%-----------------------------------------------------------------------
% Beginning of memo-l.template
%-----------------------------------------------------------------------
%
% This is a template file for AMS Memoirs for use with AMS-LaTeX 2.0.
% Separate chapters should be included at the appropriate position.
%
%%%%%%%%%%%%%%%%%%%%%%%%%%%%%%%%%%%%%%%%%%%%%%%%%%%%%%%%%%%%%%%%%%%%%%%%

\documentclass[english]{smfbook}

%    For use when working on individual chapters
%\includeonly{}

%    Include referenced packages here.
\usepackage{amsfonts, amscd, amsthm, amssymb, amsmath, amsxtra, latexsym}
\usepackage{graphicx}
\usepackage{enumerate}
\usepackage{pb-diagram,pb-xy}%,pdfsync}

\usepackage[applemac]{inputenc}
\usepackage[cmtip,arrow]{xy}
\input xy
\xyoption{all}

\newtheorem{theorem}{Theorem}[chapter]
\newtheorem{lemma}[theorem]{Lemma}
\newtheorem{proposition}[theorem]{Proposition}
\newtheorem{corollary}[theorem]{Corollary}
\newtheorem*{Thm2}{Theorem}

\newtheorem*{problem}{Problem}

\theoremstyle{definition}
\newtheorem{definition}[theorem]{Definition}

\theoremstyle{remark}
\newtheorem{remark}[theorem]{Remark}

\newcommand{\mN}{\mathbb N}
\newcommand{\matH}{\mathbb H}
\newcommand{\mR}{\mathbb R}
\newcommand{\mZ}{\mathbb Z}

\newcommand{\G}{\Gamma}
\newcommand{\La}{\Lambda}
\newcommand{\p}{\prime}

\newcommand{\tilM}{\widetilde{M}}
\newcommand{\calM}{\mathcal{M}}
\newcommand{\calP}{\mathcal{P}}
\newcommand{\calC}{\mathcal{C}}
\newcommand{\calH}{\mathcal{H}}
\newcommand{\calG}{\mathcal{G}}
\newcommand{\calV}{\mathcal{V}}
\newcommand{\calE}{\mathcal{E}}
\newcommand{\mn}{{\rm Min}}
\newcommand{\rk}{{\rm rk}}
\newcommand{\out}{{\rm Out}}
\newcommand{\mcg}{{\rm MCG}}
\newcommand{\diff}{{\rm Diff}}

\numberwithin{section}{chapter}
\numberwithin{equation}{chapter}

%    For a single index; for multiple indexes, see the manual
%    "Instructions for preparation of papers and monographs:
%    AMS-LaTeX" (instr-l.pdf in the AMS-LaTeX distribution).
\makeindex

\begin{document}

\frontmatter

\title[Rigidity of high dimensional graph manifolds]
    {Rigidity of high dimensional graph manifolds}

\alttitle[Rigidit\'e des vari\'et\'es graph\'ees de grande dimension]

%    Remove any unused author tags.

%    author one information
\author{Roberto Frigerio}
\address{Dipartimento di Matematica - Largo B. Pontecorvo, 5 - 56127 Pisa, Italy}
\email{frigerio@dm.unipi.it}
\urladdr{http://www.dm.unipi.it/~frigerio/}
\thanks{}

%    author two information
\author{Jean-Fran\c cois Lafont}
\address{Department of Mathematics, The Ohio State University, 231 West 18th Avenue,
Columbus, OH 43210-1174, USA}
\email{jlafont@math.ohio-state.edu}
\urladdr{http://www.math.osu.edu/~jlafont/}
\thanks{}

%    author three information
\author{Alessandro Sisto}
\address{Mathematical Institute, 24-29 St Giles', Oxford, OX1 3LB, England, UK}
\email{sisto@maths.ox.ac.uk}
\urladdr{http://people.maths.ox.ac.uk/sisto/}
\thanks{}

%    \date is required; it is the date received by the editor.
\date{}

\subjclass[2000]{Primary: 53C24, 20F65; Secondary: 53C23, 20E08, 20F67, 20F69, 19D35}

\keywords{Quasi-isometry, quasi-action, graph of groups, CAT(0) space, Borel conjecture, smooth rigidity, 
Baum-Connes conjecture, asymptotic cone, mapping class group, Kazhdan's property (T), Tits' alternative, 
co-Hopf property, $C^\ast$-simplicity, $SQ$-universality}

\begin{abstract}
We define the class of {\it high dimensional graph manifolds}. These are compact smooth manifolds
supporting a decomposition into finitely many pieces, each of which is diffeomorphic to the
product of a torus with a finite volume hyperbolic manifold with toric cusps. The various pieces 
are attached together via affine maps of the boundary tori. We require all the hyperbolic factors
in the pieces to have dimension $\geq 3$. Our main goal is to study
this class of graph manifolds from the viewpoint of rigidity theory.

We show that, in dimensions $\geq 6$, the Borel conjecture holds for our graph manifolds. We 
also show that smooth rigidity holds within the class: two graph manifolds are homotopy 
equivalent if and only if they are diffeomorphic. We introduce the notion of {\it irreducible}
graph manifolds. These form a subclass which has better coarse geometric properties, 
in that various subgroups can be shown to be quasi-isometrically embedded inside the
fundamental group.  
We establish some structure theory for finitely generated groups which are quasi-isometric
to the fundamental group of an irreducible graph manifold: any such group has a graph of 
groups splitting with strong constraints on the edge and vertex groups. Along the way, we
classify groups which are quasi-isometric to the product of a free abelian group and
a non-uniform lattice in $SO(n,1)$. We provide 
various examples of graph manifolds which do {\it not} support any locally CAT(0) metric.

Several of our results can be extended to allow pieces with hyperbolic surface factors.
We emphasize that, in dimension $=3$, our notion of graph manifold {\bf does not} coincide with
the classical graph manifolds. Rather, it is a class of $3$-manifolds that contains some (but not all)
classical graph $3$-manifolds (we don't allow general Seifert fibered pieces), as well as some 
non-graph 3-manifolds (we do allow hyperbolic pieces). 
\end{abstract}

\begin{altabstract}
Ce texte est consacr\'e \`a la d\'efinition et \`a l'\'etude syst\'ematique des {\it vari\'et\'es graph\'ees de grande dimension}.
Celles-ci sont des vari\'et\'es lisses, ayant une d\'ecomposition en un nombre fini de morceaux g\'eom\'etrique. Chaque 
morceau est diff\'eomorphe au produit d'un tore et d'une vari\'et\'e hyperbolique de volume fini dont tous
les bouts sont des tores. Les morceaux sont recoll\'es par des applications affines des tores 
qui en sont les bords. Nous exigeons que le facteur hyperbolique dans chaque morceau
soit de dimension $\geq 3$.
%(quoique certains de nos r\'esultats s'appliquent aussi bien en pr\'esence de
%morceaux ayant commes facteurs des surfaces hyperboliques). 
Notre but principal est d'\'etablir 
divers r\'esultats de rigidit\'e pour cette classe de vari\'et\'es graph\'ees.

Nous d\'emontrons, en dimension $\geq 6$, la conjecture de Borel pour les vari\'et\'es graph\'ees :
une vari\'et\'e quelconque est homotopiquement \'equivalente a une vari\'et\'e graph\'ee si et seulement si
elle est hom\'eomorphe a cette m\^eme vari\'et\'e graph\'ee.
Nous \'etablissons la rigidit\'e lisse pour la classe des vari\'et\'es graph\'ees :  deux vari\'et\'es 
graph\'ees sont homotopiquement \'equivalentes si et seulement si elles sont diff\'eomorphes. Du point
de vue de la g\'eom\'etrie \`a grande \'echelle, la distorsion des groupes
fondamentaux des morceaux dans le groupe fondamental de la vari\'et\'e graph\'ee joue un r\^ole 
essentiel. Nous introduisons la notion de {\it varit graphe irrductible}. Elles forment une sous-classe pour 
laquelle ces sous-groupes sont toujours non-distordus. Ceci nous permet d'analyser la structure des groupes 
quasi-isom\'etriques au groupe fondamental d'une vari\'et\'e graph\'ee irr\'eductible: un tel groupe a 
(virtuellement) une action sur un arbre, avec de fortes contraintes sur les stabilisateurs de sommets
et d'ar\^etes. Cet analyse comprend, entre autre, une classification des groupes quasi-isom\'etriques
au produit d'un groupe ab\'elien libre et d'un r\'eseau non-uniforme dans $SO(n,1)$. Nous pr\'esentons plusieurs
\'examples de vari\'et\'es graph\'ees qui n'admettent {\it aucune} m\'etrique locallement CAT(0).
%Il est possible d'adapter nos d\'emonstrations en pr\'esence de morceaux qui sont des produits de 
%surfaces et de tores ; ceci est expliqu\'e en d\'etail a la fin du m\'emoire.

Certains de nos r\'esultats s'appliquent aussi bien en pr\'esence de
morceaux ayant commes facteurs des surfaces hyperboliques.
Nous pr\'ecisons que, en dimension trois, notre notion de vari\'et\'e graph\'ee {\bf ne co\"incide pas} 
avec la notion classique de vari\'et\'e graph\'ee. Nos vari\'et\'es forment une classe comprenant
certaines (mais pas toutes) les vari\'et\'es graph\'ees classiques (nous excluons certaines sous-vari\'et\'es de Seifert), 
ainsi que des vari\'et\'es que ne sont pas des vari\'et\'es graph\'ees classiques (nous admettons des
morceaux purement hyperboliques). 
%Quelques-uns de nos r\'esultats sont nouveaux m\^eme en dimension
%trois.

\end{altabstract}

\maketitle

%    Dedication.  If the dedication is longer than a line or two,
%    remove the centering instructions and the line break.
%\cleardoublepage
%\thispagestyle{empty}
%\vspace*{13.5pc}
%\begin{center}
%  Dedication text (use \\[2pt] for line break if necessary)
%\end{center}
%\cleardoublepage

%    Change page number to 7 if a dedication is present.
\setcounter{page}{4}

\tableofcontents

%    Include unnumbered chapters (preface, acknowledgments, etc.) here.
%-----------------------------------------------------------------------
% Beginning of chap1.tex
%-----------------------------------------------------------------------
%
%  AMS-LaTeX sample file for a chapter of a monograph, to be used with
%  an AMS monograph document class.  This is a data file input by
%  chapter.tex.
%
%  Use this file as a model for a chapter; DO NOT START BY removing its
%  contents and filling in your own text.
% 
%%%%%%%%%%%%%%%%%%%%%%%%%%%%%%%%%%%%%%%%%%%%%%%%%%%%%%%%%%%%%%%%%%%%%%%%

\chapter*{Introduction}~\label{intro:sec}

In recent years, there has been an extensive amount of work done on proving rigidity results
for various classes of non-positively curved spaces. In this monograph, we are interested in
establishing similar rigidity theorems in the context of spaces which may not support any 
non-positively curved metrics.

To motivate our class of manifolds, we briefly recall some basic notions from $3$-manifold
topology. In the theory of $3$-manifolds, a central role is played by {\it Thurston's geometrization 
conjecture}, recently established by Perelman. Loosely speaking, this asserts that a closed 
$3$-manifold can be decomposed into pieces, each of which supports a {\it geometric structure}, 
i.e. a complete metric locally modelled on one of the eight $3$-dimensional geometries. 
When restricted to the class of $3$-manifolds which support a non-positively curved 
metric, the geometrization conjecture states that such a $3$-manifold contains a 
finite collection of pairwise disjoint, embedded $2$-tori, and each component of the 
complement is either hyperbolic (supports a metric modeled on $\mathbb H^3$) or 
is non-positively curved Seifert fibered (supports a metric modeled on $\mathbb H^2 
\times \mR$). In the case where there are no hyperbolic components, 
the $3$-manifold is an example of a {\it graph manifold}. The class of manifolds we consider 
are inspired by these notions.

\begin{definition}
We will say that a compact smooth $n$-manifold $M$, $n\geq 3$, is a {\it graph manifold}
provided that it can be constructed in the following way:
\begin{enumerate}
\item
For every $i=1,\ldots, r$, take a complete finite-volume
non-compact hyperbolic $n_i$-manifold $N_i$ 
with toric cusps, where $3\leq n_i\leq n$.
\item
Denote by $\overline{N}_i$ the manifold obtained by ``truncating the cusps'' of $N_i$, i.e.~by removing
from $N_i$ a horospherical neighbourhood of each cusp.
\item
Take the product $V_i=\overline{N}_i\times T^{n-n_i}$, where $T^k=(S^1)^k$ is the $k$-dimensional torus. 
\item
Fix a pairing of some boundary components of the $V_i$'s and glue the paired
boundary components using \emph{affine} diffeomorphisms of the boundary tori, so as to obtain a connected manifold of
dimension $n$ (see Section~\ref{construction:sec} for the precise definition of affine gluing in this context).
\end{enumerate}
Observe that $\partial M$ is either empty or consists of tori. The submanifolds $V_1,\ldots,V_r$ will be called
the \emph{pieces} of $M$. The manifold $\overline{N}_i$ is the \emph{base} of $V_i$, while every
subset of the form $\{\ast\}\times T^{n-n_i}\subseteq V_i$ is a \emph{fiber} of $V_i$. The boundary
tori which are identified together will be called the {\it internal walls} of $M$ (so any two distinct pieces in $M$
will be separated by a collection of internal walls), while the components of $\partial M$ will be called the \emph{boundary walls} of $M$.
\end{definition}

\vskip 5pt

Informally, our manifolds can be decomposed into pieces, each of which supports 
a finite-volume product metric locally modeled on some $\mathbb H^k \times \mR ^{n-k}$ ($k\geq 3$).

Our notion of generalized graph manifolds includes both the classical ``double'' of a finite volume 
hyperbolic manifold with toric cusps, as well as those twisted doubles of such manifolds (in the sense of 
Aravinda and Farrell \cite{ArFa}) that are obtained via affine gluings. 

A restriction that we have imposed on our graph manifolds is that all
pieces  have a base which is hyperbolic {\it of dimension $\geq 3$}. The
reason for this restriction is obvious: hyperbolic manifolds of
dimension $\geq 3$ exhibit a lot more rigidity than surfaces. 
However, some of our results extend also to the case when surfaces with boundary 
are allowed as bases of pieces. To allow for these, we introduce the following:

\begin{definition}
 For $n\geq 3$, an \emph{extended graph $n-$manifold} is a manifold built up from pieces as in the definition of graph manifold as well as \emph{surface pieces},
that is manifolds of the form $\Sigma \times T^{n-2}$ with $\Sigma$ non-compact, finite volume,
hyperbolic surface. Also, we require that each gluing does {\bf not} identify the fibers in adjacent surface pieces.
\end{definition}

Let us briefly comment about the last requirement described
in the above Definition. If we allowed gluings which identify the fibers
of adjacent surface pieces, then
the resulting decomposition into pieces of our extended graph manifold
would no longer be canonical, and some of our rigidity results (see e.g.~Theorem~\ref{iso-preserve:thm}) would no longer be true.
Indeed, within a surface piece $\Sigma \times T^{n-2}$, 
we can take any non-peripheral simple closed curve $\gamma \hookrightarrow \Sigma$ 
in the base surface, and cut the piece open along $\gamma \times T^{n-2}$. This
allows us to break up the original piece $\Sigma \times T^{n-2}$ into pieces 
$(\Sigma \setminus \gamma) \times T^{n-2}$ (which will either be two
pieces, or a single ``simpler'' piece, according to whether $\gamma$ separates
or not). Our additional requirement avoids this possibility. 
Note however that if one has
adjacent surface pieces with the property that
the gluing map matches up their fibers exactly, then it is not
possible to conclude that the two surface pieces
can be combined into a single surface piece (the resulting manifold could be a
non-trivial $S^1$-fiber bundle over a surface rather than just a product).

We emphasize that, restricting down to $3$-dimensions, our notion of (extended)
graph manifold  {\bf do not} coincide with the classical $3$-dimensional
graph manifolds.  For instance:
\begin{itemize}
\item we do not allow general finite volume quotients of $\mathbb H^2 \times \mR$, 
\item we allow purely hyperbolic pieces in our decompositions (i.e. the case where a
piece is just a truncated cusped hyperbolic $3$-manifold),
\item in the case of genuine graph manifolds, we do not allow pieces to be products of a hyperbolic surface with a circle. 
\end{itemize}

\vskip 10pt

Now our (extended) graph manifolds are ``built up'', in a relatively simple manner, from non-positively curved 
manifolds. If we know some property holds for non-positively curved manifolds, and hence for all
the pieces in our decomposition, we could expect it to hold for the (extended) graph manifold. This monograph
pursues this general philosophy, with a view towards establishing analogues of various rigidity
theorems for the class of (extended) graph manifolds. 

\smallskip 

In some special cases, the implementation of the strategy we have just described is quite plain. This is the case, for example, 
for purely hyperbolic graph manifolds, which we now define.
We say that a graph manifold is \emph{purely hyperbolic} if the fiber of each of its pieces
is trivial (i.e.~each piece is just a truncated hyperbolic manifold). 
Such manifolds enjoy additional nice properties, that are of great help in understanding
their geometry: for example, they support nonpositively curved Riemannian metrics (Theorem~\ref{purelyhyp:CAT0}), 
and their fundamental groups are relatively hyperbolic (Theorem~\ref{relhyp:thmintro}).
As a consequence, many of our results are much easier (and sometimes already known)
for purely hyperbolic graph manifolds. 
In order to support the reader's intuition of our arguments, in this introduction we pay a particular attention to this subclass of manifolds, pointing
out how some of our arguments could be shortened in the case of purely hyperbolic manifolds.

\bigskip 
 
 Let us now briefly describe the content of each Chapter.

\bigskip

Chapter \ref{quasiapp} starts out with a review of 
some basic notions: quasi-isometries, quasi-actions, and the Milnor-S\v varc Lemma.

\bigskip 

In Chapter \ref{construction:sec}, we introduce our (extended) graph manifolds, and establish some basic
general results.  A result by Leeb~\cite[Theorem 3.3]{leeb} ensures that every (extended) $3$-dimensional graph manifold containing at least one purely hyperbolic piece
supports a non-positively curved
Riemannian metric with totally geodesic boundary. 
A slight variation of Leeb's argument allows us to prove the following:

\begin{theorem}\label{purelyhyp:CAT0}
Let $M$ be a purely hyperbolic graph manifold. Then $M$ supports a nonpositively curved Riemannian metric with totally geodesic boundary.
\end{theorem}

On the other hand, in Section \ref{noncat0-easy:subsec} we provide a first family of examples of 
(extended) graph manifolds which {\it cannot} support any locally CAT(0)-metric. More precisely, for $n\geq 4$
we construct examples of $n$-dimensional (extended) graph manifolds $M$ where the fundamental group 
of the walls is {\it not} quasi-isometrically embedded in $\pi_1(M)$ (these examples
are genuine graph manifolds for $n\geq 5$). 
By the Flat Torus Theorem,  for these examples $\pi_1(M)$ cannot act via semisimple isometries on any CAT(0) space, so
$M$ cannot support 
a locally CAT(0)-metric. In fact, the fundamental groups of our examples contain distorted cyclic subgroups, so by the work of Haglund~\cite{Hag}
they cannot act properly on any (potentially infinite-dimensional) CAT(0) cube complex. This contrasts with the recent advances in $3$-manifold theory.

We also stress that our non-CAT(0) examples may contain purely hyperbolic pieces. 
Therefore, in contrast with Leeb's result in dimension 3, the hypothesis
of Theorem~\ref{purelyhyp:CAT0} cannot be weakened by replacing the condition that $M$ is purely hyperbolic with
the condition that it contains a purely hyperbolic piece. 

These first results already suggest that the geometry of generic graph manifolds may be more complicated than 
the one of purely hyperbolic
graph manifolds. As a consequence, more care is needed in our analysis when non-trivial fibers are present.

\bigskip

In Chapter \ref{toprigidity:sec}, we study the topology of our graph manifolds. Recall that the {\it Borel Conjecture}
states that if $M, M^\p$ are aspherical manifolds with isomorphic fundamental group, then
they are in fact homeomorphic.  If the manifold $M$ is assumed to support a Riemannian metric
of non-positive curvature and has dimension $\geq 5$, then the validity of the Borel Conjecture is
a celebrated result of Farrell-Jones.  Our next result establishes (Section~\ref{Borel:sec}):

\begin{theorem}[Topological rigidity]\label{toprigidity:thm}
Let $M$ be an (extended) graph manifold (possibly with boundary), of dimension $n\geq 6$. Assume $M^\prime$ is
an arbitrary topological manifold and $\rho: M^\prime \rightarrow M$ is a homotopy equivalence which 
restricts to a homeomorphism
$\rho|_{\partial M^\prime}: \partial M^\prime \rightarrow \partial M$ between the boundaries of the manifolds. 
Then $\rho$ is 
homotopic, rel $\partial$, to a homeomorphism $\bar \rho: M^\prime \rightarrow M$.
\end{theorem}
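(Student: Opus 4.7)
The plan is to attack this through surgery theory, reducing topological rigidity to the vanishing of the structure set $\mathcal{S}^{TOP}(M,\partial M)$, which in turn will follow from the Farrell--Jones conjecture (FJC) for $\pi_1(M)$.

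First I would verify that $M$ is aspherical. Each piece $V_i=\overline{N}_i\times T^{n-n_i}$ is aspherical as a product of aspherical manifolds (the truncated hyperbolic $\overline{N}_i$ deformation retracts onto the compact core, which is aspherical), and each wall is a torus, hence aspherical. Gluing aspherical spaces along $\pi_1$-injective aspherical subspaces produces an aspherical space, so $M=K(\pi_1(M),1)$. This puts us in the setting where classical surgery-theoretic Borel-type results apply: the conclusion will follow once one shows that the assembly maps in algebraic $K$- and $L$-theory for $\pi_1(M)$ are isomorphisms (equivalently, $\mathrm{Wh}(\pi_1(M))=0$ and the $L$-theoretic assembly is an isomorphism), since in dimension $n\geq 6$ this forces $\mathcal{S}^{TOP}(M,\partial M)$ to be a single point relative to a homeomorphism on the boundary.

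The core of the argument is therefore to establish FJC for $\pi_1(M)$. The group $\pi_1(M)$ is the fundamental group of a finite graph of groups whose vertex groups are $\pi_1(V_i)\cong\pi_1(\overline{N}_i)\times\mZ^{n-n_i}$ and whose edge groups are $\mZ^{n-1}$. The strategy is:
\begin{enumerate}
\item Each $\pi_1(\overline{N}_i)$ is a non-uniform lattice in $\mathrm{Isom}(\matH^{n_i})$, and hence satisfies FJC by the work of Bartels--Lück--Reich (extending Farrell--Jones); equivalently these groups are CAT(0)-groups relative to their cusp subgroups.
\item FJC is inherited by direct products with virtually abelian groups, giving FJC for each $\pi_1(V_i)$.
\item FJC is preserved under the formation of fundamental groups of (finite) graphs of groups with FJC vertex groups, by the theorem of Bartels and subsequent work of Bartels--Farrell--Lück on relatively hyperbolic and graph-of-groups constructions.
\end{enumerate}
Combining these three inputs yields FJC for $\pi_1(M)$.

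Once FJC is in hand, the standard surgery exact sequence
\[
\cdots\to L^h_{n+1}(\mZ\pi_1(M))\to \mathcal{S}^{TOP}(M,\partial M)\to [M/\partial M, G/TOP]\to L^h_n(\mZ\pi_1(M))
\]
together with the identification of the assembly map with the map from normal invariants (via the asphericity of $M$) forces $\mathcal{S}^{TOP}(M,\partial M)$ to be a point. Hence any homotopy equivalence $\rho:(M',\partial M')\to (M,\partial M)$ which is a homeomorphism on the boundary is homotopic rel $\partial$ to a homeomorphism, which is the statement of the theorem.

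The principal obstacle is step (3): one must invoke a version of the graph-of-groups inheritance result for FJC that is strong enough to combine with the non-hyperbolicity of the vertex groups (they are products of a lattice with a torus, not word-hyperbolic). Verifying that the existing machinery applies in this precise setting --- for both $K$- and $L$-theory, with the wreath-product/fibered versions needed so that the induction works through all vertex groups --- is the technical heart of the argument; once this is granted the rest is essentially formal surgery theory plus asphericity.
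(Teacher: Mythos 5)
Your proposal takes a genuinely different route from the paper: you want to deduce topological rigidity wholesale from the Farrell--Jones conjecture for $\pi_1(M)$, whereas the paper argues by an explicit induction on the number of internal walls, cutting $M$ along a wall and using Cappell's splitting theorem to reduce to graph manifolds with fewer walls. The two strategies are not equivalent in strength: yours, if it worked, would prove more (it yields the Borel conjecture for $\pi_1(M)$ outright, together with the isomorphism conjectures), while the paper's proves exactly what is needed and controls the obstruction groups by hand.

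The gap is in your step (3), and you yourself flag it as the technical heart, but it is worse than a verification problem --- it is not covered by existing black-box theorems, and the standard routes are actually blocked by the paper's own results. There is no general inheritance theorem for FJC under graphs of groups; the known results concern edge groups that are finite or virtually cyclic (Knopf), and here the edge groups are $\mathbb Z^{n-1}$ with $n-1\geq 5$. One might hope to bypass this by showing $\pi_1(M)$ is CAT(0) or relatively hyperbolic and invoking Wegner or Bartels, but the paper explicitly constructs graph manifolds $M$ that support \emph{no} locally CAT(0) metric (Proposition~\ref{notqi} and Chapter~\ref{construction2:sec}), and shows that $\pi_1(M)$ is \emph{not} relatively hyperbolic with respect to any collection of proper subgroups when every piece has non-trivial fiber (end of Section~\ref{thickness:sec}, via thickness of order $1$). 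So the natural hypotheses under which FJC is currently known fail for precisely the graph manifolds that make the theorem interesting. What the paper does instead is exactly the hands-on substitute for the missing inheritance result: it proves $Wh_i(\pi_1(M))=0$ directly via Waldhausen's Mayer--Vietoris sequence (with the Nil terms killed by Hall's regular coherence of $\mathbb Z[\mathbb Z^{n-1}]$, Proposition~\ref{WhVanish}), kills Cappell's $UNil$ obstruction by verifying that wall subgroups are square-root closed (the \textbf{Assertion} in Section~\ref{Borel:sec}), and inducts using Farrell--Jones for the individual pieces. If you want to salvage the FJC route, you would in effect have to reprove these same vanishing statements; the clean invocation of ``FJC for graphs of groups'' does not exist in the generality you need.
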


%As remarked above, every $3$-dimensional purely hyperbolic graph manifold supports a non-positively curved 
%Riemannian metric~\cite{leeb}. If this were the case in every dimension, then Theorem~\ref{toprigidity:thm}
%would follow (even in dimension 5) from the results by Farrell and Jones mentioned above.

Recall that purely hyperbolic graph manifolds admit a nonpositively curved Riemannian metric. So if $M$ is purely 
hyperbolic, then Theorem~\ref{toprigidity:thm} follows from the result of Farrell and Jones mentioned above
(and holds even in dimension $5$).

Our Theorem~\ref{toprigidity:thm} is actually a special case of our more general Theorem \ref{Borel-general},
where we establish the Borel Conjecture for a broader class of manifolds.
Along the way, we also show that our (extended) graph manifolds are always aspherical (Section \ref{Asphere:sec}),
and have vanishing lower algebraic $K$-theory (Section \ref{algKth:sec}). 
We also point out that the Baum-Connes conjecture holds (Section \ref{BCC:sec}) and mention some 
well-known consequences. It is worth noting that, by work
of Ontaneda \cite[Theorem 1]{On}, there are examples of doubles of finite volume hyperbolic manifolds 
which support exotic PL-structures. As such, the conclusion of our Theorem \ref{toprigidity:thm} is optimal, 
since there are examples where no 
PL-homeomorphism (and hence, no diffeomorphism) exists between $M$ and $M^\p$. 

\bigskip

From the generalized Seifert-Van Kampen theorem, the fundamental group
$\G$ of one of our (extended) graph manifolds $M$ can be expressed as the fundamental group of a graph
of groups, with vertex groups given by the fundamental groups of the pieces, and edge groups
isomorphic to $\mZ^{n-1}$, where $n$ is the dimension of $M$. To further develop our analysis of (extended) graph manifolds, we would like
to ensure that reasonable maps between (extended) graph manifolds have to (essentially)
preserve the pieces. The following result, which is the main goal of Chapter~\ref{pieces-iso:sec},
is crucial:

\begin{theorem}[Isomorphisms preserve pieces]\label{iso-preserve:thm}
Let $M_1$, $M_2$ be a pair of (extended) graph manifolds and let $\G_i=\pi_1(M_i)$ be their respective
fundamental groups.  Let $\La_1 \leq \G_1$ be a subgroup conjugate to the fundamental
group of a piece in $M_1$, and $\varphi: \G_1\rightarrow \G_2$ be an isomorphism. Then 
$\varphi(\La_1)$ is conjugate to the fundamental group $\La_2 \leq \G_2$ of a piece in $M_2$.
\end{theorem}

A fairly straightforward consequence of this result is a necessary condition for two (extended) graph 
manifolds to have isomorphic fundamental groups (see also Theorem~\ref{preserve2:thm}):

\begin{corollary}\label{same-pieces}
Let $M, M^\p$ be a pair of (extended) graph manifolds. If $\varphi: \pi_1(M) \to \pi_1(M^\p)$ is an isomorphism,
then it induces a graph isomorphism between the associated graph of groups. Moreover, vertices
identified via this graph isomorphism must have associated vertex groups which are isomorphic.
\end{corollary}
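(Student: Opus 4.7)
The plan is to use Theorem~\ref{iso-preserve:thm} to build a bijection $\Phi$ between the vertex sets of the two graphs of groups, show it respects adjacency via an algebraic criterion, and then read off the vertex-group isomorphism from the conjugacy assertion.

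For each piece $V$ of $M$, pick a representative $\Lambda_V \leq \pi_1(M)$ of the conjugacy class of $\pi_1(V)$. Theorem~\ref{iso-preserve:thm} produces a piece $V'$ of $M'$ with $\phi(\Lambda_V)$ conjugate to $\Lambda_{V'}$, and I would set $\Phi(V) = V'$. Uniqueness of $V'$ reduces to showing that distinct pieces of $M'$ yield non-conjugate subgroups of $\pi_1(M')$: on the Bass--Serre tree for $M'$, if two vertex stabilizers were conjugate via some $g$, the common subgroup $g\Lambda_{V'}g^{-1} = \Lambda_{W'}$ would fix both $g\widetilde v'$ and $\widetilde w'$, and hence every edge of the geodesic between them; each edge stabilizer is $\cong \mZ^{n-1}$, while every vertex stabilizer contains the non-elementary factor $\pi_1(\overline N_{V'})$, forcing the geodesic to be trivial and $V' = W'$. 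Applying the same construction to $\phi^{-1}$ produces an inverse, so $\Phi$ is a bijection. Writing $\phi(\Lambda_V) = g\Lambda_{\Phi(V)}g^{-1}$, the composition of $\phi|_{\Lambda_V}$ with conjugation by $g^{-1}$ gives the required isomorphism $\Lambda_V \to \Lambda_{\Phi(V)}$.

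The main obstacle is showing that $\Phi$ is a graph morphism, which I would reduce to the criterion: two pieces $V, W$ of a graph manifold are adjacent if and only if some conjugates of $\Lambda_V, \Lambda_W$ intersect in a subgroup containing a copy of $\mZ^{n-1}$. The forward implication is immediate, since adjacent pieces share an internal wall with $\pi_1 \cong \mZ^{n-1}$. For the converse, by Bass--Serre theory the intersection of two vertex stabilizers on the tree is the stabilizer of the geodesic joining them; at tree-distance $\geq 2$ this sits inside the intersection of two distinct edge stabilizers of some intermediate piece $\Lambda_{V_x} = \pi_1(\overline N_x) \times \mZ^{n-n_x}$. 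These edge stabilizers have the form $P \times \mZ^{n-n_x}$ with $P$ a conjugate of a peripheral subgroup of $\pi_1(\overline N_x)$, and since distinct conjugates of peripheral subgroups in a finite-volume cusped hyperbolic manifold intersect trivially (peripheral subgroups being self-normalizing maximal parabolics), the intersection collapses to the fiber $\mZ^{n-n_x}$, of rank at most $n-3 < n-1$ by the hypothesis $n_x \geq 3$. Since $\phi$ preserves this purely algebraic criterion, $\Phi$ sends adjacent pairs to adjacent pairs; applying the same argument to $\phi^{-1}$ yields the reverse implication and completes the graph isomorphism.
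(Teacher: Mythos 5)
Your overall strategy is sound and reaches the stated conclusion, but it takes a somewhat different route than the paper does. The paper deduces Corollary~\ref{same-pieces} (in its refined form Theorem~\ref{preserve2:thm}) by combining Theorem~\ref{iso-preserve:thm} with Corollary~\ref{walls-preserve:cor} (the isomorphism sends wall subgroups to conjugates of wall subgroups) and Lemma~\ref{conj:lemma} (which packages the relevant Bass--Serre rigidity statements). You instead re-derive the needed rigidity directly: your uniqueness argument is essentially Lemma~\ref{conj:lemma}(4) proved from scratch, and your adjacency criterion is a clean algebraic reformulation that does the job for \emph{distinct} pieces. That part of the argument --- the distance-$\geq 2$ rank collapse using that distinct maximal parabolics of $\pi_1(\overline N_x)$ intersect trivially, giving rank $n-n_x \leq n-3 < n-1$ --- is correct and is the same kind of computation the paper performs inside the proof of Lemma~\ref{walls-preserve:lem}.

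There is, however, a genuine gap: your adjacency criterion, as stated, fails for $V = W$. If you allow ``some conjugates of $\Lambda_V, \Lambda_W$'' to be the \emph{same} subgroup, then $\Lambda_V \cap \Lambda_V = \Lambda_V$ always contains copies of $\mZ^{n-1}$, so every piece would be declared adjacent to itself, regardless of whether it is actually glued to itself. Since the graph of groups may have loop edges (a piece glued to itself along one of its boundary tori), this case must be handled, and the criterion needs to be sharpened to require the two conjugates to be \emph{distinct} subgroups --- equivalently, to stabilize distinct vertices of the Bass--Serre tree. With that fix, your geodesic argument applies and forces tree-distance exactly one, so the piece has a self-gluing. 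This refined statement is precisely the content of Lemma~\ref{conj:lemma}(5). Separately, your criterion is a yes/no adjacency test and does not control the \emph{number} of walls between a given pair of pieces, so it only yields an isomorphism of the underlying simple graph; to recover edge multiplicities (and hence a genuine isomorphism of the graphs underlying the graphs of groups) one should instead track wall subgroups directly via Corollary~\ref{walls-preserve:cor} together with Lemma~\ref{conj:lemma}(2), which is what the paper's route provides.
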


It should not be difficult to prove that the description of $\pi_1(M)$ as the fundamental
group of the graph of groups corresponding to the decomposition of $M$ into pieces
provides a JSJ-decomposition of $\pi_1(M)$, in the sense of Fujiwara and Papasoglu \cite{FP}
(see also Dunwoody and Sageev \cite{DS}). As such, Theorem~\ref{iso-preserve:thm} and 
Corollary~\ref{same-pieces}
could probably be deduced from the uniqueness results proved
in~\cite{FP} (see also Forester \cite{Forester}, and Guirardel and Levitt \cite{GL}). However, the case
of (extended) graph manifolds is considerably easier than the general case treated in these other
papers, so we preferred to give complete and self-contained proofs of 
Theorem~\ref{iso-preserve:thm} and Corollary~\ref{same-pieces}.

\bigskip

In Chapter~\ref{smoothrig:sec}, we return to studying the topology of 
(extended) graph manifolds. 
Building on Theorem~\ref{iso-preserve:thm}, we prove the following:

\begin{theorem}[Smooth rigidity]\label{smrigidity:thm}
Let $M,M'$ be (extended) graph manifolds, and let
$\varphi\colon \pi_1(M)\to \pi_1(M')$ be a group isomorphism. 
Also assume that no boundary component of $M,M'$ lies in a surface piece
(of course, this condition is automatically satisfied if $M$ and $M'$ are 
genuine graph manifolds).
Then $\varphi$ 
is induced by a diffeomorphism $\psi\colon M\to M'$. 
\end{theorem}

In Theorem~\ref{smrigidity:thm}, the additional hypothesis 
preventing surface pieces to be adjacent to the boundary 
is necessary: if
$M=\Sigma\times S^1$ and
$M'=\Sigma'\times S^1$, where $\Sigma_1$ is a once-punctured torus and
$\Sigma'$ is a thrice-punctured sphere, then 
$\pi_1(M)\cong \pi_1(M')$, but $M$ and $M'$ are not diffeomorphic (in fact, they are not even homeomorphic).

Ontaneda \cite{On} had previously shown smooth rigidity within the class of doubles of 
finite volume hyperbolic manifolds. The proof of Theorem~\ref{smrigidity:thm} is easier if $M,M'$ 
are purely hyperbolic (as in Ontaneda's examples). In fact, in that case
Theorem~\ref{iso-preserve:thm}, 
together with Mostow Rigidity Theorem, ensures that the 
restriction of $\varphi$ to the fundamental groups of the pieces of $M$ is induced by
suitable diffeomorphisms between the pieces of $M$ and the pieces of $M'$. 
In presence of non-trivial fibers, the proof of this fact needs some more work (see the proof of Lemma~\ref{diffeo:pieces}).
Once this is established,
one has to carefully check that the diffeomorphisms between the pieces can be extended to a global
diffeomorphism between $M$ and $M'$, which indeed induces the fixed group isomorphism between the fundamental groups.

\bigskip

Next, for $M$ a closed smooth manifold, we denote by $\mcg (M)$ the \emph{mapping class group of} $M$,
i.e. the group of homotopy classes of diffeomorphisms of $M$ into itself.
Theorem~\ref{smrigidity:thm} easily implies the following corollary (see Section \ref{mcg:sec}):

\begin{corollary}\label{mcg:cor}
 Let $M$ be a closed graph manifold. Then, the group
$\mcg (M)$ is isomorphic to the group ${\rm Out} (\pi_1 (M))$ of the outer automorphisms
of $\pi_1 (M)$.
\end{corollary}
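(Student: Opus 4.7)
The plan is to exhibit the natural homomorphism
\[
\Phi : \mcg(M) \longrightarrow \out(\pi_1(M))
\]
sending the homotopy class of a self-diffeomorphism $\psi$ to its induced outer automorphism, and then to check that Theorem~\ref{smrigidity:thm} gives surjectivity while the asphericity of $M$ (established in Section~\ref{Asphere:sec}) gives injectivity.

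First I would verify that $\Phi$ is a well-defined homomorphism. Fix a basepoint $x_0 \in M$; for $\psi\in\diff(M)$, choose a path from $\psi(x_0)$ to $x_0$ to produce an automorphism $\psi_\ast$ of $\pi_1(M,x_0)$. Different choices of path change $\psi_\ast$ only by an inner automorphism, so the outer class $[\psi_\ast]$ depends only on $\psi$; and homotopic diffeomorphisms induce the same outer class, so $\Phi$ descends to $\mcg(M)$. Functoriality of $\pi_1$ makes $\Phi$ a group homomorphism.

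For surjectivity, let $[\varphi]\in\out(\pi_1(M))$ and pick a representative $\varphi\in\mathrm{Aut}(\pi_1(M))$. Applying Theorem~\ref{smrigidity:thm} to the isomorphism $\varphi:\pi_1(M)\to\pi_1(M)$ produces a diffeomorphism $\psi:M\to M$ with $\psi_\ast=\varphi$, whence $\Phi([\psi])=[\varphi]$.

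For injectivity, suppose $\psi_1,\psi_2\in\diff(M)$ satisfy $\Phi([\psi_1])=\Phi([\psi_2])$. After homotoping each diffeomorphism along a path to fix $x_0$ and composing with an inner automorphism in the target, we may assume $(\psi_1)_\ast=(\psi_2)_\ast$ as automorphisms of $\pi_1(M,x_0)$. Since $M$ is aspherical, it is a $K(\pi_1(M),1)$, and standard obstruction theory shows that any two based continuous maps between $K(\pi,1)$ spaces inducing the same homomorphism on $\pi_1$ are homotopic through based maps; hence $\psi_1$ and $\psi_2$ are homotopic and represent the same element of $\mcg(M)$.

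The proof is essentially a formal assembly of previously established inputs: there is no real obstacle, since the two nontrivial ingredients (smooth rigidity and asphericity) have already been proved earlier in the monograph. The only mild subtlety is keeping careful track of basepoints and the inner-automorphism ambiguity when going back and forth between $\mathrm{Aut}(\pi_1(M))$ and $\out(\pi_1(M))$, but this is routine.
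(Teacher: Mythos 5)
Your proposal is correct and takes essentially the same approach as the paper: the paper observes that surjectivity of the natural map $\mcg(M)\to\out(\pi_1(M))$ is a restatement of Theorem~\ref{smrigidity:thm}, and that injectivity follows from asphericity of $M$ (Lemma~\ref{Aspherical}), exactly as you argue via the $K(\pi,1)$ property.
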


Using Corollary~\ref{mcg:cor}, it is easy to see that $\mcg (M)$ is often infinite. For example, 
this is always the case when considering doubles or twisted doubles (obtained via affine gluings)
of one-cusped hyperbolic manifolds with toric cusp (see Remarks~\ref{twist:rem}
and~\ref{mcg:rem}).

\bigskip

In Chapter~\ref{groups:sec} we
describe some group theoretic properties of fundamental groups of 
(extended) graph manifolds. 
In order to properly state our results, we need  to introduce some definitions. 

\begin{definition}
Let $M$ be an (extended) graph manifold, and $V^+, V^-$ a pair of adjacent (not necessarily distinct) pieces of $M$. 
We say that the two pieces have {\it transverse fibers} along the common internal wall $T$ provided that, under the gluing diffeomorphism 
$\psi: T^+ \rightarrow T^-$ of the paired boundary tori corresponding to $T$, the image of the fiber subgroup of $\pi_1(T^+)$ under $\psi_*$ intersects 
the fiber subgroup of $\pi_1(T^-)$ only in $\{0\}$ (in this case, we equivalently say that the gluing $\psi$ is \emph{transverse} along 
$T$).
This is equivalent to asking that the 
sum of the dimensions of the fibers of $T^+$ and $T^-$ is strictly less
than the dimension of $M$, and that
the image of every fiber of $T^+$
under $\psi$ is transverse to every fiber of $T^-$.
\end{definition}

\begin{definition}\label{irr:def}
An (extended) graph manifold is {\it irreducible} if every pair of adjacent
pieces has transverse fibers along every common internal wall.
\end{definition}

In the case of $1$-dimensional fibers, an (extended) graph manifold is irreducible
if and only if the $S^1$-bundle structure on each piece cannot be extended
to the union of adjacent pieces. 

Simple examples of irreducible graph manifolds include the doubles of truncated
finite volume hyperbolic manifolds with toric cusps, as well as the
twisted doubles of 
such manifolds obtained via affine gluings. More generally, every purely hyperbolic graph manifold
is  irreducible. Irreducible graph manifolds play an important role in our analysis. On the one hand,
they provide a much wider class than purely hyperbolic graph manifolds (for example, in contrast with 
Theorem~\ref{purelyhyp:CAT0}, some of them can fail to support nonpositively curved metrics - 
see Theorem~\ref{existence:thm}). On the other hand their geometry can still be understood
quite well in terms of the geometry of the pieces (see e.g.~Theorem~\ref{walls-ok}).

At the other extreme of irreducibility, 
it may happen that an (extended) graph manifold $M$ admits a toric bundle structure obtained by gluing the product structures defined on the pieces. In this case, $M$ is the total space of a fiber bundle
with base a graph manifold
of lower dimension. This observation motivates the following:

\begin{definition}\label{fibered:defn}
An (extended) graph manifold $M$ is \emph{fibered} if it is the total space of a 
smooth fiber bundle
$F\hookrightarrow M\to M'$, where the fiber $F$ is a $d$-dimensional torus, $d\geq 1$, and
$M'$ is an (extended) graph manifold.
\end{definition} 

%It is shown in Proposition~\ref{weakly:char} that $M$ is fibered if and only if it is the total space of a smooth fiber bundle
%$F\hookrightarrow M\to M'$, where the fiber $F$ is diffeomorphic to the $d$-dimensional torus, $d\geq 1$, and
%$M'$ is a graph manifold.

\medskip

A natural question is whether the fundamental groups of (extended) graph manifolds
are relatively hyperbolic.
The notion of (strong) relative hyperbolicity first appeared in Gromov \cite{gro1}. 
The motivating example of a relatively hyperbolic group
is the fundamental group of a non-compact, finite volume, Riemannian manifold with
sectional curvature bounded above by some negative constant. Such a group is
relatively hyperbolic
with respect to the collection of cusp subgroups (see e.g. Farb \cite{farb}). 
Therefore, a  graph manifold consisting of a single piece with trivial toric fiber is relatively hyperbolic. Building on 
Dahmani's Combination Theorem \cite{Dahmani}, in Section~\ref{relhyp:sec} we extend this result as follows:
 
 \begin{theorem}\label{relhyp:thmintro}
 Assume the (extended) graph manifold $M$ has at least one piece with trivial toric fiber. 
Then $\pi_1(M)$ is relatively hyperbolic with respect to a finite family of proper subgroups.
 \end{theorem}
 
For example, the fundamental group of any purely hyperbolic graph manifold $M$ is relatively hyperbolic (in fact,
it is not difficult to show that in this case one may choose the fundamental groups of the walls of $M$ as a family
of peripheral subgroups, so $\pi_1(M)$ is toral relatively hyperbolic). 
However, at least in the case of irreducible graph manifolds, this is the only case in which $\pi_1(M)$ is relatively hyperbolic. 
In fact, in Section~\ref{thickness:sec} we prove:

\begin{theorem}\label{relhyp:thm2}
Let $M$ be an irreducible graph manifold. Then $\pi_1(M)$ is relatively hyperbolic with respect to a finite
family of proper subgroups if and only if $M$ contains at least one purely hyperbolic piece.
\end{theorem}

Our proof of Theorem~\ref{relhyp:thm2} is based on the study of the coarse geometric properties of the fundamental group
of irreducible graph manifolds, which is carried out in Chapters~\ref{strongirr:sec} and~\ref{preserve:sec}.
 
The notion of a hyperbolically embedded collection of subgroups has been recently introduced by
Dahmani, Guirardel, and Osin \cite{DGO},
and can be thought of as a generalization of
peripheral structures of relatively hyperbolic groups. One may wonder whether the fundamental
group of an (extended) graph manifold always contains a non-degenerate hyperbolically embedded subgroup. 
A very useful feature of irreducible (extended) graph
manifolds is that the action of the fundamental group on the associated Bass-Serre tree is
{\it acylindrical} (see Proposition~\ref{irr-acyl}).
In Chapter~\ref{groups:sec} we exploit (a refinement of) this result to prove the following:

\begin{theorem}\label{hypemb:thm}
 Let $M$ be an (extended) graph manifold, and suppose that
 $M$ contains an internal wall with transverse fibers. Then
  $\pi_1(M)$ contains a non-degenerate hyperbolically embedded subgroup.
\end{theorem}

Observe that the conclusion of Theorem~\ref{hypemb:thm} cannot hold in general: for example, if $M$ is the double of
a non-purely hyperbolic piece, then $\pi_1(M)$ has an infinite center, so it cannot
contain any non-degenerate hyperbolically embedded subgroup~\cite[Corollary 4.34]{DGO}.

\medskip

%In Chapter~\ref{groups:sec}, we shift perspective from topology to algebra, and 
%describe some group theoretic properties of fundamental groups of 
%(extended) graph manifolds. 

The following theorem describes other interesting algebraic properties of fundamental groups of
(extended) graph manifolds. It summarizes the results proved in
Propositions~\ref{UEG}, \ref{CH}, 
\ref{C-simple-manifolds}, \ref{SQ-u} and Corollaries~\ref{Kazhdan subgroups},  \ref{TA}, \ref{solvable:cor}.
%we establish the following:

\begin{theorem}\label{algebraic:thm}
Let $M$ be an (extended) graph manifold.
\begin{enumerate}
 \item 
If an arbitrary subgroup $H<\pi_1 (M)$ has Kazhdan's property (T), then
$H$ is the trivial subgroup.
\item
$\pi_1 (M)$ has uniformly exponential growth.
\item 
(Tits Alternative):
If $H<\pi_1 (M)$ is an arbitrary subgroup, then
either $H$ is solvable, or $H$ contains a non-abelian free group. Moreover, if $M$ is irreducible,
then every solvable subgroup of $M$ is abelian.
\item
Suppose that $\partial M= \emptyset$, and that $M$ 
contains a pair of adjacent pieces with transverse fibers.
Then $\pi_1 (M)$ is co-Hopfian.
\item
$\pi_1(M)$ is $C^\ast$-simple if and only if $M$ is not fibered.
\item
Suppose that at least one of the following conditions holds:
\begin{itemize}
\item
$M$ consists of a single piece without internal walls, or
\item
$M$ contains at least one separating internal wall, or
\item
$M$ contains at least one internal wall with transverse fibers. 
\end{itemize}
Then $\pi_1(M)$ is SQ-universal.
\end{enumerate}
\end{theorem}

Our proof of Theorem~\ref{algebraic:thm} is based on the study of the action of $\pi_1(M)$ on the
Bass-Serre tree corresponding to the decomposition of $M$ into pieces. 
Some of the statements of Theorem~\ref{algebraic:thm} are deduced from more general results concerning
fundamental groups of graph of groups.
For example, in Propositions~\ref{TA-graph}  and \ref{solvable}
we establish the Tits Alternative and the solvability of the word problem for 
wide classes of fundamental groups of graphs of groups.
Moreover, building on the results established in~\cite{DGO},
in Propositions~\ref{C-simple-graphs} and~\ref{SQ-u-graph} we characterize acylindrical
graphs of groups having respectively $C^*$-simple and SQ-universal fundamental groups.

\smallskip

If $M$ is purely hyperbolic, several points of Theorem~\ref{algebraic:thm} immediately follow from known results. In that case,
 $\pi_1(M)$ is toral relatively hyperbolic, so point (2) follows from Xie \cite{Xie}, point (5) from Arzhantseva and
 Minasyan \cite{AM} and point (6)
 from Arzhantseva, Minasyan, and Osin \cite{AMO}. Moreover, \cite[Theorem 8.2.F]{gro1} ensures that every finitely generated subgroup
 of $\pi_1(M)$ that does not contain a non-abelian free subgroup is either virtually cyclic, or contained in a parabolic subgroup (whence abelian, in our case).
Also observe that Mostow rigidity implies that fundamental groups of finite-volume hyperbolic manifolds are co-Hopfian, while
the fundamental groups of  tori are obviously non-co-Hopfian. As a consequence, 
in the case of purely hyperbolic manifolds our proof of point (4) may be simplified notably (see Proposition~\ref{CH}).

\medskip
 
Now recall that, by Corollary \ref{same-pieces}, to have any chance of 
having isomorphic fundamental groups, two graph manifolds would have to be built
up using the exact same pieces, and the gluings would have to identify the same collection
of boundary tori together. So the only possible variation lies in the choice of 
gluing maps used to identify the boundary tori together. In Section~\ref{groups:subsec},
we show how, in some cases, fixing the collection of pieces, we can still produce infinitely 
many non-isomorphic fundamental groups simply by varying the gluings between the 
common tori. The construction is flexible enough that we can even ensure that all 
the resulting graph manifolds are irreducible.

\bigskip

Starting from Chapter~\ref{strongirr:sec}, we shift our focus to coarse geometric properties of 
our graph manifolds. Let us first observe that the coarse geometry of (the universal covering of) surface pieces is very different from the coarse geometry of non-surface pieces: namely, surface pieces contain many more quasi-flats of maximal dimension, and this implies for example that there cannot exist a coarse-geometric characterization
of the boundary components of a surface piece. In order to avoid the resulting
complications, we restrict our attention to genuine graph manifolds.

As we mentioned earlier, there exist examples of $n$-dimensional graph manifolds $M$
with the property that certain walls $T\subset M$ have fundamental groups
$\pi_1(T) \cong \mZ ^{n-1} \hookrightarrow \pi_1(M)$ which are {\it not} quasi-isometrically
embedded. 
As one might expect, the presence of such walls causes serious difficulties
when trying to study the coarse geometry of $M$. 

If $M$ is purely hyperbolic, then $\pi_1(M)$
is hyperbolic relative to the fundamental groups of walls, and so these walls {\it cannot} be distorted
(this also follows from Theorem~\ref{purelyhyp:CAT0} and the Flat Torus Theorem).
It follows easily that, in the purely hyperbolic case, the fundamental group
of every fiber and of every piece of $M$ is quasi-isometrically embedded.
However, restricting our attention to the class of
purely hyperbolic manifolds would be much too limiting. As we hinted above, irreducible manifolds
provide the right class of manifolds to work with, as they satisfy the important:

\begin{theorem}\label{walls-ok}
Let $M$ be an irreducible graph manifold. Then the fundamental group of every fiber,
wall, and piece, is quasi-isometrically embedded in $\pi_1(M)$.
\end{theorem}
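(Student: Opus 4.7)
Setup and reductions. By the Milnor--Schwarz lemma (Chapter~\ref{quasiapp}), the statement is equivalent to showing the corresponding subspaces in the universal cover $\widetilde M$ are quasi-isometrically embedded. The graph of groups splitting equips $\widetilde M$ with a tree of spaces structure over the Bass--Serre tree $\mathcal T$: vertices of $\mathcal T$ correspond to lifted pieces $\widetilde V_i\cong \widetilde{\overline N_i}\times \mR^{k_i}$, and edges to lifted walls $\widetilde T\cong \mR^{n-1}$. Each fiber lift is a Euclidean subspace of a wall lift, so the fiber conclusion will follow from the wall conclusion. Inside an adjacent piece, the wall decomposes as $\widetilde T=\widetilde H\times \mR^{k_i}$ with $\widetilde H$ a boundary horosphere; by the classical undistortion of maximal parabolic subgroups in cusped hyperbolic groups, $\widetilde T$ is QI-embedded in each adjacent piece. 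Hence the wall statement follows from the piece statement by composition, and it suffices to show that each piece $\widetilde V$ is QI-embedded in $\widetilde M$.

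Main argument. Fix $\widetilde V_0$, points $x,y\in \widetilde V_0$, and a geodesic $\gamma$ of $\widetilde M$ from $x$ to $y$. Decompose $\gamma$ into alternating sub-arcs in $\widetilde V_0$ and excursions in $\widetilde M\setminus \widetilde V_0$. Because removing the vertex corresponding to $\widetilde V_0$ disconnects $\mathcal T$ into sub-trees indexed by the walls of $\widetilde V_0$, each excursion is confined to the half-space beyond a single wall $\widetilde T$ and so enters and exits $\widetilde V_0$ through $\widetilde T$. The plan is to replace each excursion $\sigma$ from $p$ to $q\in \widetilde T$ by a straightening arc inside $\widetilde V_0$ of length comparable to $\ell(\sigma)$; concatenating these produces a path in $\widetilde V_0$ from $x$ to $y$ of length bounded by a uniform constant times $\ell(\gamma)$. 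The missing ingredient is a key lemma: for any such arc $\sigma$ confined to the half-space $H^-$ beyond $\widetilde T$, one has $d_{\widetilde T}(p,q)\le C\,\ell(\sigma)$ with $C$ uniform. Combined with the QI-embedding of $\widetilde T$ in $\widetilde V_0$ already available, this gives a straightened arc in $\widetilde V_0$ of length $\lesssim \ell(\sigma)$.

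Key lemma and the role of irreducibility. To prove the key lemma I would induct on the combinatorial depth in $\mathcal T$ of the pieces visited by $\sigma$: the base case, where $\sigma$ stays in the piece $\widetilde V_1$ immediately across $\widetilde T$, follows from the wall-in-piece QI-embedding, and the inductive step applies the same excursion decomposition to $\sigma$ inside $\widetilde V_1$, treating each sub-excursion as a shallower instance of the same statement. The main difficulty, and the essential use of the irreducibility hypothesis, is that a naive induction would accumulate QI-constants geometrically with depth; and without irreducibility the examples of Section~\ref{noncat0-easy:subsec} show that the key lemma genuinely \emph{fails}, so irreducibility must be used quantitatively here. Concretely, transversality of fibers across every common torus forbids the formation of an ``extended fibered region'' spanning two adjacent pieces: any long wall displacement attempted by sliding along the fiber of a deeper piece has, when viewed in the shallower piece, a mandatory horospherical component of linear cost which cannot be offloaded onto a matching fiber direction. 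Combined with the finiteness of piece- and gluing-types in the compact manifold $M$, this transversality is expected to yield uniform QI-constants closing the induction; the wall and fiber conclusions then follow from the piece conclusion via the reductions of the first paragraph.
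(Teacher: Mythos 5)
Your reductions (fibers inside walls; walls undistorted in adjacent pieces; Remark~\ref{easyrem}) are fine, and you correctly identify the crux: everything hinges on the ``key lemma'' that a path confined to the half-space beyond a wall $\widetilde T$, returning to $\widetilde T$, has length linearly controlling the intrinsic wall-distance between its endpoints. This is precisely the non-backtracking case that is the heart of Theorem~\ref{quasi-isom:thm}. But your proposed proof of the key lemma --- induction on the combinatorial depth of the sub-tree visited by the excursion --- does not close, and you say as much yourself: each inductive step multiplies the quasi-isometry constant, the Bass--Serre tree has unbounded depth, and the resulting bound is not uniform. The appeal to ``transversality\dots combined with finiteness of piece- and gluing-types'' to rescue the induction is an aspiration, not an argument: you do not exhibit a quantity preserved or improved from one depth level to the next, nor a reason the hypothetical horospherical cost absorbs all the accumulated constants.

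The paper's proof is not an induction at all; it is a single-pass estimate built on two ingredients your sketch does not supply. The first is Proposition~\ref{osin:prop}, a global estimate coming from relative hyperbolicity of the neutered bases: for a loop in a neutered space, the total length of the isolated horospherical segments is bounded by a uniform multiple of the total length of the connecting segments. Applied once to the base-projection of a good path, this controls the base-components of \emph{all} wall-crossings simultaneously, regardless of depth. The second is Lemma~\ref{strong:lem}, the quantitative form of irreducibility: a crossing that is fiber-dominated on one side of a wall is forced to be base-dominated on the other. Theorem~\ref{quasi-isom:thm} combines these in a case analysis (after reducing to ``good'' paths via Lemma~\ref{nobktrk:lem} and Lemma~\ref{back:lem}): fiber-dominated crossings are converted to base-dominated ones by passing exactly one chamber across (Lemma~\ref{osin2:lem} plus Lemma~\ref{strong:lem}), and base-dominated crossings are handled by Proposition~\ref{osin:prop}. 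Without these --- or a genuine substitute for the global relative-hyperbolicity estimate --- the depth-induction degrades and the proof is incomplete. (Your order of deduction, pieces first and walls by composition, is opposite to the paper's; but since your key lemma is itself a wall statement that is cosmetic, and the genuine gap is that the key lemma is never actually proved.)
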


The proof of this result occupies the bulk of Chapter 7 (see in particular
Theorem~\ref{quasi-isom:thm} and Corollary~\ref{fibre:cor}). 

\bigskip

In Chapter \ref{preserve:sec}, we start analyzing quasi-isometries between fundamental
groups of irreducible graph manifolds. By studying the asymptotic cone of the universal 
cover of $M$, we are able to show:

\begin{theorem}[QI's preserve pieces of irreducible graph manifolds]\label{qi-preserve:thm}
Let $M_1$, $M_2$ be a pair of irreducible graph manifolds, and $\G_i=\pi_1(M_i)$ their respective
fundamental groups.  Let $\La_1 \leq \G_1$ be a subgroup conjugate to the fundamental
group of a piece in $M_1$, and $\varphi: \G_1\rightarrow \G_2$ be a quasi-isometry. 
Then, the set $\varphi(\La_1)$ is within finite Hausdorff distance from
a conjugate of $\La_2 \leq \G_2$, where $\La_2$ is the fundamental group of a piece  in $M_2$.
\end{theorem}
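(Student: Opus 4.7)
The plan is to reduce the statement to the level of universal covers and then exploit the structure of their asymptotic cones. By the Milnor--\v{S}varc Lemma, $\varphi$ is within bounded distance of (the orbit map of) a $(K,C)$-quasi-isometry $\widetilde{\varphi}\colon\widetilde{M}_1\to \widetilde{M}_2$ between the universal covers with their lifted Riemannian metrics, equivariant up to bounded error. A conjugate $\Lambda_1$ of a piece subgroup acts cocompactly on a connected component $\widetilde{V}_1 \subseteq \widetilde{M}_1$ of the preimage of the piece $V\subseteq M_1$, so $\Lambda_1$ is at finite Hausdorff distance from $\widetilde{V}_1$. Thus it suffices to show that $\widetilde{\varphi}(\widetilde{V}_1)$ is at finite Hausdorff distance from some piece lift $\widetilde{V}_2\subseteq \widetilde{M}_2$.

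First I would establish the geometry of the asymptotic cones $X_i := \mathrm{Cone}_\omega(\widetilde{M}_i)$. Irreducibility of $M_i$ gives, via the theorem of Chapter~7, that each wall subgroup $\mZ^{n-1}$ and each piece subgroup is quasi-isometrically embedded in $\pi_1(M_i)$. Consequently, the ultralimits of wall lifts become bilipschitz-embedded $(n-1)$-flats $W^\omega\subset X_i$, and the ultralimits of piece lifts become bilipschitz-embedded subspaces $P^\omega\subset X_i$ of the product form $\mathrm{Cone}_\omega(\widetilde{\overline{N}_j})\times \mR^{n-n_j}$. The Bass--Serre tree structure on $\widetilde{M}_i$ (in which distinct piece lifts share at most one separating wall) passes to the limit, equipping $X_i$ with a tree-graded structure in the sense of Dru\c{t}u--Sapir whose pieces are precisely the $P^\omega$'s and whose cut-sets are the wall flats $W^\omega$.

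Next, $\widetilde{\varphi}$ induces a bilipschitz homeomorphism $\varphi^\omega\colon X_1\to X_2$, and the Dru\c{t}u--Sapir machinery tells us that bilipschitz maps preserve tree-graded structure as soon as the pieces admit an intrinsic, metric-invariant characterization. The heart of the argument, and what I expect to be the main obstacle, is to give such a characterization of the $P^\omega$'s: one needs a description of pieces purely in terms of the metric geometry of $X_i$ (not referring to the particular group $\pi_1(M_i)$). A natural candidate is to characterize each $P^\omega$ as a maximal connected subspace containing a bilipschitz embedded $\mR^{n_j}$ (witnessing the hyperbolic factor) together with a transverse Euclidean direction of complementary dimension, and admitting no separating flats of codimension one in its interior; alternatively, one may describe the $P^\omega$ as maximal subspaces in which every pair of points is joined by a family of transverse topological discs avoiding the $W^\omega$'s. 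Once pieces are characterized intrinsically, $\varphi^\omega$ automatically carries each piece onto a piece.

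Finally, I would transfer the conclusion back to the manifold level by a standard sequence-extraction argument. Assume for contradiction that the Hausdorff distance between $\widetilde{\varphi}(\widetilde{V}_1)$ and every piece lift of $\widetilde{M}_2$ is infinite. Then one can choose basepoints $x_n\in \widetilde{V}_1$ and scales $r_n\to\infty$ so that $\widetilde{\varphi}(x_n)$ lies at distance $\geq \varepsilon r_n$ from every piece lift in $\widetilde{M}_2$. Taking the asymptotic cones based at $\{x_n\}$ with scales $\{r_n\}$, the images of these points in $X_2$ represent elements of $\varphi^\omega(P^\omega_1)$ lying outside every piece $P^\omega_2$, contradicting the previous step. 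Hence $\widetilde{\varphi}(\widetilde{V}_1)$ is within finite Hausdorff distance of some lift $\widetilde{V}_2$ of a piece of $M_2$, and translating back through the Milnor--\v{S}varc identifications yields that $\varphi(\Lambda_1)$ is Hausdorff-close to a conjugate of the corresponding piece group $\Lambda_2\leq \G_2$.
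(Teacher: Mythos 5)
Your overall plan (pass to universal covers via Milnor--\v{S}varc, look at asymptotic cones, show pieces are preserved there, then pull back by an ultralimit-based argument) matches the architecture of the paper's proof. But there are two real problems in the middle step.

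First, the claim that the asymptotic cone $X_i$ is tree-graded in the sense of Dru\c{t}u--Sapir with pieces the $\omega$-chambers and cut-sets the $\omega$-walls is false. Tree-graded structure requires that any two distinct pieces intersect in at most a single point, whereas two adjacent $\omega$-chambers intersect in an $\omega$-wall, which is a bi-Lipschitz copy of $\mR^{n-1}$ --- enormously far from a point. So the Dru\c{t}u--Sapir machinery does not apply at the level of chambers, and nothing in the paper claims $\widetilde{M}_\omega$ itself is tree-graded. What \emph{is} tree-graded is the hyperbolic factor of an individual $\omega$-chamber (Lemma~\ref{omegapezzi2:lem}): each $\omega$-chamber is bi-Lipschitz to $Y\times\mR^l$ with $Y$ tree-graded and pieces of $Y$ equal to $(n-l-1)$-flats. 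This is a statement about the internal structure of a chamber, not about how chambers fit together globally. The correct global separation statement (Lemma~\ref{inters:lem}) is that Lipschitz paths between two distinct $\omega$-walls are forced to pass through a single $\omega$-fiber, which is a flat of dimension $\leq n-3$; this codimension-$\geq 2$ separation, not a tree-graded structure, is what drives the topology.

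Second, you correctly identify that an intrinsic, metric characterization of the relevant subspaces is the crux, but you stop at listing candidate characterizations of the \emph{pieces} $P^\omega$ without proving any of them. The paper sidesteps this entirely by characterizing the \emph{walls} instead, which is much cleaner: Proposition~\ref{wall:char:prop} shows that the image of any bi-Lipschitz embedding $\mR^{n-1}\hookrightarrow\widetilde{M}_\omega$ is exactly an $\omega$-wall. The proof uses Lemma~\ref{inters:lem} together with elementary topology (invariance of domain, the fact that an embedded $\mR^{n-1}$ cannot be disconnected by removing a subset homeomorphic to a subset of $\mR^{n-3}$, and a Baire-category argument). From the wall characterization one deduces Corollary~\ref{wall:cor} (quasi-flat images live near walls) and Proposition~\ref{wall2:prop} (quasi-isometries coarsely preserve walls), and then chamber preservation follows not from any cone-level characterization of pieces but from the down-to-earth connectivity argument of Proposition~\ref{useful:prop}, which shows that once walls are preserved, adjacency is preserved and chambers are forced onto chambers. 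So the missing ingredient in your proposal is precisely the hard part, and the paper's route through walls (codimension-one flats, which are maximal flats and hence admit a clean topological description) rather than pieces avoids having to solve the harder characterization problem you flagged.
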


The key step in the proof of Theorem~\ref{qi-preserve:thm} consists in showing that 
fundamental groups of walls of irreducible graph manifolds are quasi-preserved by
quasi-isometries. 
In the case of purely hyperbolic graph manifolds, this readily follows from the results
on quasi-isometries between relatively hyperbolic groups established in~\cite[Theorem 1.7]{dru}. 
Namely, Drutu and Sapir proved that, if $G$ is relatively hyperbolic, then
$G$ is asymptotically tree-graded with respect to the family of its peripheral subgroups,
and they used this result to show that a quasi-isometric copy in $G$ of any unconstricted group 
(e.g.~any free abelian of rank $\geq 2$) is close to a peripheral subgroup. 
In the case of a (not necessarily purely hyperbolic) irreducible graph manifold
 $M$, we are still able to give an asymptotic characterization of wall subgroups of $\pi_1(M)$, but the presence of non-trivial fibers
 makes the geometry of the asymtotic cone of $\pi_1(M)$ quite complicated, so more care is needed. 

\smallskip

Since pieces are essentially mapped to pieces under quasi-isometries, our next goal is
to understand the behavior of groups quasi-isometric to the fundamental group of a piece.
This is the subject of Chapter \ref{product:sec}, where we establish:

\begin{theorem}[QI-rigidity of pieces]\label{product:thm}
Let $N$ be a complete finite-volume hyperbolic $m$-manifold, $m\geq 3$, and
let $\Gamma$ be a finitely generated group quasi-isometric to
$\pi_1 (N)\times\mZ^d$, $d\geq 0$. 
Then there exists a finite-index subgroup $\Gamma'$ of $\Gamma$,
a finite-sheeted covering $N'$ of $N$, a group $\Delta$  and a finite group $F$ 
such that the following short exact sequences hold:
\begin{equation*}
\xymatrix{
1\ar[r] &\mZ^d \ar[r]^j & \Gamma' \ar[r] & \Delta \ar[r] & 1,\\
}
\end{equation*}
\begin{equation*}
\xymatrix{
1\ar[r] & F \ar[r] & \Delta\ar[r] & \pi_1 (N')\ar[r] & 1 .
}
\end{equation*}
Moreover, $j(\mZ^d)$ is contained in the center of $\Gamma'$.
In other words, $\Gamma'$ is a central extension by $\mZ^d$ 
of a finite extension of $\pi_1 (N')$.
\end{theorem}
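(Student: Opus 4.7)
My plan is to detect a canonical central $\mZ^d$ subgroup inside a finite-index subgroup of $\Gamma$, and then to apply Schwartz's quasi-isometric rigidity theorem for non-uniform lattices in $SO(m,1)$ (valid for $m\geq 3$) to the quotient.

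By the Milnor-\v Svarc lemma, $\Gamma$ acts geometrically on a model space $X=\tilde{\overline{N}}\times\mR^d$, and its asymptotic cone is (by Drutu-Sapir, since $\pi_1(N)$ is relatively hyperbolic with peripherals $\mZ^{m-1}$) isometric to a tree-graded space whose pieces are $(m{-}1{+}d)$-dimensional Euclidean flats, one for each cusp. In the original group these pieces are stabilized by peripheral subgroups isomorphic to $\mZ^{m-1}\times\mZ^d$. Because $m\geq 3$, the factor $\pi_1(N)$ is centerless; consequently the $\mZ^d$ factor is literally the center of $\pi_1(N)\times\mZ^d$ and coincides with the intersection of any two non-commensurable peripheral subgroups.

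The key step, and the main obstacle, is to show that the peripheral subgroups of $\Gamma$ form a quasi-isometry-invariant family: they should be characterized, up to finite Hausdorff distance and finite index, as those $\mZ^{m-1+d}$-subgroups whose orbits coarsely cover top-dimensional ``thick'' flats in the tree-graded asymptotic cone. This is a product-analogue of Schwartz's detection of cusp subgroups of non-uniform hyperbolic lattices, together with a Kapovich-Leeb-style analysis of flats in the product. Granting this, I define $Z<\Gamma$ to be the intersection of two non-commensurable peripheral subgroups; this intersection is a canonical $\mZ^d$. Passing to a finite-index subgroup $\Gamma'<\Gamma$ in which $Z$ is honestly normal, the conjugation map $\Gamma'\to GL(d,\mZ)$ factors through a finite quotient: every loxodromic element of $\pi_1(N)$ has virtual centralizer of rank $d{+}1$ containing a finite-index subgroup of $Z$ and so commutes with $Z$ up to finite index, while parabolic elements commute with $Z$ because $Z$ lies in their peripheral subgroup. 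Replacing $\Gamma'$ by a further finite-index subgroup makes $Z$ central, yielding the first short exact sequence
\[
1\longrightarrow \mZ^d \longrightarrow \Gamma' \longrightarrow \Delta \longrightarrow 1.
\]

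Finally, $\Delta=\Gamma'/Z$ is quasi-isometric to $\pi_1(N)$, so Schwartz's QI-rigidity theorem produces a finite normal subgroup $F\lhd \Delta$ with $\Delta/F$ isomorphic to $\pi_1(N')$ for some finite-sheeted cover $N'\to N$. This is exactly the second short exact sequence, completing the proof. The entire argument pivots on the QI-invariant characterization of peripheral subgroups in the product $\pi_1(N)\times\mZ^d$; once that ingredient is in place, the extraction of the central $\mZ^d$ and the invocation of Schwartz are essentially bookkeeping.
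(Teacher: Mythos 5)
Your high-level strategy---isolate a coarse-geometrically canonical $\mZ^d$, show it is virtually central, and apply Schwartz to the quotient---is the right one, and it is the strategy the paper pursues. However, your mechanism for producing the $\mZ^d$ and your argument for centrality both have genuine gaps, and the paper's route is designed precisely to avoid them.

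The first gap is in the definition of $Z$. You propose to take $Z = A\cap B$ where $A, B < \Gamma$ are ``peripheral subgroups'' (coarse stabilizers of peripheral flats), and you assert this intersection ``is a canonical $\mZ^d$''. This works in $\pi_1(N)\times\mZ^d$ itself because the $\mZ^d$ factor is genuinely a direct factor, so $(g P_1 g^{-1}\times\mZ^d)\cap(h P_2 h^{-1}\times\mZ^d) = \{1\}\times\mZ^d$. But $\Gamma$ is only quasi-isometric to this group; a priori you know only that $A$ and $B$ are finitely generated groups quasi-acting coboundedly on the flats $O_A\times\mR^d$, $O_B\times\mR^d$ (hence virtually $\mZ^{m-1+d}$ by Gromov). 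Two maximal free abelian subgroups of a group whose orbits coarsely fill two flats with coarse intersection a fiber $\mR^d$ can easily have literal intersection of rank $<d$, or even trivial intersection; the coarse intersection of orbits controls the coarse intersection of flats, not the algebraic intersection of the groups. You would need to show separately that $A\cap B$ quasi-acts \emph{coboundedly} on the fiber, and nothing in your proposal addresses this. The paper sidesteps the issue entirely: it first shows (using quasi-preservation of walls and the fact that fibers are characterized as coarse intersections of pairs of walls) that every $\gamma\in\Gamma$ projects to a quasi-isometry $\psi(\gamma)$ of the neutered base $B$, applies Schwartz to get a genuine homomorphism $\theta\colon\Gamma\to\mathrm{Comm}(G)$ with image a non-uniform lattice $\Lambda$, and then takes $Z$ to be (a finite-index subgroup of) $\ker\theta$. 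Coboundedness of the $\ker\theta$-quasi-action on a fiber is proved directly (Lemma~\ref{kertheta}) using discreteness of $\Lambda$.

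The second gap is the centrality argument. You argue that loxodromic and parabolic elements ``of $\pi_1(N)$'' commute with $Z$ up to finite index---but those are elements of $\pi_1(N)\times\mZ^d$, not of $\Gamma$ or of $\Delta=\Gamma'/Z$. To use the loxodromic/parabolic dichotomy on $\Delta$ you would need to already know that $\Delta$ is (commensurable with) a lattice, which is supposed to follow \emph{from} the central extension structure; the reasoning is circular. The paper instead proves centrality by a robust translation-length computation: since $K\cong\mZ^d$ is quasi-isometrically embedded in $\Gamma'$ and translation numbers are conjugacy-invariant, the $\Gamma'$-conjugation orbit of each basis element of $K$ lies in a bounded ball, hence is finite, so the conjugation map $\Delta\to\mathrm{Aut}(K)\cong\mathrm{GL}(d,\mZ)$ has finite image. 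This argument uses no structure of $\Delta$ beyond finite generation. If you want to salvage your approach, the translation-number argument is exactly what you should substitute for the loxodromic/parabolic reasoning, and for the extraction of $Z$ you should project to the base and take the kernel of Schwartz's homomorphism rather than intersecting peripheral stabilizers.
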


In the case of purely hyperbolic pieces, i.e.~when $d=0$, Theorem~\ref{product:thm} is proved
by Schwartz \cite{schw}.
Note that the analogous result in the setting where $N$ is compact has been
established by Kleiner and Leeb \cite{klelee}.
A consequence of this result is that we can determine when two pieces have 
quasi-isometric fundamental group: their fibers must be of the same dimension, 
while their bases must be commensurable. 

\bigskip

In Chapter \ref{qirigidity:sec}, we study groups quasi-isometric to an irreducible
graph manifold, and show that they must exhibit a graph of groups structure which
closely resembles that of a graph manifold (compare with the work of Mosher, Sageev,
and Whyte \cite{MSW1}, \cite{MSW2}, and Papasoglu \cite{Pap}). 
Once Theorems~\ref{qi-preserve:thm} and~\ref{product:thm} are established, to deduce Theorem~\ref{qirigidity:thm} 
it is sufficient to ensure that a quasi-action on the universal cover of an irreducible graph manifold yields a genuine action on the Bass-Serre tree, and this follows
quite easily from the fact that walls and pieces are quasi-preserved. 

\begin{theorem}\label{qirigidity:thm}
Let $M$ be an irreducible graph $n$-manifold obtained by gluing the pieces
$V_i=\overline{N}_i\times T^{d_i}$, 
$i=1,\ldots, k$. Let $\Gamma$ be a group quasi-isometric
to $\pi_1 (M)$. Then either $\Gamma$ itself or a subgroup of $\Gamma$ of index two is isomorphic
to the fundamental group of a graph of groups satisfying the following conditions:
\begin{itemize}
\item
every edge group contains $\mZ^{n-1}$ as a subgroup
of finite index;
\item
for every vertex group $\G_v$ there exist $i\in\{1,\ldots, k\}$,
a finite-sheeted covering $N'$ of $N_i$ and a finite-index subgroup
$\G'_v$ of $\G_v$ that fits into the exact sequences
$$
\xymatrix{
1\ar[r] &\mZ^{d_i} \ar[r]^j & \Gamma_v' \ar[r] & \Delta \ar[r]& 1,\\
}
$$
$$
\xymatrix{
1\ar[r] & F \ar[r] & \Delta\ar[r] & \pi_1 (N')\ar[r] & 1 ,
}
$$
where $F$ is a finite group, and $j(\mZ^{d_i})$ is contained
in the center of $\Gamma'_v$.
\end{itemize}
\end{theorem}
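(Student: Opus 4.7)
My strategy is to transport the tree-of-pieces decomposition of $\widetilde{M}$ across the quasi-isometry to obtain a genuine simplicial action of (an index at most two subgroup of) $\Gamma$ on a tree $T'$, and then to identify vertex and edge stabilizers using the rigidity results already established. Fix a quasi-isometry $\phi\colon \pi_1(M)\to \Gamma$. The universal cover $\widetilde{M}$ is tiled by \emph{chambers} (lifts of pieces) glued along \emph{walls} (lifts of internal tori), and the dual graph is the Bass-Serre tree $T$ on which $\pi_1(M)$ acts acylindrically by Section~\ref{acyl:sub}. Pushing forward through $\phi$, I define the \emph{coarse chambers} in $\Gamma$ to be the images $\phi(\widetilde{V}_\alpha)$ and the \emph{coarse walls} to be their pairwise coarse intersections. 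Theorem~\ref{qi-preserve:thm}, applied to the self-quasi-isometries $\phi^{-1}\circ L_\gamma\circ \phi$ (with $L_\gamma$ denoting left translation by $\gamma\in\Gamma$), implies that $\Gamma$ permutes the family of coarse chambers up to uniformly bounded Hausdorff error, and likewise permutes the coarse walls.

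Next I would promote this permutation of coarse objects to a genuine action on a tree. Declare two coarse chambers adjacent when they share a coarse wall; the resulting incidence graph $T'$ carries a cellular $\Gamma$-action. The claim is that $T'$ is a tree: the quasi-isometric embedding of walls and pieces (Chapter~\ref{strongirr:sec}) transfers the tree structure of $T$ across $\phi$, while acylindricity prevents bounded Hausdorff ambiguities from collapsing distinct coarse chambers. Since an element of $\Gamma$ may a priori swap the two sides of a wall (inverting an edge of $T'$), I pass to the finite-index subgroup $\Gamma^+\leq\Gamma$ that preserves the bipartition of $T'$, which has index one or two; Bass-Serre theory then yields a splitting of $\Gamma^+$ as a graph of groups, with vertices corresponding to $\Gamma^+$-orbits of coarse chambers and edges to orbits of coarse walls.

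It remains to identify the stabilizers. A vertex stabilizer $\Gamma_v\leq \Gamma^+$ acts geometrically on its coarse chamber, and is therefore quasi-isometric to some piece group $\pi_1(V_i)=\pi_1(N_i)\times\mZ^{d_i}$; Theorem~\ref{product:thm} then produces precisely the two exact sequences appearing in the statement, together with the centrality of $j(\mZ^{d_i})$ in $\G'_v$. An edge stabilizer acts geometrically on a coarse wall, which is quasi-isometric to $\mZ^{n-1}$, so by Gromov's polynomial growth theorem it is virtually $\mZ^{n-1}$, hence contains $\mZ^{n-1}$ as a finite-index subgroup.

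The main obstacle will be the middle step: upgrading the $\Gamma$-equivariant coarse decomposition to an honest simplicial tree carrying a $\Gamma^+$-action. This is where irreducibility is essential, since otherwise walls need not be quasi-isometrically embedded (as noted in Section~\ref{noncat0-easy:subsec}) and coarse chambers could interleave in ways incompatible with any tree structure. The combinatorial verification that $T'$ really is a tree, rather than merely a graph with quasi-tree structure, will parallel the tree-of-spaces techniques of Mosher-Sageev-Whyte and Papasoglu referenced immediately before the theorem.
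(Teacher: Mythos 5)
Your proposal is correct and follows essentially the same strategy as the paper: transport the tree-of-spaces structure via the quasi-action, pass to an index at most two subgroup to kill inversions, apply Bass--Serre theory, and then identify stabilizers using the strengthened Milnor--\v{S}varc Lemma together with Gromov's theorem and Theorem~\ref{product:thm}. There is one point where the paper takes a cleaner route that dissolves the difficulty you flag as ``the main obstacle.'' You propose to build a \emph{new} incidence graph $T'$ whose vertices are the coarse chambers $\phi(\widetilde{V}_\alpha)\subseteq\Gamma$, and then to argue that $T'$ is a tree. The paper avoids this entirely: Propositions~\ref{wall2:prop} and~\ref{qi-useful:prop} show that each $\gamma\in\Gamma$ sends every chamber (resp.\ wall) of $\widetilde{M}$ to within uniformly bounded Hausdorff distance of a \emph{unique} chamber (resp.\ wall), and this assignment respects adjacency. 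So one obtains directly a simplicial action of $\Gamma$ on the \emph{original} Bass--Serre tree $T$ of $\pi_1(M)$ (Proposition~\ref{act-on-tree:prop}), and there is simply nothing new to verify is a tree. Your $T'$ is $\Gamma$-equivariantly isomorphic to $T$ once the uniqueness statements are in hand, so nothing is lost by your formulation, but you would spend effort reproving that a tree is a tree. One smaller gloss: you say the vertex stabilizer ``acts geometrically'' on its coarse chamber; since $\Gamma$ only quasi-acts, what the paper actually does (Lemma~\ref{stab:lemma}) is produce a cobounded quasi-action of $\Gamma^0_v$ on the chamber and then invoke Lemma~\ref{milsv+:lem}, the quasi-action version of Milnor--\v{S}varc. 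The conclusion you draw (vertex groups QI to $\pi_1(N_i)\times\mZ^{d_i}$, edge groups QI to $\mZ^{n-1}$ and hence virtually $\mZ^{n-1}$) is exactly right.
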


\bigskip

As we mentioned at the beginning of this introduction, many of our rigidity results are inspired by
corresponding results in the theory of non-positively curved spaces and groups. 
We have already mentioned the fact that fundamental groups of irreducible graph manifolds are 
not relatively hyperbolic in general. 
We say that a group is CAT(0)
if it acts properly via semisimple isometries on a complete CAT(0) space (see Section~\ref{strirr:sub}). 
%In order to emphasize that 
%our class of manifolds is genuinely different, we show 
In Chapter \ref{construction2:sec} we show that fundamental groups of irreducible graph manifolds are not CAT(0) in general:

\begin{theorem}\label{existence:thm}
In each dimension $n\geq 4$, there are infinitely many examples of $n$-dimensional
irreducible graph manifolds having a non-CAT(0) fundamental group.
In particular, there exist infinitely many irreducible graph $n$-manifolds
which do not support any locally CAT(0) metric.
\end{theorem}

Finally, in Chapter \ref{open:sec}, we provide some concluding
remarks, and propose various open problems suggested by our work.

\include{Acknowledgments}

\mainmatter
%    Include main chapters here.

\part{Graph manifolds: topological and algebraic properties}

% %-----------------------------------------------------------------------
% Beginning of chap1.tex
%-----------------------------------------------------------------------
%
%  AMS-LaTeX sample file for a chapter of a monograph, to be used with
%  an AMS monograph document class.  This is a data file input by
%  chapter.tex.
%
%  Use this file as a model for a chapter; DO NOT START BY removing its
%  contents and filling in your own text.
% 
%%%%%%%%%%%%%%%%%%%%%%%%%%%%%%%%%%%%%%%%%%%%%%%%%%%%%%%%%%%%%%%%%%%%%%%%

\chapter{Quasi-isometries and quasi-actions}\label{quasiapp}

In this chapter we fix some notations we will extensively use in the rest of this monograph. 
We also list some well-known results about quasi-isometries and quasi-actions,
providing a proof for the strengthened version of Milnor-Svarc's Lemma
described in Lemma~\ref{milsv+:lem}. Such a result is probably well-known to experts,
but we did not find an appropriate reference for it in the literature. 

Let $(X,d),(Y,d')$ be metric spaces and $k\geq 1$, $c\geq 0$ be real numbers. 
A (not necessarily continuous) map $f\colon X\to Y$
is a $(k,c)$-\emph{quasi-isometric embedding} if for every $p,q\in X$ the following inequalities hold:
$$
\frac{d(p,q)}{k}-c\leq d'(f(p),f(q))\leq k\cdot d(p,q)+c.
$$
Moreover, 
a $(k,c)$-quasi-isometric embedding $f$ is a $(k,c)$-\emph{quasi-isometry} if there exists
a $(k,c)$-quasi-isometric embedding $g\colon Y\to X$ such that $d'(f(g(y)),y)\leq c$, 
$d(g(f(x)),x)\leq c$ for every $x\in X$, $y\in Y$. Such a map $g$ is called a \emph{quasi-inverse}
of $f$. It is easily seen that 
a $(k,c)$-quasi-isometric embedding $f\colon X\to Y$ is a $(k',c')$-quasi-isometry for some $k'\geq 1$, 
$c'\geq 0$ if and only if its image is
$r$-dense for some $r\geq 0$, \emph{i.e.}~if every point in $Y$ is at distance at most $r$ from
some point in $f(X)$ (and in this case $k',c'$ only depend on $k,c,r$).

\section{The quasi-isometry type of a group}
If $\Gamma$ is a group endowed with a finite system of generators $S$ such that $S=S^{-1}$, the \emph{Cayley graph}
$C_S (\Gamma)$ of $\Gamma$ is the geodesic graph
defined as follows: $C_S(\Gamma)$ has $\Gamma$ as set of vertices, 
two vertices $g,g'\in C_S(\Gamma)$
are joined by an edge if and only if $g^{-1}g'$ lies in $S$, and every edge has unitary length.
It is very easy to show that different finite sets of generators for the same group define quasi-isometric 
Cayley graphs, so every finitely generated group is endowed with a metric which is well-defined
up to quasi-isometry. 

\begin{remark}\label{easyrem}
Suppose $i\colon \Gamma_1\to \Gamma_2$, $j\colon \Gamma_2\to \Gamma_3$ 
are injective group homomorphisms between finitely generated groups, and let
$S_i$ be a finite system of generators for $\Gamma_i$, $i=1,2,3$. We may enlarge $S_2$ and $S_3$ in such a way that
 $i(S_1)\subseteq S_2$, $j(S_2)\subseteq S_3$. Under this assumption, both $i$ and $j$
are $1$-Lipschitz embeddings with respect to the word metrics defined via the $S_i$'s. 
Using this fact, it is not hard to show that
the composition $j\circ i$ is a quasi-isometric embedding if and only if both $i$ and $j$ are
quasi-isometric embeddings. 
\end{remark}

\section{The Milnor-Svarc Lemma}
The following fundamental result shows how the quasi-isometry type of a group is related to
the quasi-isometry type of a metric space on which the group acts geometrically.
A geodesic metric space $X$ is \emph{proper} if every closed ball in $X$ is compact.
An isometric action $\Gamma\times X\to X$ of a group $\Gamma$ on a metric space $X$ is
\emph{proper} if for every compact subset $K\subseteq X$
the set $\{g\in\Gamma\, |\, g\cdot K\cap K\neq \emptyset \}$ is finite, and \emph{cocompact}
if $X/\Gamma$ is compact.

\begin{theorem}[Milnor-Svarc Lemma]\label{milsv}
Suppose $\Gamma$ acts by isometries, properly and cocompactly on a proper
geodesic space $X$. Then $\Gamma$ is finitely generated and quasi-isometric to $X$, a quasi-isometry
being given by the map
$$
\psi\colon\Gamma\to X,\qquad \psi(\gamma)=\gamma(x_0),
$$
where $x_0\in X$ is any basepoint.
\end{theorem}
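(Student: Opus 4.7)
The plan is to produce three ingredients: a finite candidate generating set $S\subseteq\G$, an upper bound showing $\psi$ is Lipschitz in terms of the word metric $d_S$, and a lower bound showing $\psi$ is a quasi-isometric embedding; density of the image then follows from cocompactness, yielding a quasi-isometry.

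First I would exploit cocompactness to pick a compact $K\subseteq X$ with $\G\cdot K=X$ and $x_0\in K$. Setting $D:=\mathrm{diam}(K)$, every $x\in X$ satisfies $d(x,\gamma x_0)\leq D$ for some $\gamma\in\G$; in particular $\psi(\G)$ is $D$-dense in $X$. Next I would define
\[
S:=\{\gamma\in\G\setminus\{1\}\ :\ d(x_0,\gamma x_0)\leq 2D+1\}.
\]
Since $X$ is proper, the closed ball $\overline{B}(x_0,2D+1)$ is compact; since the action is proper, only finitely many $\gamma\in\G$ move $x_0$ into this ball, so $S$ is finite.

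The main step is to show simultaneously that $S$ generates $\G$ and that $|\gamma|_S\leq d(x_0,\gamma x_0)+1$. Given $\gamma\in\G$, set $\ell:=d(x_0,\gamma x_0)$ and $n:=\lceil\ell\rceil$, pick a geodesic $c\colon[0,\ell]\to X$ from $x_0$ to $\gamma x_0$, and place sample points $c(0),c(1),\dots,c(n-1),c(\ell)$. Using $D$-density of the orbit, choose $\gamma_i\in\G$ with $d(\gamma_i x_0,c(t_i))\leq D$, taking $\gamma_0=1$ and $\gamma_n=\gamma$. The triangle inequality gives
\[
d(x_0,\gamma_i^{-1}\gamma_{i+1}x_0)=d(\gamma_i x_0,\gamma_{i+1}x_0)\leq 2D+1,
\]
so each $\gamma_i^{-1}\gamma_{i+1}$ lies in $S\cup\{1\}$, and the telescoping product $\gamma=\prod_i(\gamma_i^{-1}\gamma_{i+1})$ exhibits $\gamma$ as a word of length at most $n$ in $S$.

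For the opposite inequality, let $L:=\max_{s\in S}d(x_0,sx_0)$, which is finite since $S$ is finite. For any word $\gamma=s_1\cdots s_m$ of length $m=|\gamma|_S$, iterated use of the triangle inequality and isometric invariance yields $d(x_0,\gamma x_0)\leq Lm$. Combining this with the previous step gives
\[
\tfrac{1}{L}\,d_S(\gamma,\gamma')\ \leq\ d(\psi(\gamma),\psi(\gamma'))\ \leq\ d_S(\gamma,\gamma')+1
\]
after replacing $\gamma,\gamma'$ by $\gamma^{-1}\gamma'$ via left-invariance, so $\psi$ is a $(\max\{L,1\},1)$-quasi-isometric embedding whose image is $D$-dense. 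The standard characterization recalled in Section 1 (a quasi-isometric embedding with dense image is a quasi-isometry) finishes the proof. The only subtlety I expect is being careful that $S$ really is finite, which uses both properness of the space and properness of the action in an essential way; everything else is a bookkeeping exercise with the triangle inequality.
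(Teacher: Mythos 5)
Your proof is correct and follows the standard Milnor–\v{S}varc argument from Bridson--Haefliger, which is exactly the reference the paper cites for Theorem~\ref{milsv}; the paper itself only proves the quasi-action generalization Lemma~\ref{milsv+:lem}, and that proof follows the same blueprint you lay out (choose a generating set from a ball, sample a geodesic to bound the word length from above, bound it from below by the maximal displacement of a generator), just carried out with the extra bookkeeping that quasi-action constants require.
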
 

As a corollary, if $M$ is a compact Riemannian manifold with Riemannian universal covering $\tilM$,
then the fundamental group of $M$ is quasi-isometric to $\tilM$. A proof of this result can be found in
\cite[Chapter I.8.19]{bri}, and we will prove a slightly more general version of the Lemma in the next section.

\section{From quasi-isometries to quasi-actions}\label{quasiact:sub}
Suppose $(X,d)$ is a geodesic metric space, let ${\rm QI} (X)$ be the set of quasi-isometries of $X$ into itself, and let $\Gamma$ be a group. 
For $k\geq 1$, 
a $k$-\emph{quasi-action} of $\Gamma$ on $X$
is a map $h\colon \Gamma\to {\rm QI} (X)$ such that the following conditions hold:
\begin{enumerate}
\item
$h(\gamma)$ is a $(k,k)$-quasi-isometry with $k$-dense image 
for every $\gamma\in\Gamma$;
\item
$d(h(1)(x),x)\leq k$ for every $x\in X$;
\item
the composition $h(\gamma_1)\circ h(\gamma_2)$ is at distance bounded by $k$ from the quasi-isometry
$h(\gamma_1 \gamma_2)$, \emph{i.e.}
$$
d\big(h(\gamma_1 \gamma_2)(x),h(\gamma_1)(h(\gamma_2)(x))\big)\leq k\quad {\rm for\ every}\ x\in X,\ 
\gamma_1,\gamma_2\in\Gamma .
$$
\end{enumerate}
A $k$-quasi-action $h$ as above is $k'$-\emph{cobounded} if every orbit of $\Gamma$ in $X$ is $k'$-dense.
A (cobounded) quasi-action is a map which is a ($k'$-cobounded) $k$-quasi-action for some $k,k'\geq 1$. 
Throughout the whole paper, by an abuse of notation, when $h$ is a quasi-action as above
we do not distinguish between $\gamma$ and $h(\gamma)$.

\begin{remark}\label{basepoint:rem}
If $h$ is a $k$-quasi-action as above, then 
for every $\gamma\in \Gamma$, 
$x_0,x_1,p\in X$ we have
$$ 
d(\gamma (x_1),p)\leq d(\gamma (x_1),\gamma (x_0))+d(\gamma (x_0),p)\leq k d(x_0,x_1) +k +d (\gamma (x_0),p) .
$$
Using this inequality, it is not difficult to show that if there exists a $k'$-dense orbit of $\Gamma$ in $X$, then
$h$ is $k''$-cobounded for some $k''$ (possibly larger than $k'$).
\end{remark}

Suppose $M$ is a geodesic metric space with metric universal covering $\tilM$, let $\Gamma$ be a finitely
generated group and suppose we are given a quasi-isometry
$\tilde{\varphi}\colon \Gamma\to\pi_1(M)$. We now briefly recall the well-known fact that
$\tilde{\varphi}$ naturally induces a cobounded quasi-action of $\Gamma$ on $\tilM$.

Let $\varphi\colon \Gamma\to\tilM$ be a fixed quasi-isometry provided by Milnor-Svarc's Lemma, and
let $\psi\colon \tilM\to\Gamma$ be a quasi-inverse of $\varphi$. 
For each $\gamma\in\Gamma$ we define a map $h(\gamma)\colon \tilM\to \tilM$ by setting
$$h(\gamma)(x)=\varphi(\gamma\cdot\psi(x))\qquad {\rm for\ every}\ x\in\tilM .$$
Since $h(1)=\varphi\circ\psi$, the map $h(1)$ is at finite distance from the identity of $\tilM$.
The left multiplication by a fixed element of $\Gamma$ defines an isometry
of any Cayley graph of $\Gamma$, so each $h(\gamma)$ is the composition of three quasi-isometries 
with fixed constants. In particular, it is a quasi-isometry and
its quasi-isometry constants can be bounded by a universal constant which only depends on
$\varphi$ and $\psi$, and is therefore independent of $\gamma$. As such, we have that
for every $\gamma\in\Gamma$ the map $h(\gamma)$ is a $(k,k)$-quasi-isometry
with $k$-dense image, where $k$ is some fixed uniform constant.
Moreover, it is easily seen that for each $\gamma_1, \gamma_2$, 
$h(\gamma_1\gamma_2)$ is at a finite distance 
(bounded independently of $\gamma_1,\gamma_2$) from $h(\gamma_1)\circ h(\gamma_2)$, that is, $h$ defines a quasi-action. 
Since every $\Gamma$-orbit in $\Gamma$ is $1$-dense, the quasi-action $h$ is clearly cobounded.

In Chapters~\ref{product:sec} and~\ref{qirigidity:sec} we need the following 
strengthened version of Milnor-Svarc's Lemma.

\begin{lemma}\label{milsv+:lem}
Let $X$ be a geodesic space with basepoint $x_0$, and let $\Gamma$ be a group.
Let $h\colon \Gamma\to {\rm QI} (X)$ be a cobounded quasi-action 
of $\Gamma$ on $X$, 
and suppose that 
for each $r>0$, the set $\{\gamma\in \Gamma\,|\, \gamma(B(x_0,r))\cap B(x_0,r)\neq \emptyset\}$ is finite.
Then $\Gamma$ is finitely generated and the map 
$\varphi\colon \Gamma\to X$ defined by $\varphi(\gamma)=\gamma(x_0)$ is a quasi-isometry.
\end{lemma}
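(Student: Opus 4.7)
The plan is to adapt the classical proof of the Milnor-Svarc lemma, being careful that composition of group elements is only coarsely respected by the quasi-action (multiplicative errors from property (1) compound with the additive errors from property (3)).

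First, using coboundedness, fix $R$ such that $\Gamma\cdot x_0$ is $R$-dense in $X$. Set
$$S=\{\sigma\in\Gamma\,:\,d(\sigma(x_0),x_0)\leq C\}$$
for a constant $C=C(k,R)$ to be chosen below. Observe that $S$ is finite: if $d(\sigma(x_0),x_0)\leq C$ then $\sigma(x_0)\in B(x_0,C)\cap \sigma(B(x_0,C))$, so $S$ is contained in the finite set $\{\gamma\,:\,\gamma(B(x_0,C))\cap B(x_0,C)\neq\emptyset\}$.

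To see $S$ generates $\Gamma$ with quantitative control, given $\gamma\in\Gamma$ I would pick a geodesic $\alpha\colon[0,L]\to X$ from $x_0$ to $\gamma(x_0)$ where $L=d(x_0,\gamma(x_0))$, subdivide $[0,L]$ into $n=\lceil L/R\rceil$ intervals of length at most $R$, and at each division point pick $\gamma_i\in\Gamma$ with $d(\gamma_i(x_0),\alpha(t_i))\leq R$ (setting $\gamma_0=1$, $\gamma_n=\gamma$). The "differences" $\sigma_i:=\gamma_i^{-1}\gamma_{i+1}$ satisfy $\gamma=\sigma_0\sigma_1\cdots\sigma_{n-1}$ as group elements (this is exact, independent of the quasi-action). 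Using properties (1)-(3) to estimate
$$d(\sigma_i(x_0),x_0)\leq d(\gamma_i^{-1}\gamma_{i+1}(x_0),\gamma_i^{-1}(\gamma_{i+1}(x_0)))+d(\gamma_i^{-1}(\gamma_{i+1}(x_0)),\gamma_i^{-1}(\gamma_i(x_0)))+d(\gamma_i^{-1}\gamma_i(x_0),x_0),$$
each summand is bounded in terms of $d(\gamma_{i+1}(x_0),\gamma_i(x_0))\leq 2R+2k$, so for $C$ chosen appropriately each $\sigma_i\in S$. This shows $\Gamma=\langle S\rangle$ and yields $|\gamma|_S\leq n\leq \lceil L/R\rceil$, hence $|\gamma|_S\leq A_1\cdot d(x_0,\gamma(x_0))+A_2$.

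For the reverse bound, write $\gamma=s_1\cdots s_n$ with $s_i\in S$ and $n=|\gamma|_S$, set $g_i=s_1\cdots s_i$, and telescope:
$$d(g_{i+1}(x_0),g_i(x_0))\leq d(g_i(s_{i+1}(x_0)),g_i(x_0))+k\leq k\,d(s_{i+1}(x_0),x_0)+2k\leq kC+2k,$$
where the outer $+k$ comes from property (3) applied to $g_i,s_{i+1}$. Summing gives $d(\gamma(x_0),x_0)\leq n(kC+2k)$. Finally, to pass from pairs $(1,\gamma)$ to arbitrary pairs $(\gamma_1,\gamma_2)$ I would apply $\gamma_1^{-1}$: properties (1) and (3) yield $d(x_0,(\gamma_1^{-1}\gamma_2)(x_0))\leq k\,d(\gamma_1(x_0),\gamma_2(x_0))+O(k)$ and conversely, absorbing the constants. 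Combined with $R$-density of $\varphi(\Gamma)$, this shows $\varphi$ is a quasi-isometry.

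The main obstacle is bookkeeping: in a genuine isometric action the telescoping identities are exact, but here each application of $h(\gamma)$ both shifts basepoints by $O(k)$ and multiplicatively distorts distances by $k$, so one must verify that the constant $C$ defining $S$ can be selected consistently with the bounds arising in both directions, and that iterated use of property (3) along a word of length $n$ does not introduce constants depending on $n$. Once the correct order of applying properties (1) and (3) is chosen (always contract distances before composing), all implicit constants remain uniform in $\gamma$ and $n$.
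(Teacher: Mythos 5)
Your proposal follows essentially the same route as the paper's proof: subdivide a geodesic from $x_0$ to $\gamma(x_0)$, select orbit points at the division points, show the group-theoretic differences lie in a finite set, and telescope along a word in the reverse direction, with the multiplicative $k$ from condition (1) and additive $k$ from conditions (2)--(3) giving uniform constants independent of word length. The only cosmetic difference is that you define the generating set by the condition $d(\sigma(x_0),x_0)\leq C$ rather than by the ball-intersection condition used in the paper, but you correctly note the two are comparable via the finiteness hypothesis, so the argument goes through unchanged.
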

\begin{proof}
The usual proof of Milnor-Svarc's Lemma works in this case too, up to minor changes. We will closely 
follow~\cite[Chapter I.8.19]{bri}. Suppose that $h$ is a $k$-cobounded $k$-quasi-action, and let us 
first prove that the finite set
$$\mathcal{A}=\{\gamma\in \Gamma\, |\, \gamma(B(x_0,2k^2+5k)\cap B(x_0,2k^2+5k)\neq \emptyset\}$$
generates $\Gamma$.
Fix $\gamma\in \Gamma$ and consider a geodesic $\alpha:[0,1]\to X$ joining $x_0$ with $\gamma(x_0)$. 
If $n\in\mathbb{N}$ is such that $d(x_0,\gamma (x_0))\leq n\leq d(x_0,\gamma (x_0))+1$, we can choose 
$0=t_0<\dots<t_n=1$ in such a way $d(\alpha(t_i),\alpha(t_{i+1}))\leq 1$ for each $i$. For each $t_i$ pick 
$\gamma_i$ so that $d(\alpha(t_i),\gamma_i(x_0))\leq k$, with $\gamma_0=1$ and $\gamma_n=\gamma$, 
and observe that $d(\gamma_i (x_0),\gamma_{i+1} (x_0))\leq 2k+1$ for $i=0,\ldots,n-1$. Since
\begin{align*}
d(x_0,(\gamma_i^{-1}\gamma_{i+1})(x_0)) & \leq d(\gamma_i^{-1}(\gamma_i (x_0)),\gamma_i^{-1}(\gamma_{i+1} (x_0)))+3k\\
& \leq kd(\gamma_i(x_0),\gamma_{i+1}(x_0) ) +4k \\
& \leq k (2k+1)+4k 
\end{align*}
we see that  $\gamma_i^{-1}\gamma_{i+1}\in\mathcal{A}$. This tells us that
$$\gamma=\gamma_0(\gamma_0^{-1}\gamma_1)\dots(\gamma_{n-1}^{-1}\gamma_n)$$
is a product of at most $d(x_0,\gamma(x_0))+1$ elements of $\mathcal{A}$. But $\gamma$ was 
chosen arbitrarily, so $\mathcal{A}$ is indeed a generating set for $\Gamma$.

Moreover, if $d_\mathcal{A}$ is the word metric with respect to $\mathcal{A}$, we have 
$d_\mathcal{A}(1,\gamma)\leq d(x_0,\gamma(x_0))+1$, and
for every $\gamma,\gamma'\in\Gamma$ we have
\begin{align*}
d_\mathcal{A}(\gamma,\gamma') = d_\mathcal{A}(1,\gamma^{-1}\gamma') &\leq 
d(x_0,(\gamma^{-1}\gamma')(x_0))+1\\
& \leq  d(\gamma^{-1} (\gamma (x_0)), \gamma^{-1}(\gamma' (x_0)))+3k+1\\
&\leq kd(\gamma(x_0),\gamma'(x_0))+4k+1
\end{align*}
which is one of the two inequalities needed to prove that $\varphi$ is a quasi-isometric embedding. 
For the reverse inequality, we first establish a useful inequality. For an arbitrary pair of elements 
$\gamma_1,\gamma_2$ in $\Gamma$, we have
the estimate:
\begin{align*}
d(\gamma_1(x_0),\gamma_2(x_0)) &= d\big(\gamma_1 (x_0),(\gamma_1\gamma_1^{-1})(\gamma_2 (x_0))\big)+k \\
& \leq d\Big(\gamma_1 (x_0),\gamma_1 \big(\gamma_1^{-1}(\gamma_2 (x_0))\big)\Big)+2k \\
& \leq  kd\big(x_0,\gamma_1^{-1}(\gamma_2(x_0))\big)+3k\\
& \leq  kd\big( x_0, (\gamma_1^{-1}\gamma_2)(x_0)\big)+k^2+3k \\
\end{align*}
Choose $\mu$ so that $d(x_0,a(x_0))\leq\mu$ 
for each $a\in \mathcal{A}$. Given any two elements $\gamma,\gamma'\in\Gamma$, let $n=d_\mathcal{A}(\gamma,\gamma')$
and write $\gamma^{-1}\gamma'=a_1\dots a_n$, 
where $a_i\in \mathcal{A}$. Set $g_0=1$, $g_i=a_1\dots a_i$, $i=1,\ldots,n$, so that
$g_n=\gamma^{-1}\gamma'$. From the above inequality, we see that $d(g_i(x_0),g_{i+1}(x_0))\leq k\mu+k^2+3k$ 
for every $i=0,\ldots,n-1$. Combining this estimate with the above inequality, we finally obtain
\begin{align*}
d(\gamma(x_0),\gamma'(x_0)) & \leq kd(x_0,g_n(x_0))+k^2+3k \\
& \leq k\Big( \sum_{i=1}^n d\big(g_{i-1}(x_0),g_{i}(x_0)\big)\Big)+k^2+3k \\
& \leq k(k\mu+k^2+3k) d_\mathcal{A}(\gamma,\gamma')+k^2+3k.
\end{align*}
We have thus proved that $\varphi$ is a quasi-isometric embedding,
and the fact that $h$ is cobounded now implies that it is in fact a quasi-isometry.
\end{proof}

%-----------------------------------------------------------------------
% Beginning of chap1.tex
%-----------------------------------------------------------------------
%
%  AMS-LaTeX sample file for a chapter of a monograph, to be used with
%  an AMS monograph document class.  This is a data file input by
%  chapter.tex.
%
%  Use this file as a model for a chapter; DO NOT START BY removing its
%  contents and filling in your own text.
% 
%%%%%%%%%%%%%%%%%%%%%%%%%%%%%%%%%%%%%%%%%%%%%%%%%%%%%%%%%%%%%%%%%%%%%%%%

\chapter{Generalized graph manifolds}\label{construction:sec}

Let us introduce the precise definition of high dimensional graph manifold.
Fix $n\geq 3$, $k\in \mN$ and $n_i\in\mN$ with $3\leq n_i\leq n$, and for every $i=1,\ldots, k$ let $N_i$ be a complete finite-volume
non-compact hyperbolic $n_i$-manifold 
with toric cusps. It is well-known that each cusp of $N_i$ supports a canonical smooth foliation by closed 
tori, which defines in turn a diffeomorphism between the cusp and $T^{n_i-1}\times [0,\infty)$, where
$T^{n_i-1}=\mR^{n_i-1} /\mZ^{n_i-1}$ is the standard torus. Moreover, the restriction of the hyperbolic metric to each leaf
of the foliation induces a flat metric on each torus, and there is a canonical affine diffeomorphism between any such two leaves.

We now ``truncate'' the cusps of $N_i$ by setting
$\overline{N}_i=N_i\setminus \cup_{j=1}^{a_i}
T_j^{n_i-1}\times (4,\infty)$, where $T_j^{n_i-1}\times [0,\infty)$, $j=1,\ldots,a_i$ are the cusps
of $V_i$. 
If $V_i=\overline{N}_i\times T^{n-n_i}$, then $V_i$ is a well-defined smooth manifold
with boundary, and as mentioned above the boundary of $V_i$ is endowed with a well-defined affine structure.
Moreover, the boundary of $V_i$ admits a collar which is canonically foliated by affine tori.

Let now $\mathcal{B}$ be a subset of the set of boundary components of the $V_i$'s, and suppose
that a pairing of the boundary components in $\mathcal{B}$ is fixed. 
We can construct a smooth manifold $M$ by gluing the $V_i$'s 
along affine diffeomorphisms between the paired tori in $\mathcal{B}$: the smooth
manifold $M$ obtained in this way is what we call a \emph{graph $n$-manifold}.
The manifolds $V_1,\ldots,V_k$ (which will be often considered as subsets of $M$ itself)
are called the \emph{pieces} of $M$. For every $i$, we say that $N_i$ (or $\overline{N}_i$) 
is the \emph{base} of $V_i$, while if $p\in\overline{N}_i$, then 
the set $\{p\}\times T^{n-n_i}\subseteq V_i$ is a \emph{fiber} of $V_i$. Abusing terminology,
we will sometimes also refer to $T^{n-n_i}$ as the fiber of $V_i$.
The toric hypersurfaces of $M$ corresponding to the tori in $\mathcal{B}$
will be called the {\it internal walls} of $M$ (so any two distinct pieces in $M$
will be separated by a collection of internal walls), while the components of $\partial M$ will be called the \emph{boundary walls} of $M$.
We say that $M$ is \emph{purely hyperbolic} if the fiber of every piece of $M$ is trivial, i.e.~if pieces of $M$
are just truncated complete finite-volume hyperbolic $n$-manifolds with toric cusps.

Observe that $M$ is closed (\emph{i.e.}~$\partial M=\emptyset$) if and only if
$\mathcal{B}$ coincides with the whole set of boundary components of the $V_i$'s.

\begin{remark}
The product
of an affine torus with a truncated hyperbolic manifold with toric cusps
provides the simplest example of graph manifold with non-empty boundary.
The quasi-isometry type of the fundamental group of
such a manifold will be studied in detail in Chapter~\ref{product:sec}.
\end{remark}

\begin{remark}
The simplest examples of closed graph manifolds are closed purely hyperbolic graph manifolds. Therefore, it makes sense to compare our rigidity results with the analogous
results described in~\cite{On} (for doubles of cusped hyperbolic manifolds), in~\cite{ArFa} (for twisted doubles of cusped hyperbolic manifolds),
and in~\cite{N} (for manifolds obtained by gluing locally symmetric negatively curved manifolds with deleted cusps).
\end{remark}

A restriction that we have imposed on our graph manifolds is that all
pieces  have a base which is hyperbolic {\it of dimension $\geq 3$}. The
reason for this restriction is obvious: hyperbolic manifolds of
dimension $\geq 3$ exhibit a lot more rigidity than surface groups. 
In fact, some of our results extend to a more general case, namely when surfaces with boundary are allowed as bases of pieces.

\begin{definition}
 For $n\geq 3$, an \emph{extended graph $n-$manifold} is a manifold built up from pieces as in the definition of graph manifold as well as \emph{surface pieces},
that is manifolds of the form $\overline{\Sigma} \times T^{n-2}$ with $\Sigma$ non-compact, finite volume,
hyperbolic surface. Also, we require that each gluing does not identify the fibers in adjacent surface pieces.
\end{definition}

Let us briefly comment about the last requirement described
in the above Definition. If we allowed gluings which identify the fibers
of adjacent surface pieces, then
the resulting decomposition into pieces of our extended graph manifold
would no longer be canonical. Indeed, within a surface piece $\Sigma \times T^{n-2}$, 
we can take any non-peripheral simple closed curve $\gamma \hookrightarrow \Sigma$ 
in the base surface, and cut the piece open along $\gamma \times T^{n-2}$. This
allows us to break up the original piece $\Sigma \times T^{n-2}$ into pieces 
$(\Sigma \setminus \gamma) \times T^{n-2}$ (which will either be two
pieces, or a single ``simpler'' piece, according to whether $\gamma$ separates
or not). Our additional requirement avoids this possibility. 
Note however that if one has
adjacent surface pieces with the property that
the gluing map matches up their fibers exactly, then it is not
possible to conclude that the two surface pieces
can be combined into a single surface piece (the resulting manifold could be a
non-trivial $S^1$-fiber bundle over a surface rather than just a product).

\begin{remark}\label{deepness:rem}
Let $N$ be the base of a piece of an (extended) graph manifold, and
suppose that $\overline{N}$ and $\overline{N}'$ are obtained as above by deleting from 
$N$ horospherical cusp neighbourhoods of possibly different ``heights''. Then, there exists a diffeomorphism
between $\overline{N}$ and $\overline{N}'$ which is coherent with the identification of $\partial \overline{N}$
and $\partial \overline{N}'$ induced by the canonical foliations of the cusps of $N$. In particular,
the diffeomorphism type of an (extended) graph manifold $M$ does not depend on the choice of the height of the cusps 
removed from the hyperbolic factors of the pieces into which $M$ decomposes.
\end{remark}

\begin{remark}\label{aff-diff:rem}
It is proved in~\cite{HsWa} that, if $n\geq 5$, then any diffeomorphism between affine $n$-dimensional tori is  
$C^0$-isotopic to an affine diffeomorphism. As a consequence, for $n\geq 6$, if we allow also non-affine gluings,
then we do not obtain new homeomorphism
types of (extended) graph manifolds. On the other hand, as showed in~\cite{ArFa}, requiring the gluings to be affine is necessary 
for getting smooth rigidity results as in our Theorem~\ref{smrigidity:thm} (i.e. non-affine gluings can give rise to new 
diffeomorphism types of manifolds).
\end{remark}

\section{Putting a metric on (extended) graph manifolds}\label{initial:subsec} 
By construction,
each hypersurface in $M$ corresponding to a boundary torus of some $V_i$ is
either a boundary component of $M$, or
admits a canonical smooth bicollar in $M$ diffeomorphic to $T^{n-1}\times [-3,3]$, 
which is obtained by gluing, according to the pairing of the boundary components
in $\mathcal{B}$, some subsets of the form $\partial V_i\times [1,4]$,
where $\partial V_i$ is canonically identified with $\partial V_i\times \{4\}$. 

In what follows, we will say that a point $p\in T^{n-1}\times \{-3\}$
is \emph{tied} to $q\in T^{n-1}\times \{3\}$ if $p=(x,-3)$, $q=(x,3)$
for some $x\in T^{n-1}$, \emph{i.e.}~if $p,q$ have the same ``toric'' component 
in the product space $T^{n-1}\times [-3,3]\subseteq M$.

The following
lemma shows how one can put on $M$ a Riemannian metric which somewhat extends the 
product metrics defined on the $V_i$'s.

\begin{lemma}\label{glumetric:lem}
Consider $A_1=T^{k}\times [-3,0]$ and $A_2=T^{k}\times [0,3]$,
each equipped with a Riemmanian metric $g_i$, and let $B_1=T^k\times
[-3,-2]$,
$B_2=T^k\times [2,3]$. Then there exists a Riemmanian metric
on $A=T^{k}\times[-3,3]$ such that $g|_{B_i}=g_i|_{B_i}$, $i=1,2$.
\end{lemma}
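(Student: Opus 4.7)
The plan is to build $g$ by convex interpolation between $g_1$ and $g_2$, exploiting the fact that the space of positive-definite symmetric $(0,2)$-tensors is a convex cone, so that any convex combination of Riemannian metrics is again a Riemannian metric. The only subtlety is that $g_1$ and $g_2$ are defined on \emph{different} subsets of $A$ (meeting only in the slice $T^k\times\{0\}$), so a direct convex combination is not available; the strategy is therefore to first extend each $g_i$ smoothly past $t=0$ into the other's region, and then interpolate on the overlap using a cutoff function.

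First I would extend $g_1$, which is a smooth Riemannian metric on the manifold-with-boundary $T^k\times[-3,0]$, to a smooth Riemannian metric $\widehat{g}_1$ on a slightly larger cylinder $T^k\times[-3,1]$. This is standard: $g_1$ is a smooth section of the bundle $\mathrm{Sym}^2(T^*A)$ restricted to $T^k\times[-3,0]$, and being smooth at the boundary it admits a smooth extension (obtained e.g.\ by Seeley/Whitney extension of the component functions in each finite atlas, or via a collar-reflection trick); positivity is preserved in a neighbourhood by continuity of eigenvalues. Symmetrically, extend $g_2$ to a smooth Riemannian metric $\widehat{g}_2$ on $T^k\times[-1,3]$. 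On the overlap $T^k\times[-1,1]$ both $\widehat{g}_1$ and $\widehat{g}_2$ are defined.

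Next I would choose a smooth function $\phi\colon[-3,3]\to[0,1]$ with $\phi\equiv 1$ on $[-3,-1/2]$ and $\phi\equiv 0$ on $[1/2,3]$, and define
\[
g \;=\;
\begin{cases}
\widehat{g}_1 & \text{on } T^k\times[-3,-1/2],\\[2pt]
\phi(t)\,\widehat{g}_1 + (1-\phi(t))\,\widehat{g}_2 & \text{on } T^k\times[-1/2,1/2],\\[2pt]
\widehat{g}_2 & \text{on } T^k\times[1/2,3].
\end{cases}
\]
The three pieces agree on overlaps because $\phi$ is locally constant near $\pm 1/2$, so $g$ is globally smooth. It is Riemannian on the middle slab since the convex combination of two positive-definite symmetric bilinear forms is positive-definite. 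Finally, $B_1=T^k\times[-3,-2]\subset T^k\times[-3,-1/2]$, where $g=\widehat{g}_1=g_1$, and $B_2=T^k\times[2,3]\subset T^k\times[1/2,3]$, where $g=\widehat{g}_2=g_2$, giving the required agreement on $B_1$ and $B_2$.

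The only genuinely non-trivial point is the smooth extension of a boundary-smooth metric into a collar, but this is entirely standard (e.g.\ via Seeley extension applied coordinate-wise combined with a partition of unity, or by pushing $g_1$ across the boundary by a diffeomorphism onto a collar of the double); I do not expect it to present any real difficulty. Once that extension is in hand, the cutoff-interpolation argument above is routine.
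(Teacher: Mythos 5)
Your proof is correct, but it takes a genuinely different route from the paper's. You first extend each $g_i$ smoothly past the common slice $T^k\times\{0\}$ (appealing to a Seeley/Whitney-type extension result, with positivity preserved in a collar by openness of positive-definiteness), and then interpolate between the two extensions via a cutoff supported in the overlap. The paper instead avoids any extension lemma entirely: it introduces an odd ``compression'' function $\rho$ that is the identity on $[2,3]$ (hence on $[-3,-2]$ by oddness), and is constant $0$ on $[0,1]$ (hence on $[-1,0]$). Reparametrizing by $\rho$ pulls back $g_i$ so that near the middle the metric is forced to equal the boundary slice metric $g_i(\cdot,0)$, $t$-independent; on the now-constant region one just blends $g_1(\cdot,0)$ and $g_2(\cdot,0)$ by a non-negative combination whose coefficients never vanish simultaneously. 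Both arguments are valid. The trade-off is that yours is slightly shorter to state but outsources a non-trivial step to an extension theorem, while the paper's is fully explicit and self-contained, requiring nothing beyond elementary bump-function constructions (at the mild cost that the interpolation coefficients $\delta(-x)+\delta(x)$ need not sum to one, which is harmless since positivity of a metric is preserved under any non-negative, not-both-zero combination).
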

\begin{proof}
Let $\rho:[-3,3]\to[-3,3]$ be an odd $C^\infty$ function such that:
\begin{enumerate}
\item
$\rho|_{[2,3]}=id$,
\item
$\rho([1,2])=[0,2]$,
\item
$\rho|_{[0,1]}=0$.
\end{enumerate}

Also, let $\delta:[-1/2,1/2]\to[0,1]$ be an increasing
$C^\infty$ function which is constantly 0 (resp. 1) in a neighborhood of
-1/2 (resp. 1/2) and is strictly positive in $[0,1/2]$.
We can define $g$ as follows:

$$g(p,x)=\begin{aligned}
\left\{
\begin{array}{c c}
g_1(p,\rho(x))&\mathrm{for\ }x\in[-3,-1/2]\\
\delta(-x)g_1(p,0)+\delta(x)g_2(p,0)&\mathrm{for\ }x\in[-1/2,1/2]\\
g_2(p,\rho(x))&\mathrm{for\ }x\in[1/2,3]
\end{array}
\right.
\end{aligned}
$$
for all $p\in T^{k}, x\in [-3,3]$.
\end{proof}

%\begin{definition}\label{geod:defn}
%We recall that a metric space $X$ is \emph{geodesic} if for every $x,y\in X$
%there exists a rectifiable curve $\gamma\colon [0,1]\to X$ joining $x$ to $y$ whose length
%is equal to $d(x,y)$ (the constant speed parameterization of such a curve is called \emph{geodesic}).
%Suppose $S$ is a submanifold of the (possibly bounded) simply connected Riemannian manifold $X$, 
%and let $d$ be the Riemannian metric 
%of $X$. We say that $S$ is \emph{totally geodesic} in $(X,d)$ (in the metric sense) 
%if for every $p,q\in S$ there exists a geodesic of $X$ which joins $p$ to $q$ and whose support
%is contained in $S$. In this case, the path metric associated to the restriction
%of $d$ to $S$ coincides with the restriction of $d$ to $S$.
%\end{definition}
 
%Suppose $\overline{N}$ is obtained by deleting some horospherical cusp neighbourhoods from
%a complete finite-volume hyperbolic manifold $N$, and let $\widetilde{\overline{N}}$
%be the universal convering of $\overline{N}$, endowed with the metric induced
%by the covering map. Then, it is well-known (see \emph{e.g.}~\cite[pgs. 362-366]{bri})
%that every component of $\partial \widetilde{\overline{N}}$ is totally geodesic in 
%$\widetilde{\overline N}$, even if its extrinsic curvature in $\widetilde{\overline{N}}$
%does not vanish. Putting together this observation with 

From Lemma~\ref{glumetric:lem}
we get the following:

\begin{corollary}\label{glumetric:cor}
Suppose $M$ is an (extended) graph manifold, and let $U\subseteq M$ be the union of the bicollars of the internal walls of $M$.
Then $M$ admits a Riemannian metric $g$ which extends the restriction to $M\setminus U$
of the product metrics originally defined
on the pieces of $M$. 
\end{corollary}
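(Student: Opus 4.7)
The plan is to keep the product metrics unchanged on $M\setminus U$ and to invoke Lemma~\ref{glumetric:lem} to interpolate smoothly across each bicollar separately. I would first decompose $U$ into its connected components; each component is a bicollar $B$ of a single internal wall of $M$, and by the construction preceding the lemma it comes equipped with a diffeomorphism to $T^{n-1}\times[-3,3]$ whose halves $T^{n-1}\times[-3,0]$ and $T^{n-1}\times[0,3]$ are identified with collars $\partial V_i\times[1,4]$ and $\partial V_j\times[1,4]$ of two adjacent pieces, glued together along the affine diffeomorphism over $T^{n-1}\times\{0\}$.

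On $T^{n-1}\times[-3,0]$ the product Riemannian metric on $V_i$ restricts to a smooth Riemannian metric $g_1$, and similarly $V_j$ supplies $g_2$ on $T^{n-1}\times[0,3]$. In general $g_1$ and $g_2$ will not match across $T^{n-1}\times\{0\}$: the two pieces induce possibly different flat metrics on the common wall, and the gluing diffeomorphism is only affine, not isometric. The content of Lemma~\ref{glumetric:lem}, applied with $k=n-1$, is precisely that the pair $(g_1,g_2)$ can be replaced by a single smooth Riemannian metric $g_B$ on $B$ which still coincides with $g_1$ on $T^{n-1}\times[-3,-2]$ and with $g_2$ on $T^{n-1}\times[2,3]$. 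I would apply the lemma independently on each bicollar and leave the original product metrics intact on $M\setminus U$; the agreement on the "outer strips" $T^{n-1}\times([-3,-2]\cup[2,3])$, which lie in both $U$ and the unmodified piece metrics, ensures that the resulting piecewise-defined tensor is a globally smooth Riemannian metric $g$ on $M$ with the required extension property.

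No serious obstacle is expected, since the only geometric task, smoothing two a priori unrelated metrics across a single torus, has been isolated and resolved by Lemma~\ref{glumetric:lem}, while the rest is bookkeeping on the canonical decomposition of $M$ into pieces and bicollars. The preceding observation that the boundary components of $\widetilde{\overline N}_i$ are totally geodesic enters only as background: it confirms that the hyperbolic metric on each $\overline N_i$ extends smoothly up to the boundary and induces a flat metric there, so that the product metric on each $V_i$ is a bona fide smooth Riemannian metric on a manifold with boundary, giving the two well-defined inputs $g_1$ and $g_2$ needed to invoke the lemma on every bicollar.
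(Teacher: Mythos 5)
Your argument is correct and is exactly what the paper has in mind: the corollary is an immediate consequence of Lemma~\ref{glumetric:lem}, applied one bicollar at a time, and your write-up supplies precisely the bookkeeping details the paper leaves implicit. Two small points worth noting. First, the phrase that the outer strips ``lie in both $U$ and the unmodified piece metrics'' is slightly loose --- the strips are subsets of $U$; what you actually use, and correctly, is that the original product metrics on $V_i, V_j$ are defined there and $g_B$ coincides with them, so the piecewise definition of $g$ is smooth across $\partial U$. Second, your reading of the ``totally geodesic boundary'' observation as background is essentially right: it is not logically needed for Corollary~\ref{glumetric:cor} itself (Lemma~\ref{glumetric:lem} applies to arbitrary metrics $g_1,g_2$ on the two halves), and it is really invoked for the subsequent Corollary~\ref{treeofspaces:cor}, where the metric structure of the tree of spaces is described.
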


\section{Purely hyperbolic graph manifolds are nonpositively curved}

In the case of purely hyperbolic manifolds, much more can be proved. In that case, the negatively curved Riemannian metrics defined
on the pieces of $M$ can be glued together into a non-positively curved Riemannian metric on the whole of $M$:

\begin{theorem}\label{CAT0:leeb}
Let $M$ be a purely hyperbolic graph manifold. Then $M$ supports a nonpositively curved Riemannian metric for which each component
of $\partial M$ is totally geodesic and flat.
%If
%$U\subseteq M$ is the union of suitable bicollars of the internal walls of $M$, then $g$ may be chosen in such a way that it coincides
%with the hyperbolic metrics originally defined
%on the pieces of $M$. 
\end{theorem}
\begin{proof}
Our proof is based on the Claim below, which deals with the extension of flat metrics on the boundary of a piece $V$ of $M$ to nonpositively curved metrics
on $V$. The Claim
 provides the $n$-dimensional analogue of~\cite[Proposition 2.3]{leeb}
(indeed, if $M$ is $3$-dimensional, then the theorem readily follows from~\cite[Theorem 3.3]{leeb}). 
Actually, the proof of~\cite[Proposition 2.3]{leeb} already works in any dimension. However, we prefer to recall it here with full details, both for the sake of completeness, and because
in higher dimensions a more precise statement is needed, which takes into account the fact that distinct affine structures on the boundary of $V$
may be non-equivalent via diffeomorphisms of $V$. 
%In fact, 
%some subtleties arise in deducing the theorem from the Claim
%in the higher dimensional case, due to the fact that there exist smooth diffeomorphisms between affine tori which are not isotopic
%to any affine diffeomorphism 
In fact, the main result of~\cite{ArFa} implies that the theorem would be false if we allowed non-affine gluings between the pieces of $M$.
%Therefore, even if the proof of the following Claim is basically just the $n$-dimensional restatement of~\cite[Proposition 2.3]{leeb}, we prefer
%to recall its proof in detail.

\smallskip\noindent

{\bf Claim:}
Let $V$ be a piece of $M$,  let $h$ be a flat metric on $\partial V$, and assume that the affine structure induced by $h$ on $\partial V$
coincides with the affine structure induced on $\partial V$ by the hyperbolic structure of $V$.  Then $h$  
extends to a nonpositively curved Riemannian metric on $V$, which is flat in a collar of $\partial V$.
%, and coincides
%with the original hyperbolic metric on $V$ outside a (deeper) collar of $\partial V$.
\smallskip

Let $g$ be the original hyperbolic metric on the hyperbolic manifold $N=V\cup (\partial V\times [0,\infty))$. 
The cusps of $N$ are identified with the product $\partial V\times [0,\infty)$. On this set, the metric $g$ is isometric to
 a warped product metric
 $$
 e^{-2t}g_{\partial} + dt^2\ ,
 $$
 where $g_\partial$ is the flat metric induced by $g$ on $\partial V$. 
 It is now sufficient to modify $g$ into a smooth metric on $V\cup (\partial V\times [0,T_1])$ which coincides
 with $h$ on $\partial V\times \{T_1\}$ (up to the obvious identification between $\partial V$ and $\partial V\times \{T_1\}$), and is
 a product in $\partial V\times [T_1,T_2]$ for some $0<T_1<T_2$. In fact, after identifying $V$ with $V\cup (\partial V\times [0,T_2])$
 via a diffeomorphism which is affine on every boundary component, such a metric satisfies the properties described in the Claim.

Since the affine structures induced by $g_\partial$ and by $h$ on $\partial V$ coincide, the Spectral Theorem ensures that the tangent bundle of $\partial V$
(endowed with the flat structure induced by these coincident affine structures) 
admits a parallel frame which is orthonormal for $g_\partial$ and orthogonal for $h$. In other words, 
on every component of $\partial V$ we may choose local coordinates
$x_1,\ldots,x_{n-1}$ such that 
$$
g_\partial=dx_1^2+\ldots+dx_{n-1}^2\, ,\quad
h=a_1^2dx_1^2+\ldots+a_{n-1}^2dx_{n-1}^2\ ,
$$
where $a_i>0$ for every $i$.
To interpolate between
the conformal types of $g_\partial$ and $h$, we put on $\partial V\times [0,\infty)$ the metric
$$
e^{-2t}\, \left(\sum_{i=1}^{n-1} (\phi+(1-\phi)a_i)^2dx_i^2\right)\ ,
$$
where $\phi\colon [0,\infty)\to [0,1]$ is smooth, equal
to 1 in a neighborhood of 0 and equal to 0 in a neighborhood of $\infty$.

If the first and the second derivative of $\phi$ are small with respect to the $a_i$'s, then each
$e^{-t}(\phi +(1-\phi)a_i)$ is strictly monotonically descreasing and convex, and this implies in turn that the 
above metric is negatively curved.
Hence we can find a complete negatively curved metric on $V\cup (\partial V\times [0,\infty))$ 
which is negatively curved, and isometric to 
the warped product metric
$$
e^{-2t}h +dt^2
$$
on $\partial V\times [T_0,\infty)$ for a suitably chosen $T_0>0$. 
We now replace the factor $e^{-2t}$ 
by a convex and monotonically decreasing function $\psi\colon [T_0,\infty)\to \mathbb{R}^+$
which coincides with $e^{-2t}$ 
in a neighborhood
of $T_0$ and is constant in $[T_1,\infty)$.
The curvature of the resulting
complete metric is nonpositive because $\psi$ is convex. After rescaling, this metric is negatively curved, and
isometric to the product $h+ dt^2$ on $\partial V\times [T_1,T_2]$ for every $T_2>T_1$.
 This concludes the proof of the Claim.

\smallskip

Recall now that, by definition of graph manifold, the gluings defining $M$ are affine. Therefore,
every internal wall $T$ of $M$ can be endowed with a flat Riemannian metric $h_T$
whose induced affine structure coincides with the affine structures induced on $T$
by the hyperbolic structures on the two pieces adjacent to $T$.
Let now $V$ be a piece of $M$.
The Claim allows us to replace
the metric on $V$ with a nonpositively curved smooth metric which coincides with a product
metric in a neighborhhod of $T$ in $V$. By construction, such metrics on the pieces of $M$ glue into
a globally defined smooth nonpositively curved metric on $M$, which is totally geodesic and flat on each component of $\partial M$.
\end{proof}

In dimension 3, Leeb proved that an (extended) graph manifold supports a nonpositively curved Riemannian metric provided that 
it contains at least one purely hyperbolic piece~\cite[Theorem 3.3]{leeb}. However, Leeb's result does not extend to higher dimensions
(see Remark~\ref{nogeneral:rem}).

\section{$\pi_1(M)$ as the fundamental group of a graph of groups}\label{graphofgroups}
The decomposition of an (extended) graph $n$-manifold $M$ into pieces $V_1,\ldots,V_k$ induces on $\pi_1 (M)$
the structure of the fundamental group of a graph of groups $\mathcal{G}_M$ (see~\cite{serre} for the definition and some basic results on the fundamental group of a graph of groups). 
More precisely, let $\mathcal{G}_M$ be the graph of groups that describes
the decomposition of $M$ into the $V_i$'s, in such a way that
every vertex group is the fundamental group of the corresponding piece,
every edge group is isomorphic to $\mZ^{n-1}$, and the homomorphism
of every edge group into the group of an adjacent vertex is induced by the inclusion
of the corresponding boundary component of $V_i$ into $V_i$.
Then we have an isomorphism $\pi_1 (M)\cong \pi_1 (\mathcal{G}_M)$
(see e.g.~\cite{SW} for full details). 

Recall that cusps of hyperbolic manifolds are $\pi_1$-injective, so
every boundary component of $V_i$ is $\pi_1$-injective in $V_i$. This implies
that every piece (hence every boundary component of a piece) is $\pi_1$-injective
in $M$.

For later reference, we point out the following lemma, which can be easily deduced
from~\cite[Lemma D.2.3]{BePe}:

\begin{lemma}\label{basicprop:lem}
Let $N$ be a complete finite-volume hyperbolic $n$-manifold, $n\geq 3$. 
\begin{enumerate}
\item
Suppose that the cusps of $N$ are toric, and
that $\gamma$ is a non-trivial element of
$\pi_1 (N)$. Then, the centralizer of $\gamma$ in $\pi_1 (N)$ is free abelian.
\item
The center of $\pi_1 (N)$ is trivial.
\end{enumerate}
\end{lemma}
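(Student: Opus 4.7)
Identify $\pi_1(N)$ with its image $\Gamma$ in $\mathrm{Isom}(\mathbb{H}^n)$ under the deck transformation representation. Since $N$ is complete finite-volume hyperbolic, $\Gamma$ is a discrete, torsion-free, non-elementary subgroup; in particular every non-trivial element of $\Gamma$ is either hyperbolic (loxodromic) or parabolic. I will handle the two cases for part (1) separately and then deduce part (2) from part (1).

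\textbf{Hyperbolic case.} Suppose $\gamma\in\Gamma\setminus\{1\}$ is hyperbolic with axis $\ell\subset\mathbb{H}^n$. Any isometry commuting with $\gamma$ must preserve $\mathrm{Fix}(\gamma)\cap\partial\mathbb{H}^n$, hence preserves $\ell$; thus the centralizer of $\gamma$ in $\mathrm{Isom}(\mathbb{H}^n)$ sits inside $\mathbb{R}\times O(n-1)$, with the $\mathbb{R}$-factor being translation along $\ell$ and the $O(n-1)$-factor being rotation around $\ell$. An element of the form $(0,R)$ fixes $\ell$ pointwise, so it has fixed points in $\mathbb{H}^n$; since $\Gamma$ is torsion-free and acts freely, any such element of $\Gamma$ must be trivial. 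Hence the projection of $C_\Gamma(\gamma)$ to the translation factor $\mathbb{R}$ is injective, and its image is a discrete subgroup of $\mathbb{R}$ containing the translation length of $\gamma$; therefore $C_\Gamma(\gamma)\cong\mathbb{Z}$, which is free abelian.

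\textbf{Parabolic case.} Suppose $\gamma$ is parabolic with unique fixed point $p\in\partial\mathbb{H}^n$. Any $\alpha\in\Gamma$ commuting with $\gamma$ must fix $p$, hence lies in the stabilizer $\mathrm{Stab}_\Gamma(p)$. A standard discreteness argument (if $\alpha$ were hyperbolic with $p$ as an endpoint, the conjugates $\alpha^n\gamma\alpha^{-n}$ would accumulate at the identity, contradicting discreteness of $\Gamma$) shows that $\mathrm{Stab}_\Gamma(p)$ is purely parabolic; and since the cusps of $N$ are toric, this stabilizer acts on any horosphere based at $p$ as a lattice of pure translations of $\mathbb{E}^{n-1}$, hence $\mathrm{Stab}_\Gamma(p)\cong\mathbb{Z}^k$ for some $k\leq n-1$. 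In particular it is abelian, so $C_\Gamma(\gamma)=\mathrm{Stab}_\Gamma(p)$ is free abelian.

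\textbf{Part (2).} Let $z\in\Gamma$ be a central element. Then every element of $\Gamma$ lies in $C_\Gamma(z)$, so if $z\neq 1$, part (1) forces $\Gamma$ to be abelian. But $\Gamma$ is a non-elementary discrete subgroup of $\mathrm{Isom}(\mathbb{H}^n)$ (its limit set is all of $\partial\mathbb{H}^n$, since $N$ has finite volume), and every abelian subgroup of $\mathrm{Isom}(\mathbb{H}^n)$ is elementary (its limit set has at most two points). This contradiction shows the center is trivial.

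The only non-routine steps are the two structural facts used above, both of which are standard for discrete subgroups of $\mathrm{Isom}(\mathbb{H}^n)$: that the stabilizer of a parabolic fixed point in a torsion-free discrete group is purely parabolic, and that an abelian subgroup of $\mathrm{Isom}(\mathbb{H}^n)$ is always elementary. I expect these to be the main inputs, and indeed the reference to \cite[Lemma D.2.3]{BePe} is meant to package exactly this kind of structural information.
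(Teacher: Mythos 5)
Your overall strategy — split into hyperbolic and parabolic cases using the discrete representation $\Gamma\subset\mathrm{Isom}(\mathbb{H}^n)$ — is the standard one, and the paper itself only cites \cite[Lemma~D.2.3]{BePe} for exactly this kind of structural information, so there is no "paper's proof" to diverge from. Both cases of part (1) are essentially correct: in the hyperbolic case, after observing $C_\Gamma(\gamma)\hookrightarrow\mathbb{R}\times O(n-1)$ and killing the compact factor by freeness of the action, you should say a word about why the image in $\mathbb{R}$ is discrete (e.g.\ the action of $C_\Gamma(\gamma)$ on the axis $\ell$ is properly discontinuous, and it factors faithfully through translations, so the translation lengths form a discrete subgroup); as written you assert discreteness without justification. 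In the parabolic case your chain "toric cusps $\Rightarrow$ $\mathrm{Stab}_\Gamma(p)$ acts by pure translations $\Rightarrow$ free abelian" is correct, though the cleaner logical order is: toric cusps $\Rightarrow$ cusp cross-section is $T^{n-1}$ $\Rightarrow$ $\mathrm{Stab}_\Gamma(p)\cong\pi_1(T^{n-1})\cong\mathbb{Z}^{n-1}$; the "pure translations" claim then follows but isn't needed.

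The genuine gap is in part (2). You deduce triviality of the center by applying part (1), but part (1) carries the hypothesis that the cusps of $N$ are toric, whereas part (2) is stated for an arbitrary complete finite-volume hyperbolic $n$-manifold (possibly closed, possibly with non-toric cusps). Your deduction would therefore fail to cover, say, a hyperbolic $3$-manifold with a Klein-bottle cusp. The fix is short and avoids part (1) entirely: if $z\neq 1$ is central, then every $\alpha\in\Gamma$ commutes with $z$ and hence preserves the (non-empty, $\le 2$-point) fixed set of $z$ on $\partial\mathbb{H}^n$, so $\Gamma$ fixes a finite non-empty set at infinity and is therefore elementary. But $\Gamma$ is a lattice, so its limit set is all of $\partial\mathbb{H}^n$ — a contradiction. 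This is precisely the "abelian $\Rightarrow$ elementary" idea you invoke at the end, applied directly to $\langle z\rangle$-invariance rather than detoured through the conclusion of (1); that way no extra hypothesis is imported.
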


The following remark is an immediate consequence of Lemma~\ref{basicprop:lem}-(2).

\begin{remark}\label{center:rmk}
If $N$ is a complete finite-volume hyperbolic $n$-manifold and $d$ is a natural
number, then
the center of $\pi_1 (N)\times \mZ^d$ is given by $\{1\}\times\mZ^d$.
Therefore, if $V_i\cong\overline{N}_i\times T^d$ is a piece of $M$ and $p_i\colon V_i\to N_i$
is the natural projection, then the center of $\pi_1 (V_i)$ coincides with
$\ker (p_i)_\ast$. 
\end{remark}

\begin{definition}\label{fiberdef}
Let $V_i$ be a piece of $M$. Then the center of $\pi_1 (V_i)$ is called the 
\emph{fiber subgroup} of $\pi_1 (V_i)$. If $T$ is a component of $\partial V_i$,
we call \emph{fiber subgroup} of $\pi_1 (T)$ the intersection of $\pi_1 (T)$
with the fiber subgroup of $\pi_1 (V_i)$.
\end{definition}

\section{The universal cover of $M$ as a tree of spaces}\label{univ:subsec}
In this subsection we begin our analysis of the metric structure of the universal covering
$\tilM$ of $M$. We will be mainly interested in the study of the quasi-isometric
properties of $\tilM$. 
%See Appendix~\ref{quasiapp} for the definition of quasi-isometry
%and some basic results about quasi-isometries.

\begin{definition}\label{tree-of-spaces:def}
A \emph{tree of spaces} $(X,p,T)$ is a topological space $X$ equipped with 
a map $p$ on a (simplicial, but possibly not locally finite) tree $T$ with the following property: 
for any edge $e$ in $T$ and $t$ in the internal part ${e}^\circ$ of $e$, 
if $X_e=p^{-1} (t)$ then 
$p^{-1} ({e}^\circ)$ is homeomorphic to $X_e\times (0,1)$.
\end{definition}

\begin{definition}
Suppose $(X,p,T)$ is a tree of spaces where $X$ is a Riemannian manifold.  
An \emph{internal wall} of $X$ 
is the closure of the preimage under $p$ of the interior of an edge of $T$;
a \emph{boundary wall} of $X$ is simply a connected component
of $\partial X$. If $W$ is a (boundary or internal) wall of $X$,
 we will denote by $d_W$
the path metric induced on $W$ by the restriction to $W$ of the Riemannian structure of $X$.
A \emph{chamber} $C\subseteq X$ is the preimage under $p$ of a vertex
of $T$;  we will denote by $d_C$
the path metric induced on $C$ by the restriction to $C$ of the Riemannian structure
of $X$. Two distinct chambers of $X$ are \emph{adjacent} if the corresponding vertices
of $T$ are joined by an edge, while a wall $W$ is \emph{adjacent} to the chamber $C$
if $W\cap C\neq\emptyset$
(if $W$ is internal, then $W$ is adjacent to $C$ if and only if
the vertex corresponding to $C$ is an endpoint of the edge corresponding to $W$, while
if $W$ is a boundary wall, then $W$ is adjacent  to $C$ if and only if $W\subseteq C$).
\end{definition}

Let us now come back to our (extended) graph $n$-manifold $M$.
If $\dim N_i=n_i$, the universal covering of $\overline{N}_i$ is isometric
(as a Riemannian manifold)
to the complement $B_i$ in $\mathbb{H}^{n_i}$ of an equivariant family of open disjoint horoballs. 
Following Schwartz, we say that $B_i$ 
is a \emph{neutered space}. In the rest of this monograph, we will extensively use several features 
of neutered spaces (see for example Proposition~\ref{osin:prop} or Section~\ref{treegr:subsec},
where we will deduce asymptotic properties
of such spaces from the well-know fact that they are \emph{relatively hyperbolic} in the metric sense).

Since the fundamental group of each $\overline{N}_i$ and each $V_i$ injects
in the fundamental group of $\pi_1 (M)$, the universal coverings
$\widetilde{V}_i=B_i\times \mR^{n-n_i}$ 
embed into $\tilM$. 
Putting together this observation and  
Corollary~\ref{glumetric:cor} we get the following:

\begin{corollary}\label{treeofspaces:cor}
$M$ admits a Riemmanian metric such that $\tilM$ can be turned into a tree of spaces such that:
\begin{enumerate}
\item
If $C$ is a chamber of $\tilM$, then $(C,d_C)$ is isometric (as a Riemannian manifold) to $B\times \mR^k$,
where $B$ is a neutered space in $\mathbb{H}^{n-k}$.
\item
If $W$ is an internal wall of $\tilM$, then $W$ is diffeomorphic to $\mR^{n-1}\times [-1,1]$.
\item
If $W$ is a boundary wall of $\tilM$, then $W$ is isometric (as a Riemannian manifold) to $\mR^{n-1}$.
\end{enumerate}
\end{corollary}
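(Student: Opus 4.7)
The plan is to build the tree of spaces structure on $\tilM$ from the Bass--Serre tree $T$ of the graph of groups decomposition $\mathcal{G}_M$ of Section~\ref{graphofgroups}: vertices correspond to $\pi_1(M)$-conjugates of the piece subgroups $\pi_1(V_i)$, and edges to conjugates of the edge subgroups $\pi_1(T^{n-1})$ associated to paired boundary tori. Since every piece and every boundary torus is $\pi_1$-injective in $M$ (as recorded in Section~\ref{graphofgroups}), each such conjugate acts cocompactly on exactly one connected component of the corresponding preimage in $\tilM$; these components are canonically identified with the universal covers $\widetilde{V}_i = B_i \times \mR^{n-n_i}$ and with $\mR^{n-1}$ respectively, where $B_i$ is the neutered space obtained as the universal cover of $\overline{N}_i$.

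To fix a convenient metric, I would first apply Remark~\ref{deepness:rem} to re-truncate the cusps of each $N_i$ at a deeper height, producing a smaller piece $V_i' \subsetneq V_i$ whose interior is disjoint from the bicollar set $U$ of Corollary~\ref{glumetric:cor}. Using Corollary~\ref{glumetric:cor} (applied with these new truncations in place of the original pieces), one obtains a Riemannian metric $g$ on $M$ that restricts to the product metric on each $V_i'$, while the portion of $g$ inside each bicollar is produced by the interpolation of Lemma~\ref{glumetric:lem}. Crucially, $g$ already agrees with the product (hence flat) metric in a neighborhood of $\partial M$, because the bicollars in $U$ only surround the internal gluing tori.

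The map $p \colon \tilM \to T$ is now defined by sending each point in a lift of the closed piece $V_i'$ to the associated vertex, and each point in the interior of a lift of a bicollar to the corresponding open edge (using the $(-3,3)$-coordinate, linearly reparameterized onto $(0,1)$). This makes $(\tilM, p, T)$ a tree of spaces in the sense of Definition~\ref{tree-of-spaces:def}, since the preimage of any open edge is by construction a lift of $T^{n-1} \times (-3,3)$, homeomorphic to $\mR^{n-1} \times (0,1)$.

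The three claims then follow. For (1), a chamber is a lift of a closed piece $V_i'$ carrying the product metric, hence the path metric $d_C$ is the product Riemannian metric and $(C, d_C)$ is isometric to $B \times \mR^{n-n_i}$ for the appropriate neutered space $B \subset \mathbb{H}^{n_i}$. For (2), an internal wall is the closure of a lift of an open bicollar, hence diffeomorphic to $\mR^{n-1} \times [-1,1]$ after linear rescaling. For (3), a boundary wall is a lift of an unpaired boundary torus carrying the flat metric inherited from the cusp foliation; since $g$ agrees with the product metric near $\partial M$, this lift is isometric as a Riemannian manifold to $\mR^{n-1}$. The main technical obstacle is the content of the second paragraph---arranging $g$ so that chambers carry exactly the product metric while interpolation only occurs on bicollars---and this is precisely what the combination of Remark~\ref{deepness:rem} with Corollary~\ref{glumetric:cor} accomplishes.
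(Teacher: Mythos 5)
Your proof is correct and follows essentially the same approach as the paper, which combines the embedding of the universal covers $\widetilde{V}_i$ into $\tilM$ with Corollary~\ref{glumetric:cor} and leaves the definition of $p$ and the implicit re-truncation to the reader; you have supplied these missing details. One small remark on the second paragraph: re-applying Corollary~\ref{glumetric:cor} ``with these new truncations in place of the original pieces'' is unnecessary and slightly misleading (the bicollar set would shift along with the boundary tori). It is cleaner to apply the corollary once with the original pieces and bicollar set $U$: this already yields a metric $g$ agreeing with the product metric on $M\setminus U$, and since each re-truncated piece $V_i'$ lies in the closure of $M\setminus U$, continuity gives $g|_{V_i'}$ equal to the product metric, which is all that is needed to define $p$ and verify (1)--(3). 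You should also make explicit that the re-truncation height is chosen exactly at the edge of $U$, so that the lifts of the $V_i'$ together with the lifts of the bicollar interiors cover $\tilM$ with no gaps.
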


We will call $B$ the \emph{base} of $C$, and $F=\mR^k$ the \emph{fiber} of $C$.
If $\pi_B\colon C\to B$, $\pi_F\colon C\to \mR^k$ are the natural projections, 
we will abuse the terminology, and also refer to a subset $F\subseteq C$ of the form 
$F=\pi_B^{-1}(x_0)$, where $x_0$ is a point in $B$, as a \emph{fiber} of $C$. A fiber of 
$\tilM$ is a fiber of some chamber of $\tilM$.

If $x,y\in C$, we denote by $d_B (x,y)$ the distance (with respect
to the path metric of $B$) between $\pi_B(x)$ and $\pi_B (y)$, and by
$d_F(x,y)$ the distance between $\pi_F(x)$ and $\pi_F(y)$
(so by construction $d_C^2=d_B^2+d_F^2$).

If $(\tilM,p,T)$ is the tree of spaces described in Corollary~\ref{treeofspaces:cor},
we refer to $T$ as to the \emph{Bass-Serre tree} of $\pi_1 (M)$ (with respect to the
isomorphism $\pi_1 (M)\cong\pi_1 (\mathcal{G}_M)$, or to the decomposition of $M$ into the $V_i$'s). The action of $\pi_1 (M)$
on $\tilM$ induces an action of $\pi_1 (M)$ on $T$.
By the very definitions,
(every conjugate of) the fundamental group of a piece (resp.~of a paired 
boundary component of a piece) coincides with the stabilizer of a vertex
(resp.~of an edge) of $T$, and vice versa. Also recall that the fiber subgroup
is normal (even central) in the fundamental group of a piece, so
it is well-defined as a subgroup of a vertex stabilizer.

\begin{lemma}\label{acyl-lem-0}
Let $M$ be an (extended) graph manifold and let $T$ be the Bass-Serre tree corresponding
to the decomposition of $M$ into pieces. Also set $G=\pi_1(M)$, and for every vertex $v$ (resp.~edge $e$) of $T$ denote by $G_v$ (resp.~$G_e$) the stabilizer of $v$ (resp.~$e$)
in $G$.
\begin{enumerate}
\item If $v$ is a vertex of $T$, then $v$ is the unique vertex fixed by $G_v$.
\item 
Let $W_1,W_2$ be distinct walls of $\widetilde{M}$, and let $v$ be a vertex
of $T$ such that any path joining $W_1$ with $W_2$ must intersect
the chamber corresponding to $v$. If
$g\in G$ is such that
$g(W_i)=W_i$ for $i=1,2$, then $g$ belongs to the fiber subgroup of $G_v$.
\item 
Let $W$ be a wall of $\widetilde{M}$, and denote by $H$ the (set-wise) stabilizer
of $W$ in $G$. Then $W$ is the unique wall
which is stabilized by $H$. 
%If $e$ is an edge of $T$, then $e$ is the unique edge fixed by $G_e$.
\end{enumerate}
\end{lemma}
\begin{proof}
(1): If $G_v$ fixes another vertex $v'\neq v$, then
it fixes an edge $e$ exiting from $v$. This implies that
$G_v$ is contained in the stabilizer of an edge, which is clearly impossible
since edge stabilizers are abelian.

(2): Let $\widetilde{V}\subseteq \tilM$ be the chamber corresponding to 
$v$, and denote by $V$ the piece of $M$ corresponding to $\widetilde{V}$.
Our hypothesis implies that there exist two connected
components $Z_1,Z_2$ of $\partial \widetilde{V}$ such that
$g(\widetilde{V})=\widetilde{V}$, $g(Z_1)=Z_1$ and $g(Z_2)=Z_2$.
In particular we have $g\in G_v$.

Let us fix an identification of $G_v$ with 
$\pi_1(V)=\pi_1(N)\times \mZ^k$, where $N$ is the base of $V$.
Also denote by $\rho\colon G\to \pi_1(N)$ the projection map,
and recall that $\pi_1(N)$ acts on the universal covering
$\widehat {\mathbb H}^{n-k}$ of $N$, which
is a copy of hyperbolic space with a suitable $\pi_1(N)$-equivariant family of (open) horoballs removed. 
The boundary components of $\widetilde{V}$ are in natural bijection
with the boundary components of $\widehat {\mathbb H}^{n-k}$, and the action of
$g\in G$ on the components of $\partial \widetilde{V}$ can be detected
by looking at the action of $\rho(g)$ on the set of connected components of
$\partial (\widehat {\mathbb H}^{n-k})$. 
Therefore, 
$\rho(g)$ leaves two boundary components
of $\widehat {\mathbb H}^{n-k}$ invariant. This implies that $g$ pointwise fixes the unique minimal geodesic joining these boundary components. But the action
of $\pi_1(N)$ on $\widehat {\mathbb H}^{n-k}$ is free, so $\rho(g)=e$.
This means that $g$ belongs to the fiber subgroup of $G_v$, and concludes
the proof of point (2).

(3): 
Notice that $H$ is free abelian of rank $n-1$, where $n$ is the dimension of $M$.
Suppose that $H$ stabilizes a wall $W'\neq W$. By point (2), $H$  is contained
in the fiber subgroup of $G_v$ for some vertex $v$ of $T$. But the rank of $H$ is 
strictly
bigger than the rank of the fiber subgroup of $G_v$, a contradiction.
\end{proof}

\begin{lemma}\label{conj:lemma}
Set $G=\pi_1 (M)$. 
Let $V_1,V_2$ be pieces of $M$ and $T_i$ a component of $\partial V_i$, $i=1,2$.
Let $G_i< \pi_1 (M)$ (resp.~$H_i<\pi_1 (M)$)
be (any conjugate of) the fundamental group of $V_i$ (resp.~of $T_i$). 
Then:
\begin{enumerate}
 \item 
The normalizer of $G_1$ in $G$ is equal to $G_1$.
\item
If $G_1$ is conjugate to $G_2$ in $G$,
then $V_1=V_2$.
\item
The normalizer of $H_1$ in $G$ is equal to $H_1$.
\item
If $H_2$ is conjugate to $H_1$ in $G$,
then $T_1=T_2$ in $M$.
\item
If $g\in G$ is such that $G_1 \cap g G_1 g^{-1}\supseteq H_1$, then either $g \in G_1$ or
$V_1$ is glued to itself along $T_1$ in $M$.
\end{enumerate}
\end{lemma}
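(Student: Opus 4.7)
The plan is to work entirely in the Bass-Serre tree $T$ associated to the graph-of-groups decomposition $\mathcal{G}_M$, on which $\pi_1(M)=G$ acts without inversions. Under this action, edge stabilizers are (conjugates of) the wall subgroups $H_i\cong\mZ^{n-1}$ and vertex stabilizers are (conjugates of) the piece subgroups $G_i\cong \pi_1(\overline N_i)\times\mZ^{d_i}$, with $d_i=n-n_i\leq n-3$. The single key geometric input I will use repeatedly is the following rigidity at a vertex $v$ corresponding to a piece $V\cong \overline N\times T^d$: every edge group incident to $v$ has the form $P\times\mZ^d$ for $P$ a peripheral subgroup of $\pi_1(\overline N)$, and \emph{distinct} peripheral subgroups of $\pi_1(\overline N)$ intersect trivially (a standard property of non-compact finite-volume hyperbolic manifolds with toric cusps, also derivable from Lemma~\ref{basicprop:lem}, since any non-trivial element of $P\cap gPg^{-1}$ with $g\notin P$ would have the non-free-abelian centralizer $\pi_1$ of a cusp torus). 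Combined with the rank inequality $\rk(H_1)=n-1>d$, this shows that for two distinct edges $e,e'$ at $v$, the intersection $G_e\cap G_{e'}$ is contained in the fiber $\mZ^d$ and in particular cannot contain $H_1$.

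For (1), suppose $g$ normalizes $H_1=G_e$, where $e$ is the edge of $T$ stabilized by $H_1$. Then $H_1=gH_1g^{-1}=G_{ge}$, so $H_1$ fixes every edge on the geodesic $[e,ge]\subset T$. If $ge\neq e$, this geodesic contains two distinct edges sharing a common vertex $v$, so $H_1\subset G_e\cap G_{e'}$, contradicting the rank/malnormality observation above; thus $ge=e$ and $g\in H_1$. Statement (3) is analogous but simpler: if $g\in N_G(G_1)$ with $gv_1\neq v_1$ (where $G_1=G_{v_1}$), then $G_1$ fixes the first edge along $[v_1,gv_1]$, hence embeds into some $\mZ^{n-1}$; but $G_1$ contains the non-abelian group $\pi_1(\overline N_1)$, a contradiction.

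Parts (2) and (4) are then immediate corollaries. For (2), if $H_2=gH_1g^{-1}$, then $H_2$ is the stabilizer of the edge $ge$ and also of the edge $e_2$ corresponding to $T_2$; by the same geodesic argument used in (1), the unique edge fixed by $H_2$ forces $ge=e_2$. Hence $e$ and $e_2$ lie in the same $G$-orbit, meaning they project to the same internal wall in $\mathcal{G}_M$, i.e.\ $T_1=T_2$. The proof of (4) mirrors this, using the uniqueness of vertex fixed by $G_1$ (from (3)).

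For (5), assume $H_1\subset G_{v_1}\cap G_{gv_1}$. If $gv_1=v_1$, then $g\in G_{v_1}=G_1$ and we are done. Otherwise $H_1$ fixes the whole geodesic from $v_1$ to $gv_1$; by the rank/malnormality obstruction at $v_1$, the first edge of this geodesic must be exactly $e_1$ (the edge for $T_1$). Let $v_1'$ be the other endpoint of $e_1$. If the geodesic extended past $v_1'$, the same obstruction applied at $v_1'$ (to the piece containing $v_1'$) would yield a contradiction. So $gv_1=v_1'$, which places $v_1$ and $v_1'$ in the same $G$-orbit, meaning they project to the same vertex of $\mathcal{G}_M$; equivalently, $V_1$ is glued to itself along $T_1$. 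The only delicate point in the whole argument is the peripheral malnormality input together with the strict inequality $d<n-1$ (guaranteed by $n_i\geq 3$); once these are in hand, every claim reduces to the elementary Bass-Serre fact that a subgroup fixing two distinct edges must fix the geodesic between them.
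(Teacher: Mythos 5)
Your proof is correct and follows essentially the same route as the paper: both work in the Bass--Serre tree and hinge on the observation that the common stabilizer of two distinct edges at a vertex lies in the fiber $\mZ^{d}$, whose rank is strictly less than $n-1$. The only cosmetic difference is that you package this key input as malnormality of peripheral subgroups in $\pi_1(\overline N)$, whereas the paper derives it directly from the fact that a covering automorphism of a neutered space preserving two distinct horospheres must fix the geodesic between them and hence be trivial --- these are the same geometric fact.
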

\begin{proof}
Let us consider the action of $G$ on the Bass-Serre tree $T$ corresponding
to the decomposition of $M$ into pieces. 

(1): By Lemma~\ref{acyl-lem-0}, there exists a unique
vertex $v_1$ such that $G_1=G_{v_1}$. If $g$ normalizes $G_1$, then
$G_1$ fixes $g(v_1)$, so $g(v_1)=v_1$ and $g\in G_1$.

(2): Let $v_1,v_2$ be the vertices of $T$ fixed respectively by $G_1,G_2$,
and suppose that there exists $g\in G$ such that $g G_1 g^{-1}=G_2$.
Then $G_{1}$ fixes both $v_2$ and $g(v_1)$, so $v_2=g(v_1)$.
Therefore, the covering automorphism $g\colon \widetilde{M}\to\widetilde{M}$
sends a chamber covering $V_1$ onto a chamber covering $V_2$, and $V_1=V_2$.

(3): 
By Lemma~\ref{acyl-lem-0}, there exists a unique
wall $W$ such that $G_1$ is the stabilizer of $W$ in $G$.
If $g$ normalizes $G_1$, then
$G_1$ stabilizes $g(W)$, so $g(W)=W$ and $g\in G_1$.

(4): 
Let $W_1,W_2$ be the walls of $T$ stabilized respectively by $G_1,G_2$
(see Lemma~\ref{acyl-lem-0}),
and suppose that there exists $g\in G$ such that $g G_1 g^{-1}=G_2$.
Then $G_{1}$ fixes both $W_2$ and $g(W_1)$, so $W_2=g(W_1)$.
Therefore, the covering automorphism $g\colon \widetilde{M}\to\widetilde{M}$
sends a wall covering $T_1$ onto a wall covering $T_2$, and $T_1=T_2$
in $M$.

(5): Let us suppose that $g\notin G_1$, and prove that $V_1$ is glued to itself along $T_1$.
Let $v_1,v_1'$ be the vertices of $T$ associated to $G_1$, $g G_1 g^{-1}$ respectively.
Since $g\notin G_1$ we have $v_1'\neq v_1$.
The assumption $G_1\cap g G_1 g^{-1}\supseteq H_1$ implies
that every element of $H_1$ fixes every edge of the injective path
joining $v_1$ with $v_1'$. Equivalently, if $C,C'$ are the chambers
corresponding to $v_1,v_1'$, then $g$ stabilizes every wall
which separates $C$ from $C'$. By Lemma~\ref{acyl-lem-0},
this implies that $C$ is adjacent to $C'$ along a wall stabilized
by $H_1$, whence the conclusion.
\end{proof}

\section{Basic metric properties of $\tilM$}
In this subsection we collect several metric properties of $\tilM$
that we will extensively use in the following chapters in order to study the quasi-isometry
type of the fundamental group of an (extended) graph manifold.

Recall from Corollary~\ref{treeofspaces:cor} that,
if $C$ is a chamber of $\tilM$, then $(C,d_C)$ is isometric to
the product of a neutered space with a Euclidean space.
An elementary application of Milnor-Svarc Lemma (see~Theorem~\ref{milsv}) implies the following:

\begin{lemma}\label{easywall:lem}
If $W$ is a wall of $\tilM$, then $(W,d_W)$ is quasi-isometric to $\mR^{n-1}$.
\end{lemma}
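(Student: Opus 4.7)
The plan is to split into the two cases in Corollary~\ref{treeofspaces:cor}. If $W$ is a boundary wall, then by part (3) of that corollary $W$ is already isometric to $\mR^{n-1}$, so there is nothing to prove. The content lies in the case of an internal wall, which by part (2) is only known to be diffeomorphic to $\mR^{n-1}\times [-1,1]$; we must pass from diffeomorphism type to quasi-isometry type of the path metric $d_W$.

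First I would observe that $(W,d_W)$ is a proper geodesic metric space. Indeed, $W$ is a closed smooth submanifold of $\tilM$ and the path metric associated to the restriction of the Riemannian structure on $\tilM$ to $W$ turns $W$ into a complete Riemannian manifold (with boundary) diffeomorphic to $\mR^{n-1}\times [-1,1]$; hence closed balls are compact and any two points are joined by a length-minimizing path in $W$.

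Next I would produce a group acting on $W$ to which Milnor--\v{S}varc (Theorem~\ref{milsv}) applies. The wall $W$ is the preimage in $\tilM$ of an internal wall of $M$ arising from a bicollar $T^{n-1}\times[-3,3]$ of a paired boundary torus (see the discussion before Corollary~\ref{glumetric:cor}). By the correspondence between the Bass--Serre tree and the decomposition of $M$ into pieces described in Section~\ref{graphofgroups}, the stabilizer of $W$ in $\pi_1(M)$ is (conjugate to) the edge group $H\cong\mZ^{n-1}$ associated to the corresponding edge of $T$. The action of $\pi_1(M)$ on $\tilM$ is by isometries of the Riemannian metric and properly discontinuous, so the induced action of $H$ on $(W,d_W)$ is by isometries and proper. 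The quotient $H\backslash W$ is the internal wall of $M$ itself, a compact space homeomorphic to $T^{n-1}\times[-1,1]$, so the action is cocompact.

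Applying Milnor--\v{S}varc to this action yields that $(W,d_W)$ is quasi-isometric to $\mZ^{n-1}$, and hence to $\mR^{n-1}$. I do not expect any serious obstacle: the only point that requires care is checking that the intrinsic path metric $d_W$ (rather than the subspace metric inherited from $d_{\tilM}$) is the one governed by the group action, but this is immediate since the deck transformations in $H$ preserve the Riemannian metric on $\tilM$, hence preserve lengths of curves contained in $W$, hence act as isometries of $(W,d_W)$.
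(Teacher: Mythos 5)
Your argument is correct and is exactly the route the paper intends: the paper simply remarks that the lemma is "an elementary application of the Milnor--\v{S}varc Lemma" without spelling out the details, and your proposal fills those in — properness/geodesicity of $(W,d_W)$, and the proper cocompact isometric action of the edge group $\mZ^{n-1}$ on $W$ with compact quotient the internal wall of $M$.
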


Also recall that $d$ denotes the distance associated to the Riemannian structure of $\widetilde{M}$.
For every $r\geq 0$ and $X\subseteq \widetilde{M}$, we denote by $N_r (X)\subseteq \widetilde{M}$
the $r$-neighbourhood of $X$ in $\widetilde{M}$, with respect to the metric $d$.

\begin{lemma}\label{comparedist:lem}
If $C$ is a chamber of $\tilM$,
then there exists a function $g:\mR^+\to\mR^+$ such that $g(t)$ tends to $+\infty$ as $t$ 
tends to $+\infty$ and $d(x,y)\geq g(d_{C}(x,y))$ for each $x,y\in C$.
\end{lemma}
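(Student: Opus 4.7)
The natural approach is to set
\[
g(t)\;=\;\inf\{\,d(x,y) : x,y\in C,\ d_C(x,y)\geq t\,\},
\]
which is non-decreasing and satisfies $g(t)\leq t$ (since $d\leq d_C$ on $C\times C$). By definition $d(x,y)\geq g(d_C(x,y))$ for all $x,y\in C$, so the whole content of the lemma is that $g(t)\to +\infty$ as $t\to +\infty$. I will prove this by contradiction: suppose there exist $R>0$ and sequences $x_n,y_n\in C$ with $d(x_n,y_n)\leq R$ but $d_C(x_n,y_n)\to +\infty$, and derive a contradiction via a standard compactness argument.

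The key inputs are that the stabilizer $\Gamma_C$ of $C$ in $\pi_1(M)$ acts cocompactly on $C$ by Riemannian isometries of $\widetilde M$ (it is the deck group of the covering $C\to V$, and the piece $V=\overline N\times T^{n-n_i}$ is compact), and that $\widetilde M$ is proper (a connected Riemannian manifold admitting a cocompact isometric action). Fix a compact fundamental domain $K\subseteq C$ for $\Gamma_C$ and choose $\gamma_n\in\Gamma_C$ with $\gamma_n x_n\in K$. Since $\gamma_n$ preserves both $d$ and $d_C$, after replacing $(x_n,y_n)$ by $(\gamma_n x_n,\gamma_n y_n)$ we may assume $x_n\in K$. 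Extract a subsequence with $x_n\to x_\infty\in K$. For $n$ large, $y_n$ lies in the closed $d$-ball of radius $R+1$ around $x_\infty$, which is compact by properness of $\widetilde M$, so a further subsequence gives $y_n\to y_\infty\in\widetilde M$. Because $C$ is closed in $\widetilde M$ (being a lift of the closed subset $V\subseteq M$), $y_\infty\in C$.

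It remains to obtain the contradiction from $d_C(x_n,y_n)\to\infty$. For this I will invoke continuity of $d_C$ on $C\times C$: $C$ is a smooth submanifold with boundary of $\widetilde M$, its manifold topology agrees with the subspace topology it inherits from $\widetilde M$, and the path metric of a connected Riemannian manifold-with-boundary is continuous with respect to its own topology. Hence $d_C(x_n,y_n)\to d_C(x_\infty,y_\infty)$, and the right-hand side is finite since $C$ with $d_C$ is a genuine geodesic metric space (isometric to a neutered space times a Euclidean factor). This contradicts $d_C(x_n,y_n)\geq n$ and completes the proof.

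The only delicate point is the continuity of $d_C$ at boundary points of $C$ (points lying on a wall of $C$), since there one must verify that nearby points in the ambient topology are also close in the chamber path metric; this is where it matters that the Riemannian structure on $\widetilde M$ given by Corollary~\ref{glumetric:cor} restricts to a genuine Riemannian metric on $C$ for which $\partial C$ has a collar, so $d_C$ is locally bi-Lipschitz to the ambient Riemannian distance in small neighborhoods of every point of $C$.
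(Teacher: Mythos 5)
Your proposal is correct and takes essentially the same route as the paper: a compactness argument combining properness of $\widetilde M$, closedness of $C$, and the fact that $d_C$ induces the same topology on $C$ as the ambient metric. The paper compresses your cocompactness normalization into the phrase ``by quasi-homogeneity of $C$ it is enough to prove the statement for a fixed $x$'' and your continuity discussion into ``it is easily seen that this contradicts $d_C(x,y_i)\to+\infty$,'' but the underlying argument is identical.
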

\begin{proof}
By quasi-homogeneity of $C$ it is enough to prove the statement for a fixed $x$. 
Let us observe that $d$ and $d_C$ induce the same topology on $C$.
Take any 
sequence $\{y_i\}$ of points such that $d_{C}(x,y_i)$ tends to $+\infty$. Since $\tilM$ is proper,
if the $d(x,y_i)$'s are bounded, then up to passing to a subsequence we 
can suppose $\lim_{i\to\infty} y_i=y$ for some $y\in\tilM$.
But $C$ is closed in $\tilM$, so we have $y\in C$. It is easily seen that this contradicts $d_C(x,y_i)\to +\infty$.
\end{proof}

\begin{lemma}\label{prefacile2:lem}
Let $W_1,W_2$ be walls of $\tilM$,
and suppose that there exists $r\in\mR^+$ such that $W_1\subseteq N_r(W_2)$.
Then $W_1=W_2$. In particular, distinct walls of $\tilM$ lie at infinite Hausdorff
distance from each other.
\end{lemma}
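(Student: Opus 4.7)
The plan is to exploit the tree-of-spaces structure of Corollary~\ref{treeofspaces:cor} together with the divergence of distinct horospheres in hyperbolic space. I will argue by contradiction, distinguishing two cases according to whether $W_1$ and $W_2$ are adjacent to a common chamber.

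Suppose first that $W_1 \neq W_2$ are both adjacent to a common chamber $C$, isometric to $B \times \mR^k$ where $B \subseteq \mathbb{H}^{n-k}$ is a neutered space. The intersections $W_i \cap C$ are of the form $H_i \times \mR^k$ for distinct horospherical boundary components $H_1, H_2$ of $B$. The first step is to show that $W_1 \subseteq N_r(W_2)$ implies $W_1 \cap C \subseteq N_r(W_2 \cap C)$: given $x \in W_1 \cap C$, a point $y \in W_2$ with $d(x,y) \leq r$ either already lies in $W_2 \cap C$, or lies in the other chamber $C' \neq C$ adjacent to $W_2$, in which case any geodesic from $x$ to $y$ must cross the separating wall $W_2 \cap C = W_2 \cap C'$ and provides a closer point of $W_2 \cap C$. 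By Lemma~\ref{comparedist:lem} this inclusion upgrades to the statement that $W_1 \cap C$ lies in an $r'$-neighborhood of $W_2 \cap C$ with respect to the intrinsic metric $d_C$, where $r' = r'(r)$. However $H_1$ and $H_2$ are distinct horospheres in $\mathbb{H}^{n-k}$ based at different ideal points, so they asymptotically diverge and hence have infinite Hausdorff distance in $\mathbb{H}^{n-k}$; since $d_B$ dominates the restriction of the hyperbolic metric to $B$, the same holds in $(B,d_B)$, and passing to the product with $\mR^k$ preserves this. Hence $W_1 \cap C$ cannot be contained in any finite $d_C$-neighborhood of $W_2 \cap C$, yielding the desired contradiction.

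Suppose now that $W_1$ and $W_2$ are not adjacent to a common chamber, so the edge-path in the Bass-Serre tree between the edges (or half-edges) corresponding to them has length at least $2$. Let $W_3$ be the wall corresponding to the first edge of this path on the $W_1$-side; then $W_3$ shares a chamber with $W_1$, is distinct from $W_1$, and is an internal wall separating $\tilM$ into two components, one containing $W_1$ and the other containing $W_2$. Every path from a point of $W_1$ to $W_2$ must therefore traverse $W_3$, forcing $W_1 \subseteq N_r(W_3)$, and the first case applied to $W_1$ and $W_3$ gives a contradiction. The main difficulty lies in the bookkeeping of the first case: ensuring that proximity to $W_2$ in the ambient metric $d$ really does translate into proximity to $W_2 \cap C$ in the intrinsic metric $d_C$. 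Once this link is established, the rest relies only on the standard fact that distinct horospheres diverge together with the comparison provided by Lemma~\ref{comparedist:lem}, and case two is a short tree-theoretic reduction to case one.
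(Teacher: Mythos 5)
Your proposal is correct and follows essentially the same route as the paper: reduce via the tree-of-spaces structure to the case of two walls adjacent to a common chamber, use Lemma~\ref{comparedist:lem} to trade the ambient distance for the intrinsic distance $d_C$, project to the base $B$, and invoke the fact that $d_B$ dominates the hyperbolic distance together with the divergence of distinct horospheres. The paper's version is terser (it simply asserts the reduction and writes the final step as ``$O_1 \subseteq N_r(O_2)$ in $d_{\mathbb H}$ forces $O_1 = O_2$''), but the two arguments are the same in substance.
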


\begin{proof}
Considering the realization of $\tilM$ as a tree of spaces,
one can easily reduce to the case that $W_1$ and $W_2$ are adjacent to the same chamber $C$.
By Lemma~\ref{comparedist:lem}, up to increasing $r$ we may assume that
$W_1$ is contained in the $r$-neighbourhood of $W_2$ with respect to
the path distance $d_C$ of $C$.

Let $C=B\times\mR^k$ be the decomposition of $C$ into the product of a neutered
space and a Euclidean space. Then, $W_1$ and $W_2$ project onto two horospheres
$O_1,O_2$ of $B\subseteq \mathbb{H}^{n-k}$, and $O_1$ is contained
in the $r$-neighbourhood of $O_2$ with respect to the distance
$d_B$. Now, the distance $d_B$ is bounded below by the restriction of the hyperbolic distance
$d_{\mathbb{H}}$ of $\mathbb{H}^{n-k}$, so $O_1$ is contained in the 
$r$-neighbourhood of $O_2$ with respect to the distance
$d_{\mathbb{H}}$. This forces $O_1=O_2$, whence $W_1=W_2$.
\end{proof}

\begin{corollary}\label{prefacile3:lem}
 Let $W$ (resp.~$C_1,C_2$) be a wall (resp.~two chambers) 
of $\tilM$. Then:
\begin{enumerate}
 \item 
if $W\subseteq N_r (C_1)$ for some $r\geq 0$,
then $W$ is adjacent to $C_1$;
\item
if $C_1\subseteq N_r (C_2)$ for some $r\geq 0$, then $C_1=C_2$; 
in particular, the Hausdorff distance between distinct chambers of $\tilM$
is infinite.
\end{enumerate}
\end{corollary}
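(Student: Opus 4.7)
\medskip

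\noindent\textbf{Proof plan.} The two parts will be proved in order: (1) is the key step, and (2) will follow quickly from (1).

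\medskip

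For (1), the strategy is to argue by contradiction, exhibiting a wall $W_1\neq W$ that separates $W$ from $C_1$ in $\widetilde{M}$, and then invoking Lemma~\ref{prefacile2:lem}. Suppose $W$ is not adjacent to $C_1$, and let $v_1$ be the vertex of the Bass-Serre tree $T$ corresponding to $C_1$. If $W$ is an internal wall, it corresponds to an edge $e_W$ with endpoints $v_0,v_0'$; without loss of generality, let $v_0$ be the endpoint closer to $v_1$ in $T$, and let $e_1$ be the last edge of the tree-geodesic from $v_1$ to $v_0$. Since $v_0$ is strictly closer to $v_1$ than $v_0'$ is, $e_1\neq e_W$. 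If instead $W$ is a boundary wall adjacent to the chamber corresponding to $v_0\neq v_1$, then let $e_1$ be the first edge of the tree-geodesic from $v_0$ to $v_1$; the corresponding $W_1$ is an internal wall, hence different from $W$. In either case, removing the interior of $e_1$ disconnects $T$, placing $W$ and $v_1$ on opposite sides; consequently, the associated wall $W_1$ disconnects $\widetilde{M}$ so that $W$ and $C_1$ lie in distinct components. Any rectifiable path in $\widetilde{M}$ from a point $p\in W$ to $C_1$ must therefore cross $W_1$, so that $d(p,W_1)\leq d(p,C_1)\leq r$. Hence $W\subseteq N_r(W_1)$, and Lemma~\ref{prefacile2:lem} forces $W=W_1$, contradicting $W_1\neq W$.

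\medskip

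For (2), the plan is to feed (1) with infinitely many walls. Assume $C_1\subseteq N_r(C_2)$ with $C_1\neq C_2$. Any wall $W$ adjacent to $C_1$ satisfies $W\subseteq C_1\subseteq N_r(C_2)$, so by (1) it is also adjacent to $C_2$. Since $T$ is a tree and $C_1,C_2$ correspond to distinct vertices of $T$, there is at most one edge of $T$ having both as endpoints; in other words, at most one internal wall is adjacent to both $C_1$ and $C_2$, and no boundary wall is adjacent to two distinct chambers. So it suffices to show that $C_1$ has at least two walls adjacent to it. Writing $C_1=B\times\mR^k$ where $B$ is the universal cover of some $\overline{N}_i$, the walls adjacent to $C_1$ are in bijection with the horospherical boundary components of $B$. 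These components form orbits under the infinite group $\pi_1(\overline{N}_i)$, with stabilizers the cusp subgroups, so there are in fact infinitely many of them. This yields the desired contradiction, proving $C_1=C_2$. The contrapositive, combined with symmetry in $C_1,C_2$, shows that the Hausdorff distance between distinct chambers is infinite.

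\medskip

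I expect the verification of the separation statement in (1), namely that $W_1$ indeed disconnects $\widetilde{M}$ into two components with $W$ and $C_1$ on opposite sides, to be the main (though mild) technical point, as one must carefully exploit the tree-of-spaces structure from Definition~\ref{tree-of-spaces:def}. Once this separation is in hand, the reduction to Lemma~\ref{prefacile2:lem} is mechanical, and (2) is then an easy consequence together with the counting of horospherical boundary components of a neutered space.
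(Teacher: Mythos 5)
Your proof is correct and takes essentially the same route as the paper: exhibit a separating wall using the tree-of-spaces structure, deduce that $W$ lies in its $r$-neighborhood, and invoke Lemma~\ref{prefacile2:lem}; then apply part (1) to two distinct walls adjacent to $C_1$. The paper's version of (1) is organized as a direct identification ($W$ is forced to equal the wall adjacent to $C_1$ that screens it from $C_1$) rather than as a contradiction with a wall adjacent to the far endpoint of the geodesic, and in (2) the paper tacitly assumes $C_1$ has at least two adjacent walls where you explicitly justify it — but these are cosmetic differences, not a different argument.
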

\begin{proof}
 (1) By considering the realization of $\tilM$ as a tree of spaces, it is immediate
to realize that $W$ is contained in the $r$-neighbourhood of a wall
adjacent to $C_1$, so $W$ is adjacent to $C_1$ by Lemma~\ref{prefacile2:lem}.

(2) Suppose $W,W'$ are distinct walls both adjacent to $C_1$. Then,
by point~(1) they are adjacent also to $C_2$, and this forces
$C_1=C_2$.
\end{proof}

In order to study the quasi-isometry type of $\tilM$, it would be very useful
to know that the inclusions of walls and chambers are quasi-isometric embeddings.
However, this is not true in general, as it is shown in the proof of Proposition~\ref{notqi} below, where
we exploit this fact for constructing (extended) graph manifolds which do not support any CAT(0) metric.

In Chapter~\ref{strongirr:sec} we will define the class of \emph{irreducible} graph manifolds,
and we will prove that walls and chambers are quasi-isometrically embedded in the universal 
covering of an irreducible graph manifold. 

\section{Examples not supporting any locally CAT(0) metric}\label{noncat0-easy:subsec}
In this section we construct (extended) graph manifolds which do not support any locally CAT(0) metric. 
The construction described here is easy, and it is based on a straightforward application of the 
Flat Torus Theorem (see \emph{e.g.}~\cite[Chapter II.7]{bri}).
As mentioned in the Introduction, however, there are reasons for being interested in 
\emph{irreducible} graph manifolds (see Chapter \ref{strongirr:sec}). It turns out that providing examples 
of irreducible graph 
manifolds which do not support any locally CAT(0) metric is much harder. We will discuss this issue 
in detail in Chapter~\ref{construction2:sec}.

\begin{proposition}\label{notqi}
Let $n\geq 2$, and take a  
hyperbolic $n$-manifold $N$ with at least two cusps. We suppose as usual that every
cusp of $N$ is toric.  For $i=1,2$, let $N_i=N$ and $V_i=\overline{N}_i\times T^2$.
Then, we can glue the pieces $V_1$ and $V_2$ in such a way that the
resulting (extended) graph manifold $M$ does not support any CAT(0) metric.
\end{proposition}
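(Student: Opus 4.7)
The plan is to construct $M$ so that the fundamental group of one of its internal walls, which is isomorphic to $\mZ^{n+1}$, is not quasi-isometrically embedded in $\pi_1(M)$; the Flat Torus Theorem will then rule out any locally CAT(0) metric on $M$. Recall that this theorem implies that whenever a group $G$ acts geometrically on a CAT(0) space, every free abelian subgroup $\mZ^k\le G$ acts cocompactly on a $k$-flat, and in particular $\mZ^k\hookrightarrow G$ is a quasi-isometric embedding.

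Let $P_1,P_2\cong\mZ^{n-1}$ be the two peripheral subgroups of $\pi_1(\overline N)$, let $F_i\cong\mZ^2$ be the fiber subgroup of $\pi_1(V_i)=\pi_1(\overline N_i)\times\mZ^2$, and note that the boundary torus $T_{i,j}$ of $V_i$ corresponding to the cusp $C_j$ has fundamental group splitting as $P_j\oplus F_i$. The plan is to pair $T_{1,j}$ with $T_{2,j}$ for $j=1,2$ via affine diffeomorphisms $\phi_j$ that respect this direct sum splitting, so that fibers are matched on \emph{both} walls (the doubly non-transverse case): take $\phi_1$ to be the identity of $\mZ^{n+1}$, and take $\phi_2=\mathrm{id}_{P_2}\oplus A$, where $A\in SL(2,\mZ)$ is any hyperbolic toral automorphism (for instance $A=\bigl(\begin{smallmatrix}2 & 1\\ 1 & 1\end{smallmatrix}\bigr)$), with dominant eigenvalue $\lambda>1$.

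Using the first wall as the spanning-tree edge of the Bass--Serre graph, $\pi_1(M)$ acquires the structure of an HNN extension
$$ \pi_1(M)=\bigl(\pi_1(V_1)\ast_{E_1}\pi_1(V_2)\bigr)\ast_{E_2}, $$
with stable letter $t$ realizing $\phi_2$, i.e.\ $t\,e\,t^{-1}=\phi_2(e)$ for every $e\in E_2=\pi_1(T_{1,2})$. Since $\phi_1$ is the identity on the fiber factor, the subgroups $F_1\le\pi_1(V_1)$ and $F_2\le\pi_1(V_2)$ are identified inside the amalgam $G_0=\pi_1(V_1)\ast_{E_1}\pi_1(V_2)$. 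Since $\phi_2$ preserves the splitting, for any $f\in F_1\subseteq E_2$ one has $t\,f\,t^{-1}=\phi_2(f)=A(f)\in F_2=F_1\subseteq E_2$, and so the HNN relation may be applied again. An easy induction then yields the key identity
$$ t^k f t^{-k}=A^k(f)\quad\text{in } \pi_1(M),\ \text{for every}\ k\in\mZ. $$
Now fix a nontrivial $f\in F_1$. Since $A$ is hyperbolic on $\mZ^2$ and $F_1$ sits as a direct summand of $\pi_1(T_{1,2})\cong\mZ^{n+1}$, one has $|A^k(f)|_{\pi_1(T_{1,2})}\asymp\lambda^k$, whereas expressing $A^k(f)$ as $t^k f t^{-k}$ gives a word of length at most $2k+O(1)$ in $\pi_1(M)$. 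This exponential-versus-linear discrepancy is incompatible with a quasi-isometric embedding $\pi_1(T_{1,2})\hookrightarrow\pi_1(M)$, and the Flat Torus Theorem then yields that $M$ supports no locally CAT(0) metric.

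The main step to verify is the identity $t^k f t^{-k}=A^k(f)$, and this is where both design choices are essential: the matching of fibers on the first wall (so that $F_1=F_2$ inside $G_0$ and every successive image $\phi_2(A^{k-1}(f))\in F_2$ actually lies back in $F_1\subseteq E_2$, allowing the HNN relation to be applied anew), and the block-diagonal form of $\phi_2$ (so that $\phi_2$ sends $F_1$ into $F_2$ without picking up a cusp component, keeping the dynamics entirely inside the common fiber). Infinitely many such $M$ are produced by varying $A$ among the hyperbolic elements of $SL(2,\mZ)$.
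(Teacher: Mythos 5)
Your argument is correct and follows the same overall strategy as the paper: match fibers on both walls (identity on one, $\mathrm{id}_{P_2}\oplus B$ on the other for $B\in SL(2,\mZ)$), so that a fiber copy of $\mZ^2$ inside a wall subgroup becomes distorted in $\pi_1(M)$; Remark~\ref{easyrem} and the Flat Torus Theorem then finish the job. Where you differ from the paper is in the choice of twist and in the bookkeeping. The paper takes the parabolic $B=\bigl(\begin{smallmatrix}1&0\\1&1\end{smallmatrix}\bigr)$, realizes the relevant subgroup topologically as $\pi_1(L)$ for a $T^2$-bundle $L\subset M$ over a loop in $D\overline{N}$, and cites \cite[III.$\Gamma$.4.17]{bri} for the (quadratic) distortion of $\mZ^2$ in $\mZ^2\rtimes_\psi\mZ$. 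You instead work directly in the HNN description of $\pi_1(M)$, take a hyperbolic $A\in SL(2,\mZ)$, and read the distortion off the identity $t^kft^{-k}=A^k(f)$: since $A$ has an eigenvalue $\lambda>1$ and $F_1$ is a direct summand of $\pi_1(T_{1,2})$, the norm of $A^k(f)$ in $\pi_1(T_{1,2})$ grows like $\lambda^k$, while the word length in $\pi_1(M)$ is $O(k)$. Your verification that the HNN relation may be iterated --- the chain $\phi_2(f)\in F_2=F_1\subseteq E_2$, which hinges on $\phi_1=\mathrm{id}$ and on $\phi_2$ being block-diagonal so the dynamics stay in the common fiber $\mZ^2$ --- is correct and is exactly what makes the induction go through. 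The hyperbolic choice buys a self-contained, elementary distortion estimate with no external reference; the paper's parabolic choice is the more degenerate example but leans on Bridson--Haefliger for the quadratic distortion. Both constructions yield reducible graph manifolds (neither gluing is transverse), which is consistent with this Proposition not being about the irreducible class.
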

\begin{proof}
Let $A,A'$ be two distinct boundary tori of $\overline{N}$, and let
$A_i\times T^2$, $A'_i\times T^2$ be the corresponding boundary tori of $V_i$.
We now glue $V_1$ to $V_2$ as follows:
$A_1\times T^2$ is glued to $A_2\times T^2$ with the identity, where $A_1,A_2$ are indentified
with $A$; $A'_1\times T^2$ is glued to $A'_2\times T^2$ by an affine map $\varphi$ 
such that $\varphi_\ast\colon \pi_1(A'_1\times T^2)\to \pi_1 (A'_2\times T^2)$
is given by $\varphi_\ast (\overline{a},c,d)=(\overline{a},c,c+d)$, where 
$\overline{a}\in\mZ^{n-1}$ and 
we are identifying 
$A'_i$ with $A'$, and $\pi_1 (A'_i\times T^2)=\pi_1 (A')\times \pi_1 (T^2)$
with $\mZ^{n-1}\oplus \mZ^2=\mZ^{n+1}$.

Let $M$ be the (extended) graph manifold obtained by the gluings just described. It is readily seen
that the natural projections $V_i\to\overline{N}_i$ define a projection $q\colon M\to D\overline{N}$,
where $D\overline{N}$ is the double of the natural compactification of $N$.
The map $q$ is a locally trivial fiber-bundle with fibers homeomorphic
to $T^2$. If $\gamma$ is the support of any simple curve joining the two boundary components
of $\overline{N}$, then the double $\alpha$ of $\gamma$ defines a simple loop in $D\overline{N}$.
Let $L=q^{-1} (\alpha)$. It is easily seen that 
$$
\pi_1 (L)\cong \langle x,y,z\, |\, yz=zy, xy=yzx, xz=zx\rangle \cong \mZ^2\rtimes _\psi \mZ,
$$ 
where if $x$ generates $\mZ$ we have $\psi (x)(y,z)=(y,y+z)$.
Moreover, if $L'$ is the intersection of $L$ with one component $Y$ of $\partial V_1=\partial V_2\subseteq 
M$, then $L'\cong T^2$, and $i\colon L'\to L$ induces 
an injective homomorphism $i_\ast\colon \pi_1 (L')\to \pi_1 (L)$ with 
$i_\ast (\pi_1 (L'))=\langle y,z\rangle$. It is well-known
(see \emph{e.g.}~\cite[III.$\Gamma$.4.17]{bri}) 
that $i_\ast$ is \emph{not}
a quasi-isometric embedding, so the inclusion of $\pi_1 (L')$ into $\pi_1 (M)$
is \emph{not} a quasi-isometric embedding (see Remark~\ref{easyrem}). 

On the other hand, 
since the inclusion $\pi_1 (L')\hookrightarrow \pi_1 (Y)$ is a quasi-isometric embedding,
if the inclusion $\pi_1 (Y)\hookrightarrow \pi_1 (M)$ were a quasi-isometric embedding,
then by Remark~\ref{easyrem} the inclusion $\pi_1(L')\hookrightarrow \pi_1 (M)$ would also be quasi-isometric,
while we have just proved that this is not the case.
Therefore, the inclusion
$\pi_1 (Y)\hookrightarrow \pi_1 (M)$ is also not a quasi-isometric embedding, and by the Milnor-Svarc Lemma,
this implies that there exist walls of $\tilM$ which are not quasi-isometrically
embedded in $\tilM$.

As a consequence, $M$ cannot support any locally CAT(0) metric: in fact,
due to Milnor-Svarc Lemma and the Flat Torus Theorem (see \emph{e.g.}~\cite[pg. 475]{bri}), 
if a compact manifold $M$ supports a locally CAT(0) metric
and $H<\pi_1 (M)$ is isomorphic to $\mZ^r$ for some $r\geq 1$, then
$H$ is necessarily quasi-isometrically embedded in $\pi_1 (M)$.
\end{proof}

We can exploit Proposition~\ref{notqi} to prove a portion of Theorem~\ref{existence:thm} in any dimension $n\geq 4$. Indeed, for 
every $n\geq 3$, there exists a cusped hyperbolic $n$-manifold with at least two cusps,
and whose cusps are all toric
(in fact, such manifolds fall into infinitely many distinct
commensurability classes, see \cite{MRS}).
Applying 
Proposition~\ref{notqi} and the rigidity results
proved in Chapters~\ref{pieces-iso:sec} and~\ref{smoothrig:sec}, we immediately deduce:

\begin{corollary}
For every $n\geq 4$, there exist infinitely many $n$-dimensional (extended) graph manifolds
which do \emph{not} support any locally CAT(0) metric.
\end{corollary}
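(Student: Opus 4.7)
The plan is to combine Proposition~\ref{notqi} with the cited folk/forthcoming result of McReynolds--Reid--Stover \cite{MRS}, which provides, in each dimension $m\geq 3$, infinitely many pairwise non-commensurable cusped hyperbolic $m$-manifolds whose cusps are all toric and which have at least two cusps. Given $n\geq 5$, I would apply this with $m=n-2\geq 3$ to fix an infinite family $\{N_j\}_{j\in\mathbb{N}}$ of such hyperbolic $(n-2)$-manifolds, lying in pairwise distinct commensurability classes. For each $j$, the recipe in Proposition~\ref{notqi} (with $N:=N_j$ and fiber torus $T^2$) produces a graph $n$-manifold $M_j$ obtained by gluing two copies of $V_j=\overline{N}_j\times T^2$ along their boundary tori in the prescribed way, and the Proposition itself guarantees that $M_j$ admits no locally CAT(0) metric.

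The remaining issue is to ensure that among the $M_j$, infinitely many are pairwise non-homeomorphic (equivalently, that we obtain infinitely many isomorphism classes of fundamental group). Here I would invoke Corollary~\ref{same-pieces}: any isomorphism $\pi_1(M_j)\cong \pi_1(M_k)$ would induce a bijection between pieces under which corresponding vertex groups are isomorphic. Thus $\pi_1(V_j)\cong \pi_1(\overline{N}_j)\times\mZ^2$ would be isomorphic to $\pi_1(\overline{N}_k)\times\mZ^2$ for some matching piece. By Remark~\ref{center:rmk} the $\mZ^2$-factor is the center in both groups, so the isomorphism descends to an isomorphism $\pi_1(N_j)\cong \pi_1(N_k)$ on the quotients, and Mostow--Prasad rigidity then forces $N_j$ and $N_k$ to be commensurable, contradicting the choice of $\{N_j\}$. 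Hence the $M_j$ yield infinitely many distinct homeomorphism types.

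The only real subtlety is the invocation of \cite{MRS}; modulo that (stated as an accepted folk fact), the argument is essentially a direct combination of Proposition~\ref{notqi} and Corollary~\ref{same-pieces}, together with the elementary observation that a product $\pi_1(N)\times\mZ^2$ has $\mZ^2$ as its center (Remark~\ref{center:rmk}), so that commensurability of base hyperbolic manifolds is detected by the isomorphism type of the piece group. I would write the argument quite compactly, since no genuinely new input beyond what has already been developed (and the forthcoming existence statement) is required.
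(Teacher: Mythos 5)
Your proposal is correct and follows essentially the same route as the paper. The paper's own argument invokes Proposition~\ref{notqi} together with the \cite{MRS} result, but in a very terse way (``applying Proposition~\ref{notqi}, we immediately deduce''), glossing over the question of whether the resulting $M_j$ are pairwise non-homeomorphic. You spell this out explicitly: an isomorphism $\pi_1(M_j)\cong\pi_1(M_k)$ would, via Corollary~\ref{same-pieces}, force $\pi_1(N_j)\times\mZ^2\cong\pi_1(N_k)\times\mZ^2$, hence $\pi_1(N_j)\cong\pi_1(N_k)$ after quotienting by the center (Remark~\ref{center:rmk}), and Mostow rigidity then makes $N_j$ and $N_k$ commensurable, contradicting the choice. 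That chain is exactly right. One thing worth flagging: Corollary~\ref{same-pieces} is a consequence of Theorem~\ref{iso-preserve:thm}, which the paper proves in Chapter~\ref{pieces-iso:sec}, \emph{after} the corollary you are proving; this forward reference is logically harmless (no circularity), but it does show that the paper's one-line deduction is implicitly leaning on a nontrivial structural result established later, which your write-up is more honest about. The paper also mentions an alternative source of infinitely many $N_j$ (finite covers of a single manifold of increasing degree, hence increasing volume), which would equally well produce pairwise non-isomorphic $\pi_1(N_j)$ by Mostow rigidity; your direct use of the non-commensurability statement from \cite{MRS} is slightly cleaner but not materially different.
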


\begin{remark}\label{nogeneral:rem}
 Let $n\geq 2$, and let us take two hyperbolic $n$-manifolds $N_1$, $N_2$ with more than two cusps,
and whose cusps are all toric. Also take an $(n+2)$-hyperbolic manifold $N_3$ with at least one cusp,
and whose cusps are all toric. (Such manifolds exist by~\cite{MRS}.) If $V_i=\overline{N}_i\times T^2$, $i=1,2$,
then we can glue $V_1$ to $V_2$ as described in Proposition~\ref{notqi}, thus getting an (extended) graph manifold $M$
such that $\pi_1(M)$ contains a subgroup isomorphic to $\mathbb{Z}^2$ which is not quasi-isometrically embedded.
We can now glue $\overline{N}_3$ to $M$, thus getting an (extended) graph manifold $M'$
such that $\pi_1(M')$ again contains a subgroup isomorphic to $\mathbb{Z}^2$ which is not quasi-isometrically embedded.
As a consequence $M'$, while containing a purely hyperbolic piece, does not support any locally CAT(0) metric.
This shows that~\cite[Theorem 3.3]{leeb} may not be extended to higher dimensions. 
\end{remark}

%-----------------------------------------------------------------------

% Beginning of chap1.tex

%-----------------------------------------------------------------------

%

%  AMS-LaTeX sample file for a chapter of a monograph, to be used with

%  an AMS monograph document class.  This is a data file input by

%  chapter.tex.

%

%  Use this file as a model for a chapter; DO NOT START BY removing its

%  contents and filling in your own text.

% 

%%%%%%%%%%%%%%%%%%%%%%%%%%%%%%%%%%%%%%%%%%%%%%%%%%%%%%%%%%%%%%%%%%%%%%%%

\chapter{Topological rigidity}\label{toprigidity:sec}

In this chapter, we will establish various topological results for (extended) graph manifolds. The main goal will be to
establish Theorem \ref{toprigidity:thm}, which we restate here for the reader's convenience.

\begin{Thm2}[Topological Rigidity] 

Let $M$ be an (extended) graph manifold (possibly with boundary), of dimension $n\geq 6$. Assume $M^\prime$ is
an arbitrary topological manifold and $\rho: M^\prime \rightarrow M$ is a homotopy equivalence which restricts to a 
homeomorphism $\rho|_{\partial M^\prime}: \partial M^\prime \rightarrow \partial M$ between the boundaries 
of the manifolds. Then $\rho$ is homotopic, rel $\partial$, to a homeomorphism $\bar \rho: M^\prime \rightarrow M$.

\end{Thm2}

This result will be deduced as a special case of a more general result. For a compact topological manifold $M$, we
will call a finite family $\{N_i\}$ of embedded codimension one submanifolds in the interior of $M$ 
a {\it topological 
decomposition} if each $N_i$ has a product neighborhood $E_i \cong N_i\times (-1,1)$, and
the submanifolds are all pairwise disjoint. The {\it complexity} of the decomposition will be
the size of the family $\{N_i\}$. Given a topologically embedded codimension one submanifold 
$N\hookrightarrow M$ with a product neighborhood, 
the open manifold $M\setminus N$ has two ends, which can each be compactified by adding a copy of $N$. We
will say that the resulting manifold with boundary is obtained from $M$ {\it by cutting along $N$}. Note that if we
have a topological decomposition $\{N_i\}$ of $M$, then cutting along one of the $N_i$ yields a new 
topological manifold
$M^\prime$, with a topological decomposition of complexity one less. As the process of cutting decreases
the complexity, this allows us to use inductive arguments in our proofs. 

If $\{N_i\}$ is a topological decomposition of the manifold $M$, we can repeatedly cut along the $N_i$ until
we obtain a manifold $M^\prime$ with an empty topological decomposition (i.e. complexity zero). Each connected
component $M_j$ of $M^\prime$ will be called a {\it piece}, and each $N_i$ will be called a {\it wall}. Note that $M$ can
be reconstructed from its pieces, by performing repeated gluings along the walls. Observe also that our high
dimensional (extended) graph manifolds obviously come equipped with a topological decomposition, 
given by letting $\{N_i\}$
consist of all its internal walls (in the graph manifold sense). The Borel conjecture for (extended) graph manifolds is 
then a consequence of the following more general result.

\begin{theorem}[Topological Rigidity - general case]\label{Borel-general}

Let $M$ be a compact manifold of dimension $n\geq 6$, with a 
topological decomposition $\{N_i\}$. 
Assume the following conditions hold:
\begin{enumerate}[(i)]
\item each of the pieces $\{M_j\}$ and each of the walls $\{N_i\}$ are aspherical,
\item each of the pieces $\{M_j\}$ and each of the walls $\{N_i\}$ satisfy the Borel Conjecture,
\item each of the inclusions $N_i\hookrightarrow M_j$ is $\pi_1$-injective, 
\item each of the inclusions $\pi_1(N_i)\hookrightarrow \pi_1(M_j)$ is square-root-closed,
\item the rings $\mathbb Z\pi_1(N_i)$ are all regular coherent, and
\item $Wh_k\big(\mathbb Z\pi_1(M_j)\big) =0$ for $k\leq 1$, and likewise for $\pi_1(N_i)$.
\end{enumerate}
Then the manifold $M$ also satisfies the Borel Conjecture.
\end{theorem}

In Section \ref{Asphere:sec}, we start by discussing asphericity of our (extended) graph manifolds. In Section
\ref{algKth:sec}, we establish vanishing results for the lower algebraic $K$-theory. In Section \ref{Borel:sec},
we will prove Theorem \ref{Borel-general}, and in Section \ref{proof-Borel-graphmanifold:sec}, we will 
use it to deduce Theorem \ref{toprigidity:thm}. Finally, in Section \ref{BCC:sec}, we point out that
the Baum-Connes Conjecture also holds for the (extended) graph manifolds, and mention some consequences.

%%%%%%%%%%%%%%%%%%%%%%%%%%%%
%%%%%%%%%%%%%%%%%%%%%%%%%%%%

\section{Contractible universal cover}\label{Asphere:sec}

A basic result in metric geometry implies that the universal cover of a closed CAT(0) manifold is
contractible, and hence that any such manifold is aspherical. We establish the analogue:

\begin{lemma}

Let $M$ be a compact topological manifold, with a 
topological decomposition $\{N_i\}$. Assume that each of the pieces $M_j$ and each of the
walls $N_i$ are aspherical, and that each inclusion $N_i \hookrightarrow M_j$ is $\pi_1$-injective.
Then $M$ is aspherical.

\end{lemma}

\begin{proof}

We argue by induction on the complexity $k$ of the topological decomposition of $M$. If $k=0$, then $M$ is formed 
from a single piece. By hypothesis, the piece is aspherical, which establishes the base case for our induction.

Now assume $M$ has topological decomposition of complexity $k>0$, and that the result holds whenever we
have such a topological decomposition of complexity $<k$. Let 
$N_i$ be an arbitrary wall in $M$, and cut $M$ open along $N_i$. There are
two cases to consider, according to whether $N_i$ separates $M$ into two components or not. We deal with the
case where $W$ separates $M$ into $M^\prime$ and $M^{\prime \prime}$ (the other case uses a similar reasoning).
The manifolds $M^\prime$, $M^{\prime \prime}$ come equipped with a topological decomposition
of complexity $<k$. The inductive hypothesis now ensures that
they are both aspherical. 

So $M$ is obtained by gluing together the two aspherical spaces $M^\prime$ and $M^{\prime \prime}$ along a 
common aspherical
subspace $N_i$. A result of Whitehead \cite{Wh} now asserts that $M$ is also aspherical, {\it provided} that each of the
inclusions $N_i\hookrightarrow M^\prime$, $N_i\hookrightarrow M^{\prime \prime}$ is $\pi_1$-injective. But this 
follows from the fact that all the walls lie $\pi_1$-injectively in the adjacent pieces. This completes the inductive step, 
and establishes the Lemma.

\end{proof}

Let us now specialize to the case of (extended) graph manifolds. We have that each piece $M_j$ is homeomorphic 
to the 
product $\overline{N} \times T^k$ where $\overline{N}$ is a finite volume hyperbolic manifolds with cusps cut off, 
and $T^k$ is a torus. Since both factors are aspherical, and a product of aspherical manifolds is aspherical, we 
see that the pieces are aspherical. Each wall is homeomorphic to a torus $T^{n-1}$, hence is also aspherical. 
Moreover, we know (see Section~\ref{graphofgroups}) that the embedding of a wall into a piece is always 
$\pi_1$-injective. So an immediate consequence of the Lemma is:

\begin{corollary}\label{Aspherical}
If $M$ is an (extended) graph manifold (possibly with boundary), then $M$ is aspherical.
\end{corollary}

\section{Lower algebraic K-theory}\label{algKth:sec}

In the field of high-dimensional topology, some of the most important invariants of a manifold $M$
are the (lower) algebraic $K$-groups of the integral group ring of the fundamental group. Obstructions
to various natural problems often reside in these groups, and in some cases, all elements in the
group can be realized as such obstructions. As a result, it is of some interest to obtain vanishing results for
the lower $K$-groups. We will focus on the following covariant functors:

\begin{itemize}

\item the {\it Whitehead group} of $M$, $Wh\big(\pi_1(M)\big)$, which is 
a quotient of the group $K_1\big(\mZ [\pi_1(M)]\big)$,

\item the reduced $K_0$-group, $\tilde K_0\big(\mZ [\pi_1(M)]\big)$, and

\item the lower $K$-groups, $K_i\big(\mZ [\pi_1(M)]\big)$ with $i\leq -1$.

\end{itemize}
To simplify notation, we define the functors $Wh_i$ (for
$i\leq 1$) from the category of groups to the category of abelian groups as follows:
$$Wh_i(\G) := 
\begin{cases}
Wh(\G) & i=1 \\
\tilde K_0\big(\mZ [\G]\big) & i=0 \\
K_i\big(\mZ [\G]\big) & i\leq -1 \\
\end{cases}
$$
Recall that a ring $R$ is said to be {\it regular
coherent} provided every finitely generated $R$-module has a finite-length resolution by finitely generated 
projective $R$-modules. The following Lemma is an immediate consequence of work of Waldhausen.

\begin{lemma} \label{WhVanish-general}
Let $\mathcal G$ be a graph of groups, with vertex groups $G_j$ and edge groups $H_k$, and let
$\Gamma$ denote the fundamental group $\pi_1(\mathcal G)$. Assume that
we have $Wh_i(G_j)=0$ and $Wh_i(H_k)=0$ for all $i\leq 1$ and all $j,k$. If the rings $\mathbb Z[H_k]$ are
all regular coherent, then $Wh_i(\Gamma)=0$ for all $i\leq 1$.
\end{lemma}

\begin{proof}

We proceed by induction on the number $k$ of edges in the graph of groups $\mathcal G$. If $k=0$, 
then $\Gamma \cong G$, where $G$ is the (single) vertex group in $\mathcal G$. By hypothesis, we 
have $Wh_i(\Gamma)=Wh_i(G) = 0$ for all $i\leq 1$. So we may now assume that $k>0$. Pick an 
arbitrary edge $e$ in $\mathcal G$, and consider the induced 
splitting of the group $\G$. There are two cases to consider: 

\begin{enumerate}

\item if the edge separates the graph $\mathcal G$ into two components, then $\G = \G_1*_H \G_2$ is an 
amalgamation of two groups $\G_1$, $\G_2$ over a subgroup $H$.

\item if the edge does {\it not} separate, then $\G = \G^\prime*_H$ is isomorphic to an HNN extension of 
$\G^\prime$ over a subgroup $H$.

\end{enumerate}

Moreover, $H$ is the group associated to the edge $e$, and $\G^\prime, \G_1, \G_2$ are fundamental groups of 
graphs of groups with $<k$ edges (and which satisfy the hypotheses of this Lemma). By induction, the
$Wh_i$ functors ($i\leq 1$) vanish on the groups $\G^\prime, \G_1, \G_2$. We explain Case (1) in detail, as 
the argument for Case (2) is completely analogous.

\vskip 10pt

Waldhausen has established \cite{Wa1}, \cite{Wa2} (see also Bartels and L\"uck \cite{BL}
and Connolly and Prassidis \cite{CP})
a Mayer-Vietoris type sequence for 
the functors $Wh_i$ of an amalgamation $\G=\G_1*_H \G_2$ (or of an amalgamation $\G=\G^\prime*_H$). 
Waldhausen's sequence requires an ``adjustment term'' to 
$Wh_i(\G)$, and takes the form:
\begin{equation}
\hskip -0.8in \ldots \rightarrow Wh_i(H) \rightarrow Wh_i(\G_1)\oplus Wh_i(\G_2) \rightarrow Wh_i(\G)/Nil_i
\end{equation}

$$
\hskip 1.2in  \rightarrow Wh_{i-1}(H) \rightarrow Wh_{i-1}(\G_1)\oplus Wh_{i-1}(\G_2) \rightarrow \ldots
$$
In the above sequence, the adjustment terms $Nil_i$ are called the {\it Waldhausen Nil-groups} 
associated to the amalgamation $\G_1*_H \G_2$.

For our specific amalgamation, the inductive hypothesis ensures that the 
terms involving the $\G_i$ and the $H$ all vanish. 
Hence the Waldhausen long exact sequence gives us an isomorphism
$Wh_i(\G) \cong Nil_i$ for $i\leq 1$. Now the Waldhausen Nil-groups for a general amalgamation are 
extremely difficult to compute. However, when the amalgamating subgroup $H$ has the property that
its integral group ring $\mZ [H]$ is regular coherent,
Waldhausen has shown that the Nil-groups all vanish (see \cite[Theorem 4]{Wa1}). 
This gives us $Wh_i(\G) \cong Nil_i =0$ for $i\leq 1$, concluding 
the inductive step in Case (1). In Case (2), we can apply an identical argument to the analogous long exact
sequence for $\G =\G^\prime *_H$:

\begin{equation}
 \ldots \rightarrow Wh_i(H) \rightarrow Wh_i(\G^\prime) \rightarrow Wh_i(\G)/Nil_i \rightarrow Wh_{i-1}(H) 
 \rightarrow Wh_{i-1}(\G^\prime) \rightarrow \ldots
\end{equation}

This completes the proof of the proposition.

\end{proof}

Next let us specialize to the case of (extended) graph manifolds.
As discussed in Section~\ref{graphofgroups}, the fundamental group of $M$ coincides 
with the fundamental group of a graph of groups. 
The vertex groups are the fundamental groups
of manifolds with boundary, whose interiors are homeomorphic
to the product of a finite volume hyperbolic manifold with a torus.
For such manifolds, Farrell
and Jones \cite{FJ2} established the vanishing of the $Wh_i$ functors ($i\leq 1$).
The edge groups are fundamental groups of codimension one tori. When $M$ is a closed manifold of 
non-positive sectional curvature of dimension
$n\geq 5$, it follows from work of Farrell and Jones \cite{FJ1} that $Wh_i\big(\pi_1(M)\big)=0$ for all $i\leq 1$. 
As a special case, $Wh_i(\mZ ^k)$ vanishes for $i\leq 1$, $k\geq 5$ (in fact, using work of 
Bass, Heller, and Swan \cite{BHS} one can establish this for all $k$).
 Moreover, it is an old result of Hall \cite{Ha} that the integral group ring of finitely generated 
free abelian groups are regular coherent. Applying the previous Lemma, we can immediately conclude:

\begin{corollary}[Lower $K$-groups vanish] \label{WhVanish}

Let $M$ be a (extended) graph manifold (possibly with boundary) and $\G = \pi_1(M)$. 
Then we have that $Wh_i(\G)=0$ for all $i\leq 1$.

\end{corollary}

%%%%%%%%%%%%%%%%%%%%%%%%%%%%
%%%%%%%%%%%%%%%%%%%%%%%%%%%%

\section{Topological rigidity - the general case}\label{Borel:sec}

Having established our preliminary results, we now turn to showing Theorem \ref{Borel-general}. 
We start with a compact topological manifold $M$, of dimension $\geq 6$, equipped with a 
topological decomposition $\{N_i\}$, and satisfying the following conditions:
\begin{enumerate}[(i)]
\item each of the pieces $\{M_j\}$ and each of the walls $\{N_i\}$ are aspherical,
\item each of the pieces $\{M_j\}$ and each of the walls $\{N_i\}$ satisfy the Borel Conjecture,
\item each of the inclusions $N_i\hookrightarrow M_j$ is $\pi_1$-injective, 
\item each of the inclusions $\pi_1(N_i)\hookrightarrow \pi_1(M_j)$ is square-root-closed,
\item the rings $\mathbb Z\pi_1(N_i)$ are all regular coherent, and
\item $Wh_k\big(\mathbb Z\pi_1(M_j)\big) =0$ for $k\leq 1$, and likewise for $\pi_1(N_i)$.
\end{enumerate}
Moreover, we have a homotopy equivalence 
$\rho: M^\prime \rightarrow M$ where $M^\prime$ is an arbitrary topological manifold, and $\rho$ restricts to a
homeomorphism from $\partial M^\prime$ to $\partial M$. Our goal is to find a homeomorphism
$\bar \rho: M^\prime \rightarrow M$ homotopic to $\rho$ (rel $\partial$).

The proof of the theorem will proceed by induction on $k$, the number of walls in the topological decomposition 
of the manifold $M$. The base case for our induction, $k=0$, corresponds to the case where $M$ consists of
a single piece $M_j$, and the theorem follows immediately from condition (ii).
So we may now assume that $k>0$, and choose an arbitrary  
wall $N$ from the topological decomposition of $M$. Recall
that this wall $N$ is a topologically embedded codimension one submanifold, and that
the embeddings $N\hookrightarrow M$ extends to an embedding $N\times (-1, 1) \hookrightarrow
M$, with the wall corresponding
to the subset $N \times \{0\}$. We may also assume that this neighborhood is disjoint from any of the other 
walls in the topological decomposition of $M$. As a first step, we want to homotope the homotopy equivalence 
$\rho$ to a continuous map $f:M^\prime \rightarrow M$ having the property that $f$ is topologically transverse to $N$.

\vskip 10pt

Since transversality in the topological category might 
not be familiar to most readers, we briefly recall some aspects of the theory.
Milnor developed in \cite{Mi} a bundle theory for the topological category. A {\it microbundle} over a 
space $B$ consists of a triple $\mathfrak{X}:=(E, i, j)$, where $E$ is a space, 
$i: B\rightarrow E$ and $j: E\rightarrow B$ are a
pair of maps with $j\circ i \equiv \text{Id}_B$ ($i$ is called the injection, $j$ is called the projection). 
Additionally, this triple must satisfy a {\it local triviality} condition:
around each point $p\in B$, there should exist open neighborhoods $p\in U$, $i(p) \in V$ satisfying $i(U)\subset V$
and $j(V)\subset U$, and a homeomorphism $\phi:V\rightarrow U\times \mathbb R^n$ so that the following diagram 
commutes:
$$
\xymatrix{
U\times \mathbb R^n  \ar[r] ^-{p_1} & U \\
U  \ar[u]^{\text{Id}\times \{0\}} \ar[r]_{i|_U} & V \ar[ul]_\phi \ar[u] _{j|_V}\\}
$$
Two bundles $\mathfrak{X}_1, \mathfrak{X}_2$ over $B$ are considered isomorphic if, after passing to smaller 
neighborhoods of the sets $i_1(B) \subset V_1^\prime$ and $i_2(B)\subset V_2^\prime$, one has a homeomorphism
$\psi: V_1' \rightarrow V_2'$ with the property that $i_2' = \psi \circ i_1'$ and $j_1'=j_2'\circ \psi$. In other words, 
one only cares about the local behavior near the subset $i(B)$ in $E$.

If one has a topological submanifold $N$ inside an ambient manifold $M$, we say the submanifold has a {\it normal
microbundle} $\mathfrak{n}= (E, i, j)$ if the space $E$ is a neighborhood of $N$ inside $M$, and $i$ is the
obvious inclusion of $N$ into $E$. A map 
$f: M'\rightarrow M$ is said to be {\it topologically transverse} to the bundle $\mathfrak{n}$ if it satisfies:
\begin{itemize}
\item $N' :=f^{-1}(N)$ is a topological submanifold inside $M'$,
\item the submanifold $N'$ has a normal microbundle $\mathfrak{n}' =(E', i', j')$, and
\item $f$ restricts to a topological microbundle map $f|_{E'}: E' \rightarrow E$ (i.e. restricts to an open topological embedding of each
fiber of $\mathfrak{n}'$ into a corresponding fiber of $\mathfrak{n}$).
\end{itemize}
A fundamental result of Kirby and Siebenmann is that one can always homotope a map to be transverse to a given
normal microbundle for a submanifold in the target (see 
\cite[Essay III, Theorem 1.1, pg. 85]{kirby-siebenmann}, 
along with Quinn \cite[Theorem 2.4.1]{Q} for the remaining cases). Moreover, the homotopy can be chosen
to have support in an arbitrarily small neighborhood of the preimage of $N$ (assuming all manifolds involved
are compact).

\vskip 10pt

Now returning to the proof of the Borel Conjecture, we observe that, by hypothesis, the wall $N$ comes equipped
with a canonical normal microbundle $\mathfrak{n}$, whose total space is given by the product neighborhood 
homeomorphic to $N\times (-1,1)$. Applying Kirby-Seibenmann, we know that one can homotope $\rho$ to a map
$f$ which is a topologically transverse to $\mathfrak{n}$. We would like to further ensure that the
resulting topologically transverse continuous map $f:M^\prime \rightarrow M$ have the additional property that
(a) $f$ restricts to a homotopy equivalence 
$f|_{f^{-1}(N)}:f^{-1}(N) \rightarrow N$, and (b) $f$ restricts to a homotopy equivalence from 
$M^\prime \setminus f^{-1}(N)$ to $M \setminus N$.
This question was studied by Cappell \cite{Ca}, who showed that there are two further 
obstructions to being able to do this: 

\begin{itemize}

\item an element in a suitable quotient group of $Wh\big(\pi_1(M)\big)$, and

\item an element in a group $\text{UNil}$ defined by Cappell, which depends on the decomposition of $\pi_1(M)$ as
an amalgamation over $\pi_1(N)$ (or on the expression of $\pi_1 (M)$ as an HNN-extension over $\pi_1 (N)$).

\end{itemize}

In view of our hypotheses, conditions (v) and (vi) allows us to appeal to Lemma \ref{WhVanish-general}, which ensures
that the first obstruction must vanish. To deal with the second obstruction, we use a result of Cappell \cite{Ca}
showing that the $\text{UNil}$ group vanishes provided the subgroup $\pi_1(N)$ is {\it square-root closed} in the 
group $\pi_1(M)$. Recall that a subgroup $H\leq G$ is $n$-root closed provided that for $g\in G$, $g^n\in H$ forces
$g\in H$. But it is a general result that, for a graph of groups, root closure of the edge groups in the adjacent vertex
groups implies that the edge group is root closed in the fundamental group of the graph of groups
(see the proof of Lemma \ref{root free}). Using our topological decomposition, we can realize $\pi_1(M)$ as the 
fundamental group of a graph of groups, with edge groups the $\pi_1(N_i)$ and vertex groups the  
$\pi_1(M_j)$. Our hypothesis (iv) then ensures that $\pi_1(N)$ is square-root closed in $\pi_1(M)$, and hence
forces Cappell's secondary obstruction in the $\text{UNIl}$ group to also vanish (as the later group is trivial).

\vskip 10pt

So from Cappell's work, we have now succeeded in homotoping the homotopy equivalence $\rho$ to a map $f$
with the property that $f$ is topologically transverse to the normal microbundle $\mathfrak{n}$ of $N$.  Moreover, the homotopy can be chosen to have support in a small neighborhood of $\rho^{-1}(N)$, and in particular, we have
that $f$ coincides with $\rho$ on $\partial M'$. Let $N^\prime = f^{-1}\big(N\big)$, an 
$(n-1)$-dimensional submanifold of $M^\prime$. By transversality, $N'$ has a neighborhood $E'$ 
which forms the total space of a normal microbundle $\mathfrak{n}'$ over $N'$, and the map $f$ induces
a topological microbundle map from $E'$ into the product neighborhood $E\cong N\times (-1,1)$ of $N$.
Since $N$ separates the product neighborhood $E$ into two components, $N'$ must likewise
separate its neighborhood $E'$ into two components. This forces the microbundle $\mathfrak{n}'$ to be isomorphic
to the trivial $1$-dimensional microbundle $N' \times \mathbb R$ over $N'$, so after possibly shrinking the 
neighborhood $E'$, we can assume that $E'$ is also a product neighborhood homeomorphic to $N' \times (-1,1)$. 
By further restricting the total spaces
of the microbundles $\mathfrak{n}'$ and $\mathfrak{n}$, we can assume that the restriction 
$f: E' \rightarrow E$ to the product neighborhood $E' \cong N' \times (-1,1)$ takes the form 
$f(x, t) = (f_N(x), t)$, where $f_N: N^\prime \rightarrow N$ denotes the restriction
of $f$ to $N^\prime$. 

We know from Cappell's property (a) that the map $f_N: N^\prime \rightarrow N$ is a homotopy equivalence. 
By assumption (ii), the manifold $N$ 
satisfies the Borel Conjecture, so there exists a homotopy $F: N^\prime \times [0,1/2] \rightarrow N$ where
$F|_{N^\prime \times \{1/2\}} \equiv f_N$ and $F|_{N^\prime \times \{0\}}: N^\prime \rightarrow N$ 
is a homeomorphism. Inserting this homotopy into the map $f$, we obtain a new map 
$\hat f: M^\prime \rightarrow M$ defined via:
$$
{\hat f}(x) := 
\begin{cases}
f(x) & x\in M^\prime \setminus \big(N^\prime \times [-1/2, 1/2]\big)\\
F(x,|t|) & x\in N^\prime , t\in [-1/2, 1/2] \\
\end{cases}
$$

Now consider cutting $M$ open along the submanifold $N$. There are two possibilities, according to 
whether the complement of the wall has one or two connected components. We focus on the first case, since the second 
case is completely analogous. We now have a new manifold $M_0:= M \setminus N$ with two open 
ends, and we denote by $\bar M$ the obvious compactification  of $M_0$ obtained by closing off each end by attaching 
a copy of $N$. The compact manifold $\bar M$ inherits a topological decomposition, with one fewer wall 
than the topological decomposition of $M$, but with two additional boundary components. Likewise, we can cut 
$M^\prime$ open along the submanifold $N^{\prime}$, resulting in a manifold 
$M^\prime_0 = M^\prime \setminus N^{\prime}$ with two open ends, and corresponding manifold 
with boundary $\bar M^\prime$ obtained from $M^\prime_0$ by compactifying both ends with a copy of 
$N^\prime$. Now the map 
$\hat f$ induces a map, which we denote $g_o$, from $M^\prime_0$ to $M_0$. From the specific form
of $f$ in the vicinity of the submanifold $N^\prime \subset M^\prime$, we see that $g_0$ obviously extends to 
a map $g: \bar M^\prime \rightarrow \bar M$ between the compactifications, which induces a homeomorphism
between the compactifying set $\bar M^\prime \setminus M^\prime _0$ (two copies of $N^{n-1}$)
and the compactifying set $\bar M \setminus M_0$ (two copies of $N$). By Cappell's property (b), $g_0$ is
a homotopy equivalence, and since we have obvious homotopy equivalences $\bar M^\prime \simeq M^\prime _0$ 
and $\bar M \simeq M_0$, we conclude that $g$ is also a homotopy equivalence. 

We now have that $\bar M$ is a manifold with a topological decomposition having $<k$ walls, 
and a homotopy equivalence $g:\bar M^\prime
\rightarrow \bar M$ which restricts to a homeomorphism from $\partial \bar M^\prime$ to $\partial \bar M$. From the
inductive hypothesis, we see that the map $g$ is homotopic, rel $\partial$, to a homeomorphism. Since the homotopy
leaves the boundaries unchanged, we can lift the homotopy, via the obvious ``re-gluing'' of boundary components, 
to a homotopy from $\hat f: M^\prime \rightarrow M$ to a new map $\bar \rho: M^\prime \rightarrow M$. Moreover, it is 
immediate that the map $\bar \rho$ is a homeomorphism, completing the inductive step, and concluding the proof of 
our Theorem \ref{Borel-general}.

%%%%%%%%%%%%%%%%%%%%%%%%%%
%%%%%%%%%%%%%%%%%%%%%%%%%%

\section{Topological rigidity - (extended) graph manifolds}\label{proof-Borel-graphmanifold:sec}

\vskip 10pt

In the last section, we proved Theorem \ref{Borel-general}, establishing the Borel Conjecture for a broad 
class of manifolds.
We now proceed to prove Theorem \ref{toprigidity:thm}, by checking that our (extended) graph manifolds satisfy all the 
hypotheses of Theorem \ref{Borel-general}. We need to verify the following
six conditions:
\begin{enumerate}[(i)]
\item each of the chambers $\{C_j\}$ and each of the walls $\{W_i\}$ are aspherical,
\item each of the chambers and walls satisfy the Borel Conjecture, 
\item each of the inclusions $W_i\hookrightarrow C_j$ are $\pi_1$-injective, 
\item each of the inclusions $\pi_1(W_i)\hookrightarrow \pi_1(C_j)$ are square-root-closed,
\item the rings $\mathbb Z\pi_1(W_i)$ are all regular coherent, and
\item $Wh_k\big(\mathbb Z\pi_1(C_j)\big) =0$ for $k\leq 1$, and likewise for $\pi_1(W_i)$.
\end{enumerate}
Conditions (i) and (iii) have already been verified (see the paragraph preceding Corollary \ref{Aspherical}), as have
conditions (v) and (vi) (see the paragraph preceding Corollary \ref{WhVanish}). The fact that the chambers
and walls satisfy the Borel Conjecture is due to Farrell and Jones (see \cite{FJ1}, \cite{FJ2}), so condition (ii) holds.
We verify condition (iv).

\begin{lemma}
If $C$ is a chamber in an (extended) graph manifold, and $W$ is any adjacent wall, then $\pi_1(W)$ is 
square-root closed inside $\pi_1(C)$.
\end{lemma}

\begin{proof}
From the product structure
$C = \overline{N}  \times T^k$ on the chambers, we have that $\pi_1(C)$ splits as a product $\pi_1(\overline{N})
\times \mZ ^k$, 
where $\overline{N}$ is a suitable finite volume hyperbolic manifold with cusps cut off, and the $\mZ^k$ comes from the 
torus factor. $W$ is a boundary component of $C$, hence splits as $\pi_1(Y) \times \mZ^k$, where $Y\subset 
\overline{N}$
is a boundary component of $\overline{N}$. It is immediate from the definition that $\pi_1(W)$ is square-root closed
in $\pi_1(C)$ if and only if $\pi_1(Y)$ is square-root closed in $\pi_1(\overline{N})$.

Using the induced action of $\pi_1(\overline{N})$ on the neutered space $B$ (see Section \ref{univ:subsec}), we 
can identify $\pi_1(Y)$
with the stabilizer of a boundary horosphere component $\tilde Y$ in $B$. Now assume that 
$g\in \pi_1(\overline{N})$ satisfies 
$g^2\in \pi_1(Y)$, but $g\not \in \pi_1(Y)$. Then $g^2$ maps $\tilde Y$ to itself, but $g$ maps $\tilde Y$ to some other
boundary component $\tilde Y^\prime \neq \tilde Y$, i.e. $g$ interchanges the two horospheres $\tilde Y^\prime$ and 
$\tilde Y$. Since these two horospheres are centered at different points at infinity, there is a unique minimal length
geodesic segment $\eta$ joining $\tilde Y^\prime$ to $\tilde Y$. But $g$ acts isometrically, and interchanges the
two horospheres, hence must leave $\eta$ invariant. This forces $g$ to fix the midpoint of $\eta$, contradicting the
fact that the $\pi_1(\overline{N})$ action on $B$ is free. We conclude that every $\pi_1(W)$ is square-root closed in each
adjacent $\pi_1(C)$.

\end{proof}

This completes the proof of Theorem
\ref{toprigidity:thm}, establishing the Borel Conjecture for (extended) graph manifolds.

\vskip 5pt

\begin{remark} \label{Borel-extend:rmk}
Nguyen Phan~\cite{N} introduced the class
of {\it cusp decomposable} manifolds. These manifolds are defined in a manner similar to our graph manifolds,
but have pieces which are homeomorphic to finite volume negatively curved locally symmetric spaces with the 
cusps truncated.
The walls are homeomorphic to infra-nil manifolds. It is
straightforward to check that these pieces and walls satisfy conditions (i)-(vi) in our generalized
Theorem. As such, the Borel Conjecture also holds for the class of cusp decomposable manifolds.

\end{remark}

%%%%%%%%%%%%%%%%%%%%%%%%%%%%
%%%%%%%%%%%%%%%%%%%%%%%%%%%%

\section{Baum-Connes Conjecture and consequences}\label{BCC:sec}

We conclude this chapter by discussing the Baum-Connes conjecture for fundamental groups of (extended) 
graph manifolds.
Recall that to any group $G$, one can associate it's reduced group $C^*$-algebra $C^*_r(G)$ (see Section
\ref{C-simple:sec} for the definition). For a torsion-free group, the {\it Baum-Connes Conjecture} predicts that 
the complex $K$-homology of the classifying space $BG$ coincides with the topological $K$-theory of 
$C^*_r(G)$. 
For a thorough discussion of this subject, we refer the reader to the book \cite{MV} or the survey article 
\cite{LuR}.
We will actually establish a somewhat stronger result known as the Baum-Connes conjecture
with coefficients (the latter has better inheritance properties).

A group $G$ is {\it a-T-menable} (or {\it Haagerup}) if one can find an affine isometric action of $G$ on some 
Hilbert space $\mathcal H$
with the property that for any point $x\in \mathcal H$ and bounded set $B\subset \mathcal H$, only finitely many 
group elements map $x$ into $B$. This notion is extensively discussed in the book  \cite{CCJJV}. 

\begin{lemma}

Let $\mathcal G$ be a graph of groups, with vertex groups $G_i$, and let
$\Gamma$ denote the fundamental group $\pi_1(\mathcal G)$. If all vertex groups $G_i$ are a-T-menable,
then $\Gamma$ satisfies the Baum-Connes Conjecture with coefficients.

\end{lemma}

\begin{proof}
Groups which are a-T-menable satisfy the 
Baum-Connes Conjecture with coefficients (see Higson and Kasparov \cite[Thm. 1.1]{HK}), and
if a graph of groups has vertex groups satisfying the Baum-Connes Conjecture with coefficients,
so does the fundamental group of the graph of groups (by work of Oyono-Oyono, see \cite[Thm. 1.1]{O-O}).
\end{proof}

Fundamental 
groups of finite volume hyperbolic manifolds are examples of a-T-menable groups. Extensions of a-T-menable
groups by amenable groups are still a-T-menable (see \cite[Ex. 6.1.6]{CCJJV}). This tells us that the fundamental 
groups of pieces in our graph manifolds are always a-T-menable. So we obtain the immediate:

\begin{corollary}[Baum-Connes conjecture]

For $M$ an (extended) graph manifold (possibly with boundary), $\pi_1(M)$ satisfies the Baum-Connes conjecture (with
coefficients).

\end{corollary}

A nice feature of the Baum-Connes conjecture is that it is known to imply several other well-known
conjectures. We explicitly mention three of these consequences which
may be of general interest. Throughout the rest of this section, we let $G$ denote the fundamental group
of an arbitrary (extended) graph manifold.

\begin{corollary}[Idempotent conjectures] The Kadison
Conjecture holds: the reduced $C^*$-algebra $C^*_r(G)$ has no non-zero idempotents.
As a consequence, the Kaplansky Conjecture also holds: 
the group algebra $\mathbb Q G$ has no non-zero idempotents.
\end{corollary}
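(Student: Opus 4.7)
The plan is to reduce both conjectures to the validity of the Baum-Connes conjecture established in the previous proposition, via the standard "trace argument" of Baum-Connes-Higson. First, I would observe that $G = \pi_1(M)$ is torsion-free: indeed, by Lemma~\ref{Aspherical}, $M$ is aspherical, so $M$ is a model for the classifying space $BG$, and a group with a finite-dimensional $BG$ cannot contain torsion (any finite cyclic subgroup would have infinite cohomological dimension).

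Next, I would invoke the standard implication that for a torsion-free discrete group $G$, the Baum-Connes conjecture implies the Kadison (idempotent) conjecture for $C^*_r(G)$. The mechanism is the following: any idempotent $e \in C^*_r(G)$ is similar to a projection, and hence determines a class $[e] \in K_0(C^*_r(G))$. The canonical trace $\tau \colon C^*_r(G) \to \mathbb{C}$ induces a homomorphism $\tau_* \colon K_0(C^*_r(G)) \to \mathbb{R}$, and $\tau(e)$ equals the value of $\tau_*$ on $[e]$. Since $G$ is torsion-free, $\underline{E}G = EG$, and the Baum-Connes isomorphism $K_0^G(EG) \xrightarrow{\cong} K_0(C^*_r(G))$ combined with the compatibility of $\tau_*$ with the $L^2$-index map (the so-called trace conjecture, which is a known consequence of Baum-Connes in the torsion-free setting) forces $\tau_*$ to take values in $\mathbb{Z}$. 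But $\tau(e) \in [0,1]$ since $e$ is similar to a projection, so $\tau(e) \in \{0,1\}$; the faithfulness of $\tau$ then gives $e = 0$ or $e = 1$.

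Finally, the Kaplansky conjecture for $\mathbb{Q}G$ is a formal consequence of Kadison: the natural inclusion $\mathbb{Q}G \hookrightarrow \mathbb{C}G \hookrightarrow C^*_r(G)$ sends any nontrivial idempotent in $\mathbb{Q}G$ to a nontrivial idempotent in $C^*_r(G)$, contradicting Kadison. Hence $\mathbb{Q}G$ has no idempotents other than $0$ and $1$.

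The only non-routine input is the implication ``Baum-Connes for torsion-free $G$ $\Rightarrow$ Kadison,'' which I would not reprove but rather cite (for instance from~\cite{MV} or~\cite{LuR}); once that is quoted, the argument is a short verification that $G$ is torsion-free and an elementary embedding argument for Kaplansky. There is no real obstacle here, since everything nontrivial has already been established (Baum-Connes with coefficients in the previous proposition, asphericity in Lemma~\ref{Aspherical}).
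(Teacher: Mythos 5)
Your argument is correct and is exactly the standard reduction that the paper invokes without proof: it merely lists these idempotent conjectures as ``well-known consequences'' of Baum--Connes and refers to \cite{MV} and \cite{LuR}. Your write-up supplies what is implicit there --- the torsion-freeness of $\pi_1(M)$ (via Lemma~\ref{Aspherical}, which gives a finite-dimensional $BG$, hence finite cohomological dimension, hence no torsion), the trace/$K_0$-integrality argument for Kadison, and the embedding $\mathbb{Q}G \hookrightarrow C^*_r(G)$ for Kaplansky --- so it is a faithful expansion of the paper's citation rather than a different route.
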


\begin{corollary}[Gromov-Lawson-Rosenberg conjecture]  
Let $W$ be a closed, connected, smooth, spin manifold with $\pi_1(W) \cong G$. If
$W$ supports a Riemannian metric of positive scalar curvature, then the higher $\hat A$-genera
of $W$ all vanish.
\end{corollary}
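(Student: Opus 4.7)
The plan is to derive this as a standard consequence of the Baum-Connes conjecture established in the previous proposition, via the Rosenberg index and the assembly map. I will not prove anything new about positive scalar curvature; rather, I will assemble known implications in the right order.

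First, I would set up the framework. Let $f\colon W\to BG$ be a classifying map for the universal cover of $W$. Since $W$ is Spin of dimension $n$, one has a Dirac operator $D_W$ whose fundamental class in $K$-homology $[D_W]\in K_n(W)$ pushes forward under $f$ to a class $f_\ast [D_W]\in K_n(BG)$. Recall that the higher $\hat A$-genera of $W$ are by definition the rational numbers $\langle \hat A(W)\cup f^\ast(x),[W]\rangle$ for $x\in H^\ast(BG;\mathbb Q)$, and by the Chern character these are precisely the rational pairings detecting $f_\ast [D_W]$ in $K_\ast(BG)\otimes\mathbb Q\cong \bigoplus_k H_k(BG;\mathbb Q)$.

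Next, I would introduce the Rosenberg index $\alpha(W)\in K_n(C^\ast_r G)$, obtained by applying the Baum-Connes assembly map $\mu\colon K_n(BG)\to K_n(C^\ast_r G)$ to $f_\ast [D_W]$. The key classical fact, due to Rosenberg (building on Lichnerowicz's Weitzenb\"ock formula), is that if $W$ admits a metric of positive scalar curvature, then the Dirac operator twisted by the Mishchenko-Fomenko bundle is invertible, and consequently $\alpha(W)=0$ in $K_n(C^\ast_r G)$. This step uses nothing about $G$ beyond the existence of the reduced $C^\ast$-algebra.

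Now I would invoke the conclusion of the preceding proposition: $\pi_1(M)=G$ satisfies the Baum-Connes conjecture, and in particular the assembly map $\mu\colon K_\ast(BG)\to K_\ast(C^\ast_r G)$ is an isomorphism, hence injective. Combining injectivity with the vanishing $\mu(f_\ast [D_W])=\alpha(W)=0$ forces $f_\ast[D_W]=0$ in $K_n(BG)$, and therefore also in $K_n(BG)\otimes\mathbb Q$. By the Chern character identification above, every pairing $\langle \hat A(W)\cup f^\ast(x),[W]\rangle$ with $x\in H^\ast(BG;\mathbb Q)$ vanishes, which is precisely the statement that the higher $\hat A$-genera of $W$ are zero.

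There is no real obstacle specific to our setting; the substantive input is the Baum-Connes conjecture for $G$, which has already been established. The only thing to double-check is that we are citing the correct form of Rosenberg's theorem (complex vs.\ real $K$-theory): for the rational vanishing of higher $\hat A$-genera it suffices to work with the complex Baum-Connes assembly map and the complex index, so the argument above is self-contained modulo standard references.
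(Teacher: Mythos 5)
Your argument is correct and is exactly the standard chain the paper implicitly relies on: the paper states this corollary without proof, as a known consequence of Baum-Connes, and your reconstruction (Rosenberg index vanishing from positive scalar curvature, injectivity of the assembly map from Baum-Connes, and the Chern character identifying $f_\ast[D_W]\otimes\mathbb Q$ with the $\hat A$-class pushed to $H_\ast(BG;\mathbb Q)$) is precisely that chain. No discrepancy with the paper's (unstated) proof.
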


\begin{corollary}[Zero-in-the-Spectrum conjecture]

Let $M$ be an (extended) graph manifold, equipped with an arbitrary Riemannian metric. Then there exists
some $p\geq 0$ so that zero lies in the spectrum of the Laplace-Beltrami operator $\Delta _p$ acting
on square-integrable complex valued $p$-forms on $\tilde M$ (the universal cover of $M$). 
\end{corollary}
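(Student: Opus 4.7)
The strategy is to combine the asphericity of graph manifolds with the Baum--Connes conjecture just established, and then invoke a known implication due to Lott.

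First, by Lemma~\ref{Aspherical}, any graph manifold $M$ is aspherical, so $M$ is a model for the classifying space $BG$ where $G=\pi_1(M)$, and in particular $\tilde M$ is contractible. Second, the Baum--Connes conjecture for $G$ (proved above) implies that the analytic assembly map $\mu\colon K_*(BG)\to K_*(C^\ast_r G)$ is an isomorphism; injectivity of $\mu$ is precisely the Strong Novikov Conjecture for $G$.

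The key input is then the theorem of Lott (\emph{The zero-in-the-spectrum question}, L'Enseign.\ Math.\ 42 (1996)), which asserts that for a closed aspherical Riemannian manifold whose fundamental group satisfies the Strong Novikov Conjecture, there exists $p\geq 0$ such that zero lies in the $L^2$-spectrum of the Laplace--Beltrami operator $\Delta_p$ on $\tilde M$. The heuristic behind Lott's result is that if zero were absent from every $\Delta_p$, the spectral gap would produce a uniformly $G$-equivariantly bounded inverse for the de Rham Laplacian on $L^2$-forms, hence a bounded equivariant chain contraction of the $L^2$-de Rham complex. Standard signature/index machinery would then force the image of the $K$-homology fundamental class $[M]\in K_n(BG)$ under $\mu$ to vanish in $K_*(C^\ast_r G)$, contradicting rational injectivity of $\mu$ together with the non-triviality of $[M]\in K_n(BG)=K_n(M)$.

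The principal obstacle I anticipate is the case of nonempty boundary, since Lott's theorem is formulated for closed manifolds. I would resolve this by doubling $M$ along $\partial M$: because the boundary walls are affine tori, the double $DM$ is again a (closed) graph manifold, still aspherical with $\pi_1(DM)$ satisfying Baum--Connes. Applying Lott's theorem to $DM$ yields some $p$ with $0\in \mathrm{spec}(\Delta_p)$ on $\widetilde{DM}$; using the fact that for a symmetric product metric on $DM$ the universal cover $\tilde M$ embeds in $\widetilde{DM}$ as a totally geodesic subset with a reflection involution, one can transfer an approximate eigenform on $\widetilde{DM}$ to one on $\tilde M$ (projecting onto the $\pm 1$-eigenspaces of the reflection and noting that the parity must land in some $L^2$-form degree), obtaining the spectral conclusion for $\tilde M$. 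Alternatively, one could adapt Lott's argument directly to the bounded setting via absolute (Neumann) boundary conditions, since asphericity, Baum--Connes, and the fundamental class in relative $K$-homology are all still available.
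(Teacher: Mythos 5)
Your main argument --- asphericity (Lemma~\ref{Aspherical}), plus Baum--Connes (hence the Strong Novikov Conjecture), plus Lott's theorem for closed aspherical manifolds --- is precisely the standard implication the paper implicitly invokes when presenting this corollary without proof, and it is correct for closed $M$.

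The doubling workaround you sketch for the case $\partial M\neq\emptyset$ does not work as described, though. The reflection involution $\sigma$ on $DM$ fixing $\partial M$ does \emph{not} lift to a single involution on $\widetilde{DM}$ whose fixed locus splits $\widetilde{DM}$ into two copies of $\tilde M$. Since $\pi_1(M)$ is a proper amalgamated factor of $\pi_1(DM)=\pi_1(M)\ast_{\pi_1(\partial M)}\pi_1(M)$, it has infinite index, so the preimage of $M$ in $\widetilde{DM}$ consists of infinitely many disjoint copies of $\tilde M$, and the various lifts of $\sigma$ generate an infinite reflection-type action on $\widetilde{DM}$ in which $\tilde M$ is merely one chamber among infinitely many. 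Projecting an approximate $L^2$-eigenform on $\widetilde{DM}$ onto the $\pm 1$-eigenspaces of any single lift therefore need not produce anything concentrated on, or even restricting sensibly to, $\tilde M$; more to the point, having $0$ in the $L^2$-spectrum of the much larger space $\widetilde{DM}$ carries no direct spectral information about the subspace $\tilde M$. Your second suggested alternative (absolute/Neumann boundary conditions and a relative $K$-homology class) is the sound route if one really needs the bounded case --- but as the corollary is stated with no boundary conditions on $\Delta_p$, it is most naturally read for closed $M$, in which case your primary argument already gives the result.
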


\chapter{Isomorphisms preserve pieces}\label{pieces-iso:sec}
This chapter is devoted to the proof of Theorem~\ref{iso-preserve:thm}.
We recall the statement here for convenience.

\begin{Thm2}
Let $M_1$, $M_2$ be a pair of (extended) graph manifolds and let $\G_i=\pi_1(M_i)$ be their respective
fundamental groups.  Let $\La_1 \leq \G_1$ be a subgroup conjugate to the fundamental
group of a piece in $M_1$, and $\varphi\colon \G_1\rightarrow \G_2$ be an isomorphism. Then $\varphi(\La_1)$ is conjugate to the fundamental group $\La_2
\leq \G_2$ of a piece in $M_2$.
\end{Thm2}

Let us briefly describe the strategy of our proof. 
It is sufficient to provide a group-theoretic characterization of
fundamental groups of pieces for a generic (extended) $n$-dimensional graph manifold $M$.
We study the action of the fundamental group of $M$ on the Bass-Serre tree associated
to the decomposition of $M$ into pieces. We first describe the maximal subgroups
of $\pi_1(M)$ which are isomorphic to $\mZ^{n-1}$. In the case when $M$
is a graph manifold, these subgroups are just (conjugates of) the fundamental groups
of the boundary components of the pieces of $M$. From the point of view of the geometry
of $\widetilde{M}$, this implies
that, if $M$ is a graph manifold, then the stabilizers of walls of $\widetilde{M}$ admit an easy algebraic characterization.

In the general case things get more complicated, because
the fundamental groups of surface pieces contain many maximal
abelian subgroups of rank $n-1$. However, this fact allows us to provide
a group-theoretic characterization of (conjugates of) fundamental groups
of surface pieces. The algebraic description of the fundamental groups
of non-surface pieces requires more work, and it is based on the study
of the coarse geometry of non-surface chambers in $\widetilde{M}$. 
We first provide a coarse-geometric characterization of the fundamental groups 
of the boundary components of such pieces (as mentioned above, a much easier 
algebraic description of these subgroups is available in the case of graph manifolds). Since 
any chamber is coarsely approximated by the adjacent walls and
every group isomorphism is  a quasi-isometry, 
via Milnor-Svarc Lemma this implies
that every group isomorphism between the fundamental groups
of (extended) graph manifolds  quasi-preserves the fundamental
groups of non-surface pieces. Finally, a standard trick 
allows us to show that fundamental
groups of non-surface pieces are indeed preserved (rather than only quasi-preserved)
by any isomorphism, and this concludes the proof.

%Let us briefly describe our strategy in the easier case of graph manifolds.
%First, via the study of the action
%of $\G_i$ on its Bass-Serre tree, one may provide an easy group-theoretic %characterization
%the fundamental groups of the boundary component of the pieces of $M_i$.
%This implies that $\varphi$
%has to take the fundamental group of a boundary component of a piece of $M_1$
%onto (the conjugate of) the fundamental group of a boundary component
%of a piece of $M_2$. This step requires that
%$\varphi$ is an isomorphism, rather than simply a quasi-isometry.
%This result can be used to show that the fundamental
%group $\La_1$ of a piece of $M_1$ is taken by $\varphi$ onto a subset of $\G_2$ which %lies
%at finite Hausdorff distance from (the conjugate of)
%the fundamental group $\Lambda_2$ of a piece of $M_2$ (here we only use that $\varphi$ is a quasi-isometry).
%Finally, a standard trick (using again the fact that $\varphi$ is an isomorphism)
%allows us to conclude that
%this subset indeed coincides with a conjugate of $\Lambda_2$.

%In the case of extended graph manifolds things get more complicated,
%because the algebraic characterization of wall stabilizers
%is more involved. In fact, we will first characterize the fundamental groups
%of surface pieces. Then
%MAH...

As already mentioned, in the case of graph manifolds the proof of Theorem~\ref{iso-preserve:thm} may be significantly simplified. We refer the reader to Remark~\ref{simplification} for a brief description of the shortcuts
that are available in that case.

\section{Some properties of wall stabilizers}
Let $M$ be an (extended) $n$-dimensional graph manifold. We set $\G=\pi_1(M)$, and we denote
by $T$ the Bass-Serre tree associated to the decomposition of $M$ into pieces.
Edge (resp.~vertex) stabilizers correspond to stabilizers
of internal walls (resp.~chambers) of $\widetilde{M}$. 
If $e$ is an edge (resp.~$v$ a vertex) of $T$, then we denote
by $\G_e$ (resp.~$\G_v$) the stabilizer of $e$ (resp.~$v$) in $\G$. 
In order
to prove Theorem~\ref{iso-preserve:thm} we need to provide a group-theoretic
characterization of vertex stabilizers of $T$.

We denote by $F(\G)$ the collection of maximal subgroups of $\Gamma$ which are isomorphic to $\mZ^{n-1}$ (the symbol $F(\G)$ is meant to
suggest that elements in $F(\G)$ behave somewhat like
$(n-1)$-dimensional flats -- note however that subgroups in $F(\G)$
may  be distorted in $\G$).
We will see in Corollary~\ref{char:graph:cor} that, in the case
when $M$ is a graph manifold, wall stabilizers
are exactly the elements of $F(\G)$. 
Unfortunately, this is not true when surface pieces are allowed.

\begin{lemma}\label{maximal:sub:lemma}
 A subgroup $H$ of $\G$ belongs to $F(\G)$ if and only if
it is a maximal subgroup isomorphic to $\mZ^{n-1}$ of the stabilizer
$\G_v$ of a vertex $v$ of $T$. 
\end{lemma}
\begin{proof}
Let $H\in F(\G)$. As $H$ is a finitely generated nilpotent group, a standard result about groups acting on a tree (see~\cite[Proposition 6.5.27]{serre}) guarantees that if $H$ does {\it not} stabilize a vertex, then there exists a geodesic $\gamma$ in $T$ that is invariant under the action of $H$. So we only need to prove that there is no such geodesic.
\par

The stabilizer $Stab(\gamma)$ of any geodesic has a subgroup 
$Fix(\gamma)$, with quotient isomorphic to either $1$, $\mathbb Z/2$,
$\mathbb Z$, or $\mathbb D_\infty$. So if $H=Stab(\gamma)$, then the subgroup
$Fix(\gamma) \leq H \cong \mZ^{n-1}$ is abstractly isomorphic to either 
(i) $\mZ^{n-1}$ or (ii) $\mZ^{n-2}$. From the properties of the action on the Bass-Serre tree, we know
that the subgroup which fixes a pair of adjacent edges, when thought of as a
subgroup of the common vertex group, is contained in the corresponding fiber
subgroup (see Lemma \ref{acyl-lem}). Since these 
fiber subgroups have rank $\leq n-2$, we see that (i) cannot occur. 

To see that (ii) cannot occur, we note that this would force {\it all} vertices on
the geodesic $\gamma$ to correspond to surface pieces. But we assumed that surface pieces have
fiber subgroups whose intersection has rank $\leq n-3$. Since $H$ would have
to be contained in this intersection, we again obtain a contradiction. This rules out
case (ii), thus showing that $H$ is contained in the stabilizer
$\G_v$ of a vertex $v$ of $T$. Being maximal among the
subgroups of $\G$ that are isomorphic to $\mZ^{n-1}$, the subgroup $H$ is 
maximal among the
subgroups of $\G_v$ that are isomorphic to $\mZ^{n-1}$.

Let now $v$ be a vertex of $T$, and suppose that $H$ is 
maximal among the
subgroups of $\G_v$ that are isomorphic to $\mZ^{n-1}$. Take a subgroup $H'<\G$
isomorphic to $\mZ^{n-1}$ and containing $H$. We distinguish two cases.

If $H$ fixes a vertex $w\neq v$, then
it fixes an edge $e$ of $T$ exiting from $v$.  Using that $H$
is maximal  among
the subgroups of $\G_v$ that are isomorphic to $\mZ^{n-1}$, it is readily seen that
$H$ coincides with the stabilizer of $e$. Moreover, $H'$ is contained in the normalizer
of $H$, so Lemma~\ref{conj:lemma} implies that $H'=H$. 

If $v$ is the unique vertex fixed by $H$, then $H'$ also fixes $v$, so
$H'$ is contained in $\G_v$, and $H'=H$ by maximality of $H$ among
the subgroups of $\G_v$ isomorphic to $\mZ^{n-1}$. 

In any case, we have shown that $H\in F(\G)$, and this concludes the proof.
\end{proof}

\begin{lemma}\label{first-char}
 If $H<\G$ is a wall stabilizer, then $H\in F(\G)$. On the other hand,
if $H\in F(\G)$, then:
\begin{enumerate}
 \item either $H$ is a wall stabilizer, or
\item there exists a unique vertex $v$ of $T$ which is fixed by $H$, and
this vertex corresponds to a surface piece of $M$.
\end{enumerate}
\end{lemma}
\begin{proof}
 It is immediate to check that a wall stabilizer is a maximal subgroup isomorphic to $\mZ^{n-1}$ of the stabilizer of a vertex of $T$, so the first statement
follows from Lemma~\ref{maximal:sub:lemma}.

Assume now that  $H\in F(\G)$ is not a wall stabilizer. 
In order to conclude we need to show that $H$ satisfies condition (2) of the statement.

By Lemma~\ref{maximal:sub:lemma}
we know that $H$ is contained in the stabilizer of
a vertex $v$ of $T$. Moreover, $v$ is the unique vertex fixed by $H$, 
because otherwise
$H$ would fix an edge of $T$, and by maximality it would coincide
with a wall stabilizer. 

Suppose by contradiction that the piece $V$ of $M$ corresponding to $v$
is not a surface piece.
We denote by
$N$ and $T^k$ respectively the hyperbolic and the toric factor of $V$, so that
$H$ is contained in a conjugate of $\pi_1(N\times T^k)<\pi_1(M)$.
For our purposes, we can safely assume $H<\pi_1(N\times T^k)$.
 Since $V$ is not a surface piece, we have $k\leq n-3$. The projection of $H$ on $\pi_1(N)$ is an abelian group of rank at least $n-k-1\geq 2$, and it is therefore contained in a cusp subgroup. By maximality,
this implies that  $H$ is a wall stabilizer, a contradiction. 
\end{proof}

The previous Lemma shows that, in the case when $M$ 
is a graph manifold, wall stabilizers admit an easy group-theoretic characterization:

\begin{corollary}\label{char:graph:cor}
 Suppose that $M$ is a graph manifold and let $H$ be a subgroup
of $\G$. Then $H\in F(\G)$ if and only if $H$ is a wall stabilizer.
\end{corollary}

\begin{remark}\label{simplification}
 As mentioned in the introduction of the Chapter, Corollary~\ref{char:graph:cor}
allows us to simplify the proof of Theorem~\ref{iso-preserve:thm} in the case of graph manifolds. In fact, the reader who is not interersted in the extended case
%extended graph manifolds may safely skip...
%If the reader is not interested in the extended case then he/she 
can safely skip all the material preceding Section~\ref{coarse:section},
with the exception of Lemma~\ref{avoid-hor} (where ${\rm Fl}(\overline{H})$,
for $H\in F(\G)$, may be replaced by the wall stabilized by $H$ -- see Corollary~\ref{char:graph:cor}).
\end{remark}

It will be useful to deepen our understanding
of groups in $F(\G)$  in the general case of extended graph manifolds. To this aim
we point out the following
geometric description
of elements in $F(\G)$.

\begin{lemma}
\label{fmstab}
 Each $H\in F(\G)$ is contained is the stabilizer of a flat ${\rm Fl}(H)$
in some chamber $C$ (with respect to the CAT(0) metric of $C$). Such flat is unique up to bounded Hausdorff distance. Moreover we may choose ${\rm Fl}(H)$ in such a way that:
\begin{itemize}
 \item either ${\rm Fl}(H)$ is a boundary component of $C$, 
\item or $C$ is a surface chamber, and ${\rm Fl}(H)=\gamma\times \mR^{n-2}$, 
where $\gamma$ is a geodesic contained in the base of $C$. 
\end{itemize}
\end{lemma}

\begin{proof}
If $H$ is a wall stabilizer, then the conclusion is clear, so 
by Lemma \ref{first-char} we may suppose that $H$ is contained in the stabilizer of a surface chamber $C$.
Let $V=\Sigma\times T^{n-2}$ be the piece of $M$ which is covered by $C$,
and let us fix an identification of $\pi_1(V)=\pi_1(\Sigma)\times \mZ^{n-2}$
with the stabilizer of $C$. It is immediate to check that, being a maximal
abelian subgroup of $\pi_1(V)$ of rank $n-1$, the group
$H$ decomposes as a product $H=\langle \alpha\rangle \times \mZ^{n-2}$,
where $\alpha$ is an indivisible element of $\pi_1(\Sigma)$.
We can now apply the Flat Torus Theorem, and notice that the flat 
associated to $H$ splits as claimed because of the product structure of $H$. 
\end{proof}

%\begin{lemma}\label{surface:H}
%Suppose that the group $H\in F(\G)$ is contained
%in the stabilizer of a vertex $v$ corresponding
%to a surface piece $V$. Then  a surface chamber projecting to the piece
%$V=\Sigma\times T^{n-2}$, and under the identification of $\G_v$
%with (a conjugate of) $\pi_1(V)=\pi_1(\Sigma)\times \mZ^{n-2}$ we have
%$H=\langle \gamma\rangle \times \mZ^{n-2}$, where $\gamma$ is an indivisible
%element of $\pi_1(\Sigma)$. MAH
%\end{lemma}
%\begin{proof}
%We have shown that the piece $V$ corresponding to $v$ decomposes into
%a product $\Sigma\times \mZ^{n-2}$, where $\Sigma$ is a hyperbolic
%surface with boundary. Moreover,
%$H$ is (conjugated to) a subgroup of $\pi_1(V)=\pi_1(\Sigma)\times \mZ^{n-2}$,
%so
%the last sentence of point (2) is an easy consequence of the classification
%of maximal subgroups of $\pi_1(\Sigma)\times \mZ^{n-2}$ isomorphic to $\mZ^{n-1}$.
%\end{proof}

\section{Characterizing surface pieces}

\begin{lemma}
 \label{n-2int}
 Let $H_1,H_2\in F(\G)$ and suppose that $H_1\cap H_2=K$ is an abelian group
of rank $n-2$. Then:
\begin{enumerate}
 \item There exists a unique vertex $v$ of $T$ which is fixed by $H_1\cup H_2$. 
\item 
The vertex
$v$ corresponds to a surface piece $V$ of $M$, and $K$ coincides with the fiber
subgroup of (a conjugate of) $\pi_1(V)$.
\item
Let $\G_v$ be
the stabilizer of $v$ in $\G$. Then
$$
\G_v\, =\, \bigcup_{H\in F(\G),\, H\supseteq K} H\ .
$$ 
\end{enumerate}
\end{lemma}
\begin{proof}
(1): We know from Lemma~\ref{first-char} that there exist vertices $v_i$, $i=1,2$
such that $v_i$ is fixed by $H_i$. Let us denote by $T_K\subseteq T$ the
subset of $T$ fixed by $K$. It is well-known that $T_K$ is a subtree of $T$.
Moreover, we have $\{v_1,v_2\}\subseteq T_K$. We claim that the diameter
of $T_K$ is at most 2. In fact, if this is not the case, then there exist
3 consecutive edges $e_1,e_2,e_3$ of $T$ which are fixed by $K$. 
By Lemma~\ref{acyl-lem}, this implies that $K$ is contained in the fiber subgroups
of the stabilizers of two adjacent vertices of $T$.
But this contradicts the fact that adjacent surface pieces have
fiber subgroups whose intersection has rank $\leq n-3$. We have thus shown that 
the diameter of $T_K$ is at most 2. 
Observe now that the group $\langle H_1,H_2\rangle$ generated by $H_1\cup H_2$
centralizes $K$, so $\langle H_1,H_2\rangle$ leaves $T_K$ 
invariant. Since $T_K$ is bounded, this implies
that $\langle H_1,H_2\rangle$ fixes a point $v\in T_K$ (see e.g.~\cite[Corollary 2.8]{bri}). But $\G$ acts on $T$ without inversions, so we may assume that
$v$ is a vertex of $T$. 

In order to conclude the proof of (1) we are left to show that, if $v'$ is a vertex fixed
by $H_1\cup H_2$, then $v'=v$. However, if $v'\neq v$, then $\langle H_1\cup H_2\rangle$ fixes an edge $e$ of $T$, so $\langle H_1\cup H_2\rangle$
is abelian of rank at most $n-1$. By maximality, this forces
$H_1=\langle H_1\cup H_2\rangle=H_2$, against the fact that
the rank of $K=H_1\cap H_2$ is equal
to $n-2$.

(2): Let $V=N\times T^k$ be the piece of $M$ corresponding to $v$, 
where $N$ and $T^k$ denote respectively the hyperbolic base and the toric
factor of $V$.
The stabilizer $\G_v$ of $v$ in $\G$ is isomorphic to $\pi_1(V)=\pi_1(N)\times \pi_1(T^k)$, and the maximal abelian subgroups
of rank $n-1$ of $\pi_1(N)\times \pi_1(T^k)$ are given by the products
$J\times \mZ^k$, where $J$ varies among the maximal abelian subgroups
of $\pi_1(N)$ of rank $n-1-k$. Two such products intersect in the fiber
subgroup $\{1\}\times \mZ^k$, which has rank equal to $n-2$ if and only
if $k=1$, i.e.~if and only if $V$ is a surface piece.

(3): 
We first prove the inclusion $\subseteq$.
Let $V=\Sigma\times T^{n-2}$ be the piece of $M$ corresponding to $v$, 
where $\Sigma$ and $T^{n-2}$ denote respectively the hyperbolic base and the toric
factor of $V$, and let us fix an identification between $\G_v$ and $\pi_1(V)=\pi_1(\Sigma)\times \mZ^{n-2}$. Every element of $\pi_1(V)$ lies 
in a subgroup $J$ of the form $\langle \alpha\rangle \times \mZ^{n-2}$, where
$\alpha$ is an indivisible element of $\pi_1(\Sigma)$, and it is immediate
to check that $J$ is a maximal
abelian subgroup of rank $n-1$ in $\pi_1(V)$. 
Then the conclusion follows from Lemma~\ref{maximal:sub:lemma}.

Let now $H\in F(\G)$ be such that $K\subseteq H$. Then $H$
leaves invariant the subtree $T_K$ introduced in the proof of point (1).
As a consequence, $H$ fixes $v$, i.e.~$H\subseteq \G_v$, and this concludes
the proof of the Lemma.
\end{proof}

\begin{corollary}\label{surface-pieces-preserved}
 Let $\Lambda$ be a subgroup of $\G$. Then $H$ is (conjugate to) the fundamental
group of a surface piece of $M$ if and only if the following condition holds:
there exist elements $H_1,H_2\in F(\G)$ such that $K=H_1\cap H_2$ has rank $n-2$,
and $$
\Lambda =\, \bigcup_{H\in F(\G),\, H\supseteq K} H\ .
$$ 
\end{corollary}

The previous corollary implies that any isomorphism between the fundamental
groups
of extended graph manifolds must preserve (up to conjugacy)
the fundamental groups of surface pieces. In order to prove the same result
for the fundamental groups of non-surface pieces we need to develop the study of the
coarse geometry of chambers.

\section{Further properties of wall stabilizers}

%We will call \emph{affine subspace} of the wall $W$ an affine subspace of codimension at least $1$ of a boundary component of $W$ (with respect to its natural affine structure). 

\begin{lemma}
\label{avoid-hor}
 Let $C$ be a non-surface chamber in $\widetilde{M}$. Let $W,W'$ be distinct walls 
adjacent to $C$, let $H,H'$ be the stabilizers of $W,W'$, and
take $\overline{H}\in F(\G)\setminus \{H,H'\}$.
 %and let $F$ be either a wall adjacent to $C$ and distinct from $W,W'$ or 
%a flat of a chamber $\overline{C}\neq C$ (with respect to the path metric of $\overline{C}$).
%an affine subspace of $W\cap C$ or $W'\cap C$. 
Then for every $D\geq 0$ there exist points 
$w\in W\cap C$, $w'\in W'\cap C$ which are joined by a path $\gamma\colon [0,l]\to C$
which avoids $N_D({\rm Fl}(\overline{H}))$. 
\end{lemma}

\begin{proof}
Let us set $F={\rm Fl}(\overline{H})$. Since $\overline{H}\notin \{H,H'\}$
we may suppose that $F$ is disjoint from $C\cup W\cup W'$.

We first assume that $F$ lies in the connected
component of $\widetilde{M}\setminus (W\cup W')$ containing $C$.
%Then, since $C$ is a non-surface chamber, 
%the flat ${\rm Fl}(H'')$ either is contained
%in a wall of $C$, or it is disjoint from $C$. 
Then
there exists a wall $\overline{W}\neq W,W'$ adjacent to $C$
such that every path joining $F$ to $W\cup W'$
must pass through $\overline{W}$. Then any path which joins $W$ to $W'$
and avoids $N_D(\overline{W})$ also avoids $N_D(F)$. Therefore,
by Lemma~\ref{comparedist:lem}, if $D'$ is any given constant, then
it is sufficient to construct a path $\gamma$ joining $W$ to $W'$ and such that
$d_C(\gamma (t),\overline{W})\geq D'$ for every $t\in [0,l]$, where
$d_C$ is the path metric of $C$.

If $\pi\colon C\to B$
is the projection of the chamber $C$ on its base, then $\pi (W\cap C)=O$, 
$\pi (W'\cap C)=O'$ and 
$\pi (\overline{W}\cap C)=\overline{O}$ for distinct horospheres $O,O',\overline{O}$ of the neutered space $B\subseteq \mathbb{H}^k$. 
Let us fix an identification of $\mathbb{H}^k$ with the half-space model, in such a way that
$\overline{O}$ corresponds to a horosphere centered at the point at infinity. 
Since $k\geq 3$, it is now easy to show that for every sufficiently small $\varepsilon>0$ it is possible
to join a point in $O$ with a point in $O'$ by a rectifiable path supported
on the intersection of $B$ with the Euclidean horizontal hyperplane at height $\varepsilon$. 
In fact, this intersection is (homeomorphic to) $\mR^{k-1}$ with 
a countable family of open disjoint balls removed (recall that $k-1\geq 2$). Let 
$\gamma\colon [0,l] \to C$ be a lift to $C$ of such a path. 
It is clear that $d_{C} (\gamma (t),\overline{W})\geq D'(\varepsilon)$ for every $t\in [0,l]$,
where $D'(\varepsilon)$ tends to $+\infty$ as $\varepsilon$ tends to $0$.
This concludes the proof in the case when
$F$ lies in the connected
component of $\widetilde{M}\setminus (W\cup W')$ containing $C$.

Let us now suppose that $F$ and $C$ lie in distinct connected components of $\widetilde{M}\setminus (W\cup W')$. Then either every path joining $W$ with $F$ must
pass through $W'$, or every path joining $W'$ with $F$ must
pass through $W$. We assume that the second case holds, the first case
being symmetric. Under our assumptions, there exists a chamber $\overline{C}\neq C$
which is adjacent to $W$. We choose a fiber of $\overline{C}\cap W$,
and denote by $\overline{P}$ the corresponding affine subspace of 
$W\cap C$.
%a fiber of $\overline{C}$ contained in $W$.
It is not difficult to show that there exists $D'\geq 0$ such that any path
in $C$ avoiding $N_{D'} (\overline{P})$ also avoids $N_D(F)$.
Therefore, by Lemma~\ref{comparedist:lem}, if $D''$ is any given constant, then
it is sufficient
to construct a path $\gamma$ joining $W$ to $W'$ and such that
$d_C(\gamma (t),\overline{P})\geq D''$ for every $t\in [0,l]$, where
$d_C$ is the path metric of $C$.
%Suppose now that $P$ is an affine subspace of $W$ (the case when $P$ is contained in $W'$ is symmetric). 

If $P$ is the fiber of $C$, we can choose $v\in P$ so $B\times\{v\}\subseteq C$ intersects $P$ in a proper subset of $W\cap (B\times\{v\})$. 
If $D''$ is large enough, then
any path in $B\times\{v\}$ that connects $W$ to $W'$ avoiding $N_{D''}(P\cap (B\times\{v\}))$  also avoids $N_{D''}(P)$, so we want to find such a path. Let us fix an identification of $B\times\{v\}$ with a neutered space $\hat{\mathbb{H}}^k$ in the half-space model of $\mathbb{H}^k$ (where $k\geq 3$) in such a way that $W\cap (B\times\{v\})$ corresponds to the horosphere $O$ centered at infinity. Let $O'$ be the horosphere corresponding to $W'\cap (B\times\{v\})$. For any neighborhood of finite radius $N$ of $P\cap (B\times\{v\})$ there is $\epsilon$ so that $N$ does not intersect the Euclidean horizontal hyperplane $H\subseteq \mathbb{H}^k$ at height $\varepsilon$, and also there is a path $\gamma$ in $\hat{\mathbb{H}}^k$ connecting $O$ to $H$ and not intersecting $N$. As $H\cap \hat{\mathbb{H}}^k$ is connected (see above) and $H$ intersects $O'$, we can then concatenate $\gamma$ and a path in $H\cap \hat{\mathbb{H}}^k$ whose final point is in $O'$, as required.
%Using 
%Lemma~\ref{comparedist:lem} we obtain that the same holds for $d(\gamma_1(t),P_1)$, 
%up to changing the function $x(\varepsilon)$.
\end{proof}

For $H\in F(\G)$ we set 
$$\mathcal{I}(H)=\{H'\in F(\G)\, |\, {\rm rk}(H\cap H')\geq n-2\}\ .$$
%We say that $H\in F(\G)$ is adjacent to a 
%piece if it is contained in (a conjugate of) the fundamental group of a boundary component of the piece.

\begin{proposition}\label{charwalls1}
Let $H\in F(\G)$. Then
$H$ is the stabilizer of a wall which is adjacent to at least one non-surface chamber
if and only if the following condition holds: 
\begin{itemize}
\item[(*)]
There exists
$H'\in F(\G)\setminus \mathcal{I}(H)$ such that, for every $H''\in F(\G)\setminus \{H,H'\}$
and $D\geq 0$, there exists a path joining $H$ to $H'$ which avoids $N_D(H'')$.
\end{itemize}
\end{proposition}
\begin{proof}
Suppose that $H$ is the stabilizer of a wall $W$ which is adjacent
to the non-surface chamber $C$. Let $W'$ be any other wall of $C$, and denote
by $H'$ the stabilizer of $W'$.
Since $C$ is not a surface chamber we have $H'\notin \mathcal{I}(H)$.
 We now take an arbitrary element
$H''\in F(\G)\setminus \{H,H'\}$, and we denote by
${\rm Fl}(H'')$ the flat associated to $H''$
by Lemma~\ref{fmstab}.
By Lemma~\ref{avoid-hor}, 
for every $D'\geq 0$ we may join $W$ to $W'$ by a path in $C$ which avoids
$N_{D'}({\rm Fl}(H''))$. By Milnor-Svarc Lemma, up to suitably choosing the constant $D'$, this path translates into a path in the Cayley
graph of $\G$ which joins $H$ to $H'$ and avoids $N_{D}(H'')$.

We have thus shown that, if
$H$ is the stabilizer of a wall which is adjacent to at least one non-surface chamber,
then condition (*) holds.

We now suppose that 
$H\in F(\G)$ is 
not the stabilizer of a wall which is adjacent to at least one non-surface chamber,
and show that condition (*) does not hold.
Let ${\rm Fl}(H)$ be the flat associated
to $H$ by Lemma~\ref{fmstab}. Our assumption on $H$ implies that one of the following possibilities holds:
\begin{itemize}
 \item[(a)]
either ${\rm Fl}(H)$ is contained in a wall $W$ which is not adjacent
to any non-surface chamber,
\item[(b)]
or ${\rm Fl}(H)$ is contained in a surface chamber $C$, and it is not at finite Hausdorff
distance from any boundary component of $C$.
\end{itemize}
In case (a) we denote by $\widehat{C}$ the union of the chambers
which are adjacent to $W$ (so $\widehat{C}$ is a surface chamber
if $W={\rm Fl}(H)$ is a boundary wall, and the union of two surface chambers
otherwise).
In case (b) we simply set $\widehat{C}=C$. 

Let us now take $H'\in F(\G)\setminus \mathcal{I}(H)$, and let 
${\rm Fl}(H')$ be the flat associated to $H'$. Since
$H'\notin \mathcal{I}(H)$ the flat ${\rm Fl}(H')$
is disjoint from $\widehat{C}$. Therefore, there exists a boundary component
$W''$ of $\widehat{C}$ such that every path joining ${\rm Fl}(H')$ with ${\rm Fl}(H)$
must pass through $W''$. Let $H''$ be the stabilizer of $W''$.
By construction $H''\in \mathcal{I}(H)$, so $H''\neq H'$. Moreover,
${\rm Fl}(H)$ is not at finite Hausdorff distance from $W''$, so
$H''\neq H$. Via Milnor-Svarc Lemma, the fact that every path joining ${\rm Fl}(H')$ with 
${\rm Fl}(H)$
must pass through $W''$ implies that there exists $D\geq 0$ such that 
every path joining $H'$ with $H$
must intersect $N_D(H'')$, so condition (*) is violated. 
\end{proof}

\begin{proposition}\label{charwalls2}
Let $H\in F(\G)$. Then
$H$ is the stabilizer of a wall which is adjacent to two surface chambers
if and only if 
there exist elements
$H_1,H_2$ of $\mathcal{I}(H)$ such that the rank of 
$H_1\cap H_2$ is strictly less than $n-2$.
\end{proposition}
\begin{proof}
Suppose that there exist elements
$H_1,H_2$ of $\mathcal{I}(H)$ such that the rank of 
$H_1\cap H_2$ is strictly less than $n-2$. 
Of course $H,H_1,H_2$ are pairwise distinct, so
by Lemma~\ref{n-2int} there exists
a unique vertex $v_i$ of $T$ which is stabilized by $H\cup H_i$. Moreover,
$v_i$ corresponds to a surface piece of $M$.
If $v_1=v_2=v$, then Lemma~\ref{n-2int} implies that $H\cap H_i$ coincides
with the fiber subgroup of $\G_{v}$, 
so $H_1\cap H_2$ contains an abelian subgroup of rank $n-2$,
a contradiction. 
Observe now that $H$ fixes the geodesic path $\gamma$ in $T$ joining $v_1$ to $v_2$. 
If the length of $\gamma$ is at least two, then Lemma~\ref{acyl-lem} implies
that $H$ is contained in the fiber subgroup of a vertex stabilizer,
which is impossible since ${\rm rk}\, H=n-1$. We have thus shown that $v_1$ and $v_2$
are joined by an edge $e$ of $T$. Moreover, $H$ stabilizes $e$, so by maximality
$H=\G_e$. Therefore, $H$ is the stabilizer of  
a wall which is adjacent to two surface chambers.

Suppose now that $H$ is the stabilizer of a wall $W$ which is adjacent to
the surface chambers $C_1,C_2$. By Lemma~\ref{n-2int} there exist
subgroups $H_1,H_2\in \mathcal{I}(H)$ such that $H_i$ stabilizes $C_i$,
and $H_i\cap H$ coincides with the fiber subgroup of the stabilizer of $C_i$.
But the stabilizers of adjacent surface chambers have
fiber subgroups whose intersection has rank $\leq n-3$,
so ${\rk}\, H_1\cap H_2\leq n-3$, and this concludes the proof of the Proposition.
\end{proof}

\section{Isomorphisms quasi-preserve non-surface pieces}\label{coarse:section}
In this section we show that fundamental groups of pieces are coarsely preserved by isomorphisms, and actually certain quasi-isometries as well. 

Let us come back to the notation of the statement of Theorem~\ref{iso-preserve:thm},
i.e.~let $\varphi\colon \pi_1(\G_1)\to\pi_1(\G_2)$ be an isomorphism between the fundamental groups of the (extended) graph manifolds $M_1,M_2$.

\begin{definition}\label{proper:defn}
Let $\tilM_1$, $\tilM_2$ be the universal coverings of $M_1$, $M_2$. 
We say that a wall of $\widetilde{M}_i$ is \emph{proper} if it is not a boundary
wall adjacent to a surface chamber.
\end{definition}

The group-theoretic characterizations
of stabilizers of proper walls provided by Propositions~\ref{charwalls1} and~\ref{charwalls2}
implies the following:

\begin{corollary}\label{iso-preserve-walls}
Let $H$ be a subgroup of $\G_1$. Then $H$ is the stabilizer of a proper wall $W$ of $\widetilde{M}_1$ if and only if $\varphi(H)$ is the stabilizer
of a proper wall of $\widetilde{M}_2$.
\end{corollary}

We observe that the stabilizers of non-proper walls may not be preserved by isomorphism: this phenomenon occurs, for example, for any group isomorphism
between $\pi_1(M_1)$ and $\pi_1(M_2)$, where $M_1=\Sigma_1\times S^1$
$M_2=\Sigma_2\times S^1$, and $\Sigma_1$ is a once-punctured torus, while
$\Sigma_2$ is a thrice-punctured sphere.

Let us now come back to the setting of Theorem~\ref{iso-preserve:thm}.
By Milnor-Svarc's Lemma, the isomorphism $\varphi\colon \pi_1(M_1)\to\pi_1(M_2)$ induces
a $(k,c)$-quasi-isometry $f\colon\tilM_1\to \tilM_2$. 
Corollary~\ref{iso-preserve-walls}, together with
the fact that the $\pi_1(M_i)$-orbits of the walls of $\tilM_i$
are in finite number, implies that there exists a constant $\lambda>0$ such that
 for every proper wall $W_1\subseteq \tilM_1$ the set 
$f(W_1)$ 
is at Hausdorff distance bounded by $\lambda$ from a proper wall $W_2\subseteq \tilM_2$
(the wall $W_2$ is unique in view of Lemma~\ref{prefacile2:lem}). 

The following result plays an important role in the proof both of Theorem~\ref{iso-preserve:thm} and 
of Theorem~\ref{qi-preserve:thm}.

\begin{proposition}\label{useful:prop}
Let $f\colon \tilM_1\to\tilM_2$ be a $(k,c)-$quasi-isometry
and let $g$ be a quasi-inverse of $f$. 
Suppose that one of the following conditions hold:
\begin{enumerate}
 \item either $M_1,M_2$ do not have surface pieces (i.e.~they are graph manifolds), and there exists $\lambda$ with 
the property that, for each wall $W_1$ of $\tilM_1$, 
there exists a wall $W_2$ of $\tilM_2$ with
the Hausdorff distance between $f(W_1)$ and $W_2$ bounded by $\lambda$;
also assume
that, up to switching the roles of $W_1$ and $W_2$, the same
property also holds for $g$;
\item or at least one $M_i$ contains at least one surface piece, and the quasi-isometry $f$ is induced by an isomorphism
between $\G_1$ and $\G_2$.
\end{enumerate}
Then there exists a universal constant $H$ with the property that, for every non-surface chamber $C_1\subseteq \tilM_1$,
there exists a unique non-surface chamber $C_2\subseteq \tilM_2$ with the Hausdorff distance between $f(C_1)$ 
and $C_2$ bounded by $H$. Moreover, if $W_1$ is a wall adjacent to $C_1$ then $f(W_1)$
lies at finite Hausdorff distance from a wall $W_2$ adjacent to $C_2$.
\end{proposition}
\begin{proof}
Let us fix a non-surface chamber $C_1$ of $\tilM_1$, and let
$W_1,W'_1$ be walls adjacent to $C_1$.
Condition (1) in the statement (in the case when $M_1,M_2$ are graph manifolds)
or condition (2) and Proposition~\ref{charwalls1} imply
%Since $W_1,W'_1$ are proper,
%our hypothesis implies 
that
there exist proper walls $W_2,W'_2$ of $\widetilde{M}_2$ such that $f(W_1)$ and $f(W'_1)$
lie within finite Hausdorff distance respectively from
$W_2$ and $W'_2$ (such walls are uniquely determined -- see Lemma~\ref{prefacile2:lem}). 
We first prove that a non-surface chamber $C_2$ exists such that
$W_2$ and $W'_2$ are both adjacent to $C_2$.
  
Suppose by contradiction 
that there exists a wall $P_2\subseteq \tilM_2$ such 
that $P_2\neq W_2,W'_2$, and every continuous path connecting $W_2,W'_2$ 
intersects $P_2$. Then $P_2$ is proper, so 
%Since $f$ is a quasi-isometry, 
there exists a wall $P_1\subseteq \tilM_1$ such that
$f(P_1)$ is at Hausdorff distance at most $\lambda$ from $P_2$ (just take $P_1$ to be
the wall at bounded distance from $g(P_2)$). Now it is not difficult to show that,
since $f$ and $g$ are quasi-isometries, the fact that $P_2$ separates $W_2$ from $W'_2$
implies that $P_1$ coarsely separates $W_1$ from $W'_1$: in other words, there exists
$D>0$ such that every path joining $W_1$ with $W_1'$ must intersect $N_D(P_1)$.
However, this contradicts  Lemma~\ref{avoid-hor}. We have thus shown
that $W_2$ and $W'_2$ are adjacent to a chamber $C_2$ of $\widetilde{M}_2$.
%for every $D>0$, there exist points 
%$w_1\in W_1$, $w'_1\in W'_1$ which are joined by a path $\gamma_1:[0,l]\to C_1$, parametrized by arc length, 
%such that $d(\gamma _1 (t),P_1)\geq D$ for every $t\in [0,l]$. 
%Let $\gamma_2$ be the continuous path obtained by concatenating the geodesic segments
%joining $f(\gamma_1(n))$ to $f(\gamma_1 (n+1))$, $0\leq n\leq l$. 
%The starting point of the obtained path is at a distance at most $\lambda$ 
%from $W_2$ and its final point is at a distance at most $\lambda+k+c$ from $W'_2$. Moreover,
%for every $t$ we have the estimate:
%$$d(\gamma_2 (t),P_2)\geq \frac{D-\lambda}{k}-k-2c.$$
%If $D$ is chosen big enough, we obtain
%$d(\gamma_2 (t),P_2)\geq \lambda+k+c+1$ for every $t$. By concatenating $\gamma_2$
%with two geodesics joining its initial
%point with $W_2$ and its endpoint with $W'_2$ we obtain a continuous path connecting %$W_2,W'_2$
%which does not intersect $P_2$: a contradiction. We have thus proved that every wall
%adjacent to $C_1$ is taken by $f$ onto a subset which lies at Hausdorff distance
%bounded by $\lambda$ from a wall adjacent to $C_2$.

Suppose by contradiction that $C_2$ is not a surface chamber.
Then there exists a subgroup $H_2\in F(\G_2)$
such that ${\rm Fl}(H_2)$ separates $W_2$ from $W_2'$, and it is not at finite Hausdorff
distance from $W_2$ nor from $W_2'$. Since we are supposing that $f$ is induced by
an isomorphism $\G_1\cong \G_2$, this means that 
there exists a subgroup $H_1\in F(\G_1)$
such that ${\rm Fl}(H_1)$ coarsely separates $W_1$ from $W_1'$.
But this contradicts Lemma~\ref{avoid-hor}, so $C_2$ cannot be a surface chamber.

Let us now prove that $C_2$ lies at a universally bounded Hausdorff distance
from $f(C_1)$.
Since walls are $h$-dense in  
$\tilM$ for some $h>0$, for every $p_1\in C_1$ there exists $p'_1\in W_1$ with $d(p_1,p_1')\leq h$, where
$W_1$ is a wall adjacent to $C_1$. Then 
$$d(f(p_1),C_2)\leq d(f(p_1),f(p_1'))+d(f(p_1'),C_2)\leq kh+c+\lambda.$$
This tells us that $f(C_1)$ is contained in the $(kh+c+\lambda)$-neighbourhood of $C_2$.
Let $g$ be the quasi-inverse of $f$.
The same argument shows that $g(C_2)$ is contained in the $(kh+c+\lambda)$-neighbourhood of 
some non-surface chamber $C_1'$, and Lemma~\ref{prefacile3:lem} implies that $C_1'=C_1$. Now, if $q_2\in C_2$ we have 
$d(q_2,f(g(q_2)))\leq c$, and there exists $q_1\in C_1$ with $d(g(q_2), q_1)\leq kh+c+\lambda$. 
We now can estimate the distance 
\begin{align*}
d(q_2,f(q_1)) &\leq d(q_2,f(g(q_2)))+d(f(g(q_2)),f(q_1))\\
& \leq c+kd(g(q_2),q_1)+c \\
& \leq 2c+k(kh+c+\lambda).
\end{align*} 
So we can set $H=k^2h+(k+2)c+k\lambda$, and we are done. Finally, the uniqueness of $C_2$ is a 
consequence of Lemma~\ref{prefacile3:lem}.
\end{proof}

\section{Isomorphisms preserve pieces}\label{iso-preserve:sub}

We are ready to establish Theorem~\ref{iso-preserve:thm}. 

\begin{proof}
Let $\Lambda_1<\G_1$ be the fundamental group of a piece $V_1$
of $M_1$. If $V_1$ is a surface piece, then 
Corollary~\ref{surface-pieces-preserved} ensures that
$\varphi(\Lambda_1)$
is (conjugated to) the fundamental group of a non-surface piece
of $M_2$.

Otherwise, by Proposition~\ref{useful:prop}
and the Milnor-Svarc Lemma, 
the Hausdorff distance between $\varphi(\Lambda_1)$ and $g\Lambda_2 g^{-1}$ is bounded
by $H$ for some fundamental group of a non-surface piece $\Lambda_2<\Gamma_2$
and some $g\in\Gamma_2$.
Up to conjugation, and increasing $H$ by $d(g,id)$, we may assume $g=id$. 

A standard argument now allows us to prove that $\varphi(\Lambda_1)=\Lambda_2$.
In fact, if $h\in\Lambda_1$ we have that 
$$
\varphi (h)\cdot \varphi (\Lambda_1)=\varphi (h\cdot \Lambda_1)=\varphi (\Lambda_1).
$$
Since $\varphi(\Lambda_1)$ is at bounded Hausdorff distance from $\Lambda_2$, this implies that $\varphi (h)\cdot \Lambda_2$
is at bounded Hausdorff distance from $\Lambda_2$. By Milnor-Svarc's Lemma, if
$C_2$ is the chamber of $\tilM_2$ that is fixed by $\Lambda_2$, 
then the chamber $\varphi(h)(C_2)$
is at finite Hausdorff distance from $C_2$. By Lemma~\ref{prefacile3:lem}
this implies in turn that $\varphi(h)(C_2)=C_2$, so
$\varphi (h)\in \Lambda_2$, and 
$\varphi (\Lambda_1)\subseteq \Lambda_2$.
Finally, 
since $\varphi^{-1}$ is a quasi-inverse of $\varphi$, we have that $\varphi^{-1} (\Lambda_2)$ stays at finite
distance from $\Lambda_1$. The above argument again shows that $\varphi^{-1}(\Lambda_2)\subseteq \Lambda_1$.
We conclude that $\varphi (\Lambda_1)=\Lambda_2$, completing the proof of Theorem~\ref{iso-preserve:thm}.
\end{proof}

\vskip 10pt

Putting together 
Theorem~\ref{iso-preserve:thm} and Lemma~\ref{conj:lemma} one can easily refine
the statement of Theorem~\ref{iso-preserve:thm} as follows:

\begin{theorem}\label{preserve2:thm}
Let $M$, $M'$ be a pair of graph manifolds
which decompose into pieces $V_1,\ldots,V_h$, and $V'_1,\ldots,V'_k$ respectively. 
Suppose that $\varphi\colon \pi_1 (M)\to\pi_1 (M')$ is an isomorphism.
Then $h=k$ and, up to reordering the indices, for every
$i=1,\ldots,h$ the image of $\pi_1 (V_i)$
under $\varphi$ coincides with a conjugate of $\pi_1 (V'_i)$.
Moreover, with this choice of indices 
$V_i$ is adjacent to $V_j$ if and only if $V'_i$ is adjacent to $V'_j$.
\end{theorem}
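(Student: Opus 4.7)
The plan is to deduce Theorem~\ref{preserve2:thm} directly from Theorem~\ref{iso-preserve:thm} combined with the rigidity statements in Lemma~\ref{conj:lemma} and the Bass--Serre dictionary. First I would define a map $\sigma\colon\{1,\ldots,h\}\to\{1,\ldots,k\}$ by noting that, by Theorem~\ref{iso-preserve:thm}, for each index $i$ the image $\varphi(\pi_1(V_i))$ is conjugate in $\pi_1(M')$ to $\pi_1(V'_j)$ for some $j$, and by Lemma~\ref{conj:lemma}(4) such a $j$ is unique. The same construction applied to $\varphi^{-1}$ produces a map $\tau$ in the reverse direction, and another application of Lemma~\ref{conj:lemma}(4) yields $\tau\circ\sigma=\mathrm{id}$ and $\sigma\circ\tau=\mathrm{id}$. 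Hence $\sigma$ is a bijection and $h=k$; after relabelling via $\sigma$, the pieces of $M'$ can be indexed so that $\varphi(\pi_1(V_i))$ is conjugate to $\pi_1(V'_i)$ for every $i$.

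For the adjacency statement, I would first record the following Bass--Serre reformulation: two pieces $V_i$ and $V_j$ are adjacent in $M$ if and only if there exist a subgroup $H\leq\pi_1(M)$ conjugate to the fundamental group of a wall of $M$ and an element $g\in\pi_1(M)$ such that $H\subseteq\pi_1(V_i)\cap g\pi_1(V_j)g^{-1}$. The nontrivial direction uses the uniqueness of the edge of the Bass--Serre tree fixed by $H$ (established in the proof of Lemma~\ref{conj:lemma}(1)): the condition $H\subseteq\pi_1(V_i)$ forces the vertex fixed by $\pi_1(V_i)$ to be an endpoint of the edge fixed by $H$, since otherwise $H$ would fix additional edges along the geodesic to that edge; the analogous conclusion for $g\pi_1(V_j)g^{-1}$ then yields that the two vertices share this common edge.

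Given this characterization, preservation of adjacency is immediate. If $V_i$ and $V_j$ are adjacent witnessed by $H$ and $g$, then by Corollary~\ref{walls-preserve:cor} $\varphi(H)$ is conjugate to a wall subgroup of $\pi_1(M')$, while $\varphi(H)\subseteq\varphi(\pi_1(V_i))$ and $\varphi(g)\varphi(H)\varphi(g)^{-1}\subseteq\varphi(\pi_1(V_j))$. Since $\varphi(\pi_1(V_i))$ and $\varphi(\pi_1(V_j))$ are conjugate in $\pi_1(M')$ to $\pi_1(V'_i)$ and $\pi_1(V'_j)$ respectively, absorbing these conjugations produces a wall subgroup satisfying the above criterion for adjacency of $V'_i$ and $V'_j$ in $M'$. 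The reverse implication is obtained by applying the same argument to $\varphi^{-1}$. I expect the main bookkeeping point to be tracking the several layers of conjugation in this last step consistently; no genuinely new obstacle arises, since all the essential rigidity has already been encoded in Theorem~\ref{iso-preserve:thm} and Corollary~\ref{walls-preserve:cor}.
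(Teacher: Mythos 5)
The paper itself gives no proof of Theorem~\ref{preserve2:thm}; it simply asserts that the statement is an easy refinement of Theorem~\ref{iso-preserve:thm} combined with Lemma~\ref{conj:lemma}, and your proposal supplies exactly the details that are left to the reader. The first half (construction of the bijection $\sigma$ via Theorem~\ref{iso-preserve:thm} and uniqueness from Lemma~\ref{conj:lemma}(4), with $\varphi^{-1}$ providing the inverse map, giving $h=k$ and the matching of vertex groups) is correct and is essentially the only reasonable route.

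There is, however, a genuine gap in your Bass--Serre characterization of adjacency when $i=j$. As you have stated it, the criterion ``there exist a wall subgroup $H$ and some $g\in\pi_1(M)$ with $H\subseteq\pi_1(V_i)\cap g\pi_1(V_j)g^{-1}$'' reduces, upon taking $g$ to be the identity and $i=j$, to merely ``some wall subgroup $H$ is contained in $\pi_1(V_i)$,'' which holds whenever $V_i$ meets any internal wall at all, not just when $V_i$ is glued to itself. Since the paper's class of graph manifolds explicitly allows a piece to be glued to itself (and the theorem's conclusion includes the diagonal case $i=j$), the characterization must carry the extra requirement that $g\notin\pi_1(V_i)$ when $i=j$; the right tool for cleaning this up is precisely Lemma~\ref{conj:lemma}(5), which says that $H_1\subseteq G_1\cap gG_1g^{-1}$ forces either $g\in G_1$ or a self-gluing of $V_1$ along $T_1$. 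Once this proviso is added, the transfer under $\varphi$ still works: if $i=j$ then the conjugating elements $a=b$ realizing $\varphi(\pi_1(V_i))=a\pi_1(V'_i)a^{-1}$ coincide, and the new twisting element $g'=a^{-1}\varphi(g)a$ lies in $\pi_1(V'_i)$ if and only if $g$ lies in $\pi_1(V_i)$, so the distinguishing hypothesis is preserved. One further, purely cosmetic, remark: the displayed containment $\varphi(g)\varphi(H)\varphi(g)^{-1}\subseteq\varphi(\pi_1(V_j))$ has the conjugation on the wrong side; applying $\varphi$ to $H\subseteq g\pi_1(V_j)g^{-1}$ gives $\varphi(g)^{-1}\varphi(H)\varphi(g)\subseteq\varphi(\pi_1(V_j))$. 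This does not affect the logic, since one simply replaces $g$ by $g^{-1}$, but it is worth fixing when you, as you correctly anticipate, ``track the several layers of conjugation consistently.''
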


\chapter{Smooth rigidity}\label{smoothrig:sec}

This chapter is devoted to the proof of Theorem~\ref{smrigidity:thm}, which we recall here for the convenience of the reader:
\begin{Thm2}%\label{smrigidity2:thm}
Let $M,M'$ be (extended) graph manifolds, and let
$\varphi\colon \pi_1(M)\to \pi_1(M')$ be a group isomorphism. Suppose that the boundaries of $M,M'$ do not intersect any 
surface piece. Then $\varphi$ is induced by a diffeomorphism $\psi\colon M\to M'$. 
%Let $M,M'$ be graph manifolds and let
%$\varphi\colon \pi_1(M)\to \pi_1(M')$ be a group isomorphism. Then $\varphi$ 
%is induced by a diffeomorphism $\psi\colon M\to M'$. 
\end{Thm2}

It will be clear from our construction that the diffeomorphism $\psi$ of the above theorem can be chosen
in such a way that $\psi|_{\partial M}\colon \partial M\to\partial M'$ is an affine diffeomorphism. As a corollary, we obtain that every group isomorphism
between the fundamental groups of two graph manifolds is realized by a diffeomorphism.
Notice that, when dealing with extended graph manifolds, the additional hypothesis 
preventing surface pieces to be adjacent to the boundary 
is necessary: if
$M=\Sigma\times S^1$ and
$M'=\Sigma'\times S^1$, where $\Sigma_1$ is a once-punctured torus and
$\Sigma'$ is a thrice-punctured sphere, then 
$\pi_1(M)\cong \pi_1(M')$, but $M$ and $M'$ are not diffeomorphic (in fact, they are not even homeomorphic).

\section{Rigidly decomposable pairs}

In this section we single out the hypotheses on $M,M'$ that we need in the proof of Theorem \ref{smrigidity:thm}.

\begin{definition}
 Let $M,M'$ be smooth manifolds and let $\varphi:\pi_1(M)\to\pi_1(M')$ be a group isomorphism. We say that $(M,M',\varphi)$ is \emph{rigidly decomposable} if
\begin{enumerate}
 \item $M,M'$ are obtained by gluing submanifolds with toric $\pi_1$-injective boundary, called \emph{pieces}, using affine diffeomorphisms of pairs of boundary components,
 \item $\varphi$ preserves conjugacy classes of fundamental groups of the pieces,
 \item the restriction of $\varphi$ to the fundamental group of any piece $P$ of $M$ is induced by a diffeomorphism from $P$ to the corresponding piece of $M'$ which restricts to affine diffeomorphisms between corresponding boundary components,
 \item the normalizer in $M$ and $M'$ of (a conjugate of) the fundamental group of any piece coincides the fundamental group of the piece,
 \item fundamental groups of distinct boundary components of pieces are not conjugate to each other.
\end{enumerate}
\end{definition}

\begin{remark}
 Notice that pieces are automatically $\pi_1$-injective because of the $\pi_1$-injectivity of their boundary components.
\end{remark}

\begin{theorem}
\label{rigiddec:thm}
 If $(M,M',\phi)$ is rigidly decomposable then there exists a diffeomorphism $\psi:M\to M'$ inducing $\phi$ at the level of fundamental groups.
\end{theorem}

The proof of Theorem~\ref{rigiddec:thm} is deferred to Section~\ref{proofrigiddec}.
We first check that the theorem applies to (extended) graph manifolds.

Since we will need to be careful about some well-known, but somewhat subtle,
details of the theory of fundamental groups, we recall here some basic facts. If $f\colon M\to N$
is a continuous map between path connected spaces, then
$f$ induces a homomorphism $f_\ast \colon \pi_1 (M)\to \pi_1 (N)$ which is well-defined
up to conjugacy (in $\pi_1 (N)$). This is due to the fact that, for $x_0,x_1\in M$, $x_0\neq x_1$, 
the identification of $\pi_1 (M,x_0)$
with $\pi_1 (M,x_1)$ is canonical up to conjugacy, and the same holds when choosing different
basepoints in $N$. If $\varphi\colon \pi_1 (M)\to \pi_1 (N)$ is a homomorphism,
we will say that $\varphi$ is induced by $f$ if for some (and hence every)
choice of basepoints $x_0\in M$, $y_0\in N$ the homomorphism
$f_\ast\colon \pi_1 (M,x_0)\to\pi_1 (N,y_0)$ is equal to $\varphi$, up to conjugacy
by an element of $\pi_1 (N)$ (by the discussion above, this notion is indeed well-defined).
Also observe that if $V$ is a path connected subset of $M$ and $i\colon V\hookrightarrow M$
is the inclusion, then we can define $i_\ast (\pi_1 (V))$ as a subgroup
of $\pi_1 (M)$, well-defined up to conjugacy. When saying that $\pi_1 (V)$ is a 
subgroup of $\pi_1 (M)$, we will be implicitly choosing a preferred representative
among the conjugate subgroups representing the conjugacy class of $\pi_1 (V)$: this amounts
to choosing a basepoint in $V$, a basepoint in $M$ and a path joining these basepoints.

In order to deduce Theorem \ref{smrigidity:thm} from Theorem \ref{rigiddec:thm} we need to show that if $\varphi:\pi_1(M)\to \pi_1(M')$ is an isomorphism between fundamental groups of (extended) graph manifolds then $(M,M',\varphi)$ is rigidly decomposable,
provided that no component of $\partial M$ and $\partial M'$ is contained in a surface piece of $M$ and $M'$.
%We will actually prove a more general statement that holds for (certain) extended graph manifolds as well, and which implies Theorem \ref{smrigidity:thm} when combined with Theorem \ref{rigiddec:thm}. The proofs in the non-extended and extended case are very similar, so if the reader is not interested in the extended case then he/she can just skip the few additional arguments required to deal with the extended case.

\begin{proposition}\label{rigidly:prop}
 Let $M,M'$ be extended graph manifolds so that their boundaries do not intersect any 
surface piece and let $\varphi:\pi_1(M)\to \pi_1(M')$ be an isomorphism. Then $(M,M',\varphi)$ is rigidly decomposable.
\end{proposition}

This Section is devoted to the proof
of Proposition~\ref{rigidly:prop}. Item 1 
in the definition of rigidly decomposable
follows from the definition of graph manifold. Item 2 is the content of Theorem Theorem \ref{iso-preserve:thm}. Lemma \ref{conj:lemma}-(2)-(3) implies items 4 and 5.

Item 3, as we are about to explain, is ultimately a consequence of Mostow rigidity (and, in the extended case, of the fact that outer automorphisms of fundamental groups of surfaces which preserve the conjugacy 
classes of the fundamental groups of the boundary components are induced by diffeomorphisms, see \emph{e.g.}~\cite[Theorem 8.8]{farbbook}).

The isomorphism $\varphi$ establishes
a bijection between the (conjugacy classes of the) fundamental groups of the pieces
of $M$ and $M'$.
Let 
$N_1,\ldots,N_h$ (resp.~$N'_1,\ldots,N'_h$) be the (truncated) cusped hyperbolic manifolds
such that $V_i=N_i\times T^{a_i}$ (resp.~$V'_i=N'_i\times T^{b_i}$) 
are the pieces of $M$ (resp.~of $M'$), $i=1,\ldots,h$. From now on, 
for every $i=1,\ldots,h$, we fix an identification of $\pi_1 (V_i)$ (resp.~of $\pi_1 (V'_j)$)
with a distinguished subgroup of $\pi_1 (M)$ (resp.~of $\pi_1 (M')$).
As mentioned above, such an identification
depends on the choice of one basepoint for $M,M'$ and for each piece, and suitable
paths connecting the basepoint of the ambient manifolds with the basepoints
of their pieces. We also fix $g_i\in\pi_1 (M')$ such that
$\varphi (\pi_1 (V_i))=g_i\pi_1 (V'_i)g_i^{-1}$ for every $i=1,\ldots,h$.

We now formulate item 3 as a Lemma for future reference.

\begin{lemma}\label{diffeo:pieces}
 For $i=1,\ldots,h$ there exists a diffeomorphism
$\psi_i\colon V_i\to V'_i$ which induces the isomorphism
$g\mapsto g_i^{-1} \varphi (g) g_i$ between $\pi_1 (V_i)$ and $\pi_1 (V'_i)$, and
restricts to an affine diffeomorphism
of $\partial V_i$ onto $\partial V'_i$.
\end{lemma}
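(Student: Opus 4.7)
The plan is to construct $\psi_i$ by separately treating the hyperbolic and toric factors of $V_i = \overline{N}_i \times T^{a_i}$, using Mostow-Prasad rigidity for the former and classification of automorphisms of tori for the latter, then patching them together to account for a possible twist of the product structure by $\varphi_i$.

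First, I would set $\varphi_i = g_i^{-1}\varphi(\cdot)g_i \colon \pi_1(V_i) \to \pi_1(V'_i)$. By Remark~\ref{center:rmk}, the center of $\pi_1(V_i)$ is the fiber subgroup $\{1\}\times\mZ^{a_i}$, and likewise for $V'_i$, so $\varphi_i$ restricts to an isomorphism of fiber subgroups. This forces $a_i = b_i$, gives a matrix $A \in GL_{a_i}(\mZ)$ realizing this restriction, and induces an isomorphism $\bar\varphi_i \colon \pi_1(N_i) \to \pi_1(N'_i)$ on the quotients (well-defined by Lemma~\ref{basicprop:lem}-(2)). A brief calculation, exploiting that the fiber subgroup is central and that $\varphi_i$ is a homomorphism, shows that in the coordinates $\pi_1(V_i) = \pi_1(N_i)\times \mZ^{a_i}$, $\varphi_i$ takes the form
$$
\varphi_i(x,t) \;=\; (\bar\varphi_i(x),\; c(x)+A(t))
$$
for a uniquely determined homomorphism $c\colon \pi_1(N_i)\to \mZ^{a_i}$.

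Next, since $n_i \geq 3$, Mostow-Prasad rigidity supplies an isometry $N_i \to N'_i$ inducing $\bar\varphi_i$. Because any such isometry sends cusps to cusps and restricts affinely to the canonically foliated cusp cross-sections, after combining with Remark~\ref{deepness:rem} I obtain a diffeomorphism $\phi_i \colon \overline{N}_i \to \overline{N}'_i$ that is affine on each boundary torus. The matrix $A$ similarly yields an affine self-diffeomorphism of $T^{a_i}$, still denoted $A$. To realize the twist $c$, I choose a smooth map $\gamma\colon \overline{N}_i \to T^{a_i}$ inducing $c$ on fundamental groups; existence and uniqueness up to homotopy hold since $T^{a_i}$ is a $K(\mZ^{a_i},1)$. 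Moreover, by first prescribing $\gamma$ on each boundary torus $T\subset\partial\overline{N}_i$ to be an affine map realizing the composition $\pi_1(T)\to \pi_1(N_i) \xrightarrow{c} \mZ^{a_i}$ -- any homomorphism between tori is realized by an affine map, uniquely up to translation -- and then extending smoothly through a collar of the boundary, I arrange $\gamma$ to be affine on all of $\partial\overline{N}_i$.

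Finally, I define $\psi_i(x,t) = (\phi_i(x),\; A(t)+\gamma(x))$. This is a smooth bijection with explicit smooth inverse $(y,s)\mapsto (\phi_i^{-1}(y),\; A^{-1}(s-\gamma(\phi_i^{-1}(y))))$, hence a diffeomorphism. By construction $(\psi_i)_\ast = \varphi_i$, and on each connected component $T\times T^{a_i}$ of $\partial V_i$ the map $\psi_i$ is affine because each of $\phi_i|_T$, $A$, and $\gamma|_T$ is. The main subtlety will be arranging the twist map $\gamma$ to have simultaneously the correct global homotopy class and affine restrictions on every boundary torus; once this is in hand, the rest of the construction reduces to a direct application of Mostow-Prasad rigidity together with elementary manipulations with direct products.
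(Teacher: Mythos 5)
Your proposal is correct and follows essentially the same route as the paper: reduce $\varphi_i$ to the normal form $(g,v)\mapsto(\theta(g),\alpha(g)+Av)$, realize $\theta$ via Mostow rigidity together with Remark~\ref{deepness:rem}, realize the twist homomorphism $\alpha$ by a smooth map $\gamma\colon\overline{N}_i\to T^{a_i}$, and finally adjust $\gamma$ in a collar so that the restriction of $\psi_i$ to each boundary torus is affine. The only cosmetic difference is in how $\gamma$ is produced: the paper builds an $\alpha$-equivariant lift $\widetilde\eta\colon B\to\mR^{a_i}$ by choosing a smooth section (via a partition of unity, using that the fibers are affine spaces) of the flat affine bundle $(B\times\mR^{a_i})/\Gamma\to\overline{N}_i$ and then reducing modulo $\mZ^{a_i}$, whereas you invoke the asphericity of $T^{a_i}$ directly; the two constructions are interchangeable.
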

\begin{proof}
Set $V=V_i$, $V'=V'_i$, $N=N_i$, $N'=N'_i$. 
The center of $\pi_1 (V)$ is equal to the fundamental group of its toric factor
(see Remark~\ref{center:rmk}), so $\pi_1 (N)$ is just the quotient
of $\pi_1 (V)$ by its center, and the same holds true for $\pi_1 (N')$.
We have in particular $V=N\times T^a$, $V'=N'\times T^a$ for the same  $a\in\mN$,
so $\pi_1 (V)$ (resp.~$\pi_1 (V')$)
is canonically isomorphic to $\pi_1 (N)\times \mZ^a$ (resp.~$\pi_1 (N')\times\mZ^a$),  
and the isomorphism $\varphi_i\colon \pi_1 (V)\to \pi_1 (V')$ 
defined by $\varphi_i (g)=g_i^{-1}\varphi (g) g_i$ for every $g\in \pi_1 (V)$
induces an isomorphism 
$\theta\colon \pi_1 (N)\to\pi_1 (N')$. 
Henceforth, we identify $T^a$ with the quotient of $\mR^a$ by the standard
action of $\mZ^a$, \emph{i.e.}~we fix an identification of $\pi_1 (T^a)$ with
$\mZ^a\subseteq \mR^a$ (since $\pi_1 (T^a)$ is abelian, we do not need to 
worry about choice of basepoints). Then the isomorphism
$\varphi_i\colon  \pi_1 (N)\times \mZ^a\to\pi_1 (N')\times \mZ^a$ has the form
$\varphi_i (g,v)=(\theta (g), f(g,v))$, where $f\colon \pi_1 (N)\times \mZ^a\to \mZ^a$
is a homomorphism.
If $\beta\colon\mZ^a\to\mZ^a$, $\alpha\colon\pi_1 (N)\to\mZ^a$ are defined
by $\beta (v)=f(1,v)$ and $\alpha (g)=f(g,0)$, we have that
$$
\varphi_i (g,v)=(\theta (g), \alpha (g)+ \beta (v))\quad\ {\rm for\ every}\ g\in \pi_1 (N),\,
v\in \mZ^a .
$$
Moreover, since $\varphi_i$ is an isomorphism, we have that $\alpha$ is a homomorphism and
$\beta$ is an automorphism. Any automorphism 
of $\pi_1(T^a)$ is induced by an affine diffeomorphism of
$T^a$ onto itself, so in order to construct the required diffeomorphism
$\psi\colon V\to V'$ inducing $\varphi_i$ it is not restrictive to assume
that $\beta (v)=v$ for every $v\in\mZ^a$.

Let us now fix identifications $\pi_1(N)\cong \Gamma< {\rm Isom} (\mathbb{H}^l)$,
$\pi_1(N')\cong \Gamma' < {\rm Isom} (\mathbb{H}^l)$,
$N=B/\Gamma$, $N'=B'/\Gamma'$, where $B,B'\subseteq \mathbb{H}^l$ are the 
neutered spaces providing the universal coverings of $N,N'$. 
For later purposes,  we will denote by $p_1\colon B\to N$ the covering
map just introduced.
In the case of non-surface pieces, Mostow rigidity provides an isometry (whence in particular a diffeomorphism)
$\widetilde{\kappa}\colon \mathbb{H}^l\to\mathbb{H}^l$ such that
$\widetilde\kappa (g\cdot x)=\theta (g)\cdot \widetilde\kappa (x)$ for every $g\in \Gamma$,
$x\in\mathbb{H}^l$. Up to changing the choice of the horospherical
sections defining $N$ as the truncation of a cusped hyperbolic manifold,
we may also suppose that $\widetilde\kappa (B)=B'$ (see Remark~\ref{deepness:rem}). 

Let us now consider surface pieces instead. Not every isomorphism between fundamental groups of surfaces with boundary is induced by a diffeomorphism. However, we claim that our isomorphism, $\theta$, preserves the conjugacy classes of the fundamental groups of the boundary components. Assuming this for the moment, we can use the fact that outer automorphisms of fundamental groups of surfaces which preserve the conjugacy 
classes of the fundamental groups of the boundary components are induced by diffeomorphisms, as we recalled at the beginning of the proof of the Proposition. So, there exists a diffeomorphism from $N$ to $N'$ inducing $\theta$ at the level of fundamental groups. We now regard both $N$ and $N'$ as punctured surfaces with chopped-off cusps, and notice that the said diffeomorphism extends to a diffeomorphism $\kappa$ of the punctured surfaces. By construction, lifting $\kappa$ to the universal covers we get a diffeomorphism $\widetilde{\kappa}\colon \mathbb{H}^2\to\mathbb{H}^2$ which by construction satisfies $\widetilde\kappa (B)=B'$ and $\widetilde\kappa (g\cdot x)=\theta (g)\cdot \widetilde\kappa (x)$ for every $g\in \Gamma$, $x\in\mathbb{H}^2$. The set-ups both in the case of surface and non-surface pieces are thus identical.

We are only left to show that $\theta$ preserves the conjugacy classes of the fundamental groups of the boundary components. But this just follows from the fact that the isomorphism $\varphi$ preserves wall stabilizers by Corollary \ref{iso-preserve-walls}
(notice that the assumption that the boundaries of $M,M'$ do not intersect any surface piece
implies that the walls of $\widetilde{M}$ and $\widetilde{M'}$ are all proper, according to Definition~\ref{proper:defn}).
In particular, we have that wall stabilizers are also preserved by $\varphi_i\colon  \pi_1 (N)\times \mZ^a\to\pi_1 (N')\times \mZ^a$, and $\theta$ is obtained just projecting $\varphi_i$ on the first factors.

We now establish the following:
\smallskip

\noindent {\bf Claim:} There exists a smooth function $\widetilde\eta\colon B\to \mR^a$
such that $\widetilde\eta (g\cdot x)=\widetilde\eta (x)+\alpha (g)$ for every 
$x\in B$, $g\in \Gamma$. 

\smallskip

In fact, let $\Gamma$ act on $B\times \mR^a$ by setting
$g\cdot (x,v)=(g\cdot x, v+\alpha (g))$, denote by $Y$ the quotient space
and let $p_2\colon B\times \mR^a\to Y$ be the natural projection. 
Since $N$ is canonically identified with the quotient of $B$ by the
action of $\Gamma$, we have a canonical projection
$p_3\colon Y\to N$, which defines a natural structure of 
flat affine fiber bundle. More precisely, $Y$ is the total space of
a flat fiber bundle with fiber $\mR^a$ and structural group
given by the group of integer translations of $\mR^a$.
In particular, every fiber of $p_3$ inherits a well-defined 
affine structure, so it is possible to define affine combinations
of points in a fiber. 
Exploiting this fact, we can use a suitable partition of unity
to glue local sections of $p_3$ into a global smooth section 
$s\colon N\to Y$.
\[
\xymatrix{ B\times \mR^a \ar[r]^>>>>>{p_2} & Y \ar[d]_{p_3}\\
B \ar[r]^{p_1} & N \ar@/_/[u]_{s}
}
\]

We now define $\eta$ as follows. Let us take $x\in B$. For every
$v\in\mR^a$ we have $p_3 (p_2 (x,v))=p_1 (x)$. Moreover, by construction
$p_2 (x,v)=p_2 (x,w)$ if and only if $v=w$. As a consequence,
there exists a unique $\widetilde\eta (x)\in\mR^a$ such that $p_2 (x,\widetilde{\eta} (x))=s(p_1 (x))$.
Since $p_1,p_2,s$ are smooth, $\widetilde\eta$ is also smooth. Moreover, 
for $x\in B$ and $g\in\Gamma$ we have:
\begin{align*}
p_2 (g\cdot x, \widetilde\eta (x)+\alpha (g)) & = p_2 (g\cdot (x,\widetilde\eta (x)))= p_2 (x,\widetilde\eta (x))\\
& = s(p_1 (x))=s (p_1 (g\cdot x)).\\
\end{align*}
The first equality is due to the definition of the $\Gamma$-action on $B\times \mR^a$. The
second and fourth equality are immediate from the definition of the quotient maps $p_2$ and
$p_1$ respectively. The third equality follows from the choice of $\tilde \eta$ (see previous 
paragraph). Finally, comparing the first and last term, we see that $\widetilde\eta (x)+\alpha (g)$
satisfies the defining property for the point $\widetilde\eta (g\cdot x)$, so by uniqueness
we obtain $\widetilde\eta (g\cdot x)=\widetilde\eta (x)+\alpha (g)$, and the {\bf Claim} is proved.

\smallskip

We now return to the proof of the Lemma. Define the map $\widetilde{\psi}\colon B\times\mR^a\to 
B'\times\mR^a$ via $\widetilde{\psi} (x,v)=(\widetilde\kappa (x), v+\widetilde\eta (x))$. Of course
$\widetilde{\psi}$ is a diffeomorphism. Moreover, for every
$(x,v)\in B\times\mR^a$ and $(g,w)\in \Gamma\times \mZ^a\cong \pi_1 (V)$, we have

\begin{align*}
\widetilde{\psi} ((g,w)\cdot (x,v))&= \widetilde{\psi}(g\cdot x,v+w)\\ 
&=(\widetilde\kappa (g\cdot x), v+w+\widetilde\eta (g\cdot x))\\
&=(\theta(g)\cdot \widetilde\kappa (x), v+w+\widetilde\eta (x)+\alpha (g)) \\ 
&=(\theta (g),w+\alpha (g))\cdot (\widetilde\kappa (x),v+\widetilde\eta (x))\\
&=(\theta (g),w+\alpha (g))\cdot\widetilde{\psi} (x,v)\\
\end{align*}
so $\widetilde{\psi}$ defines a diffeomorphism $\overline\psi\colon V\to V'$
inducing the isomorphism $\varphi$ at the level of fundamental groups. Now let
$\kappa\colon N\to N'$ be the diffeomorphism induced by $\widetilde\kappa$, $H$ be a component
of $\partial N$, and set $H'=\kappa(H)\subseteq \partial N'$. By construction, 
the restriction of $\overline\psi$  to the component
$H\times T^a$ of $\partial V$ has the form
$$
H\times T^a\to H'\times T^a,\qquad (x,v)\mapsto (\kappa(x), v+\overline{\eta} (x))
$$
for some smooth $\overline{\eta}\colon H\to T^a$. Recall that $H$ is affinely
diffeomorphic to a torus $T^b$, and that every 
map between affine tori is homotopic to an affine map, so $\overline{\eta}$ 
is homotopic to an affine map $\eta\colon H\to T^a$. Using this homotopy, we 
modify $\overline\psi$ in a collar of $H\times T^a$ in order to get a diffeomorphism 
$\psi\colon V\to V'$ whose restriction to $H\times T^a$ has the form 
$(x,v)\mapsto (\kappa(x), v+{\eta} (x))$. After repeating this procedure
for every component of $\partial V$ we are left with the desired diffeomorphism $\psi$.
\end{proof}

We have thus shown that $(M,M',\varphi)$ is rigidly decomposable, thus concluding the proof
of Proposition~\ref{rigidly:prop}.

\section{Dehn twists}
In this section we define Dehn twists. Only the definition of Dehn twist is strictly needed for the proof of Theorem \ref{rigiddec:thm}, but we will also provide some motivation and make some side remarks.

Let us first of all point out the issue we have to deal with. In order to establish smooth rigidity for graph manifolds, one would like to
glue the diffeomorphisms $\psi_i\colon V_i\to V'_i$ provided by Lemma~\ref{diffeo:pieces}
into a diffeomorphism $\psi \colon M\to M'$. In order to make this strategy work,
we have to take care of two issues. First, to define $\psi$ we have to check that
if $V_i$ and $V_j$ share a boundary component $H$, then $\psi_i$ and $\psi_j$
coincide on $H$. Once this has been established, we have to ensure that
the obtained $\psi$ induces the isomorphism $\varphi\colon \pi_1 (M)\to \pi_1 (M')$
fixed at the beginning of the section. The following remark, which is essentially 
due - in a different context - to Nguyen Phan~\cite{N}, shows that the issues just discussed
may really hide some subtleties.

\begin{remark}\label{twist:rem}
 Suppose $M=M'$ is a graph manifold obtained by gluing
two pieces $V_1$, $V_2$ along their unique boundary component $H=V_1\cap V_2\subseteq M$. 
Fix a basepoint
$x_0\in H$, and set $G_1=\pi_1 (V_1,x_0)$, $G_2=\pi_1 (V_2,x_0)$,
$K=\pi_1 (H,x_0)$. The group $\pi_1 (M,x_0)$ is canonically identified with the amalgamated product $G=G_1\ast_K G_2$, where we are considering $K$ as a subgroup
of $G_1$ and $G_2$ via the natural (injective) maps induced by the inclusions
$H\hookrightarrow V_1$, $H\hookrightarrow V_2$. Let us take $g_0\in K\setminus \{1\}$.
Since $K$ is abelian, there exists a unique isomorphism $\varphi\colon G\to G$
such that
$$
\varphi (g)=\left\{\begin{array}{cl} 
g & {\rm if}\ g\in G_1\\
g_0 g g_0^{-1} & {\rm if}\ g\in G_2 
\end{array}\right. .
$$
It is easy to see that, in this special case, the construction described in 
Lemma~\ref{diffeo:pieces} leads to diffeomorphisms $\psi_1\colon V_1\to V_1$,
$\psi_2\colon V_2\to V_2$ which can be chosen to equal the identity
on $V_1$, $V_2$ respectively. In particular, since $M$ and $M'$ are obtained 
by gluing $V_1$ and $V_2$ exactly in the same way, 
no issue about the possibility of defining $\psi$ arises. However, if we chose naively
to glue $\psi_1$ and $\psi_2$ simply by requiring that $\psi |_{V_i}=\psi_i$,
we would obtain $\psi={\rm Id}_M$. But this contradicts the fact that, when the
element $g_0$ is chosen appropriately, $\varphi$ may define a non-trivial outer 
automorphism of $G$ (of infinite order), see Lemma~\ref{InfOrderDehnTwist}
below.
\end{remark}

The previous remark motivates the following:

\begin{definition}
 Suppose $M$ is a manifold as in the definition of rigid decomposability (e.g. a graph manifold), and let $V_1,V_2$ be pieces 
of $M$ glued to each other along a common toric component
$H$ of $\partial V_1$ and $\partial V_2$. Let $h$ be a fixed element
of $\pi_1 (H)$ (since $\pi_1 (H)$ is abelian, this is independent of basepoints). 
The \emph{Dehn twist $t_h$ along $h$} is the diffeomorphism
$t_h\colon M\to M$ which is defined as follows. 

By construction, $H$ admits a collar $U$ in $M$ which is 
canonically foliated by tori (see Chapter~\ref{construction:sec}). In particular, $U$
is affinely diffeomorphic
to $T^{n-1}\times [-1,1]$, where $T^{n-1}=\mR^{n-1}/\mZ^{n-1}$ is the standard
affine $(n-1)$-torus,  and $\pi_1 (H)$ is canonically identified with the group
$\mZ^{n-1}$ of the automorphisms of the covering $\pi\colon \mR^{n-1}\to T^{n-1}$.
Let now $l\colon [-1,1]\to [0,1]$ be a smooth function such that
$l|_{[-1,-1+\varepsilon)}=0$, $l|_{(1-\varepsilon,1]}=1$ and set
$$\widetilde{t}_h\colon \mR^{n-1}\times [-1,1]\to \mR^{n-1}\times [-1,1],\quad
\widetilde{t}_h (v,s)=(v+l(s)\cdot h, s).$$
 The map $\widetilde{t}_h$ 
is $\mZ^{n-1}$-equivariant, so defines a diffeomorphism
$\widehat{t}_h\colon T^{n-1}\times [-1,1]\to T^{n-1}\times [-1,1]$ which is the identity
in a neighbourhood of $T^{n-1}\times \{-1,1\}$. We now define 
$t_h\colon M\to M$ as the diffeomorphism of $M$ such that
$t_h|_U=\widehat{t}_h$, $t_h|_{M\setminus U}={\rm Id}_{M\setminus U}$.
\end{definition}

Next we show how Dehn twists can be used to give elements of 
infinite order in the outer automorphism group of graph manifolds.

\begin{lemma}\label{InfOrderDehnTwist}
Let $M$ be a graph manifold, with $G= \pi_1(M)$. Assume $V_1, V_2$ are adjacent
pieces of $M$ glued together along a common toric component $H$, with 
$G_i:= \pi_1(V_i)$ and $K:= \pi_1(H)$. 
Let $F_i \leq G_i$ be the subgroups corresponding to the fibers in $V_i$, 
and set $F= F_1\cdot F_2 \leq K$ to be the subgroup generated
by the two fiber subgroups. If $h\in K$ is chosen so that $\langle h \rangle
\cap F = \{e\}$, then we have that the associated Dehn twist $\varphi := t_h$ 
has infinite order in $Out(G)$.
\end{lemma}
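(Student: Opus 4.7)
The plan is to compute the automorphism $\varphi_{*}=t_{h*}$ of $G=G_{1}*_{K}G_{2}$ explicitly, and then show that no nonzero power of it is inner by invoking the normalizer/centralizer results for vertex groups in the Bass--Serre tree (Lemma~\ref{conj:lemma}(3)) together with the computation of the center of a piece group (Remark~\ref{center:rmk}).

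First I would identify $\varphi$. Choose a basepoint $x_{0}\in V_{1}$ lying outside the collar $U$ on the side where the twist function $l$ vanishes, so that $t_{h}$ is the identity near $x_{0}$ and equals translation by $h$ in the $V_{2}$-side of $U$. Loops based at $x_{0}$ entirely in $V_{1}$ are preserved, while a loop that crosses into $V_{2}\setminus U$ picks up an extra $h$ on the way in and $h^{-1}$ on the way out. Reading this off in the amalgam $G=G_{1}*_{K}G_{2}$, the induced automorphism satisfies $\varphi|_{G_{1}}=\mathrm{id}$ and $\varphi|_{G_{2}}=\gamma_{h}$, where $\gamma_{h}(y)=hyh^{-1}$. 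Well-definedness on the amalgamating subgroup $K$ is automatic because $K$ is abelian and $h\in K$. Iterating gives $\varphi^{n}|_{G_{1}}=\mathrm{id}$ and $\varphi^{n}|_{G_{2}}=\gamma_{h^{n}}$ for every $n\in\mathbb{Z}$.

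Now suppose by contradiction that for some $n\neq 0$ the automorphism $\varphi^{n}$ is inner, say $\varphi^{n}=\gamma_{g}$ with $g\in G$. Applied to $G_{1}$ this says $g$ centralizes $G_{1}$; applied to $G_{2}$ it says $c:=h^{-n}g$ centralizes $G_{2}$. Since any centralizer is contained in the corresponding normalizer, Lemma~\ref{conj:lemma}(3) forces $g\in G_{1}$ and $c\in G_{2}$. Using Remark~\ref{center:rmk}, the center of $G_{i}=\pi_{1}(\overline{N}_{i})\times\mathbb{Z}^{d_{i}}$ is precisely the fiber subgroup $F_{i}$, so $g\in F_{1}$ and $c\in F_{2}$. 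Therefore
\[
h^{n}=gc^{-1}\in F_{1}\cdot F_{2}=F.
\]
Since $K\cong\mathbb{Z}^{n-1}$ is torsion-free and $h\neq e$ (the case $h=e$ is vacuous as the twist is then trivial but also excluded by the hypothesis being meaningful), $h^{n}$ is a nontrivial element of $\langle h\rangle$, contradicting $\langle h\rangle\cap F=\{e\}$. Hence $\varphi^{n}$ is non-inner for all $n\neq 0$, so $\varphi$ has infinite order in $\mathrm{Out}(G)$.

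I expect the only delicate step to be the computation of $\varphi$ in the first paragraph: one must be careful with basepoint conventions and with the direction of the twist, since a wrong sign would only reshuffle which factor is conjugated and would not affect the rest of the argument. All subsequent steps are structural and follow directly from results already established in the paper, so no further obstacle is anticipated.
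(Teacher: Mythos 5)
Your proof is correct and follows essentially the same route as the paper's: assume $\varphi^{n}$ is inner, use Lemma~\ref{conj:lemma}(3) and Remark~\ref{center:rmk} to force the conjugating element into $F_{1}$ on the $G_{1}$ side and its $h^{-n}$-translate into $F_{2}$ on the $G_{2}$ side, and then derive $h^{n}\in F$, contradicting $\langle h\rangle\cap F=\{e\}$ (using torsion-freeness of $K$ and, implicitly, $h\neq e$). Your explicit first-paragraph derivation of $\varphi|_{G_{1}}=\mathrm{id}$, $\varphi|_{G_{2}}=\gamma_{h}$ is a useful addition that the paper leaves to the reader via Remark~\ref{twist:rem}, but the contradiction argument is the same.
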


\begin{proof}
Suppose by way of contradiction
that for some $k\geq 1$ the automorphism $\varphi^k$ is equal to an internal 
automorphism of $G$, \emph{i.e.}~that there exists $\overline{g}\in G$ such that 
$\varphi^k (g)=\overline{g} g \overline{g}^{-1}$ for every $g\in G$. We have in 
particular $\overline{g} g \overline{g}^{-1} =g$ for every $g\in G_1$. By 
Lemma~\ref{conj:lemma}-(3), this implies that $\overline g$ belongs to $G_1$, 
whence to the center of $G_1$, which coincides with the fiber subgroup $F_1$ of 
$G_1$ (see Remark~\ref{center:rmk}). We conclude the conjugating element 
$\overline g$ satisfies $\overline g \in F_1$.

Similarly, for every $g\in G_2$
we have $\overline{g} g \overline{g}^{-1}=h^k g h^{-k}$. Rewriting, we obtain
$(h^{-k}\overline{g}) g (h^{-k}\overline{g})^{-1}=g$,
forcing $h^{-k}\overline{g}$ to lie in the fiber subgroup $F_2$ of $G_2$, and hence
$$h^{-k} \in \overline{g}^{-1} \cdot F_2 \subset F_1\cdot F_2 = F.$$
But this contradicts the fact that $\langle h \rangle
\cap F = \{e\}$. We conclude that $\varphi^k$ is not internal for every $k\geq 1$,
as desired.
\end{proof}

It is clear that the group automorphism described in Remark~\ref{twist:rem}
is induced by a Dehn twist. As a result, Dehn twists arise naturally as basic ingredients
when trying to ``patch together'' diffeomorphisms $\psi_i: V_i \rightarrow V_i^\prime$ 
between individual pieces into a globally defined 
diffeomorphism $\psi: M\rightarrow M^\prime$.

\section{Proof of Theorem \ref{rigiddec:thm}}\label{proofrigiddec}
Let $(M,M',\varphi)$ be rigidly decomposable. Let $V_i$ (resp.~$V'_i$) 
be the pieces of $M$ (resp.~of $M'$), for $i=1,\ldots,h$. We fix an identification of $\pi_1 (V_i)$ (resp.~of $\pi_1 (V'_j)$)
with a subgroup of $\pi_1 (M)$ (resp.~of $\pi_1 (M')$).
As we already remarked, such an identification
depends on the choice of one basepoint for $M,M'$ and for each piece, as well as suitable
paths connecting the basepoint of the $M,M'$ with the basepoints
of their pieces.
We also choose $g_i\in\pi_1 (M')$ such that
$\varphi (\pi_1 (V_i))=g_i\pi_1 (V'_i)g_i^{-1}$ for every $i=1,\ldots,h$.

By hypothesis, for $i=1,\ldots,h$ there exists a diffeomorphism
$\psi_i\colon V_i\to V'_i$ which induces the isomorphism
$g\mapsto g_i^{-1} \varphi (g) g_i$ between $\pi_1 (V_i)$ and $\pi_1 (V'_i)$, and
restricts to an affine diffeomorphism
of $\partial V_i$ onto $\partial V'_i$.

In order to construct $\psi\colon M\to M'$ that induces $\phi$, let us consider
a piece $V_i$ of $M$, a component $H_i$ of $\partial V_i$, and let
$V_j$ be the piece of $M$ adjacent to $V_i$ along $H_i$ (we allow the case $i=j$). 
Denote by $H_j$ the component of $V_j$ which is identified 
to $H_i$ in $M$, and by $H\subseteq M$ the image of $H_i$ and $H_j$
in $M$. 
We fix identifications of $\pi_1 (H_i)$ with a subgroup $K_i$ of $\pi_1 (V_i)$
and of $\pi_1 (H_j)$ with a subgroup $K_j$ of $\pi_1 (V_j)$ (as usual, this amounts to choosing a basepoint
in $H$ and paths joining this basepoint with the fixed basepoints of $V_i$ and $V_j$).
Via the fixed identifications of $\pi_1 (V_i)$ and $\pi_1 (V_j)$ with
subgroups of $\pi_1 (M)$, the groups $K_i$ and $K_j$ are identified
with conjugated subgroups of $\pi_1 (M)$, and this implies that 
the subgroups $\varphi (K_i)$, $\varphi (K_j)$ are conjugated
in $\pi_1 (M')$. By item 5 in the definition of rigid decomposability, this implies that $\psi_i (H_i)=H'_i$ is glued
in $M'$ to $\psi_j (H_j)=H'_j$.  

Denote by $\alpha\colon H_i\to H_j$ and $\alpha'\colon H'_i\to
H'_j$ the gluing maps which enter into the definition of $M$ and $M'$.
We now show that the diagram
\begin{equation}\label{glu:eq}
\xymatrix{
H_i \ar[r]^{\psi_i} \ar[d]^{\alpha} & H'_i \ar[d]^{\alpha'}\\
H_j \ar[r]^{\psi_j} & H'_j
}
\end{equation}
commutes, up to homotopy. In fact, recall that there exist $g_i,g_j\in \pi_1 (M')$
such that $(\psi_i)_\ast (g)=g_i^{-1} \varphi(g) g_i$ for every $g\in H_i$, 
$(\psi_j)_\ast (g)=g_j^{-1} \varphi(g) g_j$ for every $g\in H_j$. Moreover,
we can choose identifications $\pi_1 (H'_i)\cong K'_i< g_i \pi_1 (V'_i)g_i^{-1}$,
$\pi_1 (H'_j)\cong K'_j< g_j \pi_1 (V'_j)g_j^{-1}$ in such a way that
the isomorphisms $\alpha_\ast\colon K_i\to K_j$, $\alpha'_\ast\colon K'_i\to K'_j$
are induced by conjugations by an element of $\pi_1(M),\pi_1(M')$ respectively. 

It follows that there exists $h\in \pi_1(M')$ such that
$\alpha'_\ast ((\psi_i)_\ast (g))=h (\psi_j)_\ast (\alpha_\ast (g))h^{-1}$
for every $g\in K_i$. By item 4 in the definition of rigid decomposability, this implies that
$h\in K'_j$, and this implies in turn that the diagram above commutes, up to homotopy.
In order to properly define $\psi$, 
we now need to modify $\psi_i$ and $\psi_j$ in a neighbourhood of $H_i$ and $H_j$,
also taking care of the fact that $\psi$ has eventually to induce 
the fixed isomorphism $\varphi\colon \pi_1 (M)\to \pi_1 (M')$.

Being homotopic \emph{affine} diffeomorphisms of $T^{n-1}$, the diffeomorphisms
$\alpha'\circ \psi_i$ and $\psi_j\circ \alpha$
are in fact isotopic, and this implies that $\psi_i$ can be modified in a
collar of $H_i$ in order to make diagram~\eqref{glu:eq} commute.
This ensures that the maps $\psi_i$, $\psi_j$ can be glued into
a diffeomorphism $\widehat{\psi}\colon V_i\cup_{\alpha} V_j \to V'_i\cup_{\alpha'} V'_j$.
As pointed out above, 
we are now granted that an element $h\in K'_j$
exists such that
$\alpha'_\ast ((\psi_i)_\ast (g))=h (\psi_j)_\ast (\alpha_\ast (g))h^{-1}$
for every $g\in K_i$. Observe that $h$ uniquely identifies an element
of $\pi_1 (H')$. It is now easily seen that if $\psi_0 \colon V_i\cup_{\alpha} V_j \to V'_i\cup_{\alpha'} V'_j$ is obtained by composing $\widehat{\psi}$ with a Dehn
twist along $H'$ relative to $h$ (or to $-h$), then 
$\psi_0$ induces on $\pi_1 (V_i\cup_{\alpha} V_j)$ the restriction of $\varphi$.

We can apply the procedure just described along any boundary component
of any piece of $M$, eventually obtaining the desired diffeomorphism
$\psi\colon M\to M'$ inducing $\varphi$. 
\qed

\section{Mapping class group}\label{mcg:sec}
Let $M$ be a  closed graph manifold. We recall that
$\mcg (M)$ is the mapping class group of $M$, \emph{i.e.}
the group of \emph{homotopy} classes of diffeomorphisms
of $M$ onto itself. We also denote by $\out (\pi_1 (M))$ the group
of outer automorphisms of $\pi_1 (M)$. Every diffeomorphism
of $M$ induces an isomorphism of $\pi_1 (M)$, which is well-defined
up to conjugacy. Since homotopic diffeomorphisms induce
conjugate isomorphisms, there exists a well-defined map 
$$\eta\colon \mcg (M)\to \out (\pi_1 (M)),$$
which is clearly a group homomorphism.

\begin{theorem}\label{mcg:thm}
Let $M$ be a closed graph manifold. Then
the map $\eta\colon \mcg (M)\to \out (\pi_1 (M))$
is a group isomorphism.
\end{theorem}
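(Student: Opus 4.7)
The plan is to decompose the proof into establishing surjectivity and injectivity of $\eta$ separately, using the two main results already established: the smooth rigidity theorem (Theorem~\ref{smrigidity:thm}) for surjectivity, and the asphericity of $M$ (Lemma~\ref{Aspherical}) for injectivity.

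For surjectivity, I would proceed as follows. Let $[\varphi]\in \out(\pi_1(M))$ and pick a representative automorphism $\varphi\colon \pi_1(M)\to \pi_1(M)$. Applying Theorem~\ref{smrigidity:thm} with $M'=M$ directly produces a diffeomorphism $\psi\colon M\to M$ inducing $\varphi$ in the sense recalled at the beginning of Chapter~\ref{smoothrig:sec}, namely that for some (hence every) choice of basepoints the homomorphism $\psi_*$ agrees with $\varphi$ up to an inner automorphism of $\pi_1(M)$. Therefore $\eta([\psi])=[\varphi]$, and $\eta$ is onto.

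For injectivity, suppose $\psi\colon M\to M$ is a diffeomorphism with $\eta([\psi])=1$, i.e., $\psi_*$ is an inner automorphism of $\pi_1(M)$. I would invoke the classical fact that if $Y$ is an aspherical CW-complex (e.g., a $K(\pi,1)$) and $X$ is any CW-complex, then the set of free homotopy classes $[X,Y]$ is in natural bijection with the set of conjugacy classes of homomorphisms $\pi_1(X)\to\pi_1(Y)$; see, for example, Hatcher's \emph{Algebraic Topology}, Proposition~1B.9. Lemma~\ref{Aspherical} ensures that $M$ itself is aspherical, so this applies with $X=Y=M$. Since $\psi_*$ is conjugate to $\id_{\pi_1(M)}$, it follows that $\psi$ is freely homotopic to $\id_M$, and so $[\psi]=1$ in $\mcg(M)$. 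Together with surjectivity, this gives the isomorphism.

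There is essentially no obstacle left here: both nontrivial ingredients (smooth rigidity and asphericity) have already been proved in earlier chapters, and what remains is just a routine unwinding of definitions together with a standard invocation of the classification of maps into an aspherical target. The only small care needed is in bookkeeping the ambiguity in ``the'' induced homomorphism on $\pi_1$ (basepoints, connecting paths), which is precisely why one passes to the outer automorphism group and to free (rather than based) homotopy classes -- exactly the setup in which the two ingredients plug in cleanly.
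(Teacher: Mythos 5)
Your proof is correct and matches the paper's approach exactly: surjectivity from Theorem~\ref{smrigidity:thm}, injectivity from Lemma~\ref{Aspherical} via the standard classification of (free) homotopy classes of maps into an aspherical space. The paper states this in a single sentence; your version just spells out the routine details.
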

\begin{proof}
The fact that $M$ is aspherical (see Lemma~\ref{Aspherical}) easily implies that 
$\eta$ is injective, while surjectivity of $\eta$ is just a restatement of 
Theorem~\ref{smrigidity:thm}.
\end{proof}

\begin{remark}\label{mcg:rem}
Remark~\ref{twist:rem} provides some evidence that the
mapping class group of $M$ should always be infinite: in fact, 
Dehn twists generate an
abelian subgroup of $\mcg (M)$, and with some effort one could probably show that
such a subgroup is never finite. 
\end{remark}

\begin{remark}\label{Waldhausen}
A celebrated result
due to Waldhausen~\cite{Wa3} shows that Theorem~\ref{mcg:thm}
also holds in the case of classical closed $3$-dimensional graph manifolds which either decompose
into the union of at least two Seifert pieces, or do not consist of a single ``small'' Seifert manifold (for example, if $M=S^3$ then of course $\out(\pi_1(M))=\{1\}$, while $\mcg (M)$
has two elements). Observe however that Seifert pieces that are homeomorphic to the product
$\Sigma\times S^1$, where $\Sigma$ is a hyperbolic punctured surface, are never small.

In the case of classical graph manifolds with boundary,
Theorem~\ref{mcg:thm} still holds,
provided that we replace the group
$\out(\pi_1(M))$ with the group of the conjugacy classes of isomorphisms 
which preserve the peripheral structure of $\pi_1 (M)$ (one says that an isomorphism
of $\pi_1(M)$ preserves its peripheral structure if it sends the subgroup
corresponding to a boundary component of $M$ into the subgroup corresponding
to a maybe different boundary component of $M$, up to conjugacy). 

It is not difficult to show that Lemma~\ref{InfOrderDehnTwist} may be adapted to construct
big abelian subgroups of $\out(\pi_1 (M))$ also in the case of classical graph manifolds, 
so one expects that $\mcg (M)$ should be infinite for generic $3$-dimensional graph manifolds.
\end{remark}

%-----------------------------------------------------------------------
% Beginning of chap1.tex
%-----------------------------------------------------------------------
%
%  AMS-LaTeX sample file for a chapter of a monograph, to be used with
%  an AMS monograph document class.  This is a data file input by
%  chapter.tex.
%
%  Use this file as a model for a chapter; DO NOT START BY removing its
%  contents and filling in your own text.
% 
%%%%%%%%%%%%%%%%%%%%%%%%%%%%%%%%%%%%%%%%%%%%%%%%%%%%%%%%%%%%%%%%%%%%%%%%

\chapter{Algebraic properties}\label{groups:sec}
The aim of this chapter is the study of fundamental groups of (extended)
graph manifolds
(and of their subgroups) with respect to some classical properties of abstract groups.
The decomposition of an (extended) graph manifold $M$ into pieces induces a description of
$\pi_1(M)$ as the fundamental group of a graph of groups $\mathcal{G}$, and our arguments will often
exploit the study of the action of $\pi_1(M)$ on the Bass-Serre tree associated to $\mathcal{G}$.
Therefore, at the beginning of the Chapter we recall some useful results from Bass-Serre theory
(we refer the reader e.g.~to \cite{serre} for more 
background). Whenever possible, we state our results in the context of fundamental groups of graph of groups
whose vertex groups satisfy suitable conditions, and deduce as corollaries the corresponding properties 
of fundamental groups of (extended) graph manifolds.

%\todo{Several changes in every section, except that in Sections 6.5, 6.10, 6.11, that are just the same as before. Also the proof
%of co-Hopf property has been just slightly changed, but in a way that allows to remove some results from Chapter 4 (see the box there)}

We first study the action of the fundamental group of an (extended) graph manifold on its Bass-Serre tree. We show
that one can detect whether the (extended) graph manifold $M$ is irreducible or fibered by looking at the action of $\pi_1 (M)$ on its Bass-Serre tree: namely, in Propositions~\ref{irr-acyl} and \ref{weakly:char} we prove that the action is acylindrical (resp.~faithful)
if and only if $M$ is irreducible (resp.~non-fibered). We refer the reader to the Introduction for the definition of transverse gluings, and of
irreducible and fibered (extended) graph manifolds.

In Section~\ref{relhyp:sec} we  show that the fundamental group of an
(extended) graph manifold $M$ is relatively hyperbolic with respect to a finite family of proper subgroups,
provided that at least one piece of $M$ is purely hyperbolic (Proposition~\ref{exception}). This condition  is probably also necessary, and indeed it is in the case
of irreducible (extended) graph manifolds, which is discussed in Chapter~\ref{preserve:sec}. On the contrary, in Proposition~\ref{hyp-emb-elements} we show that the fundamental
group of an (extended) graph manifold contains hyperbolically embedded subgroups, provided that the manifold
has an internal wall with transverse fibers.

Then, we show that the fundamental groups of (extended) graph manifolds contain no non-trivial Kazhdan groups (Corollary~\ref{Kazhdan subgroups}), have universal 
exponential growth (Proposition~\ref{UEG}), and we
establish
the Tits alternative (Corollary~\ref{TA}).

In Proposition~\ref{CH} we prove that, if $M$ 
contains a pair of adjacent pieces with transverse fibers, then the fundamental group of $M$ is 
co-Hopfian. 
Then we prove that $\pi_1(M)$ is $C^*$-simple if and only if $M$ is not fibered (Proposition~\ref{C-simple-manifolds})
and that $\pi_1(M)$ is SQ-universal provided that $M$ contains an internal wall
which either is disconnecting, or has transverse fibers (Proposition~\ref{SQ-u}).
Our proofs of Propositions~\ref{C-simple-manifolds} and \ref{SQ-u}
exploit 
results from~\cite{dlH-Pr} and from the recent preprint~\cite{DGO}, and also provide a characterization
of $C^*$-simple and SQ-universal fundamental groups of acylindrical graphs of groups
(see Propositions~\ref{C-simple-graphs} and \ref{SQ-u-graph}).

Building on results from the subsequent Chapter~\ref{strongirr:sec},
in Section~\ref{word:sec} we show that
the word problem for $\pi_1(M)$ is always solvable for irreducible graph manifolds.
Finally, in the last section, we study how the choice of the gluing between pieces can affect the fundamental group of graph manifolds.

\section{Graphs of groups and groups acting on trees}\label{tree-theory}
Let $\mathcal{G}$ be a finite graph of groups based on the the graph $\Gamma$.
Following~\cite{dlH-Pr}, we say that an edge $e$ of $\mathcal{G}$ is \emph{trivial} if it has
distinct endpoints and at least one of the two monomorphisms associated to $e$
is an isomorphism. The graph of groups $\mathcal{G}$ is \emph{reduced} if 
no edge of $\mathcal{G}$ is trivial.
If $\mathcal{G}$ 
is not reduced, then one can define a graph $\overline{\Gamma}$ obtained from $\Gamma$
by collapsing a trivial edge to a point, and a new 
graph of groups
$\mathcal{G}'$ based on $\overline{\Gamma}$, in such a way that the fundamental
group of $\mathcal{G}'$ is canonically isomorphic to the fundamental group of
$\mathcal{G}$. Therefore, every finite graph of groups may be simplified
into a reduced one without altering its fundamental group (see~e.g.~\cite[Proposition 2.4]{Ba1}). 

We say that $\mathcal{G}$ is \emph{non-trivial}
if it is reduced and based on a graph with at least one edge. The graph of groups
$\mathcal{G}$ is \emph{degenerate} if one of the following possibilities holds:
\begin{enumerate}
\item either $\Gamma$ is a segment with two vertices, and the indices of the unique edge group
in the two vertex groups are both equal to $2$ (in this case the fundamental group of
$\mathcal{G}$ is an amalgamated product of its vertex groups), or
\item $\Gamma$ is a loop with one vertex, and both the monomorphisms associated to
the edge are isomorphisms (in this case the fundamental group of $\mathcal{G}$
is a semidirect product of the vertex group with $\mZ$).
\end{enumerate}

Finally, we say that $\mathcal{G}$ is \emph{exceptional} if it is degenerate and the edge
group of $\mathcal{G}$ is trivial. If $\mathcal{G}$ is exceptional, then
$\pi_1(\mathcal{G})$ is isomorphic either to $\mZ_2 * \mZ_2\cong \mZ\rtimes \mZ_2$ (in 
the case of the amalgamated product) 
or to $\mZ$ (in the case of the semidirect product).

Any graph of groups $\mathcal{G}$ with fundamental group $G$ determines a tree $T$, called
the \emph{Bass-Serre tree of $\mathcal{G}$}, on which $G$ acts by isometries.
It is well-known that $G$ acts on $T$ without inversions: if an element $g\in G$ and and edge $e$ of $T$
are such that $g(e)=e$, then $g$ does not interchange the vertices of $e$.

Let now $G$ be a group acting without inversions on a tree $T$.
It is well-known that the subset of fixed points of $T$ under the action of $G$
is a subtree, that will be denoted by $T_G$. Conversely,
if $T'$ is a subtree of $T$ (for example, a vertex or an edge),
then $G_{T'}$ is the subgroup of those elements of $G$ that pointwise fix $T'$.
An old result by Tits implies that every $g\in G$ is either \emph{elliptic}, if it fixes a vertex of $T$,
or \emph{hyperbolic}, if there exists a $g$-invariant subtree $T'$ of $T$ which is isomorphic to the real line, and is such
that $g$ acts on $T'$ as a non-trivial translation. The action of $G$ on $T$ is \emph{faithful} if $G_T=\{1\}$, and it is \emph{minimal} if $T$ does not contain any $G$-invariant proper subtree.
Following Delzant \cite{Del}, we say that the action of $G$ is {\it $K$-acylindrical} 
if there exists a constant $K$, such that any element
which pointwise fixes any path in $T$ of length $\geq K$ is automatically trivial. 
The action of $G$ is acylindrical if it is $K$-acylindrical for some
$K\geq 0$. When we say that a graph of groups $\mathcal{G}$ is faithful, minimal or acylindrical, we understand
that the action of $\pi_1(\mathcal{G})$ on the Bass-Serre tree of $\mathcal{G}$ is respectively faithful,
minimal or acylindrical.

The following Lemma collects some elementary results about the Bass-Serre tree
of a finite graph of groups.

\begin{lemma}\label{reduced}
 Let $\mathcal{G}$ be a non-trivial finite graph of groups,
let $G$ be the
fundamental group of $\mathcal{G}$,
and let $T$ be the Bass-Serre tree of $\mathcal{G}$. Then:
\begin{enumerate}
%\item $T$ has no vertex of valence one.
 \item $G$ contains at least one hyperbolic element (in particular, $T$ has infinite diameter).
\item The action of $G$ on $T$ is minimal.
\item The tree $T$ is isomorphic to the real line if and only if
$\mathcal{G}$ is degenerate.
\item If $\mathcal{G}$ is non-degenerate, then $G$ contains a free non-abelian subgroup.
\item $\mathcal{G}$ is exceptional if and only if it is degenerate and acylidrical.
\item If $\mathcal{G}$ is faithful, then $G$ does not contain any 
non-trivial finite normal subgroup.
\end{enumerate}
\end{lemma}
\begin{proof}
In order to prove point (1) it is sufficient to show that there exists an element
of $G$ which does not fix any vertex of $G$. But $\mathcal{G}$ is non-trivial,
so it is not a filtering tree of groups, according to the terminology used in~\cite{Ba1}.
Therefore, point (1) is a consequence of~\cite[Proposition 3.7]{Ba1}.

Points (2), (3) and (4) are proved respectively in ~\cite[Proposition 7.12]{Ba},
\cite[Proposition 18]{dlH-Pr}
and \cite[Theorem 6.1]{Ba1}.

Let us prove (5). By definition, if $\mathcal{G}$ is exceptional then it is
degenerate, and it is easy to check that the two exceptional graphs of groups
are acylindrical. Let us now suppose that $\mathcal{G}$ is degenerate. Then $T$ is the real line, so an automorphism of $T$ fixing an edge acts as the identity of $T$. Therefore,
if $\mathcal{G}$ is also acylindrical, then the stabilizer of any edge of $T$
is trivial. This means that the edge group of $\mathcal{G}$ is trivial, so $\mathcal{G}$
is exceptional, and point (5) is proved.

Suppose now that $G$ contains a non-trivial finite normal subgroup $N$. Being finite, $N$ fixes a vertex of $T$, so the fixed subtree $T_N\subseteq T$ is non-empty.
Since $N$ is normal in $G$, the subtree $T_N$ is $G$-invariant, so by minimality 
$T_N=T$, and $N$ lies in the kernel of the action of $G$ on $T$. This proves point (6).
\end{proof}

It is shown in~\cite{DGO} that (with very few exceptions)
a group acting acylindrically on 
a Gromov hyperbolic space contains hyperbolically embedded subgroups. Therefore, we have the following result
(we refer the reader to~\cite{DGO} for the definition of non-degenerate hyperbolically embedded subgroup):

\begin{proposition}[Hyperbolically embedded subgroups]\label{DGO-prop}
 Let $\mathcal{G}$ be a non-trivial non-exceptional acylindrical graph of groups
with fundamental group $G$. Then 
%every non-trivial subnormal
%subgroup of $G$ (including $G$ itself) 
$G$ contains a non-degenerate hyperbolically embedded subgroup.
\end{proposition}
\begin{proof}
 Let $T$ be the Bass-Serre tree of $\mathcal{G}$. The notion of acylindrical action
used in~\cite{DGO} is taken from~\cite{Bow}, and makes sense in the  context of group actions on Gromov hyperbolic spaces. However, as observed in~\cite{Bow},
in the particular case of trees our acylindrical actions are acylindrical
also in the sense of~\cite{DGO}. 

Let us consider the action of $G$ on $T$. By Lemma~\ref{reduced} $G$ contains
a hyperbolic element $h$.
As observed in~\cite[Remark 6.2]{DGO}, the acylindricity of the action of $G$ on $T$
implies that 
$h$ satisfies the weak proper discontinuity condition defined by Bestvina and Fujiwara in~\cite{BF}.
Therefore, if $E(h)$ is the unique maximal elementary subgroup of $G$
containing $h$, then $E(h)$ is hyperbolically embedded in $G$ (see~\cite[Lemma 6.5 and Theorem 6.8]{DGO}). In order to conclude we need to show that $E(h)$ does not coincide
with the whole of $G$. However, the infinite cyclic subgroup
generated by $h$ is of finite index in $E(h)$. Moreover, since $\mathcal{G}$ is non-degenerate, Lemma~\ref{reduced} implies that $G$ contains a free non-abelian subgroup. In particular, $G$ is not virtually cyclic, so $E(h)\neq G$, and we are done.

An alternative proof of the Proposition follows from the results contained in the recent
preprint~\cite{osin-pre}, where it is shown that the class of groups containing
 a proper infinite hyperbolically embedded subgroup coincides with the class of groups
admitting a non-elementary acylindrical action on a Gromov hyperbolic
space.
\end{proof}

\section{The graph of groups associated to an (extended) graph manifold}\label{tree-graph-theory}
Let $M$ be an (extended) graph manifold. The decomposition of $M$ into pieces determines
a description of $\pi_1(M)$ as the fundamental group of a graph of groups
$\mathcal{G}$, where vertex groups of $\mathcal{G}$ correspond to fundamental
groups of the pieces of $M$, and edge groups correspond to fundamental
groups of the internal walls of $M$. Edge groups have infinite index in the adjacent vertex groups,
so $\mathcal{G}$ is always reduced. Therefore, if $M$ contains at least one internal wall, then 
$\mathcal{G}$ is non-trivial and non-degenerate. 
In this Section we establish
some useful properties of the action of $\pi_1(M)$
on the Bass-Serre tree associated to $\mathcal{G}$. 

Before stating our first lemma, recall that the stabilizer of any vertex of the Bass-Serre tree $T$ of $\mathcal{G}$ is identified with the fundamental group $G_i$ of a piece
of $M$, and that the fiber subgroup of $G_i$ coincides with the center of $G_i$ (see Remark \ref{center:rmk}). As a consequence,
we may speak without ambiguities about the fiber subgroup of the stabilizer of any vertex of $T$.

\begin{lemma}\label{acyl-lem}
Let $M$ be an (extended) graph manifold and let $T$ be the Bass-Serre tree corresponding
to the decomposition of $M$ into pieces. 
\begin{enumerate}
 \item 
Let $e_1,e_2$ be distinct edges of $T$ sharing
the common vertex $v$, and suppose that the element $g\in\pi_1(M)$ is such that
$g(e_i)=e_i$ for $i=1,2$. Then $g$ belongs to the fiber subgroup of $G_v$.
\item
Let $\mathcal{P}$ be a path in $T$ of length three, let $e_1, e_2, e_3$ be the three consecutive edges in $\mathcal{P}$, and suppose
that there exists a non-trivial element $g\in G$ such that
$g(e_i)=e_i$ for $i=1,2,3$. Then the gluing corresponding to the edge $e_2$
is not transverse.
\end{enumerate}
\end{lemma}
\begin{proof}
Point (1) is an immediate consequence of Lemma~\ref{acyl-lem-0}--(2).

(2): Let $v_1, v_2$ the two intermediate 
vertices of the path $\mathcal{P}$, and let $G_i$
be the stabilizer of $v_i$ in $G$. By point (1) the element
$g$ belongs both to the fiber subgroup of $G_1$ and to the fiber subgroup
of $G_2$. Since $g$ is non-trivial, this implies that the gluing corresponding to the edge $e_2$ joining $v_1$ with $v_2$ is not transverse.
\end{proof}

We are now ready to provide a characterization of irreducibility in terms of the action of $\pi_1(M)$ on its Bass-Serre tree.
The ``if'' part of the following result was suggested by the anonymous referee.

\begin{proposition}[Irreducible $\Longleftrightarrow$ Acylindrical]\label{irr-acyl}
 Let $M$ be an (extended) graph manifold containing at least one internal wall. We denote by $G$ the fundamental group of $M$,
and by $T$ the Bass-Serre tree associated to the decomposition of $M$ into pieces. 
Then $M$ is irreducible if and only if
the action of $G$ on $T$ is acylindrical.
\end{proposition}
\begin{proof}
If $M$ is irreducible, then Lemma~\ref{acyl-lem} implies
that the graph of groups corresponding to the decomposition of $M$
into pieces is $3$-acylindrical.
%and 
%show that the graph of groups is $3$-acylindrical. Indeed, take any path $\mathcal P$ of length three,
%let $e_1, e_2, e_3$ be the three consecutive edges in the path $\mathcal P$, and $v_1, v_2$ the two intermediate 
%vertices. Let $G_i=\pi_1(V_i)=\pi_1(N_i)\times\mZ^{k_i}$ be the stabilizer of $v_i$, $i=1,2$.

%If we have an element $g$ in $G_{\mathcal{P}}$, we have that $g$ 
%must belong to $G_i$, $i=1,2$. Moreover, since $g$ fixes two edges exiting
%from $v_i$, Lemma~\ref{acyl-lem} implies 
%that $g$, when viewed as an element in $G_i$, 
%is entirely contained in the fiber factor of the splitting $G_{v_i} = \pi_1(N_i)\times \mathbb Z^{k_i}$, $1,2$.
%Now think of $g$ as an element
%in the stabilizer of the edge joining $v_1$ to $v_2$. We have that $g$ must lie in 
%the intersection of the fiber subgroups of $G_1$ and $G_2$.
%But the intersection of these two subgroups is trivial, since our
%graph manifold is  irreducible. We conclude that $g$ has to be the identity element, as desired.

On the other hand, let us suppose that $M$ is not irreducible. Then 
there exists a non-transverse gluing $\psi$ between two
(possibly non-distinct)
adjacent pieces $V_{i_1}$, $V_{i_2}$ of $M$. This gluing 
determines an infinite abelian subgroup $H$ of 
$G$ that 
acts trivially
on the Bass-Serre tree associated to the amalgamation (or HNN-extension)
of $\pi_1(V_{i_1})$
and $\pi_1(V_{i_2})$ corresponding to $\psi$.
This tree contains a bi-infinite geodesic, and equivariantly embeds into the Bass-Serre tree $T$ of the ambient
graph of groups. In particular, $T$ contains a bi-infinite geodesic admitting an infinite
pointwise stabilizer, so the action of $\pi_1(M)$ on $T$ is not acylindrical.
\end{proof}

We can construct an acylindrical action of $\pi_1(M)$ on a tree under the weaker hypothesis that 
$M$ contains at least one internal wall with transverse fibers. To this aim we introduce the following construction.

The decomposition of $M$ into pieces determines a description
of $\pi_1(M)$ as the fundamental group of a graph of groups
$\mathcal{G}$ based on the finite graph $\Gamma$. 
Let us choose an internal wall $H$ of $M$, which corresponds to the edge
$\mathcal{E}$ of $\Gamma$, and let
$M'=M\setminus N(H)$, where $N(H)$ is an open regular
neighborhood of $H$ in $M$.
So $M'$ is either an (extended) graph manifold
(if $\Gamma\setminus \mathcal{E}$ is connected) or the disjoint union
of two (extended) graph manifolds (if $\Gamma\setminus \mathcal{E}$ is disconnected).
We consider the graph of groups $\mathcal{G}'$ associated to the realization of $M$
as a gluing of $M'$ along the boundary components arising from the cut along $H$.
Then $\mathcal{G}'$ is based on the graph $\Gamma'$
obtained by collapsing to points all the edges in $\Gamma\setminus \mathcal{E}$.
By definition, the fundamental group of $\mathcal{G}'$ is still isomorphic to $\pi_1(M)$.
Of course, $\mathcal{G}'$ is non-degenerate, and it represents a realization of $G$
as an amalgamated product (if $M'$ is disconnected) or as an HNN-extension (if $M'$ is connected). We say that $\mathcal{G}'$ is obtained by collapsing $\mathcal{G}$ 
outside $\mathcal{E}$. 

\begin{lemma}\label{addedlemma}
 Let $M$ be an (extended) graph manifold containing at least one internal wall, let $\mathcal{G}$ be the graph of groups
corresponding to the decomposition of $M$ into pieces, and let $\mathcal{G}'$
be the graph of groups obtained by collapsing $\mathcal{G}$ outside
the edge $\mathcal{E}$. Then:
\begin{enumerate}
 \item If $\mathcal{E}$ corresponds to an internal wall of $M$ with transverse
fibers, then $\mathcal{G}'$ is $3$-acylindrical.
\item $\mathcal{G}$ is faithful if and only if $\mathcal{G}'$ is faithful.
\end{enumerate}
\end{lemma}
\begin{proof}
Let $M'=M\setminus N(H)$, where $H$ is  
the internal wall corresponding to the edge
$\mathcal{E}$, and $N(H)$ is an open regular neighborhood of $H$ in $M$.
%of $\mathcal{G}$ that survives in $\mathcal{G}'$. So $M'$ is either a graph manifold
%(if this edge is non-separating in $\mathcal{G}$) or the disjoint union
%of two graph manifolds. 
We call \emph{big chamber} a connected component
of the preimage of $M'$ in $\tilM$. Of course, a big chamber is just the union
of a (usually infinite) number of chambers of $\tilM$. Moreover, the Bass-Serre $T'$
associated to $\mathcal{G}'$ is dual to the decomposition of $\tilM$ into big chambers, and
the inclusion of chambers into big chambers
induces a surjective $G$-equivariant map $\rho\colon T\to T'$. 

We say that en edge ${e}$ of $T$ is \emph{special} if it corresponds to a preimage of $H$ in $\tilM$,
or, equivalently, if $\rho({e})$ is an edge of $T'$, i.e.~$\rho$ does not collapse
${e}$ to a vertex. The union of the special edges of $T$ is $G$-invariant, 
and $\rho$ establishes a bijection
between the set of special edges of $T$
and the set of edges of $T'$.
Therefore, if ${e}$ is a special edge of $T$ and 
$g\in G$ is such that $g(\rho({e}))=\rho({e})$, then
$g({e})={e}$. 

(1):
We take a path $\mathcal{P}'$ of length three in $T'$ with endpoints $v_0'$ and $v_3'$, we
denote by
$e_1',e_2',e_3'$ be the three consecutive edges in $\mathcal{P}'$, and we take
$g\in G$ such that $g(e_i')=e_i'$ for $i=1,2,3$. 
Let 
${e}_i$ be the special edge of $T$ such that $\rho(e_i)=e'_i$, let 
$v_0$ (resp.~$v_3$) be the vertex of $e_1$ (resp.~of $e_3$) 
such that $\rho(v_i)=v_i'$, $i=0,3$,
and let $\gamma\subseteq T$ be the geodesic joining $v_0$ with $v_3$. 
The discussion above shows that $g(e_i)=e_i$ for $i=1,2,3$, so $g(v_0)=v_0$ and
$g(v_3)=v_3$, hence $g(\gamma)=\gamma$. Moreover,
if $v_0,v_3$
were in the same connected component of $T\setminus e_2$, then
$\rho(v_0)=v_0'$ and $\rho(v_3)=v_3'$ would be in the same connected component of $T'\setminus e_2'$,
a contradiction. This implies that $\gamma$ contains 
a path of length three which is fixed by $g$ and has $e_2$
as intermediate edge. But $e_2$ is special, so 
the gluing corresponding to $e_2$ is transverse, and 
$g$ must be the identity by Lemma~\ref{acyl-lem}. 
We have thus
shown that the action of $G$ on $T'$ is $3$-acylindrical.

(2): Let $K,K'$ be the kernels of the action of $G$ on $T,T'$ respectively.
We will show that $K=K'$. Recall that $\rho$ establishes a $G$-equivariant
bijection between the set of special edges of $T$ and the set of edges of $T'$.
Therefore, $K'$ is just the group of those
elements of $G$ that fix every special edge of $T$. This already proves the inclusion
$K\subseteq K'$. Let us now show that $K'\subseteq K$.
Let $T_{K'}\subseteq T$ be the fixed subtree of $K'$.
Every special edge is contained in $T_{K'}$, so $T_{K'}$ is non-empty.
But $K'$ is normal
in $G$, so $T_{K'}$ is $G$-invariant. The minimality of the action of $G$ on $T$
now implies that $T_{K'}=T$, i.e.~that
$K'\subseteq K$.
\end{proof}

The following result is an immediate consequence of point (1) of Lemma~\ref{addedlemma}.

\begin{proposition}\label{acyl-new}
 Let $M$ be an (extended) graph manifold, 
 and suppose that $M$ contains an internal wall with transverse fibers. Then
$G$ admits a realization either as a non-degenerate acylindrical amalgamated product or as a non-degenerate acylindrical HNN-extension.
\end{proposition}

By looking at the action of $\pi_1(M)$ on
its Bass-Serre tree, one can also establish whether $M$ is fibered or not.

\begin{proposition}[Fibered $\Longleftrightarrow$ Non-faithful]\label{weakly:char}
Let $M$ be an (extended) graph manifold containing at least one internal wall, let $\mathcal{G}$ be the graph of groups associated to the decomposition of $M$ into pieces, and set
$G=\pi_1(\mathcal{G})=\pi_1(M)$. Then the following conditions are equivalent:
\begin{enumerate}
\item
$M$ is fibered.
\item
$G$ contains a non-trivial normal abelian subgroup.
\item
The action of $G$ on the Bass-Serre tree of $\mathcal{G}$ is not faithful.
\item
If $\mathcal{G}'$ is obtained by collapsing $\mathcal{G}$ outside an edge, then the action
of $G$ on the Bass-Serre tree of $\mathcal{G}'$ is not faithful.
\item
Let $V_1,\ldots,V_s$ be the pieces of $M$, let $G_i=\pi_1(V_i)=\pi_1(B_i)\times\mZ^{k_i}$, and let $\{1\}\times \mZ^{k_i}$
be the fiber subgroup of $G_i$. Then one can choose a distinguished subgroup $F_i$ of the fiber subgroup of $G_i$ 
for every $i=1,\ldots,s$,
in such a way that each gluing involved in the construction of $M$ identifies the distinguished subgroups
of the fundamental groups of the corresponding adjacent pieces.
%\item
%$M$ is the total space of a smooth fiber bundle
%$F\hookrightarrow M\to M'$, where the fiber $F$ is diffeomorphic to the $d$-dimensional torus, $d\geq 1$, and
%$M'$ is a graph manifold.
\end{enumerate}
\end{proposition}
\begin{proof}
Point (2) of Lemma~\ref{addedlemma} implies that points (3) and (4) are equivalent,
so it is sufficient to prove the chain of implications (1) $\Longrightarrow$ (2) $\Longrightarrow$ (3)
$\Longrightarrow$ (5) $\Longrightarrow$ (1).

\smallskip

(1) $\Rightarrow$ (2): Suppose that $M$ is the total space of a fiber bundle $F\hookrightarrow M\to M'$, where the fiber
$F$ is a $d$-dimensional torus, $d\geq 1$, and $M'$ is an (extended) graph manifold.  
We consider the following portion of
the exact sequence for fibrations in homotopy: 
$$
\pi_2(M')\to \pi_1 (F)\to \pi_1(M)\to \pi_1(M')\ . 
$$
Since $M'$ is aspherical, we have $\pi_2(M')=0$. Therefore, $\pi_1(F)$ injects onto a non-trivial abelian normal subgroup
of $\pi_1(M)$.

\smallskip

(2) $\Rightarrow$ (3):
It is sufficient to show that every
normal abelian subgroup of $G$ is contained in the kernel of the action of $G$ on $T$. Since $N$ is abelian, by \cite[page 65, Proposition 27]{serre} 
either $N$ fixes a vertex of $T$, or there exists a unique line $L$ in $T$ which is left invariant by the action of $N$. In the first case, the fixed subtree $T_N$
is non-empty.
But $N$ is normal in $G$, so $T_N$ is $G$-invariant, and $T_N=T$ 
by minimality of the action of $G$.
Therefore, $N$ is contained in the kernel of the action of $G$ on $T$. 
In the second case, take $g\in G$ and consider the line $g(L)$. Using that $N$ is normal in $G$ it is easily checked that
$g(L)$ is also $N$-invariant, so $g(L)=L$. We have thus shown that $L$ is $G$-invariant, so $L=T$ by minimality of the action of $G$ on $T$.
But this implies that $\mathcal{G}$ is degenerate, a contradiction.

 \smallskip

(3) $\Rightarrow$ (5): Let $T$ be the Bass-Serre tree of $\mathcal{G}$ and let $N$ be the kernel of the action of $G$ on $T$.
Let $v$ be a vertex of $T$, let $G_v$ be the stabilizer of $v$ in $T$ and take $g\in N$.
Of course $N<G_v$, so $g\in G_v$. Since $g$ fixes all the edges exiting from $v$, 
Lemma~\ref{acyl-lem} implies that 
$g$ belongs to the fiber subgroup of $G_v$.
We have thus shown that $N$ is contained in the fiber subgroup of every vertex stabilizer. If $V$ is a piece of $M$, then $\pi_1(V)$ is identified
with a vertex stabilizer $G_{v}$, so we may consider $N$ as a
subgroup
of $\pi_1(V)$ (since $N$ is normal, no ambiguities arise from the choice of $v$
and of
the identification $\pi_1(V)\cong G_{v}$). So we may choose $N$ as a 
distinguished subgroup
of the fiber subgroup of $\pi_1(V)$. It is now obvious from the definition
of $N$ that each gluing involved in the construction of $M$ identifies the distinguished subgroups
of the corresponding adjacent pieces.

\smallskip

(5) $\Rightarrow$ (1): First observe that $F_i$ is contained in a unique subgroup $F_i'<\{1\}\times \mZ^{k_i}<G_i$ such that the index
$[F_i':F_i]$ is finite and the quotient $(\{1\}\times \mZ^{k_i})/F'_i$ is torsion-free. 
The gluing maps between the pieces
preserve the $F_i$, and this readily implies that they also preserve the $F_i'$, so we may assume that $F_i=F'_i$ for every $i$. Also observe that
the $F_i$ share all the same rank, say $d\geq 1$.
Let us now consider the piece $V_i$ of $M$. 
Since $(\{1\}\times \mZ^{k_i})/F_i$ is torsion-free,
the fiber subgroup $\mZ^{k_i}$ of $\pi_1(V)$ decomposes as a direct sum
$\mZ^{k_i}=\mZ^{k_i-d}\oplus F_i$. We consider the corresponding
decomposition 
$$
V_i=N_i\times T^{k_i}\cong (N_i\times  T^{k_i-d})\times T^d=W_i\times T^d\ ,
$$
where $N_i$ is the base of $V_i$. We call \emph{small fiber of $V_i$} a subset of the form $\{*\}\times T^d\subseteq W_i\times T^d=V_i$. 
Recall now that 
the gluing maps are affine, and preserve the distinguished subgroups of the fundamental groups of the pieces. This readily implies
that the gluing maps
identify the small fibers of adjacent pieces. Therefore, the product structures $V_i=W_i\times T^d$
may be glued into a global structure of 
fiber bundle on $M$ with fiber $T^d$. If $M\to M'$ is the associated projection, then $M'$ is obtained by gluing the $W_i$ via affine gluings,
so $M'$ is an (extended) graph manifold, and $M$ is fibered.

%\smallskip

%Finally, the implication (6) $\Longrightarrow$ (1) is obvious.
\end{proof}

From condition (5) of the previous Proposition we obtain:

\begin{corollary}
Suppose that $M$ contains an internal wall with transverse fibers. Then $M$ is not fibered. 
\end{corollary}

For later reference we point out also the following:

\begin{lemma}\label{pure:lemma}
Suppose that $M$ consists of a single piece $V$ without internal walls. Then, $M$ is fibered if and only if 
$V$ is not purely hyperbolic.
\end{lemma}
\begin{proof}
Of course, if $V$ is not purely hyperbolic, then $M$ is fibered. On the other hand, just as in the proof of the implication
(1) $\Rightarrow$ (2) of Proposition~\ref{weakly:char}, one can see that, if $M$ is fibered, then $\pi_1(M)=\pi_1(V)$
contains a non-trivial abelian normal subgroup. But this implies that $\pi_1(V)$ cannot be isomorphic to the fundamental
group of a complete finite-volume hyperbolic manifold, so $V$ cannot be purely hyperbolic.
\end{proof}

We conclude the section with a technical lemma that will prove useful later.
If $G$ is a group acting on a tree $T$, then we say that the action is \emph{without reflections}
if there do no exist an element $g\in G$ and distinct edges $e_1,e_2$ of $T$ sharing a common vertex such that
$g(e_1)=e_2$ and $g(e_2)=e_1$.

\begin{lemma}\label{root free}
 Let $M$ be an (extended) graph manifold with at least one internal wall.
Let $\pi_1(M)=G$, let $\mathcal{G}$ be the graph of groups
corresponding to the decomposition of $M$ into pieces, and let $T$
be the Bass-Serre tree of $\mathcal{G}$. Let also $\mathcal{G}'$
be a graph of groups obtained by collapsing $\mathcal{G}$ outside an edge,
and let $T'$ be the Bass-Serre tree of $\mathcal{G}'$.
Then:
\begin{enumerate}
\item If $e$ is an edge of $T$, and $g\in G$ is such that
$g^n\in G_e$ for some $n\geq 1$, then $g\in G_e$.
\item If $e'$ is an edge of $T'$, and $g\in G$ is such that
$g^n\in G_{e'}$ for some $n\geq 1$, then $g\in G_{e'}$.
\item
$G$ acts on $T$ without reflections. 
\end{enumerate}
\end{lemma}
\begin{proof}
(1): 
Suppose by contradiction that there exists $g\in G$ such that
$g^n(e)=e$ for some $n\geq 1$ but $g(e)\neq e$. Then the subgroup generated by $g$ admits a finite orbit
in $T$, so it fixes a vertex $v$ of $T$. Consider the geodesics
$\gamma,\gamma'$ connecting $v$ with $e, g(e)$ respectively, and let
$v'$ be the last vertex  in $\gamma\cap \gamma'$. Then $g(v')=v'$, and
there exist distinct edges $e_1,e_2$ exiting from $v'$ such that $g^n(e_1)=e_1$,
$g^n(e_2)=e_2$, and $g(e_1)=e_2$. Let us now look at the stabilizer $G_{v'}$
of $v'$ in $G$. The element $g^n$ fixes $v'$ and two distinct edges exiting from
$v'$, so it belongs to the fiber subgroup of $G_{v'}$ (see Lemma~\ref{acyl-lem}).
Since $g\in G_{v'}$, this easily implies that
also $g$ belongs to the fiber subgroup
of $G_{v'}$. As a consequence, $g$ fixes all the edges exiting
from $v'$, a contradiction since $g(e_1)=e_2\neq e_1$.

Point (2) is an immediate consequence of point (1), since the stabilizer of an edge of $T'$
coincides with the stabilizer of an edge of $T$.

(3): Let $e_1,e_2$ be distinct edges of $T$ sharing the vertex $v$, and suppose
that $g\in G$ is such that $g(e_1)=e_2$, $g(e_2)=e_1$. Then $g^2(e_i)=e_i$
for $i=1,2$, so $g(e_i)=e_i$ by point (1), a contradiction.
\end{proof}

%\todo{The Lemma above interacts with the proof of topological rigidity in Chapter 3}

\section{Relative hyperbolicity and hyperbolically embedded subgroups}\label{relhyp:sec}
The fundamental group of a purely hyperbolic piece of a graph manifold provides the typical example
of relatively hyperbolic group. The following result shows that there exist more complicated (extended) graph manifolds
with relatively hyperbolic fundamental group:

\begin{proposition}\label{exception}
Assume the (extended) graph manifold $M$ has at least one purely hyperbolic piece. 
Then $\pi_1(M)$ is relatively hyperbolic with respect to a finite family of proper subgroups.
\end{proposition}
\begin{proof}
Let $M$ be an (extended) graph manifold containing the purely hyperbolic piece $M_0$.
We define a finite family $\mathcal{P}(M)$ of subgroups of $\pi_1(M)$ as follows.
Let $M_1\cup\ldots\cup M_k$ be the (obvious compatifications of the)
connected components of $M\setminus M_0$, and for every $i=1,\ldots,k$ let
$G_i$ be the image of $\pi_1(M_i)$ into $\pi_1(M)$. 
Moreover, let $T_1,\ldots,T_h$ be the connected components of $\partial M\cap \partial M_0$ and
let $P_j$ be the image of $\pi_1(T_j)$ in $\pi_1(M)$ for $j=1,\ldots,h$. 
Note that $G_i$ and $P_j$ are well-defined only up to conjugation, but this won't be relevant to our purposes. We set
$$
\mathcal{P}(M)=\{G_1,\ldots,G_k,P_1,\ldots,P_h\}\ ,
$$
and we claim that $\pi_1(M)$ is relatively hyperbolic with respect to the family
of subgroups $\mathcal{P}(M)$.

We proceed by induction on the number $c(M)$ of boundary components
of $M_0$ that are \emph{not} boundary components of $M$.
If $c(M)=0$, then $M=M_0$, 
and the conclusion follows from the fact that the fundamental group
of a complete finite-volume hyperbolic manifold is relatively hyperbolic
with respect to its cusp subgroups~\cite{farb}.

Let now $H$ be an internal wall of $M$ corresponding to
a boundary component of $M_0$, and consider the manifold $M'$
obtained by removing from $M$ a regular open neighborhood of $H$.
Then $M'$ is an (extended) graph manifold containing at least one purely hyperbolic piece,
and $c(M')<c(M)$.
Therefore, we may assume that $\pi_1(M')$ is relatively hyperbolic with respect
to the family of subgroups $\mathcal{P}(M')$. Then,
Dahmani's combination Theorem implies that $\pi_1(M)$ is relatively hyperbolic with respect
to $\mathcal{P}(M)$
(see~\cite[Theorem 0.1 -- (2)]{Dahmani} if $M'$ is disconnected, 
and~\cite[Theorem 0.1 -- (3')]{Dahmani}
if $M'$ is connected).
\end{proof}

It seems likely that the condition described in Proposition~\ref{exception} is not only sufficient, but also necessary
for an (extended) graph manifold to have a relatively hyperbolic fundamental group. This is the case in the context of irreducible graph
manifolds, that will be discussed
in Chapter~\ref{preserve:sec} (see Proposition~\ref{thick:prop}). On the contrary, the fundamental group of an (extended) graph manifold often contains hyperbolically embedded
subgroups. In fact, 
putting together Proposition~\ref{acyl-new} with Proposition~\ref{DGO-prop} we get
the following:

\begin{proposition}\label{hyp-emb-elements}
 Let $M$ be an (extended) graph manifold containing an internal wall with transverse fibers.
Then $\pi_1(M)$ contains a non-degenerate hyperbolically embedded subgroup.
\end{proposition}

\section{Kazhdan subgroups}
In this Section we show how we can completely classify the subgroups of $\pi_1(M)$
which are Kazhdan (we refer the reader to \cite{BdlHV} for a comprehensive introduction to Kazhdan groups). At the 
other extreme, one has amenable subgroups, which will be analyzed in the next section.

\vskip 10pt

\begin{proposition}\label{Kazhdan subgroups-pre}
Let $\mathcal{G}$ be a graph of groups with fundamental group $G$, and 
suppose that no vertex group
of $\mathcal{G}$ contains a non-trivial subgroup which satisfies
Kazhdan's property (T). Then no non-trivial subgroup of $G$
satisfies Kazhdan's property (T).
\end{proposition}
\begin{proof}
Let $T$ be the Bass-Serre tree associated to $\mathcal{G}$. Being a subgroup of $G$, the group $H$ acts on $T$.
Kazhdan groups are known to have Serre's property (FA), i.e. any action on a tree has a globally fixed point
(see \cite[Section 2.3]{BdlHV}). We conclude that $H$ 
must fix a vertex in $T$, and hence is isomorphic to a subgroup of a vertex group of $\mathcal{G}$.
Our assumptions now imply that $H=\{1\}$, and this concludes the proof.
\end{proof}

\begin{corollary}[Kazhdan subgroups of (extended) graph manifold groups]\label{Kazhdan subgroups}
Let $M$ be an (extended) graph manifold, and $H\leq \pi_1(M)$ an arbitrary subgroup.  If $H$ has Kazhdan's property (T), then
$H$ has to be the trivial group.
\end{corollary}

\begin{proof}
By Proposition~\ref{Kazhdan subgroups-pre}, it is sufficient to show that, if $H$ is an arbitrary subgroup
of $\pi_1(V) \cong \pi_1(N) \times \mathbb Z^k$, where $N$ is a non-compact, finite volume 
hyperbolic manifold, and $H$ has  Kazhdan's property (T), then
$H=\{1\}$.

Looking at the image of $H$ inside the
factor $\pi_1(N)$, we get an induced action of $H$ on hyperbolic space. But any action of a Kazhdan group on hyperbolic space must have a 
global fixed point (see \cite[Section 2.6]{BdlHV}). Since $\pi_1(N)$ acts freely on hyperbolic space, we conclude that $H$ 
must lie in the kernel of the natural
projection $\pi_1(V)\rightarrow \pi_1(N)$, i.e. must be entirely contained in the $\mathbb Z^k$ factor. Finally, 
the only subgroup of $\mathbb Z^k$ that has Kazhdan's property (T) is the trivial group, concluding the proof.
\end{proof}

\vskip 10pt

By~\cite{BDS}, there are finitely many conjugacy classes of homomorphisms from a Kazhdan group into a mapping class
group. With respect to this issue, the behaviour of $\pi_1 (M)$ is similar. In fact, 
as the homomorphic image of a Kazhdan group is Kazhdan, an immediate consequence of the previous Lemma is
the following:

\begin{corollary}
Let $M$ be an (extended) graph manifold. Then,
there are no non-trivial homomorphisms from a Kazhdan group to $\pi_1(M)$.
\end{corollary}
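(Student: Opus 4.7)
The plan is to derive this as an almost immediate consequence of Proposition~\ref{Kazhdan subgroups}, using only the standard fact that Kazhdan's property (T) passes to quotients. So this is really a one-step deduction rather than a proof requiring new ideas.

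In more detail, I would start with an arbitrary homomorphism $\varphi\colon K \to \pi_1(M)$, where $K$ is a Kazhdan group, and consider its image $H := \varphi(K) \leq \pi_1(M)$. Since $H \cong K/\ker \varphi$ is a quotient of $K$, and property (T) is preserved under quotients (this is a classical fact: any unitary representation of $K/\ker\varphi$ pulls back along the quotient map to a unitary representation of $K$ with the same invariant and almost-invariant vectors, so the Kazhdan property transfers — see \cite{BdlHV}), the subgroup $H$ is itself a Kazhdan subgroup of $\pi_1(M)$. Applying Proposition~\ref{Kazhdan subgroups} to $H$ forces $H = \{1\}$, and hence $\varphi$ is the trivial homomorphism.

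There is no real obstacle here; the single sentence that carries content is the appeal to quotients of Kazhdan groups being Kazhdan, and after that Proposition~\ref{Kazhdan subgroups} does all the work. The structural reason this is so short is that Proposition~\ref{Kazhdan subgroups} was already proved in the ``hardest'' formulation (for arbitrary subgroups), so specializing to images of homomorphisms requires nothing beyond the quotient stability.
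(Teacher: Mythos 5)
Your argument is exactly the paper's: the authors likewise observe that a homomorphic image of a Kazhdan group is Kazhdan and then invoke Proposition~\ref{Kazhdan subgroups} to conclude the image is trivial. Nothing to add.
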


\section{Uniformly exponential growth}
We now consider the notion of {\it growth} of a group $G$. Fixing a finite,
symmetric generating set $S$, one considers the Cayley graph $C_{S}(G)$ of $G$ with respect to the generating set 
$S$. Recall that the graph $C_{S}(G)$ is viewed as
a metric space by setting every edge to have length one. For any positive real number $r$, we can look at the ball of radius $r$
in $C_{S}(G)$ centered at the identity element, and let $N_{S}(r)$ count the number of vertices lying within that ball. 
The group has {\it exponential growth} provided there exists a real number $\lambda_{S}>1$ with the property 
$N_{S}(r) \geq \lambda_{S}^r$. The property of having exponential growth is a quasi-isometry invariant, hence does
not depend on the choice of generating set $S$, though the specific constant $\lambda_S$ does depend on the choice
of generating set. It is easy to see that any group which contains a free subgroup (such as
the fundamental groups of our (extended) graph manifolds) automatically has exponential growth. 
The more sophisticated notion of {\it uniform exponential growth} has been the subject of recent work. A group $G$ has 
uniform exponential growth if there exists a $\lambda >1$ with the property that, for every finite symmetric generating set $S$,
we have $N_{S}(r) \geq \lambda^r$. The point here is that the constant $\lambda$ is independent of the generating set $S$.
Non-elementary Gromov hyperbolic groups are known to have uniform exponential growth (see Koubi \cite{Kou}), 
while CAT(0) groups might not even have exponential growth (as the example of $\mathbb Z^n$ shows). In our situation, an
easy argument shows:

\vskip 10pt

\begin{proposition}\label{UEG}
If $M$ is an (extended) graph manifold, then $\pi_1(M)$ has uniform exponential growth.
\end{proposition}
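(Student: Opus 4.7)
The plan is to split into two cases according to whether $M$ contains any internal walls.

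If $M$ has no internal walls, then $M$ consists of a single piece and so $\pi_1(M)\cong \pi_1(N)\times \mZ^d$ for some complete finite-volume hyperbolic manifold $N$ of dimension $\geq 3$. This is a finitely generated linear group in characteristic $0$ (it embeds into $SO(n_0,1)\times GL_d(\mZ)$), and since $\pi_1(N)$ is a non-elementary lattice it contains a non-abelian free subgroup; in particular $\pi_1(M)$ is not virtually solvable. The Eskin--Mozes--Oh theorem on uniform exponential growth for finitely generated linear groups in characteristic zero which are not virtually solvable then yields the conclusion in this case.

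If $M$ has at least one internal wall $W$, I would cut $M$ open along $W$. Bass--Serre theory then exhibits $\pi_1(M)$ either as an amalgamated product $A\ast_C B$ (if $W$ separates $M$) or as an HNN extension $A\ast_C$ (if it does not), where the edge subgroup $C\cong \pi_1(W)\cong\mZ^{n-1}$. The crucial point is that $C$ has infinite index in each adjacent vertex group: indeed, every such vertex group is the fundamental group of a sub-graph-manifold, and in particular contains a conjugate of $\pi_1(V_i)=\pi_1(N_i)\times \mZ^{d_i}$ for some piece $V_i$; since $\pi_1(N_i)$ contains non-abelian free subgroups, no such subgroup can be contained in the abelian subgroup $C$, so $[A:C]$ and $[B:C]$ are both infinite (in particular at least $3$). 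I would then invoke the theorem of Bucher--de la Harpe (in the amalgamated product case) and its HNN analogue, which produce an explicit lower bound on the exponential growth rate of the Cayley graph with respect to \emph{any} finite symmetric generating set, depending only on the indices $[A:C]$ and $[B:C]$. This uniform lower bound is exactly the uniform exponential growth of $\pi_1(M)$.

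The main obstacle is just to check that the hypotheses of these two external results are satisfied; once this is done the proof is essentially immediate. In particular, no refined information about specific gluings is required, because the presence of non-abelian free subgroups in every vertex group (coming from the hyperbolic base of some piece, which has dimension at least $3$) is what guarantees both the failure of virtual solvability in the single-piece case and the infinite edge-to-vertex indices in the amalgamation/HNN case.
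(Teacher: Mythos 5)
Your proof is correct and uses the same two external results as the paper's own argument: Bucher--de la Harpe for the case of at least one internal wall, and Eskin--Mozes--Oh for the single-piece case. The only differences are cosmetic: you apply Eskin--Mozes--Oh directly to the linear group $\pi_1(N)\times\mZ^d$, whereas the paper applies it to the quotient $\pi_1(N)$ and then uses that a group with a uniformly exponentially growing quotient itself has uniform exponential growth; and you spell out the index verification (infinite index of the abelian edge group in each vertex group, via the non-abelian free subgroups coming from a hyperbolic base of dimension $\geq 3$) that the paper leaves implicit when saying the conclusion ``follows immediately'' from Bucher--de la Harpe.
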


\begin{proof}
Bucher and de la Harpe \cite{Bu-dlH} have analyzed uniform exponential growth for groups which split as an amalgam (or as an HNN 
extension). It follows immediately from their work that if the graph of groups description of $\pi_1(M)$ does {\it not} reduce to 
a single vertex, then $\pi_1(M)$ has uniform exponential growth. So we merely need to consider the remaining case, where $M$ has
a single piece. In this case, $\pi_1(M)$ splits as a product $\pi_1(V) \times \mathbb Z^k$, where $V$ is a non-compact, finite volume hyperbolic
manifold. But projecting onto the first factor, we see that $\pi_1(M)$ surjects onto a group of uniform exponential growth (by work of 
Eskin, Mozes, and Oh \cite{EMO}). It follows that $\pi_1(M)$ also has uniform exponential growth, concluding the proof of the Proposition.
\end{proof}

\vskip 10pt

Recall that given a Riemannian metric $g$ on a compact manifold $M$, the {\it volume growth entropy} of the metric
is defined to be the limit 
$$h_{vol}(M,g):=\lim _{r\rightarrow \infty} \frac{1}{r}\log \big(Vol_{\tilde g}(B(r))\big)$$
where $B(r)$ is the ball of radius $r$ centered
at a fixed point in the universal cover $(\widetilde M, \tilde g)$ with the pull-back metric from $(M,g)$. Work of
Manning \cite{Ma} shows that the {\it topological entropy} $h_{top}(M,g)$ of the geodesic flow on the unit tangent bundle of $M$
satisfies the inequality $h_{top}(M,g) \geq h_{vol}(M,g)$. An immediate consequence of uniform exponential growth is the:

\vskip 10pt

\begin{corollary}
For $M$ an (extended) graph manifold, there exists a real number $\delta _M>0$ with the property that for {\it any} Riemannian
metric $g$ on $M$, normalized to have diameter equal to one, we have the inequality 
$h_{top}(M,g)\geq h_{vol}(M,g)\geq \delta_M>0$.
\end{corollary}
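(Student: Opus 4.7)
The upper inequality $h_{top}(M,g) \geq h_{vol}(M,g)$ is exactly Manning's theorem, so the content of the corollary lies entirely in producing a uniform positive lower bound for $h_{vol}(M,g)$. I would derive this by coupling a Milnor--Svarc style lattice-point count with the uniform exponential growth of $\pi_1(M)$ established in Proposition~\ref{UEG}.

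The plan is as follows. Fix a basepoint $x_0$ in the universal cover $(\widetilde{M}, \tilde g)$, and let $\lambda > 1$ be the universal growth constant from Proposition~\ref{UEG}, so that $N_\Sigma(r) \geq \lambda^r$ holds for \emph{every} finite symmetric generating set $\Sigma$ of $\pi_1(M)$. Given a Riemannian metric $g$ on $M$ of diameter $1$, define the ``metric generating set''
\[
\Sigma_g \;=\; \{\gamma \in \pi_1(M) \,:\, d_{\tilde g}(x_0,\gamma\cdot x_0) \leq 2\}.
\]
The standard proof of the Milnor--Svarc lemma (Theorem~\ref{milsv}; cf.\ Lemma~\ref{milsv+:lem}) shows that $\Sigma_g$ is a finite symmetric generating set, and that each $\gamma\in\pi_1(M)$ satisfies $d_{\tilde g}(x_0,\gamma\cdot x_0) \leq 2\,|\gamma|_{\Sigma_g}$.

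Now choose a measurable fundamental domain $F \subseteq \widetilde{M}$ for the $\pi_1(M)$-action containing $x_0$ and of diameter at most $2$ (for example a Dirichlet domain centered at $x_0$, using that $\mathrm{diam}(M,g)=1$). Then $\mathrm{vol}_{\tilde g}(F)=\mathrm{vol}(M,g)$, and every translate $\gamma\cdot F$ with $|\gamma|_{\Sigma_g}\le r/2$ is contained in the ball $B(r+2)$ of radius $r+2$ around $x_0$. Since the translates are essentially disjoint, we obtain
\[
\mathrm{vol}_{\tilde g}\bigl(B(r+2)\bigr) \;\geq\; N_{\Sigma_g}(r/2)\cdot \mathrm{vol}(M,g) \;\geq\; \lambda^{r/2}\cdot \mathrm{vol}(M,g),
\]
where the second inequality is uniform exponential growth applied to $\Sigma_g$. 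Taking $\log$, dividing by $r$, and letting $r\to\infty$ yields $h_{vol}(M,g) \geq \tfrac{1}{2}\log\lambda$, so one may set $\delta_M := \tfrac{1}{2}\log\lambda$.

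The only point that requires real care, and is the whole reason the argument produces a bound \emph{independent of the metric}, is that the generating set $\Sigma_g$ depends on $g$: different metrics of diameter $1$ yield genuinely different (even non-commensurable) generating sets. This is precisely what uniform exponential growth is designed to handle: the growth rate $\lambda$ applies simultaneously to all finite generating sets, so the metric-dependent choice of $\Sigma_g$ does not weaken the bound. If instead one only had exponential growth with respect to a single fixed generating set, the constant obtained would depend on $g$ through $\Sigma_g$, and the uniformity asserted in the corollary would fail.
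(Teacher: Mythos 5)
Your argument is correct and is exactly the (unwritten) argument the paper is invoking: the paper states the corollary with no proof, citing it as an immediate consequence of Manning's inequality together with the uniform exponential growth established in Proposition~\ref{UEG}. Your reconstruction — building the metric-adapted generating set $\Sigma_g$ from a fundamental domain of diameter $\le 2$ (which uses the diameter normalization), counting translates to bound the volume growth, and then observing that uniform exponential growth is precisely what makes the resulting bound independent of $g$ through $\Sigma_g$ — is the standard way to convert uniform exponential growth into a uniform volume-entropy bound, and you have correctly identified this as the entire content of the statement.
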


\section{The Tits Alternative}
We now show that  the fundamental group of an (extended) graph manifold satisfies 
a strong version of the Tits Alternative. If $G$ is a group, we denote by $G^{(1)}$
the subgroup of commutators, and we inductively define $G^{(n)}$ by setting
$G^{(n+1)}=[G^{(n)},G^{(n)}]$. Recall that a group $G$ is \emph{solvable} if
$G^{(n)}=\{1\}$ for some $n\in\mathbb{N}$. The least $n$ such that 
$G^{(n)}=\{1\}$ is the \emph{derived length} of $G$. If 
$$
0 \to H \to G\to G/H \to 0
$$
is an exact sequence of groups, and $H, G/H$ are both solvable, then also $G$ is solvable,
and the derived length of $G$ is at most the sum of the derived lengths of $H$ and $G/H$.
We refer the reader to Section~\ref{tree-graph-theory} for the definition of action without reflections.

\begin{proposition}\label{TA-graph}
Let $\mathcal{G}$ be a finite graph of groups with fundamental group $G$
and Bass-Serre tree $T$, and suppose that 
any arbitrary subgroup of any vertex group either contains
a non-abelian free group, or is abelian. Let $H$ be an arbitrary subgroup of $G$. Then
either:
\begin{itemize}
 \item $H$ is sovable, or
\item
$H$ contains a non-abelian free group.
\end{itemize}
Moreover, if $H$ is solvable, then:
\begin{enumerate}
\item
the derived length of $H$ is at most $3$;
\item
if the action of $G$ on $T$ is acylindrical or without reflections, then
the derived length of $H$ is at most $2$.
\item
if the action of $G$ on $T$ is acylindrical
and without reflections, then $H$ is abelian.
\end{enumerate}
\end{proposition}
\begin{proof}
Let $T$ be the Bass-Serre tree associated to $\mathcal{G}$.
We first prove that, if $K$ is a subgroup of $G$ that consists solely of elliptic elements and does not contain any non-abelian free group,
then $K$ is abelian. If $K$ fixes a vertex of $T$, then  
this is just our hypothesis.
Otherwise, it is proved in  \cite[Proposition 3.7]{Ba1} that
there exists an infinite path in $T$, say with vertices $v_0,\ldots,v_n,\ldots$,
such that $K_{v_i}\subseteq K_{v_{i+1}}$ for every $i$, and $K=\cup_{i\geq 0} K_{v_i}$.
In particular, for every $i$
the group $K_{v_i}$ does not contain a non-abelian free group, so
our hypothesis implies that $K_{v_i}$ is abelian. But any ascending union of abelian
groups is abelian, so $K$ is itself abelian.

Let us now come back to our arbitrary subgroup $H$ of $G$, and let us suppose
that $H$ does not contain a non-abelian free group. By the discussion above, we may also assume that $H$ contains an element acting hyperbolically on $T$.
By~\cite[Section 2]{PV}, these conditions imply that there are two possibilities
for $H$:
%A ping-pong type argument shows that ``most'' groups that act on trees must contain a non-abelian free subgroup.
%More precisely (see e.g.~Pays and Vallette \cite[Section 2]{PV}), if the group contains an element acting hyperbolically (i.e. leaving invariant
%a geodesic, on which it acts by a non-zero translation) and does {\it not}
%contain a non-abelian free subgroup, then it must either (i) preserve an axis, or (ii) %fix an end of the tree. If we assume
%that our group $H$ does not contain a non-abelian free group, we thus have three possibilities:
\begin{enumerate}
%\item $H$ consists solely of elliptic elements,
\item $H$ is a subgroup of ${\rm Stab}(\gamma)$, where $\gamma\subset T$ is a geodesic, or
\item $H$ is a subgroup of ${\rm Stab}(\mathcal E)$, where $\mathcal E$ is an end of $T$.
\end{enumerate} 
In each of these cases, we need to show that $H$ is solvable, and estimate the derived length of $H$.

%In case (1) we know from the discussion above that either $H$ abelian,
%whence solvable of derived length one, or it fixes a vertex of $T$. 
%In the second case, however, the hypothesis (a) in the statement implies
%that $H$ is solvable of derived length at most $k$, and we are done.

Let us consider case (1). 
Since $H$ leaves $\gamma$ invariant, we can define ${\rm Isom}_H(\gamma)$
as the image of $H$ in the group of isometries of $\gamma$
(which we can identify with $\mathbb R$).
If we
denote by $H_\gamma$ is the subgroup of $H$ which pointwise fixes $\gamma$, then we get the exact sequence:
$$0\rightarrow H_\gamma \rightarrow H \rightarrow {\rm Isom}_H(\gamma)\rightarrow 0\ .$$
The group $H_\gamma$ fixes
any given vertex of $\gamma$, hence can be identified with a subgroup of a vertex 
group of $\mathcal{G}$. Since $H_\gamma$ does not contain any non-abelian free group,
this implies that $H_\gamma$ is abelian.
Also observe that, if the action of $G$ on $T$ is acylindrical, then 
$H_\gamma$ 
reduces to the identity.

On the other hand, the group ${\rm Isom}_H(\gamma)$
is a subgroup of the group of simplicial automorphisms of $\mathbb R$ (with the standard simplicial structure), hence
is either $1, \mathbb Z_2, \mathbb Z$, or the infinite dihedral group $\mathbb D_\infty$. In all cases, we see that 
${\rm Isom}_H(\gamma)$ is solvable of derived length not bigger than 2. 
Also observe that,
if $G$ acts on $T$ without reflections, then ${\rm Isom}_H (\gamma)$ is 
necessarily abelian.

%In fact, if no element of $G$ interchanges two adjacent edges of $\gamma$, then ${\rm Isom}(\gamma)$ is abelian,
%whence its derived length is at most one. 
From the short exact sequence, we deduce that $H$ 
is solvable. Moreover, its derived length is at most 3 in general. If $G$ acts on $T$ acylindrically or without inversions, then
the derived length of $H$ is at most 2, while if $G$ acts on $T$ acylindrically \emph{and} without inversions,
then $H$ is abelian. This concludes the proof in case (1).

Let us now consider case (2).
To analyze this case, we consider the {\it relative translation length} map. Given an 
end $\mathcal E$ of a tree $T$, and any pair of vertices $v,w\in T$,
there are unique unit speed geodesic rays $\gamma_v, \gamma_w \subset T$ originating at $v,w$, and exiting into 
the end $\mathcal E$. One then defines the distance of the points {\it relative to $\mathcal E$} to be the integer 
$d_{\mathcal E}(v,w) := \lim_{t\to \infty} d\big(\gamma_v(t), \gamma_w(t)\big)$. The relative translation length of an element 
$g\in {\rm Stab}(\mathcal E)$ is defined to be the integer $\tau(g):=\inf_v d_{\mathcal E}\big(v,g(v)\big)$. A basic property 
of the relative translation length is that it defines a homomorphism $\tau: {\rm Stab}(\mathcal E) \rightarrow \mathbb Z$
(see e.g. \cite[Lemme 4]{PV}).
So our group $H$ fits into a short exact sequence
$$0\rightarrow H_0\rightarrow H \rightarrow \mathbb Z\rightarrow 0$$
where $H_0 = H\cap \ker (\tau)$. 
%Since the quotient group $H/H_0$ is abelian, we also have the inclusion 
%$H^{(1)}=[H,H] \leq H_0$, and hence $H^{(n+1)}\leq H_0^{(n)}$ for every $n\in\mathbb{N}$. 
But 
every element in $H_0$ has to be elliptic, so the discussion
at the beginning of the proof implies that $H_0$ is abelian. This implies that $H$ is solvable of derived length at most $2$. Also observe that every element
of $H_0$ fixes a geodesic ray exiting into the end $\mathcal{E}$. Therefore,
if the action of $G$ is acylindrical, then $H_0=\{1\}$, which implies that $H$ is abelian.
\end{proof}

\begin{corollary}\label{TA}
Let $M$ be an (extended) graph manifold, and $H$ be an arbitrary subgroup
of $\pi_1(M)$. Then either:
\begin{itemize}
\item $H$ is solvable of derived length at most 2, or
\item $H$ contains a non-abelian free group.
\end{itemize}
If $M$ is irreducible, then either:
\begin{itemize}
\item $H$ is abelian, or
\item $H$ contains a non-abelian free group.
\end{itemize}
\end{corollary}
\begin{proof}
Let $T$ be the Bass-Serre tree associated to the decomposition of $M$ into pieces.
Recall that the action of $G$ on $T$ is acylindrical if and only if $M$ is irreducible
(Proposition~\ref{irr-acyl}). Moreover, Lemma~\ref{root free} implies that $G$ acts on $T$ without reflections.
Therefore, by Proposition~\ref{TA-graph}
it is sufficient to prove that, if $H$ is a subgroup of the fundamental group
$\pi_1(N)\times \mZ^d$ of a piece of $M$, and $H$ does not contain any non-abelian free group, then $H$ is abelian. Let $\overline{H}$ be the projection of $H$ onto $\pi_1(N)$,
and recall that $\pi_1(N)$ acts by isometries on the hyperbolic space $\mathbb{H}^{n-k}$. 
Every non-elementary discrete group of isometries 
of $\mathbb{H}^{n-k}$ contains a non-abelian free group (see e.g.~\cite[page 616, Exercise 15]{Ratcliffe}), so $\overline{H}$ must be elementary.
Moreover, $\overline{H}$
does not contain any elliptic element, so $\overline{H}$ is contained either in an infinite cyclic hyperbolic subgroup of $\pi_1(N)$
or in a parabolic subgroup of $\pi_1(N)$. But the cusps of $N$ are toric, so the parabolic subgroups of $\pi_1(N)$ are abelian.
We have thus proved that $\overline{H}$ is abelian, so $H$ is itself abelian, and we are done.
\end{proof}

\begin{remark}
As a consequence of the Flat Torus Theorem, if a solvable group $G$ acts properly via semisimple isometries
on a CAT(0) space, then $G$ is virtually abelian. This fact provides a useful obstruction for a group to be the fundamental group
of a compact locally CAT(0) space. Corollary~\ref{TA} implies that this obstruction is never effective for irreducible graph manifolds. In fact,
the construction of irreducible graph manifolds that do not support any locally CAT(0) metric described in Chapter~\ref{construction2:sec}  is based on a more 
sophisticated use of the Flat Torus Theorem.
\end{remark}

\vskip 10pt

Since a group which contains a non-abelian free subgroup is automatically non-amenable, Corollary~\ref{TA} 
implies the following:

\begin{corollary}
Let $M$ be an (extended) graph manifold and let  $H\leq \pi_1(M)$ be an amenable subgroup. Then $H$ is solvable.
If, in addition, $M$ is irreducible, then $H$ is abelian.
\end{corollary}

\section{Co-Hopf property}

\begin{proposition}\label{CH}
Let $M$ be an (extended) graph manifold, with $\partial M=\emptyset$, and  assume that 
$M$ 
contains a pair of adjacent pieces with transverse fibers.
Then the fundamental group $\pi_1(M)$ 
is co-Hopfian, i.e. every injective homomorphism $\phi: \pi_1(M)\hookrightarrow \pi_1(M)$ is automatically an isomorphism.
\end{proposition}

\begin{proof}
Let $G:= \pi_1(M)$. Using $\phi$, we can identify $\phi (G)$ with a subgroup of $G$, and our goal is
to show the index $[G: \phi(G)]$ must be equal to one. A standard argument shows that $[G: \phi(G)]$
must be finite, for if it wasn't, then we would have two manifold models for a $K(G, 1)$: the compact manifold $M$, and its non-compact
cover $\widehat M$ corresponding to the infinite index subgroup $\phi(G) \leq G$. Using these models to compute the 
top dimensional group cohomology of $G$ with $\mathbb Z /2$-coefficients gives:
$$\mathbb Z/2 = H^n(M^n; \mathbb Z/2) \cong H^n(G;\mathbb{Z}/2) \cong H^n(\widehat M; \mathbb Z/2) =0,$$
a contradiction.

Now assume the index is some finite number $[G: \phi(G)] = k$, which we would like to show is equal to $1$. 
We consider again the covering map $\pi\colon \widehat M\to M$ associated to the subgroup $\phi(G)$.
Observe that $\widehat{M}$ is itself an (extended) graph manifold, whose pieces are just the connected components
of the preimages under $\pi$ of the pieces of $M$.
By smooth rigidity, the isomorphism $\phi\colon \pi_1(M)\to \pi_1(\widehat{M})$ is realized by a diffeomorphism
$f\colon M\to \widehat{M}$ which induces a bijection between the set of pieces of $M$ and the set of pieces of $\widehat{M}$
(see Theorem~\ref{preserve2:thm}). This can only happen if, under our covering map
$\pi$, each piece of $M$ lifts to a single piece in $\widehat M$.
Let now $g:=\pi\circ f\colon M\to M$. 
The map $g$ permutes the pieces of $M$, so there exists $s\in\mathbb{N}$ such that $g^s(V)=V$ for every piece $V$ of $M$. 
By construction,
the map $g^s\colon M\to M$ is a $k^s$ degree covering, and restricts
to a degree $k^s$ covering $g^s|_V\colon V\to V$ for every piece $V$
of $M$. Therefore, if we set $\psi:=\phi^s$, then, up to conjugation,
we have $\psi\big(\pi_1(V)\big) \subset \pi_1(V)$ and 
$\big[ \pi_1(V): \psi\big(\pi_1(V)\big)\big] = k^s$
for each piece $V$ of $M$.

Let us now fix an arbitrary
piece $V$ in $M$, and let $V$ be homeomorphic to $N\times T^d$, where as usual $N$ is a non-compact finite volume hyperbolic 
manifold  and $T^d$ is a $d$-dimensional torus. The group $\Lambda:=\pi_1(V)$ is isomorphic to 
$\pi_1(N) \times \mathbb Z^d$, and $\psi$ restricts to give us an injective map from this group to itself.

We now analyze the possible injective maps from $\Lambda=\pi_1(N)\times \mathbb Z^d$ into itself (this is similar to 
the analysis in Lemma \ref{diffeo:pieces}). 
Let $\rho: \Lambda \rightarrow \pi_1(N)$ be the natural projection onto the $\pi_1(N)$ factor. 
As a first step, we consider the effect of $\psi$ on the $\mathbb Z^d$ factor in $\Lambda$, and show that its image must
be contained in the $\mathbb Z^d$ factor. Look at the image of $\psi(\mathbb Z^d) \leq \Lambda$ under
the $\rho$ map. The group $\rho\big(\psi(\mathbb Z^d)\big)$ is a free abelian subgroup of $\pi_1(N)$, and our goal is 
to show it is trivial. Since the $\mathbb Z^d$ factor is the center of the group $\Lambda$ (see Remark~\ref{center:rmk}), we see that
all of $\rho \big(\psi(\Lambda)\big)$ is contained in the centralizer of $\rho \big(\psi(\mathbb Z^d)\big)$. But inside the 
group $\pi_1(N)$, the centralizer of any {\it non-trivial} free abelian subgroup is itself free abelian (see Lemma~\ref{basicprop:lem}). 
This implies that $\rho \big(\psi(\mathbb Z^d)\big)$ is indeed trivial, because otherwise 
the preimage of its centralizer under $\rho$ should also be free abelian, but should contain an embedded copy 
 $\psi(\Lambda)$ of the non-abelian group $\Lambda$. Since $\rho \big(\psi(\mathbb Z^d)\big)$ is indeed trivial,
we conclude that $\psi(\mathbb Z^d)\leq \ker (\rho) \cong \mathbb Z^d$. In other words, we
have just established that the map $\psi$ embeds the $\mathbb Z^d$ factor into itself.

Next, let us see how the map $\psi$ behaves on the $\pi_1(N)$ factor, by again considering the composition with $\rho$. 
From the discussion in the previous paragraph, we have that $\rho \big( \psi(\Lambda) \big) = \rho \big( \psi(\pi_1(N))\big)$.
Since $\psi(\Lambda)$ has finite index in $\Lambda$, the same holds for any homomorphic image, giving us that 
$\rho \big( \psi(\pi_1(N))\big)$ has finite index in $\pi_1(N)$. But the group $\pi_1(N)$ is known to be {\it cofinitely Hopfian} 
(see \cite[Prop. 4.2]{BGHM}),
i.e. any homomorphism $\pi_1(N)\rightarrow \pi_1(N)$ whose image has finite index is automatically an isomorphism. 
We conclude that the composite $\rho \circ \psi$ maps $\pi_1(N)$ isomorphically onto $\pi_1(N)$. Summarizing our
discussion so far, in terms of the two factors in the group $\Lambda$, we can decompose the morphism $\psi$ as
$\psi(g, v) = \big( \phi(g), \nu(g) + Lv\big)$, where $\phi\in Aut \big(\pi_1(N)\big)$, $\nu\in Hom \big(\pi_1(N), \mathbb Z^d\big)$,
and $L$ is a $d\times d$ matrix with integral entries and non-vanishing determinant.

To calculate the index of $\psi(\Lambda)$ in $\Lambda$, 
consider the automorphism $\widehat {\psi} \in Aut(\Lambda)$ defined via $\widehat {\psi} (g, v) = \big(g, -\nu (\phi^{-1}(g))+v\big)$.
An easy computation shows that $\big(\widehat {\psi} \circ \psi \big)(g, v) =\big(\phi (g), Lv\big)$, allowing us to see that the index is
$$k^s = \big[ \Lambda : \psi (\Lambda)\big]=\big[ \Lambda : \widehat {\psi} \big( \psi (\Lambda)\big)\big]  
= \big[\mathbb Z^d : L(\mathbb Z ^d)\big]= |\det (L)|.$$
This formalizes the statement that the degree $k^s$ cover $g^s\colon M \rightarrow M$ comes from unfolding
the torus factors in each piece of $M$ (along with sliding the base over the fiber, which has no affect on the degree).

Finally, let us return to our manifold $M$, and exploit the hypothesis on transverse fibers. Let $V_1, V_2$ be the pair of adjacent 
pieces with transverse fibers along the common torus $T$. The torus $T$ corresponds to a $\mathbb Z^{n-1}$ subgroup of $G$,
and the two pieces give splittings of this group into direct sums $F_1^s\oplus B_1^{n-1-s} = \mathbb Z^{n-1}= F_2^t\oplus B_2^{n-1-t}$, 
where $F_i$ are the fiber subgroups and $B_i$ are the base subgroups. The homomorphism $\psi$ takes 
$\mathbb Z^{n-1}$ into itself, and by the analysis above, we can compute the index in two possible ways:
$$|\det(L_1)| = \big[F_1 : \psi(F_1)\big] = \big[\mathbb Z^{n-1} : \psi(\mathbb Z^{n-1})\big] = \big[F_2 : \psi(F_2)\big] = |\det (L_2)|$$
where $L_i$ is a matrix representing the $\psi$ action on $F_i$. Therefore, we get $k^s=|\det (L_1)|=|\det (L_2)|=|\det (\widehat{L})|$,
where $\widehat{L}$ is a matrix representing the $\psi$ action on $\mathbb{Z}^{n-1}$. We will now show that this forces
$k^s=1$, whence the conclusion.

Since we have transverse fibers, we have $F_1\cap F_2= \{0\}$. Let us denote by $K$ the subgroup $F_1\oplus F_2\subseteq\mathbb{Z}^{n-1}$,
and let us set $J=\{v\in\mathbb{Z}^{n-1}\, |\, mv\in K\ {\rm for\ some}\ m\in\mathbb{Z}\}$. Of course, $K$ is a finite index subgroup of $J$, and 
the $\psi$-invariance of $K$ implies that also $J$ is $\psi$-invariant. Our choices also ensure that the quotient
group $ \mathbb{Z}^{n-1}/J$ is free abelian.
Since $\psi$ is injective, the following equalities hold:
$$
\big[J:K\big]\, \big[K:\psi (K)\big]=\big[J:\psi(K)\big]=\big[J:\psi(J)\big]\, \big[\psi(J):\psi(K)\big]=
\big[J:\psi(J)\big]\, \big[J:K\big]\, .
$$
This tells us that 
$$\big[J:\psi(J)\big]=\big[K:\psi(K)\big]=\big| \det (L_1)\cdot \det (L_2)\big| =k^{2s}.$$ 
Moreover, $\psi$ induces a homomorphism
$\overline{\psi}\colon \mathbb{Z}^{n-1}/J \to  \mathbb{Z}^{n-1}/J$, and we have of course
$\det (\widehat{L})=\det (L_J) \cdot \det (\overline{L})$, where $L_J$ and $\overline{L}$ are matrices representing
$\psi |_J$ and $\overline{\psi}$ respectively. Since $\det (\overline{L})\geq 1$, we finally get
$$
k^{2s}=\big[J:\psi(J)\big]=| \det (L_J) | \leq | \det (\widehat{L}) | =\big[\mathbb{Z}^{n-1}:\psi(\mathbb{Z}^{n-1})\big]=k^s\, .
$$
We conclude from this inequality that $k=1$, giving us that  $[G : \phi(G)]=k=1$, as desired.
\end{proof}

\begin{remark}
In Proposition~\ref{CH}, one cannot remove the assumption that $M$ contains a pair of adjacent pieces with transverse fibers.
In fact, if $N$ is any hyperbolic manifold with toric cusps and $d\geq 1$, then the fundamental groups of 
the graph manifolds $\overline{N}\times T^d$
and $D\overline{N}\times T^d$, where $D\overline{N}$ is the double of $\overline{N}$, are not co-Hopfian. It would be interesting
to understand whether Proposition~\ref{CH} still holds under the weaker hypothesis that $M$ be non-fibered.
\end{remark}

\begin{remark}
Most arguments proving that the fundamental group of a closed manifold is co-Hopfian usually involve
invariants which are multiplicative under coverings. Two such invariants which are commonly used are the Euler characteristic $\chi$,
and the simplicial volume. But in the case where every piece in our (extended) graph manifold has non-trivial fiber, both these invariants vanish. In fact, if $V=\overline{N}\times T^d$, $d> 0$, is a piece with non-trivial fiber,
then $\chi (V)=\chi (\overline{N})\times \chi (T^d)=0$. Moreover, the pair $(V,\partial V)$ admits a self-map of degree greater than
one, and this easily implies that the (relative) simplicial volume of $V$ vanishes.
Suppose now that a compact manifold $M$ is obtained by gluing a (maybe disconnected) $M'$ along pairs of $\pi_1$-injective toric boundary components.
Since the Euler characteristic of the torus is zero we have $\chi (M)=\chi (M')$, while the amenability
of $\mZ^d$ and Gromov additivity Theorem~\cite{gro-bddcohom} 
(see also~\cite{ku} and~\cite{BBFIPP})
imply that the (relative) simplicial volumes of $M$ and $M'$ coincide.
Together with an obvious inductive argument, this implies that $\chi (M)=\| M\|=0$ for an
(extended)
graph manifold, provided all its pieces have non-trivial fibers.  

Conversely, the (relative) simplicial volume is additive with respect to gluings along
$\pi_1$-injective tori, and it never vanishes on a cusped hyperbolic manifold. So if there
is a single piece in $M$ which is purely hyperbolic (i.e. has trivial fiber), then $||M||>0$. 
Similarly, the Euler characteristic of an even dimensional cusped hyperbolic manifold is
never zero, so a similar conclusion holds. We summarize this discussion in the following:

\end{remark}

\begin{proposition}\label{simp-vol}
Let $M$ be an (extended) graph manifold. Then 
\begin{enumerate}
\item $||M||= 0$ if and only if every piece in $M$ has non-trivial fibers, and
\item if $M$ is even dimensional, then $\chi(M)=0$ if and only if every piece of $M$ has non-trivial fibers.
\end{enumerate}
\end{proposition}

\section{$C^\ast$-simplicity of acylindrical graphs of groups}\label{C-simple:sec}
%From the action of the fundamental group $G$ of a graph manifold on the Bass-Serre tree, 
%one can easily deduce a number of additional properties for such groups. 
%As a first application, 
Recall that to any countable discrete group $G$, one can associate $C^*_r(G)$, its {\it reduced $C^*$-algebra}. 
This algebra is obtained by looking at the action $g\mapsto \lambda_g$ of $G$ on the Hilbert space $l^2(G)$ of square summable 
complex-valued functions on $G$, given by the left regular representation:
$$\lambda_g \cdot f(h) = f\big(g^{-1}h\big) \hskip 20pt g, h \in G, \hskip 20pt f\in l^2(G)\ .$$
The algebra $C^*_r(G)$ is defined to be the operator norm closure of the linear span of the operators $\lambda_g$ inside the
space $B\big(l^2(G)\big)$ of bounded linear operators on $l^2(G)$. The algebra $C^*_r(G)$ encodes various analytic properties
of the group $G$, and features prominently in the Baum-Connes conjecture. 
A group $G$ is said to be {\it $C^*$-simple} if the algebra $C^*_r(G)$ is a simple algebra, i.e.~has no proper two-sided ideals. 
We refer the interested reader to the survey paper by de la Harpe \cite{dlH} for an extensive discussion of this notion. The following result may be deduced
from~\cite{DGO}, and characterizes acylindrical
graphs of groups having a $C^*$-simple fundamental group. 

\vskip 10pt

\begin{proposition}\label{C-simple-graphs}
Let $G$ be the fundamental group of a non-trivial acylindrical graph of groups $\mathcal{G}$. Then $G$ is $C^*$-simple if and only if $\mathcal{G}$ is not exceptional.
%Moreover, if $\mathcal{G}$ is not exceptional, then every subnormal subgroup
%of $G$ is $C^*$-simple.
\end{proposition}
\begin{proof}
If $\mathcal{G}$ is exceptional, then $G$ is virtually abelian, whence amenable.
As a consequence, $G$ is not $C^*$-simple (see e.g.~\cite{dlH}). Therefore, we are left to show that
%every non-trivial subnormal subgroup of 
$G$ is $C^*$-simple provided that $\mathcal{G}$ is not exceptional.

However, if $\mathcal{G}$ is not exceptional, then Proposition~\ref{DGO-prop}
implies that
$G$ contains a non-degenerate hyperbolically embedded subgroup.
Moreover, since $T$ has infinite diameter, any acylindrical action on $T$
is faithful, so
Lemma~\ref{reduced} guarantees that $G$ does not contain any non-trivial finite normal
subgroup. These conditions allow us to apply~\cite[Theorem 2.32]{DGO}, which
concludes the proof of the Proposition.
\end{proof}

\begin{remark}
Proposition~\ref{C-simple-graphs} could be probably deduced also from the results proved in~\cite{dlH-Pr},
which in fact imply $C^*$-simplicity of amalgamated products and HNN-extensions under a weaker hypothesis than acylindricity
(see the proof of Proposition~\ref{C-simple-manifolds} below). However,
some work would be required to reduce the case of generic graphs of groups to the case of one-edged graphs of groups.
\end{remark}

Propositions~\ref{acyl-new} and~\ref{C-simple-graphs} already imply
that $\pi_1(M)$ is $C^*$-simple, provided that $M$ is an (extended) graph manifold
with at least one internal wall with transverse fibers.
In the following proposition we improve this result and give a complete
characterization of (extended) graph manifolds with $C^*$-simple fundamental group.

\begin{proposition}[Non-fibered $\Longleftrightarrow$ $C^*$-simple]\label{C-simple-manifolds}
Let $M$ be an (extended) graph manifold. Then $\pi_1(M)$ is $C^*$-simple if and only if
$M$ is not fibered.
\end{proposition}
\begin{proof}
Let us first suppose that $\pi_1(M)$ is $C^*$-simple. It is well-known that a $C^*$-simple group
cannot contain non-trivial amenable normal subgroups (see e.g.~\cite{dlH}). 
If $M$ consists of a single piece, this implies that $M$ is purely hyperbolic, whence non-fibered (see Lemma~\ref{pure:lemma}). Otherwise, we may apply Proposition~\ref{weakly:char}, and conclude again that
$M$ is not fibered.

\smallskip

Let us now turn to the converse implication.
A criterion for $C^*$-simplicity was discovered by Powers \cite{Pow}, who showed that the free group on two generators is
$C^*$-simple. In fact, 
as observed e.g.~in \cite{dlH-new}, Powers' argument applies to 
every group belonging to the class of \emph{Powers group}, as defined in~\cite{dlH-new}.
Our argument exploits
some criteria for a countable group to be Powers that are described in~\cite{dlH-Pr}.

Suppose first that $M$ contains at least one internal wall, let $\mathcal{G}$ be the graph
of groups corresponding to the decomposition of $M$ into pieces, 
and let $\mathcal{G}'$
be a graph of groups obtained by collapsing $\mathcal{G}$ outside an edge of $\mathcal{G}$.
Let us denote by $G$ the fundamental group of $M$, and by $T'$ the Bass-Serre tree 
associated to $\mathcal{G}'$.
Of course we have $G=\pi_1(\mathcal{G}')$, and the graph of groups $\mathcal{G}'$
describes $G$ as an amalgamated product or an HNN-extension. The edge groups of $\mathcal{G}'$ have infinite index in the adjacent vertex groups, so the main result
of~\cite{dlH-Pr} ensures that $G$ is $C^*$-simple, provided that the following condition holds:

\begin{itemize}
 \item[(*)] There exists $k\in\mathbb{N}$ such that, if $e$ is an edge of $T'$
and  $g\in G$ pointwise fixes the $k$-neighborhood $N_k(e)$
of $e$ in $T'$, then $g=1$ in $G$.
\end{itemize}

So we are left to show that, if condition (*) does not  hold, then $M$ is fibered.
Suppose that for every $k\in\mathbb{N}$ there exist a non-trivial element 
$g_k\in G$ and an edge $e_k$ of $T'$
such that $g_k$ pointwise fixes $N_k(e_k)$. Recall that $\mathcal{G}'$
has only one edge, so, up to conjugating $g_k$, we may choose an edge
$e$ of $T'$ such that
$e_k=e$ for every $k$. Let us denote by $G_k$ the subgroup
of $G$ fixing pointwise $N_k(e)$. Then $G_0$ is isomorphic to a subgroup
of the edge group of $\mathcal{G}'$, so it is finitely generated free abelian.
Moreover, for every $k$ we have $G_{k+1}\subseteq G_{k}$. We are now going to show that
the sequence of groups $\{G_k,\, k\in\mathbb{N}\}$ stabilizes after a finite number of steps. Being a finitely generated abelian
group, every $G_k$ has a well-defined rank. Of course, the sequence of the ranks of the groups $G_k$ is eventually constant, so
it is sufficient to show that,
if $G_{k+1}$ is a finite index subgroup of $G_k$, then $G_{k+1}=G_k$.
Let $g\in G_{k}$. Since $G_{k+1}$ has finite index in $G_k$, there exists
$n\geq 1$ such that $g^n\in G_{k+1}$. Therefore, if $\overline{e}$ is an edge
in $N_{k+1}(e)$, then $g^n(\overline{e})=\overline{e}$. By Lemma~\ref{root free},
this implies that $g(\overline{e})=\overline{e}$. We have thus shown that every element
of $G_{k}$ fixes every edge of $N_{k+1}(e)$, so $G_{k}=G_{k+1}$.

Let now $k_0$  be such that $G_k=G_{k_0}$ for every $k\geq k_0$. Then the element
$g_{k_0}$ fixes the whole of $T$, so $g_{k_0}$ is a non-trivial element
of the kernel of the action of $G$ on $T$. By Proposition~\ref{weakly:char}, this implies that $M$ is fibered.

Let us now consider the case when $M$ consists of a single piece.
Being non-fibered, $M$ consists of a purely hyperbolic piece.
Therefore,
$\pi_1(M)$ is a non-elementary, relatively hyperbolic group. For these
groups, Arzhantseva and Minasyan \cite{AM} have shown that being $C^*$-simple is equivalent to having no
non-trivial finite normal subgroup. Since $\pi_1(M)$ is torsion-free, this latter condition is automatically
satisfied, and hence $\pi_1(M)$ is indeed $C^*$-simple. 
\end{proof}

\section{SQ-universality}\label{sq:sec}
Recall that a group $G$ is {\it SQ-universal} if every countable group can be embedded into a quotient of $G$. It is proved in \cite{DGO} that a group $G$ containing
a non-degenerate hyperbolically embedded subgroup is SQ-universal. Together with
Proposition~\ref{DGO-prop}, this readily implies the following:

\begin{proposition}\label{SQ-u-graph}
 Let $\mathcal{G}$ be a non-trivial acylindrical graph of groups,
and let $G$ be the fundamental group of $\mathcal{G}$. Then $G$ is SQ-universal
if and only if $\mathcal{G}$ is not exceptional. 
\end{proposition}
\begin{proof}
 If $\mathcal{G}$ is not exceptional, then $G$ is SQ-universal by Proposition~\ref{DGO-prop} and~\cite[Theorem 2.30]{DGO}. If $\mathcal{G}$
is exceptional, then $G$ is virtually abelian, so it cannot be SQ-universal.
\end{proof}

Our next result provides sufficient conditions under which the fundamental
group of an (extended) graph manifold is SQ-universal.

\begin{proposition}\label{SQ-u}
Let $M$ be an (extended) graph manifold, and
assume that at least one of the following conditions holds:
\begin{enumerate}
\item
$M$ consists of a single piece without internal walls, or
\item
$M$ contains at least one separating internal wall, or
\item
$M$ contains at least one internal wall with transverse fibers. 
\end{enumerate}
Then $\pi_1(M)$ is SQ-universal.
\end{proposition}
\begin{proof}
(1):
If $M$ consists of a single piece, then $\pi_1(M) \cong \pi_1(V) \times \mathbb Z^k$, where $V$ is a finite volume hyperbolic manifold
of dimension $\geq 3$. Since $\pi_1(V)$ is a non-elementary (properly) relatively hyperbolic group, work of Arzhantseva, Minasyan 
and Osin~\cite{AMO} implies that $\pi_1(V)$ is SQ-universal. Since $\pi_1(M)$ surjects onto a SQ-universal group, it is itself SQ-universal.

\medskip

(2): We first recall that
Lyndon \& Schupp \cite{LySc} provide some criterions under which an amalgamation or HNN-extension is 
SQ-universal.
For a group $A$, define a {\it blocking set} for a subgroup $C \leq A$ to be a pair of distinct elements 
$\{x,y\} \subset A \setminus C$ with 
the property that all the intersections $x^{\pm 1} C y^{\pm 1} \cap C = \{1\}$. Then 
\cite[pg. 289, Theorem V.11.3]{LySc} establishes that, if the subgroup $C$ is blocked inside $A$, the amalgamation 
$G=A*_CB$ is SQ-universal. 

We now verify that the conditions for SQ-universality are fulfilled for the amalgamations that arise
in case (2) of our statement. In this case,
the group $\pi_1(M)$ splits as an amalgamation over
$C:=\mathbb Z^{n-1}$, with the two vertex groups $A, B$ themselves fundamental groups of (extended) graph manifolds (with fewer 
pieces than $M$). Since the amalgamating subgroup $\mathbb Z^{n-1}$ is contained in a piece, it is sufficient to show that
a blocking set exists within the fundamental group of that piece. By projecting onto the first factor, the group 
$\pi_1(V)\times \mathbb Z^k$ acts on 
$\widehat  {\mathbb H}^{n-k}$, a copy of hyperbolic space with a suitable 
$\pi_1(V)$-equivariant collection of (open) horoballs removed. The subgroup $C=\mathbb Z^{n-1}$ can then be identified
with the subgroup that leaves invariant a fixed boundary horosphere ${\mathcal H} \subset \partial (\widehat {\mathbb H} ^{n-k})$.
In this context, the blocking condition requires us to find two elements $x,y\in \pi_1(V) \setminus C$ with the property that
$x^{\pm 1} C y^{\pm 1} \cap C = \{1\}$, which is equivalent to $(x^{\pm 1} C y^{\pm 1}) \cdot \mathcal H \neq \mathcal H$.
The $\pi_1(V)$ action on $\widehat {\mathbb H}^{n-k}$ is via isometries, so it is sufficient to show that we can find elements
$x, y$ having the property that the following sets of distances satisfy:
$$\big \{ d(x^{\pm 1} \cdot \mathcal H, \mathcal H)\big \} \cap 
\big \{ d(y^{\pm 1} \cdot \mathcal H, \mathcal H)\big \} = \emptyset .$$

Now pick $x\in \pi_1(V)$ stabilizing some horosphere $\mathcal H^\prime$ (distinct from $\mathcal H$). Then we know that
$x$ does not leave any other horosphere invariant, so $d(x^{\pm 1}\cdot \mathcal H, \mathcal H)>0$.
Moreover, taking large powers of $x$, we can find an $n$ for which the two real numbers 
$d(x^{\pm n} \cdot \mathcal H, \mathcal H)$ are as large as we want. In particular, there exists a sufficiently large 
$n\in\mathbb{N}$ such that, for $y:= x^n$, the distance $d(y^{\pm 1}\cdot \mathcal H, \mathcal H)$ exceeds the distances 
$d(x^{\pm 1}\cdot \mathcal H, \mathcal H)$. 

By the discussion in the previous paragraph, this implies that $\{x,y\}$ form a blocking set for the $\mathbb Z^{n-k-1}$ 
subgroup in $\pi_1(V)$ corresponding to the stabilizer of the horosphere $\mathcal H$. Taking the product with any
element in the $\mathbb Z^k$ factor gives a blocking set for the subgroup $\mathbb Z^{n-1}$ inside $\pi_1(V)\times 
\mathbb Z^k$. This completes the verification of SQ-universality in 
case (2).

\medskip

(3): Suppose now that $M$ contains at least one internal wall with transverse
fibers. Then Proposition~\ref{acyl-new} implies that $\pi_1(M)$ is the fundamental
group of a non-exceptional acylindrical graph of groups,
so the conclusion follows from Proposition~\ref{SQ-u-graph}.
\end{proof}

%\begin{remark}
%We note that, in the proof above, the irreducibility of the graph manifold was only used to deal
%with the case where the associated graph of groups has no separating edge. As such, the argument
%given above also shows that if $M$ is an arbitrary (i.e. not necessarily irreducible) graph manifold,
%{\it which contains at least one internal wall that separates the manifold into two components}, then
%the fundamental group $\pi_1(M)$ is SQ-universal.
%\end{remark}

\section{Solvable word problem}\label{word:sec}
We now shift our attention to an algorithmic question. Given a finite presentation of a group $G$, the {\it word problem} asks
whether there exists an algorithm for deciding whether or not two words $w_1, w_2$ in the generators represent the same 
element in the group $G$. Building on work of Dehn, who resolved the case where $G$ is a surface group, we know 
that this question is equivalent to the presentation having a recursive Dehn function (see Gersten \cite{Ger}). It is possible to formulate this condition in terms of the coarse geometry of $G$, and this approach would be probably quite convenient
to study the solvability of the word problem for fundamental groups of graph of groups
(see also Remark~\ref{filling:rem} below).
However, in this Section we prefer to develop more geometric arguments, that
may be applied to the 
study of fundamental
groups of Riemannian manifolds.
In fact, in the case where
the group $G$ is the fundamental group of a compact connected Riemannian manifold (possibly with boundary), a consequence of 
the well known Filling Theorem (see e.g. Burillo and Taback \cite{BT}) is that 
the word problem for $G$ is solvable
if and only if the 2-dimensional filling function for the universal cover $\widetilde M$ has a recursive upper bound. As we will require 
this in our arguments, we remind the reader of the definition of the 2-dimensional filling function:
$$Area_{M}(L) := \sup _c \inf _D \big \{ Area(D) \hskip 5pt | \hskip 5pt D: \mathbb D^2 \rightarrow \widetilde M, \hskip 5pt D|_{\partial 
\mathbb D^2} = c, \hskip 5pt L(c) \leq L \big \}\ . $$
In other words, we find a minimal area spanning disk for each curve, and try to maximize this area over all curves of length
$\leq L$. We are now ready to show:

\vskip 10pt

\begin{proposition}\label{solvable}
Let $M$ be a compact manifold, and assume that $M$ contains an embedded finite family of pairwise disjoint $2$-sided smooth 
submanifolds $N_i$, cutting $M$ into a finite collection of connected open submanifolds $M_j$ (denote by $\bar M_j$ their closure). 
Moreover, assume this 
decomposition has the following properties:
\begin{enumerate}[(a)]
\item each inclusion $N_i \hookrightarrow \bar M_j$, and $\bar M_j\hookrightarrow M$ is $\pi_1$-injective,
\item each $\pi_1(N_i)$ is a quasi-isometrically embedded subgroup of $\pi_1(M)$, and
\item each $\pi_1(M_j)$ has solvable word problem.
\end{enumerate}
Then the group $\pi_1(M)$ also has solvable word problem.
\end{proposition}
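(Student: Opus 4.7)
The plan is to invoke the Filling Theorem of Burillo and Taback: it suffices to show that the $2$-dimensional filling function $\operatorname{Area}_M(L)$ of the universal cover $\widetilde{M}$ admits a recursive upper bound. Equip $M$ with a smooth Riemannian metric and lift it to $\widetilde{M}$. Since the $N_i$ form a finite disjoint family of embedded submanifolds of the compact manifold $M$, there is a constant $\epsilon>0$ such that any $\epsilon$-ball in $\widetilde{M}$ meets at most one lift of the family $\{N_i\}$.

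Let $c\subset\widetilde{M}$ be a null-homotopic loop of length at most $L$, placed in general position with respect to the preimages of the walls. By the choice of $\epsilon$, the loop $c$ meets these preimages in at most $L/\epsilon$ points, which decompose $c$ into at most $L/\epsilon$ arcs, each contained in a single chamber (a lift of some $\overline{M}_j$). Because $c$ is null-homotopic, the induced closed walk on the Bass--Serre tree of the splitting given by hypothesis (a) is trivial; hence it must backtrack, meaning that two consecutive wall-crossings $p_r,p_{r+1}$ of $c$ lie on a common lift $\widetilde{N}$ of some $N_i$.

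The main reduction step uses hypothesis (b): by the Milnor--Svarc Lemma the inclusion $\pi_1(N_i)\hookrightarrow\pi_1(M)$ lifts to a quasi-isometric embedding $\widetilde{N}\hookrightarrow\widetilde{M}$ with constants depending only on $M$. In particular, $p_r$ and $p_{r+1}$ may be joined by a path $\sigma\subset\widetilde{N}$ of length at most $KL+K$ for a universal constant $K$. Cutting $c$ along $\sigma$ (up to a zero-area homotopy performed along the wall) splits $c$ into a chamber loop $\alpha\cdot\sigma^{-1}$ contained in one chamber, and a residual loop obtained by replacing the arc $\alpha$ in $c$ by $\sigma$. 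By hypothesis (a) the chamber loop is null-homotopic inside its chamber and admits a filling whose area is recursively bounded in its length, using the filling function of $\pi_1(\overline{M}_j)$, which is recursive by hypothesis (c) and the Filling Theorem. The residual loop has strictly fewer wall-crossings and length at most $(1+K)L+K$.

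Iterating this reduction at most $L/\epsilon$ times produces a filling disk for $c$ whose area is a finite sum of values of the recursive filling functions of the $\pi_1(\overline{M}_j)$'s, evaluated on lengths that are recursively (at worst exponentially) bounded in $L$. The resulting upper bound on $\operatorname{Area}_M(L)$ is therefore recursive, completing the proof via the Filling Theorem. The main technical obstacle will be to carefully control the length of successive residual loops under iteration and to verify that the individual chamber fillings piece together into an honest filling disk in $\widetilde{M}$; here $\pi_1$-injectivity from hypothesis (a) plays an essential role, both in guaranteeing that each chamber loop is genuinely null-homotopic inside its chamber and in lifting the resulting fillings back to $\widetilde{M}$.
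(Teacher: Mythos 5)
Your overall strategy agrees with the paper's: reduce to bounding the $2$-dimensional filling function via Burillo--Taback, cut the loop into pieces lying in single chambers, and use hypothesis (b) to replace long detours by quasi-geodesic shortcuts living in a wall, so that each piece becomes a loop in a single chamber whose filling is controlled by hypothesis (c). The two proofs then diverge on how the loop is decomposed. You peel off one backtracking chamber-loop at a time, using the fact that a closed walk in the Bass--Serre tree must backtrack somewhere, and iterate; this gives a length bound on the $n$-th residual loop that grows like $(1+K)^n L$, hence an exponential-in-$L$ input to the chamber filling functions, which is still recursive. The paper instead assigns a \emph{type} (a Bass--Serre vertex) to each point of $S^1$, groups \emph{all} arcs of the same type into a single chamber-loop $\gamma_v$ with connecting segments in the adjacent walls, and fills these simultaneously. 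This yields a polynomial ($O(L^2)$) input to the chamber filling functions and a cleaner quantitative estimate. Both routes prove solvability of the word problem; the paper's gives a better a priori bound on the Dehn function.

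There is, however, a genuine gap in your argument, at the very first estimate. You assert that a length-$L$ loop in general position meets the lifted walls in at most $L/\epsilon$ points, on the grounds that an $\epsilon$-ball in $\widetilde M$ meets at most one wall lift. This does not follow: a single $\epsilon$-ball can contain an arbitrarily high number of transverse intersections of the loop with \emph{that one} wall lift (the loop can oscillate back and forth within a thin collar of the wall at negligible length cost). Since the rest of your argument --- both the termination of the iteration and the final area bound --- is controlled by the number of wall-crossings, this uncontrolled quantity breaks the estimate: you would obtain an area bound depending on the crossing number $m$, not on $L$. The paper avoids the problem by not counting transverse intersections at all: it fattens each $N_i$ to a tubular neighbourhood $\widehat N_i$ of definite thickness $\geq 1/\lambda$, defines the ``type'' of a point so that a type change forces the loop to cross the full thickness of some $\widehat N_i$, and hence bounds the number of type changes by $\lambda L$. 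To repair your proof you would need an analogous device --- for instance, first homotope $c$ (with controlled area cost) to a representative that minimizes intersection with the walls, or replace transverse crossings with traversals of thickened walls --- so that the number of iterations is genuinely bounded by a recursive function of $L$.
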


\vskip 10pt

\begin{proof}
To show that $\pi_1(M)$ has solvable word problem, we need to find a recursive function $F:\mathbb N \rightarrow \mathbb N$
having the property that, if $\gamma : S^1\rightarrow \widetilde M$ is any closed curve of length $\leq n$, one can find a bounding
disk $H: \mathbb D^2\rightarrow \widetilde M$ with area $\leq F(n)$. This will be achieved by giving a construction for finding 
a bounding disk, and verifying that the resulting areas are bounded above by a recursive function.

From hypothesis (a), $\pi_1(M)$ is the fundamental group of a graph of groups $\mathcal G$, with 
vertex groups isomorphic to 
the various $\pi_1(M_j)$, and edge groups isomorphic to the various $\pi_1(N_i)$. Let $T$ denote the associated Bass-Serre tree.
Take closed tubular neighborhoods $\widehat{N}_i \supset N_i$ be  of the various $N_i$, chosen small enough so as to be pairwise disjoint. 
Let $\widehat{M}_j \supset M_j$ be 
the manifold with boundary obtained by taking the union of $\widehat{M}_j$ with all of the various $\widehat{N}_i$ (ranging over all $N_i$ that
occur as boundary components of $\widehat{M}_j$). The inclusion $\bar M_j \subset \widehat{M}_j$ is clearly a $\pi_1$-isomorphism.

Next, let us construct a map from $M$ to the graph $\mathcal G$. This is achieved by mapping each $\widehat{N}_i \cong N_i \times [-1,1]$ 
to the edge labelled by the corresponding $\pi_1(N_i)$, by first collapsing $\widehat{N}_i$ onto the interval factor $[-1,1]$,
and then identifying the interval with the edge. Finally, each connected component of the complement $M \setminus \bigcup \widehat{N}_i$
is entirely contained inside one of the submanifolds $M_j$; we map the component to the vertex $v_j\in \mathcal G$ whose label
is $\pi_1(M_j)$. This map lifts to an equivariant map $\Phi: \widetilde M \rightarrow T$, which we will use to analyze
the behavior of a closed loop $\gamma : S^1\rightarrow \widetilde M$. Note that $\Phi$ is essentially the map 
defining the ``tree of spaces'' structure on $\widetilde M$, see Section \ref{univ:subsec} (particularly the discussion
around Definition \ref{tree-of-spaces:def}).

Our analysis of the loop $\gamma$ will start by associating a {\it type} to each point in $S^1$, i.e.~by defining a map from $S^1$ to the vertex set of $T$. Using the map $\Phi\circ \gamma$, we first assign the type of any point lying in the
pre-image of a vertex $v\in T$ to be that same vertex. We now need to discuss how to extend this map to points in the preimage of 
an open edge $e^\circ \subset T$ (i.e. $e^\circ$ excludes the two endpoints of $e$). Each connected component of the
pre-image of $e^\circ$ is either the whole $S^1$, or an open interval $U = (a,b)$ in the circle, which inherits an orientation
from the ambient $S^1$. 
In the first case, we choose an endpoint $v$ of $e$, and we simply
establish that every point of $S^1$ has type equal to $v$.
Otherwise, the two endpoints of the interval $U=(a,b)$ either (i) map to the same vertex $v$ in $T$, 
or (ii) map to distinct vertices $v,w$ in $T$. In case (i), we define the type of that interval to be the vertex $v$. In case (ii),
taking into account the orientation on the interval, we can talk of an ``initial vertex" $\Phi\big(\gamma(a)\big)=v$, and a 
``terminal vertex" $\Phi\big(\gamma(b)\big)=w$. The 
restriction of $\gamma$ to $U = (a,b)$ maps into a subset $\widehat{N}_i$. Let $t\in (a, b)$ be the largest $t$ so that
$\gamma(t) \in N_i$. Then we define the type of the points in $(a, t]$ to be $v$, and the type of the points in $(t, b)$ to be $w$.
By construction, we have that the type function $\rho: S^1\rightarrow Vert(T)$ takes on values contained in the image of
$\Phi\circ \gamma(S^1)$, and hence only assumes {\it finitely many} values (as the latter set is compact).

Let us now fix a vertex $v$ of $T$.
Having defined the type function $\rho: S^1\rightarrow Vert(T)$ associated to the closed loop $\gamma$, we now have that 
either $\rho^{-1} (v)$ is equal to
the whole $S^1$, or the preimage $\rho^{-1}(v)$ satisfies the following properties: 
\begin{enumerate}
\item each connected component of $\rho^{-1}(v)$ is a half-open interval $(a_k, b_k] \subset S^1$, and there are finitely
many such components,
\item there exists a fixed connected lift $\widetilde {\widehat{M}_j}$ of some $\widehat{M}_j$ with the property that the restriction of 
$\gamma$ to each connected component $(a_k,b_k]$ has image $\alpha_k$ contained entirely inside $\widetilde {\widehat {M}_j}$,
\item the point $\gamma(a_k)$ lies on the lift $W_k$ of some $N_j$, and the point $\gamma(b_k)$ lies on the lift $W^\prime_{k}$ of some (possibly distinct) $N_{j^\prime}$, and
\item if one considers the cyclically ordered collection of intervals $(a_k,b_k]$ along the circle $S^1$, then we have that 
$W^\prime _k = W_{k+1}$.
\end{enumerate}
Except for the fact that there are finitely many components in $\rho^{-1}(v)$ (which will be justified later), the four properties 
stated above follow immediately from the definition of the type function $\rho$. 
Let us concentrate on the case when $\rho^{-1}(v)$ is not the whole $S^1$,
the case when $\rho$ is constant being much easier.
We proceed to construct a bounding disk for $\gamma$, where $\gamma$ has length $\leq L \in \mathbb N$, 
and to estimate the resulting area. This will be achieved by 
first expressing $\gamma$ as a concatenation of loops $\gamma_v$, where $v$ ranges over all the (finitely many) types 
associated to the loop $\gamma$. The bounding disk for $\gamma$ will be obtained by concatenating the bounding disks
for the $\gamma_v$.

So let $v\in Vert(T)$ lie in the range of the type function, and consider the connected lift $\widetilde {\widehat {M}_j}$ given
by property (2). Each $W_k$ appearing in property (3) is a connected lift of one of the $N_i$. From hypothesis (a), 
$W_k$ is a copy of the universal cover of $N_i$,
and from hypothesis (b), the inclusion $W_k \hookrightarrow \widetilde M$ is a quasi-isometric embedding. As there are
only finitely many such $N_i$ in $M$, we can choose constants $C,K \in \mathbb N$ so that all the inclusions 
$W_k \hookrightarrow \widetilde M$ are $(C,K)$-quasi-isometries. The two 
points $\gamma(b_{k-1}) \in W_{k-1}^\prime$ and  $\gamma(a_k)\in W_k$ are contained in the same $W_k$ by property (4);
let $\beta_k$ be a minimal length curve in $W_k$ joining them together. The distance between these two points is clearly
$\leq L$ in $\widetilde M$, so as measured inside the submanifold 
$W_k$, their distance is $\leq CL+K$. Define the loop $\gamma_v$ by cyclically concatenating 
$\alpha_1* \beta_1* \alpha_2 *\beta_2 * \cdots *\alpha_r * \beta_r$.
Since each of the $\beta_i$ has length $\leq CL+K$, while the union of the $\alpha_i$ has length $\leq L$ (being a subpath
of the loop $\gamma$), we can estimate the total length of $\gamma_v$ to be $\leq r\cdot (CL+K) + L \in \mathbb N$.

So to complete our estimate on the length of $\gamma_v$, we need to estimate the integer $r$ (this will also justify the 
``finitely many''
in property (1) above). For any of the intervals $U=(a_k, b_k]\subset S^1$ in $\rho^{-1}(v)$, the type of the point $a_k$ is a 
vertex $w$ which is adjacent to $v$. Correspondingly, there is another subinterval $V\subset S^1$, consisting of points
of type $w$, which satisfies $V\cap \bar U= \{a_k\}$. Moreover, there exists a small neighborhood $[a_k-\epsilon, a_k+\delta]
\subset V\cup U$ whose image under $\gamma$ lies entirely in a connected lift $\widetilde {\widehat{N}_i}$ of some $\widehat{N}_i$,
and whose endpoints map to opposite boundary components of $\widetilde {\widehat{N}_i}\cong \widetilde N_i \times [-1,1]$.
For each of the $\widehat{N}_i \subset M$, we let $\lambda _i >0$ denote the minimal distance between the two
boundary components of $\widehat{N}_i \cong N_i \times [-1,1]$. Since there are only finitely many such $\widehat{N}_i$, we can
find a $\lambda \in \mathbb N$ so that $1/\lambda \leq \min \{\lambda_i\}$. We have seen above that to each connected 
component inside each of the sets $\rho^{-1}(v)$ (where $v\in Vert(T)$), we can associate a subpath of $\gamma$ 
contained inside a connected lift of one of the $\widehat{N}_i$, which moreover connects opposite boundary components of the lift. 
These paths are pairwise disjoint, and from the discussion above, have length $\geq 1/\lambda$. We conclude that the total
number of such paths is bounded above by $\lambda \cdot L\in \mathbb N$. In particular, this gives us the upper bound
$\lambda \cdot L$ for:
\begin{itemize}
\item the number $r$ of connected components in $\rho^{-1}(v)$, for any $v\in Vert(T)$, and
\item the total number of vertices $v\in Vert(T)$ for which $\rho^{-1}(v)$ is non-empty.
\end{itemize}
Combining this with our estimate above, we see that the total length of $\gamma_v$ is bounded above by 
the natural number $\lambda CL^2+\lambda KL + L$.

From hypothesis (a), the space $\widetilde {\widehat {M}_j}$ can be identified with the universal cover of 
${\widehat {M}_j}$. From hypothesis (c), $\pi_1(M_j)$ has solvable word problem, and hence the $2$-dimensional
filling function $Area_{\widehat M_j}$ on $\widetilde {\widehat {M}_j}$ has a recursive upper bound 
$F_j:\mathbb N \rightarrow \mathbb{N}$. Observe
that there are only finitely many $\widehat M_j$ inside the manifold $M$, hence we can choose a single 
recursive $F: \mathbb N\rightarrow \mathbb{N}$ which serves as a common upper bound for {\it all} the $2$-dimensional
filling functions for the $\widetilde {\widehat {M}_j}$ (for instance, take $F= \sum F_j$). Then we can find a bounding
disk for $\gamma_v$ whose area is $\leq F(\lambda CL^2+\lambda KL + L)$. Finding such a bounding disk for
each of the vertices $v$ in the range of the type map $\rho$, we obtain a bounding disk for the original curve $\gamma$.
As we know that there are $\leq \lambda \cdot L$ vertices in the range of $\rho$, we conclude that the original
curve $\gamma$ has a bounding disk of total area
$$\leq \lambda \cdot L \cdot F\big(\lambda CL^2+\lambda KL + L\big)$$

Finally, we recall that the class of recursive functions is closed under composition as well as elementary arithmetic 
operations, and hence the function $$G(L):= \lambda \cdot L \cdot F\big(\lambda CL^2+\lambda KL + L\big)$$ provides
the desired recursive upper bound for the function $Area_{M}$. From the Filling Theorem \cite{BT}, we conclude that
$\pi_1(M)$ has a recursive Dehn function, and hence that the word problem is solvable for $\pi_1(M)$.

\end{proof}

\vskip 10pt

Note that the obvious decomposition of a graph manifold into pieces satisfies property (a) in 
the statement of the previous Proposition. Moreover, since all the pieces support a locally CAT(0) metric, their fundamental 
groups have solvable word problem (see for instance Bridson and Haefliger \cite[Section 3.$\Gamma$, Theorem 1.4]{bri}), so 
property (c) always holds. Finally, 
the main result of Chapter~\ref{strongirr:sec}
guarantees that, if the graph manifold is assumed to be irreducible, then properties (b) also holds
(see Theorem~\ref{quasi-isom:thm}). 
This gives us the immediate:

\vskip 10pt

\begin{corollary}[Irreducible $\Rightarrow$ solvable word problem]\label{solvable:cor}
For $M$ an irreducible graph manifold, the fundamental group $\pi_1(M)$ has solvable word problem.
\end{corollary}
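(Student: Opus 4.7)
The plan is to deduce the corollary as a direct application of Proposition~\ref{solvable}, where the family $\{N_i\}$ consists of the internal walls of $M$ (the toric hypersurfaces along which pieces are glued) and the submanifolds $M_j$ are (the interiors of) the pieces $V_1,\ldots,V_r$. So the main task is to verify that this decomposition satisfies the three hypotheses (a), (b), (c) of Proposition~\ref{solvable}.

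For hypothesis (a), the $\pi_1$-injectivity of each inclusion $N_i\hookrightarrow \bar V_j$ follows from the fact that the boundary of a truncated hyperbolic manifold with toric cusps is $\pi_1$-injective in the manifold itself (cusps of hyperbolic manifolds are $\pi_1$-injective), and this property is clearly preserved under taking products with a torus. The $\pi_1$-injectivity of each inclusion $\bar V_j\hookrightarrow M$ is a standard consequence of the description of $\pi_1(M)$ as the fundamental group of the graph of groups $\mathcal G_M$ (see Section~\ref{graphofgroups}). For hypothesis (c), each piece is diffeomorphic to $\overline{N}_i\times T^{n-n_i}$, and this is a product of a non-positively curved manifold (a truncated cusped hyperbolic manifold) with a flat torus, hence carries a locally CAT(0) metric. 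Since CAT(0) groups have quadratic Dehn function, and in particular solvable word problem, $\pi_1(V_j)$ has solvable word problem.

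For hypothesis (b) we crucially use the assumption that $M$ is irreducible. Each $N_i$ is an internal wall, so a lift of $N_i$ to $\widetilde M$ is precisely a wall $W\subseteq \widetilde M$ in the sense of the tree-of-spaces structure of Chapter~\ref{construction:sec}. By Theorem~\ref{quasi-isom:thm}, the inclusion $(W,d_W)\hookrightarrow \widetilde M$ is a quasi-isometric embedding. Since $\pi_1(N_i)$ acts geometrically on $(W,d_W)$ and $\pi_1(M)$ acts geometrically on $\widetilde M$, the Milnor-Svarc Lemma (Theorem~\ref{milsv}) together with this quasi-isometric embedding of $W$ translates into the statement that $\pi_1(N_i)$ is quasi-isometrically embedded in $\pi_1(M)$, which is exactly hypothesis (b).

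All three hypotheses being verified, Proposition~\ref{solvable} immediately yields that $\pi_1(M)$ has solvable word problem. There is essentially no obstacle here beyond making sure that each hypothesis is recorded and correctly cited; the whole point is that the substantive work — particularly the quasi-isometric embedding of walls, which is where irreducibility is essential — has already been carried out in Chapter~\ref{strongirr:sec}.
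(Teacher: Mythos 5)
Your proposal is correct and takes essentially the same route as the paper: apply Proposition~\ref{solvable} to the decomposition of $M$ into pieces glued along internal walls, verifying (a) from $\pi_1$-injectivity of walls and pieces, (c) from the locally CAT(0) metric on each piece, and (b) from Theorem~\ref{quasi-isom:thm} via the Milnor-Svarc Lemma, with irreducibility entering exactly and only at step (b).
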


\begin{remark}\label{filling:rem}
(1) The above proposition doesn't seem to appear in the literature, though it is no doubt well-known
to experts. Indeed, estimates for the Dehn function of a
free product with amalgam (or HNN-extension) in terms of the Dehn functions of the vertex groups along with estimates
of the {\it relative distortion} of the edge group inside the vertex groups first seems to have been studied in the (unpublished)
thesis of A. Bernasconi \cite{Be}. See also the stronger estimates recently obtained by Arzhantseva and 
Osin \cite{AO}.

\noindent (2) The argument given in the proposition shows that, assuming all vertex groups have solvable word problem,
the complexity of the word problem for the fundamental group of a graph of groups is closely related to the distortion
of the edge/vertex groups in the ambient group (see also the discussion in Farb \cite{farb1}). In fact, one can weaken hypothesis
(c) in the statement of the proposition by instead requiring the distortion of each $\pi_1(N_i)$ inside $\pi_1(M)$ to be bounded
above by a recursive function (generalizing the linear bound one has in the special case of a QI-embedding). The same 
argument works to show that $\pi_1(M)$ still has solvable word problem. 
\end{remark}

\section{Gluings and isomorphism type}\label{groups:subsec}
In this final section, we consider the question of when the fundamental groups of a pair of 
graph manifolds are isomorphic. Let us first recall that, by Theorem~\ref{preserve2:thm},
a pair $M_1,M_2$ of graph manifolds can have isomorphic fundamental groups only if there
is a bijection between the pieces of $M_1$ and the pieces of $M_2$, having the property
that the bijection respects the fundamental groups of the pieces.  This implies that the
only possible freedom occurs in the {\it gluing maps}, telling us how the various pieces are
glued to each other.

For the sake of simplicity, we will only treat the case when the pieces involved 
are constructed starting from cusped hyperbolic manifolds of a fixed dimension $n\geq 3$
and toric fibers of a fixed dimension $k\leq n-2$.
Let us fix a finite \emph{directed} graph $\calG$, that is
a finite connected CW-complex of dimension one with an orientation attached
to every edge, and let $\calV$, $\calE$ be the sets of vertices and edges of $\calG$. 
As usual, the valency of a vertex $v$ of $\calG$ is the total number of germs of edges
starting or ending at $v$. For each $v\in \calV$ with valency $h$
let $N_v$ be a (truncated) cusped hyperbolic $n$-manifold with at least $h$ cusps, 
and set $V_v=N_v\times T^k$. We define $G_v=\pi_1 (V_v)=\pi_1 (N_v)\times \mZ^k$,
and we associate to every germ of edge starting or ending at $v$ a subgroup
$H_{e,v}$ of $G_v$, in such a way that the following conditions hold:
\begin{itemize}
 \item 
each $H_{e,v}$ is (a fixed representative in the conjugacy class of)
the fundamental group of a 
boundary component
of $V_v$; 
\item
$H_{e,v}$ is not conjugated to $H_{e',v}$ whenever
$e\neq e'$, i.e. subgroups corresponding to different edges
with an endpoint in $v$ 
are associated to different boundary components of $V_v$.
\end{itemize}
As a consequence, every $H_{e,v}$ is isomorphic to $\mZ^{n+k-1}$. 
The graph $\calG$ and the groups $G_v$, $H_{e,v}$ determine what we call
a \emph{pregraph} of groups. 

For every $e\in E$ let now $v_-(e),v_+(e)\in\calV$ be respectively
the starting point and the ending point of $e$. A \emph{gluing pattern}
for $\calG$ is a collection 
of group isomorphisms $\Phi=\{\varphi_e\colon H_{e,v_-(e)}\to H_{e,v_+(e)},\ e\in \calE\}$.
We say that $\Phi$ is irreducible if for every $e\in\calE$ the fiber subgroup
of $H_{e,v_+(e)}$ intersects trivially the image of the fiber subgroup of $H_{e,v_-(e)}$ via
$\varphi_e$.
Of course, every gluing pattern for $\calG$ defines a 
graph of groups $(\calG,\Phi)$, which has in turn a well-defined fundamental group 
$\pi_1 (\calG,\Phi)$, according
to the Bass-Serre theory. We say that $(\calG,\Phi)$ is 
supported by $\calG$, and is irreducible if $\Phi$ is.

Let $\calM(\calG)$ be the set of diffeomorphism classes of
graph manifolds obtained by gluing the pieces $V_{v}$, $v\in\calV$ 
according to the pairing of the boundary components encoded by the edges
of $\calG$.
It follows by Theorem~\ref{smrigidity:thm} that the isomorphism classes
of
fundamental groups of (irreducible) graph of groups supported by $\calG$
coincide with the isomorphism classes of fundamental groups of 
(irreducible) manifolds
in $\calM(\calG)$. 

\begin{remark}
The assumption $k\leq n-2$ on the dimensions of toric and hyperbolic factors
of the pieces
will play a crucial
role in the proof of Theorem~\ref{infinitelymany} below. Note however that 
there could not exist irreducible gluing patterns for $\calG$ if
the dimension of
the toric factors of the pieces exceeded the dimension of the hyperbolic factors.
Moreover,
it seems reasonable (and the proof of Theorem~\ref{infinitelymany}
strongly suggests) that an analogue of Theorem~\ref{infinitelymany}
could also hold when different pieces have toric factors of variable
dimensions, provided that such dimensions are sufficiently small.
\end{remark}

The main result of this section is the following:

\begin{theorem}\label{infinitelymany}
Suppose that $\calG$ has at least two vertices. Then,
there exist infinitely
many irreducible graphs of groups supported by $\calG$
with mutually non-isomorphic fundamental groups. Equivalently, there
exist infinitely many diffeomorphism classes of
irreducible manifolds in $\calM(\calG)$.
\end{theorem}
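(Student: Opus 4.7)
The plan is to vary the gluing along a single edge $e \in \calE$ (which exists by the connectedness of $\calG$ together with $|\calV| \geq 2$), and to distinguish the resulting fundamental groups by an algebraic invariant intrinsic to the gluing. Identify each $H_{e,v_\pm(e)}$ with $\mZ^{n-1} \oplus \mZ^k$, where the two summands are the cusp subgroup of the corresponding boundary component and the fiber subgroup of the adjacent piece. For each integer $m \geq 1$, I set
$$
\varphi_e^{(m)} \,=\, \begin{pmatrix} I_{n-1} & A_2^{(m)} \\ 0 & I_k \end{pmatrix} \,\in\, GL(n+k-1, \mZ),
$$
where $A_2^{(m)}$ is the $(n-1)\times k$ matrix whose $(i,i)$ entry is $1$ for $1 \leq i \leq k-1$, whose $(k,k)$ entry is $m$, and whose other entries vanish. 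The injectivity of $A_2^{(m)}$ ensures that $\varphi_e^{(m)}$ is irreducible along $e$. After choosing once and for all arbitrary irreducible gluings $\varphi_f$ for every $f \in \calE \setminus \{e\}$, I obtain irreducible gluing patterns $\Phi_m$ and corresponding manifolds $M_m \in \calM(\calG)$. The distinguishing invariant will be
$$
\mathrm{Coker}(A_2^{(m)}) \,\cong\, \mZ/m \,\oplus\, \mZ^{n-1-k},
$$
which, for distinct $m \geq 1$, gives pairwise non-isomorphic abelian groups (the torsion subgroup alone distinguishes them, and the hypothesis $k \leq n-2$ ensures $n-1-k \geq 1$).

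To see that this cokernel is a $\pi_1$-isomorphism invariant up to a finite ambiguity, suppose $\pi_1(M_m) \cong \pi_1(M_{m'})$. By Theorem~\ref{smrigidity:thm}, the isomorphism is induced by a diffeomorphism $\psi : M_m \to M_{m'}$; by Theorem~\ref{preserve2:thm}, $\psi$ induces a labeling-preserving graph automorphism $\sigma$ of $\calG$. Since $\mathrm{Aut}(\calG)$ is finite, and since each $\sigma$ with $\sigma(e) \neq e$ imposes only a single cokernel-matching constraint on the varying parameter (namely $\mathrm{Coker}(A_2^{(m)})$ must equal the fixed cokernel of the gluing along $\sigma^{-1}(e)$ or $\sigma(e)$), it suffices to treat the case $\sigma(e) = e$ after discarding finitely many exceptional values of $m$. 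The diffeomorphism then restricts to self-diffeomorphisms $\psi_{v_\pm}$ of the pieces $V_{v_\pm(e)}$, and the gluing compatibility reads
$$
\varphi_e^{(m')} \,=\, (\psi_{v_+}|_\partial)_* \,\circ\, \varphi_e^{(m)} \,\circ\, (\psi_{v_-}|_\partial)_*^{-1}.
$$
Thus $\varphi_e^{(m)}$ and $\varphi_e^{(m')}$ lie in the same orbit under the two-sided action of the mapping class groups of $V_{v_\pm(e)}$ on the boundary torus.

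The classification of automorphisms of $\pi_1(V_v) = \pi_1(N_v) \times \mZ^k$ from the proof of Lemma~\ref{diffeo:pieces} gives every such automorphism the form $(g,v) \mapsto (\theta(g), \alpha(g) + \beta(v))$, where $\theta \in \mathrm{Aut}(\pi_1(N_v))$, $\alpha \in \mathrm{Hom}(\pi_1(N_v), \mZ^k)$, and $\beta \in GL(k,\mZ)$. Restricting to the boundary subgroup $\mZ^{n-1} \oplus \mZ^k$ and requiring that the relevant cusp $C_v$ be preserved, the induced action takes the block-lower-triangular form
$$
\begin{pmatrix} A & 0 \\ C & B \end{pmatrix},
$$
with $A$ ranging over a finite subgroup of $GL(n-1,\mZ)$ (by Mostow rigidity applied to the cusped hyperbolic base $N_v$) and $B \in GL(k,\mZ)$. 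The key fact is that the upper-right block vanishes. Computing the compatibility relation block by block, the upper-right block of $\varphi_e$ transforms as $A_2 \mapsto A_+^{-1} A_2 B_-$, which preserves the abstract isomorphism class of $\mathrm{Coker}(A_2)$. Hence $\mathrm{Coker}(A_2^{(m)}) \cong \mathrm{Coker}(A_2^{(m')})$, forcing $m = m'$; excluding the finitely many exceptional parameters, we conclude that infinitely many of the $M_m$ have pairwise non-isomorphic fundamental groups.

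The principal subtlety to be handled carefully is confirming that the block-lower-triangular description above exhausts the action of the mapping class group on the boundary subgroup. The concern is that $\out(\pi_1(M_m))$ contains additional elements beyond the piecewise automorphisms --- most notably Dehn twists along internal walls --- but one verifies that any such Dehn twist restricts on each adjacent piece to a conjugation by an element of a cusp subgroup, which is therefore inner in $\pi_1(V_v)$ and acts trivially on the abelian boundary subgroup. Once this verification is in place, the two-sided MCG action is of the asserted block-triangular form, and the cokernel invariant provides the required infinite family of non-isomorphic fundamental groups.
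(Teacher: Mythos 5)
Your proposal is correct and follows the same overall strategy as the paper: vary the gluing matrix $\begin{pmatrix} I & B \\ 0 & I\end{pmatrix}$ along a single edge $e$, invoke Mostow rigidity together with preservation of the fiber (center) subgroup to force the vertex automorphisms to act on the boundary by block-lower-triangular matrices with finitely many upper-left blocks, and exhibit infinitely many pairwise non-equivalent choices of the upper-right block. Two places where your writeup departs from, or improves on, the paper's treatment: the paper only asserts the existence of an infinite family of $(n-1)\times k$ matrices $B_j$ whose column spans $\Lambda_j$ are pairwise inequivalent under the finite group $\Theta_2$, whereas you produce an explicit such family and distinguish them by the concrete invariant $\mathrm{Coker}(B_j)$ (which is automatically preserved under $B\mapsto \theta B w^{-1}$ with $\theta\in GL_{n-1}(\mZ)$, $w\in GL_k(\mZ)$); and in the presence of graph symmetries you discard finitely many bad $m$ for each $\sigma$ with $\sigma(e)\neq e$, while the paper uses a pigeonhole over $\mathrm{Aut}(\calG)$. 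Both of these are fine.

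There is one genuine gap you should close. After reducing to $\sigma(e)=e$, you assume $\psi$ restricts to \emph{self}-diffeomorphisms of $V_{v_\pm(e)}$, but a label-preserving $\sigma$ may fix the edge $e$ while swapping its two endpoints (this occurs whenever $G_{v_-(e)}\cong G_{v_+(e)}$). In that case the relevant compatibility involves $\varphi_e^{(m)}$ and $\big(\varphi_e^{(m')}\big)^{-1}$ rather than two gluings with the same orientation, and the restricted diffeomorphisms go between the two pieces rather than within them. The fix is straightforward — the upper-right block of $\varphi_e^{-1}$ is $-A_2$, and $\mathrm{Coker}(-A_2)\cong\mathrm{Coker}(A_2)$, so the cokernel invariant is still preserved — but it should be said. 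Two small imprecisions worth correcting as well: the compatibility formula actually gives $A_2\mapsto A_+\,A_2\,B_-^{-1}$, not $A_+^{-1} A_2 B_-$ (the conclusion is unaffected since the cokernel is invariant under left $GL_{n-1}(\mZ)$ and right $GL_k(\mZ)$ action); and the appeal to Theorem~\ref{smrigidity:thm} is unnecessary, since Theorem~\ref{preserve2:thm} is already stated for group isomorphisms.
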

\begin{proof}
An automorphism of a pregraph of groups is a combinatorial automorphism $\varphi$ of $\calG$
(as an \emph{undirected} graph) such that $G_{\varphi (v)}$ is isomorphic
to $G_v$ for every $v\in\calV$ (as discussed at the beginning of
the proof of Lemma~\ref{diffeo:pieces}, this is equivalent to requiring that $V_{\varphi (v)}$ is diffeomorphic
to $V_v$ for every $v\in\calV$).
We say that a pregraph of groups is \emph{without symmetries}
if it does not admit non-trivial automorphisms. 
We first consider the case when $\calG$ is without symmetries.

Since $\calG$ has at least two vertices, there exists an edge $e\in\calE$
with distinct endpoints $v_1=v_- (e)$, $v_2=v_+ (e)$. We fix this edge for
use in the rest of the proof.

Let  $\Phi$, $\Phi'$ be irreducible gluing patterns for $\calG$. Consider
$\varphi\colon H_{e,v_1}\to H_{e,v_2}$ (resp.~$\varphi'\colon H_{e,v_1}\to H_{e,v_2}$)
the isomorphism of $\Phi$ (resp.~of $\Phi'$) associated to the edge $e$.
We say that $\Phi'$ is \emph{equivalent} to $\Phi$ if there exist an automorphism $\psi_1$ of $G_{v_1}$
and an automorphism $\psi_2$ of $G_{v_2}$ 
such that $\psi_1 (H_{e,v_1})=H_{e,v_1}$, 
$\psi_2 (H_{e,v_2})=H_{e,v_2}$ and 
$\varphi'\circ \psi_1 |_{H_{e,v_1}}=\psi_2|_{H_{e,v_2}}\circ \varphi$.
Note that this notion of equivalence is only sensitive to the behavior of
the gluing along the single edge $e$, and completely ignores what happens
along the remaining edges in $\calG$.

Now, the proof of Theorem \ref{infinitelymany} (in the case of pregraphs of groups without symmetries)
will follow immediately from the 
following two facts:

\vskip 10pt

\noindent {\bf Fact 1:} If $\pi_1 (\calG,\Phi)\cong \pi_1 (\calG,\Phi')$, then $\Phi$ is equivalent to $\Phi'$.

\vskip 5pt

\noindent {\bf Fact 2:} There exist infinitely many pairwise non-equivalent irreducible gluing patterns for $\calG$.

\vskip 10pt

Let us begin by establishing {\bf Fact 1}. Let $\psi\colon \pi_1 (\calG,\Phi)\to\pi_1 (\calG,\Phi')$ be a group isomorphism.
By Theorem~\ref{preserve2:thm}, the isomorphism $\psi$ induces an automorphism of $\calG$. But by hypothesis,
we are in the case where $\calG$ has no symmetries, so the automorphism of $\calG$ must be the identity. 
In particular, we have 
$\psi (G_1)=g_1 G'_1 g_1^{-1}$, $\psi (G_2)=g_2 G'_2 g_2^{-1}$,
where $G_i$ (resp.~$G'_i$) is the image of $G_{v_i}$ in 
$\pi_1 (\calG,\Phi)$ (resp.~in $\pi_1 (\calG,\Phi')$), and $g_1,g_2$ are elements in $\pi_1 (\calG,\Phi')$. 
If $H$ (resp.~$H'$) is the image in  $\pi_1 (\calG,\Phi)$
(resp.~in  $\pi_1 (\calG,\Phi')$) of
$H_{e,v_1}$ and $H_{e,v_2}$ (which are identified by the very definition
of fundamental group of a graph of groups), since $\psi$ induces
the identity of $\calG$ we also have $\psi (H)=g_3 H' g_3^{-1}$
for some $g_3\in \pi_1 (\calG,\Phi')$.

Up to conjugating $\psi$, we can assume $g_1=1$, so that $\psi(G_1)=G_1^\prime$. 
Next note that we have
$g_3 H' g_3^{-1} =\psi (H)\subseteq \psi (G_1)=G'_1$, 
so $H'\subseteq g_3^{-1}G'_1g_3\cap G'_1$. By Lemma~\ref{conj:lemma}-(5), this implies  
that either $g_3\in G'_1$, or $H'$ corresponds to
an edge of $\calG$ having both endpoints on the vertex representing
$G'_1$. But recall that the edge $e$ was chosen to have distinct endpoints, 
ruling out this last possibility. So at the cost of conjugating $\psi$ with 
$g_3^{-1}$, we may further assume that $g_3=1$, and both $\psi (G_1)=G'_1$ and
$\psi (H)=H'$. As a consequence we have
$H'=\psi(H)\subseteq \psi (G_2)=g_2 G'_2g_2^{-1}$, so
$H'\subseteq g_2G_2'g_2^{-1}\cap G_2$, whence $g_2\in G'_2$ as above
and $\psi (G_2)=G_2'$. 

We have thus proved that $\psi$ induces isomorphisms $G_1\cong G_1'$, $G_2\cong G_2'$  
which ``agree'' on $H=G_1\cap G_2$. More precisely, for $i=1,2$ there exists an isomorphism  
$\psi_i\colon G_{v_i}\to G_{v_i}$ such that the following conditions hold: $\psi_i (H_{e,{v_i}})=H_{e,v_i}$
 for $i=1,2$, and $\varphi'\circ \psi_1 |_{H_{e,v_1}}=\psi_2|_{H_{e,v_2}}\circ \varphi$. By definition, 
 this means that $\Phi$ is equivalent to $\Phi'$, and {\bf Fact 1} is proved.

\vskip 10pt

Let us now prove {\bf Fact 2}. Recall that for $i=1,2$ we have an identification $G_{v_i} \cong \Gamma_i\times \mZ^k$,
where $\Gamma_i=\pi_1 (N_{v_i})$.
We also denote
by $L_i$ the subgroup of $\Gamma_i$ such that 
$L_i\times \mZ^k < \Gamma_i\times \mZ^k$ corresponds to $H_{e,v_i}$ 
under the above identification.
As showed in the proof of Lemma~\ref{diffeo:pieces}, every automorphism of $G_{v_i}=\Gamma_i\times \mZ^k$
is of the form $(g,v)\mapsto (\theta_i (g),\alpha_i (g)+\beta_i (v))$,
where $\theta_i\colon \Gamma_i\to\Gamma_i$ and $\beta_i\colon\mZ^k\to\mZ^k$ are isomorphisms,
and $\alpha_i\colon \Gamma_i\to\mZ^k$ is a homomorphism. We now claim that, in a sense to be made precise 
below, if we restrict to automorphisms leaving $L_i$ invariant, then there exist at most a finite number of possibilities 
for the isomorphism $\theta_i$.

Let $\widetilde{\Theta}_i$ be the group of automorphisms of $\Gamma_i$ leaving $L_i$ invariant, and let ${\Theta}_i$ be the group of automorphisms of $L_i$
given by restrictions of elements of $\widetilde{\Theta}_i$. 
For $g\in\Gamma_i$, we denote by $c_g\in {\rm Aut}(\Gamma_i)$ the conjugation by $g$. 
If $\theta,\hat{\theta}\in\widetilde{\Theta}_i$
are such that $\theta=c_g\circ \hat{\theta}$ for some $g\in\Gamma_i$, then $gL_ig^{-1}=L_i$,
whence $g\in L_i$ (see the proof of Lemma~\ref{conj:lemma}-(1)). Since $L_i$ is abelian, this implies that $\theta$ and $\hat\theta$
restrict to the same element of $\Theta_i$. As a consequence, $\Theta_i$
has at most the cardinality of the group of outer automorphisms of $\Gamma_i$,
which is finite by Mostow rigidity (together with the well-known fact
that the group of isometries of a complete finite-volume hyperbolic manifolds is finite). We have thus proved the fact claimed above that
$\Theta_i$ is finite.

For $i=1,2$, 
let us now fix a free basis of $L_i\times \mZ^k\cong\mZ^{n+k-1}$
whose first $n-1$ elements give a basis of $L_i$ and whose last $k$ elements
give a basis of $\mZ^k$. Under the induced identification of $L_i$ with $\mZ^{n-1}$, the group
$\Theta_i$ is identified with a finite subgroup of ${\rm SL} (n-1,\mZ)$ , which will still be denoted by $\Theta_i$. Moreover,
we may identify the group of automorphisms of $H_{e,v_i}\cong L_i\times\mZ^k$ with 
the group of matrices ${\rm SL}(n+k-1,\mZ)$. 
The discussion above shows that under these identifications every automorphism of $L_i\times\mZ^k$ which extends
to an automorphism of $G_{v_i}$ has the form
$$
\left(\begin{array}{ccc}
\theta_i &\vline & 0\\
\hline 
v_i & \vline & w_i \end{array}\right)\in{\rm SL}(n+k-1,\mZ),\quad 
\theta_i\in \Theta_i < {\rm SL} (n-1,\mZ),
$$
and  any isomorphism between $\varphi\colon H_{e,v_1}\to H_{e,v_2}$ may be represented by a matrix
$$
\left(\begin{array}{ccc}
A &\vline & B\\
\hline 
C & \vline & D \end{array}\right)
\in {\rm SL}(n+k-1,\mZ),$$ 
where $A,D$ have order $(n-1)\times (n-1)$ and $k\times k$ respectively.
Moreover, it is readily seen that $\varphi$ can be extended to an irreducible 
gluing pattern if and only if
$\rk (B)=k$. 

Now, since $k<n-1$ and $\Theta_2$ is finite, it is possible to construct an infinite sequence
$\{B_j\}_{j\in\mN}$ of matrices of order $(n-1)\times k$ such that the following conditions hold:
\begin{itemize}
\item
$\rk B_j= k$ for every $j\in\mN$;
\item
if $\Lambda_j$ is the subgroup of $\mZ^{n-1}$ generated by the columns of $B_j$, $j\in\mN$, 
and
$\Lambda_j=\theta (\Lambda_h)$ for some $\theta\in\Theta_2$, then necessarily 
$j=h$.
\end{itemize}

Let $\varphi_j\colon H_{e,v_1}\to H_{e,v_2}$, $j\in\mN$, be the isomorphism represented by the matrix
$$
P_j=\left(\begin{array}{ccc}
{\rm Id}_{n-1} &\vline & B_j\\
\hline 
0 & \vline & {\rm Id}_{k} \end{array}\right),
$$
 and extend $\varphi_j$ to an irreducible gluing pattern $\Phi_j$. We now claim that
 $\Phi_j$ is not equivalent to $\Phi_h$ if $j\neq h$, 
  thus concluding the proof of~(2). In fact, if $\Phi_j$ is equivalent
 to $\Phi_h$, then there exist matrices
 $$
 N_1=\left(\begin{array}{ccc}
\theta_1 &\vline & 0\\
\hline 
v_1 & \vline & w_1 \end{array}\right), \quad
N_2=
\left(\begin{array}{ccc}
\theta_2 &\vline & 0\\
\hline 
v_2 & \vline & w_2 \end{array}\right)
$$
such that $\theta_i\in\Theta_i$, $w_i\in {\rm SL}(k,\mZ)$ for $i=1,2$, and 
$P_j N_1=N_2 P_h$. It is readily seen that this condition implies the equality
$B_j w_1=\theta_2 B_h$. Since $w_1\in{\rm SL}(k,\mZ)$, this implies in turn
$\Lambda_j=\theta_2 (\Lambda_h)$, whence $j=h$ by the properties of the $B_j$'s listed above.
We have thus proved the theorem under the assumption that $\calG$ is without symmetries.

In the general case, the arguments just described ensure that an infinite family $\{\Phi_i\}_{i\in\mN}$
of irreducible gluing patterns exists such that, if $i\neq j$, then $\pi_1 (\calG,\Phi_i)$ is not isomorphic to
$\pi_1 (\calG,\Phi_j)$ via an isomorphism inducing the identity of $\calG$.  Suppose now by contradiction that
the groups $\pi_1 (\calG,\Phi_i)$ fall into finitely many isomorphism classes. Then, up to passing to an infinite subfamily,
we may suppose that for every $i,j\in\mN$ there exists an isomorphism $\psi_{ij}\colon \pi_1 (\calG,\Phi_i)\to
\pi_1 (\calG,\Phi_j)$ inducing the automorphism $\delta_{ij}$ of $\calG$. Since the group of automorphisms of $\calG$ is finite,
there exist $h,k\in\mN\setminus\{0\}$ such that $h\neq k$ and $\delta_{0h}=\delta_{0k}$. Therefore,
the map $\psi_{0k}\circ\psi_{0h}^{-1}$ establishes an isomorphism between $\pi_1 (\calG,\Phi_h)$ and $\pi_1 (\calG,\Phi_k)$
inducing the identity of $\calG$, a contradiction.
\end{proof}

\begin{remark}
The assumption that $\calG$ has at least two vertices is not really necessary. In 
the case that $\calG$ has only one vertex, we could provide a different proof of
Theorem~\ref{infinitelymany} just by replacing our analysis of isomorphisms between
amalgamated products with an analogous analysis of isomorphisms between
HNN-extensions.
\end{remark}

\begin{remark}\label{infinitelymany:rem}
The strategy described in the proof of Theorem~\ref{infinitelymany} can also be applied to the examples discussed
in Remark~\ref{infinite:rem}, where an infinite
family $\{M_i\}_{i\geq 1}$ of irreducible manifolds not supporting any CAT(0) metric
is constructed by gluing two fixed $4$-dimensional pieces $V_1,V_2$
along their unique boundary component. 
With notation as in Corollary~\ref{examples:cor} and Remark~\ref{infinite:rem}, we now show that 
if $V_1$ is not diffeomorphic to $V_2$, then
$M_i$ is not diffeomorphic
to $M_j$ for every $i,j\in\mN$, $i\neq j$. 

Let us choose bases for the fundamental groups of the boundary components of
$V_1,V_2$ (such components
are 3-dimensional tori) in such a way that 
the first vector is null-homologous in $V_i$, $i=1,2$, and the last one belongs to the fiber subgroup (which is isomorphic to $\mZ$).
Then the gluing map defining $M_n$ is encoded by the matrix
$$
 A_n=\left(\begin{array}{ccc}
 1 & \ast & 1\\
 0 & \ast & 0\\
 0 & \ast & n\end{array}\right).
$$
Moreover, every homomorphism of the fundamental group of a piece into the fiber subgroup (which is abelian) vanishes
on null-homologous elements, whence on horizontal slopes.
So any automorphism of the fundamental group of each of the two pieces, when 
restricted to the boundary, gives an automorphism of the form
$$
 \left(\begin{array}{ccc}
 \ast & \ast & 0\\
 \ast & \ast & 0\\
 0 & \ast & \pm 1 \end{array}\right)
$$
 (see the proof of Theorem~\ref{infinitelymany}).
 It is now readily seen that if $N_1,N_2$ are matrices of this form, then
 we have
$$
 N_1 A_n = \left(\begin{array}{ccc}
 \ast & \ast & \ast\\
 \ast & \ast & \ast\\
 \ast & \ast & \pm n \end{array}\right)
\neq
 \left(\begin{array}{ccc}
 \ast & \ast & \ast\\
 \ast & \ast & \ast\\
 \ast & \ast & \pm m \end{array}\right)
 =A_m N_2.
$$
Now, since $V_1$ is not diffeomorphic to $V_2$, the $M_i$'s are associated to a graph
without symmetries. As explained in the proof Theorem~\ref{infinitelymany},
this is now sufficient to conclude that the $M_i$'s are pairwise non-diffeomorphic.
 
Also, observe that by the proof of Theorem~\ref{infinitelymany}, if $V_1$ is diffeomorphic
to $V_2$
we can still conclude that among
the $M_i$'s there exist infinitely many pairwise non-diffeomorphic manifolds.
\end{remark}

\part{Irreducible graph manifolds: coarse geometric properties}

%-----------------------------------------------------------------------
% Beginning of chap1.tex
%-----------------------------------------------------------------------
%
%  AMS-LaTeX sample file for a chapter of a monograph, to be used with
%  an AMS monograph document class.  This is a data file input by
%  chapter.tex.
%
%  Use this file as a model for a chapter; DO NOT START BY removing its
%  contents and filling in your own text.
% 
%%%%%%%%%%%%%%%%%%%%%%%%%%%%%%%%%%%%%%%%%%%%%%%%%%%%%%%%%%%%%%%%%%%%%%%%

\chapter{Irreducible graph manifolds}\label{strongirr:sec}

In Section~\ref{noncat0-easy:subsec} we proved that 
there exist examples of graph manifolds $M$
with the property that certain walls of $\tilM$ are \emph{not} quasi-isometrically embedded in $M$.
In order to study in detail the quasi-isometric properties
of the fundamental groups of graph manifolds, we would like to find conditions
that prevent this phenomenon to occur. The main result of this Chapter shows that,
if $M$ is irreducible, then walls and chambers of $\tilM$ are quasi-isometrically embedded in
$\tilM$.

By Milnor-Svarc Lemma and Proposition~\ref{irr-acyl},
this fact may be restated as follows. Let us fix the description of $\pi_1(M)$
as the fundamental group of the graph of groups corresponding
to the decomposition of $M$ into pieces; if this graph of groups is acylindrical,
then edge groups and vertex groups are quasi-isometrically embedded in $\pi_1(M)$.

Acylindricity plays a fundamental role in analogous
results for hyperbolic or relatively hyperbolic groups.
For example, in~\cite{Kap} it is shown that
edge groups and vertex groups of an acylindrical graph of hyperbolic groups are quasi-isometrically embedded
in the fundamental group of the graph of groups, provided that edge groups
are quasi-isometrically embedded
in the ``adjacent'' vertex groups (this is always the case in our case of interest).
A similar result in the context of relatively hyperbolic groups
may be deduced from~\cite[Theorem 0.1--(1)]{Dahmani}.

It would be interesting to find less restrictive
conditions under which the walls of $\tilM$ are ensured to be quasi-isometrically
embedded.
In our situation, the fundamental groups of the pieces
are semihyperbolic in the sense of~\cite{alo}. Since every free abelian subgroup
of a semihyperbolic group is quasi-isometrically embedded, an (apparently difficult) strategy 
could be to find conditions on a graph of semihyperbolic groups 
in order to ensure that the fundamental group of the graph is itself semihyperbolic.

Some further discussion of related issues can be found in Section \ref{qi-open:sec}.

\section{The geometry of chambers and walls}

Let $M$ be a graph $n$-manifold.
The boundary of each internal wall $W$ of $\tilM$ decomposes into the union of two connected components
$W_+$, $W_-$, while if $W$ is a boundary wall, we simply set $W_+=W_-=W$. We call $W_+,W_-$ 
the \emph{thin walls} associated to $W$, and we denote 
by $d_{W_{\pm}}$
the path metric on  $W_\pm $ induced by the restriction
of the Riemannian structure of $\tilM$.
If $W$ is an internal wall, then the canonical product structure on the image of $W$ in $M$
induces a canonical product structure $W=\mR^{n-1}\times [-3,3]$
with $ W_\pm=\mR^{n-1}\times\{\pm 3\}$. If  
$p=(x,3)\in W_+$, $q=(y,-3)\in W_-$, we say that $p,q$ are \emph{tied} to each other 
if and only if $x=y$. 
If $W$ is a boundary wall, we say that $p\in W_+=W$ is tied
to $q\in W_-=W$ if and only if $p=q$.
Finally, for every wall $W$
we denote by $s_W\colon W_+\to W_-$
the map that associates to each $p\in W_+$ the point $s_W(p)\in W_-$
tied to $p$. Note that, by the restriction on our gluing maps, the map
$s_W$ is an affine diffeomorphism.

In order to study the quasi-isometry type of $\tilM$ we first need to understand the geometry of its chambers. 
Recall that if
$C\subseteq\tilM$ is a chamber, then there exists an isometry $\varphi\colon C\to B\times\mR^k$, 
where $B\subseteq \matH^{n-k}$ is a neutered space
(such an isometry is unique up to postcomposition
with the product of isometries of $B$ and $\mR^k$). 
Also recall that $B$ is the \emph{base} of $C$, and $F=\mR^k$ the \emph{fiber} of $C$.
If $\pi_B\colon C\to B$, $\pi_F\colon C\to F$ are the natural projections, 
for every $x,y\in C$, we denote by $d_B (x,y)$ the distance (with respect
to the path metric of $B$) between $\pi_B(x)$ and $\pi_B (y)$, and by
$d_F(x,y)$ the distance between $\pi_F(x)$ and $\pi_F(y)$
(so by construction $d_C^2=d_B^2+d_F^2$).

\begin{definition}%\label{geod:defn}
We recall that a metric space $X$ is geodesic if for every $x,y\in X$
there exists a rectifiable curve $\gamma\colon [0,1]\to X$ joining $x$ to $y$ whose length
is equal to $d(x,y)$ (the constant speed parameterization of such a curve is called \emph{geodesic}).
Suppose $S$ is a submanifold of the (possibly bounded) simply connected Riemannian manifold $X$, 
and let $d$ be the Riemannian metric 
of $X$. We say that $S$ is \emph{totally geodesic} in $(X,d)$ (in the metric sense) 
if for every $p,q\in S$ there exists a geodesic of $X$ which joins $p$ to $q$ and whose support
is contained in $S$. In this case, the path metric associated to the restriction
of $d$ to $S$ coincides with the restriction of $d$ to $S$.
\end{definition}
 
%Suppose $\overline{N}$ is obtained by deleting some horospherical cusp neighbourhoods from
%a complete finite-volume hyperbolic manifold $N$, and let $\widetilde{\overline{N}}$
%be the universal covering of $\overline{N}$, endowed with the metric induced
%by the covering map. 
Let $B$ be a neutered space, endowed with its path metric.
Then, it is well-known (see e.g.~\cite[pgs. 362-366]{bri})
that every component of $\partial B$
%$\partial \widetilde{\overline{N}}$ 
is totally geodesic in 
$B$, even if its extrinsic curvature in $B$
does not vanish. 

\begin{lemma}\label{easybil:lem}
For $W$ an arbitrary wall, we have:
\begin{enumerate}
\item
if $C$ is the chamber containing $W_\pm$, then the inclusion
$(W_\pm, d_{W_\pm})\hookrightarrow (C,d_C)$ is isometric;
\item
the inclusion $(W_\pm,d_{W_{\pm}}) \to (W,d_W)$ is a bi-Lipschitz embedding
and a quasi-isometry;
\item 
the map $s_W\colon (W_+,d_{W_+})\to
(W_-,d_{W_-})$ is a bi-Lipschitz homeomorphism.
\end{enumerate}
(Of course, points~(2) and (3) are trivial if $W$ is a boundary wall).
\end{lemma}

\begin{proof}
We have just recalled that the boundary components of a neutered space are totally geodesic
(in the metric sense). Therefore,
if $W_\pm$
is a thin wall contained in the chamber $C$, we have that 
$W_{\pm}$ is a totally geodesic (in the metric sense) hypersurface of $C$. In particular, the path metric
induced on $W_{\pm}$ by the Riemannian structure on $\tilM$
is isometric to the restriction of $d_C$, whence~(1).

Concerning~(2), first observe that, by definition of induced path metric, the inclusion
$i\colon W_\pm\hookrightarrow W$ is $1$-Lipschitz.
The map $i$ is the lift of an embedding which induces an isomorphism
on fundamental groups, so by the Milnor-Svarc Lemma, $i$ is a quasi-isometry. 
This guarantees that $i$ is bi-Lipschitz at large scales,
i.e.~that there exist constants $C'\geq 1$, $R> 0$ such that
$$
d_{W_\pm} (x,y)\leq C' d_W (x,y) \quad {\rm whenever}\ d_{W_\pm } (x,y)\geq R.
$$
We need to control distances within the range $0\leq d_{W_{\pm}} (x,y)\leq R$. Observe that this inequality describes a region
$K\subseteq W_\pm\times W_\pm$ which is invariant under the obvious diagonal $\mZ^{n-1}$-action. 
Moreover, the quotient space $K/\mZ^{n-1}$ is easily seen to be compact.
If $K'=K\setminus \{(x,x),\, x\in W_\pm\}$, then the ratio $d_{W_\pm} / d_W$ defines a positive continuous function on $K'$.
It is not difficult to see that such a function extends to a continuous  $f\colon K\to \mR$
such that $f(x,x)=1$ for every $x\in W_{\pm}$.  Moreover, $f$ is obviously $\mZ^{n-1}$-equivariant,
so compactness of $K /\mZ^{n-1}$ implies that $f$ is bounded above by some constant $C''$. This implies
that $i$ is $\max \{C',C''\}$-bi-Lipschitz, giving~(2).
 
Similarly, $s_W$ is obtained by lifting
to the universal coverings a diffeomorphism between compact manifolds, and is therefore bi-Lipschitz. 
\end{proof}

\section{An important consequence of irreducibility}

The following lemma shows how irreducibility is related to the behaviour of the 
metric of $\tilM$ near the internal walls. Informally, 
it shows that points which almost lie on the same fiber of a thin wall are 
tied to points that are forced to lie on distant fibers of the adjacent chamber. 

\begin{lemma}\label{strong:lem}
Suppose $\psi_l\colon T^+_l\to T^-_l$ is transverse. Let $W\subseteq \tilM$ be a
(necessarily internal) wall
projecting to a regular neighbourhood of $T^+_l=T^-_l$ in $M$, and let $C_+,C_-\subseteq \tilM$
be the chambers adjacent to $W$ with bases $B_+,B_-$. Then 
there exists $k\geq 1$ such that the following holds: 
let $x_+,y_+\in W\cap C_+=W_+$ (resp.~$x_-,y_-\in W\cap C_-=W_-$)
be such that $x_+$ is tied to $x_-$ and $y_+$ is tied to $y_-$; then
$$
d_{C_+}(x_+,y_+)\geq k d_{B_+}(x_+,y_+)\quad \Longrightarrow \quad 
d_{C_-}(x_-,y_-)\leq k d_{B_-}(x_-,y_-).$$
\end{lemma}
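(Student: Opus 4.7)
The plan is to reduce the statement to the linear-algebraic fact that a vector almost tangent to one of two transverse subspaces of Euclidean space must be bounded away from the other.

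First I would use Lemma~\ref{easybil:lem}(1) to identify each thin wall $W_\pm$ isometrically with a Euclidean space $\mR^{n-1}$. Since $C_\pm \cong B_\pm \times \mR^{k_\pm}$ as a metric product and the relevant boundary horosphere of $B_\pm$ is flat and totally geodesic in $B_\pm$, the induced metric on $W_\pm$ is Euclidean and decomposes orthogonally as $H_\pm \oplus F_\pm$ (horospherical direction plus fiber direction). Writing $v = y - x$, one then has $d_{C_\pm}(x,y) = \|v\|_{g_\pm}$ and $d_{B_\pm}(x,y) = \|\pi_{H_\pm}(v)\|_{g_\pm}$, where $g_\pm$ denotes the Euclidean metric on $W_\pm$. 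The tying map $s_W\colon W_+ \to W_-$ is an affine map (because $\psi_l$ is affine), so after using $s_W$ to identify the two copies of $\mR^{n-1}$, the subspaces $F_+$ and $F_-$ sit inside a common ambient $\mR^{n-1}$; the transversality hypothesis on $\psi_l$ becomes $F_+ \cap F_- = \{0\}$, and the two flat metrics $g_\pm$ are equivalent as norms with biLipschitz constants coming from Lemma~\ref{easybil:lem}(3), say $a\|\cdot\|_{g_+} \leq \|\cdot\|_{g_-} \leq A\|\cdot\|_{g_+}$.

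For the estimate, given $v = y_+ - x_+$ I decompose $v = v_H + v_F$ with $v_H \in H_+$ and $v_F \in F_+$ (orthogonal with respect to $g_+$). The hypothesis $d_{C_+} \geq k\,d_{B_+}$ translates to $\|v_H\|_{g_+} \leq \|v\|_{g_+}/k$ and hence $\|v_F\|_{g_+} \geq \|v\|_{g_+}\sqrt{1 - k^{-2}}$. By compactness of the $g_-$-unit sphere in $F_+$ and the transversality $F_+ \cap F_- = \{0\}$, there is $c > 0$ with $\|\pi_{H_-}(w)\|_{g_-} \geq c\|w\|_{g_-}$ for every $w \in F_+$. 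The triangle inequality on $\pi_{H_-}(v) = \pi_{H_-}(v_F) + \pi_{H_-}(v_H)$ then yields
$$\|\pi_{H_-}(v)\|_{g_-} \geq c\|v_F\|_{g_-} - \|v_H\|_{g_-} \geq ca\|v_F\|_{g_+} - A\|v_H\|_{g_+} \geq \bigl(ca\sqrt{1 - k^{-2}} - A/k\bigr)\|v\|_{g_+}.$$
For $k$ large enough the coefficient on the right exceeds $ca/2$, so combining with $\|v\|_{g_-} \leq A\|v\|_{g_+}$ gives $d_{C_-}(x_-,y_-) \leq (2A/(ca))\,d_{B_-}(x_-,y_-)$; requiring $k$ also to exceed $2A/(ca)$ produces the conclusion $d_{C_-}(x_-,y_-) \leq k\,d_{B_-}(x_-,y_-)$.

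The argument is not deep; the only real care is in the setup, where one needs to correctly place the affine structure from $\psi_l$ and the Euclidean metrics from the two chambers inside a common linear model, so that transversality can be interpreted as a positive-angle condition. Once this is in hand, the lemma reduces to a one-line triangle inequality argument combined with a compactness statement.
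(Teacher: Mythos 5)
Your proof is correct, and it takes a genuinely different route from the paper's. The paper argues by contradiction: it assumes sequences of pairs of points violating the conclusion for every $n$, normalizes so that $\|v^n_+ + w^n_+\|_+ = 1$, uses the biLipschitz property of $s_W$ and compactness to extract convergent subsequences, and then passes to the limit to exhibit a nonzero vector in $\widehat{s}_W(\widehat{F}_+)\cap\widehat{F}_-$, contradicting transversality. You instead prove a direct estimate: you first extract the quantitative content of transversality as a uniform bound --- the constant $c>0$ with $\|\pi_{H_-}(w)\|_{g_-}\geq c\|w\|_{g_-}$ on $F_+$, which is exactly the statement that the angle between $F_+$ and $F_-$ is bounded away from zero --- and then close the argument with a single reverse-triangle-inequality computation, carrying along the biLipschitz constants $a,A$ explicitly. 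Both proofs rest on the same finite-dimensional compactness (yours on the unit sphere of $F_+$, the paper's on the normalized sequence of difference vectors); they are, in effect, the contrapositive of one another. What your version buys is an explicit admissible value of the constant $k$ (anything exceeding both the threshold making $ca\sqrt{1-k^{-2}}-A/k\geq ca/2$ and the ratio $2A/(ca)$), whereas the paper's contradiction argument is nonconstructive. One small stylistic caveat: you need $k$ simultaneously large enough for the coefficient in your estimate to be positive and for $k$ itself to dominate the resulting bound $2A/(ca)$; you do address this, but it is worth flagging that the self-referential role of $k$ (appearing both in the hypothesis and in the conclusion) is what makes the final step nontrivial, and the paper's formulation sidesteps this by working with a single threshold directly.
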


\begin{proof}
Suppose by contradiction that there exist sequences $\{x^n_+\}$, $\{y^n_+\}$ of points in $W_+$
such that 
\begin{equation}\label{euclnorms}
d_{C_+} (x^n_+,y^n_+) > n d_{B_+}(x^n_+,y^n_+),\quad  
d_{C_-}(x^n_-,y^n_-) > n d_{B_-}(x^n_-,y^n_-).
\end{equation}
Recall that $W_+$ and $W_-$ are endowed with a canonical affine structure, and the map
$s_W\colon W_+\to W_-$ defined before Lemma~\ref{easybil:lem}
is an affine diffeomorphism. Let $Z_+$, $Z_-$ be the vector spaces underlying the affine spaces $W_+$, $W_-$,
and denote by $\widehat{s}_W\colon Z_+\to Z_-$ the linear map associated to $s_W$.

The product decompositions of $C_+=B_+\times F_+$ and $C_-=B_-\times F_-$ induce direct sum decompositions
$$
Z_+ =\widehat{B}_+\oplus \widehat{F}_+,\quad Z_-=\widehat{B}_-\oplus \widehat{F}_-,
$$
and transversality of $\psi_l$ implies that $\widehat{s}_W (\widehat{F}_+)\cap \widehat{F}_-=\{0\}$.

For every $n\in\mathbb{N}$, we denote by $v^n_+ \in \widehat{F}_+$, $w^n_+\in\widehat{B}_+$ 
(resp.~$v^n_-\in \widehat{F}_-$ ,$w^n_-\in\widehat{B}_-$) the vectors uniquely determined
by the conditions $y_+^n-x^n_+=v^n_++w^n_+$, $y_-^n-x^n_-=v^n_-+w^n_-$.

By Lemma~\ref{easybil:lem}-(1), the restrictions of the distances $d_{C_+}$ and $d_{C_-}$
to $W_+$ and $W_-$ are induced by Euclidean norms  $\|\cdot \|_+$, $\|\cdot\|_-$  on $Z_+$, $Z_-$.
The inequalities~\eqref{euclnorms} may now be rewritten in the following way:
\begin{equation}\label{eucl2}
\frac{\| v^n_+ + w^n_+\|_+}{n} >  \| w^n_+\|_-,\qquad 
\frac{\| v^n_- + w^n_-\|_-}{n} >  \|w^n_-\|_-.
\end{equation}
Up to rescaling, we may suppose that $\| v^n_+ + w^n_+\|_+ =1$ for every $n$. Since $s_W$ is bi-Lipschitz,
there exists $\alpha\geq 1$ such that $\alpha^{-1}\leq \| v^n_- + w^n_-\|_- \leq\alpha$ for every $n$.
In particular, up to passing to subsequences, we may suppose that the sequences 
$\{v^n_+\}$, $\{w^n_+\}$, $\{v^n_-\}$, $\{w^n_-\}$ converge to $v_+\in \widehat{F}_+$, $w_+\in \widehat{B}_+$, 
$v_-\in\widehat{F}_-$, $w_-\in\widehat{B}_-$. Moreover, we have $\widehat{s}_W (v_++w_+)=v_-+w_-$.
As $n$ tends to infinity, inequalities~\eqref{eucl2} imply $w_+=0$, $w_-=0$, so
$\widehat{s}_W (v_+)=v_-$. Since $\| v_+\|_+ = \|v_+ +w_+\|_+=1$, we have that $\widehat{s}_W (v_+)=v_-$
is a non-trivial element in $\widehat{s}_W (\widehat{F}_+)\cap \widehat{F}_-=\{0\}$, and this provides the desired contradiction.
\end{proof}

\section{The geometry of neutered spaces}
The following Proposition provides a useful tool
in the study of neutered spaces, whence of chambers.
It is inspired by~\cite[Lemma 3.2]{osin}:

%\begin{center}
%\begin{figure}
%\includegraphics[height=5cm]{osin.png}
%\end{figure}
% \end{center}

\begin{proposition}\label{osin:prop}
Let $B$ be a neutered space. Then there exists a constant $Q$ only depending
on $B$ such that the following result holds. 
Let $\gamma\subseteq B$ be a loop obtained by concatenating a finite number of paths
$\alpha_1,\gamma_1,\ldots,\alpha_{n},\gamma_{n}$, where 
\begin{itemize}
\item each $\alpha_{i}$ is a geodesic on a horosphere $O_i\subseteq \partial B$,
\item each $\gamma_{i}$ is any path in $B$ connecting the endpoint of $\alpha_{i}$
with the starting point of $\alpha_{i+1}$, and
\item the endpoints of each $\gamma_i$ lie on distinct walls.
\end{itemize} 
Let $D\subseteq \{1,\ldots,n\}$ be a distinguished subset
of indices such that $O_h\neq O_i$ for every $h\in D$, $i\in\{1,\ldots,n\}$, $i\neq h$.
Then
$$\sum_{h\in D} L(\alpha_{h})\leq Q \sum_{i=1}^n L(\gamma_{i}).$$
\end{proposition}
\begin{proof}
Let $B$ be a neutered space, and 
recall that by the very definitions, the group of isometries of $B$
contains a discrete torsion-free cocompact subgroup $\Gamma$. The quotient $N=B/\Gamma$ is obtained 
by removing horospherical neighbourhoods of the cusps from a finite-volume hyperbolic manifold. 
As a consequence, there exists $R>0$ such that the distance between 
every pair of distinct connected components
of $\partial B$ is at least $R$, so that 
\begin{equation}\label{estimaten}
n\leq \frac{\sum_j L(\gamma_{j})}{R} .
\end{equation}

\begin{figure}
\begin{center}
\includegraphics[width=6cm]{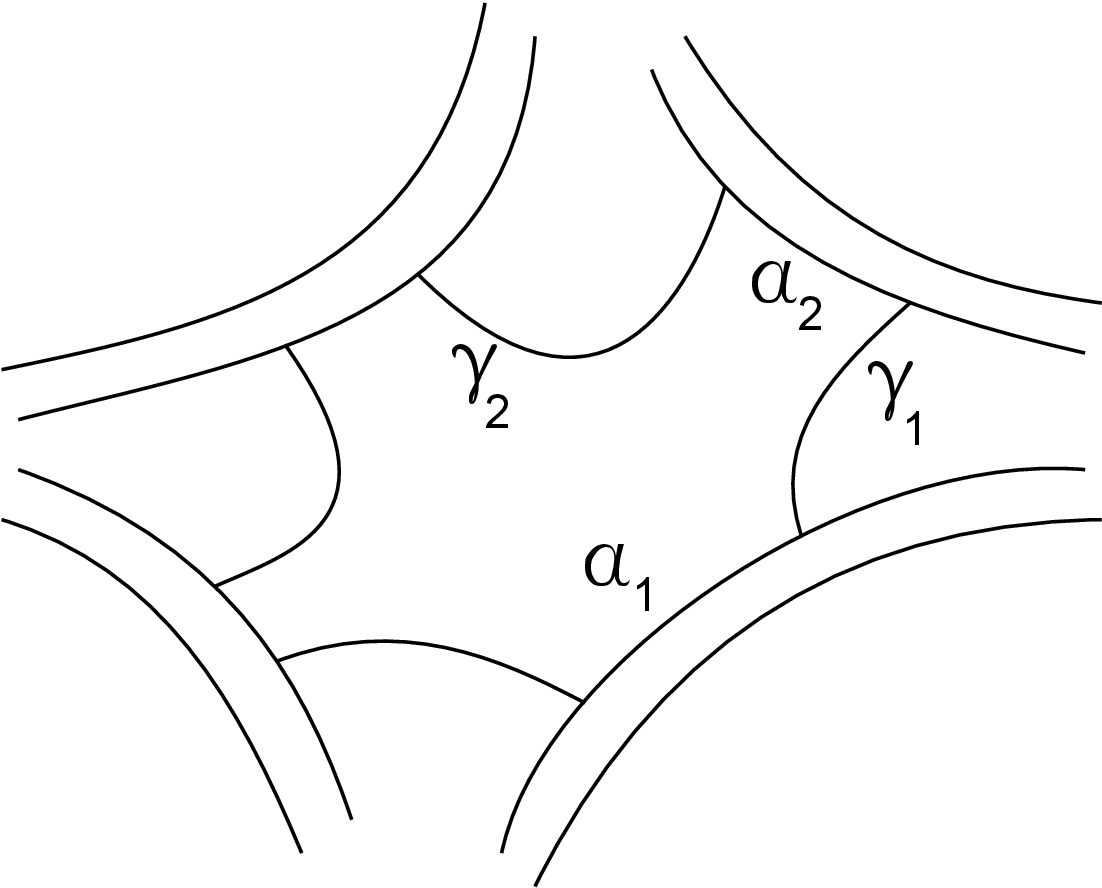}
\caption{Proposition~\ref{osin:prop} provides a bound on the lengths of the 
$\alpha_i$'s in terms of the lengths of the $\gamma_i$'s.}\label{osin:fig}
\end{center}
\end{figure}

Let 
$\{H_1,\ldots,H_l\}$ be the collection of subgroups of $\Gamma$ obtained by choosing a representative in each conjugacy class
of cusp subgroups of $\Gamma$, and
recall that $\Gamma$ is relatively hyperbolic with respect
to the $H_i$'s. Choose
$X$ to be a symmetric set of generators for $\Gamma$ satisfying the assumptions of~\cite[Lemma 3.2]{osin}, 
and let us denote by $\calC_\Gamma$ the corresponding Cayley graph of $\Gamma$ with 
distance $d_\Gamma$.

We denote by $\overline{\calC}_\Gamma$ the Cayley graph of $\Gamma$ with respect
to the (infinite) set of generators $\left( X\cup\left(H_1\cup\ldots\cup H_l\right)\right)\setminus \{1\}$,
and by $\overline{d}_\Gamma$ the path distance on $\overline{\calC}_\Gamma$ (see~\cite{osin}).

More precisely, if $\widetilde{X}$ is a copy of $X$, 
$\widetilde{H}_\lambda$ is a copy of $H_\lambda$ 
and $\calH=\bigsqcup_{\lambda=1}^l \left(\widetilde{H}_\lambda\setminus \{1\}\right)$, 
then $\overline{\calC}_\Gamma$  is the graph having $\Gamma$ as set of vertices and
$\Gamma\times (\widetilde{X}\cup\calH)$ as set of edges, where if $\overline{y}\in\Gamma$
is the element corresponding to $y\in \widetilde{X}\cup\calH$, then the edge 
$(g,y)$ has $g$ and $g\cdot \overline{y}$ as endpoints. We label
the edge $(g,y)$ by the symbol $y$. Note that different labels may represent
the same right multiplication in $\Gamma$: for instance, this is the case if there exist letters
$x\in \widetilde{X}$ and $y\in \widetilde{H}$ representing the same element
$\overline{x}=\overline{y}$ in $\Gamma$, 
i.e.~if $X\cap\left(\bigcup_{\lambda=1}^l H_\lambda\right)\neq\emptyset$. 

Notice that by the very definitions we have a natural inclusion
$\calC_\Gamma\hookrightarrow \overline{\calC}_\Gamma$. 
Let $q$ be a (non-based) loop in $\overline{\calC}_\Gamma$ labelled by the (cyclic) word $w$ with letters in 
$\widetilde{X}\cup\calH$. 
Recall
from~\cite{osin} that a subpath of a loop $q$ in $\overline{\calC}_\Gamma$ is a 
\emph{$H_\lambda$-subpath} if it is labelled by a subword of $w$ with letters in 
$\widetilde{H}_\lambda$. An
\emph{$H_\lambda$-component} of $q$ is a maximal $H_\lambda$-subpath of $q$. 
An $H_\lambda$-component $q'$ of $q$ is \emph{not isolated} if
there exists an $H_\lambda$-component $q''\neq q'$ of $q$ such that 
a vertex in $q'$ and a vertex of $q''$ are joined by an edge labelled by a letter
in $\widetilde{H}_\lambda$
(in algebraic terms this means that such vertices belong to the same left 
coset of $H_\lambda$ in $\Gamma$).

Starting from $\gamma$, we wish to construct a loop $\overline{\gamma}$ in $\overline{\calC}_\Gamma$. Milnor-Svarc's Lemma
provides a $(\mu,\epsilon)$-quasi-isometry $\varphi\colon B\to\calC_\Gamma$.
Up to increasing $\epsilon$,   
we can require that $\varphi$ maps every point of $B$ onto a vertex of $\calC_\Gamma$, i.e.~onto
an element of $\Gamma$, and that every horosphere $O\subseteq \partial B$ is taken by
$\varphi$ onto a lateral class of some $H_\lambda$. It is easy to see
that if $\varphi$ maps the horospheres $O,O'\subseteq \partial B$
onto the same lateral class of the same $H_\lambda$, then $O=O'$. 
Fix $i\in\{1,...,n\}$, suppose that $\gamma_{i}$ is parametrized by arc length,
denote by $m_i$ the least integer number such that 
$L(\gamma_{i})\leq m_i$, and set
$p^j_i=\varphi(\gamma_{i}(j L(\gamma_i)/m_i))\in \Gamma$ for 
$j=0,\ldots,m_i$.
Due to our choices we have 
$p^0_i\in\varphi(O_i)$ and $p^{m_i}_{i}\in\varphi(O_{i+1})$. Now let $\tilde{\gamma}_{i}$ 
be the path in $\calC_\Gamma$ obtained by concatenating the geodesics 
joining $p^j_i$ and $p^{j+1}_i$, $j=0,\ldots, m_{i}-1$,
and let $\overline{\gamma}_{i}$ be the path in $\overline{\calC}_\Gamma$ obtained by
taking the image of $\tilde{\gamma}_{i}$ under the inclusion 
$\calC_\Gamma\hookrightarrow \overline{\calC}_\Gamma$.
Observe that by construction every edge of $\overline{\gamma}_{i}$
is labelled by a symbol in $\widetilde{X}$, so no $\overline{\gamma}_i$ contains any 
$H_\lambda$-subpath. 

As $m_i\leq L(\gamma_i)+1$ by our choice of $m_i$, we have the estimate:
$$
L(\overline{\gamma}_{i})= L(\tilde{\gamma}_{i})= \sum_{j=0}^{m_i-1} d_\Gamma(p^j_i,p^{j+1}_i)
\leq \mu L(\gamma_{i}) +m_i\epsilon\leq (\mu+\epsilon)L(\gamma_{i})+\epsilon . 
$$

Next, observe that $p^{m_{i-1}}_{i-1}$ and $p^0_{i}$ both lie
on $\varphi(O_i)$, and hence belong to the same left coset of
some $H_{\psi (i)}$, $\psi (i)\in \{1,\ldots,l\}$. Thus we can connect $p^{m_{i-1}}_{i-1}$ and $p^0_{i}$
in $\overline{\calC}_\Gamma$ by a path $\overline{\alpha}_i$ which is either constant
(if $p^{m_{i-1}}_{i-1}=p^0_{i}$), or consists of a single edge labelled
by a symbol in $\widetilde{H}_{\psi (i)}$. 
Now define the loop $\overline{\gamma}=\overline{\alpha}_1\ast \overline{\gamma}_1\ast\ldots\ast\overline{\alpha}_n\ast
\overline{\gamma}_n$ in $\overline{\calC}_\Gamma$.
Using~\eqref{estimaten}, we obtain 
\begin{align*}
L(\overline{\gamma}) & \leq \left(\sum_{i=1}^n L(\overline{\gamma}_i)\right) + n\\
& \leq (\mu+\epsilon) \sum_{i=1}^n L(\gamma_{i})+n\epsilon+n \\
& \leq \left(\mu+\epsilon+\frac{\epsilon+ 1}{R}\right) \sum_{i=1}^n L(\gamma_{i}).\\
\end{align*}

Moreover,
due to our assumption on $D$,
for every $h\in D$  
the subpath $\overline{\alpha}_h$ is (either constant or)
an isolated component
of $\overline{\gamma}$, so by~\cite[Lemma 3.2]{osin} 
there exists $Q'$ only depending on (the Cayley graphs $\calC_\Gamma$
and $\overline{\calC}_\Gamma$ of) $\Gamma$ such that 
for 
every $\lambda=1,\ldots,l$
$$
\sum_{h\in D\cap \psi^{-1} (\lambda)} 
d_\Gamma(p^{m_{h-1}}_{h-1},p^0_{m_{h}})\leq Q' L(\overline{\gamma}), 
$$
whence
\begin{equation}\label{estimate:eq}
\sum_{h\in D} d_\Gamma(p^{m_{h-1}}_{h-1},p^0_{m_{h}})\leq lQ' L(\overline{\gamma})\leq
lQ' \left(\mu+\epsilon+\frac{\epsilon+ 1}{R}\right) \sum_{i=1}^n L(\gamma_{i}) . 
\end{equation}

On the other hand we have
\begin{align}\label{estimate2:eq}
\sum_{h\in D} d_\Gamma(p^{m_{h-1}}_{h-1},p^0_{m_{h}}) &\geq \frac{1}{\mu}\sum_{h\in D} L(\alpha_{h})-\epsilon n \\
\nonumber &\geq \frac{1}{\mu}\sum_{h\in D} L(\alpha_{h})-\frac{\epsilon}{R} \sum_{i=1}^n L(\gamma_{i}).
\end{align}
Putting together inequalities~(\ref{estimate:eq}) and~(\ref{estimate2:eq}) we
finally get that 
the 
inequality of the statement holds for some $Q$ only depending on $\mu,\epsilon,Q',R$.
\end{proof}

\section[The coarse geometry of $\tilM$ for an irreducible $M$]{Walls and chambers are quasi-isometrically embedded in the universal
covering of irreducible graph manifolds}

Let us fix the graph manifold $M$ which we are studying. We will now introduce various constants, 
which will be extensively used in the rest of the arguments for this section. Fix the following quantities:

\begin{itemize}
\item the constant $Q$: chosen so that Proposition~\ref{osin:prop} holds for all the bases of the chambers of $\tilM$.
\item the constant $R$: the minimal distance between pairs of thin walls \emph{not} associated to the same internal wall 
(note that $R$ is also the minimal distance between pairs of removed horoballs in the bases of the chambers of $\tilM$).
\item the constant $D$: the maximal distance between pairs of \emph{tied} points on adjacent thin walls (here we refer to the path distance of the corresponding wall).
\item the constant $k$: chosen so that Lemma~\ref{strong:lem} holds for all the internal walls in $\tilM$.
\item the constant $k'$: chosen so that $s_W\colon W_+\to W_-$
is $k'$-bi-Lipschitz for every internal wall $W$ of $\tilM$.
\item the constant $c$: chosen so that all the inclusions 
$W_\pm \hookrightarrow W$ are $c$-bi-Lipschitz (see Lemma~\ref{easybil:lem}). 
\end{itemize}
These constants only depend on the geometry of $M$. In what follows, we will also assume without loss of generality 
that $Q\geq 2$ and $k\geq \sqrt{2}$.

\vskip 10pt

In order to prove that walls and chambers are quasi-isometrically embedded in $\tilM$, we need to show
that the distance between points in the same chamber
can be bounded from below by the distance of the projections of the points
on the base of the chamber. We begin with the following:

\begin{definition}
Let $W_\pm$ be a thin wall, take $x,y\in W_{\pm}$ and let $\gamma$ be a continuous
path in $\tilM$ joining $x$ and $y$. We say that $\gamma$ \emph{does not backtrack} on $W_\pm$ if
$\gamma$ intersects the wall containing $W_{\pm}$ only in its endpoints.
\end{definition}

\begin{lemma}\label{osin:lem}
Let $x,y$ be points on the same thin wall $W_\pm$ 
and let $\gamma$ be a path in $\tilM$ which joins $x$ to $y$ and
does not backtrack on $W_\pm$.
If $C$ is the chamber containing $W_\pm$ and $B$ is the base of $C$,
then $L(\gamma) \geq d_B (x,y)/Q$.
\end{lemma}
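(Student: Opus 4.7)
The plan is to reduce Lemma~\ref{osin:lem} to an application of Proposition~\ref{osin:prop} in the base $B$ of $C$. First, assuming (after a small approximation if necessary) that $\gamma$ meets each internal wall of $\tilM$ only in finitely many points, I would decompose $\gamma$ as an alternating concatenation
$$\gamma = \sigma_0 \ast \delta_1 \ast \sigma_1 \ast \delta_2 \ast \cdots \ast \delta_m \ast \sigma_m,$$
where each $\sigma_j$ is a maximal subpath contained in the closure of the chamber $C$ and each $\delta_i$ is a maximal subpath lying outside $C$ (hence entirely inside the closure of some adjacent chamber).

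Next, using the realization of $\tilM$ as a tree of spaces over the Bass-Serre tree $T$ from Corollary~\ref{treeofspaces:cor}, I would observe that if $v \in T$ is the vertex corresponding to $C$, then each $\delta_i$ projects to a walk in $T$ that avoids $v$. Since the connected components of $T \setminus \{v\}$ are separated by $v$, such a walk must remain in a single component, and therefore $\delta_i$ exits and re-enters $C$ through one and the same thin wall $W^{(i)}$ of $C$. By the no-backtracking hypothesis $\gamma \cap W = \{x,y\}$, no $\delta_i$ can cross the wall containing $W_\pm$, so $W^{(i)} \neq W_\pm$ for every $i$, and the corresponding boundary horosphere $O^{(i)} = \pi_B(W^{(i)})$ of $B$ is distinct from the horosphere $O_0 = \pi_B(W_\pm)$.

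Then I would assemble an auxiliary loop $\ell$ in $B$ by setting
$$\ell = \pi_B(\sigma_0) \ast \alpha_1 \ast \pi_B(\sigma_1) \ast \cdots \ast \alpha_m \ast \pi_B(\sigma_m) \ast \alpha_{m+1},$$
where for $i=1,\dots,m$ the arc $\alpha_i$ is a geodesic in the horosphere $O^{(i)}$ joining the projections of the endpoints of $\delta_i$, and $\alpha_{m+1}$ is a geodesic in $O_0$ from $\pi_B(y)$ back to $\pi_B(x)$. By the observation of the previous paragraph, the horosphere $O_0$ supporting the closing arc $\alpha_{m+1}$ is distinct from every $O^{(i)}$, so the distinguished set $D = \{m+1\}$ meets the hypothesis of Proposition~\ref{osin:prop}.

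Finally, applying Proposition~\ref{osin:prop} yields
$$L(\alpha_{m+1}) \leq Q \sum_{j=0}^{m} L\bigl(\pi_B(\sigma_j)\bigr) \leq Q \sum_{j=0}^{m} L(\sigma_j) \leq Q \cdot L(\gamma),$$
the middle inequality using that $\pi_B\colon C \to B$ is $1$-Lipschitz with respect to $d_C$. Since $\alpha_{m+1}$ is supported on the horosphere $O_0 \subseteq B$, any path on $O_0$ is a path in $B$, so $L(\alpha_{m+1}) \geq d_B(\pi_B(x), \pi_B(y)) = d_B(x,y)$, and the conclusion $L(\gamma) \geq d_B(x,y)/Q$ follows. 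The only real subtle point will be the Bass-Serre argument ensuring that each excursion $\delta_i$ returns through the \emph{same} wall of $C$; the no-backtracking hypothesis then plays the clean role of guaranteeing that $O_0$ itself is an isolated horosphere in the loop, which is precisely what lets Proposition~\ref{osin:prop} isolate the closing arc as the distinguished index.
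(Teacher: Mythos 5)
Your proof is correct and follows the same strategy as the paper's (very terse) argument: decompose $\gamma \cap C$ into finitely many subpaths, project to the base $B$, close up to a loop using horosphere geodesics, and apply Proposition~\ref{osin:prop} with the horosphere $O_0 = \pi_B(W_\pm)$ as the distinguished isolated component. The Bass-Serre tree argument you give to show that each excursion $\delta_i$ re-enters $C$ through the same thin wall is exactly the implicit step needed to make the loop structure match the hypotheses of Proposition~\ref{osin:prop}, and your identification of $D = \{m+1\}$ is the right choice.
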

\begin{proof}
An easy transversality argument shows that it is not restrictive to assume that the intersection
of $\gamma$ with $C$ consists of a finite number 
of subpaths of $\gamma$. 
Now the sum of the lengths of such subpaths is greater than 
the sum of the lengths of their projections on $B$, which is in turn greater than
$d_B(x,y)/Q$ by Proposition~\ref{osin:prop}.
\end{proof}

If the distance of two points on a thin wall is not suitably bounded
by the distance of their projections on the base of the chamber they belong to,
then Lemma~\ref{osin:lem} does not give an effective estimate.  
The following result can be combined with Lemma~\ref{strong:lem} to show that, in this case, irreducibility
allows us to ``pass to the adjacent chamber'' in order to obtain a better estimate. 

\begin{lemma}\label{osin2:lem}
Let $x_+, y_+\in W_+$ be points on a thin wall, let $C_+$ be the chamber containing
$W_+$, and suppose that $\gamma$ is a rectifiable
path joining $x_+$ and $y_+$ and intersecting $C_+$ only in its endpoints.
Let also $x_-,y_-\in W_-$ be the points tied to $x_+,y_+$, and $C_-$ be the chamber
containing $x_-,y_-$. 
Then
$$
L(\gamma)\geq \frac{d_{B_-} (x_-,y_-)}{cQ}-\frac{2D}{Q},
$$ 
where $B_-$ is the base of $C_-$. 
\end{lemma}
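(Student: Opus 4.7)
The plan is to establish the upper bound $d_{B_-}(x_-, y_-) \leq cQ \cdot L(\gamma) + 2cD$, from which the lemma follows by dividing through by $cQ$. The core idea is to construct a suitable loop in the neutered space $B_-$ from the projections of the $C_-$-portions of $\gamma$, and then invoke Proposition~\ref{osin:prop}.

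First, I would decompose $\gamma$ into its maximal subpaths $\beta_1,\ldots,\beta_m$ lying in $C_-$, each running from $p_i$ to $q_i \in \partial C_-$. The hypothesis that $\gamma$ never re-enters $C_+$ (except at endpoints) yields two structural facts via the tree-of-spaces description of $\widetilde M$: the endpoints $p_1$ and $q_m$ both lie on $W_-$, since the only route from the $C_+$ side into $C_-$ crosses $W_-$; and for each $i < m$, the consecutive points $q_i, p_{i+1}$ lie on a common thin wall $T_i$ of $C_-$ (a walk in a tree that leaves a vertex must return through the same edge). Projecting each $\beta_i$ by $\pi_{B_-}$ yields a path $\bar\beta_i$ in $B_-$ of length at most $L(\beta_i)$. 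I would then assemble a loop in $B_-$ by concatenating the $\bar\beta_i$'s, joining $\bar q_i$ to $\bar p_{i+1}$ via the horospherical geodesic on $O(T_i)$, and closing with a horospherical geodesic on the horosphere $O$ associated to $W_-$ from $\bar q_m$ back to $\bar p_1$. The delicate case is $T_i = W_-$, where $\gamma$ both exits and re-enters $C_-$ through $W_-$: then the connecting segment would also lie on $O$, spoiling the isolated-horosphere hypothesis of Proposition~\ref{osin:prop}. I would handle this by instead connecting $\bar q_i$ to $\bar p_{i+1}$ by a $B_-$-geodesic (treated as an ``arbitrary'' path in the proposition); since such a $W_-$-detour of $\gamma$ necessarily stays inside $W$ (as it cannot enter $C_+$), the biLipschitz embedding $W_-\hookrightarrow W$ bounds the length of this $B_-$-geodesic by $c$ times the length of the corresponding detour of $\gamma$.

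Distinct thin walls of $C_-$ correspond to distinct horospheres of $\partial B_-$, so after the replacement above the closing segment is the only component of the loop lying on $O$. Proposition~\ref{osin:prop}, applied with the distinguished set consisting solely of the closing segment, then gives
\[
d_O(\bar q_m,\bar p_1) \;\leq\; Q\Bigl(\textstyle\sum_j L(\bar\beta_j) + c\sum_{T_i = W_-} L_i\Bigr),
\]
where $L_i$ denotes the length of the $i$-th $W$-detour. Combining this with the triangle inequality $d_{B_-}(\bar x_-,\bar y_-) \leq d_{B_-}(\bar x_-,\bar p_1) + d_{B_-}(\bar p_1,\bar q_m) + d_{B_-}(\bar q_m,\bar y_-)$, where each end term is bounded by $c(D + L_{\rm end\text{-}piece})$ using the tied-point estimate $d_W\leq D$ and the biLipschitz inclusion $W_-\hookrightarrow W$, and using $c,Q\geq 1$ to absorb the coefficients $c$ and $Q$ into $cQ$, assembles into $d_{B_-}(\bar x_-,\bar y_-)\leq 2cD + cQ\cdot L(\gamma)$. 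The main obstacle is precisely the treatment of the $T_i = W_-$ case: because $\gamma$ is permitted to re-enter $C_-$ through the same wall it exited, the naive loop construction fails Proposition~\ref{osin:prop}'s distinguished-horosphere condition, and the unavoidable factor of $c$ lost in the replacement trick is what produces the $cQ$ (rather than $Q$) appearing in the statement.
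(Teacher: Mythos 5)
Your proof is correct, and it takes a route that is organizationally different from the paper's even though the underlying ingredients --- Proposition~\ref{osin:prop}, the biLipschitz constant $c$, the tied-point constant $D$, and a final triangle inequality in $B_-$ --- are the same. The paper decomposes $\gamma$ by the thick wall $W$: it writes $\gamma = \gamma'_1 \ast \gamma''_1 \ast \cdots \ast \gamma''_n \ast \gamma'_{n+1}$, where each $\gamma'_i$ is supported in $W$ and each $\gamma''_i$ is an excursion with endpoints $a_i, b_i \in W_-$ that does \emph{not} backtrack on $W_-$. It then invokes Lemma~\ref{osin:lem} as a black box on each $\gamma''_i$ to get $d_{B_-}(a_i,b_i) \le Q\, L(\gamma''_i)$, and handles the $\gamma'_i$'s via $d_{W_-} \le c\, d_W$ together with $d_W(x_-,x_+) \le D$ at the two ends. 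You instead decompose $\gamma$ by the chamber $C_-$, build one loop in $B_-$ out of the projections of all the $C_-$-portions plus connecting geodesics, and apply Proposition~\ref{osin:prop} once. Your ``delicate case $T_i = W_-$'' corresponds exactly to the paper's $\gamma'_i$ pieces, and you resolve it with the same biLipschitz estimate --- the only difference is that you feed the resulting $B_-$-geodesic into the loop as an arbitrary path rather than estimating its contribution separately, outside the application of Proposition~\ref{osin:prop}. The paper's route is more modular (it reuses the already-proved Lemma~\ref{osin:lem}) and sidesteps the delicate case entirely, because the non-backtracking condition on the $\gamma''_i$'s guarantees the inner loop has a unique horospherical segment on $O$; yours is more self-contained in that it never invokes Lemma~\ref{osin:lem}. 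Both give exactly the same constants. Two small points worth adding to make yours airtight: first, you should state the usual transversality reduction at the outset (so that $\gamma$ meets the thin walls in only finitely many points, and your ``maximal subpaths lying in $C_-$'' give a finite decomposition, as the paper does explicitly); second, you should note the degenerate case $m=0$ (when $\gamma$ never enters $C_-$), which is immediate from $d_{W_-}\le c\, d_W$ and $d_W(x_\pm, x_\mp)\le D$ alone.
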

\begin{proof}
An easy transversality argument shows that it is not restrictive to assume that
$\gamma$ intersects the thin walls of $\widetilde{M}$ only in a finite
number of points. Then
our assumptions imply that $\gamma$ decomposes as a concatenation
of curves 
$$
\gamma=\gamma'_1\ast \gamma''_1\ast \gamma'_2\ast \ldots \ast \gamma''_{n}\ast \gamma'_{n+1}
$$
such that 
$\gamma'_i$ is supported in $W$ and $\gamma''_i$ has endpoints $a_i,b_i\in W_-$
and does not backtrack on $W_-$ for every $i$ (see Figure~\ref{osin1:fig}). 
\begin{figure}
\begin{center}
\includegraphics[width=6cm]{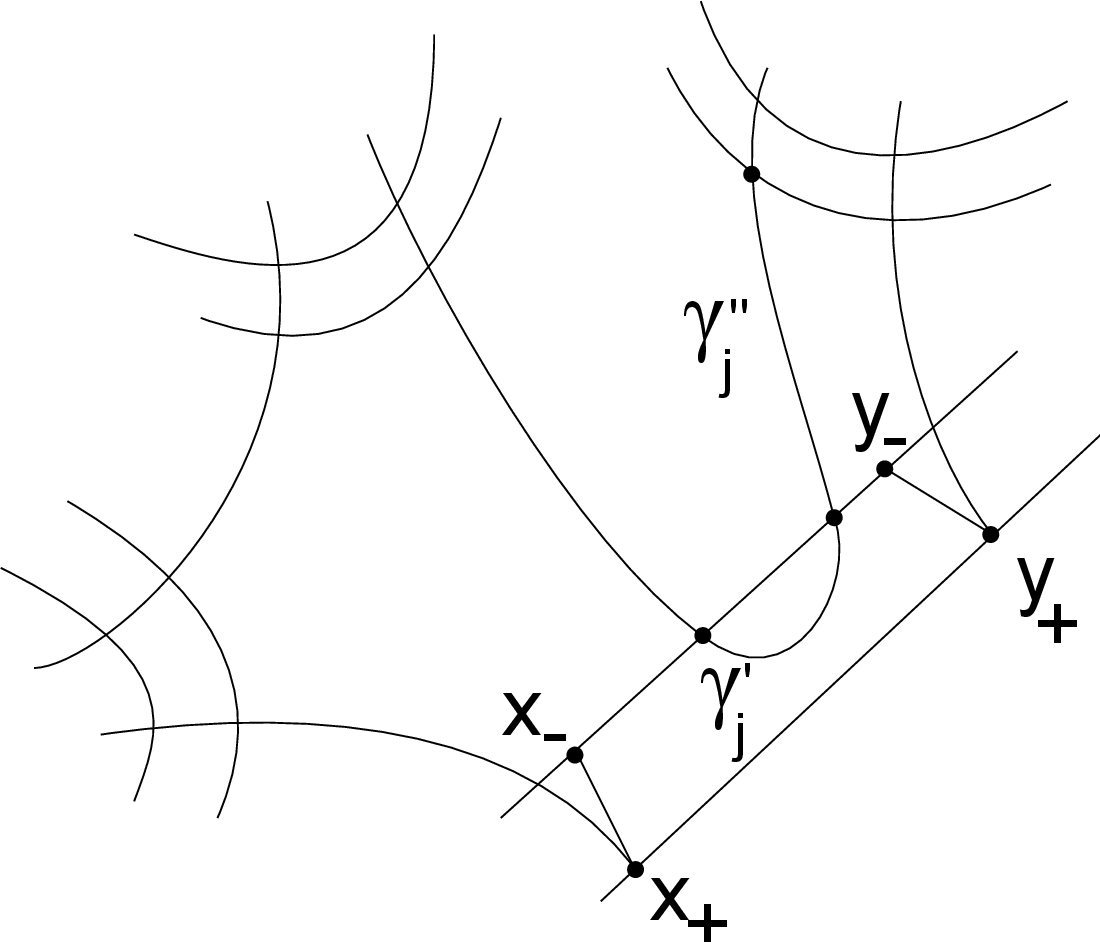}
\caption{Decomposing $\gamma$ in the proof of Lemma~\ref{osin2:lem}.}\label{osin1:fig}
\end{center}
\end{figure}
Let us suppose $n\geq 1$ (the case $n=0$ being
easier).
Since $d_W(x_-,x_+)\leq D$ we have
$$
d_{B_-} (x_-,a_1)\leq d_{W_-} (x_-,a_1)\leq c d_W (x_-,a_1)\leq c(D+L(\gamma'_1)),
$$
and analogously we get $d_{B_-} (y_-,b_{n})\leq c(D+L(\gamma'_{n+1}))$.  
Moreover Lemma~\ref{osin:lem} implies $d_{B_-}(a_i,b_i)\leq Q\cdot L(\gamma''_i)$ for every $i=1,\ldots n$, 
and we also have
$d_{B_-} (b_i,a_{i+1})\leq d_{W_-} (b_i,a_{i+1})\leq c d_W (b_i,a_{i+1})\leq c L(\gamma'_{i+1})$
for every $i=1,\ldots, n-1$.
Putting together all these inequalities we finally get
\begin{align*}
d_{B_-} (x_-,y_-)&\leq d_{B_-} (x_-,a_1)+\sum_{i=1}^n d_{B_-} (a_i,b_i)+\sum_{i=1}^{n-1} 
d_{B_-} (b_i,a_{i+1})+
d_{B_-} (b_{n},y_-)\\ 
&\leq  2cD + c\sum_{i=1}^{n+1} L(\gamma'_i) + Q \sum_{i=1}^n L(\gamma''_i)\\
&\leq 2cD +cQ L(\gamma)
\end{align*}
whence the conclusion.
\end{proof}

In order to proceed to the main argument we finally need the following lemma,
which describes how to get rid of the backtracking of a 
geodesic. 

If $\gamma$ is a path and $r=\gamma (t_0)$, $s=\gamma (t_1)$,
with an abuse we will denote by $[r,s]$ the subpath $\gamma|_{[t_0,t_1]}$ of $\gamma$. We say that $\gamma$ is \emph{minimal} if 
for every chamber $C$, the set $\gamma\cap \mathring{C}$ is a finite collection of paths each of which connects distinct
walls of $C$. Moreover, $\gamma$ is \emph{good} if it is minimal and 
for every thin wall $X$ contained in a chamber $C$ there are at most 2 endpoints of paths in $\gamma\cap\mathring{C}$ 
belonging to $X$. Notice that, since chambers are uniquely geodesic and every thin wall is totally geodesic in the
chamber in which it is contained, every geodesic of $\widetilde{M}$ is minimal.

\begin{lemma}\label{nobktrk:lem}
There exists a constant $\beta\geq 1$ depending only on the geometry of $\tilM$ such that
the following result holds.
Let $x,y$ be points belonging to the same wall
of $\widetilde{M}$. 
Then there exists a good 
path $\gamma$ in $\tilM$ connecting $x$ and $y$ such that 
$L(\gamma)\leq \beta d(x,y)$.
\end{lemma}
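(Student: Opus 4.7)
The plan is to begin with a geodesic $\gamma_0$ of $\widetilde M$ joining $x$ to $y$, so that $L(\gamma_0)=d(x,y)$, and then modify it in two stages into a good path of length at most a universal multiple of $d(x,y)$.

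\emph{Stage 1 (minimality at no length cost).} Each chamber $C$ is a product of a neutered space and a Euclidean factor, hence CAT(0), and every thin wall is totally geodesic in the containing chamber by Lemma~\ref{easybil:lem}-(1). Therefore, whenever a component $\alpha$ of $\gamma_0\cap\mathring C$ has both endpoints $p,q$ on the same thin wall $X\subseteq\partial C$, the segment $\alpha$ is itself a geodesic of $C$ realizing $L(\alpha)=d_C(p,q)=d_X(p,q)$. Replacing $\alpha$ by a geodesic on $X$ thus preserves total length and strictly decreases the number of non-minimal components. Iterating produces a minimal path $\gamma_1$ with $L(\gamma_1)=d(x,y)$.

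\emph{Stage 2 (removing excess endpoints on thin walls).} If some thin wall $X\subseteq \partial C$ carries $m\geq 3$ endpoints of components of $\gamma_1\cap\mathring C$, pick the first and last such components $\alpha_1,\alpha_m$ (in the parametrization of $\gamma_1$), with respective endpoints $p_1,p_m\in X$, and let $\eta$ be the subpath of $\gamma_1$ from $p_1$ to $p_m$. I would replace $\eta$ by the geodesic $\tau\subseteq X$ from $p_1$ to $p_m$: since $\tau\subseteq\partial C$, it contributes nothing to $\mathring C$, so the new path has exactly two components of its $\mathring C$-intersection touching $X$ (namely $\alpha_1$ and $\alpha_m$), and endpoint counts on every other wall can only decrease (because the interior visits $\alpha_2,\ldots,\alpha_{m-1}$ and the intermediate excursions are all deleted). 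Performing these surgeries with respect to an outermost offending pair each time keeps the replaced subpaths $\eta_j$ pairwise disjoint inside $\gamma_1$, and terminates in finitely many steps at a good path $\gamma$.

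\emph{Length estimate.} Granting the key linear bound
\[
L(\tau_j)\leq \beta_0\,L(\eta_j)
\]
for a constant $\beta_0$ depending only on the geometry of $M$, the disjointness of the $\eta_j$ in $\gamma_1$ yields $\sum_j L(\eta_j)\leq L(\gamma_1)=d(x,y)$, and hence
\[
L(\gamma)\;\leq\; L(\gamma_1)-\sum_j L(\eta_j)+\sum_j L(\tau_j)\;\leq\;(1+\beta_0)\,d(x,y),
\]
so one may take $\beta=1+\beta_0$.

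\emph{Main obstacle.} The crux of the argument is the linear inequality $L(\tau_j)\leq \beta_0\,L(\eta_j)$. The subpath $\eta_j$ runs from $p_1$ to $p_m$, both on $W_+=X$, and crosses the thickened wall $W$ at least twice. Decomposing $\eta_j$ into maximal excursions outside $C$ interspersed with visits to $\mathring C$, and applying Proposition~\ref{osin:prop} (in the base $B'$ of the adjacent chamber $C'$, with the distinguished horosphere equal to the image of $W_-$) to the projections of the outside-$C$ excursions, gives a linear bound on the total base-distance $d_{B'}(p_1^-,p_m^-)$ between the tied points $p_i^-=s_W(p_i)\in W_-$; the additive error from counting excursions is absorbed by the lower bound $R$ on the length of each excursion. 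One then converts this base bound back to a bound on $d_X(p_1,p_m)=L(\tau_j)$ using: the $k'$-biLipschitz tied-point map $s_W\colon W_+\to W_-$ (Lemma~\ref{easybil:lem}-(3)), the isometric inclusions $W_\pm\hookrightarrow C_\pm$ (Lemma~\ref{easybil:lem}-(1)), and---crucially, since irreducibility is what makes the argument work---Lemma~\ref{strong:lem}, which forces a large fiber component on $W_+$ to produce a comparably large base component on $W_-$. Packaging these estimates yields the desired $\beta_0$ depending only on $Q,c,k,k',D,R$, which then gives $\beta=1+\beta_0$.
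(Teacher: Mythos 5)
Your Stage 1 agrees with the paper, which observes that a geodesic $\gamma_0$ is automatically minimal because chambers are uniquely geodesic and thin walls are totally geodesic. Your Stage 2, however, diverges from the paper and contains the serious gap that you yourself flag as the ``main obstacle.'' The linear bound $L(\tau_j)\leq\beta_0\,L(\eta_j)$ is, after unwinding, precisely the statement that the wall $X$ is quasi-isometrically embedded in $\widetilde M$ along paths with both endpoints on $X$---which is the main content of Theorem~\ref{quasi-isom:thm}. But the paper proves Theorem~\ref{quasi-isom:thm} by first reducing to \emph{good} paths via Lemma~\ref{back:lem}, and Lemma~\ref{back:lem} invokes Lemma~\ref{nobktrk:lem} to produce those good paths. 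Every application of Lemma~\ref{strong:lem}, Lemma~\ref{osin2:lem}, and Proposition~\ref{osin:prop} in that proof relies on the pairwise distinctness of the walls $W_i$ encountered by the path (the defining property of good paths). Your $\eta_j$ is a subpath of a merely \emph{minimal} path and may have arbitrarily many endpoints on $X$ and arbitrary backtracking; the cited tools do not apply as written, and establishing the bound for such paths would subsume the theorem that Lemma~\ref{nobktrk:lem} is meant to serve. So the sketch does not close the gap but rather restates it.

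The paper takes a genuinely different route. Instead of straightening $\eta_j$ to a geodesic $\tau_j\subseteq X$ and paying a cost proportional to $L(\eta_j)$, the paper exploits the cocompactness of the deck-group action on $X$: since $X$ projects to a torus in $M$ of uniformly bounded diameter $\Delta/2$, any two external subpaths to $X$ can be brought within distance $\Delta$ of each other by translations $g,h$ stabilizing $X$ (this is where flatness of the walls and the Bieberbach theorem enter, as the paper's remark after the lemma emphasizes). Folding the path via these translations costs only an additive constant $4\Delta+1$ per step and strictly decreases the count $j(\gamma)$ of chamber-crossings, which itself is at most $d(x,y)/R$. Hence $\beta=1+(4\Delta+1)/R$, with no appeal to any quasi-isometric embedding. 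This folding trick is the ingredient missing from your argument; without it, the length estimate in your Stage 2 has no supporting mechanism.
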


\begin{proof}
We first introduce some terminology. 
If $X$ is a thin wall contained in the wall $W$,   
we say that a path $\theta:[t_0,t_1]\to\tilM$ is \emph{external} to 
$X$ if $\theta (t_0)\in X$, $\theta (t_1)\in X$ and 
$\theta|_{(t_0,t_1)}$ is supported
in $\tilM\setminus W$ (this is equivalent to asking that $\theta$ does not backtrack on $X$, but
this new terminology will prove more appropriate here).
Moreover, if $\gamma$ is a minimal path  
and $n$ is the number
of subpaths of $\gamma$ external to $X$, we say that  
the \emph{exceeding number} of $\gamma$ on $X$ is equal to $\max \{0,n-1\}$.
The exceeding number $e(\gamma)$ of $\gamma$ is the sum of the exceeding numbers of $\gamma$
on all the thin walls.
Finally, we denote by $j(\gamma)$ the sum over \emph{all} the chambers
$C$ of $\widetilde{M}$ of the number of connected components
of $\gamma\cap \mathring{C}$.
It is readily seen that
a path $\gamma$ is good if and only if it is minimal and $e(\gamma)=0$. 

Let $\Delta>0$ be a constant, chosen in such
a way that every torus in $M$ obtained as a projection of a thin wall of $\tilM$
has diameter (with respect to its intrinsic path metric) at most $\Delta/2$.
We denote by $\gamma_0$ a geodesic in $\widetilde{M}$ connecting $x$ and $y$. 
As observed above, $\gamma_0$ is minimal, and 
if $\gamma_i$ is a minimal path with $e(\gamma_i)>0$
we will now describe how to modify it in order to get 
a new minimal path $\gamma_{i+1}$ joining $x$ to $y$. The path $\gamma_{i+1}$ will be 
constructed so as to have
$j(\gamma_{i+1})<j(\gamma_i)$ and $L(\gamma_{i+1})\leq  L(\gamma_i)+ 4\Delta +1$.
By the very definitions we have $j(\gamma_0)\leq L(\gamma_0)/R=d(x,y)/R$, so 
after at most $d(x,y)/R$ steps we will end up with a minimal path $\gamma$
which verifies either $e(\gamma)=0$ or $j(\gamma)\leq 1$, whence again $e(\gamma)=0$.
After setting $\beta=1+(4\Delta+1)/R$, such a path satisfies all the conditions required.

So let us suppose that we have some external subpaths $[p_1,p'_1]$, $[p_2,p'_2]$
of $\gamma_i$, with $p_1,p'_1,p_2,p'_2\in X$ for some thin wall $X$ contained in the chamber $C$. 
Consider deck transformations $g,h$ which leave $X$ (and therefore $C$) 
invariant such that $d_X(g(p_2),p'_1)\leq \Delta$,
$d_X(h(p'_1),g(p'_2))\leq\Delta$, and let 
$q_1,q_2\in\gamma\cap \mathring{C}$ be chosen in such a way that
$q_1$ (resp.~$q_2$) slightly precedes (resp.~follows)
$p'_1$ (resp.~$p_2$) on $\gamma_i$: more precisely, we assume that
$L([q_1,p'_1])< 1/2$, $L([p_2,q_2])<1/2$. 
We define a path $\gamma'_{i+1}$ as 
the concatenation of the following paths (see Figure~\ref{nobktrk:fig}):
\begin{enumerate}
\item
the subpath $[x,q_1]$ of $\gamma_i$,
\item
a path $[q_1,g(q_2)]$ in $\mathring{C}$ obtained by slightly pushing
inside $\mathring{C}$ a geodesic in $X$ between $p'_1$ and $g(p_2)$,
in such a way that $L([q_1,g(q_2)])<\Delta+1/2+1/2=\Delta +1$,
\item
$g([q_2,p'_2]),$
\item
a geodesic in $X$ between $g(p'_2)$ and $h(p'_1)$,
\item
$h([p'_1, p_2]),$
\item
a geodesic in $X$ between $h(p_2)$ and $p'_2$,
\item
$[p'_2,y]$,
\end{enumerate}
where geodesics in $X$ are to be considered with respect to its path 
metric.

\begin{figure}
\begin{center}
\includegraphics[width=11cm]{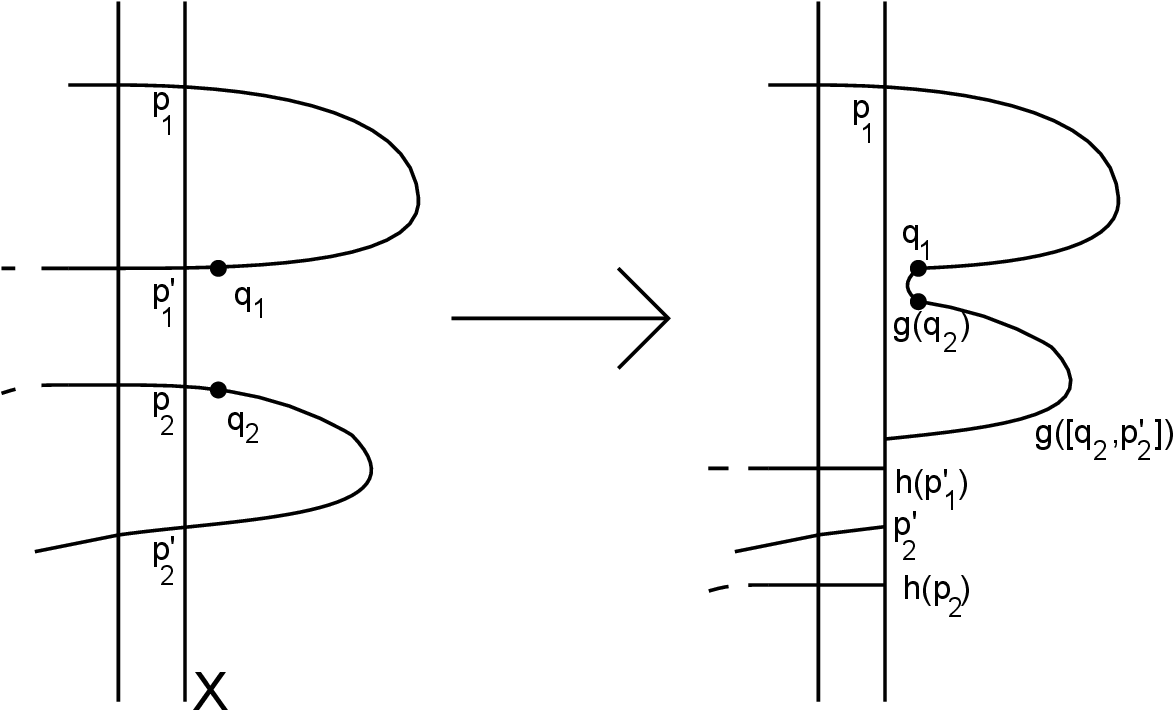}
\caption{Replacing $\gamma_i$ with $\gamma_{i+1}'$ in the proof of Lemma~\ref{nobktrk:lem}.}\label{nobktrk:fig}
\end{center}
\end{figure}

Since $X$ is isometric to $\mR^{n-1}$ and the deck transformations $g,h$
act on $X$ as translations, it follows
that the distance between $h(p_2)$ and $p'_2$ is at most 
$2\Delta$, and this readily yields $L(\gamma'_{i+1})\leq L(\gamma_i)+4\Delta+1$.
Moreover, it is easily checked that $j(\gamma'_{i+1})=j(\gamma_i)-1$.
Now, if $\gamma'_{i+1}$ is minimal we set $\gamma_{i+1}=\gamma'_{i+1}$, and we are done.
On the other hand, the only possible obstruction to $\gamma'_{i+1}$ being minimal is that
its (open) subpath with endpoints $p_1$ and
$g(p'_2)$ may be entirely contained in $\mathring{C}$. 
In this case, since $X$
is totally geodesic in $C$ we can replace the subpath $[p_1,g(p'_2)]$
with a geodesic on $X$, thus obtaining  a minimal path $\gamma_{i+1}$ with
$L(\gamma_{i+1})\leq L(\gamma'_{i+1})$ and 
$j(\gamma_{i+1})= j(\gamma'_{i+1})-1< j(\gamma_i)$, whence the conclusion again.
\end{proof}

\begin{remark}
The strategy described in this Chapter could probably be adapted
in order to study the coarse geometry of the universal coverings of other classes
of manifolds. For example, let $N$ be a cusp-decomposable manifold~\cite{N}, i.e.~a manifold obtained by taking complete, finite volume, negatively curved, locally symmetric
manifolds with deleted cusps, and gluing them along affine diffeomorphisms of their
cuspidal boundary. Then the universal covering $\widetilde{N}$ admits a natural
decomposition
into walls and chambers, and it would be interesting to show that walls are quasi-isometrically embedded also in this context (since cusp-decomposable manifolds
consist only of ``pure pieces'', we don't need to impose any irreducibility
condition here).

By~\cite{farb}, 
the fundamental groups
of the pieces of $N$ are relatively hyperbolic with respect to their cusp subgroups,
so many results proved in this Chapter readily extend to the study of $\widetilde{N}$.
However, the needed generalization of Lemma~\ref{nobktrk:lem} 
could be a challenging task.
In fact,
the proof of Lemma~\ref{nobktrk:lem}
heavily relies on the fact that 
thin walls of $\tilM$ support a \emph{flat} 
metric, a fact which is no longer true in the case of cusp-decomposable manifolds. 
By Bieberbach Theorem, the flatness of thin walls ensures that 
a finite index subgroup of the covering transformations of 
$\tilM$ preserving a thin wall acts on it as a group of translations, 
and this fact was exploited in the proof
of Lemma~\ref{nobktrk:lem}. 
\end{remark}

\begin{lemma}\label{back:lem} 
Fix a wall $W\subseteq \tilM$ and suppose that $\alpha\geq 1$ exists such that
the following holds: if $x,y\in W_\pm$ 
are points joined by a good path $\gamma$ in $\tilM$ which does not backtrack on $W_\pm$, then
$$d_C(x,y)\leq\alpha \cdot L(\gamma),$$
where $C$ is the chamber containing $x,y$. 
Then $W$ is bi-Lipschitz embedded in $\tilM$.
\end{lemma}

\begin{proof}
The inclusion  $(W,d_W)\hookrightarrow \tilM$ is clearly $1$-Lipschitz,
so we have to check that $d_W$ is linearly bounded
below by the distance $d$ on $\tilM$. 
More precisely, we have to show that there exists
$\lambda\geq 1$ such that
\begin{equation}\label{fund:eq}
d_W (p,q)\leq \lambda d(p,q) \qquad {\rm for\ all}\ p,q\in W.
\end{equation}
Let $\gamma$ be the path provided by Lemma~\ref{nobktrk:lem} such that
$L(\gamma)\leq \beta d(p,q)$, and let
$m$ be the number of the chambers adjacent
to $W$ whose internal parts intersect $\gamma$ (so $m=0,1$ or $2$). 
It is readily seen that $\gamma$ splits
as a concatenation
$$\gamma_1\ast\gamma'_1\ast\dots\ast\gamma_m\ast\gamma'_m\ast\gamma_{m+1},$$
where the $\gamma_i$'s are contained in $W$ and each $\gamma'_i$ is a good
path with endpoints on $W_\pm$ which does not backtrack on $W_\pm$.
Due to our assumptions and to the fact that $W_\pm$ 
are totally geodesic in the chambers in which they are contained,
the $\gamma'_i$'s can be replaced by curves contained in $W$
in such a way that the total length of the curve so obtained does not exceed
$\alpha \cdot L(\gamma)$. So
$$d_W (p,q)\leq \alpha L(\gamma),$$
and hence inequality~(\ref{fund:eq}) holds with
$\lambda=\alpha\cdot\beta$.
\end{proof}

\begin{theorem}\label{quasi-isom:thm}
If $M$ is irreducible and $W\subseteq \tilM$ is a wall, then
the inclusion $(W,d_W)\hookrightarrow \tilM$ is a bi-Lipschitz embedding. In particular, 
it is a quasi-isometric embedding. Moreover, the bi-Lipschitz constant of the embedding 
only depends on the geometry of $\tilM$ (i.e.~it does not depend
on the fixed wall $W$).
\end{theorem}

\begin{proof}
Take $x,y\in W_+$, let $C$ be the chamber containing $W_+$  
and let $\gamma$ be  a good path in $\tilM$ which joins $x$ to $y$ and does not backtrack on $W_+$.
By Lemma~\ref{back:lem}, in order to conclude it is sufficient to show 
that the inequality 
\begin{equation}\label{fund2:eq}
d_C(x,y)\leq \alpha \cdot L(\gamma).
\end{equation}
holds for some $\alpha\geq 1$ only depending on $\tilM$ 
(via the constants $D,R,Q,k,k',c$). 
We will have to analyze several different cases, and we
will take $\alpha$ to be the maximum among the constants we will find in each case.
 
Let $B$, $F$ be the base and the fiber of $C$.
We first distinguish the case when the distance between $x$ and $y$
is controlled (up to a suitable constant factor) by $d_B(x,y)$ from the case
when $d_C (x,y)$ is controlled by $d_F (x,y)$.

\smallskip

So let us suppose $d_C(x,y)\leq k d_B(x,y)$. In this case
by Lemma~\ref{osin:lem} we have
$L(\gamma)\geq d_B(x,y)/Q\geq d_C (x,y)/(kQ)$, so
$$
d_C (x,y)\leq k Q L(\gamma), 
$$
and we are done.

\smallskip

Let us now consider the other case and assume that $d_C(x,y)>k d_B(x,y)$.
Since $d_C^2=d_B^2+d_F^2$ and $k> \sqrt{2}$ an easy computation shows that  
$$
d_F(x,y)>  \frac{d_C (x,y)}{\sqrt{2}}, 
\qquad d_F (x,y)> d_B (x,y).
$$  
Write $\gamma\cap \mathring{C}=\gamma_1\cup...\cup\gamma_m$ where each $\gamma_i=(x_i,y_i)$ is a path 
in the (open) chamber $\mathring{C}$, 
let $W_i$ be the wall containing $y_i$ and $x_{i+1}$, and 
let $l_i$ be the length of the projection of $\gamma_i$ on the fiber $F$. 
Observe that since $\gamma$ is minimal we have $m\leq L(\gamma)/R$.
Of course, we have $\sum l_i + \sum d_F(y_i,x_{i+1}) \geq d_F (x,y)$, so either
$\sum l_i\geq  d_F(x,y)/2$
or $\sum d_F(y_i,x_{i+1})\geq d_F(x,y)/2$. In the first case we have
$$
L(\gamma)\geq \sum L(\gamma_i) \geq \sum l_i\geq \frac{d_F (x,y)}{2}> \frac{d_C (x,y)}{2\sqrt{2}},
$$
and we are done. 
Otherwise let us define
$$
I_1=\{i\in \{1,\ldots,m-1\}\, |\, 
k d_B(y_i,x_{i+1})\leq d_C(y_i, x_{i+1})\},\quad 
I_2=\{1,\ldots, m-1\}\setminus I_1.
$$ 
Since $\sum d_C (y_i,x_{i+1})\geq \sum d_F (y_i,x_{i+1})\geq d_F (x,y)/2$, we have two
possibilities: either
$\sum_{i\in I_1} d_C(y_i,x_{i+1})\geq d_F (x,y)/4$, or 
$\sum_{i\in I_2} d_C(y_i,x_{i+1})\geq d_F (x,y)/4$. 

\vskip 10pt

We begin by dealing with the first case. Let $W^i_+$ be the thin wall
containing $x_{i+1},y_i$,  
denote by $x_{i+1}^-\in W^i_-$ (resp.~$y_i^-\in W^i_-$) 
the point tied to $x_{i+1}$ (resp.~to $y_i$), let $C_i$ be the chamber containing
$x_{i+1}^-,y_i^-$, and $B_i$ the base of $C_i$. 

Recall that we have the estimate:
$$d_{C_i} (y_i^-,x_{i+1}^-)=d_{W^i_-} (y_i^-,x_{i+1}^-) 
\geq d_{W^i_+} (y_i,x_{i+1})/k'= d_C (y_i,x_{i+1})/k'.$$ 
So applying Lemma~\ref{osin2:lem} and Lemma~\ref{strong:lem} 
(which gives the inequality $d_{B_i} (y_i^-,x_{i+1}^-)\geq d_{C_i} (y_i^-,x_{i+1}^-)/k$) we obtain
the estimates:
\begin{align*}
L(\gamma) &\geq \frac{1}{cQ}
\sum_{i\in I_1}d_{B_i}(y_i^-,x_{i+1}^-)-\frac{2(m-1)D}{Q}\\
&\geq \frac{1}{kcQ} \sum_{i\in I_1}d_{C_i}(y_i^-,x_{i+1}^-)-\frac{2(m-1)D}{Q} \vspace{.2cm}\\
&\geq \frac{1}{kcQk'} \sum_{i\in I_1}d_{C}(y_i,x_{i+1})-\frac{2D}{RQ} L(\gamma)\\
&\geq \frac{1}{4kcQk'} d_F (x,y) -\frac{2D}{RQ}L(\gamma)\vspace{.2cm}\\
& \geq \frac{1}{4\sqrt{2}kcQk'} d_C(x,y)-\frac{2D}{RQ}L(\gamma)
\end{align*}
Isolating the $d_C(x,y)$ term, this gives us
$$
d_C(x,y)\leq 
\frac{4\sqrt{2}kck'(RQ+2D)}{R}\cdot L(\gamma).
$$
which gives us the requisite estimate in the first case.

\vskip 10pt

We are now left to deal with the second case, $\sum_{i\in I_2} d_C(y_i, x_{i+1})\geq d_F(x,y)/4$. 
In this case we have that:
$$\sum_{i\in I_2} d_B(y_i,x_{i+1})\geq d_F(x,y)/(4k)\geq d_C(x,y)/(4\sqrt{2}k).$$
Let $\gamma'$ be the loop in $C$ obtained by concatenating the geodesic
in $W_+$ joining $y$ with $x$, 
the paths of the form $\gamma\cap \mathring{C}$ and
the geodesics in the $W^i_+$'s joining  $y_i$ with $x_{i+1}$, and set 
$\overline{\gamma}=\pi_B\circ \gamma'$. If $\eta$ is the sum of the lengths of the subpaths
of $\overline{\gamma}$ 
obtained by projecting the paths in $\gamma\cap \mathring{C}$
we obviously have $L(\gamma)\geq \eta$. 
Moreover, the properties  of $\gamma$ described in Lemma~\ref{nobktrk:lem} 
ensure that 
the $W_i$'s are pairwise distinct, and distinct from $W$. 
As such, we can apply Proposition~\ref{osin:prop} to $\overline{\gamma}$ thus getting 
$$
L(\gamma)\geq \eta\geq \frac{1}{Q} \sum_{i\in I_2} d_B(y_i, x_{i+1}),
$$
whence 
$$
d_C (x,y)\leq 4\sqrt{2}k
\sum_{i\in I_2} d_B(y_i, x_{i+1})
\leq 4\sqrt{2} kQ L(\gamma) .
$$
\smallskip

This completes the last case, establishing that inequality~(\ref{fund2:eq}) holds with constant 
$$
\alpha=\max \left\{kQ,\, 2\sqrt{2},\, \frac{4\sqrt{2}kck'(RQ+2D)}{R},\, 4\sqrt{2}k Q\right\}=\frac{4\sqrt{2}kck'(RQ+2D)}{R},
$$
thus proving the proposition.
\end{proof}

\begin{corollary}
If $M$ is irreducible, then the inclusion of a chamber in $\widetilde{M}$ is a bi-Lipschitz embedding.
\end{corollary}
\begin{proof}
Let $p,q$ be points in a chamber $C$ and let $\delta$ be a geodesic of 
$\widetilde{M}$ joining $p$ to $q$. Then $\delta$ splits as a concatenation
$$
\delta=\delta_1\ast\eta_1\ast\ldots\ast\eta_n\ast\delta_{n+1},
$$
where $\delta_i$ is a geodesic segment (with respect to the metric $d$
on $\widetilde{M}$) supported in $C$ and the endpoints $p_i,q_i$
of $\eta_i$ belong to a thin wall $W_i^+$ adjacent to $C$. By Theorem~\ref{quasi-isom:thm} there exists $\alpha\geq 1$ such that
$d_{W_i} (p_i,q_i)\leq \alpha d(p_i,q_i)$, and this implies in turn that $d_{W_i^+}(p_i,q_i)\leq c\alpha d(p_i,q_i)$, so we may replace every $\eta_i$ with a path $\eta_i'\subseteq W_i^+$ having the same endpoints
as $\eta_i$ and length that does not exceed $c\alpha d(p_i,q_i)$. The path
$$
\delta'=\delta_1\ast\eta'_1\ast\ldots\ast\eta'_n\ast\delta_{n+1}
$$
is supported in $C$ and has length at most $c\alpha d(p,q)$, so $d_C(p,q)\leq c\alpha d(p,q)$, 
and we are done.
\end{proof}

\begin{corollary}\label{fibre:cor}
Suppose that $M$ is irreducible. Then,
the inclusion of chambers, walls and fibers (with their path metrics)
in $\tilM$ are quasi-isometric embeddings. In particular:
\begin{itemize}
\item
If $C\subseteq \tilM$ is a chamber, then $C$ is quasi-isometric (with the metric
induced by $\tilM$) to a product $B\times \mR^k$, where $B$ is a neutered space.
\item
If $W\subseteq \tilM$ is a wall, then $W$ is quasi-isometric (with the metric induced
by $\tilM$) to $\mR^{n-1}$.
\item
If $F\subseteq \tilM$ is a fiber, then $F$ is quasi-isometric (with the metric induced
by $\tilM$) to $\mR^h$, $h\leq n-3$.
\end{itemize}
\end{corollary}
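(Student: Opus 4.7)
My plan is to derive each of the three bullet points by combining the preceding biLipschitz embedding results (Theorem~\ref{quasi-isom:thm} for walls, and the immediately preceding corollary for chambers) with the previously established descriptions of the intrinsic geometry of chambers and walls (Corollary~\ref{treeofspaces:cor} and Lemma~\ref{easywall:lem}). The fiber statement is the only piece that does not have an already-stated QI model and will require a short independent argument, but it will reduce to the chamber case.

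For chambers: the preceding corollary states that the inclusion $(C, d_C) \hookrightarrow \tilM$ is biLipschitz, hence a quasi-isometric embedding. By Corollary~\ref{treeofspaces:cor}(1), $(C, d_C)$ is isometric to $B \times \mR^k$ as a Riemannian manifold, where $B$ is a neutered space; so the inclusion of $C$ into $\tilM$ with its induced metric is quasi-isometric to the inclusion of $B \times \mR^k$.

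For walls: Theorem~\ref{quasi-isom:thm} gives that the inclusion $(W, d_W) \hookrightarrow \tilM$ is a biLipschitz embedding. By Lemma~\ref{easywall:lem}, $(W, d_W)$ is quasi-isometric to $\mR^{n-1}$, and composing the two quasi-isometric embeddings gives the claim.

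For fibers: fix a fiber $F \subseteq C$ sitting inside a chamber $C$, and let $d_F$ denote the path metric on $F$ induced from $\tilM$. Since any rectifiable path in $F$ stays inside $C$, the path metric on $F$ induced from $\tilM$ coincides with that induced from $(C, d_C)$. Under the isometry $(C, d_C) \cong B \times \mR^h$ provided by Corollary~\ref{treeofspaces:cor}, the fiber corresponds to a slice $\{x_0\} \times \mR^h$, so $(F, d_F)$ is isometric to Euclidean $\mR^h$. Composing the isometric inclusion $(F, d_F) \hookrightarrow (C, d_C)$ with the biLipschitz embedding $(C, d_C) \hookrightarrow \tilM$ yields a biLipschitz embedding, and hence a quasi-isometric embedding. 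The bound $h \leq n-3$ is built into our definition of a graph manifold, since each piece $V_i = \overline{N}_i \times T^{n-n_i}$ has $n_i \geq 3$.

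The potential obstacle I would watch for is the identification of the two path metrics on $F$ (the one induced from $\tilM$ versus the one induced from $C$); but since $F$ is closed in $C$ and $C$ is closed in $\tilM$, every rectifiable curve in $F$ has the same length measured in either ambient space, so the two path metrics genuinely coincide. Given how directly everything reduces to results stated in the excerpt, no further work is needed beyond assembling these ingredients.
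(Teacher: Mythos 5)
Your proposal is correct and takes essentially the same route the paper intends: the corollary is stated without proof precisely because it is the immediate assembly of Theorem~\ref{quasi-isom:thm}, the preceding chamber corollary, Corollary~\ref{treeofspaces:cor} and Lemma~\ref{easywall:lem}, exactly as you describe. Your observation for the fiber case (that the two path metrics on $F$ coincide, that the fiber is a totally geodesic Euclidean factor of $(C,d_C)\cong B\times\mR^k$, and that $h\leq n-3$ follows from $n_i\geq 3$) is the right way to spell out what the authors leave implicit.
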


\chapter[Pieces of irreducible graph manifolds are quasi-preserved]{Pieces of irreducible graph manifolds\\ are quasi-preserved}\label{preserve:sec}

In this chapter, we prove Theorem~\ref{qi-preserve:thm}, which we recall here for the convenience of the reader:

\begin{Thm2}[Pieces of irreducible manifolds are preserved]
Let $M_1$, $M_2$ be a pair of irreducible graph manifolds, and $\G_i=\pi_1(M_i)$ their respective
fundamental groups.  Let $\La_1 \leq \G_1$ be a subgroup conjugate to the fundamental
group of a piece in $M_1$, and $\varphi: \G_1\rightarrow \G_2$ be a quasi-isometry. 
Then, the set $\varphi(\La_1)$ is within finite Hausdorff distance from
a conjugate of $\La_2 \leq \G_2$, where $\La_2$ is the fundamental group of a piece  in $M_2$.
\end{Thm2}

So, let us fix graph manifolds $M_1,M_2$ with fundamental
groups $\Gamma_i=\pi_1 (M_1)$ and suppose $\psi\colon \Gamma_1\to\Gamma_2$ is a quasi-isometry.
Due to Milnor-Svarc Lemma (see Theorem~\ref{milsv}), $\psi$ induces a quasi-isometry between
$\widetilde{M}_1$ and $\widetilde{M}_2$, which we will still denote by $\psi$.
The statement of Theorem~\ref{qi-preserve:thm} is equivalent to the fact that
$\psi$ sends, up to a finite distance, chambers of $\tilM_1$ into chambers
of $\tilM_2$. In order to prove this fact, we will use the technology of \emph{asymptotic cones},
which we now briefly describe. 

\section{The asymptotic cone of a geodesic metric space}

Roughly speaking, 
the asymptotic cone of a metric space gives a picture of the metric
space as ``seen from infinitely far away''. It was introduced by Gromov in~\cite{gropol},
and formally defined in~\cite{wilkie}. 

A \emph{filter} on $\mN$ is a set $\omega\subseteq \calP(\mN)$ satisfying the following conditions:
\begin{enumerate}
\item
$\emptyset\notin \omega$;
\item
$A,B\in \omega\ \Longrightarrow\  A\cap B\in \omega$;
\item 
$A\in \omega,\ B\supseteq A\ \Longrightarrow\ B\in\omega$.
\end{enumerate}
For example, the set of complements of finite subsets of $\mN$
is a filter on $\mN$, known as the \emph{Fr\'echet filter} on $\mN$.

A filter $\omega$ is a \emph{ultrafilter} if for every $A\subseteq\mN$
we have either $A\in\omega$ or $A^c\in\omega$, where $A^c :=\mN\setminus A$.
For example, fixing an element $a\subset \mN$, we can take the associated
\emph{principal ultrafilter} to consist of all subsets of $\mathbb N$ which 
contain $a$. An ultrafilter is \emph{non-principal} if it does not contain any 
finite subset of $\mN$. 

It is readily seen that a filter is an ultrafilter if and only if it is maximal
with respect to inclusion. Moreover, an easy application of Zorn's Lemma
shows that any filter is contained in a maximal one. Thus, non-principal
ultrafilters exist (just take any maximal filter containing the Fr\'echet filter). 

From this point on, let us fix a non-principal ultrafilter $\omega$ on $\mN$.
As usual, we say that a statement $\mathcal{P}_i$ depending on $i\in\mathbb{N}$ holds 
$\omega$-a.e.~if the set of indices such that $\mathcal{P}_i$ holds belongs to $\omega$. 
If $X$ is a topological space, and $(x_i)\subseteq X$ is a  
sequence in $X$, 
we say that $\omega$-$\lim x_i =x_\infty$ if 
$x_i\in U$ $\omega$-a.e. for every neighbourhood $U$ of $x_\infty$.
When $X$ is Hausdorff, an $\omega$-limit of a sequence, if it
exists, is unique. Moreover, any sequence in any compact space admits
an $\omega$-limit. For example, any sequence $(a_i)$ in
$[0,+\infty]$ admits a unique $\omega$-limit.

Now let $(X_i,x_i,d_i)$, $i\in\mN$, be a sequence of pointed metric spaces. 
Let $\calC$ be the set of sequences $(y_i), y_i\in X_i$,
such that $\omega$-$\lim {d_i(x_i,y_i)}<+\infty$, and consider the 
following equivalence
relation on $\calC$:
$$
(y_i)\sim (z_i)\quad \Longleftrightarrow \quad
\omega\text{-}\lim {d_i(y_i,z_i)}=0.
$$
We set $\omega$-$\lim (X_i,x_i,d_i)=\calC/_\sim$, and we endow 
$\omega$-$\lim (X_i,x_i,d_i)$ with the well-defined distance given by
$d_\omega \big([(y_i)],[(z_i)]\big)=\omega$-$\lim {d_i(y_i,z_i)}$. 
The pointed metric space $(\omega$-$\lim (X_i,x_i,d_i),d_\omega)$ is called the 
\emph{$\omega$-limit} of the pointed metric spaces $X_i$.

Let $(X,d)$ be a metric space, $(x_i)\subseteq X$ a sequence of
base-points, and $(r_i)\subset \mR^+$ a sequence of rescaling factors
diverging to infinity. We introduce the notation $(X_\omega ((x_i),(r_i)),d_\omega):=
\omega$-$\lim (X_i,x_i,{d}/{r_i})$.

\begin{definition}\label{cone:def}
The metric space $\big(X_\omega\big((x_i),(r_i)\big),d_\omega\big)$ 
is the \emph{asymptotic cone}
of $X$ with respect to the ultrafilter $\omega$, 
the basepoints $(x_i)$ and the rescaling factors $(r_i)$. 
For conciseness, we will occasionally just write $X_\omega\big((x_i),(r_i)\big)$
for the asymptotic cone, the distance being implicitly understood to
be $d_\omega$.
\end{definition}

If $\omega$ is fixed and $(a_i)\subseteq \mR$ is any sequence, we say that $(a_i)$
is $o(r_i)$ (resp.~$O(r_i)$) if $\omega$-$\lim {a_i}/{r_i}=0$ (resp.~$\omega$-$\lim {|a_i|}/{r_i}<\infty$).

Let $(x_i)\subseteq X$, $(r_i)\subseteq \mR$ be fixed sequences of basepoints
and rescaling factors, and set $X_\omega=(X_\omega((x_i),(r_i)),d_\omega)$.
Sequences of subsets in $X$ give rise
to subsets of $X_\omega$: if for every $i\in\mN$ we are given a subset
$\emptyset\neq A_i\subseteq X$, we set
$$
\omega\text{-}\lim A_i=\{[(p_i)]\in X_\omega\, |\, p_i\in A_i\ {\rm for\ every}\ i\in\mN\}.
$$
It is easily seen that for any choice of the $A_i$'s, the set $\omega$-$\lim A_i$
is closed in $X_\omega$. Moreover, $\omega$-$\lim A_i\neq\emptyset$ if and only
if the sequence $(d(x_i,A_i))$ is $O(r_i)$.

\section{Quasi-isometries and asymptotic cones}
We are interested in describing how quasi-isometries asymptotically define
bi-Lipschitz homeomorphisms. In order to do this, and to fix some notations, 
we recall some basic results about $\omega$-limits of quasi-isometries and quasi-geodesics.

%Let $\omega$ be a fixed non-principal ultrafilter, and
Suppose that $(Y_i,y_i,d_i)$, $i\in\mN$ are pointed metric spaces, and that $(X,d)$
is a metric space. Let $(x_i)\subseteq X$ be a 
sequence of basepoints and $(r_i)\subset\mR$ a sequence of rescaling factors.
Until the end of the section, to simplify the notation, we set $X_\omega:=(X_\omega,(x_i),(r_i))$.
The following result is well-known (and very easy):

\begin{lemma}\label{quasiasymp:lem}
Suppose $(k_i)\subseteq \mR^+$, $(c_i)\subseteq \mR^+$ are sequences
satisfying $k=\omega$-$\lim k_i<\infty$, and $c_i=o(r_i)$.
For each $i\in\mN$, let $f_i\colon Y_i\to X$ be a map
with the property that for every $y,y'\in Y_i$, the inequality
$$
d(f_i (y),f_i(y'))\leq k_i d_i (y,y')+c_i
$$
holds. If $d (f_i (y_i),x_i)=O(r_i)$, then the formula $[(p_i)]\mapsto
[f_i(p_i)]$ provides a well-defined map $f_\omega\colon \omega$-$\lim (Y_i,y_i,d_i/r_i)\to
X_\omega$. Moreover, $f_\omega$ is $k$-Lipschitz,
whence continuous. If $k >0$ and 
$$
d(f_i (y),f_i (y'))\geq \frac{d_i (y,y')}{k_i}-c_i
$$
is also satisfied (i.e.~if $f_i$ is a $(k_i,c_i)$-quasi-isometric embedding), then
$f_\omega$ is a $k$-bi-Lipschitz embedding.
\end{lemma}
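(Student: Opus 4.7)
The plan is to unpack the definitions directly and verify each claim by a routine ultralimit computation, with the three hypotheses on $(k_i)$, $(c_i)$, and the basepoint displacement $d(f_i(y_i),x_i)$ chosen exactly so that every estimate survives rescaling by $r_n$ and passing to the $\omega$-limit.

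First I would check that the formula $[(p_i)] \mapsto [(f_i(p_i))]$ actually lands in $X_\omega$. Given $[(p_i)]\in \omega\text{-}\lim(Y_i,y_i,d_i/r_i)$, by definition $\omega\text{-}\lim d_i(y_i,p_i)/r_i<\infty$, and then the Lipschitz-up-to-additive-constant bound on $f_i$ together with the triangle inequality yields
$$\frac{d(f_i(p_i),x_i)}{r_i}\leq k_i\cdot\frac{d_i(y_i,p_i)}{r_i}+\frac{c_i}{r_i}+\frac{d(f_i(y_i),x_i)}{r_i},$$
whose $\omega$-limit is finite since $\omega\text{-}\lim k_i=k<\infty$, $c_i=o(r_i)$, and the last term is $O(1)$ by hypothesis. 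The same inequality applied to two representatives $(p_i),(p_i')$ of the same class, for which $\omega\text{-}\lim d_i(p_i,p_i')/r_i=0$, shows $\omega\text{-}\lim d(f_i(p_i),f_i(p_i'))/r_i=0$, so the assignment descends to equivalence classes and $f_\omega$ is well-defined.

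For the Lipschitz bound, I would just read off the definition of $d_\omega$ and apply the upper estimate on $f_i$ pointwise before taking the $\omega$-limit: the inequality
$$\frac{d(f_i(p_i),f_i(p_i'))}{r_i}\leq k_i\cdot\frac{d_i(p_i,p_i')}{r_i}+\frac{c_i}{r_i}$$
gives, upon passing to the $\omega$-limit, the desired bound $d_\omega(f_\omega[(p_i)],f_\omega[(p_i')])\leq k\cdot d_\omega([(p_i)],[(p_i')])$, using that $\omega$-limits respect sums and products of bounded sequences and that $c_i/r_i\to 0$ along $\omega$. In the biLipschitz case, assuming $k>0$ and the lower quasi-isometric estimate, the symmetric computation starting from $d(f_i(p),f_i(p'))\geq d_i(p,p')/k_i - c_i$ produces $d_\omega(f_\omega[(p_i)],f_\omega[(p_i')])\geq (1/k)\cdot d_\omega([(p_i)],[(p_i')])$, completing the $k$-biLipschitz claim.

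There is no genuine obstacle here; the content of the lemma is bookkeeping about how the three scales $k_i$, $c_i$, $r_i$ interact under $\omega$-limits. The only mildly delicate points are (i) noting that the hypothesis $d(f_i(y_i),x_i)=O(r_i)$ is exactly what is needed at the well-definedness step so that the image sequence defines a point of $X_\omega$, and (ii) observing that no continuity assumption on the $f_i$ is required, because $f_\omega$ is defined on sequences rather than on individual points and the quasi-isometric bounds already control the behavior of $f_i$ at the scale $r_i$.
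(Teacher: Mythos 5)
Your proof is correct. The paper states this lemma without proof (it is prefaced by ``The following result is well-known (and very easy)''), and your direct ultralimit computation is exactly the routine verification the authors had in mind: the hypothesis $d(f_i(y_i),x_i)=O(r_i)$ is used precisely to guarantee the image sequence lands in $X_\omega$, independence of representatives and the $k$-Lipschitz bound both follow from dividing the upper estimate by $r_i$ and passing to $\omega$-limits with $c_i/r_i\to 0$, and for the biLipschitz statement the lower estimate goes through in the same way once one notes that $k>0$ forces $\omega\text{-}\lim(1/k_i)=1/k$. Nothing is missing.
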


As a corollary, quasi-isometric metric spaces have bi-Lipschitz homeomorphic asymptotic cones.
We recall that a $(k,c)$-quasi-geodesic in $X$
is a $(k,c)$-quasi-isometric embedding of a (possibly unbounded) interval
in $X$. 

\begin{lemma}\label{quasigeod1:lem}
Suppose $(k_i)\subseteq \mR^+$, $(c_i)\subseteq \mR^+$ are sequences
satisfying $k=\omega$-$\lim k_i<\infty$, and $c_i=o(r_i)$.
For each $i\in\mN$,
let $\gamma_i\colon [a_i,b_i]\to X$ be a $(k_i,c_i)$-quasi-geodesic
with image $H_i={\rm Im}\, \gamma_i$, and assume that $d(x_i,H_i)=O(r_i)$. 
Then up to precomposing $\gamma_i$ with a translation of $\mR$,
we may suppose that $0$ is the basepoint of $[a_i,b_i]$, and that the sequence $(\gamma_i)$ induces
a $k$-bi-Lipschitz path 
$$
\gamma_\omega\colon [\omega\text{-}\lim (a_i/r_i),\omega\text{-}\lim (b_i/r_i)] \to X_\omega.
$$
Moreover, we have ${\rm Im}\, \gamma_\omega=\omega$-$\lim H_i$.
\end{lemma}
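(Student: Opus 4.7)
The strategy is to reduce everything to a direct application of Lemma~\ref{quasiasymp:lem}. First I would fix a sequence of times $t_i\in [a_i,b_i]$ such that $d(x_i,\gamma_i(t_i))$ almost realizes $d(x_i,H_i)$, so that $d(x_i,\gamma_i(t_i))=O(r_i)$ by hypothesis. Precomposing $\gamma_i$ with the translation $s\mapsto s+t_i$, I may henceforth assume that $0\in[a_i,b_i]$ and $d(x_i,\gamma_i(0))=O(r_i)$. Setting $A=\omega\text{-}\lim a_i/r_i$ and $B=\omega\text{-}\lim b_i/r_i$, a straightforward check identifies the $\omega$-limit of the pointed rescaled intervals $([a_i,b_i],0,d/r_i)$ with the interval $[A,B]\subseteq\mR$ equipped with its standard metric (sequences of real numbers bounded by a multiple of $r_i$ converge in the $\omega$-limit to their rescaled $\omega$-limits).

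With this identification, Lemma~\ref{quasiasymp:lem} applies verbatim to the $(k_i,c_i)$-quasi-isometric embeddings $\gamma_i\colon [a_i,b_i]\to X$, since the hypothesis $d(\gamma_i(0),x_i)=O(r_i)$ is now satisfied. This produces a well-defined $k$-bi-Lipschitz embedding $\gamma_\omega\colon [A,B]\to X_\omega$ given by $\gamma_\omega([(s_i)])=[(\gamma_i(s_i))]$; note that $k\geq 1$ since $k_i\geq 1$, so the bi-Lipschitz clause of Lemma~\ref{quasiasymp:lem} does apply.

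It remains to identify the image of $\gamma_\omega$ with $\omega\text{-}\lim H_i$. The inclusion $\mathrm{Im}\,\gamma_\omega\subseteq\omega\text{-}\lim H_i$ is immediate from the definition of $\gamma_\omega$, since $\gamma_i(s_i)\in H_i$ for every $s_i\in[a_i,b_i]$. For the reverse inclusion, given a point $[(p_i)]\in\omega\text{-}\lim H_i$, I would write $p_i=\gamma_i(s_i)$ for suitable $s_i\in[a_i,b_i]$ and argue that $s_i=O(r_i)$. Indeed, the triangle inequality gives $d(\gamma_i(s_i),\gamma_i(0))\leq d(p_i,x_i)+d(x_i,\gamma_i(0))=O(r_i)$, and the lower quasi-isometry bound $|s_i|\leq k_i\bigl(d(\gamma_i(s_i),\gamma_i(0))+c_i\bigr)$, combined with $\omega\text{-}\lim k_i=k<\infty$ and $c_i=o(r_i)$, yields $|s_i|=O(r_i)$. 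Thus $s:=\omega\text{-}\lim s_i/r_i$ is a well-defined element of $[A,B]$, and by construction $\gamma_\omega(s)=[(p_i)]$.

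None of the steps above presents a substantive obstacle: the only mild care is required in the reparametrization that places $0$ in the domain and in verifying the bound $|s_i|=O(r_i)$ from the quasi-isometric embedding inequality, so that an arbitrary sequence in $\omega\text{-}\lim H_i$ indeed comes from parameters with a well-defined rescaled $\omega$-limit.
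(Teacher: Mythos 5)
Your proof is correct and takes exactly the route the paper intends: the bi-Lipschitz path is an immediate application of Lemma~\ref{quasiasymp:lem} once you reparametrize so that $0\in[a_i,b_i]$ with $d(x_i,\gamma_i(0))=O(r_i)$, and the only substantive point — the image identification — you handle by the same triangle-inequality-plus-lower-quasi-isometry-bound estimate that the paper has in mind when it ``leaves the last assertion to the reader.'' Your write-up actually supplies the details the paper omits, and they check out.
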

\begin{proof}
The only non-trivial (but easy) assertion is the last one, which we leave to the reader.
\end{proof}

The following result extends the previous lemma to the case of
Lipschitz loops. For every $r>0$ we denote by $r\cdot S^1$ the circle of length $2\pi r$. Using that $S^1$ is compact,
it is immediate to check that $\omega{\textrm -}\lim \frac{1}{r_i} (r_i\cdot S^1)$ may be identified with $S^1$ independently of the choice of the basepoints involved in the definition
of the $\omega$-limit. 

\begin{lemma}\label{quasigeod2:lem}
Suppose that the sequence $(k_i)\subseteq \mR^+$
satisfies $k=\omega$-$\lim k_i<\infty$.
For each $i\in\mN$,
let $\gamma_i\colon r_i\cdot S^1 \to X$ be a $k_i$-Lipschitz loop
with image $H_i={\rm Im}\, \gamma_i$, and assume that $d(x_i,H_i)=O(r_i)$. 
Then 
%up to precomposing $\gamma_i$ with a translation of $\mR$,
%we may suppose that $0$ is the basepoint of $[a_i,b_i]$, and that 
the sequence $(\gamma_i)$ induces
a $k$-Lipschitz loop 
$$
\gamma_\omega\colon S^1 \to X_\omega.
$$
such that ${\rm Im}\, \gamma_\omega=\omega$-$\lim H_i$.
\end{lemma}
\begin{proof}
 The proof is left to the reader.
\end{proof}

The previous results assert that coarsely Lipschitz paths and Lipschitz loops give rise to Lipschitz paths and loops
in the asymptotic cone (in fact, in the case of loops the Lipschitz
condition could be replaced by the analogous coarse Lipschitz condition, but this is not relevant to our purposes). The next lemma shows a type of converse to this result.

\begin{lemma}\label{lift2:lem}
Assume $X$ is a geodesic space, let $Y=[0,1]$ or $Y=S^1$, and let $\gamma_\omega\colon Y\to X_\omega$ be a $k$-Lipschitz 
path. Let also $Y_i=[0,r_i]$ (if $Y=[0,1]$) or $Y_i=r_i\cdot S^1$ (if $Y=S^1$).
Then, for every $\varepsilon>0$ there exists a sequence of $(k+\varepsilon)$-Lipschitz paths
$\gamma_i\colon Y_i\to X$ with the following properties:
\begin{itemize}
\item 
$d(x_i,\gamma_i (0))=O(r_i)$, so if
$Y=\omega$-$\lim \frac{1}{r_i}Y_i$ then $(\gamma_i)$ defines a $(k+\varepsilon)$-Lipschitz
path $\omega$-$\lim \gamma_i\colon Y\to X_\omega$;
\item
$\omega$-$\lim \gamma_i=\gamma_\omega$.
\end{itemize}
\end{lemma}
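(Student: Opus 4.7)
The plan is to construct $\gamma_n$ as a piecewise-geodesic path in $X$ that interpolates, on a finite partition of $[0,1]$, representatives in $X$ of values of $\gamma_\omega$, and then to let the partition refine as $n$ grows by a diagonal argument. The delicate point is to ensure that the $\omega$-limit of the resulting sequence of paths equals $\gamma_\omega$ exactly, not merely a uniform approximation of it. This is why one fixes the representatives of the interpolation points in advance along a countable dense subset of $[0,1]$, before ever choosing the partitions or performing the diagonal.

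Fix a countable dense sequence $(t_k)_{k\ge 0}$ in $[0,1]$ with $t_0=0$, $t_1=1$, and for every $k$ choose once and for all a representative $(q_n^k)_{n\in\mN}$ of $\gamma_\omega(t_k)\in X_\omega$. For every integer $m\ge 1$, reorder $\{t_0,\ldots,t_m\}$ increasingly as $0=s_0^m<s_1^m<\cdots<s_m^m=1$ with $s_i^m=t_{\tau(i,m)}$; the mesh of this partition tends to $0$ as $m\to\infty$ by density. Since $X$ is geodesic, we may define $\gamma_n^m\colon [0,r_n]\to X$ by taking, on each subinterval $[s_i^m r_n,s_{i+1}^m r_n]$, a constant-speed parametrization of a geodesic of $X$ from $q_n^{\tau(i,m)}$ to $q_n^{\tau(i+1,m)}$.

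For fixed $m$ and $i$, the Lipschitz constant of $\gamma_n^m$ on the $i$-th piece equals $d(q_n^{\tau(i,m)},q_n^{\tau(i+1,m)})/((s_{i+1}^m-s_i^m)r_n)$, and its $\omega$-limit is $d_\omega(\gamma_\omega(s_i^m),\gamma_\omega(s_{i+1}^m))/(s_{i+1}^m-s_i^m)\le k$ since $\gamma_\omega$ is $k$-Lipschitz. As the partition has only finitely many pieces, a finite intersection of sets in $\omega$ produces $A_m\in\omega$ on which $\gamma_n^m$ is $(k+\varepsilon)$-Lipschitz. Moreover $\gamma_n^m(0)=q_n^0$ represents $\gamma_\omega(0)\in X_\omega$, so $d(x_n,\gamma_n^m(0))=O(r_n)$ automatically.

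Now diagonalize: set $B_M=A_1\cap\cdots\cap A_M$, which still lies in $\omega$, and pick $m\colon\mN\to\mN$ tending to infinity with $n\in B_{m(n)}$ holding $\omega$-a.e.\ (this is possible because $\omega$ is non-principal, so one may take $m(n)$ to be the largest $M\le n$ with $n\in B_M$). Define $\gamma_n:=\gamma_n^{m(n)}$. Then $\gamma_n$ is $(k+\varepsilon)$-Lipschitz $\omega$-a.e.\ and $d(x_n,\gamma_n(0))=O(r_n)$, so Lemma~\ref{quasiasymp:lem} applied to the $\gamma_n$ yields a well-defined $(k+\varepsilon)$-Lipschitz path $\omega$-$\lim\gamma_n\colon [0,1]\to X_\omega$. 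For every $t_k$ in the fixed dense sequence, as soon as $m(n)\ge k$ (which happens $\omega$-a.e.) the point $t_k$ is one of the partition points used to build $\gamma_n^{m(n)}$, whence $\gamma_n(t_k r_n)=q_n^k$ and therefore $\omega$-$\lim\gamma_n(t_k r_n)=\gamma_\omega(t_k)$. Since both $\omega$-$\lim\gamma_n$ and $\gamma_\omega$ are Lipschitz and agree on the dense set $\{t_k\}$, they agree on all of $[0,1]$.

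The main obstacle is precisely this last diagonalization: interpolating along any fixed partition only approximates $\gamma_\omega$ up to the mesh, and a careless refinement would destroy the uniform $(k+\varepsilon)$-Lipschitz bound. Fixing representatives of the interpolation values on a countable dense set \emph{once and for all}, before choosing the partition sizes $m(n)$, is the key device that lets the approximation error collapse to zero in the $\omega$-limit while the Lipschitz estimate survives the diagonal.
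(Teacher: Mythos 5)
Your proposal is correct and follows essentially the same strategy as the paper's proof: fix representatives for $\gamma_\omega$ on a countable dense set once and for all, interpolate piecewise-geodesically along a partition from that set, observe that each fixed partition gives a $(k+\varepsilon)$-Lipschitz path on an $\omega$-large set of indices, then diagonalize so the partition refines as $n$ grows while remaining in the appropriate $\omega$-large set, and finally conclude by agreement on the dense set plus continuity. The only cosmetic difference is that the paper hardwires the dense set to be the dyadic rationals (which makes the nested sets $A_j$ decreasing automatically, rather than via the intersections $B_M$ you introduce), and packages the diagonal choice through an auxiliary function $j'(i)$; the substance is identical.
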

\begin{proof}
We prove the statement under the assumption that $Y=[0,1]$, the case when $Y=S^1$ being
analogous.

For every $t\in [0,1]$ set $p^t=\gamma_\omega (t)=[(p^t_i)]$, and for every $j\in\mN$ let
$A_j\subseteq \mN$ be the set of indices $i\in\mN$ such that
$$ d(p_i^t,p_i^{t'})\leq \left(1+\frac{\varepsilon}{k}\right)r_i d_\omega (p^t,p^{t'})$$ 
for every
$t=h\cdot 2^{-j}, t'=h'\cdot 2^{-j},\, h,h'\in\mZ,\, 0\leq h,h'\leq 2^j$.
By construction we have $A_{j+1}\subseteq A_j$ and $A_j\in \omega$ for every $j\in\mN$.
For every $i\in\mN$, let 
$$
j(i)=\sup \{j\in\mN\, |\, i\in A_j\}\in \mN\cup\{\infty\},
$$ 
and set $j'(i)=i$ if $j(i)=\infty$ and $j'(i)=j(i)$ otherwise. 
By the nature of the construction, we have $i\in A_{j'(i)}$.
For every $i\in\mN$, 
we define the curve $\gamma_i\colon [0,r_i]\to X$ as follows:
if $h\in\{0,1,\ldots,2^{j'(i)}-1\}$, then the restriction of $\gamma_i$
to the interval $[hr_i 2^{-j'(i)}, (h+1)r_i 2^{-j'(i)}]$ 
is a linear parameterization of a geodesic joining 
$p_i^{h 2^{-j'(i)}}$ with $p_i^{(h+1) 2^{-j'(i)}}$. Since $i\in A_{j'(i)}$ 
each such restriction is $(k+\varepsilon)$-Lipschitz, so $\gamma_i$ is 
$(k+\varepsilon)$-Lipschitz. It readily follows that $\omega$-$\lim \gamma_i$ is 
$(k+\varepsilon)$-Lipschitz, and in particular continuous. Thus, in order to show
that $\omega$-$\lim \gamma_i=\gamma_\omega$ it is sufficient to show that 
$(\omega$-$\lim \gamma_i) (t)=\gamma_\omega (t)=p^t$ for every $t$ of the form
$h2^{-j}$, $h,j\in\mN$. However, if $t=h2^{-j}$ by construction we have
$$
\begin{array}{lll}
\{i\in\mN\, |\, \gamma_i (t)=p_i^t\} &\supseteq&
\{i\, |\, j\leq j(i)<\infty\}\cup\big( \{i\, |\, j(i)=\infty\}\cap \{i\, |\, i\geq j\}\big) \\
&\supseteq& \{i\, |\, j\leq j(i)\}\cap \{i\, |\, i\geq j\} \\
&\supseteq& A_j\cap \{i\, |\, i\geq j\} \in \omega\ .
\end{array}
$$ 
As a result, for each $t=h2^{-j}$, we have that $\omega$-$\lim \gamma_i (t)=[(\gamma_i (t))]=[(p_i^t)]=
\gamma_\omega (t)$, whence the conclusion. 
\end{proof}

\section{Tree-graded spaces}\label{treegr:subsec}
We are going to need some results about the asymptotic cones 
of complete hyperbolic manifolds of finite volume. The following definitions are taken from~\cite{dru}. If $X$ is a set, then we denote by $|X|$ the cardinality of $X$.

\begin{definition}
A geodesic metric space $X$ is said to be \emph{tree-graded} 
with respect to a collection of closed subsets $\{P_i\}_{i\in I}$, 
called \emph{pieces}, if
\begin{enumerate}
\item
$\bigcup P_i=X$,
\item
$|P_i\cap P_j|\leq 1$ if $i\neq j$,
\item
any simple geodesic triangle in $X$  is contained in a single piece.
\end{enumerate}
\end{definition}

\begin{definition}\label{as-tree:def}
A geodesic metric space $X$ is \emph{asymptotically tree-graded} with respect to a collection of subsets
$A=\{H_i\}_{i\in I}$ if the following conditions hold: 
\begin{enumerate}
\item 
for each choice of basepoints $(x_i)\subseteq X$ and rescaling factors $(r_i)$, 
the associated asymptotic cone $X_\omega=X_\omega((x_i),(r_i))$ is tree-graded
with respect to the collection of subsets 
$\mathcal{P}=\{\omega$-$\lim H_{i(n)}\, |\, H_{i(n)}\in A\}$, and
\item if $\omega$-$\lim H_{i(n)}= 
\omega$-$\lim H_{j(n)}$, where $i(n),j(n)\in I$, then $i(n)=j(n)$ $\omega$-a.e.
\end{enumerate}
\end{definition}

We summarize in the following lemmas some properties of tree-graded spaces
which are proved in~\cite{dru}  and will be useful later.

\begin{lemma}\label{treegr:lem}
Let $P,P'$ be distinct pieces of a tree-graded space $Y$. Then there exist $p\in P$, $p'\in P'$
such that,
for any continuous path $\gamma\colon [0,1]\to Y$ with $\gamma (0)\in P$ and $\gamma (1)\in P'$,
we have $p, p'\in {\rm Im}\, \gamma$. 
\end{lemma}
\begin{proof}
If $Q$ is a piece of $Y$, then a projection
$Y\to Q$ is defined
in~\cite[Definition 2.7]{dru}. By~\cite[Lemma 2.6]{dru}
the piece $P'$ is connected. Since $|P'\cap P|\leq 1$,  
by~\cite[Corollary 2.11]{dru}
the projection of $P'$ onto $P$ consists of a single point $p$. In the same
way, the projection of $P$ onto $P'$ consists of a single point $p'$.
Now the conclusion follows from~\cite[Corollary 2.11]{dru}.
\end{proof}

\begin{lemma}[Lemma 2.15 in \cite{dru}]\label{newdrutu:lem}
Let $A$ be a path-connected subset of $Y$ without a cut-point. Then $A$ is contained in a piece.
\end{lemma}

The following theorem 
establishes an important bridge between 
the study of tree-graded spaces and the analysis of the asympotic
cones of universal coverings of irreducible graph manifolds.
It is a consequence of 
the fundamental work of Farb on relatively hyperbolic groups~\cite{farb},
and of the characterization of relative hyperbolicity provided in~\cite{dru}.

\begin{theorem}\label{neutered:relhyp}
Let $B$ be a neutered space obtained as the complement in $\mathbb{H}^n$ of an equivariant family
of pairwise disjoint open horoballs, and let $H$ be the collection of the boundary
components of $B$. Then $B$, endowed with its path metric, is asymptotically tree-graded with respect to $H$. Moreover, each piece of any asympotic cone of $B$ is isometric
to $\mR^{n-1}$.
\end{theorem}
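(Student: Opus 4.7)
The plan is to deduce this from known results about relatively hyperbolic groups, rather than attempt a direct geometric argument on $B$ itself. Since $B$ is a neutered space, there is a torsion-free discrete subgroup $\Gamma < \mathrm{Isom}(\mathbb{H}^n)$ acting on $B$ properly, cocompactly and by isometries, with quotient a truncated cusped hyperbolic manifold $N=B/\Gamma$. Let $\{H_1,\ldots,H_l\}$ be representatives of the conjugacy classes of maximal parabolic subgroups of $\Gamma$, so that each cusp of $N$ corresponds to one $H_i$ and every horosphere in $H$ projects to some cusp cross-section of $N$.

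First, I would invoke the theorem of Farb (as cited already in the proof of Proposition~\ref{osin:prop}) that $\Gamma$ is relatively hyperbolic (in the strong/metric sense, equivalently satisfying the Bounded Coset Penetration property) with respect to the peripheral collection $\{H_1,\ldots,H_l\}$. Next I would apply the main characterization theorem of Dru\c{t}u-Sapir, which says that a group $\Gamma$ is relatively hyperbolic with respect to a family of subgroups $\{H_1,\ldots,H_l\}$ if and only if every asymptotic cone of (any Cayley graph of) $\Gamma$ is tree-graded with respect to the collection of $\omega$-limits of left cosets of the $H_i$'s, and moreover that distinct sequences of cosets give distinct pieces. This directly verifies Definition~\ref{as-tree:def} for $\Gamma$ with peripheral collection equal to the left cosets $\{gH_i\ |\ g\in \Gamma,\ 1\leq i\leq l\}$.

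Now I would transfer this property from $\Gamma$ to $B$ via the Milnor-Svarc quasi-isometry $\varphi\colon \Gamma\to B$ (applied as in the proof of Proposition~\ref{osin:prop}). By Lemma~\ref{quasiasymp:lem}, $\varphi$ induces a biLipschitz homeomorphism between any asymptotic cone of $\Gamma$ and the corresponding asymptotic cone of $B$. The key geometric fact I would need is that $\varphi$ sends left cosets of cusp subgroups to uniformly Hausdorff-close neighborhoods of horospheres in $H$, and conversely every horosphere in $H$ is the quasi-image of a unique left coset: this is immediate from the observation that the stabilizer in $\Gamma$ of each horosphere $O\subseteq \partial B$ is precisely a conjugate of some $H_i$, which acts cocompactly on $O$, together with the fact (used already earlier) that distinct horospheres lie at infinite Hausdorff distance from each other. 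Consequently, under the induced biLipschitz homeomorphism of asymptotic cones, the $\omega$-limits of sequences of left cosets of cusp subgroups correspond bijectively to the $\omega$-limits of sequences of horospheres from $H$. Combining this bijection with the Dru\c{t}u-Sapir conclusion for $\Gamma$ yields both clauses of Definition~\ref{as-tree:def} for $B$ with peripheral family $H$.

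The main technical obstacle in this plan is verifying condition~(2) of Definition~\ref{as-tree:def}, namely that two sequences $(O_n)$, $(O'_n)$ of horospheres in $H$ yielding the same $\omega$-limit must be $\omega$-a.e.~equal. This is not automatic from the QI-invariance of the tree-graded structure, and one has to run the corresponding argument on the group side (where it is part of the Dru\c{t}u-Sapir statement about distinctness of pieces) and check it descends through the QI: since each horosphere $O_n$ corresponds uniquely, up to bounded error, to a coset $g_n H_{i(n)}$, and equality of $\omega$-limits of such cosets forces them to coincide $\omega$-a.e., the distinctness property passes to $B$. Once this matching is done, all three axioms of tree-graded in the cone follow from the $\Gamma$-side statement via the biLipschitz homeomorphism, since tree-gradedness is formulated purely in terms of the metric and the family of pieces.
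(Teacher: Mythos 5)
Your proposal is correct and takes the same route the paper implicitly relies on: the paper does not prove Theorem~\ref{neutered:relhyp} itself, but simply cites Farb~\cite{farb} for relative hyperbolicity of $\pi_1(N)$ with respect to its cusp subgroups and Dru\c{t}u--Sapir~\cite{dru} for the characterization of relative hyperbolicity via asymptotically tree-graded Cayley graphs. Your argument fills in exactly the details that citation leaves implicit, in particular the Milnor--Svarc transfer from the Cayley graph to $B$ (matching left cosets of cusp subgroups with horospheres up to uniformly bounded Hausdorff error) and the verification that condition~(2) of Definition~\ref{as-tree:def} descends through that bijective correspondence.
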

\begin{proof}
 The main result of~\cite{farb} ensures that the fundamental group of a complete Riemannian manifold of finite volume with pinched negative curvature is relatively hyperbolic with respect to cusp subgroups. Moreover, by~\cite[Theorem 1.11]{dru}, a finitely generated
group is relatively hyperbolic (with respect to a family of subgroups) if and only
if it is asymptotically tree-graded (with respect to the corresponding family of 
left cosets
of subgroups). The first statement now follows from the fact that, if $B$ is the universal covering of $N$, then Milnor-Svarc Lemma implies that $B$ is quasi-isometric
to the fundamental group of $N$ via a quasi-isometry inducing a bijection
between the components
of $\partial B$ and the cosets of the cusp subgroups of $\pi_1(N)$. The second statement
follows from the fact that each component of $\partial B$ is isometric
to $\mR^{n-1}$.
\end{proof}

\section{The asymptotic cone of $\widetilde{M}$}

Let $M$ be an \emph{irreducible} graph manifold with universal covering $\tilM$. 
Let $\omega$ be any non-principal ultrafilter on $\mN$, let $(x_i)\subseteq \tilM$,
$(r_i)\subseteq \mR$ be fixed sequences of basepoints and rescaling factors,
and set $\tilM_\omega=(\tilM_\omega, (x_i),(r_i))$.

\begin{definition}
An $\omega$-\emph{chamber} (resp.~$\omega$-wall, $\omega$-fiber)
in $\tilM_\omega$ is a subset $X_\omega\subseteq \tilM_\omega$
of the form $X_\omega=\omega\text{-}\lim X^i$,
where each $X^i\subseteq \tilM$ is a chamber (resp.~a wall, a fiber). 
\end{definition}

We say that an $\omega$-wall $W_\omega=\omega{\textrm -}\lim W_i$ is a \emph{boundary}
(resp.~\emph{internal}) $\omega$-wall if $W_i$ is a boundary (resp.~internal)
wall $\omega$-a.e. The following lemma ensures that these notions are indeed
well-defined:

\begin{lemma}\label{unique:sequence:lem}
Let $C_\omega$ (resp.~$W_\omega$) be an $\omega$-chamber (resp.~an $\omega$-wall) of $\tilM_\omega$, and suppose
that $C_\omega=\omega{\textrm -}\lim C_i=\omega{\textrm -}\lim C'_i$ (resp.~$W_\omega=\omega{\textrm -}\lim W_i=\omega{\textrm -}\lim W'_i$). Then 
$C_i=C'_i$ $\omega$-a.e. (resp~$W_i=W'_i$ $\omega$-a.e.).
\end{lemma}
\begin{proof}
The conclusion follows from the fact that distinct chambers (resp.~walls) of $\tilM$ lie at infinite Hausdorff
distance one from the other (see Lemma~\ref{prefacile2:lem} and Corollary~\ref{prefacile3:lem}).
\end{proof}

In the next sections we will describe some analogies between the decomposition of a tree-graded space into its pieces
and the decomposition of $\tilM_\omega$ into its $\omega$-walls. We first observe that 
a constant $k$ exists such that each point of $\tilM$ has distance
at most $k$ from some wall, so every point of $\tilM_\omega$ lies in some $\omega$-wall.
Lemma~\ref{newdrutu:lem} implies that, in a tree-graded space, subspaces
homeomorphic to Euclidean spaces of dimension bigger than one are contained in pieces.
The main result of this section shows that a similar phenomenon occurs in our context: in fact,
in Proposition~\ref{wall:char:prop}  we prove that $\omega$-walls can be characterized as the only subspaces of
$\tilM_\omega$ which are bi-Lipschitz homeomorphic to $\mathbb{R}^{n-1}$. As a consequence, 
every bi-Lipschitz homeomorphism of $\tilM_\omega$ preserves the decomposition
of $\tilM_\omega$ into $\omega$-walls. Together
with an argument which allows us to recover quasi-isometries
of the original spaces from homeomorphisms of asymptotic cones, 
this will allow us to prove  
Theorem~\ref{qi-preserve:thm}. We will prove Proposition~\ref{wall:char:prop}
by contradiction: with some effort we will show that any bi-Lipschitz copy of $\mR^{n-1}$
in $\tilM_\omega$ which is not contained in an $\omega$-wall is disconnected
by a suitably chosen $\omega$-fiber. This will provide the required contradiction,
since $\omega$-fibers are too small to disconnect bi-Lipschitz copies of $\mR^{n-1}$ (see Lemma~\ref{fundamental:lem}).

The following lemma is a direct consequence of the description of the quasi-isometry type of walls and fibers given in Corollary~\ref{fibre:cor}.

\begin{lemma}\label{omegapezzi:lem}
There exists $k\geq 1$ such that
 every $\omega$-wall of $\tilM$ is $k$-bi-Lipschitz homeomorphic to
$\mR^{n-1}$, and
every $\omega$-fiber of $\tilM$ is $k$-bi-Lipschitz homeomorphic to
$\mR^{h}$, $h\leq n-3$.
\end{lemma}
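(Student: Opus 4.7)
The plan is to combine two ingredients from Chapter~\ref{strongirr:sec}. First, by irreducibility, walls and fibers sit inside $\widetilde{M}$ as uniformly (bi)Lipschitz/QI subsets: Theorem~\ref{quasi-isom:thm} provides a biLipschitz constant $\lambda_0$, depending only on the geometry of $M$, such that every inclusion $(W,d_W)\hookrightarrow \widetilde{M}$ of a wall is $\lambda_0$-biLipschitz, and Corollary~\ref{fibre:cor} provides an analogous uniform QI constant for the inclusion of fibers. Second, Lemma~\ref{quasiasymp:lem} says that uniformly $(k_i,c_i)$-quasi-isometric embeddings with $\omega\text{-}\lim k_i<\infty$ and $c_i=o(r_i)$ pass to biLipschitz embeddings of asymptotic cones.

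Next I would identify the intrinsic model of walls and fibers. Applying Milnor-Svarc's Lemma to the free $\mZ^{n-1}$-action on each wall, and to the free $\mZ^h$-action on each fiber ($h\leq n-3$), and using the fact that $M$ is compact, so there are only finitely many $\pi_1(M)$-orbits of walls and of fibers, I can choose uniform QI constants $(\lambda_1,c_1)$ and, for each wall $W^i$ (resp.\ fiber $F^i$), a $(\lambda_1,c_1)$-quasi-isometry $f_i\colon (W^i,d_{W^i})\to \mR^{n-1}$ (resp.\ $\to \mR^h$).

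Now, given an $\omega$-wall $W_\omega=\omega\text{-}\lim W^i$, I would apply Lemma~\ref{quasiasymp:lem} in two different ways. Applied to the uniformly $\lambda_0$-biLipschitz inclusions $(W^i,d_{W^i})\hookrightarrow \widetilde{M}$, it yields a $\lambda_0$-biLipschitz identification between $\omega\text{-}\lim(W^i,d_{W^i}/d_n)$ and $W_\omega$ endowed with the metric induced from $\widetilde{M}_\omega$. Applied to the sequence $\{f_i\}$, it yields a $\lambda_1$-biLipschitz embedding from that same $\omega$-limit into the asymptotic cone of $\mR^{n-1}$, which is canonically isometric to $\mR^{n-1}$ itself; surjectivity follows from the uniform coarse density of the images of the $f_i$. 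Composing the two maps produces a $(\lambda_0\lambda_1)$-biLipschitz homeomorphism $W_\omega\to \mR^{n-1}$. The argument for $\omega$-fibers is identical, and taking $k$ to be the maximum of the two resulting constants completes the proof.

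I expect no serious obstacle; the argument is essentially a bookkeeping exercise with asymptotic cones, relying on the structural results of Chapter~\ref{strongirr:sec}. The one point requiring attention is the uniformity of all constants across the (arbitrary) sequences of walls or fibers defining the $\omega$-subset, but this follows automatically from compactness of $M$ together with the uniformity built into Theorem~\ref{quasi-isom:thm} and Corollary~\ref{fibre:cor}.
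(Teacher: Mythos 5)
Your proposal is correct and follows the same route as the paper, which treats the statement as an immediate consequence of Corollary~\ref{fibre:cor} (uniform quasi-isometric embedding of walls and fibers) combined with the standard passage to asymptotic cones via Lemma~\ref{quasiasymp:lem}. Your write-up simply makes explicit the bookkeeping — the double application of Lemma~\ref{quasiasymp:lem} and the uniformity of constants coming from compactness of $M$ and Theorem~\ref{quasi-isom:thm} — that the paper leaves to the reader.
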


Theorem~\ref{neutered:relhyp} and Corollary~\ref{fibre:cor} imply that every chamber is quasi-isometric to the product
of an asymptotically tree-graded space with a Euclidean fiber, and this implies
that every $\omega$-chamber is bi-Lipschitz homeomorphic to the product of a tree-graded space with a Euclidean factor: 

\begin{lemma}\label{omegapezzi2:lem}
There exists $k\geq 1$ such that
for any $\omega$-chamber $C_\omega$ there exists a $k$-bi-Lipschitz homeomorphism
$\varphi\colon C_\omega\to Y\times 
\mR^{l}$, where $Y$ is a tree-graded space whose pieces are $k$-bi-Lipschitz homeomorphic
to $\mR^{n-l-1}$, such that the following conditions hold:
\begin{enumerate}
 \item 
For every $p\in Y$, the subset
$\varphi^{-1}(\{p\}\times \mR^l)$ is an $\omega$-fiber of $\tilM_\omega$.
\item
For every piece $P$ of $Y$, the set
$\varphi^{-1}(P\times \mR^l)$ is an $\omega$-wall of $\tilM_\omega$. 
\end{enumerate}
\end{lemma}
\begin{proof}
 Suppose that $C_\omega=\omega$-$\lim C_i \subseteq \tilM_\omega$. For every $i$ we denote by $d_{C_i}$ the intrinsic distance on $C_i$, i.e.~the path distance induced on $C_i$
by the global distance $d$ of $\tilM$. Since $M$ is irreducible, there exist
constants $k\geq 1$, $\varepsilon\geq 0$ such that the identity of $C_i$
induces a $(k,\varepsilon)$-quasi-isometry between $(C_i,d_{C_i})$ and
$(C_i,d)$. Therefore, it is sufficient to prove the lemma in the case
when $C_\omega$ is replaced by the $\omega$-limit $C'_\omega$
of the rescaled spaces $(C_i,d_{C_i}/r_i)$ (with respect to any
choice of basepoints).

Recall that every $(C_i,d_{C_i})$ is isometrically identified
with the product $B_i\times \mR^{l_i}$, where $B_i$ is a neutered space.
Since $M$ is obtained by gluing finitely many pieces, the pairs $(C_i,d_{C_i})$
fall into finitely many isometry classes of metric spaces. As a consequence,
$C'_\omega$ is isometric to an asympotic cone of a fixed $(C_{i},d_{C_{i}})$.
Moreover, since $(C_{i},d_{C_{i}})$ is the product of a
neutered space with a Euclidean space, 
$C'_\omega$ is isometric to 
a product $Y\times 
\mR^{l}$, where $Y$ is a tree-graded space whose pieces are 
isometric to $\mR^{n-l-1}$ (see Theorem~\ref{neutered:relhyp}). This proves the first part of the lemma. In order
to prove points (1) and (2) it is now sufficient to observe that 
for every $p_i\in B_i$ the set $\{p_i\}\times \mR^{l_i}$
is a fiber of $\tilM$, and for every boundary component $P_i$ of $B_i$ 
the set $P_i\times \mR^{l_i}$ is at finite Hausdorff distance (with respect to $d$)
from a wall of $\tilM$.
\end{proof}

\begin{definition}\label{fiber-of}
%Let $\varphi\colon C_\omega\to Y\times 
%\mR^{l}$ be the homeomorphism described in the previous lemma. A \emph{fiber of $C_\omega$} is a
%subset of $C_\omega$ of the form
%$\varphi^{-1}(\{p\}\times \mR^l)$, where $p$ is a point in $Y$. 
Let $C_\omega$ be an $\omega$-chamber. 
A \emph{fiber of $C_\omega$} is an $\omega$-fiber 
of $\tilM_\omega$ of the form described
in point (1) of Lemma~\ref{omegapezzi2:lem}.  
%subset of $C_\omega$ of the form
%$\varphi^{-1}(\{p\}\times \mR^l)$, where $p$ is a point in $Y$. 
A \emph{wall of $C_\omega$} is an $\omega$-wall of $\tilM_\omega$ of the form described
in point (2) of Lemma~\ref{omegapezzi2:lem}.  
If $W_\omega$ is a wall of $C_\omega$, then we also
say that $C_\omega$ is \emph{adjacent} to $W_\omega$.
\end{definition}

It is not difficult to show that $\omega$-walls of $C_\omega$ are exactly the $\omega$-walls
of $\tilM_\omega$ which are contained in $C_\omega$ (however, this fact won't be used
later). On the contrary, an internal $\omega$-wall $W_\omega$ is adjacent to two 
$\omega$-chambers
$C_\omega,C'_\omega$ (see Lemma~\ref{twochambers}), and an $\omega$-fiber of $C'_\omega$ contained in $W_\omega$ is not
an $\omega$-fiber of $C_\omega$ in general (see Lemma~\ref{fiber:intersection:lem} below). Therefore, not every $\omega$-fiber
contained in an $\omega$-chamber $C_\omega$ is an $\omega$-fiber of $C_\omega$ in the sense
of Definition~\ref{fiber-of}.

\begin{lemma}\label{twochambers}
 Let $W_\omega$ be an $\omega$-wall. 
\begin{enumerate}
 \item 
If $W_\omega$ is internal, then it is adjacent to exactly two $\omega$-chambers. 
\item 
If $W_\omega$ is boundary, then it is adjacent exactly
to one $\omega$-chamber.
\end{enumerate}
\end{lemma}
\begin{proof}
Point (2) is obvious, so we may suppose that
$W_\omega$ is internal. Then $W_\omega=\omega$-$\lim W_i$,
where $W_i$ is an internal wall of $\tilM$ for every $i$. Let 
$C_i^+$, $C_i^-$ be the chambers adjacent to $W_i$, and let us set
$C_\omega^\pm=\omega$-$\lim C_i^\pm$. Of course each $C_\omega^\pm$ is adjacent
to $W_\omega$, and
by Lemma~\ref{unique:sequence:lem}
we have $C_\omega^+\neq C_\omega^-$. 
Finally, if $C'_\omega=\omega$-$\lim C'_i$ is any $\omega$-chamber
adjacent to $W_\omega$, then we have $C'_i=C^+_i$ $\omega$-a.e., or
 $C'_i=C^-_i$ $\omega$-a.e., so either $C'_\omega=C^+_\omega$
or $C'_\omega=C^-_\omega$. 
\end{proof}

\begin{definition}\label{side}
 Let $W_\omega=\omega{\textrm -}\lim W_i$ be an $\omega$-wall. A \emph{side} $S(W_\omega)$
of $W_\omega$ is a subset $S(W_\omega)\subseteq \tilM_\omega$ which is defined as follows.
For every $i$, let $\Omega_i$ be a connected component of $\tilM\setminus W_i$.
Then
$$
S(W_\omega)=\left(\omega{\textrm -}\lim \Omega_i\right)\setminus W_\omega\ .
$$
\end{definition}

The proof of the following easy lemma is left to the reader (points (1) and (2)
may be proved by the very same argument exploited for Lemma~\ref{twochambers}).

\begin{lemma}\label{sides:lem}
 Let $W_\omega$ be an $\omega$-wall. Then:
\begin{enumerate}
 \item If $W_\omega$ is internal, then $W_\omega$ has exactly
two sides $S(W_\omega)$, $S'(W_\omega)$. Moreover, $S(W_\omega)\cap S'(W_\omega)=\emptyset$,
$S(W_\omega)\cup S'(W_\omega)=\tilM_\omega\setminus W_\omega$, and
any Lipschitz path joining points contained in distinct
sides of $W_\omega$ must pass through $W_\omega$.
\item
If $W_\omega$ is boundary, then $W_\omega$ has only one side $S(W_\omega)$, and
$S(W_\omega)=\tilM_\omega\setminus W_\omega$.
\item
If $W'_\omega\neq W_\omega$ is an $\omega$-wall, then $W'_\omega\setminus W_\omega$
is contained in one side of $W_\omega$. 
\item 
If $C_\omega$ is an $\omega$-chamber, then 
$C_\omega\setminus W_\omega$ is contained in one side of $W_\omega$. Moreover, every side
of $W_\omega$ intersects exactly one $\omega$-chamber which is adjacent to $W_\omega$.
\end{enumerate}
\end{lemma}

We have already mentioned the fact that an internal $\omega$-wall
admits two fibrations by $\omega$-fibers which are in general different one from the other. 

\begin{definition}
Let $S(W_\omega)$ be a side of $W_\omega$, and let $C_\omega$
be the unique $\omega$-chamber of $\tilM_\omega$ which intersects $S(W_\omega)$
and is adjacent to $W_\omega$. 
 A fiber of $W_\omega$ associated
to $S(W_\omega)$ is a fiber of $C_\omega$ (in the sense of Definition~\ref{fiber-of})
that is contained in $W_\omega$. 
\end{definition}

\begin{lemma}\label{fiber:intersection:lem}
Let $S^+(W_\omega)$ and $S^-(W_\omega)$ be the sides of 
the internal $\omega$-wall $W_\omega$,
and let $F^+_\omega$, $F^-_\omega$ be  fibers of $W_\omega$ associated respectively to
$S^+(W_\omega)$, $S^-(W_\omega)$. Then $|F^+_\omega\cap F^-_\omega|\leq 1$.
\end{lemma}
\begin{proof}
Let $W_\omega=\omega{\textrm -}\lim W_i$, let $C_i^+$ and $C_i^-$ be the chambers adjacent to $W_i$,
and set $W_i^\pm=W_i\cap C_i^\pm$.
If $X$ is a subspace of $\tilM$, then we denote by
$d_X$ the intrinsic distance on $X$, i.e.~the path distance induced
on $X$ by the global distance $d$ of $\tilM$. Recall that $(C_i^\pm,d_{C_i^\pm})$
is isometric to a product $B_i^\pm\times \mR^{l_i}$, where $B_i^\pm$
is a neutered space, and denote by $d_{B_i}^\pm$ the pseudo-distance on $C_i^\pm$
obtained by composing the path-distance of $B_i^\pm$ with the projection $C_i^\pm\to B_i^\pm$.

Let $k>0$ be a constant chosen so that Lemma~\ref{strong:lem} holds for all the internal walls in $\tilM$.
Let $p=[(p_i)]$, $q=[(q_i)]$ be distinct points in $F^+_\omega$. 
We may assume that $p_i,q_i\in W_i^+$ 
for every $i$. We also denote by $p_i^-$ and $q_i^-$ the points
of $W_i^-$ tied respectively to $p_i$ and $q_i$. Since $p,q\in F^+_\omega$
we have $(d_{B_i^+}(p_i,q_i))=o(r_i)$, while $p\neq q$ implies
that $(d_{C_i^+}(p_i,q_i))$ is not
$o(r_i)$. Therefore, Lemma~\ref{strong:lem} implies that 
\begin{equation}\label{eqeq}
d_{C_i^-}(p_i^-,q_i^-)\leq k\cdot d_{B_i^-}(p_i^-,q_i^-)\qquad  \omega{\textrm -a.e.}
\end{equation}
But tied points lie at a universally bounded distance
one from the other, 
so $p=[(p_i^-)]$, $q=[(q_i^-)]$ and $(d_{C_i^-}(p_i^-,q_i^-))$ is not $o(r_i)$.
Therefore, by Equation~\eqref{eqeq} 
also the sequence $(d_{B_i^-}(p_i^-,q_i^-))$ is not $o(r_i)$, and
the set $\{p,q\}$ cannot be contained in 
$F^-_\omega$.
\end{proof}

\begin{lemma}\label{wall-chambers}
 Let $C_\omega$ be an $\omega$-chamber and let $p\in \tilM_\omega\setminus C_\omega$.
Then there exists an $\omega$-wall $W_\omega$ of $C_\omega$ such that
 every Lipschitz path in $\tilM$ joining $p$ with a point in $C_\omega$  intersects
$W_\omega$.
Moreover, $W_\omega$ is internal and $C_\omega\setminus W_\omega$ and $p$ lie on different sides of $W_\omega$.
\end{lemma}
\begin{proof}
 Let $C_\omega=\omega{\textrm -}\lim C_i$ and $p=[(p_i)]$. 
Since $p\notin C_\omega$, we may suppose that $p_i\notin C_i$ for every $i$.
Therefore, for every $i$ there exists a wall $W_i$ adjacent to $C_i$ such that
every continuous path joining $p_i$ to $C_i$ has to intersect $W_i$. 
We claim
that the $\omega$-wall $W_\omega=\omega{\textrm -}\lim W_i$ satisfies the properties stated in the lemma. 

Let $\gamma_\omega$ be a Lipschitz path in $\tilM$ joining $p$ with a point in $C_\omega$.
By Lemma~\ref{lift2:lem} we may choose
a sequence $\gamma_i\colon [0,r_i]\to \tilM$
of Lipschitz paths such that $\gamma_i(0)=p_i$, $\gamma_i(r_i)\in C_i$
and $\omega{\textrm -}\lim \gamma_i=\gamma_\infty$. Our choices imply that the image
of $\gamma_i$ intersects $W_i$ for every $i$, so by Lemma~\ref{quasigeod1:lem}
the image of $\gamma_\omega$ intersects $W_\omega$. This proves the first statement. 
The second statement is an immediate consequence of the description of $W_\omega$.
\end{proof}

We have seen in Lemma~\ref{omegapezzi2:lem} that $\omega$-chambers are products
of tree-graded spaces with Euclidean factors, so
the following results are immediate consequences of the results about tree-graded spaces described in
Lemma~\ref{treegr:lem}.

\begin{lemma}\label{omegapezzi2_bis:lem}
Let $W_\omega$ and $W'_\omega$ be distinct $\omega$-walls of the $\omega$-chamber
$C_\omega$. Then there exists an $\omega$-fiber $F_\omega\subseteq W_\omega$ 
%and
%$F'_\omega\subseteq W'_\omega$ 
of $C_\omega$ such that
every continuous path in $C_\omega$ joining a point in $W_\omega$ with a point
in $W'_\omega$ has to pass through $F_\omega$.
% and $F'_\omega$. 
\end{lemma}

\begin{corollary}\label{omegapezzi2_tris:lem}
 Let $W_\omega$ and $W'_\omega$ be distinct $\omega$-walls of the $\omega$-chamber
$C_\omega$, and let $\gamma\colon [0,1]\to C_\omega$ be a continuous path such that
$\gamma(0)\in W_\omega$, $\gamma(1)\in W'_\omega$ and $W_\omega\cap {\rm Im}\,\gamma=\{\gamma (0)\}$. If $F_\omega$ is the fiber of $C_\omega$
containing $\gamma(0)$, then every continuous path joining a point in $W_\omega$
with a point in $W'_\omega$ intersects $F_\omega$.
\end{corollary}

The following result 
extends Lemma~\ref{omegapezzi2_bis:lem}
to pairs of $\omega$-walls
which are not contained in the same $\omega$-chamber.

\begin{lemma}\label{inters:lem}
Let $W_\omega,W'_\omega$ be distinct $\omega$-walls, and
let
$S(W_\omega)$ be the side of $W_\omega$ containing $W'_\omega\setminus W_\omega$.
Then
there exists an $\omega$-fiber $F_\omega$ of $W_\omega$ such that
\begin{enumerate}
 \item $F_\omega$ is associated to $S(W_\omega)$, and
 \item every Lipschitz path joining a point in $W'_\omega$ with
a point in $W_\omega$ passes through $F_\omega$. 
\end{enumerate}
\end{lemma}

\begin{proof}
Let $\gamma\colon [0,1]\to \tilM_\omega$ be a Lipschitz path with $\gamma(0)\in W_\omega$,
$\gamma(1)\in W'_\omega$, and let $W_i,W'_i\subseteq \tilM$, $i\in\mathbb{N}$, be walls such that $\omega$-$\lim W_i=W_\omega$,
$\omega$-$\lim W'_i=W'_\omega$. Since $W_\omega\neq W'_\omega$,  we may suppose
$W_i\neq W'_i$ for every $i\in\mathbb{N}$.

Let us take $\varepsilon>0$. By Lemma~\ref{lift2:lem}, $\gamma=\omega$-$\lim \gamma_i$
where $\gamma_i\colon [0,a_i]\to \tilM$ is a $(k+\epsilon)$-Lipschitz path and $(a_i)$ is $O(r_i)$. Of course
(see the proof of Lemma~\ref{lift2:lem}) we may suppose $\gamma_i(0)\in W_i$, $\gamma_i (a_i)\in W'_i$
$\omega$-a.e. 

%e construct a fiber $F_\omega$ satisfying properties (1) and (2) of the statement.
%The very same argument may then be applied to define also the fiber $F'_\omega$.
For every $i\in\mathbb{N}$, 
let us define a wall $L_i$ and a chamber $C_i$ as follows: 
if both $W_i$ and $W'_i$ are adjacent to the same chamber, then
$L_i=W'_i$ and $C_i$ is the chamber adjacent both to $W_i$ and to $L_i$;
if $W_i,W'_i$ do not intersect the same chamber, then $L_i\neq W_i$ and $C_i$ are such that
$W_i\cap C_i\neq\emptyset$, $L_i\cap C_i\neq \emptyset$,
and every path connecting $W_i$ and $W'_i$ intersects $L_i$
(the existence of such $L_i$, $C_i$ is an obvious consequence of the realization of
$\tilM$ as a tree of spaces). 
We would 
like to associate to $\gamma_i$ a path $\alpha_i$ joining $W_i$ with $L_i$ which does not intersect
any chamber different from $C_i$. This can be done in the following way. 
Let
$z_i$ be the last point of $\gamma_i$ which lies on $W_i$, let $p_i$ be the first point
of $\gamma_i$
which follows $z_i$ and lies on $L_i$ and 
call 
$\gamma'_i$ the subpath of $\gamma_i$ with endpoints $z_i, p_i$. 
We have that $\gamma'_i\cap C_i$ is a collection of paths in $C_i$, and, 
since $\gamma'_i$ is rectifiable and the distance between walls is bounded from below, 
only finitely many of them, say
$\delta^i_1,\ldots,\delta^i_m$, have endpoints in different walls. 
By concatenating the $\delta^i_j$'s with suitable
geodesics $\psi^i_j$ contained in the appropriate thin walls
we obtain the desired $\alpha_i$. 
By construction,  $\alpha_i$ intersects $W_i$ only in its initial point.
Also note that because thin walls
are quasi-isometrically embedded in $\tilM$ the length of $\alpha_i$ is 
uniformly linearly bounded by the length of $\gamma'_i$,
whence of $\gamma_i$. Therefore,
we can suppose that there exists $k'>0$ such that $\alpha_i$ is defined on the same
interval as $\gamma_i$, and $\alpha_i$ is $k'$-Lipschitz
$\omega$-a.e.

Now consider $C_\omega=\omega$-$\lim C_i$ and  $L_\omega=\omega$-$\lim L_i$.
We find ourselves in the context of Lemma~\ref{omegapezzi2_bis:lem}, which implies that there
exists an $\omega$-fiber $F_\omega\subseteq W_\omega$ with the property that every path 
joining $W_\omega$ and $L_\omega$ passes through $F_\omega$. 
Observe that $F_\omega$ satisfies property (1) of the statement by construction.
Now,
by Lemma~\ref{quasiasymp:lem}, $\alpha=\omega$-$\lim \alpha_i$ is a continuous path joining
$W_\omega$ and $L_\omega$, 
so $\alpha$ necessarily passes through $F_\omega$.
Then, in order to prove~(2)
it is sufficient to show
that $\gamma$ must also pass through $F_\omega$.

Choose the points $q_i\in {\rm Im}\, \alpha_i$ so that the corresponding
$q=[(q_i)]\in {\rm Im}\, \alpha$ is the first point along $\alpha$ which 
belongs to $F_\omega$. By the definition of $\gamma_i$ and $\psi^i_j$, 
at least one of the following possibilities must hold:
\begin{enumerate}[(i)]
\item
$q_i\in\gamma_i$ $\omega-$a.e.
\item
$q_i\in\psi^i_{j(i)}$ $\omega-$a.e. and $l_{B_i}(\psi^i_{j(i)}|_{q_i})=o(r_i)$, where 
$\psi^i_{j(i)}|_{q_i}$ denotes the initial subpath of $\psi^i_{j(i)}$ ending in $q_i$ 
and $l_{B_i}$ denotes the length of the projection of such a path on the base of $C_i$,
\item
$q_i\in\psi^i_{j(i)}$ $\omega-$a.e. and $\omega$-$\lim l_{B_i}(\psi^i_{j(i)}|_{q_i})/r_i > 0$.
\end{enumerate}
In cases (i) and (ii), it is clear that there is a point on $\gamma\cap F_\omega$. So 
let us now prove that case (iii) cannot occur. 
Indeed, the sequence 
of the starting points of the $\psi^i_{j(i)}$'s gives a point $q'\neq q$ 
which comes before $q$ along $\alpha$. 
Since $\alpha_i$ intersects $W_i$ only in its initial point, by Lemma~\ref{omegapezzi2_bis:lem}
the initial subpath of $\alpha$ ending in $q'$ 
joins a point on $W_\omega$ with a point on an 
$\omega-$wall $Q_\omega$ such that $Q_\omega \neq W_\omega$, 
and, by our hypothesis on $q$, it does not pass through the fiber $F_\omega$.
But the portion of $\omega$-$\lim \psi^i_{j(i)}$
between $q'$ and $q$ provides
a path starting on $Q_\omega$ and intersecting $W_\omega$ only in $q\in F_\omega$.
By Corollary~\ref{omegapezzi2_tris:lem}, this implies that \emph{every} continuous path joining
a point on $Q_\omega$ to a point in $W_\omega$ has to 
intersect $F_\omega$, a contradiction. This completes the proof of 
%points (1) and (2) in 
the Lemma.
%In order to prove point (3), let us 
%take a point $p\in W_\omega\cap W'_\omega$, and let $\gamma\colon [0,1]\to \tilM$
%be the constant path based at $p$. By point (2), $\gamma$ must intersect both
%$F_\omega$ and $F'_\omega$, so $p\in F_\omega\cap F'_\omega$, whence the conclusion.
\end{proof}

As every point in $\tilM_\omega$ is contained in an $\omega$-wall, we get the following.

\begin{corollary}\label{inters:cor}
Let $W_\omega$ be an $\omega$-wall, 
let $p\in\tilM_\omega\setminus W_\omega$, and let 
$S(W_\omega)$ be the side of $W_\omega$ containing 
$p$.
Then
there exists an $\omega$-fiber $F_\omega$ of $W_\omega$ associated to $S(W_\omega)$
such that 
every Lipschitz path joining $p$ with
$W_\omega$ passes through $F_\omega$. 
\end{corollary}

The fact that $\tilM$ is a tree of spaces suggests that the $\omega$-chambers
of $\tilM_\omega$ should be arranged in $\tilM_\omega$ following a sort of tree-like pattern.
We can formalize this fact as follows.

\begin{lemma}\label{omegatree}
Let $k>0$ and $\gamma_\omega\colon S^1\to \tilM_\omega$ be a Lipschitz path. 
Then there exists an $\omega$-chamber $C_\omega$ such that
$\gamma_\omega(p)\in C_\omega$ and $\gamma_\omega(q)\in C_\omega$, where
$p,q$ are distinct points of $S^1$.
\end{lemma}
\begin{proof}
For every $n$ we set $Y_i=r_i\cdot S^1$, and we denote by
$\gamma_i\colon Y_i\to \tilM$ the $(k+\varepsilon)$-Lipschitz path approximating
$\gamma_\omega$ in the sense of Lemma~\ref{lift2:lem}. 
We now make the following:

\vskip 10pt

\noindent {\bf Assertion:} 
There exists $H>0$ such that,
for every $n$, there exist points $p_i$ and $q_i$ in $Y_i$ and a chamber
$C_i\subseteq \tilM$ such that
$d(p_i,q_i)\geq (2\pi r_i)/3$ and $d(\gamma_i(p_i),C_i)\leq H$.

\vskip 5pt

\noindent Let $\Gamma$ be the Bass-Serre tree corresponding to the decomposition
of $\tilM$ as a tree of spaces, let $\pi\colon \tilM\to\Gamma$ be the canonical
projection, and let $\gamma_i'\colon Y_i\to \Gamma$ be defined by $\gamma_i'=\pi\circ\gamma_i$. Since the distance
between a point in a wall and any chamber adjacent to the wall is bounded
from above by a universal constant $H$, it is sufficient to show that
there exist points $p_i,q_i\in Y_i$ such that $d(p_i,q_i)\geq (2\pi r_i)/3$
and $\gamma'_i(p_i)=\gamma'(q_i)$. 

Pick three points $a^1_i,a^2_i,a^3_i$ on $Y_i$ such that $d(a^j_i,a^l_i)=(2\pi r_i)/3$
for $j\neq l$, and let $Y_i^1,Y_i^2,Y_i^3$ be the subarcs
of $Y_i$ with endpoints $a^1_i,a^2_i,a^3_i$.
We may suppose that  $\gamma'(a^j_i)\neq \gamma'(a^k_i)$ for $j\neq k$, 
otherwise we are done. Since $\Gamma$ is a tree,
there exists a point $v_i\in \Gamma$ such that, if $j\neq l$, then any path joining $\gamma'(a^j_i)$ with 
$\gamma'(a^l_i)$ passes through $v_i$. Therefore, every $Y_i^j$, $j=1,2,3$,
contains a point which is taken by $\gamma_i'$ onto $v_i$. As a consequence, 
the preimage $(\gamma'_i)^{-1}(v_i)$ contains two points $p_i,q_i$ with the desired properties, and the assertion is proved.

\vskip 10pt

Let now $C_\omega=\omega{\textrm -}\lim C_i$, and set $p=[(p_i)]\in S^1$, $q=[(q_i)]\in S^1$.
By construction we have $d(p,q)\geq (2\pi)/3$, and $\gamma_\omega (p)\in C_\omega$,
$\gamma_\omega(q)\in C_\omega$, whence the conclusion.
\end{proof}

\section{A characterization of bi-Lipschitz $(n-1)$-flats in $\tilM_\omega$}
A \emph{bi-Lipschitz $m$-flat} in $\tilM_\omega$ is the image of a 
bi-Lipschitz embedding $f\colon \mR^m\to \tilM_\omega$.
This section is aimed at proving that $\omega$-walls are the only
bi-Lipschitz $(n-1)$-flats in $\tilM_\omega$. 

We say that a metric space is L.-p.-connected if
any two points in it may be joined by a Lipschitz path.

\begin{lemma}\label{fundamental:lem}
Let $A\subseteq \tilM_\omega$ be a bi-Lipschitz $(n-1)$-flat. Then for every fiber $F_\omega$
the set $A\setminus F_\omega$ is L.-p.-connected.
\end{lemma}
\begin{proof}
Let $f\colon \mR^{n-1}\to C_\omega$ be a bi-Lipschitz embedding
such that $f(\mR^{n-1})=A$, 
and let $l\leq n-3$ be such that $F_\omega$ is bi-Lipschitz homeomorphic
to $\mR^l$.
The set $f^{-1}(F_\omega)$ is a closed subset
of $\mR^{n-1}$ which is bi-Lipschitz homeomorphic to a subset
of $\mR^l$. But it is known that the complements
of two homeomorphic closed subsets of $\mR^{n-1}$ have the same singular homology
(see e.g.~\cite{dold}), so $\mR^{n-1}\setminus f^{-1}(F_\omega)$ is
path-connected. It is immediate to check that any two points in a connected
open subset of $\mR^{n-1}$ are joined by a piecewise linear path,
so $\mR^{n-1}\setminus f^{-1}(F_\omega)$ is L.-p.-connected.
The conclusion follows from the fact that $f$ takes Lipschitz paths
into Lipschitz paths.
\end{proof}

We can already characterize bi-Lipschitz $(n-1)$-flats which are contained in a single 
$\omega$-chamber.

\begin{proposition}\label{reduction}
Let $A$ be a bi-Lipschitz $(n-1)$-flat contained
in the $\omega$-chamber $C_\omega$.
Then $A$ is equal to a wall of $C_\omega$.
\end{proposition}
\begin{proof}
 Recall that $C_\omega$ is homeomorphic to a product $Y\times \mR^l$,
where $Y$ is a tree-graded space and $l\leq n-3$. We denote by $\pi\colon C_\omega\cong Y\times \mR^l\to Y$ the projection on the first factor, and we set $A'=\pi (A)$.

We show that $A'$ has no cut points. In fact, if $p\in A'$, then
Lemma~\ref{fundamental:lem}
implies  that
the set $B=A\setminus \pi^{-1}(p)$ is L.-p.-connected, whence connected,
so $A'\setminus \{p\}=\pi(B)$ is also connected.
Now Lemma~\ref{newdrutu:lem} implies that $A'$ is contained in a piece of the tree-graded space $Y$,
so $A$ is contained in an $\omega$-wall $W_\omega$
of $C_\omega$. 
Being the image of a bi-Lipschitz embedding of a complete space, the set $A$ is closed
in $W_\omega$. Moreover, $A$ is open in $W_\omega$ by invariance of domain, so we finally
get $A=W_\omega$.
\end{proof}

We are now left to show that any bi-Lipschitz $(n-1)$-flat is contained
in an $\omega$-chamber.

\begin{lemma}\label{no-two-sides}
 Let $A$ be a bi-Lipschitz $(n-1)$-flat in $\tilM_\omega$ and let $W_\omega$
be an $\omega$-wall. Then $A\setminus W_\omega$ is contained in one side
of $W_\omega$.
\end{lemma}
\begin{proof}
 Suppose by contradiction that $p,q\in A$ are points on opposite sides of $W_\omega$
(in particular, $p\notin W_\omega$ and $q\notin W_\omega$), 
and let 
$F_\omega$ be the fiber of $W_\omega$ such that every path joining $p$ with
$W_\omega$ passes through $F_\omega$ (see Corollary~\ref{inters:cor}).

Let $\gamma$
be a Lipschitz path in $A$ joining $p$ with $q$.
By Lemma~\ref{sides:lem}, $\gamma$ must intersect $W_\omega$, whence $F_\omega$.
Since neither $p$ nor $q$ are contained in $F_\omega$, this implies
that $A\setminus F_\omega$ is not L.-p.-connected, and contradicts
Lemma~\ref{fundamental:lem}.
\end{proof}

\begin{problem}
Let $A$ be a subset of $\tilM_\omega$ such that $A\setminus W_\omega$ lies
on a definite side of $W_\omega$ for every $\omega$-wall $W_\omega$. Is it true
that $A$ is contained in an $\omega$-chamber? 
\end{problem}

By Lemma~\ref{no-two-sides}
and Proposition~\ref{reduction}, an affirmative answer to the above question
would readily imply that every bi-Lipschitz $(n-1)$-flat in $\tilM_\omega$
is equal to an $\omega$-wall.

\begin{lemma}\label{2points}
Let $A\subseteq \tilM_\omega$ be a bi-Lipschitz $(n-1)$-flat.
There there exists an $\omega$-wall $W_\omega$ such that $|A\cap W_\omega|\geq 2$.
\end{lemma}
\begin{proof}
Since $n\geq 3$, there exists
an injective Lipschitz loop contained in $A$. By Lemma~\ref{omegatree}, two distinct
points of this loop are contained in the same $\omega$-chamber. Therefore, 
there exists an $\omega$-chamber $C_\omega$ containing two distinct points $p,q$ of $A$.

If $A\subseteq C_\omega$, then we are done by Proposition~\ref{reduction}.
So we may suppose that $A$ contains a point $r\notin C_\omega$. Since $A$ is bi-Lipschitz homeomorphic to $\mR^{n-1}$, we may choose two Lipschitz paths $\gamma_p\colon [0,1]\to\tilM_\omega$,
$\gamma_q\colon[0,1]\to \tilM_\omega$ such that the following conditions hold: 
\begin{enumerate}
 \item 
the images
of $\gamma_p$ and of $\gamma_q$ are contained in $A$; 
\item 
$\gamma_p(0)=\gamma_q(0)=r$,
$\gamma_p(1)=p$, $\gamma_q(1)=q$; 
\item 
the image of $\gamma_p$ intersects
the image of $\gamma_q$ only in $r$. 
\end{enumerate}
By Lemma~\ref{wall-chambers}, point (2) implies that there exists an $\omega$-wall $W_\omega$
of $C_\omega$ intersecting the image of $\gamma_p$ in a point $p'$
and the image of $\gamma_q$ in a point $q'$. Since $r\notin C_\omega$, point (3)
ensures that $p'\neq q'$, while point (1) implies that $p'$ and $q'$ are contained in $A$.
\end{proof}

\begin{proposition}\label{wall:char:prop}
Let $A$ be a bi-Lipschitz $(n-1)$-flat in $C_\omega$.
Then
$A$ is an $\omega$-wall.
\end{proposition}
\begin{proof}
By Proposition~\ref{reduction}, it is sufficient to show that
$A$ is contained in an $\omega$-chamber.

By Lemma~\ref{2points} we may find an $\omega$-wall $W'_\omega$ 
and distinct points $p_1,p_2$ in $W'_\omega$ such that
$\{p_1,p_2\}\subseteq A\cap W'_\omega$. If $A\subseteq W'_\omega$ we are done, otherwise
Lemma~\ref{no-two-sides} ensures that
$A\setminus W'_\omega$ lies on one side $S(W'_\omega)$ of $W'_\omega$.
Moreover, if $r$ is any point in $A\setminus W'_\omega$, then Corollary~\ref{inters:cor}
implies that there exists a fiber $F'_\omega$ of $W'_\omega$ 
associated to $S(W'_\omega)$ such that 
any Lipschitz path joining $r$ with $p_1$ or $p_2$ passes through $F'_\omega$.
If $p_i\notin F'_\omega$ for some $i$, there would not be any Lipschitz path in $A\setminus F'_\omega$
joining $r$ to $p_i$, and this would contradict Lemma~\ref{fundamental:lem}. Therefore
we have
\begin{equation}\label{contained1}
 \{p_1,p_2\}\subseteq F'_\omega\ .
\end{equation}

Let $C_\omega$ be the unique $\omega$-chamber
which is adjacent to
$W'_\omega$ and intersects $S(W'_\omega)$. Since $F'_\omega$ is associated
to $S(W'_\omega)$, the fiber
$F'_\omega$ is a fiber of $C_\omega$.
We will prove that $A$ is contained in $C_\omega$.

Suppose by contradiction that there exists $q\in A\setminus C_\omega$,
and let $W_\omega$ be the wall of $C_\omega$ such that every Lipschitz
path joining $q$ with $C_\omega$ passes through $W_\omega$ 
(see Lemma~\ref{wall-chambers}). 
By Lemma~\ref{wall-chambers} the point $q\in A$ and the subset $C_\omega\setminus W_\omega$
lie on opposite sides of $W_\omega$. However, recall from
Lemma~\ref{no-two-sides} that we cannot have points of $A$ on 
opposite sides of $W_\omega$, 
so $\{p_1,p_2\}\subseteq A\cap C_\omega\subseteq W_\omega$.
Therefore, the
the fiber $F'_\omega$ is not disjoint from $W_\omega$, and this implies
at once that $F'_\omega$ is contained not only in $W'_\omega$, but also in $W_\omega$. More precisely, $F'_\omega$ is a fiber of $W_\omega$ associated to the side 
of $S(W_\omega)$ of $W_\omega$ containing $C_\omega\setminus W_\omega$. 

%Now $A\cap C_\omega\supseteq A\cap W'_\omega\neq \emptyset$, $p$ belong to $A$, and any two points in $A$ are joined by a Lipschitz
%path, so $A\cap W_\omega\neq \emptyset$. 
We have already observed that $q$ belongs to the side
$\overline{S}(W_\omega)$ of $W_\omega$ opposite to $S(W_\omega)$.
Moreover, by Corollary~\ref{inters:cor}
there exists a fiber $F_\omega$ of $W_\omega$ associated to $\overline{S}(W_\omega)$
such that every Lipschitz path joining $q$ with $p_i$, $i=1,2$, passes through $F_\omega$.
But $\{p_1,p_2\}\subseteq F'_\omega$, and $F_\omega$, $F'_\omega$ are fibers
of $W_\omega$ associated to opposite sides of $W_\omega$. By Lemma~\ref{fiber:intersection:lem},
this implies that $p_i\notin F_\omega$ for at least one $i\in \{1,2\}$.
It follows that $q$ and $p_i$ cannot be joined by any Lipschitz path in
$A\setminus F_\omega$, and this contradicts Lemma~\ref{fundamental:lem}.
\end{proof}

\begin{remark}
 Let $m\geq n-1$ and let $A$ be an $m$-bi-Lipschitz flat in $\tilM_\omega$. The arguments developed in this section
show that $A$ is contained
in an $\omega$-wall, so $m=n-1$ and $A$ is in fact an $\omega$-wall.
Therefore, $\omega$-walls are exactly the bi-Lipschitz flats of
$\tilM_\omega$ of maximal dimension. 
\end{remark}

\section{A characterization of quasi-flats of maximal dimension in $\tilM$}
Our characterization of bi-Lipschitz $(n-1)$-flats 
in $\tilM_\omega$ yields the following result:

\begin{corollary}\label{wall:cor}
For each $k,c$, there exists $\beta\geq 0$ 
(only depending on $k,c$ and the geometry of $\tilM$)
such that the image of $\mR^{n-1}$ under a 
$(k,c)-$quasi-isometric embedding in 
$\widetilde{M}$ is contained in the $\beta-$neighborhood of a wall.
\end{corollary}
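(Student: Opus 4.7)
The plan is to argue by contradiction using the asymptotic cone machinery of Chapter~\ref{quasiapp} together with Proposition~\ref{wall:char:prop}. Suppose the corollary fails for the stated $k$ and $c$; then one can find a sequence of $(k,c)$-quasi-isometric embeddings $f_i \colon \mathbb{R}^{n-1} \to \widetilde{M}$ with
$$
\beta_i := \inf_{W \text{ wall of } \widetilde{M}} \; \sup_{y \in \mathbb{R}^{n-1}} d(f_i(y), W) \;\longrightarrow\; +\infty.
$$
Fix a non-principal ultrafilter $\omega$, use basepoints $x_i := f_i(0)$ and rescaling factors $r_i := \beta_i$ to form the asymptotic cone $\widetilde{M}_\omega$. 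Since the asymptotic cone of $\mathbb{R}^{n-1}$ is canonically isometric to $\mathbb{R}^{n-1}$ and each $f_i$ becomes a $(k, c/r_i)$-quasi-isometric embedding after rescaling, Lemma~\ref{quasiasymp:lem} yields a $k$-bi-Lipschitz embedding $f_\omega \colon \mathbb{R}^{n-1} \to \widetilde{M}_\omega$. By Proposition~\ref{wall:char:prop} the image $W_\omega := f_\omega(\mathbb{R}^{n-1})$ is an $\omega$-wall of the form $\omega\text{-}\lim W_i^*$ for walls $W_i^* \subseteq \widetilde{M}$; since $f_\omega(0)$ is the basepoint and lies in $W_\omega$, one has $d(f_i(0), W_i^*) = o(r_i)$.

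To close the contradiction, the plan is to produce a point $y_i \in \mathbb{R}^{n-1}$ with $|y_i| = O(r_i)$ and $d(f_i(y_i), W_i^*) \geq r_i/4$. Any such $y_i$ yields a class $y_\omega$ in the cone simultaneously satisfying $f_\omega(y_\omega) \in W_\omega$ (because $W_\omega = f_\omega(\mathbb{R}^{n-1})$) and $d_\omega(f_\omega(y_\omega), W_\omega) \geq 1/4$, which is absurd. The existence of some point at distance $\geq r_i$ from $W_i^*$ follows from the defining property of $\beta_i$ applied to the specific wall $W_i^*$. A witness at bounded rescaled location can typically be extracted by a walking argument along the straight segment from $0$ to such a point, exploiting the fact that $y \mapsto d(f_i(y), W_i^*)$ is $k$-Lipschitz up to additive $c$.

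The only subtlety arises when no witness with $|y_i|/r_i$ uniformly bounded can be found this way; equivalently, $f_i(B(0, R r_i)) \subseteq N_{r_i}(W_i^*)$ for every fixed $R$. In that case I would rescale more aggressively, setting $r_i' := \sup\{r : f_i(B(0, r)) \subseteq N_{r_i}(W_i^*)\}$, so that $r_i'/r_i \to \infty$. In the resulting cone, $f_\omega'(B(0, 1))$ is forced into the single $\omega$-wall $\omega\text{-}\lim W_i^*$, while Proposition~\ref{wall:char:prop} still identifies $f_\omega'(\mathbb{R}^{n-1})$ as some $\omega$-wall. If these two $\omega$-walls differ, Lemma~\ref{omegapezzi2:lem} confines their intersection to an $\omega$-fiber, which by Lemma~\ref{omegapezzi:lem} is bi-Lipschitz to $\mathbb{R}^l$ with $l \leq n-3$; this would give a bi-Lipschitz embedding of an $(n-1)$-dimensional ball into $\mathbb{R}^l$, contradicting invariance of domain since $l < n-1$. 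If the two $\omega$-walls coincide, one then finds a witness at radius slightly exceeding $r_i'$ and repeats the cone argument at this new scale.

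The main obstacle is the case analysis above, i.e.\ identifying the correct scale at which Proposition~\ref{wall:char:prop} yields the contradiction; the rest of the argument is a routine asymptotic cone calculation. The dimension gap $n-1 > n-3$ coming from the standing assumption $n_i \geq 3$ on the hyperbolic factors is precisely what rules out the pathological nested-wall scenario via invariance of domain, and the final constant $\beta$ depends only on $k$, $c$, and the geometric constants attached to $\widetilde{M}$ because the entire argument is uniform in $i$.
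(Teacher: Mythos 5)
Your argument is correct and matches the paper's up to the point where you invoke Proposition~\ref{wall:char:prop} to identify $f_\omega(\mathbb{R}^{n-1})$ with an $\omega$-wall $\omega\text{-}\lim W_i^*$, and then observe (as the paper does) that the contradiction forces the ``subtle case'' $r_i'/r_i \to\infty$. The gap is in how you then handle that case. You rescale by $r_i'$ while keeping the basepoint at $f_i(0)$, and split into two sub-cases according to whether $f'_\omega(\mathbb{R}^{n-1})$ equals $\omega\text{-}\lim W_i^*$ or not. But the first sub-case is vacuous by precisely the dimension-count you use to dismiss it: $f'_\omega(B(0,1))$ is a bi-Lipschitz copy of an open $(n-1)$-ball lying inside $W''_\omega \cap \omega\text{-}\lim W_i^*$, and if the two $\omega$-walls were distinct, Lemma~\ref{inters:lem} (which, rather than Lemma~\ref{omegapezzi2:lem}, is the right reference for $\omega$-walls not a priori adjacent to a common chamber) would place that intersection inside an $\omega$-fiber $\cong \mathbb{R}^{l}$ with $l \leq n-3$, contradicting invariance of domain. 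So the two $\omega$-walls always coincide, and the entire argument reduces to the sub-case you leave unresolved.

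In that remaining sub-case, the proposed iteration does not close. The only witness you can produce near radius $r_i'$ satisfies $d(f_i(y_i), W_i^*) \geq r_i$, but $r_i = o(r_i')$, so after rescaling by $r_i'$ the class $[f_i(y_i)]$ lies \emph{in} $\omega\text{-}\lim W_i^*$, consistent with $f'_\omega(\mathbb{R}^{n-1}) = \omega\text{-}\lim W_i^*$; nothing has been contradicted. Rescaling again, at any scale that keeps $[y_i]$ in the cone, has the same problem, so the ``repeat at a new scale'' step has no termination criterion.

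The fix, which is what the paper does, is to keep the \emph{original} rescaling factor $r_i$ (so the witness's escape distance $\geq r_i$ from $W_i^*$ survives as distance $\geq 1$ in the cone) and instead move the basepoint to the witness $p_m$, chosen of minimal distance $r_m$ from $0$ among points escaping $N_m(W_m)$. Viewed from $p_m$ at scale $m$, the ball $B(0,r_m)$ limits to a \emph{half-space} $A_\omega$ (because $r_m/m \to \infty$), whose image lies within distance $1$ of $W'_\omega = \omega\text{-}\lim W_m$, while the basepoint $[f_m(p_m)]$ sits at distance $\geq 1$ from $W'_\omega$. This forces $W''_\omega = f'(\mathbb{R}^{n-1}) \neq W'_\omega$ non-vacuously, and via Lemma~\ref{inters:lem} the full half-space $f'(A_\omega)$ is squeezed into a neighborhood of an $\omega$-fiber $\cong \mathbb{R}^{l}$, $l\leq n-3$, giving the dimension contradiction. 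Your over-rescaling destroys exactly the unit-scale separation between the witness and $W'_\omega$ that this argument depends on.
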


\begin{proof}
By contradiction,
take a sequence of $(k,c)$-quasi-isometric embeddings $f_m:\mR^{n-1}\to\tilM$ such that 
for each $m\in\mN$ and wall $W\subseteq \tilM$ we have $f_m (\mR^n)\nsubseteq N_m (W)$,
where $N_m (W)$ is the $m$-neighbourhood of $W$.
Fix a point $p\in\mR^{n-1}$. The $f_m$'s induce a bi-Lipschitz embedding 
$f$ from the asymptotic cone $\mR^{n-1} = \mR^{n-1}_\omega ((p),(m))$ 
to the asymptotic cone $\tilM_\omega (f_m (p),(m))$. 
(Recall that, if $X$ is a metric space, we denote by $X_\omega ((x_m), (r_m))$ the asymptotic 
cone of $X$ associated to the sequence of basepoints $(x_m)$ and the sequence of rescaling factors $(r_m)$.)
By the previous proposition, there is an $\omega-$wall $W_\omega=\omega$-$\lim W_m$ such that 
$f(\mR^{n-1})=W_\omega$. 
By hypothesis, for each $m$ there is a point $p_m\in\mR^{n-1}$ with $d(f_m (p_m),W_m)\geq m$.
Set $r_m = d(p_m,p)$. By choosing $p_m$ as close to $p$ as possible, 
we may assume that no point $q$ such that $d(p,q)\leq r_m-1$ satisfies $d(f_m (q),W_m)\geq m$, 
so 
\begin{equation}\label{cond1:eqn}
d(f_m (q),W_m)\leq m +k+c\qquad  {\rm for\ every}\ q\in\mR^{n-1}\ {\rm s.t.}\ d(p,q)\leq r_m.
\end{equation} 
Notice that $\omega$-$\lim r_m/m=\infty$,
for otherwise $[(p_m)]$ should belong to $\mR^{n-1}_\omega ((p),(m))$,
$[f_m (p_m)]$ should belong to $\tilM_\omega ((f_m(p)), (m))$, and, since
$f(\mR^{n-1})=W_\omega$, we would
have $d(f_m(p_m),W_m)=o(m)$. 

Let us now change basepoints, and consider instead the pair of asymptotic cones
$\mR^{n-1}_\omega( (p_m), (m))$ and $\tilM_\omega ((f_m(p_m)), (m) )$.
The sequence $(f_m)$ induces a bi-Lipschitz embedding $f'$ between these asymptotic cones
(note that $f\neq f'$, simply because due to the change of basepoints, 
$f$ and $f'$ are defined on different spaces
with values in different spaces!).
Let $A_m=\{q\in\mR^{n-1}\, |\, d(q,p)\leq r_m\}$ and $A_\omega=
\omega$-$\lim A_m\subseteq \mR^{n-1}_\omega ((p_m), (m))$.
Since $\omega$-$\lim r_m/m=\infty$, it is easy to see that $A_\omega$ is bi-Lipschitz
homeomorphic to a half-space in $\mR^{n-1}$. Moreover, 
by (\ref{cond1:eqn}) each point in $f'(A_\omega)$ is at a distance at most 1 from 
$W'_\omega=\omega$-$\lim W_i$ (as before, observe that the sets $W_\omega$ and $W'_\omega$
live in different spaces). 
Again by Proposition~\ref{wall:char:prop}
we have that $f'(A_\omega)\subseteq f'(\mR^{n-1}_\omega((p_m), (m)) )= W''_\omega$ 
for some $\omega-$wall $W''_\omega$. Moreover, since $[(f_m (p_m))]\in W''_\omega\setminus
W'_\omega$, we have $W'_\omega \neq W''_\omega$. 

By Lemma~\ref{inters:lem} there exists a fiber $F_\omega\subseteq W'_\omega\cap W''_\omega$ such that
every path joining a point in $W''_\omega$
with a point in $W'_\omega$ has to pass through $F_\omega$. Now, if $a\in f'(A_\omega)$  
we have $d(a, W'_\omega)\leq 1$, so there exists a geodesic of length at most one 
joining $a\in W''_\omega$ with some point in $W'_\omega$. Such a geodesic must pass
through $F_\omega$, so 
every point of $f'(A_\omega)$ must be at a distance at most 1 from $F_\omega$.
If $h\colon f'(A_\omega)\to F_\omega$ is such that $d(b,h(b))\leq 1$ for every $b\in f'(A_\omega)$,
then $h$ is a $(1,2)$-quasi-isometric embedding. Therefore the map
$g=h\circ f'\colon A_\omega\to F_\omega$ is a quasi-isometric embedding. But this is not possible,
since if $n-1>l$ there are 
no quasi-isometric embeddings from a half space in $\mR^{n-1}$ to $\mR^l$ (as, taking asymptotic cones, 
such an embedding would provide an injective continuous function 
from an open set in $\mR^{n-1}$ to $\mR^l$). This completes the proof of the corollary.
\end{proof}

\section{Walls and chambers are quasi-preserved by quasi-isometries}
We are now ready to conclude the proof of Theorem~\ref{qi-preserve:thm}.
We come back to our original situation, i.e.~we take irreducible graph $n$-manifolds
$M_1,M_2$ and we suppose that $f\colon \tilM_1\to\tilM_2$ is a given $(k,c)$-quasi-isometry. We will
say that a constant is \emph{universal} if it only depends on $k,c$ and on the geometry
of $M_1,M_2$. We begin by recalling the following well-known result (see \emph{e.g.}~\cite[Corollary 2.6]{kapleenew}):

\begin{lemma}\label{rn}
Let $f\colon \mR^{n-1}\to \mR^{n-1}$ be an $(a,b)$-quasi-isometric embedding.
Then $f$ is an $(a',b')$-quasi-isometry, where $a',b'$ only depend on $a,b$.
\end{lemma}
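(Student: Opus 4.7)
Since $f$ is already an $(a,b)$-quasi-isometric embedding, in order to show that it is a quasi-isometry it suffices to exhibit a constant $R = R(a,b)$ such that the image $f(\mathbb R^{n-1})$ is $R$-dense in $\mathbb R^{n-1}$; once this is achieved, one easily constructs a quasi-inverse with constants depending only on $a,b,R$, giving the desired $(a',b')$. So the whole problem is reduced to proving quasi-density with a constant depending only on $a,b$.

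The plan is to argue by contradiction, passing to the asymptotic cone. Suppose no such $R$ works. Then for each $m \in \mathbb N$ there exists a $(a,b)$-quasi-isometric embedding $f_m\colon \mathbb R^{n-1}\to\mathbb R^{n-1}$ and a point $y_m \in \mathbb R^{n-1}$ with $d_m := d(y_m, f_m(\mathbb R^{n-1})) > m$; pick $x_m \in \mathbb R^{n-1}$ with $d(f_m(x_m), y_m) \leq d_m + 1$. Fix a non-principal ultrafilter $\omega$, and consider the asymptotic cones
\[
X_\omega := \mathbb R^{n-1}_\omega((x_m),(d_m)), \qquad Y_\omega := \mathbb R^{n-1}_\omega((y_m),(d_m)).
\]
Both are canonically isometric to $\mathbb R^{n-1}$ (since the metric on $\mathbb R^{n-1}$ is scale-invariant). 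Because $d(f_m(x_m), y_m)/d_m$ is bounded, by Lemma~\ref{quasiasymp:lem} the sequence $(f_m)$ induces a well-defined $a$-biLipschitz embedding $\bar f\colon X_\omega \to Y_\omega$.

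The main computation is then to locate the basepoint of $Y_\omega$ with respect to the image of $\bar f$: for any point $[(p_m)] \in X_\omega$ we have
\[
d_{Y_\omega}([(y_m)],\bar f([(p_m)])) = \omega\text{-}\lim \frac{d(y_m, f_m(p_m))}{d_m} \geq \omega\text{-}\lim \frac{d_m}{d_m} = 1,
\]
so the basepoint of $Y_\omega$ lies at distance $\geq 1$ from $\bar f(X_\omega)$; in particular $\bar f$ is \emph{not} surjective.

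The last step is to derive a contradiction from the existence of a non-surjective biLipschitz embedding $\mathbb R^{n-1}\to\mathbb R^{n-1}$. This is the standard invariance-of-domain argument: $\bar f$ is open (by invariance of domain), and it is also closed because a biLipschitz map is proper and the target is Hausdorff; since $\mathbb R^{n-1}$ is connected and non-empty, $\bar f$ must be surjective, contradicting the previous paragraph. This contradiction establishes the desired uniform constant $R(a,b)$, hence the lemma.

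The only slightly delicate point is the existence of the asymptotic-cone induced map and its biLipschitz constant depending only on $a$ (not on $b$), but this is precisely the content of Lemma~\ref{quasiasymp:lem}: the additive constant $b$ is absorbed by the rescaling since $b = o(d_m)$ as $d_m \to \infty$. Everything else is routine.
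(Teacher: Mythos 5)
Your proof is correct. Note first that the paper itself does not prove Lemma~\ref{rn}: it simply points the reader to Kapovich--Leeb [Corollary 2.6]. Your argument supplies a self-contained proof, and it does so using precisely the tools the paper already develops.

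The reduction of quasi-surjectivity to $R$-density with $R=R(a,b)$ is exactly the observation in Chapter~\ref{quasiapp} that a $(k,c)$-quasi-isometric embedding with $r$-dense image is a quasi-isometry with constants depending only on $k,c,r$. The passage to asymptotic cones is handled correctly: you choose $x_m$ with $d(f_m(x_m),y_m)\le d_m+1$ so that the hypothesis $d(f_m(x_m),y_m)=O(d_m)$ of Lemma~\ref{quasiasymp:lem} holds, and the additive constant $b$ disappears in the limit since $b=o(d_m)$. The ultralimit of the basepoints $[(y_m)]$ is then at distance $\geq 1$ from the image of $\bar f$, giving non-surjectivity. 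The final topological step — a biLipschitz embedding $\mR^{n-1}\to\mR^{n-1}$ is open by invariance of domain and closed by properness, hence onto by connectedness — is literally the same trick the paper deploys in the proof of Proposition~\ref{wall:char:prop}. (As you note, properness of a biLipschitz map, together with the fact that the target is a proper metric space, gives closedness of the image; ``Hausdorff'' alone would not suffice, but this is a minor imprecision in phrasing and everything here is Euclidean.)

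So the argument is not only sound, it is very much in the spirit of the quasi-flat analyses carried out in Chapter~\ref{preserve:sec}, notably Proposition~\ref{wall:char:prop} and Corollary~\ref{wall:cor}; one could even view your proof as a degenerate ($d=0$, single-chamber) special case of Corollary~\ref{wall:cor}.
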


\begin{proposition}\label{wall2:prop}
A universal constant $\lambda$ exists such that for every wall 
$W_1\subseteq \tilM_1$, there exists a wall
$W_2\subseteq \tilM_2$ with the property that the Hausdorff distance between $f(W_1)$ and $W_2$  
is $\leq \lambda$. Moreover, $W_2$ is the unique wall in $\tilM_2$
at finite Hausdorff distance from $f(W_1)$.
\end{proposition}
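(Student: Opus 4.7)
The plan is to derive Proposition~\ref{wall2:prop} as a fairly direct consequence of Corollary~\ref{wall:cor} (applied to both $f$ and a quasi-inverse $g$), with Lemma~\ref{prefacile2:lem} used to rule out ambiguity. The inclusion $f(W_1) \subseteq N_\lambda(W_2)$ for some wall $W_2$ will come immediately from Corollary~\ref{wall:cor}; the harder task is the reverse inclusion $W_2 \subseteq N_\lambda(f(W_1))$, together with the uniqueness of $W_2$.

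First I would invoke Theorem~\ref{quasi-isom:thm} together with Corollary~\ref{fibre:cor} to fix a biLipschitz identification $\iota\colon \mathbb{R}^{n-1}\to W_1 \subseteq \widetilde{M}_1$, whose biLipschitz constant depends only on the geometry of $M_1$ (this is the point where irreducibility is crucially used). Composing, $f\circ\iota$ is a quasi-isometric embedding of $\mathbb{R}^{n-1}$ into $\widetilde{M}_2$ with constants depending only on $k$, $c$ and the geometry of $M_1,M_2$. Corollary~\ref{wall:cor} then provides a universal $\beta$ and a wall $W_2\subseteq \widetilde{M}_2$ with $f(W_1)=(f\circ\iota)(\mathbb{R}^{n-1})\subseteq N_\beta(W_2)$. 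The uniqueness of $W_2$ among walls at finite Hausdorff distance from $f(W_1)$ is then immediate from Lemma~\ref{prefacile2:lem}, since that Lemma asserts that two distinct walls of $\widetilde{M}_2$ must lie at infinite Hausdorff distance.

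For the reverse inclusion I would run the same argument for a quasi-inverse $g\colon \widetilde{M}_2\to \widetilde{M}_1$ of $f$, producing a wall $W_1'\subseteq \widetilde{M}_1$ and a (possibly larger, still universal) $\beta'$ with $g(W_2)\subseteq N_{\beta'}(W_1')$. Combining $f(W_1)\subseteq N_\beta(W_2)$ with the quasi-inverse estimate $d(g(f(x)),x)\le c$ gives $W_1\subseteq N_R(W_1')$ for a universal $R$; by Lemma~\ref{prefacile2:lem} this forces $W_1'=W_1$. Consequently $g(W_2)\subseteq N_{\beta'}(W_1)$, and applying $f$ together with $d(f(g(y)),y)\le c$ yields $W_2\subseteq N_{R'}(f(W_1))$ for a universal $R'$. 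Taking $\lambda=\max\{\beta,R'\}$ completes the two-sided bound.

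The main (though relatively mild) obstacle is simply keeping track of constants to ensure that everything depends only on $k$, $c$ and the geometry of $M_1,M_2$, and not on the particular wall $W_1$; this is where the precise form of Theorem~\ref{quasi-isom:thm}, which gives a universal biLipschitz constant for \emph{all} walls at once, is essential. Once this is in place, the argument is essentially a bookkeeping exercise built around the two inputs Corollary~\ref{wall:cor} and Lemma~\ref{prefacile2:lem}.
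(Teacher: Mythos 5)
Your argument is correct, and its first half matches the paper's exactly: both use irreducibility to obtain a quasi-isometry $\iota\colon\mR^{n-1}\to W_1$ with universal constants, apply Corollary~\ref{wall:cor} to $f\circ\iota$ to get $f(W_1)\subseteq N_\beta(W_2)$, and invoke Lemma~\ref{prefacile2:lem} for uniqueness. You diverge on the reverse inclusion $W_2\subseteq N_{\beta'}(f(W_1))$. You obtain it by symmetry: run the forward argument on a quasi-inverse $g$ to get $g(W_2)\subseteq N_{\beta'}(W_1')$ for some wall $W_1'$, use the estimate $d(g(f(x)),x)\le c$ plus Lemma~\ref{prefacile2:lem} a second time to force $W_1'=W_1$, and then push back through $f$ using $d(f(g(y)),y)\le c$. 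The paper instead composes $f|_{W_1}$ with a nearest-point map $p$ into $W_2$, identifies both $W_1$ and $W_2$ with $\mR^{n-1}$, and cites Lemma~\ref{rn} to conclude that the resulting quasi-isometric embedding $\mR^{n-1}\to\mR^{n-1}$ is coarsely surjective. Both routes produce universal constants and are about equally long; yours has the mild advantage of not needing the auxiliary projection $p$ or Lemma~\ref{rn}, at the cost of a second invocation of the quasi-inverse and of Lemma~\ref{prefacile2:lem}. The choice is a matter of taste.
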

\begin{proof}
Since $M_1$ is irreducible, there exists a $(k',c')$-quasi-isometry
$i\colon \mR^{n-1}\to W_1$ (where $k',c'$ only depend
on the geometry of $M_1$), and 
Corollary~\ref{wall:cor} (applied to the quasi-isometric embedding $f\circ i$) ensures that
$f(W_1)$ is contained in the $\beta$-neighbourhood of $W_2$
for some wall $W_2$, where $\beta$ is universal.
For every $y\in f(W_1)$ let $p(y)\in W_2$ be a point such that $d(y,p(y))\leq\beta$.
It follows easily from Lemma~\ref{rn} that the map
$p\circ f|_{W_1}\colon W_1\to W_2$ is a $(k'',c'')$-quasi-isometry,
where $k'',c''$ are universal. This in turn implies that $W_2$
is contained in the $\beta'$-neighbourhood of $f(W_1)$, where $\beta'$ is universal.
The first statement follows, with
$\lambda=\max \{\beta,\beta'\}$. The uniqueness of $W_2$ is an immediate consequence
of Lemma~\ref{prefacile2:lem}.
\end{proof}

Putting together Propositions~\ref{wall2:prop}
and~\ref{useful:prop} we now get the following result, which concludes the proof
of Theorem~\ref{qi-preserve:thm}:

\begin{proposition}\label{qi-useful:prop}
There exists a universal constant $H$ such that for every chamber $C_1\subseteq \tilM_1$ there exists a unique chamber
$C_2\subseteq \tilM_2$ such that the Hausdorff distance between $f(C_1)$ and $C_2$ 
is bounded by $H$. Moreover, if $W_1$ is a wall adjacent to $C_1$ then $f(W_1)$
lies at finite Hausdorff distance from a wall $W_2$ adjacent to $C_2$.
\end{proposition}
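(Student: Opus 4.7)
The plan is essentially to invoke the two preceding results in sequence: Proposition~\ref{wall2:prop} provides exactly the hypothesis needed to apply Proposition~\ref{useful:prop}. More precisely, Proposition~\ref{wall2:prop} guarantees the existence of a universal constant $\lambda$ (depending only on $k,c$ and the geometry of $M_1,M_2$) such that for every wall $W_1\subseteq\tilM_1$, the image $f(W_1)$ lies within Hausdorff distance $\lambda$ from a (necessarily unique, by Lemma~\ref{prefacile2:lem}) wall $W_2\subseteq\tilM_2$. This is precisely the assumption under which Proposition~\ref{useful:prop} was stated and proved.

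Therefore I would simply apply Proposition~\ref{useful:prop} to the $(k,c)$-quasi-isometry $f\colon\tilM_1\to\tilM_2$, using the constant $\lambda$ just provided. The conclusion of that proposition yields a constant $H$ (depending only on $k$, $c$, $\lambda$, and the density constant for walls in $\tilM_2$, hence universal in our present sense) such that for every chamber $C_1\subseteq\tilM_1$, there exists a chamber $C_2\subseteq\tilM_2$ with $f(C_1)$ at Hausdorff distance at most $H$ from $C_2$, together with the statement that walls adjacent to $C_1$ are quasi-preserved to walls adjacent to $C_2$. Uniqueness of $C_2$ follows from Corollary~\ref{prefacile3:lem}: any two chambers lying within finite Hausdorff distance of the same set $f(C_1)$ would themselves lie within finite Hausdorff distance of each other, forcing them to coincide.

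No real obstacle remains to be overcome at this stage, since all the substantive work has been carried out earlier; the genuine difficulty was establishing Proposition~\ref{wall2:prop} via the asymptotic cone machinery of Section~\ref{treegr:subsec} and Proposition~\ref{wall:char:prop}, and the combinatorial passage from walls to chambers encoded in Proposition~\ref{useful:prop}. The present proposition is just the synthesis of these two ingredients, and completes the proof of Theorem~\ref{qi-preserve:thm} by the same argument used in Section~\ref{iso-preserve:sub}.
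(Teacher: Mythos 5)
Your proof is correct and matches the paper's argument exactly: the paper states this proposition as an immediate consequence of combining Proposition~\ref{wall2:prop} (which verifies the hypothesis on walls) with Proposition~\ref{useful:prop} (which promotes the wall conclusion to chambers), and leaves no separate proof. Your additional remark on uniqueness via Corollary~\ref{prefacile3:lem} is also precisely how the paper handles this inside Proposition~\ref{useful:prop}.
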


\section{Thickness and relative hyperbolicity}\label{thickness:sec}

For an irreducible graph manifold $M$, we may exploit the study of the coarse geometry of $\pi_1(M)$
to answer the question whether $\pi_1(M)$ is relatively hyperbolic with respect to some finite family of proper subgroups.
Recall from Proposition~\ref{exception} that (even when $M$ is not necessarily irreducible) $\pi_1(M)$ is relatively hyperbolic provided that
at least one piece of $M$ is purely hyperbolic. In this Section we show that this sufficient condition
is also necessary if $M$ is irreducible.

As mentioned in the Introduction,
there are several equivalent definitions of the notion of relative hyperbolicity
of $G$ with respect to $H_1,\ldots,H_n$. Since we are going to describe obstructions to relative hyperbolicity
coming from the study of asymptotic cones, we recall the characterization
of relative hyperbolicity provided by the following result:

\begin{theorem}[\cite{dru}]
Let $C(G)$ be any Cayley graph of $G$.
Then, the group $G$ is relatively hyperbolic with respect
to $H_1,\ldots, H_n$ if and only if $C(G)$ is asymptotically tree-graded (see Definition~\ref{as-tree:def})
with respect to the left cosets of $H_1,\ldots,H_n$
(considered as subsets of $C(G)$).
\end{theorem}

%Let $M$ be an irreducible graph manifold. We already know that $\pi_1(M)$ is relatively hyperbolic if $M$ contains at least one 
%purely hyperbolic piece, 
%so we are left with considering the case where all pieces of $M$ have non-trivial toric factor. 
%Let us moreover assume that the irreducible graph manifold $M$ 
%has associated graph of groups. Let us sketch the proof that $\tilM$, and so any Cayley graph of $\pi_1(M)$, 
%is thick of order $1$.

The notion of thickness was introduced by Behrstock, Dru\c{t}u and
Mosher in \cite{BDM} as an obstruction for a metric space to be
asymptotically tree-graded, and hence, for a group to be relatively
hyperbolic. The simplest such obstruction is being unconstricted, 
i.e.~having no cut-points in any asymptotic cone (by definition, a metric
space is thick of order 0 if it is unconstricted). 
It is readily seen that the product of two unbounded geodesic metric spaces
(e.g.~a graph manifold consisting of a single piece with non-trivial fiber and without internal walls)
is unconstricted. 
Notable thick metric
spaces and groups which are not unconstricted include the mapping class
group and Teichm\"uller space (equipped with the Weil-Petersson metric)
of most surfaces (see \cite{BDM}), fundamental groups of classical $3$-dimensional
graph manifolds (see again \cite{BDM}), and the group $\out(F_n)$
for $n\geq 3$ (see Algom-Kfir \cite{algom}).

\par
Let us briefly describe what it means for a metric space $X$ to be thick
of order at most 1 with respect to a collection of subsets
$\mathcal{L}$. First of all, the family
$\mathcal{L}$ is required to ``fill'' $X$, that is there must exist
a positive constant $\tau$
such that the union of the sets in $\mathcal{L}$ is $\tau-$dense in $X$ (property $(N1)$).
Secondly, a certain coarse connectivity property (denoted by $(N2)$) must be satisfied: for each
$L,L'\in\mathcal{L}$ we can find elements $L_0=L,L_1\dots,L_n=L'$ of $\mathcal{L}$
such that $N_\tau(L_i)\cap N_\tau(L_{i+1})$ has infinite diameter, where the constant
$\tau$ is independent of $L,L'$. The space $X$ is said to be a $\tau$-network
with respect to the family of subspaces $\mathcal {L}$ if conditions
$(N1)$ and $(N2)$ hold (with respect to the constant $\tau$). For $X$ to be thick
of order at most 1, we need $X$ to be a $\tau$-network with respect a 
family $\mathcal {L}$, where each $L\in\mathcal{L}$ is
unconstricted (actually the stricter condition that the family $\mathcal{L}$
is uniformly unconstricted is required to hold). 
\par
Notice that property $(N2)$ fails if $X$ is asymptotically tree-graded
with respect to $\mathcal{L}$ as in that case there are uniform bounds
on the diameter of $N_k(L)\cap N_k(L')$ for $L,L'\in \mathcal{L}$ with
$L\neq L'$.

 \begin{proposition}
Let $M$ be an irreducible graph manifold, with at least one internal wall, and with the property that all
pieces have non-trivial fibers. Then $\tilM$ and $\pi_1(M)$ are both thick of order $1$. 
\end{proposition}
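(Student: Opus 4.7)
The plan is to verify the Behrstock--Dru\c{t}u--Mosher thickness criterion for $\tilM$ using the collection $\mathcal{L}$ of chambers as the network of subsets, and then transfer the conclusion to $\pi_1(M)$ via the Milnor--\v{S}varc Lemma (thickness being a quasi-isometry invariant). The argument splits into two steps: showing that each chamber is uniformly unconstricted (hence thick of order zero), and verifying the network properties (N1) and (N2).

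For the first step, Corollary~\ref{treeofspaces:cor} tells us that every chamber $C \subseteq \tilM$, equipped with its path metric $d_C$, is isometric to a product $B \times \mR^k$, where $B$ is a neutered space in $\matH^{n-k}$. The hypothesis that every piece has non-trivial fiber gives $k \geq 1$, so both factors are unbounded geodesic spaces. A standard observation used throughout \cite{BDM} is that any asymptotic cone of a product of two unbounded geodesic spaces splits as a product of asymptotic cones, in which no point can be a cut-point: the second factor always provides a way to route paths around any given point. Hence each chamber is unconstricted, and cocompactness of the $\pi_1(M)$-action on the set of chambers produces only finitely many isometry types, yielding uniform unconstrictedness of $\mathcal{L}$.

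Property (N1) is trivial since the chambers already cover $\tilM$. Property (N2) is where the internal wall hypothesis plays its role: because $M$ contains at least one internal wall, the Bass--Serre tree $T$ is non-trivial, and any two chambers $C, C'$ of $\tilM$ are joined by a chain $C = C_0, C_1, \ldots, C_m = C'$ of successively adjacent chambers obtained from a geodesic path in $T$. For adjacent $C_i, C_{i+1}$, the shared internal wall contains thin walls $W_\pm \cong \mR^{n-1}$ lying inside $C_i \cap C_{i+1}$; in particular $C_i \cap C_{i+1}$ has infinite diameter, so $N_\tau(C_i) \cap N_\tau(C_{i+1})$ has infinite diameter for any $\tau \geq 0$. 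Combined with the previous step, this establishes that $\tilM$ is thick of order at most $1$.

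To pin down that the order is exactly $1$ rather than $0$, one must exhibit a cut-point in some asymptotic cone of $\tilM$. Lemma~\ref{inters:lem} delivers the key ingredient: any pair of distinct $\omega$-walls in $\tilM_\omega$ is separated by an $\omega$-fiber $F_\omega$ biLipschitz to $\mR^h$ with $h \leq n-3$, and a short transversality argument along a generic slice upgrades such a separating subset of low dimension to an honest cut-point. The main subtlety I anticipate is verifying uniform unconstrictedness when the chambers are viewed as subsets of $\tilM$ with the ambient metric rather than with their intrinsic product metric $d_C$; this is handled by Lemma~\ref{comparedist:lem} together with the product structure, which together ensure that the two metrics on a chamber give rise to the same family of asymptotic cones (with their product splitting) up to biLipschitz equivalence.
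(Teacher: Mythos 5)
Your upper-bound argument (chambers as a network, product structure giving uniform unconstrictedness, transfer to $\pi_1(M)$ via Milnor--\v{S}varc) matches the paper's proof essentially verbatim; that part is fine.

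The lower bound is where you diverge, and the step you propose fails. You invoke Lemma~\ref{inters:lem} to produce an $\omega$-fiber $F_\omega$ biLipschitz to $\mR^h$ (with $1\le h\le n-3$, so $h\ge 1$ since fibers are non-trivial) separating two $\omega$-walls, and then claim that ``a short transversality argument along a generic slice'' promotes this separating set to a cut-point. But a positive-dimensional separating subspace is strictly weaker than a cut-point: if $h\ge 1$, removing any single point $p\in F_\omega$ leaves $F_\omega\setminus\{p\}$ path-connected, hence $\tilM_\omega\setminus\{p\}$ path-connected, so no point of $F_\omega$ is a cut-point, and no slicing will change that (compare $\mR^2$, which is separated by the line $\mR\times\{0\}$ yet has no cut-point at all). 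The paper obtains the cut-point by an entirely different route: Lemma~\ref{irr-acyl} gives that the $\pi_1(M)$-action on the Bass--Serre tree is acylindrical, and \cite[Theorem 4.1]{DMS} then guarantees that a group with an acylindrical action on a tree has cut-points in \emph{every} asymptotic cone. Note that both that lemma and your Lemma~\ref{inters:lem} require irreducibility (a hypothesis omitted in the proposition statement but assumed in the surrounding discussion), so neither route dispenses with it; the real gap is that separation by $\omega$-fibers is the phenomenon that obstructs relative hyperbolicity, not the cut-point phenomenon you need to rule out thickness of order $0$.
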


\begin{proof}
Let us first argue that $\tilM$ is thick of order $\leq 1$. We show that $\tilM$ is a 
$\tau-$network with respect to the collection $\mathcal{H}$ of its chambers 
(for $\tau$ large enough). In fact, every point in $\tilM$ is clearly
uniformly close to a chamber (property $(N1)$). Furthermore, if $\tau$ is large enough, 
then the intersection of two adjacent chambers contains a wall. 
As walls have infinite diameter, we easily obtain property $(N2)$ as well.

To complete the proof that $\tilM$ is thick of order $\leq 1$ we are only left with proving
that $\calH$ is uniformly unconstricted. This is true because there exists
a uniform constant $k\geq 1$ such that any $\omega-$chamber is 
$k$-biLipschitz homeomorphic to the product of a geodesic metric space and some $\mathbb{R}^n$, $n>0$.

Finally, we note that, by a result of Drutu, Mozes, and Sapir \cite[Theorem 4.1]{DMS},
any group which supports an acylindrical action on a tree has the property that every 
asymptotic cone has a cut point. In view of Proposition \ref{irr-acyl}, we conclude that $\pi_1(M)$ has
cut points in every asymptotic cones, hence cannot be thick of order $0$. This concludes
the proof of the Proposition.
\end{proof}

Therefore, if we assume that every piece of our irreducible graph manifold $M$ 
has non-trivial torus factor, then $\pi_1 (M)$ is either thick of order $0$
(when $M$ consists of a single piece without internal walls), or thick of order $1$
(when $M$ has at least one internal wall). Therefore, from Proposition~\ref{exception} and~\cite[Corollary 7.9]{BDM}
we deduce the following:

\begin{proposition}\label{thick:prop}
Let $M$ be an irreducible graph manifold. Then $\pi_1(M)$ is relatively hyperbolic with respect to a finite
family of proper subgroups if and only if $M$ contains at least one purely hyperbolic piece.
\end{proposition}

\chapter{Quasi isometry rigidity, I}\label{product:sec}
This chapter is devoted to the proof of Theorem~\ref{product:thm}.
We recall the statement for the convenience of the reader:

\begin{Thm2}
Let $N$ be a complete finite-volume hyperbolic $m$-manifold, $m\geq 3$, and
let $\Gamma$ be a finitely generated group quasi-isometric to
$\pi_1 (N)\times\mZ^d$, $d\geq 0$. 
Then there exist a finite-index subgroup $\Gamma'$ of $\Gamma$,
a finite-sheeted covering $N'$ of $N$, a group $\Delta$  and a finite group $F$ 
such that the following short exact sequences hold:
$$
\xymatrix{
1\ar[r] &\mZ^d \ar[r]^j & \Gamma' \ar[r] & \Delta \ar[r] & 1,\\
}
$$
$$
\xymatrix{
1\ar[r] & F \ar[r] & \Delta\ar[r] & \pi_1 (N')\ar[r] & 1 .
}
$$
Moreover,
$j(\mZ^d)$ is contained in the center of $\Gamma'$.
In other words, $\Gamma'$ is a central extension by $\mZ^d$ 
of a finite extension of $\pi_1 (N')$.
\end{Thm2}

In what follows we will give a proof of Theorem~\ref{product:thm} under the additional
assumption that the cusps of $N$ are toric. However, the attentive reader will observe that
all the results needed in the proofs below also hold in the case where $N$ is not assumed to have toric cusps,
provided that the walls of the universal covering of $\overline{N}\times T^d$ are quasi-isometrically embedded
in the universal covering $B\times \mR^d$, where $B$ is the neutered space covering $\overline N$. 
But this last fact is obvious, since the boundary components
of  $B\times \mR^d$ are totally geodesic (in the metric sense). 

So, let us consider the graph manifold with boundary 
$M=\overline{N}\times T^d$, and  observe that $\Gamma$ is 
quasi-isometric to $\pi_1 (M)$. Moreover, $M$ is obviously irreducible, and
the universal covering $\tilM$ of $M$ is isometric to the Riemannian
product $B\times \mR^d$, where $B$ is a neutered space. The walls of $\tilM$ coincide
with the boundary components of $\tilM$.

\section{The quasi-action of $\Gamma$ on $\tilM$}
As discussed  
in Section~\ref{quasiact:sub}, a quasi-isometry between $\Gamma$ and $\pi_1 (M)$ induces a $k$-cobounded
$k$-quasi-action $h$ of $\Gamma$ on $\tilM$ for some $k\geq 1$. From this point on, we will fix such a quasi-action. 
Henceforth, for every $\gamma\in\Gamma$, we will abuse notation, and also denote by $\gamma$ the corresponding
quasi-isometry $h(\gamma)\colon \tilM\to\tilM$.

We want to prove that every quasi-isometry $\gamma\colon \tilM\to \tilM$, $\gamma\in \Gamma$ 
can be coarsely projected on $B$ to obtain a quasi-isometry of $B$. 
We say that a constant is universal if it depends only on $k, H$ and the geometry of
$B$, where $H$ is such that for every $\gamma\in \Gamma$ and every wall
$W\subseteq \tilM$, the set $\gamma (W)$ is at Hausdorff distance at most $H$ from
a wall of $\tilM$ (see Proposition~\ref{wall2:prop}).

\begin{lemma}
There exists a universal constant $H'$ such that, for each fiber 
$F=\{b\}\times\mR^d\subseteq \tilM$ and each $\gamma\in \Gamma$, the set
$\gamma(F)$ is at Hausdorff distance bounded by $H'$ from a fiber
$\overline F=\{\overline b\}\times \mR^d\subseteq \tilM$. 
\end{lemma}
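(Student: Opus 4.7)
The plan is to promote Proposition~\ref{wall2:prop} (walls are coarsely preserved by $\gamma$) to the analogous preservation of fibers, by working in the asymptotic cone of $\tilM = B\times \mR^d$, where the tree-graded geometry of $B$ becomes visible. By Theorem~\ref{neutered:relhyp}, the neutered space $B$ is asymptotically tree-graded with respect to its boundary horospheres, so any asymptotic cone $B_\omega$ is tree-graded with Euclidean pieces. Since $\tilM$ carries the product Riemannian metric, the asymptotic cones factor as $\tilM_\omega = B_\omega\times \mR^d$, with $\omega$-walls of the form $P\times \mR^d$ ($P$ a piece of $B_\omega$) and $\omega$-fibers of the form $\{b\}\times \mR^d$ ($b\in B_\omega$). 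The quasi-isometry $\gamma$ induces a bi-Lipschitz homeomorphism $\gamma_\omega$ of $\tilM_\omega$ (Lemma~\ref{quasiasymp:lem}), and Proposition~\ref{wall2:prop}, combined with uniqueness of the nearby wall, guarantees that $\gamma_\omega$ permutes the set of $\omega$-walls.

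The crux is to deduce that $\gamma_\omega$ also sends $\omega$-fibers to $\omega$-fibers. For this I would exploit Lemma~\ref{treegr:lem}: two distinct pieces of $B_\omega$ meet in at most a single point $b_0$, in which case the $\omega$-walls $P_1\times \mR^d$ and $P_2\times \mR^d$ intersect exactly in the $\omega$-fiber $\{b_0\}\times \mR^d$. Since $\gamma_\omega$ permutes $\omega$-walls and is a homeomorphism, it sends every such wall intersection to another such intersection, and hence preserves the collection of ``piece-intersection'' $\omega$-fibers. To obtain this for every $\omega$-fiber—including those $\{b\}\times \mR^d$ with $b$ lying in the interior of a single piece of $B_\omega$—I would combine continuity of $\gamma_\omega$ with the density of piece-intersection points in $B_\omega$, or pass to a further asymptotic cone in which every point becomes a cut-point of the base factor.

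With $\omega$-fibers preserved in every asymptotic cone, I would transfer back to $\tilM$ by a standard argument by contradiction. Were there no universal constant $H'$, one could find sequences $\gamma_n\in\Gamma$, fibers $F_n\subseteq \tilM$, and points $p_n,q_n\in F_n$ with $d_B(\pi_B(\gamma_n(p_n)),\pi_B(\gamma_n(q_n)))\to\infty$. Choosing basepoints along $\gamma_n(p_n)$ and rescaling factors equal to these diverging distances, one obtains, in the resulting asymptotic cone, a bi-Lipschitz image $\gamma_\omega(F_\omega)$ of an $\omega$-fiber whose projection to $B_\omega$ has positive diameter—contradicting the fiber-preservation established in the previous paragraph. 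The main obstacle is the invariant characterization of \emph{every} $\omega$-fiber by means of $\omega$-walls: walls in $\tilM$ being pairwise disjoint, passing to the asymptotic cone must produce intersecting pairs of $\omega$-walls at sufficiently many points of $B_\omega$, which requires controlled use of the relatively hyperbolic structure of $B$ and of the abundance of horoballs near every point.
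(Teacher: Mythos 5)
Your approach is genuinely different from the paper's, and it has a real gap in the transfer from asymptotic cones back to $\tilM$.

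The paper's proof is elementary and avoids asymptotic cones entirely. It observes, via the compactness of the cut-locus of $\overline N$ relative to $\partial\overline N$, that there is a universal $R'$ such that every fiber $F$ is contained in $A_{R'}(W,W') := \{x : d(x,W)\leq R',\ d(x,W')\leq R'\}$ for two \emph{distinct} walls $W, W'$, and that any such set $A_{R'}(W,W')$ is contained in a uniform neighbourhood of a single fiber (because in the base $B$ the set of points $R'$-close to two distinct horospheres has uniformly bounded diameter). Since $\gamma$ sends each wall within uniform Hausdorff distance of a wall (Proposition~\ref{wall2:prop}), one gets $\gamma(A_{R'}(W,W'))\subseteq A_{R''}(\overline W,\overline W')$ with $R''$ universal. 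Both $F$ and $A_{R''}(\overline W,\overline W')$ are quasi-isometric to $\mR^d$, so Lemma~\ref{rn} upgrades the restriction $\gamma|_F$ to a quasi-isometry onto $A_{R''}(\overline W,\overline W')$, giving the two-sided Hausdorff bound. Note that the fact you flag as the ``main obstacle'' — that walls in $\tilM$ are pairwise disjoint yet their $\omega$-limits must intersect abundantly — is precisely what the cut-locus argument supplies: every point of $B$ lies within distance $R'$ of \emph{two distinct} horospheres, hence every point of $B_\omega$ is a cut-point. You identify the need for this but do not actually establish it, so the characterization of every $\omega$-fiber as a wall intersection is left open in your write-up.

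More seriously, even granting that $\gamma_\omega$ sends $\omega$-fibers to $\omega$-fibers in every asymptotic cone, the transfer-back you sketch does not yield a uniform bound. What fiber-preservation in all cones gives is that, for $p,q\in F$, the displacement $d_B\bigl(\pi_B\gamma(p),\pi_B\gamma(q)\bigr)$ is \emph{sublinear} in $d_F(p,q)$: if $d_B\gamma/d_F$ stayed bounded away from $0$ along some sequence, rescaling by $d_F(p_n,q_n)$ would produce an $\omega$-fiber whose image has positive $\pi_B$-diameter, a contradiction. But sublinearity is strictly weaker than a uniform bound. In your proposed contradiction — rescaling by the diverging distance $s_n = d_B(\pi_B\gamma_n(p_n),\pi_B\gamma_n(q_n))$ — the point $[(q_n)]$ need not lie in the asymptotic cone at all, since $d_F(p_n,q_n)$ may grow much faster than $s_n$; and the bisection trick does not help because each halving halves \emph{both} the $d_F$-distance and the $d_B$-displacement, leaving the ratio unchanged. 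This is exactly why the paper's proof of the analogous statement for walls (Corollary~\ref{wall:cor}) had to change basepoints and invoke a topological separation argument (embedded half-space of $\mR^{n-1}$ near a codimension-$\geq 2$ subset); that dimension-counting argument is unavailable for $\mR^d$, $d\leq n-3$, so the fiber case cannot be handled by mimicking that proof. The paper circumvents the issue entirely by characterizing fibers coarsely in $\tilM$ rather than in the cone.
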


\begin{proof}
Let $K\subseteq \overline N$ be the cut-locus of $\overline{N}$ relative to
$\partial\overline N$, i.e.~the set of points of $\overline{N}$ whose distance from $\partial\overline{N}$
is realized by at least two distinct geodesics, and let $R'=2\sup \{d_{\overline{N}} (p,q)\, |\, p\in K,\, q\in\partial
\overline N\}$. Since $\overline N$ is compact, $R'$ is finite, and it is easily seen
that for each $p\in \overline N$ there exist (at least) two distinct components
of $\partial\overline N$ whose distance from $p$ is at most $R'$.  
This implies that
for each fiber $F$ there exist two walls $W, W'$ such that 
$F\subseteq A_{R'}(W,W')=\{x\in \tilM\,|\, d(x,W)\leq R', d(x, W')\leq R'\}$.

Moreover, if $O,O'$ are disjoint horospheres in $\partial B$, it is easy to see
that the diameter of the set $\{b\in B\, |\, d(b,O)\leq R', d(b,O')\leq R'\}$ is bounded
by a constant which only depends on $R'$.
As a consequence, if $F$ is a fiber contained in $A_{R'}(W,W')$ then
there exists a universal constant $D$ 
such that $A_{R'}(W,W')\subseteq N_D(F)$.
As quasi-isometries almost preserve walls, there exist a universal constant $R''\geq R '$
and walls 
$\overline{W},\overline{W'}$ such that $\gamma(A_{R'}(W,W'))\subseteq  A_{R''}(\overline{W},\overline{W'})$.
It follows that $\gamma$ restricts to a $(k', k')$-quasi-isometric embedding of $F$
into $A_{R''}(\overline{W},\overline{W'})$, where $k'$ is a universal constant.
But both $F$ and $A_{R''}(\overline{W},\overline{W'})$ are quasi-isometric to $\mR^d$,
so by Lemma~\ref{rn} the restriction of $\gamma$ to $F$ defines a quasi-isometry
(with universal constants) between $F$ and $A_{R''}(\overline{W},\overline{W'})$, and this
forces the Hausdorff distance between $\gamma (F)$ and
a fiber in $A_{R''}(\overline{W},\overline{W'})$ 
to be bounded by a universal $H'$.
\end{proof}

The above Lemma can be used to define a quasi-action of $\Gamma$ on $B$. 
Recall that $\tilM$ is isometric to $B\times \mR^d$, 
and fix $\gamma\in \Gamma$. 
We define a map $\psi (\gamma)\colon B\to B$ by setting 
$\psi(\gamma)(b)=\pi_B (\gamma( (b,0) ))$ for every $b\in B$, where $\pi_B\colon \tilM\cong B\times \mR^d \to B$
is the natural projection, and for $(b,f), (b',f')\in B\times \mR^d \cong \tilM$ 
we denote by $d_B ((b,f),(b',f'))$ the distance in $B$ between $b$ and $b'$
(see Section~\ref{construction:sec}). With a slight abuse of notation, we also 
denote by $d_B$ the distance on $B$.

We now show that every $\psi (\gamma)$ is a quasi-isometry (with universal constants).
Let $b,b'\in B$ and set $F=\{b\}\times \mR^d$ and $F'=\{b'\}\times \mR^d$. 
The Hausdorff distance between $\gamma (F)$ and $\gamma (F')$ is bounded
from below by $d_B (b,b')/k -k$, so 
if $\overline{F},\overline{F'}$ are fibers with Hausdorff distance bounded
by $H'$ from $\gamma(F),\gamma(F')$ respectively, then the Hausdorff distance between
$\overline{F}$ and $\overline{F'}$ is at least $d_B (b,b')/k -k -2H'$. 
We have therefore
\begin{align*}
d_B \big(\psi(\gamma)(b),&\psi(\gamma)(b') \big) \\ 
& \geq d_B \big(\pi_B (\overline{F}),\pi_B (\overline{F'}) \big)- d_B \big(\psi(\gamma)(b),\pi_B (\overline{F}) \big)-
d_B \big(\psi(\gamma)(b'), \pi_B (\overline{F'}) \big) \\ 
& \geq \big(d_B (b,b')/k -k -2H'\big)-2H' \\
& = d_B (b,b')/k -k -4H'.
\end{align*}
On the other hand, we also have
\begin{align*}
d_B \big(\psi(\gamma)(b),\psi(\gamma)(b') \big) &= d_B \Big(\gamma \big((b,0) \big),\gamma \big((b',0) \big)\Big)\\
&\leq kd \big((b,0),(b',0) \big)+k\\
&\leq k d_B (b,b')+k.
\end{align*}
Having $(k+2H')$-dense image, the map
$\psi(\gamma)\colon B\to B$ is 
therefore
a $(k',k')$-quasi-isometry with $k'$-dense image, where $k'$ is a universal constant. 
It is now easy to show that the map $\gamma\mapsto \psi (\gamma)$ defines a 
quasi-action of $\Gamma$ on $B$. Moreover, up to increasing $k'$ we may assume that
such a quasi-action is $k'$-cobounded.
From the way the action of $\Gamma$ on
$B$ was defined, we also have that, for every $\gamma\in\Gamma$ and every component $O$ of $\partial B$,
there exists a component $O'$ of $\partial B$ such that
the Hausdorff distance between $\psi(\gamma) (O)$ and $O'$ is 
bounded by $H$. In order to simplify notations, we will as usual denote $\psi (\gamma)$ simply by $\gamma$.

Recall that $m=n-d$ is the dimension of the neutered space $B$, and let $G$
be the isometry group of $(B,d_B)$. Every element of $G$ is the restriction to $B$ of an isometry
of the whole hyperbolic space $\matH^m$ containing $B$.
We will denote by ${\rm Comm} (G)$ the \emph{commensurator} of 
$G$ in ${\rm Isom} (\matH^m)$, i.e.~the group
of those elements $h\in {\rm Isom} (\matH^m)$ such that
the intersection $G\cap (h G h^{-1})$
has finite index both in $G$ and in $h G h^{-1}$.

We are now in a position to use a deep result due to Schwartz
(see~\cite[Lemma 6.1]{schw}), which in our
context can be stated as follows:

\begin{theorem}[\cite{schw}]\label{schwartz1}
There exists a universal constant $\beta$ such that the following condition holds:
for every $\gamma\in\Gamma$ a unique isometry
$\theta (\gamma)\in {\rm Isom}(\matH^m)$ exists such that $d_\matH (\gamma(x),\theta(\gamma)(x))\leq \beta$
for every $x\in B$, where $d_\matH$ denotes the hyperbolic distance on
$\matH^m$. Moreover, for every $\gamma\in\Gamma$ the isometry
$\theta (\gamma)$ belongs to ${\rm Comm} (G)$, and 
the resulting map $\theta\colon \Gamma\to {\rm Comm}(G)$ is
a group homomorphism.
\end{theorem}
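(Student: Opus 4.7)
The plan is to invoke (and sketch the ingredients of) Schwartz's rigidity result for quasi-isometries of neutered spaces. The starting point is the fact, already established above, that every $\gamma \in \Gamma$ defines a $(k',k')$-quasi-isometry of $B$ with $k'$-dense image, and that $\gamma$ permutes the boundary horospheres of $B$ up to uniformly bounded Hausdorff error (by a constant $H$). We will use this ``coarse permutation'' structure to upgrade $\gamma$ to a bona fide isometry of the ambient hyperbolic space.

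The first step is to extend $\gamma$ from $B$ to a quasi-isometry of $\mathbb{H}^m$ (still with uniform constants). One does this by filling in each removed open horoball $\mathcal{B}_O \subset \mathbb{H}^m$ bounded by a horosphere $O \subset \partial B$: using the product-like structure of a horoball (horospheres at various depths) together with the chosen correspondence $O \mapsto O'$ (where $O'$ is the horosphere whose Hausdorff distance from $\gamma(O)$ is at most $H$), one extends $\gamma$ to a map $\hat\gamma \colon \mathcal{B}_O \to \mathcal{B}_{O'}$ by sending a point at hyperbolic depth $t$ in $\mathcal{B}_O$ to a point at depth $t$ over $\gamma(p)$, with $p$ its horospherical projection. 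Straightforward estimates using the exponential contraction of horospheres show that $\hat\gamma$ is a quasi-isometry of $\mathbb{H}^m$ with constants depending only on $k'$ and the geometry of $B$. Once $\hat\gamma$ is a quasi-isometry of the whole $\mathbb{H}^m$, it extends to a quasi-conformal homeomorphism $\partial_\infty \hat\gamma$ of the sphere at infinity $S^{m-1}_\infty = \partial_\infty \mathbb{H}^m$.

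Next, since $m \geq 3$ (here we crucially use $\dim N \geq 3$, \emph{i.e.}~$m = n - d \geq 3$), we apply Tukia's theorem: any quasi-conformal homeomorphism of $S^{m-1}_\infty$ that conjugates the parabolic fixed-point set of one geometrically finite lattice onto another is M\"obius, provided enough regularity. More precisely, the restriction of $\partial_\infty \hat\gamma$ to the $\Gamma$-invariant (in the quasi-action sense) set of parabolic fixed points of the lattice uniformizing $N$ is sufficiently controlled; a density and rigidity argument (Tukia / Mostow-type) then forces $\partial_\infty \hat\gamma$ to be a Möbius transformation, and hence to come from a unique isometry $\theta(\gamma) \in \mathrm{Isom}(\mathbb{H}^m)$. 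Standard comparison estimates between a quasi-isometry of $\mathbb{H}^m$ and an isometry inducing the same boundary map yield a \emph{universal} constant $\beta$ with $d_{\mathbb{H}}(\gamma(x), \theta(\gamma)(x)) \leq \beta$ for all $x \in B$. Uniqueness of $\theta(\gamma)$ is immediate: two isometries of $\mathbb{H}^m$ at bounded distance on the unbounded set $B$ must coincide on $\partial_\infty \mathbb{H}^m$, hence everywhere.

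For the commensurator statement, observe that $\theta(\gamma)$ sends parabolic fixed points of $G$ to parabolic fixed points of $G$: indeed $\gamma$ almost-permutes the boundary horospheres of $B$, and these horospheres are canonically centered at the parabolic fixed points of the lattice $G$. Hence $\theta(\gamma) G \theta(\gamma)^{-1}$ and $G$ are two lattices in $\mathrm{Isom}(\mathbb{H}^m)$ sharing the same set of parabolic fixed points, from which a standard argument (both lattices have the same limit set $S^{m-1}_\infty$ and the quotient manifolds have cusps with commensurable cross-sections) shows their intersection has finite index in each. For homomorphism: if $\gamma_1,\gamma_2 \in \Gamma$, then both $\theta(\gamma_1\gamma_2)$ and $\theta(\gamma_1)\theta(\gamma_2)$ lie within a uniformly bounded distance from $\gamma_1\gamma_2$ on $B$ (here we use property~(3) of the quasi-action, which costs only a $k$-error in the composition), hence within bounded distance from each other on $B$, and the uniqueness clause forces them to be equal.

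The main obstacle is the Tukia-style rigidity argument in Step~2, which genuinely requires the hypothesis $m \geq 3$: for $m = 2$ one cannot upgrade a quasi-conformal boundary map to a Möbius one, which is precisely why hyperbolic-surface pieces must be treated separately (as done in Chapter~\ref{surf-base:chap}). The rest of the argument is bookkeeping around the neutered-space geometry, but the quasi-conformal rigidity on $S^{m-1}_\infty$ is the engine that drives everything.
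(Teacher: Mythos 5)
The paper does not give a proof of this statement at all: it simply quotes Schwartz's result, explicitly referencing \cite[Lemma 6.1]{schw}, and builds on it from there. So any ``proof'' here is necessarily a recap of Schwartz, and your proposal is a reasonable (if slightly impressionistic) sketch of his method, not an independent route.

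Two remarks on the sketch itself. First, the high-level plan --- extend $\gamma$ from the neutered space $B$ to a quasi-isometry $\hat\gamma$ of all of $\mathbb{H}^m$ by filling in horoballs coherently, pass to the induced quasi-conformal map on $\partial_\infty\mathbb{H}^m$, and then argue it is M\"obius --- is indeed the shape of Schwartz's argument, and you are correct that $m\geq 3$ enters precisely at the boundary-rigidity step. Second, however, your appeal to ``Tukia's theorem'' for upgrading the boundary map to a M\"obius map is not quite the right tool: Tukia's theorem concerns \emph{uniformly quasi-conformal groups}, whereas here one is dealing with a single quasi-conformal map which a priori belongs to no group. What Schwartz actually uses is a \emph{zooming} (or \emph{scattering}) argument at the parabolic points: he shows that, because $\hat\gamma$ coarsely respects the horoball pattern, the boundary map must be differentiable with conformal derivative at the parabolic fixed points, and since these are dense and the map is quasi-conformal (hence differentiable a.e.\ with quasi-conformality controlled by the derivative), this forces the derivative to be conformal a.e., whence the map is M\"obius. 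Your phrasing ``a density and rigidity argument (Tukia / Mostow-type)'' glosses over exactly this engine. Since you explicitly flag this step as the crux, I'd encourage you to look up Schwartz's zooming lemma and reference it rather than Tukia. The parts about uniqueness of $\theta(\gamma)$, membership in $\mathrm{Comm}(G)$, and the homomorphism property via the quasi-action axioms and uniqueness are all correct and match what one would write if expanding Schwartz's citation.
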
 

In the next few sections, we will analyze the kernel and image of the morphism
$\theta$, in order to extract information about the structure of $\Gamma$.

\section{The image of $\theta$}

From now on we denote by $\Lambda< {\rm Isom}(\matH^m)$ the image of the
homomorphism $\theta$. Our next goal is to show that $\Lambda$ is 
commensurable with $\pi_1 (N)$. 
It is a result of Margulis that a non-uniform lattice in ${\rm Isom} (\matH^m)$ is arithmetic
if and only if it has infinite index in its commensurator (see~\cite{zim}). As a result,
things would be quite a bit easier if $N$ were assumed to be non-arithmetic. 
To deal with the general case, we will again use results (and techniques) from~\cite{schw}.
Note that, at this stage, we don't even know that $\Lambda$ is a discrete subgroup of 
${\rm Isom}(\matH^m)$. 

From now on, unless otherwise stated,
we will consider the Hausdorff distance of subsets of $\matH^m$ 
with respect to the hyperbolic metric $d_\matH$ on $\matH^m$. 
We denote by $P\subseteq \partial\matH^m$
the set of all the basepoints of horospheres in $\partial B$. 
As an immediate corollary of Theorem~\ref{schwartz1} we get the following:

\begin{lemma}\label{easyfinite:lem}
For every $\alpha\in\Lambda$ and every horosphere $O\subseteq \partial B$
there exists a unique horosphere $O'\subseteq \partial B$ such 
that the Hausdorff distance between $\alpha (O)$ and $O'$ is at most
the universal constant $H+\beta$. In particular, the group $\Lambda$ acts on $P$.
\end{lemma}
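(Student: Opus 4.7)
The plan is to combine Theorem~\ref{schwartz1} with the already-established fact, noted just before the statement of Schwartz's theorem, that the induced quasi-action of $\Gamma$ on $B$ sends each component of $\partial B$ to within $d_B$-Hausdorff distance at most $H$ from another component. So first I would write $\alpha=\theta(\gamma)$ for some $\gamma\in\Gamma$ and let $O'$ be the component of $\partial B$ such that the Hausdorff distance, measured in $d_B$ (hence also in $d_\matH$, since $d_\matH\leq d_B$ on $B$), between $\psi(\gamma)(O)$ and $O'$ is bounded by $H$. By the defining property of $\theta$ provided by Theorem~\ref{schwartz1}, one has $d_\matH(\alpha(x),\psi(\gamma)(x))\leq \beta$ for every $x\in B$, so the $d_\matH$-Hausdorff distance between $\alpha(O)$ and $\psi(\gamma)(O)$ is at most $\beta$. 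A triangle inequality for Hausdorff distance then gives that $\alpha(O)$ lies within $d_\matH$-Hausdorff distance at most $H+\beta$ from $O'$, which is exactly the bound claimed.

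For uniqueness, I would invoke the elementary fact that two distinct horospheres in $\matH^m$ centered at different points of $\partial\matH^m$ lie at infinite Hausdorff distance from each other. Since the components of $\partial B$ are horospheres centered at pairwise distinct points of $P$, at most one such $O'$ can sit within finite $d_\matH$-Hausdorff distance from $\alpha(O)$, yielding the uniqueness of $O'$.

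Finally, to obtain the action of $\Lambda$ on $P$, I would use that $\alpha$ is an honest isometry of $\matH^m$, so $\alpha(O)$ is a genuine horosphere whose center at infinity is precisely $\alpha(p)$, where $p\in P$ denotes the center of $O$. Since $\alpha(O)$ has finite Hausdorff distance from $O'$, their centers at infinity must coincide, so $\alpha(p)$ equals the center of $O'$ and in particular belongs to $P$. Thus $\alpha$ restricts to a well-defined map $P\to P$; applying the same reasoning to $\alpha^{-1}=\theta(\gamma^{-1})$ shows this map is a bijection, and the fact that $\theta$ is a homomorphism then promotes this to a genuine action of $\Lambda$ on $P$.

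The argument is essentially bookkeeping; the only real point to watch is the interplay between the three different maps in play (the quasi-isometry $\gamma$ of $\tilM$, its projected quasi-action $\psi(\gamma)$ on $B$, and the honest isometry $\alpha=\theta(\gamma)$ of $\matH^m$) and keeping track of which Hausdorff distance (in $d_B$ or in $d_\matH$) is being estimated at each step. The harder part of the chapter — namely, controlling the image $\Lambda$ of $\theta$ well enough to compare it with $\pi_1(N)$ — will not enter here; this lemma simply records the preliminary boundary-preserving behavior that one needs before carrying out that comparison.
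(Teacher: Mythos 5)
Your proof is correct and fills in precisely the reasoning the paper leaves implicit when it calls this an ``immediate corollary'' of Theorem~\ref{schwartz1}: the triangle inequality combining the $H$-bound on the quasi-action's effect on boundary components with the $\beta$-bound from Schwartz's approximating isometry, together with the infinite Hausdorff separation of distinct boundary horospheres. Your identification of the induced map on $P$ via centers at infinity, and the use of $\theta$ being a homomorphism to get an honest action, is the intended argument.
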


\begin{lemma}\label{finite-orbits:lem}
The action of $\Lambda$ on the set $P$ has a finite number of orbits, 
and every element of $\Lambda$ which fixes a point in $P$ is parabolic.
\end{lemma}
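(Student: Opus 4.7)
The plan is to establish (a) by a pigeonhole argument exploiting the coboundedness of the induced quasi-action on $B$, and (b) by a trichotomy analysis of the isometries of $\mathbb H^m$ that fix $p\in P$, arguing that only the parabolic type is compatible with the wall-preservation estimates.

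For (a), fix a basepoint $x_0\in B$. Given any $p\in P$, pick $y\in O_p$. Coboundedness of the induced quasi-action $\psi$ of $\Gamma$ on $B$ yields $\gamma\in\Gamma$ with $d_B(\psi(\gamma)(x_0),y)\leq k'$. Setting $\alpha=\theta(\gamma)\in\Lambda$, Theorem~\ref{schwartz1} gives $d_\mathbb{H}(\alpha(x_0),y)\leq\beta+k'$, whence $d_\mathbb{H}(x_0,\alpha^{-1}(O_p))\leq\beta+k'$. By Lemma~\ref{easyfinite:lem}, $\alpha^{-1}(O_p)$ has Hausdorff distance $\leq H+\beta$ from the horosphere $O_{p_1}$ of $\partial B$ based at $p_1:=\alpha^{-1}(p)\in P$, so $d_\mathbb{H}(x_0,O_{p_1})\leq D:=H+2\beta+k'$. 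Since the horoballs defining $B$ are pairwise disjoint and $\pi_1(N)$ acts cocompactly on $B$, only finitely many horospheres of $\partial B$ meet the closed ball $\overline B_\mathbb{H}(x_0,D)$. Because $\alpha(p_1)=p$, every $p\in P$ lies in the $\Lambda$-orbit of one of these finitely many cusp points.

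For (b), suppose $\alpha\in\Lambda$ fixes $p\in P$. Since every power $\alpha^n$ also belongs to $\Lambda$ and fixes $p$, Lemma~\ref{easyfinite:lem} gives $d_H(\alpha^n(O_p),O_p)\leq H+\beta$ for all $n\in\mathbb Z$. Work in the upper half-space model with $p=\infty$, so that any isometry of $\mathbb H^m$ fixing $p$ has the form $(x,y)\mapsto(Ax+b,\lambda y)$ with $A\in O(m-1)$ and $\lambda>0$; a direct computation yields $d_H(\alpha^n(O_p),O_p)=|n\log\lambda|$. The uniform bound forces $\lambda=1$, so $\alpha$ is not hyperbolic; moreover $\alpha$ preserves each horosphere at $p$ setwise and acts on $O_p\cong\mathbb R^{m-1}$ as the Euclidean isometry $x\mapsto Ax+b$, and is parabolic precisely when $b\notin\mathrm{Im}(I-A)$.

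The remaining step—excluding the elliptic case $b\in\mathrm{Im}(I-A)$—is the main obstacle. My strategy is to exploit that $\alpha\in\mathrm{Comm}(G)$ fixes $p$, so $\alpha G_p\alpha^{-1}$ is commensurable with the cusp subgroup $G_p\cong\mZ^{m-1}$; conjugation by $\alpha$ acts on the translation lattice $G_p$ via $A$, forcing $A$ to preserve $G_p\otimes\mathbb Q$ and hence to belong to a finite subgroup of $O(m-1)$. An elliptic $\alpha$ would then satisfy $\alpha^k=\mathrm{id}$ for $k$ the order of $A$, producing nontrivial torsion in $\Lambda$ with a cusp fixed point. To derive a contradiction I plan to pull $\alpha^k$ back to a preimage in $\Gamma$ and combine the cobounded quasi-action on $\widetilde M=B\times\mR^d$ with the hypothesis that $\Gamma$ is quasi-isometric to the torsion-free group $\pi_1(N)\times\mZ^d$, showing that such a torsion element would have bounded $B$-orbit and hence contradict finiteness of $\Lambda$-orbits on $P$ combined with discreteness considerations coming from Schwartz's theorem; an alternative route would be to invoke directly the arithmetic/non-arithmetic dichotomy for commensurators of non-uniform rank-one lattices. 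This elliptic-exclusion step is the most delicate piece of the argument, and is where the careful interplay between the cobounded quasi-action and the commensurator structure provided by Theorem~\ref{schwartz1} plays the decisive role.
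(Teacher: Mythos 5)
Your argument for finiteness of orbits is correct and essentially identical to the paper's (the paper works directly with $\theta(\gamma)(p') = p$ rather than via $\alpha^{-1}$, but the idea is the same). Your reduction of the second assertion to $\lambda = 1$, equivalently $\alpha(O_p) = O_p$, via the linear growth of $d_H(\alpha^n(O_p), O_p)$ against the uniform bound of Lemma~\ref{easyfinite:lem}, is also exactly the paper's argument.

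The gap is in what you believe comes next. The paper's proof stops at $\alpha(O_p) = O_p$ and from it concludes ``so $\alpha$ is parabolic,'' performing no analysis of the elliptic case; the step you identify as the ``main obstacle'' — excluding $b \in \mathrm{Im}(I-A)$ — is never carried out and is not needed. The only downstream use of this assertion, in Lemma~\ref{neuteredmodified:lem} (that each $p\in P$ supports exactly one horosphere of $\widehat{\mathcal{O}}$), requires precisely the horosphere-preservation $\alpha(O_p) = O_p$ you have already established: from it, $\alpha(\widehat{O}_p) = \widehat{O}_p$ follows immediately since $\widehat{O}_p$ is at a prescribed Hausdorff distance inside $O_p$. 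In other words, ``parabolic'' here should be read as ``non-loxodromic, preserving the horospheres at $p$''; strictly speaking nothing rules out elliptic elements at this stage, and indeed $\Lambda$ is not yet known to be discrete and is never shown to be torsion-free (the later quotient $\widehat B/\Lambda$ is only shown to be a compact \emph{orbifold}). Consequently your proposed route for excluding elliptics is unlikely to succeed: quasi-isometry to the torsion-free group $\pi_1(N)\times\mZ^d$ does not force $\Gamma$ to be torsion-free, so pulling $\alpha^k = \mathrm{id}$ back to $\Gamma$ cannot yield a contradiction, and invoking arithmeticity of $\Lambda$ presupposes that $\Lambda$ is a lattice, which is only proved after this lemma. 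Your proof should stop, as the paper's does, once $\lambda = 1$ is established.
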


\begin{proof}
Fix a point $b\in B$. Let $A$ be the set of boundary components of $B$  
whose hyperbolic distance from $b$ is $\leq k'(H +k') + k'$. The set $A$ is finite,
and define $P_0$ to be the (finite) set of basepoints corresponding to the horospheres 
in the set $A$. We will prove that $P_0$ contains a set of representatives for the action of 
$\Lambda$ on $P$.

So taking an arbitrary $p\in P$, let
$O$ be the corresponding component of $\partial B$, and fix a point $y\in O$. 
Since the quasi-action of $\Gamma$ on $B$ is $k'$-cobounded,
there exists $\gamma\in\Gamma$ such that $d_\matH (\gamma(b),y)\leq d_B (\gamma (b),y)\leq k'$. 
We know that there exists a component $O'$ of $\partial B$ based at $p'\in P$ 
such that $\gamma(O')$ is at Hausdorff distance bounded by $H$ from $O$. 
It follows that $\gamma(O')$ contains a point at distance at most $H$ from $y$, 
and this in turn implies that $O'$ belongs to $A$, so
$p'$ belongs to $P_0$. 
Moreover, the horosphere $\theta (\gamma) (O')$ is at bounded Hausdorff distance from $O$,
giving us $ \theta(\gamma)(p')=p$. So $p$ belongs to the $\Lambda$-orbit of a point in $P_0$, 
completing the first part of the Lemma.

Now assume $p\in P$ is fixed by an element $\alpha\in\Lambda$, and let $O$ be the connected
component of $\partial B$ corresponding to $p$. 
Since $\alpha (p)=p$, the horosphere $\alpha (O)$ is also based at the point $p$. It easily follows
that the Hausdorff distance between $O$ and $\alpha^n (O)$ equals $n$ times 
the Hausdorff distance between $O$ and $\alpha (O)$. 
Since $\alpha^n\in\Lambda$ for every $n\in\mathbb{N}$, 
if such a distance were positive, then for sufficiently large $n$ the Hausdorff distance 
from $O$ to $\alpha^n (O)$ would exceed the uniform constant $H+\beta$, contradicting
Lemma~\ref{easyfinite:lem}. We conclude $\alpha (O)=O$, so $\alpha$ is parabolic.
\end{proof}

Now let $P_0=\{p_1,\ldots,p_j\}\subseteq P$ as in Lemma \ref{finite-orbits:lem} 
be a finite set of representatives for the action of $\Lambda$
on $P$. For every $i=1,\ldots,j$ let $O_i$ be the component of $\partial B$
based at $p_i$, and let $\widehat{O}_i$ be the horosphere contained in the horoball
bounded by $O_i$ and having Hausdorff distance $H+\beta$ from $O_i$. We let
$\widehat{\mathcal{O}}$ be the set of horospheres obtained by translating
$\widehat{O}_1,\ldots,\widehat{O}_j$ by all the elements of $\Lambda$, and we denote
by $\widehat{B}$ the complement in $\matH^m$ of the union of the horoballs bounded
by elements in $\widehat{\mathcal{O}}$. By construction the set
$\widehat{B}$ is $\Lambda$-invariant, and since all the stabilizers
of points in $P$ are parabolic, for every $p\in P$ there exists exactly one
horosphere in $\widehat{O}$ based at $p$. 
Let $R>0$ be the minimal distance between distinct connected components
of $\partial B$.
Take $\widehat{O}\in\widehat{\mathcal{O}}$ and let $O$ be the corresponding
boundary component of $B$. By definition there exist $i\in\{1,\ldots,j\}$
and an element $\alpha\in\Lambda$ such that 
$\widehat{O}=\alpha (\widehat{O}_i)$. Recall now that the Hausdorff distance 
between $\alpha (O_i)$ and $O$ is bounded by $H+\beta$. Together with our choice 
for the construction of $\widehat{O}_i$, this implies that
$\widehat{O}$ is contained in the horoball bounded by $O$, and the Hausdorff distance
between $\widehat{O}$ and $O$ is bounded by $2(H+\beta)$. As a consequence we 
easily deduce the following:

\begin{lemma}\label{neuteredmodified:lem}
The set $\widehat{B}$ is $\Lambda$-invariant and is such that
$$
B\subseteq \widehat{B}\subseteq N_{2(H+\beta)} (B)
$$
(where regular neighbourhoods are considered with respect to the hyperbolic metric
$d_\matH$).
Moreover, 
if $\widehat{O},\widehat{O'}$ are distinct elements of 
$\widehat{\mathcal{O}}$, then the distance between the horoballs bounded by
$\widehat{O}$ and $\widehat{O'}$ is at least $R$ (in particular, such horoballs are
disjoint).
\end{lemma}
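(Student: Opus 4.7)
The plan is to verify the three assertions one by one, drawing heavily on the discussion in the paragraph preceding the statement, where we already established the key geometric fact that each $\widehat{O} \in \widehat{\mathcal{O}}$ sits inside the horoball bounded by the associated boundary component $O$ of $B$ at Hausdorff distance at most $2(H+\beta)$.

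First, for $\Lambda$-invariance of $\widehat{B}$, I would note that by construction $\widehat{\mathcal{O}}$ is exactly the $\Lambda$-orbit of the finite family $\{\widehat{O}_1,\ldots,\widehat{O}_j\}$; since $\Lambda$ acts by isometries of $\matH^m$, it permutes the horoballs bounded by elements of $\widehat{\mathcal{O}}$, and therefore preserves their complement $\widehat{B}$.

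Second, for the sandwiching $B \subseteq \widehat{B} \subseteq N_{2(H+\beta)}(B)$, I would argue as follows. Fix $\widehat{O} \in \widehat{\mathcal{O}}$ and write $\widehat{O} = \alpha(\widehat{O}_i)$ for some $\alpha \in \Lambda$ and some $i\in\{1,\ldots,j\}$. Let $O$ be the component of $\partial B$ at Hausdorff distance at most $H+\beta$ from $\alpha(O_i)$ (which exists by Lemma~\ref{easyfinite:lem}, and is based at the same point $p$ of $P$ as $\alpha(O_i)$, since horospheres at finite Hausdorff distance share their basepoint). Since $\widehat{O}_i$ is contained in the horoball bounded by $O_i$ at Hausdorff distance exactly $H+\beta$, applying the isometry $\alpha$ yields the corresponding property for $\widehat{O}$ and $\alpha(O_i)$, and composing with the inclusion/estimate for $\alpha(O_i)$ versus $O$ shows that $\widehat{O}$ lies inside the horoball bounded by $O$ at Hausdorff distance at most $2(H+\beta)$. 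Since distinct $\widehat{O} \in \widehat{\mathcal{O}}$ are based at distinct points of $P$ (exactly one per orbit representative per point), each horoball bounded by an element of $\widehat{\mathcal{O}}$ is contained in a unique horoball bounded by a component of $\partial B$, giving $B\subseteq \widehat{B}$; conversely, any point of $\widehat{B}\setminus B$ lies in some horoball bounded by an $O$ but outside the horoball bounded by the corresponding $\widehat{O}$, and the bound on Hausdorff distance between $O$ and $\widehat{O}$ gives the inclusion $\widehat{B} \subseteq N_{2(H+\beta)}(B)$.

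Finally, for the minimum-distance statement, given two distinct horospheres $\widehat{O},\widehat{O'} \in \widehat{\mathcal{O}}$, by what was just proved they are contained in the horoballs bounded by two \emph{distinct} components $O,O'$ of $\partial B$ (distinctness because different elements of $\widehat{\mathcal{O}}$ are based at different points of $P$, and distinct points of $P$ correspond to distinct components of $\partial B$). The horoballs bounded by $\widehat{O},\widehat{O'}$ are therefore contained in those bounded by $O,O'$, so their Hausdorff distance is at least the Hausdorff distance between the horoballs bounded by $O$ and $O'$, which is at least $R$ by definition of $R$. I expect no real obstacle here, only care in keeping the bookkeeping straight; the main subtlety is the step identifying $\widehat{O}$ as lying inside the horoball of $O$ (as opposed to merely near $O$), which relies on the choice to push $\widehat{O}_i$ inward by exactly $H+\beta$ so that the outer tolerance absorbs the drift coming from Lemma~\ref{easyfinite:lem}.
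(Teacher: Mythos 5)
Your argument is correct and follows essentially the same route as the paper: the paper establishes, in the paragraph preceding the lemma, exactly the key fact you use — that each $\widehat{O}=\alpha(\widehat{O}_i)$ lies \emph{inside} the horoball bounded by the corresponding $O\subseteq\partial B$ at Hausdorff distance at most $2(H+\beta)$, thanks to the choice to push inward by exactly $H+\beta$ — and then records the lemma as an immediate consequence. Your write-up simply makes explicit the bookkeeping (the $\Lambda$-invariance of the orbit, the containment argument for the sandwiching, and the uniqueness of the horosphere per basepoint coming from parabolicity of stabilizers) that the paper compresses into ``As a consequence we easily deduce the following,'' so there is nothing to change.
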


We are now ready to prove the following:

\begin{proposition}
The group $\Lambda$ is a non-uniform lattice in
${\rm Isom} (\matH^m)$, and admits $\widehat{B}$ as
associated neutered space.
\end{proposition}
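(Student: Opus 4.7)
The plan is to prove that $\Lambda$ acts on $(\widehat{B}, d_{\widehat{B}})$ coboundedly, by isometries, and properly discontinuously; these three properties will yield that $\widehat{B}/\Lambda$ is compact, and the decomposition of $\matH^m$ as $\widehat{B}$ together with the disjoint horoballs bounded by $\widehat{\mathcal{O}}$ (Lemma~\ref{neuteredmodified:lem}), combined with the parabolicity of stabilizers of points in $P$ (Lemma~\ref{finite-orbits:lem}), will deliver that $\matH^m/\Lambda$ has finite volume with the expected cusp structure.

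Coboundedness is essentially given: the $\Gamma$-quasi-action on $B$ is $k'$-cobounded, $\widehat{B}$ is at uniform Hausdorff distance from $B$ (Lemma~\ref{neuteredmodified:lem}), and by Theorem~\ref{schwartz1} each $\theta(\gamma)$ is uniformly $\beta$-close to the quasi-action of $\gamma$ on $B$, so $\Lambda$ acts coboundedly on $\widehat{B}$. That $\Lambda$ acts by isometries of $(\widehat{B}, d_{\widehat{B}})$ is automatic since its elements are hyperbolic isometries preserving $\widehat{B}$ setwise.

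The heart of the proof, and the main obstacle, is to show that $\Lambda$ is a discrete subgroup of ${\rm Isom}(\matH^m)$; combined with coboundedness on the proper space $\widehat{B}$ this will immediately give a properly discontinuous action and hence cocompactness. I plan to derive discreteness from the density of $P$ in $\partial \matH^m$, a standard fact since $P$ contains the orbit under $\pi_1(N)$ of any cusp point and the orbits at infinity of non-uniform lattices in ${\rm Isom}(\matH^m)$ are dense. By density of $P$, one can select finitely many base points $p_1, \ldots, p_N \in P$ in sufficiently general position on $\partial \matH^m$ so that their common stabilizer in ${\rm Isom}(\matH^m)$ is trivial (for instance, a configuration not contained in any proper boundary subsphere suffices). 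Let $\widehat{O}_i \in \widehat{\mathcal{O}}$ be the horosphere based at $p_i$. Since distinct elements of $\widehat{\mathcal{O}}$ are at Hausdorff distance at least $R > 0$ (Lemma~\ref{neuteredmodified:lem}), any isometry $\alpha$ of $\matH^m$ close enough to the identity which permutes $\widehat{\mathcal{O}}$ must in fact fix each of the finitely many $\widehat{O}_i$ setwise, hence must fix each $p_i$ at infinity. Applied to an element of $\Lambda$ sufficiently close to the identity, this forces the element to lie in the trivial common stabilizer of $p_1, \ldots, p_N$. Hence a neighborhood of the identity in ${\rm Isom}(\matH^m)$ meets $\Lambda$ only in $\{{\rm id}\}$, establishing discreteness.

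Once discreteness is secured, standard general principles conclude the argument. A discrete subgroup of ${\rm Isom}(\matH^m)$ acting coboundedly by isometries on a proper geodesic space admits a compact quotient, so $\widehat{B}/\Lambda$ is compact. The $\Lambda$-action on $\matH^m \setminus \widehat{B}$ permutes the horoballs with finitely many orbits (by compactness of $\widehat{B}/\Lambda$) and parabolic stabilizers (Lemma~\ref{finite-orbits:lem}), so the quotient $\matH^m/\Lambda$ is obtained by attaching finitely many finite-volume cusps to the compact $\widehat{B}/\Lambda$. Thus $\Lambda$ is a non-uniform lattice in ${\rm Isom}(\matH^m)$ with associated neutered space $\widehat{B}$.
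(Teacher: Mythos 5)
Your proposal is correct and follows essentially the same route as the paper's proof: discreteness via density of $P$ in $\partial\matH^m$, a finite configuration of cusp points with trivial common stabilizer, and the positive lower bound $R$ on the separation of horospheres in $\widehat{\mathcal{O}}$; then cocompactness of $\widehat{B}/\Lambda$ from coboundedness, and finite co-volume from compact cusp cross-sections with parabolic stabilizers (Lemma~\ref{finite-orbits:lem}). The only cosmetic difference is that the paper makes the open neighborhood $U$ of the identity explicit, while you leave ``close enough to the identity'' informal, but the underlying argument is identical.
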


\begin{proof}
We begin by showing that $\Lambda$ is discrete. Since $N$ has finite volume,
the set $P$ is dense in $\partial \matH^m$, so we may find 
horospheres $\widehat{O}_1,\ldots,\widehat{O}_{m+1}$  in $\partial \widehat{B}$ with basepoints 
$p_1,\ldots,p_{m+1}$ such that $\{p_1,\ldots,p_{m+1}\}$ is not contained in the trace at infinity of any
hyperbolic hyperplane of $\matH^m$. In particular, if $\alpha\in {\rm Isom}(\matH^m)$
is such that $\alpha (p_i)=p_i$ for every $i=1,\ldots,m+1$, then $\alpha={\rm Id}$. 

Recall that the minimal distance between distinct connected components of $\partial \widehat{B}$ is 
bounded from below by the constant $R>0$. Choose $x_i\in O_i$ for $i=1,\ldots,m+1$ and set
$$
U=\left\{\alpha\in{\rm Isom} (\matH^m)\, |\, d_\matH (\alpha (x_i),x_i)< R\ {\rm for\ every}\ i=1,\ldots,m+1
\right\}.
$$
Then $U$ is an open neighbourhood of the identity in ${\rm Isom}(\matH^m)$; let us compute the
intersection $\Lambda \cap U$.
If $\alpha\in \Lambda$, we have that $\alpha$ permutes the component of $\partial \widehat{B}$.
If we also assume $\alpha \in U$, then $\alpha$ moves each of the horospheres $O_i$ at most $R$,
which forces $\alpha (O_i)=O_i$, whence $\alpha (p_i)=p_i$, for
each $i=1,\ldots, m+1$. As noted above, this implies $\alpha={\rm Id}$, and $\Lambda \cap U = \{{\rm Id}\}$.
But this implies $\Lambda$ is a discrete subgroup.

Next we verify that $\Lambda$ has finite co-volume.
Since $\widehat{B}$ is contained in the $2(H+\beta)$-neighbourhood of $B$, there exists
a $\Gamma$-orbit which is $(k'+2H+2\beta)$-dense in $\widehat{B}$, and this immediately implies
that there exists a $\Lambda$-orbit which is $(k'+2H+3\beta)$-dense in $\widehat{B}$. It follows that
the quotient orbifold $\widehat{B}/\Lambda$ is compact. By Lemma~\ref{finite-orbits:lem},  
such an orbifold has a finite number $V_1,\ldots,V_j$ of boundary components. Let $\widehat{O}_j$
be the boundary component of $\widehat{B}$ projecting onto $V_j$. 
Since elements of $\Lambda$ permute
the boundary components of $\widehat{B}$, if $\alpha\in\Lambda$ is such that $\alpha (\widehat{O}_j)\cap 
\widehat{O}_j\neq\emptyset$,
then $\alpha (\widehat{O}_j)=\widehat{O}_j$, so $\alpha$ belongs to the stabilizer $\Lambda_j$ of the basepoint of 
$\widehat{O}_j$.
Being a closed subset of the compact quotient $\widehat{B}/\Lambda$, the set
$V_j=\widehat{O}_j/\Lambda_j$ is also compact. If $W_j\subseteq \matH^m$ is the horoball 
bounded by $\widehat{O}_j$, it follows that 
the quotient $W_j/\Lambda_j$ has finite volume. Since $\left(\bigcup_{i=1}^j W_j\right)\cup\widehat{B}$
projects surjectively onto $\matH^m/\Lambda$, we conclude that
$\matH^m/\Lambda$ has finite volume, and we have verified that $\Lambda$ is a non-uniform lattice.
\end{proof}

\begin{corollary}\label{schwartz2:cor}
The group $\Lambda$ is commensurable with $\pi_1 (N)$. 
\end{corollary}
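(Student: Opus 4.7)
My plan is to combine the inclusion $\Lambda\subseteq \mathrm{Comm}(\pi_1(N))$ given by Theorem~\ref{schwartz1} with the fact, just established, that $\Lambda$ is itself a non-uniform lattice in $\mathrm{Isom}(\matH^m)$. The argument then splits according to whether $\pi_1(N)$ is arithmetic or not.

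In the non-arithmetic case, Margulis's arithmeticity theorem implies that $\mathrm{Comm}(\pi_1(N))$ is a discrete subgroup of $\mathrm{Isom}(\matH^m)$, in fact itself a lattice containing $\pi_1(N)$ as a finite-index subgroup. Since $\Lambda$ is a subgroup of $\mathrm{Comm}(\pi_1(N))$ with finite covolume in $\mathrm{Isom}(\matH^m)$, it must automatically have finite index in $\mathrm{Comm}(\pi_1(N))$, and hence be commensurable with $\pi_1(N)$.

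The arithmetic case is the delicate one, because $\mathrm{Comm}(\pi_1(N))$ is now dense in $\mathrm{Isom}(\matH^m)$ and the preceding argument collapses. The key additional input is provided by Lemma~\ref{neuteredmodified:lem}: the neutered space $\widehat B$ associated to $\Lambda$ lies within bounded hyperbolic Hausdorff distance of $B$, so (combining this with Lemma~\ref{finite-orbits:lem} and the evident action of $\pi_1(N)$ on $P$) the two groups share the same set $P$ of parabolic fixed points in $\partial\matH^m$. I would then exploit that at each cusp $p\in P$ the maximal parabolic subgroups of $\Lambda$ and of $\pi_1(N)$ are crystallographic lattices acting cocompactly on the same Euclidean horosphere, hence admit a common finite-index subgroup; using cocompactness of both groups on a suitably enlarged common neutered space, these local commensurations can be patched together to produce a finite-index subgroup of $\Lambda$ contained in $\pi_1(N)$ (and symmetrically).

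The main obstacle is precisely this arithmetic case, where the density of the commensurator forbids any purely abstract group-theoretic argument and one has to use the geometric rigidity encoded in the closeness of $B$ and $\widehat B$. In practice, this corollary is essentially the content of Schwartz's quasi-isometric rigidity theorem for non-uniform lattices in real hyperbolic space \cite{schw}; once the setup of Theorem~\ref{schwartz1} has been verified, as it has been in the preceding sections, the cleanest implementation is simply to quote Schwartz's conclusion directly.
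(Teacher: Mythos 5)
Your treatment of the non-arithmetic case is correct and even a little more direct than the paper's: Margulis's criterion makes $\mathrm{Comm}(\pi_1(N))$ itself a lattice, and a covolume comparison then puts $\Lambda$ at finite index inside it. The problem is the arithmetic case. The "patching local commensurations" step does not work as stated: the fact that $\Lambda$ and $\pi_1(N)$ have the same parabolic fixed set $P$ and commensurable maximal parabolic (cusp) subgroups at every $p\in P$ does \emph{not} imply the two lattices are commensurable. There is no general mechanism for assembling cusp-by-cusp commensurations into a global one; commensurability of the boundary data places no obvious constraint on the thick part. So the argument you sketch would genuinely fail at that step, and you implicitly concede this by ending with "just quote Schwartz's conclusion directly" without saying precisely which conclusion and how the hypotheses are met.

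The paper avoids the arithmetic/non-arithmetic dichotomy entirely, and avoids working with $\mathrm{Comm}(\pi_1(N))$ at all. Instead it observes that Lemma~\ref{neuteredmodified:lem} gives $B\subseteq\widehat B\subseteq N_{2(H+\beta)}(B)$, so $B$ and $\widehat B$ (with their path metrics) are quasi-isometric; since $\pi_1(N)$ acts geometrically on $B$ and $\Lambda$ acts geometrically on $\widehat B$, Milnor--\v{S}varc makes $\Lambda$ quasi-isometric to $\pi_1(N)$; and then the commensurability is exactly the content of Schwartz's quasi-isometric rigidity result \cite[Corollary~1.3]{schw} for non-uniform lattices in $\mathrm{Isom}(\matH^m)$. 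That single step replaces both halves of your case analysis. If you want to keep your version, the non-arithmetic branch is fine, but in the arithmetic branch you should abandon the patching idea and instead invoke the QI-rigidity statement as the paper does (or argue, via Schwartz's Theorem~1.1, that a lattice contained in the commensurator of an arithmetic lattice must be commensurable to it — but that is essentially re-deriving the same result).
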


\begin{proof} Since $B\subseteq\widehat{B}\subseteq N_{2(H+\beta)} (B)$, 
the spaces $B$ and $\widehat{B}$, when endowed with their path distances, are quasi-isometric.
Since $\pi_1 (N)$ acts properly and cocompactly on $B$ and $\Lambda$ acts properly and cocompactly
on $\widehat{B}$, by Milnor-Svarc's Lemma this ensures that $\Lambda$ is quasi-isometric
to $\pi_1 (N)$. The conclusion now follows from~\cite[Corollary 1.3]{schw}, since both $\pi_1 (N)$ and 
$\Lambda$ are non-uniform lattices in ${\rm Isom}(\matH^m)$.
\end{proof} 

\section{The kernel of $\theta$}

Having obtained an understanding of the image of $\theta$, we now turn to studying the kernel.

\begin{lemma}\label{kertheta}
The group
$\ker \theta$ is finitely generated and quasi-isometric to $\mZ^d$.
Moreover, it is quasi-isometrically embedded in $\Gamma$.
\end{lemma}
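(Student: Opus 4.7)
The plan is to apply the strengthened Milnor--Svarc Lemma (Lemma~\ref{milsv+:lem}) to a suitable cobounded quasi-action of $K:=\ker\theta$ on a copy of $\mR^d$, thereby deducing both that $K$ is finitely generated and that it is quasi-isometric to $\mR^d$, hence to $\mZ^d$. I would begin by choosing a basepoint $x_0=(b_0,0)\in\tilM$ with $b_0\in B$ at hyperbolic distance at least $\beta+1$ from $\partial B$, so that on the closed hyperbolic $\beta$-ball around $b_0$ the two distances $d_\matH$ and $d_B$ coincide. For $\gamma\in K$, the relation $\theta(\gamma)={\rm Id}$ combined with Theorem~\ref{schwartz1} then yields $d_B(\psi(\gamma)(b_0),b_0)\leq\beta$; combined with the first lemma of this chapter (which says $\gamma$ sends the fiber $F_0=\{b_0\}\times\mR^d$ to within Hausdorff distance $H'$ of a fiber), this shows that the orbit $K\cdot x_0$ is contained in a uniform neighbourhood $N_{R_0}(F_0)$ of $F_0\cong\mR^d$. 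Since fibers are totally geodesic flat subspaces of the product $\tilM=B\times\mR^d$, the nearest-point projection $N_{R_0}(F_0)\to F_0$ is a quasi-isometry, and composing it with the restriction of the $\Gamma$-quasi-action to $K$ defines a quasi-action $\rho\colon K\to {\rm QI}(\mR^d)$ (bounded cocycle errors propagate through the Lipschitz projection).

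The heart of the argument is the verification that $\rho$ is cobounded. Given $f\in\mR^d$, coboundedness of the $\Gamma$-quasi-action on $\tilM$ produces some $\gamma_0\in\Gamma$ with $\gamma_0(x_0)$ close to $(b_0,f)$; projecting to $B$ gives $d_\matH(\theta(\gamma_0)(b_0),b_0)\leq D$ for a universal $D$. Since $\Lambda=\theta(\Gamma)$ is a lattice (Corollary~\ref{schwartz2:cor}), the set
\[
S=\{s\in\Lambda\mid d_\matH(s(b_0),b_0)\leq D\}
\]
is finite. Fixing once and for all a lift $\gamma_s\in\Gamma$ of each $s\in S$, the element $\gamma:=\gamma_0\,\gamma_{\theta(\gamma_0)}^{-1}$ lies in $K$, and because $S$ is finite, $\gamma(x_0)$ stays within a uniform distance of $\gamma_0(x_0)$, hence of $(b_0,f)$. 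The finiteness condition $\{\gamma\in K\mid \rho(\gamma)(B(x_0,r))\cap B(x_0,r)\neq\emptyset\}$ is finite for every $r$ is inherited from the analogous property of the $\Gamma$-quasi-action on $\tilM$ (which itself follows, via the formulas of Section~\ref{quasiact:sub}, from the fact that the quasi-action is induced by the quasi-isometry $\Gamma\to\pi_1(M)$). Applying Lemma~\ref{milsv+:lem} therefore gives that $K$ is finitely generated and that the orbit map $\gamma\mapsto\gamma(x_0)$ is a quasi-isometry $K\to\mR^d$, which is quasi-isometric to $\mZ^d$.

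For the quasi-isometric embedding statement, the two orbit maps $K\to\tilM$ and $\Gamma\to\tilM$ are given by the \emph{same} formula $\gamma\mapsto\gamma(x_0)$; Milnor--Svarc says the second is a quasi-isometry, and the argument above says the first is a quasi-isometry onto a uniform neighbourhood of $F_0$. Because the inclusion $F_0\hookrightarrow\tilM$ is isometric in the product metric, distances in $\tilM$ between two points near $F_0$ are quasi-isometric to their intrinsic distance in $F_0$, and it follows that the inclusion $K\hookrightarrow\Gamma$ corresponds to a quasi-isometric embedding of metric spaces. The main subtlety throughout is geometric control at the basepoint: near the horospherical boundary of $B$, the hyperbolic distance and the path distance $d_B$ can differ dramatically, so without placing $b_0$ in the thick part of $B$, Schwartz's bound in $d_\matH$ would not translate to a $d_B$-bound and the orbit $K\cdot x_0$ would not be confined to a single neighbourhood of any one fiber; this is the only point where the argument requires genuine care.
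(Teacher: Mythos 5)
Your proof follows essentially the same route as the paper's: use Theorem~\ref{schwartz1} to confine the $\ker\theta$-orbit of a basepoint to a uniform neighbourhood of a single fiber $F_0$, build the induced quasi-action on $\mR^d$ via projection, verify coboundedness using discreteness of $\Lambda=\theta(\Gamma)$ and the technique of correcting $\gamma_0$ by a chosen lift of $\theta(\gamma_0)$, invoke Lemma~\ref{milsv+:lem}, and then exploit the totally geodesic inclusion $F_0\hookrightarrow\tilM$ to get the quasi-isometric embedding.

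One step deserves correction, and it is exactly the step you flag as the ``only point where the argument requires genuine care.'' You require a basepoint $b_0\in B$ with $d_\matH(b_0,\partial B)\geq\beta+1$ so that $d_\matH$ and $d_B$ coincide on the $\beta$-ball around $b_0$. But the Schwartz constant $\beta$ is only ``universal'' in the sense of depending on the geometry of $B$, while the inradius of the neutered space $B$ (the supremal distance of a point of $B$ from $\partial B$) is an independent geometric quantity of $B$; there is no a priori reason that this inradius exceeds $\beta+1$, so the basepoint you want may simply not exist. The paper sidesteps this by not demanding that the two metrics coincide anywhere: instead it takes $\beta'$ to be the smallest constant such that every $d_\matH$-ball of radius $\beta$ in $B$ is contained in a $d_B$-ball of radius $\beta'$. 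Such a $\beta'$ always exists and is finite, by cocompactness of the lattice action on $B$ together with continuity of $d_B$; one then has $\gamma(x_0)\in N_{\beta'}(F)$ for \emph{any} basepoint $x_0=(b,0)$, with no constraint on the position of $b$. Your argument is otherwise sound and is repaired by this substitution, which weakens exact coincidence of $d_\matH$ and $d_B$ near the basepoint to coarse comparability on bounded $d_\matH$-balls.
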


\begin{proof}
Let $F=\{b\}\times \mR^d \subseteq \tilM$ be a fixed fiber of $\tilM$, set $x_0=(b,0)\in F$ 
and observe that there exists $\beta'>0$ such that
if $\gamma\in\ker \theta$
then $\gamma (x_0)\in N_{\beta'} (F)$ 
(we may take as $\beta'$ the smallest number
such that in the base $B$ every $d_\matH$-ball of radius $\beta$ is contained in
a $d_B$-ball of radius $\beta'$). For $\gamma\in\ker\theta$, $x\in F$, we denote by
$\alpha (\gamma,x)\in F$ a point such that $d(\alpha (\gamma,x), \gamma (x))\leq\beta'$.
It is not difficult to see that the resulting map
$\alpha\colon \ker\theta \times F\to F$ defines a quasi-action. Since the fiber
$F$ is isometric to $\mR^d$ (and hence quasi-isometric to $\mZ^d$), Lemma~\ref{milsv+:lem}
tells us the first statement would follow provided we can 
show that $\alpha$ is cobounded, i.e.~that 
the orbit of $x_0$ is 
$Q$-dense in $F$ for some $Q$. 

First observe that if $\gamma\in\Lambda$ is such that $\gamma (x_0)\in N_{\beta'} (F)$, then 
$\theta (\gamma)$ moves
$b$ a universally bounded distance from itself, so discreteness of $\Lambda$ implies that 
$\theta (\gamma)$ belongs to a fixed finite subset $A\subseteq \Lambda$. For every $a\in A$
we choose an element $\gamma_a\in \Gamma$ such that $\theta (\gamma_a)=a$ and we
set $M=\max \{d (x_0,\gamma_a^{-1} (x_0)), \ a\in A\}$.
Now, for each point $p\in F$
there exists $\gamma\in \Gamma$ such that 
$d(\gamma(x_0),p)\leq k$. Then, if $\theta (\gamma)=a\in A$ we have that $\gamma\gamma_a^{-1}\in\ker\theta$ and
\begin{align*}
d((\gamma \gamma_a^{-1})(x_0),p) &\leq d(\gamma (\gamma_a^{-1} (x_0)),p)+k \\
&\leq d(\gamma (\gamma_a^{-1} (x_0)),\gamma (x_0))+d(\gamma (x_0),p)+k \\ 
&\leq k d (\gamma_a^{-1} (x_0),x_0)+3k \\
&\leq M+3k
\end{align*}
so $d(\alpha (\gamma\gamma_a^{-1},x_0),p)\leq M+3k+\beta'$.
We have thus proved that $\alpha$ is cobounded, and
from Lemma~\ref{milsv+:lem} we can now deduce that $\ker\theta$
is finitely generated and quasi-isometric to $F$ (whence to
$\mZ^d$) via the map
$$
j_{x_0}\colon \ker\theta\to F,\qquad
j_{x_0} (\gamma)=\alpha (\gamma,x_0)\ .
$$

Let us now prove that $\ker\theta$ is quasi-isometrically embedded in $\Gamma$.
Let $\varphi\colon \Gamma\to\tilM$, $\psi\colon \tilM\to\Gamma$ be the quasi-isometries 
introduced in Section~\ref{quasiact:sub}, and let $i\colon F \to \tilM$ be the inclusion. 
Also choose $k''$ large enough, so that $\psi$ is a $(k'',k'')$-quasi-isometry and
$d(\psi(\varphi(\gamma)),\gamma)\leq k''$ for every $\gamma\in\Gamma$. Since $F$ 
is totally geodesic in $\tilM$, the inclusion $i$ 
defines an isometric embedding of $F$ into $M$, hence the composition of quasi-isometric
embeddings $\psi\circ i\circ j_{x_0}\colon \ker\theta\to\Gamma$ is also a quasi-isometric embedding. In order to conclude,
it is now sufficient to show that
the inclusion of $\ker \theta$ into $\Gamma$ stays at bounded distance from
$\psi\circ i\circ j_{x_0}$.

Keeping the notation from Section~\ref{quasiact:sub} (and recalling that, in the proof above, we denoted 
by $\gamma(x_0)$ the point $\varphi(\gamma\cdot \psi (x_0))$), 
for
every $\gamma\in\ker\theta$ we have the series of inequalities:
\begin{align*}
d(\psi(i(j_{x_0} (\gamma))),\gamma) &= d(\psi(\alpha(\gamma,x_0)),\gamma) \\
& \leq d(\psi (\alpha(\gamma,x_0)),\psi(\gamma(x_0)))+ d(\psi(\gamma (x_0)),\gamma)\\
& \leq k''\beta'+k'' + d(\psi(\varphi(\gamma\cdot \psi (x_0))),\gamma)\\
& \leq  k''\beta'+ 2k'' + d(\gamma\cdot \psi(x_0),\gamma)\\ 
& = k''\beta'+2k'' +d(\psi (x_0),1_\Gamma), 
\end{align*}
where the last equality is due to the $\Gamma$-invariance of any word metric on $\Gamma$,
and this concludes the proof.
\end{proof}

We now need the following fundamental
result by Gromov:

\begin{theorem}[\cite{gropol}]\label{virtab:thm}
A finitely generated group quasi-isometric to $\mZ^d$ contains 
a finite index subgroup isomorphic to $\mZ^d$.
\end{theorem}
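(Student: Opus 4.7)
The plan is to reduce the statement to Gromov's polynomial growth theorem, which is precisely the content of the cited reference \cite{gropol}. First I would observe that the polynomial growth rate is a quasi-isometry invariant of finitely generated groups: since $\mZ^d$ has growth function $|B(r)|\sim r^d$, the group $\Gamma$ likewise has polynomial growth of degree $d$. By Gromov's theorem, $\Gamma$ therefore admits a nilpotent subgroup of finite index, and by Mal'cev's theorem we may refine this to a torsion-free nilpotent finite-index subgroup $N\leq\Gamma$. The subgroup $N$ remains quasi-isometric to $\mZ^d$, so the remaining task is to show that a torsion-free finitely generated nilpotent group quasi-isometric to $\mZ^d$ is isomorphic to $\mZ^d$.

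To carry this out, I would combine the Bass-Guivarc'h formula with the quasi-isometric invariance of the Hirsch length. Writing $r_i=\mathrm{rk}(N^{(i)}/N^{(i+1)})$ for the factors of the lower central series of $N$, the Bass-Guivarc'h formula gives the growth degree $d(N)=\sum_{i\geq 1} i\, r_i$, while the Hirsch length of $N$ is $h(N)=\sum_{i\geq 1} r_i$. Both quantities are visible in any asymptotic cone of $N$: by Pansu's theorem the asymptotic cone of $N$ is a simply connected graded nilpotent Lie group (a Carnot group) whose topological dimension equals $h(N)$ and whose Hausdorff dimension equals $d(N)$. Since $N$ is quasi-isometric to $\mZ^d$, whose asymptotic cone is $\mR^d$ with a norm metric, and since the (topological and Hausdorff) dimensions are bi-Lipschitz invariants, both $h(N)$ and $d(N)$ must equal $d$. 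The equality $\sum_{i\geq 1} i\, r_i = \sum_{i\geq 1} r_i$ together with $r_i\geq 0$ forces $r_i=0$ for all $i\geq 2$, so $[N,N]$ is trivial and $N$ is finitely generated torsion-free abelian of rank $d$, that is, isomorphic to $\mZ^d$.

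The only genuinely hard step is the invocation of Gromov's polynomial growth theorem, which is used as a black box; everything else is either routine quasi-isometric invariance or standard structure theory for nilpotent groups. For readers wishing to avoid Pansu, an alternative to the asymptotic-cone argument is to note that the Hirsch length of a finitely generated torsion-free nilpotent group coincides with its rational cohomological dimension, and that cohomological dimension is preserved under quasi-isometry within the class of virtually torsion-free nilpotent groups; this yields the same key equality $h(N)=d(N)$ and hence the same conclusion.
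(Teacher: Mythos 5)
Your proof is correct. The paper itself gives no argument for this statement — it cites it to Gromov's polynomial-growth paper as a known result — so there is no in-house proof to compare against, but your derivation (Gromov's theorem gives a finite-index torsion-free nilpotent $N$; then Pansu identifies the asymptotic cone of $N$ as a Carnot group whose topological dimension is the Hirsch length $h(N)=\sum r_i$ and whose Hausdorff dimension is the homogeneous dimension $d(N)=\sum i\,r_i$; bi-Lipschitz equivalence with the cone $\mR^d$ of $\mZ^d$ forces both to equal $d$, whence all $r_i$ with $i\geq 2$ vanish and $N\cong\mZ^d$) is the standard one and is sound. One small point worth making explicit: for this comparison of asymptotic cones one should take the cones of $N$ and of $\mZ^d$ with respect to the same ultrafilter and rescaling sequence, so that the quasi-isometry induces a bi-Lipschitz homeomorphism between them; and for Pansu's theorem the relevant object is the Gromov--Hausdorff limit of the rescalings, which for nilpotent groups exists and coincides with every asymptotic cone. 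Your alternative via $h(N)=\mathrm{cd}_{\mathbb{Q}}(N)$ and quasi-isometry invariance of rational cohomological dimension also works, provided one notes that both $N$ and $\mZ^d$ are of type $F$ (indeed, fundamental groups of closed aspherical nilmanifolds and of $T^d$, respectively), which is the finiteness hypothesis required for that invariance.
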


By Theorem~\ref{virtab:thm}, 
$\ker\theta $ contains a finite index subgroup $K$ isomorphic to $\mZ^d$.
Being finitely generated, $\ker \theta$ contains only a finite number of subgroups
having the same index as $K$. The intersection of all such subgroups has finite index in $K$ and
is characteristic in $\ker\theta$. Therefore, up to replacing $K$ with one of its finite index subgroups,
we can assume 
that $K$ is characteristic in $\ker\theta$, 
hence normal in $\Gamma$. By construction, the quotient $\Gamma/ K$ is a finite extension
of $\Lambda=\Gamma/\ker\theta$. By Corollary~\ref{schwartz2:cor}, 
there exists a finite index subgroup $\Lambda'$ of $\Lambda$ such that
$\Lambda'\cong \pi_1 (N')$ for some finite-sheeted covering $N'$ of $N$.
Let us set $\Gamma'=\theta^{-1}(\Lambda')$ and $\Delta=\Gamma'/K$. Then,
we have the following exact sequences:
\begin{equation}\label{exseq:eq}
\xymatrix{
1\ar[r] & \mZ^d\ar[r]^j & \Gamma' \ar[r]^>>>>>{\theta} & \Gamma'/K=\Delta \ar[r] & 1,\\
}
\end{equation}
\begin{equation}\label{exseq2:eq}
\xymatrix{
1\ar[r] & F \ar[r] & \Delta \ar[r] & \pi_1 (N')\ar[r] & 1,
}
\end{equation}
where $K=j(\mZ^d)$, and $F$ is finite.

\section{Abelian undistorted normal subgroups are virtually central}
%In order to conclude the proof of Theorem~\ref{product:thm},
%all that remains is
In order to conclude the proof of Theorem~\ref{product:thm}, it is sufficient 
to show that the sequence~\eqref{exseq:eq} is virtually central, i.e.~that 
$K=j(\mZ^d)$ is contained in the center of a finite-index subgroup
of $\Gamma'$. In fact, in this case we can replace $\Gamma'$
with this finite-index subgroup, and, up to replacing $\Delta$, $F$ and $\pi_1(N')$
with suitable finite-index subgroups, the exact sequences \eqref{exseq:eq}, \eqref{exseq2:eq}
satisfy all the properties stated in Theorem~\ref{product:thm}.

Since $K$ is a finite-index subgroup
of $\ker\theta$ and $\Gamma'$ is a finite-index subgroup
of $\Gamma$, by Lemma~\ref{kertheta} the inclusion of
$K$ in $\Gamma'$ is a quasi-isometric embedding.
Therefore, in order to conclude the proof of Theorem~\ref{product:thm}
we just need to apply the following result to the case
$\overline\Gamma=\Gamma'$, $\overline{K}=K$.

\begin{proposition}\label{virtualcentral}
Let $\overline\Gamma$ be a finitely generated group, and let
$\overline{K}$ be a free abelian normal subgroup of $\overline{\Gamma}$. Also
suppose that $\overline{K}$ is quasi-isometrically embedded in $\overline\Gamma$.
Then $\overline K$ is contained in the center of a finite-index subgroup of $\overline\Gamma$.
\end{proposition}
\begin{proof}
In the proof of this Proposition we exploit 
the notion of
\emph{translation number}, and follow a strategy already described 
in~\cite{gro1,gersten} (see also~\cite{alo,klelee}). 

Let $G$ be a finitely generated group with finite set of generators $A$,
and for every $g\in G$ let us denote by $|g|_A$ the distance
between $g$ and the identity of $G$ in the Cayley graph of $G$
relative to $A$. The \emph{translation number} of $g$ is then
given by the non-negative number
$$
\tau_{G,A} (g)=\lim\limits_{n\to\infty} \frac{|g^n|_A}{n}
$$
(the fact that such a limit exists follows from the inequality 
$|g^{m+n}|_A\leq |g^m|_A+|g^n|_A$, which holds for every $g\in G$, $m,n\in\mN$). 
%In what follows we will need the following elementary properties
%of the translation number:
We recall the following well-known properties of the translation number:
\begin{enumerate}
\item \label{conj:pro}
$\tau_{G,A} (ghg^{-1})=\tau_{G,A} (h)$
for every $g,h\in G$;
\item \label{abelian:pro}
if $G$ is free abelian and $A$ is a basis of $G$, then
$\tau_{G,A} (g)=|g|_A$ for every $g\in G$;
\item \label{quasi:pro}
let
$G$ be a subgroup of $G'$ and $A,A'$ be finite set of generators
for $G,G'$; if the inclusion $i\colon G\to G'$ is a $(\lambda,\varepsilon)$-quasi-isometric
embedding (with respect to the metrics defined on $G,G'$ by $A,A'$), then
for every $g\in G$ we have
$$
\lambda^{-1}\tau_{G,A} (g)\leq \tau_{G',A'} (g)\leq \lambda \tau_{G,A} (g).
$$
\end{enumerate}

For every $x\in \overline{\Gamma}$
we consider
$\alpha (x)\colon \overline{K}\to \overline{K}$ 
defined by $\alpha (x) (k)=x \cdot k \cdot x^{-1}$. Of course,
the map
$\alpha\colon \overline{\Delta}\to {\rm Aut} (\overline{K})$ is a well-defined 
homomorphism of groups.

Now let $\overline{A} \subseteq \overline{\Gamma}$ be a finite set of generators and
let $A=\{k_1,\ldots,k_d\}$ be a free basis
of $\overline{K}$. 
For every $x\in \overline{\Gamma}$, $i=1,\ldots,d$, the element
$\alpha (x)(k_i)$ is conjugate to $k_i$ in $\overline{\Gamma}$,
so by property~\eqref{conj:pro} above we have
\begin{equation}\label{conj:eq}
\tau_{\overline{\Gamma},\overline{A}} (\alpha (x) (k_i))= 
\tau_{\overline{\Gamma},\overline{A}} (k_i).
\end{equation} 
Since $\overline K$ is quasi-isometrically embedded in $\overline\Gamma$,
by property~\eqref{quasi:pro} of the translation number 
there exists $\lambda>0$ such that
\begin{equation}\label{quasi:eq}
\tau_{\overline{K},A} (\alpha(x)(k_i))\leq \lambda
\tau_{\overline{\Gamma},\overline{A}} (\alpha(x)(k_i)), \qquad \tau_{\overline{\Gamma},\overline{A}} (k_i)
\leq \lambda \tau_{\overline{K},A} (k_i) = \lambda . 
\end{equation}
Putting together property~\eqref{abelian:pro} of the translation number
with equations~\eqref{conj:eq} and~\eqref{quasi:eq} we finally obtain
$$
|\alpha (x)(k_i)|_{A}=
\tau_{\overline{K},A} (\alpha(x)(k_i))\leq \lambda
\tau_{\overline{\Gamma},\overline{A}} (\alpha(x)(k_i))= \lambda \tau_{\overline{\Gamma},\overline{A}} (k_i)
\leq \lambda^2 
$$
for every $x\in\overline{\Gamma}$, $i=1,\ldots,d$. This implies that the orbit of each
$k_i$ under the action of $\alpha (\overline{\Gamma})$ is finite, so the homomorphism 
$\alpha\colon \overline{\Gamma}\to{\rm Aut} (\overline{K})$ has finite image, and 
$\ker\alpha$ has finite index in 
$\overline{\Gamma}$. 
Moreover, $\overline{K}$ is contained in the center of $\ker \alpha$,
so $\ker\alpha$ provides the required finite-index subgroup of $\overline{\Gamma}$.
\end{proof}

\begin{remark}
Let us analyze further the short exact sequence
$$
\xymatrix{
1 \ar[r] & \overline{K} \ar[r] & 
\overline{\Gamma} \ar[r]^\pi & \overline{\Delta}=\overline{\Gamma}/\overline{K}\ar[r] & 1\ ,
}
$$
studied in Proposition~\ref{virtualcentral}. 
%Let us briefly comment the result stated in Lemma~\ref{virtualcentral}. 
In our case of interest,
i.e.~when $\overline{K}=K$, $\overline{\Gamma}=\Gamma'$, and $\overline{\Delta}=\Delta$, we also know
that the following condition holds:
\begin{itemize}
 \item[(*)] 
there exists a quasi-isometry $q\colon \overline{\Gamma}\to \overline{K}\times \overline{\Delta}$ which makes the following diagram commute:
$$
\xymatrix{
\overline{\Gamma}\ar[r]^\pi \ar[d]_q & \overline{\Delta}\ar[d]^{\rm Id}\\
\overline{K}\times\overline{\Delta} \ar[r] & \overline{\Delta}
}
$$
where the horizontal arrow on the bottom represents the obvious projection.
\end{itemize}
Moreover, the group $\overline\Delta$ is a finite extension of the fundamental group of a cusped
hyperbolic manifold.
One may wonder whether these extra assumptions 
%that
%$\Gamma$ (whence
% $\Gamma'$) is quasi-isometric
%to $\pi_1 (N')\times \mZ^d$ implies that
could be exploited to show that the
sequence~\eqref{exseq:eq} above virtually splits. In this remark  we show
that this is not true in general.

Condition (*) is equivalent to the existence of a Lipschitz section
$s\colon \overline{\Delta}\to \overline{\Gamma}$ such that $\pi\circ s={\rm Id}_{\overline{\Delta}}$ (see {e.g.}~\cite[Proposition 8.2]{klelee}).
Recall that a central extension of $\overline{\Delta}$ by $\overline{K}$ is 
classified by its characteristic coclass in $H^2(\overline{\Delta},\overline{K})$. In the case
when $\overline{K}\cong \mZ$, Gersten proved that a sufficient condition for a
central extension
to satisfy condition (*) is that its characteristic coclass admits a \emph{bounded}
representative (see \cite[Theorem 3.1]{Ger:last}). Therefore, in order to construct
an exact sequence  that satisfies condition (*) but does not virtually
split, it is sufficient to find an element of $H^2(\overline{\Delta},\mZ)$
of infinite order that admits a bounded representative.

Let us set 
$\overline{\Delta}=\pi_1(N)$, where $N$ is a hyperbolic $3$-manifold
with $k\geq 1$ cusps and second Betti number $b_2>k$ (it is not difficult
to construct such a manifold, for example by considering suitable link complements
in the connected sum of several copies of $S^2\times S^1$). We denote by
$\widehat{N}$
a closed hyperbolic $3$-manifold obtained by Dehn filling all the cusps of $N$. An easy argument using a Mayer-Vietoris sequence 
shows that a $2$-class $c_N\in H_2 (N;\mZ)$ exists such
that the element $i_\ast (c_N)\in H_2 (\widehat{N};\mZ)$ has infinite order,
where $i\colon N\to \widehat{N}$ is the natural inclusion.
Thanks to the Universal Coefficient Theorem, a coclass
$\omega\in H^2 (\widehat{N};\mZ)$ exists
such that $\omega (i_\ast (c_N))=1$ (here and henceforth we denote
by $\omega (i_\ast (c_N))$ the number $\langle \omega, i_\ast (c_N)\rangle$,
where $\langle \cdot \, , \, \cdot \rangle\colon H^2(\widehat{N};\mZ)\times 
H_2(\widehat{N};\mZ) \to \mZ$ is the Kronecker pairing).

Since $N$ and $\widehat{N}$ have contractible universal coverings,
we have natural isomorphisms 
$H_2 (N;\mZ)\cong H_2 (\pi_1 (N);\mZ)$, 
 $H_2 (\widehat{N};\mZ)\cong H_2 (\pi_1 (\widehat N);\mZ)$,
 $H^2 (N;\mZ)\cong H^2 (\pi_1 (N);\mZ)$, 
 $H^2 (\widehat{N};\mZ)\cong H^2 (\pi_1 (\widehat N);\mZ)$. Abusing
 notation, we will denote by $c_N\in H_2 (\pi_1 (N);\mZ)$,
$i_\ast(c_N)\in H_2 (\pi_1 (\widehat{N});\mZ)$,
$\omega\in H^2 (\pi_1 (\widehat{N});\mZ)$ the elements corresponding
to the (co)classes introduced above. The inclusion 
$i:N \hookrightarrow \widehat{N}$ induces a morphism
 $i^\ast \colon H^2 (\pi_1(\widehat{N});\mZ)\to 
H^2 (\pi_1({N});\mZ)$. 

Recall now that $\overline{\Delta}=\pi_1(N)$, and consider the central extension
\begin{equation}\label{example:seq}
1\to \mZ \to \overline{\Gamma} \to \overline{\Delta}\to 1
\end{equation}
associated to the coclass $i^\ast(\omega)\in H^2 (\overline{\Delta};\mZ)$.  
On one hand, since $\pi_1 (\widehat{N})$
is Gromov-hyperbolic, by~\cite{neumann} the coclass $\omega\in H^2 (\pi_1 (\widehat{N});\mZ)$ 
admits a bounded representative, so $i^\ast (\omega)$ is also bounded,
and the sequence~\eqref{example:seq} satisfies condition (*).
On the other hand,
we have $i^\ast (\omega) (c_N)=\omega (i_\ast (c_N))=1$,
so $i^\ast (\omega)$ has infinite order in $H^2 (N;\mZ)$,
and this proves that the sequence~\eqref{example:seq} does not virtually split.
\end{remark}

\section{Pieces with quasi-isometric fundamental groups}
The following proposition provides a necessary and sufficient condition for
two pieces of graph manifolds to have quasi-isometric fundamental groups.

\begin{proposition}\label{qi-pieces:prop}
Let $n\geq 3$ be fixed, and,
for $i=1,2$,
let $N_i$ be a complete finite-volume hyperbolic $n_i$-manifold
with toric cusps, $n_i\geq 3$. If $\pi_1 (N_1\times T^{n-n_1})=\pi_1 (N_1)\times\mZ^{n-n_1}$
is quasi-isometric to $\pi_1 (N_2\times T^{n-n_2})=\pi_1 (N_2)\times \mZ^{n-n_2}$, then $n_1=n_2$
and $N_1$ is commensurable with $N_2$. 
\end{proposition}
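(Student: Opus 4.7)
The plan is to deduce this directly from Theorem~\ref{product:thm} by applying it with $\Gamma = \pi_1(N_2)\times\mZ^{d_2}$, where $d_i := n-n_i$, viewed as quasi-isometric to $\pi_1(N_1)\times\mZ^{d_1}$. This produces a finite-index subgroup $\Gamma'\leq \pi_1(N_2)\times \mZ^{d_2}$ together with a central copy $j(\mZ^{d_1})\subseteq Z(\Gamma')$ and an exact sequence exhibiting $\Delta := \Gamma'/j(\mZ^{d_1})$ as a finite extension of $\pi_1(N_1')$ for some finite cover $N_1'\to N_1$. The whole argument then amounts to pinning down $Z(\Gamma')$ directly inside the product $\pi_1(N_2)\times\mZ^{d_2}$ and comparing it with the subgroup $j(\mZ^{d_1})$ that the theorem provides.

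The first step is to show $d_1=d_2$, which will give $n_1=n_2$. Let $p\colon \pi_1(N_2)\times\mZ^{d_2}\to\pi_1(N_2)$ denote the projection. A standard coset counting argument shows that $p(\Gamma')$ has finite index in $\pi_1(N_2)$, hence is non-abelian (as $\pi_1(N_2)$ contains non-abelian free subgroups). If $(g,v)\in Z(\Gamma')$ then $g$ centralises $p(\Gamma')$ in $\pi_1(N_2)$; but by Lemma~\ref{basicprop:lem} the centraliser of any non-trivial element of $\pi_1(N_2)$ is free abelian, forcing $g=1$. Combined with the obvious inclusion $\{1\}\times\mZ^{d_2}\cap\Gamma'\subseteq Z(\Gamma')$, this yields
$$Z(\Gamma')=\{1\}\times\mZ^{d_2}\cap\Gamma',$$
which is free abelian of rank exactly $d_2$ because $\Gamma'$ has finite index in $\pi_1(N_2)\times\mZ^{d_2}$. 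Since $j(\mZ^{d_1})\subseteq Z(\Gamma')$ has rank $d_1$, this forces $d_1\leq d_2$, and swapping the roles of $N_1$ and $N_2$ gives $d_1=d_2=:d$, so $n_1=n_2$.

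For commensurability, with $n_1=n_2$ established, we compare two natural quotients of $\Gamma'$. On the one hand $\Gamma'/Z(\Gamma')=p(\Gamma')$ is a finite-index subgroup of $\pi_1(N_2)$, hence isomorphic to $\pi_1(N_2'')$ for some finite-sheeted cover $N_2''\to N_2$. On the other hand, $\Gamma'/j(\mZ^d)=\Delta$ is a finite extension of $\pi_1(N_1')$. Because $j(\mZ^d)$ has finite index in $Z(\Gamma')$ (both being free abelian of rank $d$), the quotient $Z(\Gamma')/j(\mZ^d)$ is finite and $p(\Gamma')$ is the quotient of $\Delta$ by this finite group. It follows that $\pi_1(N_1')$ and $\pi_1(N_2'')$ are both quasi-isometric to $\Delta$, hence to each other, and Schwartz's rigidity theorem for non-uniform lattices in $\mathrm{Isom}(\matH^{n_1})$ (invoked already in the proof of Corollary~\ref{schwartz2:cor} via \cite[Corollary~1.3]{schw}) implies that they are commensurable. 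This gives the desired commensurability of $N_1$ and $N_2$.

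The key technical input is the identification $Z(\Gamma')=\{1\}\times\mZ^{d_2}\cap\Gamma'$; once this is in hand, everything else is a formal juggling of the two exact sequences provided by Theorem~\ref{product:thm} together with Schwartz's commensurability theorem, so there is no serious obstacle beyond the machinery already developed in this chapter.
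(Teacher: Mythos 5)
Your proposal is correct and follows essentially the same route as the paper: apply Theorem~\ref{product:thm}, identify the center of the finite-index subgroup with the (intersection with the) $\mZ^d$-factor, compare ranks to get $n_1=n_2$, and then juggle the two exact sequences to exhibit finite-index subgroups of $\pi_1(N_1)$ and $\pi_1(N_2)$ that are quasi-isometric to each other, so that Schwartz's rigidity theorem gives commensurability. The only cosmetic differences are that you apply Theorem~\ref{product:thm} with the roles of $N_1$ and $N_2$ swapped relative to the paper, and you pin down the center via the centralizer characterization of Lemma~\ref{basicprop:lem}-(1) rather than via the (equivalent) observation that finite-index subgroups of $\pi_1(N_i)$ have trivial center.
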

\begin{proof}
Let us set $G_i=\pi_1 (N_i)\times \mZ^{n-n_i}$. 
By Theorem~\ref{product:thm}, since $G_1$ is quasi-isometric to
$\pi_1 (N_2)\times \mZ^{n-n_2}$, 
there exist a finite index subgroup $G'_1$ of
$G_1$, a group $\Delta$ and a finite group $F$ which fit in the following
short exact sequences:
$$
\xymatrix{
1\ar[r] & \mZ^{n-n_2} \ar[r]^j & G'_1 \ar[r] & \Delta \ar[r] & 1,\\
}
$$
$$
\xymatrix{
1\ar[r] & F \ar[r] & \Delta\ar[r] & \pi_1 (N_2')\ar[r] & 1,
}
$$
where $N'_2$ is a finite-sheeted covering of $N_2$.
Moreover, $j(\mZ^{n-n_2})$ lies in the center of $G'_1$.
 
Let $Z(G_1)$ (resp.~$Z(G'_1)$) be the center of $G_1$ (resp.~of $G'_1$).
We claim that $Z(G'_1)=Z(G_1)\cap G'_1$. The inclusion $\supseteq$ is obvious.
Moreover, if $p_1\colon G_1\to \pi_1 (N_1)$ is the projection on the first factor,
then $p_1 (G'_1)$ is a finite-index subgroup of $\pi_1 (N_1)$. 
Since any finite-index subgroup of $\pi_1 (N_1)$ has trivial center, this implies
that any element $(\gamma,w)\in G'_1\subseteq G_1= \pi_1 (N_1)\times\mZ^{n-n_1}$
which commutes with all the elements of $G'_1$ must satisfy $\gamma=1$ in $\pi_1 (N_1)$.
We conclude that $(\gamma,w)\in Z(G_1)$, as claimed. 

This implies that $j(\mZ^{n-n_2})\subseteq Z(G'_1)\subseteq Z(G_1)\cong \mZ^{n-n_1}$, so 
$n_1\leq n_2$ by injectivity of $j$. Interchanging the roles
of $G_1$ and $G_2$ we also get $n_2\leq n_1$, forcing $n_1=n_2$.

Since $Z(G_1')=Z(G_1)\cap G'_1$, the quotient $G'_1 /Z(G_1')$ is isomorphic
to a finite-index subgroup of $G_1 / Z(G_1)$, which is in turn isomorphic
to $\pi_1 (N_1)$. In particular, $G'_1 /Z(G_1')$ is quasi-isometric
to $\pi_1 (N_1)$. Moreover, since $n_1=n_2$ the groups
$j(\mZ^{n-n_2})$ and $Z(G'_1)$ share the same rank, and this implies
that $j(\mZ^{n-n_2})$ is a finite-index subgroup of $Z(G'_1)$, 
so that $\Delta\cong G'_1 /j(\mZ^{n-n_2})$ is quasi-isometric to $G'_1/Z(G'_1)$, whence to $\pi_1 (N_1)$. 
On the other hand, since $\Delta$
is a finite extension of
$\pi_1 (N'_2)$ and $\pi_1 (N'_2)$ is of finite index in $\pi_1 (N_2)$,
the group $\Delta$ is quasi-isometric to $\pi_1 (N_2)$ too, so
$\pi_1 (N_1)$ and 
$\pi_1 (N_2)$ are quasi-isometric to each other.
The conclusion now follows from~\cite{schw}. 
\end{proof}

%-----------------------------------------------------------------------
% Beginning of chap1.tex
%-----------------------------------------------------------------------
%
%  AMS-LaTeX sample file for a chapter of a monograph, to be used with
%  an AMS monograph document class.  This is a data file input by
%  chapter.tex.
%
%  Use this file as a model for a chapter; DO NOT START BY removing its
%  contents and filling in your own text.
% 
%%%%%%%%%%%%%%%%%%%%%%%%%%%%%%%%%%%%%%%%%%%%%%%%%%%%%%%%%%%%%%%%%%%%%%%%

\chapter{Quasi isometry rigidity, II}\label{qirigidity:sec}
The aim of this chapter is the proof of Theorem~\ref{qirigidity:thm},
which we recall here:

\begin{Thm2}
Let $M$ be an irreducible graph $n$-manifold obtained by gluing the pieces
$V_i=\overline{N}_i\times T^{d_i}$, 
$i=1,\ldots, k$. Let $\Gamma$ be a group quasi-isometric
to $\pi_1 (M)$. Then either $\Gamma$ itself, or a subgroup of $\Gamma$ of index two,
is isomorphic to the fundamental group of a graph of groups satisfying the following 
conditions:
\begin{itemize}
\item
every edge group contains $\mZ^{n-1}$ as a subgroup
of finite index;
\item
for every vertex group $\G_v$ there exist $i\in\{1,\ldots, k\}$,
a finite-sheeted covering $N'$ of $N_i$ and a finite-index subgroup
$\G'_v$ of $\G_v$ that fits into the exact sequences
$$
\xymatrix{
1\ar[r] &\mZ^{d_i} \ar[r]^j & \Gamma_v' \ar[r] & \Delta \ar[r]& 1,\\}
$$
$$
\xymatrix{
1\ar[r] & F \ar[r] & \Delta\ar[r] & \pi_1 (N')\ar[r] & 1 ,
}
$$
 where $F$ is a finite group, and $j(\mZ^{d_i})$ is contained
 in the center of $\Gamma'_v$.
\end{itemize}
\end{Thm2}

Throughout this chapter we denote by $M$ an \emph{irreducible} graph manifold
with universal covering $\widetilde{M}$, and by $\Gamma$ a finitely generated group
quasi-isometric to $\pi_1 (M)$. As discussed  
in Section~\ref{quasiact:sub}, a quasi-isometry between $\Gamma$ and $\pi_1 (M)$ induces a $k$-cobounded
$k$-quasi-action $h$ of $\Gamma$ on $\tilM$ for some $k\geq 1$, which will from now on be fixed. Henceforth, 
for every $\gamma\in\Gamma$ we will denote simply by $\gamma$ the quasi-isometry $h(\gamma)\colon \tilM\to\tilM$.

\section{From quasi-actions to actions on trees}\label{quasiact2:sub}
Let $(\tilM, p, T)$ be the triple which endows $\tilM$ with 
the structure of a tree of spaces (see Section~\ref{univ:subsec}). 
Building on the results proved in Chapter~\ref{preserve:sec},
we wish to define an action of $\Gamma$ on $T$.
Fix $\gamma\in\Gamma$. By Propositions~\ref{wall2:prop} and~\ref{qi-useful:prop}, if
$v_1$, $e_1$ are a vertex and an edge 
corresponding respectively to a chamber $C_1$ and a wall $W_1$, 
then there exist a unique chamber $C_2$ at finite Hausdorff distance
from $\gamma (C_1)$ and a unique wall $W_2$ at finite Hausdorff distance from $\gamma (W_1)$. 
We will denote by $\gamma (v_1)$, $\gamma (e_1)$
the vertex and the edge corresponding respectively to $C_2$ and $W_2$. Again, by Proposition~\ref{qi-useful:prop},
if $W_1$ is adjacent to $C_1$ then $W_2$ is adjacent to $C_2$, which gives us the following:

\begin{proposition}\label{act-on-tree:prop}
The map $\gamma\colon T\to T$ just defined provides a simplicial automorphism of $T$.
\end{proposition}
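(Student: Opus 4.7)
The map $\gamma\colon T\to T$ is already defined on vertices and edges, with well-definedness guaranteed by the uniqueness parts of Propositions~\ref{wall2:prop} and~\ref{qi-useful:prop} (these uniqueness statements rest on Lemma~\ref{prefacile2:lem} and Corollary~\ref{prefacile3:lem}, which assert that distinct walls and distinct chambers of $\tilM$ lie at infinite Hausdorff distance from one another). To check that $\gamma$ is a simplicial automorphism, the plan is to verify three things: (i) $\gamma$ respects the incidence relation, so that endpoints of an edge $e$ are carried to endpoints of the edge $\gamma(e)$; (ii) $\gamma$ is a bijection on both the vertex set and the edge set of $T$; (iii) the composition law $(\gamma_1\gamma_2)(x)=\gamma_1(\gamma_2(x))$ holds on $T$ (this last point is not strictly needed for the present statement, but gives the promised action).

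First I would handle (i). If $e$ is an edge of $T$ with endpoints $v_1,v_2$, the corresponding wall $W\subseteq \tilM$ is adjacent to the two chambers $C_1,C_2$ associated to $v_1,v_2$. Proposition~\ref{qi-useful:prop} says that the unique chambers at bounded Hausdorff distance from $\gamma(C_i)$, namely $\gamma(v_i)$, are adjacent to the unique wall at bounded Hausdorff distance from $\gamma(W)$, namely $\gamma(e)$. Hence the endpoints of $\gamma(e)$ in $T$ are exactly $\gamma(v_1)$ and $\gamma(v_2)$, proving incidence preservation.

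The main step is (ii). I would proceed by a universal observation: whenever two quasi-isometries $f,f'\colon \tilM\to\tilM$ satisfy $\sup_{x\in\tilM} d(f(x),f'(x))<\infty$, they induce the \emph{same} map on $T$. Indeed, for any chamber $C$ the set $f(C)$ lies at finite Hausdorff distance from a unique chamber $C^+$ and from $f'(C)$, hence $f'(C)$ is also at finite Hausdorff distance from $C^+$; by the uniqueness part of Proposition~\ref{qi-useful:prop} (based on Corollary~\ref{prefacile3:lem}) the chamber assigned to $f$ and the one assigned to $f'$ coincide, and similarly for walls. Applying this to the relations built into the quasi-action $h$ of $\Gamma$ on $\tilM$, namely that $h(\gamma_1)\circ h(\gamma_2)$ lies within bounded distance of $h(\gamma_1\gamma_2)$ and that $h(1)$ lies within bounded distance of the identity, one concludes that on $T$ we have
\[
(\gamma_1\gamma_2)(x)=\gamma_1(\gamma_2(x))\qquad\text{and}\qquad 1(x)=x,
\]
so the assignment $\gamma\mapsto(\gamma\colon T\to T)$ is a genuine group homomorphism $\Gamma\to\mathrm{Aut}(T)$. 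In particular $\gamma$ admits $\gamma^{-1}$ as a two-sided inverse on $T$, establishing bijectivity.

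I expect the only delicate point is the well-definedness observation above: we need the map $T\to T$ attached to a single quasi-isometry $f$ to depend only on the bounded-distance class of $f$, and this is where the coarse separation results on walls and chambers from Chapter~\ref{construction:sec} (specifically Lemma~\ref{prefacile2:lem} and Corollary~\ref{prefacile3:lem}) do the real work. Once this is in hand, incidence preservation and bijectivity are immediate, and the proposition follows.
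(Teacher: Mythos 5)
The paper itself does not give an explicit proof of this proposition: the preceding paragraph constructs the map (well-definedness via Propositions~\ref{wall2:prop} and~\ref{qi-useful:prop}, incidence preservation via the ``Moreover'' clause of Proposition~\ref{qi-useful:prop}) and then simply states the result, leaving bijectivity as implicit. Your proof correctly fills in that gap. Your identification of the key fact --- that two quasi-isometries of $\tilM$ at bounded sup-distance induce the same map on $T$, by the uniqueness in Proposition~\ref{qi-useful:prop} (ultimately Lemma~\ref{prefacile2:lem} and Corollary~\ref{prefacile3:lem}) --- is exactly what makes the argument go through, and the homomorphism-into-$\mathrm{Aut}(T)$ viewpoint is what the paper needs anyway for the subsequent Bass--Serre application.

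One small step is elided. Your ``universal observation'' gives that $h(\gamma_1\gamma_2)$ and $h(\gamma_1)\circ h(\gamma_2)$ induce the same map on $T$, but passing from there to $(\gamma_1\gamma_2)_* = (\gamma_1)_* \circ (\gamma_2)_*$ also uses that the induced-map construction is compatible with composition, i.e.\ $(f\circ g)_* = f_*\circ g_*$ for quasi-isometries $f,g$ of $\tilM$. This is not literally an instance of the bounded-distance observation, but it follows from the same uniqueness argument: $g(C)$ is at finite Hausdorff distance from $g_*(C)$, hence (since $f$ is a quasi-isometry) $f(g(C))$ is at finite Hausdorff distance from $f(g_*(C))$, which in turn is at finite Hausdorff distance from $f_*(g_*(C))$; by uniqueness of the chamber assigned to $f\circ g$, we get $(f\circ g)_*(C)=f_*(g_*(C))$, and similarly for walls. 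With this one-line lemma supplied, your argument is complete.
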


In what follows, when saying that a group $G$ acts on a tree $T'$ we will always mean that
$G$ acts on $T'$ by simplicial automorphisms.
Recall that $G$ acts on  $T'$ \emph{without inversions} if no
element of $G$ switches the endpoints of an edge of $T'$. 
We wish to apply the following fundamental result from Bass-Serre theory 
(see~\cite{serre}):

\begin{theorem}\label{serre:thm}
Suppose $G$ acts on a tree $T'$ without inversions. 
Then $G$ is isomorphic to the fundamental group of a graph of groups supported by the 
graph $\mathcal{G}$ with set of vertices $V$ and set of edges $E$. If $G_v$, $v\in V$,
and $G_e$, $e\in E$, are the vertex and edge groups of the graph of groups, then:
\begin{enumerate}
\item
$\mathcal{G}$ is the quotient of $T'$ by the action of $G$.
\item
For each $v\in V$, the group $G_v$ is isomorphic to the stabilizer of a vertex of $T'$
projecting to $v$.
\item
For each $e\in E$, the group $G_e$ is isomorphic to the stabilizer of an edge of $T'$
projecting to $e$.
\end{enumerate}
\end{theorem}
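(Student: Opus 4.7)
The plan is to follow the classical Bass--Serre construction. First, I would form the quotient graph $\mathcal{G}=T'/G$. Since $G$ acts without inversions, no edge is folded onto itself, so the quotient is an honest graph and the projection $\pi\colon T'\to\mathcal{G}$ is a simplicial map. This immediately gives statement (1), and moreover for any two lifts $\tilde{v},\tilde{v}'$ of the same vertex $v$ (resp. lifts of the same edge) the stabilizers are conjugate in $G$, so the isomorphism type of the candidate vertex and edge groups in statements (2) and (3) is well defined.

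Next, I would build a \emph{fundamental transversal} for the action. Pick a spanning tree $\mathcal{T}\subseteq\mathcal{G}$ and, using the fact that $\pi$ restricts to a covering-like surjection on each star, lift $\mathcal{T}$ to a subtree $T_1\subseteq T'$ mapping bijectively onto $\mathcal{T}$. For every edge $e\in E\setminus E(\mathcal{T})$, choose a lift $\tilde{e}\subseteq T'$ having its initial endpoint in $T_1$; by connectivity of $T'$ its terminal endpoint $\tilde{w}'$ is $G$-equivalent to the distinguished lift $\tilde{w}\in T_1$, so there is some $g_e\in G$ with $g_e\tilde{w}'=\tilde{w}$. Define $G_v:=\mathrm{Stab}_G(\tilde{v})$ for $\tilde{v}\in T_1$ and $G_e:=\mathrm{Stab}_G(\tilde{e})$; the two boundary homomorphisms $G_e\hookrightarrow G_{v},G_{w}$ are the inclusion on the $T_1$-side and conjugation by $g_e$ on the other side. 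This data assembles into a graph of groups $(\mathcal{G},\{G_v\},\{G_e\})$ along with a canonical homomorphism $\Phi\colon\pi_1(\mathcal{G},\{G_v\},\{G_e\},\mathcal{T})\to G$ sending each vertex group to itself and each edge letter to the corresponding $g_e$.

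The remaining, and only substantive, task is to prove that $\Phi$ is an isomorphism. Surjectivity is easy: given $g\in G$, take the unique geodesic in $T'$ from a fixed basepoint $\tilde{v}_0\in T_1$ to $g\tilde{v}_0$, and decompose it edge by edge; each edge either lies in a $G$-translate of $T_1$ (contributing an element of a vertex group) or crosses a translate of an edge outside $\mathcal{T}$ (contributing some $g_e$ up to conjugation), and multiplying these produces a preimage of $g$. For injectivity, the cleanest approach is to construct the Bass--Serre tree $\widehat{T}$ of the abstract graph of groups $(\mathcal{G},\{G_v\},\{G_e\})$ directly from cosets, and then build a $\Phi$-equivariant simplicial map $\widehat{T}\to T'$ that sends the base vertex $G_{v}\in\widehat{T}$ to $\tilde{v}\in T_1$. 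Using that $T'$ is a tree (so reduced words of positive length cannot produce closed loops) together with the defining relations of the fundamental group of a graph of groups, one shows this map is an isomorphism of $G$-trees; in particular vertex and edge stabilizers on the two sides coincide, forcing $\Phi$ to be injective.

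The main obstacle is the injectivity step, because it requires a genuine normal-form argument: one must show that any word $g_{0}t_{e_1}g_{1}t_{e_2}\cdots t_{e_n}g_{n}$ in the generators that represents the identity of $G$ is already trivially reducible using the defining relations of the graph-of-groups fundamental group. Geometrically this is precisely the statement that a ``non-backtracking'' edge path in $T'$ cannot close up, which is where the hypothesis that $T'$ is a tree (and not merely connected) is essential. Once this is done, together with the conjugacy of stabilizers of different lifts established in the first step, conclusions (2) and (3) follow automatically from the identification $\widehat{T}\cong T'$.
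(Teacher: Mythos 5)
The paper does not prove this statement: it is quoted verbatim as the fundamental structure theorem of Bass--Serre theory, with a citation to Serre's \emph{Trees} \cite{serre}, so there is no in-paper argument to compare against. Your outline is precisely the standard proof from that source --- quotient graph, lifted spanning tree, stabilizers of distinguished lifts as vertex and edge groups, surjectivity by decomposing the geodesic from the fundamental domain to its translate, and injectivity via the normal-form theorem (equivalently, identifying the abstract Bass--Serre tree with $T'$ by an equivariant map and using that $T'$ has no circuits) --- and you correctly isolate the normal-form/injectivity step as the only substantive point. The proposal is correct as a sketch of the classical argument.
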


Now the action of $\Gamma$ on $T$ described in Proposition \ref{act-on-tree:prop} might include
some inversions. However, every tree is a bipartite graph in a canonical way. 
The group $Aut(T)$ 
of all simplicial automorphisms of $T$ contains a subgroup $Aut^0(T)$, of index at most two, 
which preserves both parts of that bi-partition. This subgroup consists solely of elements that act without inversions. We may now set $\Gamma^0=\Gamma\cap Aut^0(T)$, and conclude
that $\Gamma^0$ is a subgroup of $\Gamma$ of index at most two that acts on $T$ without inversions.

%\begin{lemma}\label{orient-pres:lem}
%If $G$ acts on the tree $T'$, then either $G$ acts without inversions or the action of $G$ restricts
%to an action without inversions of 
%a subgroup $G^0$ of index $2$ in $G$.
%\end{lemma}

%Applying Lemma \ref{orient-pres:lem} to the action of $\Gamma$ on $T$, we conclude that a subgroup of index
%at most two acts on $T$ without inversions. For the rest of this Chapter, we will denote by $\Gamma^0$ the 
%subgroup of $\Gamma$ having this property. 

\section{The action of $\Gamma^0$ on $T$} 

Recall that $\Gamma^0$ quasi-acts via $(k,k)$-quasi-isometries
with $k$-dense image
on $\tilM$, and, up to increasing the constant $k$,
we may also assume that every $\Gamma^0$-orbit is $k$-dense in $\tilM$. We denote by $E$ the set of edges
of $T$, and we suppose that for every wall $W$ (resp.~chamber $C$) and every $\gamma\in\Gamma^0$
the set $f(W)$ (resp.~$f(C)$) has Hausdorff distance bounded by $H$ from a wall (resp.~a chamber) (see Propositions~\ref{wall2:prop} and~\ref{qi-useful:prop}).
We first show that the quotient of $T$ by the action of $\Gamma^0$ is a finite graph.

\begin{lemma}
The action of $\Gamma^0$ on $E$ has a finite number of orbits.
\end{lemma}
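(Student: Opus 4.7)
The plan is to exhibit a finite set of edges of $T$ that meets every $\Gamma^0$-orbit. The natural candidate is the set of edges corresponding to walls passing close to a fixed basepoint $x_0 \in \widetilde M$, since the tree-of-spaces structure on $\widetilde M$ is locally finite in a suitable sense. More precisely, I would first fix $x_0 \in \widetilde M$ and let $r = k^2 + 2k + H + 1$, then define $\mathcal{F}$ to be the (finite) collection of walls $W' \subseteq \widetilde M$ such that $W' \cap B(x_0, r) \neq \emptyset$. Finiteness of $\mathcal{F}$ follows from the fact that, by construction, $\widetilde M$ admits only finitely many walls intersecting any compact set (the projections to $M$ are a finite collection of codimension-one submanifolds, and the covering map is proper).

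Next, given an arbitrary wall $W \subseteq \widetilde M$ with associated edge $e \in E$, I would produce a $\Gamma^0$-translate of $e$ lying in $\mathcal{F}$. Pick $p \in W$. By coboundedness of the quasi-action, there exists $\gamma \in \Gamma^0$ with $d(\gamma(x_0), p) \leq k$. Now apply the element $\gamma^{-1} \in \Gamma^0$: using that $h(\gamma^{-1}) \circ h(\gamma)$ is within $k$ of the identity and that $h(\gamma^{-1})$ is a $(k,k)$-quasi-isometry, one estimates
\[
d\bigl(\gamma^{-1}(p), x_0\bigr) \;\leq\; k \cdot d\bigl(p, \gamma(x_0)\bigr) + 2k \;\leq\; k^2 + 2k.
\]
By the defining property of $H$, the set $\gamma^{-1}(W)$ has Hausdorff distance at most $H$ from a (unique) wall $W' \subseteq \widetilde M$. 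Since $\gamma^{-1}(p) \in \gamma^{-1}(W)$ lies within $H$ of $W'$, the wall $W'$ contains a point at distance at most $k^2 + 2k + H < r$ from $x_0$, so $W' \in \mathcal{F}$.

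Finally, by the construction of the $\Gamma^0$-action on $T$ given in Proposition~\ref{act-on-tree:prop}, the edge $\gamma^{-1}(e)$ is precisely the edge associated to $W'$. Thus every orbit of $\Gamma^0$ on $E$ meets the finite set of edges corresponding to walls in $\mathcal{F}$, proving the claim. The argument is entirely elementary once the setup of Chapter~\ref{preserve:sec} (walls being quasi-preserved, uniqueness of the nearby wall via Lemma~\ref{prefacile2:lem}) and the cobounded quasi-action yielded by Milnor–Svarc are in place; the only mildly delicate point is the manipulation of the quasi-inverse, which produces the universal constant $r$ in the definition of $\mathcal{F}$.
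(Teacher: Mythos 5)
Your argument is correct and follows essentially the same route as the paper: fix a basepoint, use coboundedness to move any given wall near it, and observe that only finitely many walls meet a bounded set. The only nit is the constant in the inequality $d(\gamma^{-1}(p),x_0) \le k\,d(p,\gamma(x_0))+2k$, which should be $+3k$ (one $k$ from the quasi-Lipschitz bound, one from $h(\gamma^{-1})\circ h(\gamma)$ being $k$-close to $h(1)$, and one from $h(1)$ being $k$-close to the identity), but this only forces a harmlessly larger choice of $r$ and does not affect the argument.
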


\begin{proof}
Fix a point $p\in\tilM$. The set $A$ of those walls whose distance from $p$ is less than $k(H +k) + 3k$ is finite. 
Let $W$ be any wall, and fix a point $w\in W$. There exists $\gamma\in\Gamma^0$ such that $d(\gamma(p),w)\leq k$. We know that there exists a wall $W'$ such that $\gamma(W')$ is at Hausdorff distance bounded by $H$ from $W$. This implies that $\gamma(W')$ contains a point $\gamma(w')$, $w'\in W'$, at distance less than $H$ from $w$. We can use this to estimate:
\begin{align*}
d(w', p) & \leq d(\gamma^{-1}(\gamma(w')),\gamma^{-1}(\gamma(p)))+2k\\
& \leq k d(\gamma(w'),\gamma(p))+3k \\
& \leq k(H+k)+3k,
\end{align*}
so $W'\in A$. As a result, the finite set of edges corresponding to walls in $A$ contains a set of representatives 
for the action of $\Gamma^0$ on $E$.
\end{proof}

\section{Stabilizers of edges and vertices}
If $e$ (resp.~$v$) is an edge (resp.~a vertex) of $T$, then
we denote by $\Gamma^0_e$ (resp.~$\Gamma^0_v$) the stabilizer
of $e$ (resp.~of $v$) in $\Gamma^0$.

\begin{lemma}\label{stab:lemma}
For every edge $e$ of $T$, 
the stabilizer $\Gamma^0_e$ is quasi-isometric to a wall. 
The stabilizer $\Gamma^0_v$ of a vertex $v$ is quasi-isometric to the chamber corresponding to $v$.
\end{lemma}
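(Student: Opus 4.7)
The plan is to deduce both statements from the strengthened Milnor-Svarc Lemma (Lemma \ref{milsv+:lem}), applied to naturally constructed quasi-actions of $\Gamma^0_v$ on $(C,d_C)$ and of $\Gamma^0_e$ on $(W,d_W)$. Since every $\gamma \in \Gamma^0_v$ fixes $v$, Proposition \ref{qi-useful:prop} gives that $\gamma(C)$ has Hausdorff distance at most $H$ from $C$; fixing a coarse projection $\pi_C\colon N_H(C) \to C$ with displacement $\leq H$, the formula $\gamma \cdot x := \pi_C(\gamma(x))$ defines a quasi-action of $\Gamma^0_v$ on $C$, with uniformly bounded constants because $C$ is biLipschitz embedded in $\tilM$ by Corollary \ref{fibre:cor}. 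The analogous construction using Proposition \ref{wall2:prop} and Theorem \ref{quasi-isom:thm} yields a quasi-action of $\Gamma^0_e$ on $W$.

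The properness hypothesis in Lemma \ref{milsv+:lem} follows essentially formally. Since $(C, d_C) \hookrightarrow \tilM$ is biLipschitz, an element $\gamma \in \Gamma^0_v$ with $\gamma \cdot B_C(x_0,r) \cap B_C(x_0,r) \neq \emptyset$ satisfies $\gamma(B(x_0,r')) \cap B(x_0,r'') \neq \emptyset$ for controlled radii $r',r''$ depending only on $r$ and $H$. Hence the set of such $\gamma$ is contained in the corresponding set for the $\Gamma^0$-quasi-action on $\tilM$, which is finite because the original $\Gamma$-quasi-action inherits properness from the quasi-isometry with $\pi_1(M)$ and the proper cocompact action of $\pi_1(M)$ on $\tilM$. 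The same reduction applies verbatim to $\Gamma^0_e$ acting on $W$.

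The main obstacle is coboundedness. Given $y \in C$, coboundedness of the $\Gamma^0$-quasi-action on $\tilM$ yields $\gamma_0 \in \Gamma^0$ with $d(\gamma_0(x_0), y) \leq k$, but a priori $\gamma_0$ need not fix $v$. Setting $v' := \gamma_0(v)$ with corresponding chamber $C'$, one obtains $d(C, C') \leq H + k$, and the uniform lower bound (the constant $R$ from the proof of Proposition \ref{osin:prop}) on the distance between distinct walls adjacent to a common chamber yields a uniform upper bound on $d_T(v, v')$. The crucial observation is that if $y$ lies in the \emph{thick part} $C^{\mathrm{int}}$ of $C$, namely at distance $> H+k$ from every wall adjacent to $C$, then no chamber other than $C$ can approach $y$ within $H+k$, forcing $C' = C$ and $\gamma_0 \in \Gamma^0_v$. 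One then completes the argument by noting that $C^{\mathrm{int}}$ is cobounded in $C$: this reduces, through the product structure $C \cong B \times \mathbb{R}^d$, to the analogous statement for the neutered space $B$, which follows from the fact that moving a uniform distance inward from $\partial B$ lands in the thick part. For a general $y \in C$, approximation by a point $y' \in C^{\mathrm{int}}$ within a uniform distance, together with the quasi-isometry constants of the action, yields the desired uniform bound. The edge case is entirely parallel: for $y$ in the complement of a bounded neighborhood of the fibers along which $W$ meets other walls, the same tree-distance argument forces $\gamma_0(e) = e$, and the analogous cobounded approximation concludes the general case.

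With both hypotheses verified, Lemma \ref{milsv+:lem} yields that $\Gamma^0_v$ is finitely generated and quasi-isometric to $(C, d_C)$ via the orbit map, and similarly $\Gamma^0_e$ is finitely generated and quasi-isometric to $(W, d_W)$.
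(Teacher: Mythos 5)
Your overall strategy — build a quasi-action of the stabilizer on $(C,d_C)$ resp.\ $(W,d_W)$ and invoke the strengthened Milnor--Svarc Lemma~\ref{milsv+:lem} — is the same as the paper's, and your verification of properness is fine. The gap is in the coboundedness step.

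You propose to handle coboundedness by restricting attention to the ``thick part'' $C^{\mathrm{int}}$, consisting of points at distance $> H+k$ from every wall adjacent to $C$, and then arguing that $C^{\mathrm{int}}$ is cobounded in $C$. But this thick part can be \emph{empty}. The chamber $C$ covers a piece $V = \overline{N}\times T^d$, and since $\overline{N}$ is compact with boundary, there is a constant $D_0$ (essentially the maximum distance from a point of $\overline{N}$ to $\partial\overline{N}$) so that \emph{every} point of the neutered space $B$ lies within $D_0$ of some horosphere of $\partial B$; lifting to $C \cong B\times \mR^d$, every point of $C$ lies within $D_0$ of some adjacent wall. On the other hand, the constant $H$ coming from Propositions~\ref{wall2:prop} and~\ref{qi-useful:prop} depends on the quasi-isometry constants of the given quasi-isometry $\Gamma\to\pi_1(M)$, which are external to the geometry of $M$ and can be arbitrarily large. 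Once $H+k \geq D_0$ your set $C^{\mathrm{int}}$ is empty, and the reduction to ``a general $y$ is within uniform distance of $C^{\mathrm{int}}$'' breaks down. The implicit assumption that uniform inward displacement from $\partial B$ lands in the thick part is therefore false for large $H$.

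The paper circumvents this entirely with a correction trick: given any target point $w'$ near the wall $W$, take $\gamma\in\Gamma^0$ with $d(\gamma(w),w')\leq k$; even though $\gamma$ need not stabilize $e$, the wall $\gamma^{-1}(W)$ is forced to lie within a bounded distance of the basepoint $w$, hence is one of \emph{finitely many} walls $W_1,\dots,W_m$, and one precomposes with a fixed element $\gamma_j$ (chosen once and for all, independently of $w'$) that carries $W_j$ back near $W$ while moving $w$ only a bounded amount $L$. Then $\gamma\gamma_j\in\Gamma^0_e$ and $d((\gamma\gamma_j)(w),w')$ is uniformly bounded. This makes no appeal to a thick part and works regardless of how large $H$ is. To salvage your approach you would need to replace the thick-part reduction with this kind of finite correction set — at which point you have essentially reproduced the paper's argument.
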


\begin{proof}
Let us focus on proving the first statement, as the second statement follows from a very similar argument.
Let $N_H (W)$ be the $H$-neighbourhood of the wall $W$ corresponding to the edge $e\subseteq T$, and let
$\varphi_e\colon \Gamma^0_e \to N_H (W)$ be defined by 
$\varphi_e (\gamma)=\gamma(w)$, where $w\in W$ is a fixed basepoint. 
Let us first prove that $\varphi_e (\Gamma^0_e)$ is $p-$dense in $N_H (W)$ for some $p$. 
For each wall $W_i$, $i=1,\ldots, m$, in the orbit of $W$ and having distance less than $k^2 + 2k+H$ from $w$,
we choose $\gamma_i\in \Gamma^0$ such that $\gamma_i(W)$ has Hausdorff distance 
from $W_i$ bounded by $H$. Let $L$ be large enough so that $d(w,\gamma_i(w))\leq L$
for every $i=1,\ldots,m$. Now pick any point $w'\in N_H(W)$. 
We know that there is $\gamma\in\Gamma^0$ (but not necessarily in $\Gamma^0_e$) 
such that $d(\gamma(w), w')\leq k$. It is not difficult to show 
that $\gamma^{-1}(W)$ has finite Hausdorff distance from one of the $W_i$'s, 
so there exists $j$ such that $\gamma(W_j)$ is at finite Hausdorff distance from $W$.
Then $\gamma\cdot\gamma_j\in \Gamma^0_e$, 
and we have the estimate:
\begin{align*}
d\big((\gamma\gamma_j)(w), w'\big) & \leq d \big((\gamma\gamma_j) (w),\gamma (w) \big)+d(\gamma(w),w')\\
&\leq \big(d(\gamma (\gamma_j (w)),\gamma (w)) + k \big) + k \\ 
&\leq \big(k d(\gamma_j (w),w)+2k \big) +k \\
&\leq kL+3k .
\end{align*}
This implies that $\varphi_e (\Gamma^0_e)$ is $(kL+3k)$-dense in $N_H (W)$. 

In order to 
apply Lemma~\ref{milsv+:lem} we now need to construct a quasi-action of $\Gamma^0_e$
on $(W,d_W)$, where $d_W$ is the path-distance of $W$. 
With this goal in mind, 
for every $\gamma\in \Gamma^0_e$ and $x\in W$, we let $h_e(\gamma)(x)$ be a point in $W$ such that 
$d(\gamma (x),h_e(\gamma)(x))\leq H$.
It is easily checked that the map $\gamma\mapsto h_e(\gamma)$ indeed defines a quasi-action of
$\Gamma^0_e$ on $(W,d)$, where $d$ is the restriction to $W$ of the distance on $\tilM$.
Moreover, the orbit of $w$ under this quasi-action is $(kL+3k+2H)$-dense in $(W,d)$.
But since $M$ is irreducible the identity map on $W$ provides a quasi-isometry
between $(W,d)$ and the path metric space $(W,d_W)$, so $h$ provides a quasi-action
 of $\Gamma^0_e$ on $(W,d_W)$, and the orbit of $w$ is $p$-dense in $(W,d_W)$ for some 
 $p$.
By Lemma~\ref{milsv+:lem}, this implies that $\Gamma^0_e$ is finitely generated
and quasi-isometric to $(W,d_W)$.
\end{proof}

\begin{remark}
Arguing as in the proof of Lemma~\ref{kertheta}, it is possible to prove
that the stabilizers $\Gamma^0_e$, $\Gamma^0_v$ are quasi-isometrically embedded
in $\Gamma^0$. 
\end{remark}

Putting together Lemma~\ref{stab:lemma} and Gromov's Theorem~\ref{virtab:thm} we immediately get the 
following:

\begin{proposition}\label{edgestab:prop}
If $\Gamma^0_e$ is the stabilizer of an edge $e\subseteq T$, then
$\Gamma^0_e$ contains $\mZ^{n-1}$ as a subgroup of finite index.
\end{proposition}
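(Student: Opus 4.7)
My plan is to combine Lemma~\ref{stab:lemma} with the irreducibility hypothesis and Gromov's polynomial growth theorem (Theorem~\ref{virtab:thm}). The argument should be essentially immediate; there is no serious obstacle.

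First, I would invoke Lemma~\ref{stab:lemma}, which tells us that $\Gamma^0_e$ is finitely generated and quasi-isometric, via the map $\varphi_e$ constructed there, to the wall $W \subseteq \widetilde{M}$ corresponding to the edge $e$, equipped with its intrinsic path metric $d_W$. Note that finite generation is crucial here so that we may later apply Gromov's theorem; this is built into the conclusion of Lemma~\ref{stab:lemma} via the strengthened Milnor--\v{S}varc Lemma~\ref{milsv+:lem}.

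Next, I would use the irreducibility of $M$ to identify the quasi-isometry type of $(W, d_W)$. Precisely, by Corollary~\ref{fibre:cor}, every wall of $\widetilde{M}$ (endowed with its path metric) is quasi-isometric to $\mathbb{R}^{n-1}$, and $\mathbb{R}^{n-1}$ is in turn quasi-isometric to $\mathbb{Z}^{n-1}$. By transitivity of quasi-isometry, $\Gamma^0_e$ is quasi-isometric to $\mathbb{Z}^{n-1}$.

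Finally, applying Gromov's Theorem~\ref{virtab:thm} to the finitely generated group $\Gamma^0_e$, we conclude that $\Gamma^0_e$ contains a finite-index subgroup isomorphic to $\mathbb{Z}^{n-1}$, which is precisely the statement of Proposition~\ref{edgestab:prop}. The whole argument is a short chain of citations, and the only place where irreducibility is used is to ensure that walls are genuinely quasi-isometric to Euclidean spaces (rather than merely being continuously embedded), which is exactly the content that distinguishes the irreducible setting studied in this chapter from the general graph manifold setting.
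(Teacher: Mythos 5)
Your argument is correct and is essentially identical to the paper's: the paper proves Proposition~\ref{edgestab:prop} by directly chaining Lemma~\ref{stab:lemma} with Gromov's Theorem~\ref{virtab:thm}. One small attribution slip: $(W,d_W)$ is quasi-isometric to $\mathbb{R}^{n-1}$ for \emph{any} graph manifold (this is Lemma~\ref{easywall:lem}, no irreducibility needed); irreducibility is instead consumed inside the proof of Lemma~\ref{stab:lemma}, where one promotes the quasi-action from $(W,d)$ with the ambient metric to $(W,d_W)$ with the intrinsic path metric using the fact that walls are undistorted (Theorem~\ref{quasi-isom:thm} / Corollary~\ref{fibre:cor}).
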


\vskip 10pt

Theorem~\ref{qirigidity:thm} is now a direct consequence of
Theorem~\ref{serre:thm}, Proposition~\ref{edgestab:prop} and Theorem~\ref{product:thm}.

\vskip 10pt

\section{Graph manifolds with quasi-isometric fundamental groups}\label{qirig2:subsec}
We are now interested in analyzing when irreducible graph manifolds have quasi-isometric 
fundamental groups.

For $i=1,2$, 
let $M_i$ be an irreducible graph manifold, and let
us denote by $T_i$ the tree corresponding to the decomposition
of $\tilM_i$ into chambers. We can label each vertex $v$ of $T_i$ as follows:
if $v$ corresponds to a chamber projecting in $M$ onto a piece 
of the form $N\times T^{d}$, where $N$ is a cusped hyperbolic manifold,
then we label $v$ with the commensurability class of $N$.
The following result gives a necessary condition for $M_1,M_2$ to have quasi-isometric fundamental groups:

\begin{theorem}\label{qi-mnpc}
Suppose the fundamental groups of $M_1$ and $M_2$ are quasi-isometric.
Then $T_1$ and $T_2$ are isomorphic as labelled trees. 
\end{theorem}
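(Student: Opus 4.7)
The plan is to transport the QI between the two groups, via Milnor–\v{S}varc, to a quasi-isometry $f\colon \widetilde{M}_1\to \widetilde{M}_2$ of universal covers, and then to show that the correspondence between walls and chambers furnished by Propositions~\ref{wall2:prop} and~\ref{qi-useful:prop} (which apply precisely because both graph manifolds are irreducible) is in fact a label-preserving isomorphism $\Phi\colon T_1\to T_2$.

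First I would produce $\Phi$. Proposition~\ref{wall2:prop} assigns to each wall $W\subseteq\widetilde{M}_1$ the unique wall $\Phi(W)\subseteq\widetilde{M}_2$ at finite Hausdorff distance from $f(W)$, and Proposition~\ref{qi-useful:prop} does the same for chambers and, crucially, shows that adjacency of a wall to a chamber is preserved. To upgrade the resulting assignment from a map to a bijection of vertex/edge sets, I would repeat the construction for a quasi-inverse $g\colon\widetilde{M}_2\to\widetilde{M}_1$, obtaining $\Psi$ in the reverse direction. Since $g\circ f$ and $f\circ g$ are at uniformly bounded distance from the respective identities, and since distinct walls (respectively distinct chambers) of a given $\widetilde{M}_i$ lie at infinite Hausdorff distance from one another by Lemma~\ref{prefacile2:lem} and Corollary~\ref{prefacile3:lem}, the composites $\Psi\circ\Phi$ and $\Phi\circ\Psi$ must fix every wall and every chamber. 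Thus $\Phi$ and $\Psi$ are inverse bijections, and by the adjacency-preservation statement of Proposition~\ref{qi-useful:prop} applied in both directions, $\Phi$ is a simplicial isomorphism $T_1\to T_2$.

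It then remains to verify that $\Phi$ preserves the labels at vertices. Fix a chamber $C\subseteq\widetilde{M}_1$ covering a piece $V=\overline{N}\times T^d$, and set $C'=\Phi(C)$, corresponding to a piece $V'=\overline{N}'\times T^{d'}$ of $M_2$. Since $f(C)$ lies at bounded Hausdorff distance from $C'$ in the ambient metric of $\widetilde{M}_2$, and since by Corollary~\ref{fibre:cor} (which is where irreducibility is again used) the inclusions $(C,d_C)\hookrightarrow \widetilde{M}_1$ and $(C',d_{C'})\hookrightarrow\widetilde{M}_2$ are quasi-isometric embeddings, the restriction of $f$ induces a quasi-isometry between $(C,d_C)$ and $(C',d_{C'})$. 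Applying Milnor–\v{S}varc to the geometric actions of $\pi_1(V)$ on $(C,d_C)$ and of $\pi_1(V')$ on $(C',d_{C'})$, I conclude that $\pi_1(\overline{N})\times\mZ^d\cong\pi_1(V)$ is quasi-isometric to $\pi_1(V')\cong\pi_1(\overline{N}')\times\mZ^{d'}$.

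Proposition~\ref{qi-pieces:prop} now finishes the argument: it yields $d=d'$, $\dim N=\dim N'$, and commensurability of $N$ with $N'$. (One should note that the proposition is stated for a common ambient dimension, but its proof, which goes through Theorem~\ref{product:thm} and Schwartz's rigidity for non-uniform lattices, in fact derives the equality of the ranks of the central $\mZ^{d_i}$ factors and of the hyperbolic dimensions as consequences, so nothing needs to be assumed in advance.) The main conceptual obstacle in the whole argument — getting from a purely coarse statement about $\pi_1(M_1)$ and $\pi_1(M_2)$ to fine information such as the commensurability class of each hyperbolic base — has already been absorbed into Theorem~\ref{product:thm} and Corollary~\ref{fibre:cor}, so this last chapter amounts essentially to assembling those ingredients.
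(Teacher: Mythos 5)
Your proposal takes essentially the same route as the paper: transport the quasi-isometry to universal covers via Milnor--\v{S}varc, obtain a simplicial isomorphism of Bass--Serre trees from Propositions~\ref{wall2:prop} and~\ref{qi-useful:prop}, then use the quasi-isometric embedding of chambers (Corollary~\ref{fibre:cor}) and Proposition~\ref{qi-pieces:prop} to see that the labels match. The only difference is that you spell out the bijectivity of the tree map via a quasi-inverse and note that Proposition~\ref{qi-pieces:prop} actually yields equality of the ambient dimensions rather than presupposing it — both fair observations, but in substance your argument and the paper's are the same.
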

\begin{proof}
By Milnor-Svarc's Lemma, a quasi-isometry between $\pi_1 (M_1)$ and
$\pi_1 (M_2)$ induces a quasi-isometry, say $\psi$, between the universal coverings
$\tilM_1$ and $\tilM_2$. By Proposition~\ref{qi-useful:prop}
(see also Subsection~\ref{quasiact:sub}),
such a quasi-isometry induces a simplicial isomorphism $f_\psi$ between
$T_1$ and $T_2$. We will now show that such isomorphism preserve labels,
thus proving the theorem.

Let $v_1$ be a vertex of $T_1$ corresponding
to the chamber $C_1$, and
suppose that $C_1$ is the universal covering
of $N_1\times T^{d_1}$, where $N_1$ is a cusped hyperbolic
manifold.
Let $C_2$ be the chamber of $\tilM_2$
staying at finite Hausdorff distance from 
$\psi (C_1)$, let $v_2$ be the vertex of $T_2$ 
corresponding to $C_2$, and suppose that $C_2$ projects into $M_2$ onto
a piece of the form $N_2\times T^{d_2}$, where 
$N_2$ is a cusped hyperbolic manifold. By construction, $f_\psi$ takes $v_1$ onto $v_2$,
so we only need to check that the labels
of $v_1$ and $v_2$ are equal, i.e. that $N_1$ is commensurable with $N_2$. However, 
since $M_1,M_2$ are irreducible, the chamber $C_i$ is quasi-isometrically embedded in $M_i$, 
and this implies that $\psi|_{C_1}$ stays at bounded distance 
from a quasi-isometry between $C_1$ and $C_2$. By Milnor-Svarc's Lemma,
it follows
that $\pi_1 (N_1)\times \mZ^{d_1}$ is quasi-isometric to $\pi_1 (N_2)\times \mZ^{d_2}$, so $N_1$ is commensurable with $N_2$ by Proposition~\ref{qi-pieces:prop}.
\end{proof}

Observe that, in each dimension, there exist infinitely many commensurability classes of complete finite-volume
hyperbolic manifolds with toric cusps (see \cite{MRS}). Along with Theorem~\ref{qi-mnpc}, this immediately allows us to deduce:

\begin{corollary}
Suppose $n\geq 3$. Then, there exist infinitely many quasi-isometry classes of fundamental groups
of irreducible graph $n$-manifolds.
\end{corollary}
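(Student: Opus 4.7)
The plan is to produce, for each commensurability class of cusped hyperbolic $n$-manifolds, an irreducible graph $n$-manifold whose Bass-Serre tree carries that commensurability class as its unique label, and then invoke Theorem~\ref{qi-mnpc} to separate them.

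First I would invoke the result of McReynolds-Reid-Stover \cite{MRS} (cited just after Proposition~\ref{notqi}) to fix, for the given $n\geq 3$, an infinite sequence $\{N_i\}_{i\in\mN}$ of pairwise non-commensurable complete finite-volume hyperbolic $n$-manifolds with toric cusps (each having at least one cusp, which is automatic since they are non-compact). For each $i$, truncate the cusps to obtain $\overline{N}_i$ and form $M_i$ as the double $D\overline{N}_i$, that is, glue two copies of $\overline{N}_i$ along corresponding boundary tori via the identity. This is a closed smooth $n$-manifold built from two purely hyperbolic pieces $V=\overline{N}_i\times T^0$ (no toric factor), so it fits the definition of a graph $n$-manifold from Chapter~\ref{construction:sec}.

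Next I would observe that each $M_i$ is automatically \emph{irreducible}. Indeed, for a piece of the form $\overline{N}_i\times T^0$ the fiber subgroup of every boundary torus is trivial, so for every gluing map $\psi_\ell$ the image of the fiber subgroup intersects the fiber subgroup of the target in $\{0\}$ trivially; thus every gluing is transverse in the sense of Definition~\ref{irr:def}. Moreover, by construction every vertex of the Bass-Serre tree $T_i$ of $\tilM_i$ corresponds to a lift of one of the two pieces, both of which have hyperbolic factor $N_i$. Hence in the labelling of $T_i$ described just before Theorem~\ref{qi-mnpc}, every vertex of $T_i$ carries the single commensurability class $[N_i]$.

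Finally, suppose $\pi_1(M_i)$ and $\pi_1(M_j)$ are quasi-isometric. By Theorem~\ref{qi-mnpc} the labelled trees $T_i$ and $T_j$ are isomorphic. Since every label occurring in $T_i$ (resp.~$T_j$) is $[N_i]$ (resp.~$[N_j]$), this forces $[N_i]=[N_j]$, hence $N_i$ is commensurable with $N_j$, and therefore $i=j$ by our choice of the family $\{N_i\}$. Consequently the groups $\pi_1(M_i)$ fall into pairwise distinct quasi-isometry classes, and since there are infinitely many of them the corollary follows. I do not anticipate any serious obstacle: the construction is essentially a direct application of Theorem~\ref{qi-mnpc} once the infinite family of commensurability classes is in hand, with the only (trivial) technical point being the verification that doubles of truncated purely hyperbolic manifolds are irreducible graph manifolds in our sense.
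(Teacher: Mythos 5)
Your proposal is correct and takes essentially the same route as the paper: the paper simply observes that Theorem~\ref{qi-mnpc} together with the existence (from \cite{MRS}) of infinitely many commensurability classes of cusped hyperbolic $n$-manifolds with toric cusps ``immediately'' yields the result, and the intended realization is precisely the one you spell out -- doubles of truncated hyperbolic manifolds, whose purely hyperbolic pieces make irreducibility automatic (the paper even records, just after Definition~\ref{irr:def}, that such doubles are irreducible). Your write-up just makes the implicit construction explicit.
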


\begin{remark}\label{not-suff-rem}
Let us fix the notation as in Theorem~\ref{qi-mnpc}.
The following construction shows that the fact that $T_1$ and $T_2$ are isomorphic as labelled
trees is {\bf not} sufficient for ensuring that $\pi_1 (M_1)$ and $\pi_1 (M_2)$ are quasi-isometric.

Let $N$ be a cusped hyperbolic $3$-manifold with two toric cusps, let $\partial_1 \overline N$, $\partial_2\overline N$
be the boundary components of the truncated manifold $\overline N$, and assume that the Euclidean structures
induced by $N$ on $\partial_1\overline N$, $\partial_2\overline N$ are not commensurable with each other. 
The fact that such a manifold exists is proved in~\cite{GHH}
(we may take for example the manifold $7c\ 3548$ in the census available at the
address~\cite{web}). Furthermore, let $N',N''$ be non-commensurable $1$-cusped hyperbolic $3$-manifolds
(for example, suitable hyperbolic knot complements), and consider the (obviously irreducible) graph manifolds
$M_1,M_2$ defined as follows: $M_1$ is obtained by gluing $\overline{N}$ with $\overline{N'}$ along $\partial_1 \overline N$,
and with $\overline{N''}$ along $\partial_2 \overline N$;
$M_2$ is obtained by gluing $\overline{N}$ with $\overline{N'}$ along $\partial_2 \overline N$,
and with $\overline{N''}$ along $\partial_1 \overline N$. 
Of course, the labelled trees associated to $M_1$ and $M_2$ are isomorphic. 

On the other hand, a hypothetical quasi-isometry
between $\pi_1 (M_1)$ and $\pi_1 (M_2)$ should induce a quasi-isometry of $\pi_1 (N)$ into itself taking
the cusp subgroup $\pi_1 (\partial_1 \overline N)$ to a set at finite Hausdorff distance from $\pi_1 (\partial_2 \overline{N})$.
By~\cite{schw}, this would imply that $\pi_1 (\partial_1 \overline N)$ and $\pi_1 (\partial_2 \overline{N})$ admit finite index subgroups
that are conjugated by an isometry of $\matH^3$. 
As a consequence,  the Euclidean structures
induced by $N$ on $\partial_1\overline N$, $\partial_2\overline N$ should be commensurable with each other, which would
contradict our choices.
\end{remark}

\begin{remark}
In~\cite{behr}, Behrstock and Neumann proved that the fundamental groups of any two closed $3$-dimensional
irreducible 
graph manifolds are quasi-isometric. This result could seem in contrast with the phenomenon exhibited by the previous construction.
However, hyperbolic bases, in dimensions $\geq 3$, are much more rigid than
hyperbolic surfaces with boundary. As a consequence, 
in higher dimensions there is no obvious counterpart for
all the ``strechings'' performed on thickened graphs in~\cite{behr}.
\end{remark}

\part{Concluding remarks}

%-----------------------------------------------------------------------
% Beginning of chap1.tex
%-----------------------------------------------------------------------
%
%  AMS-LaTeX sample file for a chapter of a monograph, to be used with
%  an AMS monograph document class.  This is a data file input by
%  chapter.tex.
%
%  Use this file as a model for a chapter; DO NOT START BY removing its
%  contents and filling in your own text.
% 
%%%%%%%%%%%%%%%%%%%%%%%%%%%%%%%%%%%%%%%%%%%%%%%%%%%%%%%%%%%%%%%%%%%%%%%%

\chapter{Examples not supporting locally CAT(0) metrics}\label{construction2:sec}

We already saw a method in Section \ref{noncat0-easy:subsec} for constructing
graph manifolds which do not support any locally CAT(0) metric. The idea was
to take a finite volume hyperbolic manifold $N$ with at least two toric cusps,
and glue together two copies of $N\times T^2$ in such a way that the 
fundamental group of the resulting
graph manifold contains a non quasi-isometrically embedded abelian subgroup (see
Proposition \ref{notqi}). This method could be used to produce infinitely many 
such examples in all dimensions $\geq 5$.

In this Chapter we provide some additional methods for constructing graph manifolds
which do \emph{not} support any locally CAT(0) metric. In Section~\ref{obstructions:sub}
we show that certain $S^1$-fiber-bundles over the double of cusped hyperbolic 
manifolds do {\it not} support locally CAT(0) metrics. This allows us to construct infinitely many new examples in each dimension $\geq 4$.

Section~\ref{strirr:sub} is devoted to the construction of irreducible examples. 
We can
produce infinitely many such examples in each dimension $\geq 4$.

For ease of notation, we will omit
the coefficient ring in our cohomology groups, with the understanding that all homology
and cohomology in this chapter is taken with coefficients in $\mZ$.

\section{Fiber bundles}\label{obstructions:sub}

In this section, we describe a construction providing
graph manifolds which do not support any locally CAT(0) metrics.  
We start by recalling that
principal $S^1$-bundles over a manifold $K$ are classified (topologically) by their Euler
class in $H^2(K)$.  The Euler class is the ``primary obstruction'' to the existence of a
section, and satisfies the following two key properties:

\vskip 5pt

\noindent{\bf Fact 1:} The Euler class of a principal $S^1$-bundle $S^1\rightarrow K^\p
\rightarrow K$ is zero if and only if $K^\p \cong K\times S^1$ (i.e. $K^\p$ is the trivial
$S^1$-bundle).

\vskip 5pt

\noindent{\bf Fact 2:} If $f:L\rightarrow K$ is continuous, and $S^1\rightarrow K^\p\rightarrow
K$ is a principal $S^1$-bundle, let $S^1\rightarrow L^\p\rightarrow L$ be the pullback
principal $S^1$-bundle.  Then $e(L^\p) = f^*(e(K^\p))$, where $e(L^\p), e(K^\p)$
denote the Euler classes of the respective $S^1$-bundles, and $f^*:H^2(K; \mZ)\rightarrow
H^2(L;\mZ)$ is the induced map on the second cohomology.

\vskip 5pt

Since the manifolds we will be considering arise as principal $S^1$-bundles,
we now identify a cohomological obstruction for 
certain principal $S^1$-bundles to support a locally CAT(0) metric.

\begin{lemma}\label{Euler-class}
Let $K$ be a compact topological manifold supporting a locally CAT(0) metric,
and let $S^1\rightarrow K^\p \rightarrow K$ be a principal $S^1$-bundle over $K$ (so that $K^\p$
is also compact).  If $K^\p$
supports a locally CAT(0) metric, then $e(K^\p)$ has finite order in $H^2(K)$.
\end{lemma}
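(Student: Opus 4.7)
The plan is to exploit the interaction between the principal $S^1$-bundle structure of $K' \to K$ and the restrictions placed on $\pi_1(K')$ by the assumption that $K'$ carries a locally CAT(0) metric.

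First I will unpack the bundle data at the level of fundamental groups. Since $K$ is locally CAT(0), its universal cover is contractible, so $K$ is aspherical; the long exact homotopy sequence of the principal bundle then collapses to a short exact sequence
\[
1 \longrightarrow Z \longrightarrow \pi_1(K') \longrightarrow \pi_1(K) \longrightarrow 1,
\]
with $Z \cong \mZ$ generated by the class of an $S^1$-fiber. The bundle being principal makes the monodromy action of $\pi_1(K)$ on $Z$ trivial, so $Z$ lies in the center of $\pi_1(K')$; moreover, the standard identification $H^2(\pi_1(K);\mZ) \cong H^2(K;\mZ)$ carries the extension class of this sequence to $\pm e(K')$. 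Thus it suffices to show that, after passing to a suitable finite index subgroup of $\pi_1(K')$, this central extension splits.

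Next I will invoke the splitting theorem for fundamental groups of compact locally CAT(0) spaces, which is a consequence of the Flat Torus Theorem (see \cite[Chapter~II.7]{bri}): since $\pi_1(K')$ contains the free abelian central subgroup $Z$, it has a finite index subgroup $H$ that admits a direct product decomposition $H = Z_0 \times Q$, where $Z_0 = H \cap Z$ is a finite index subgroup of $Z$ and $Q$ maps isomorphically to its image $Q' \leq \pi_1(K)$.

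With $H$ in hand, let $f \colon \widetilde{K} \to K$ be the finite cover corresponding to $Q'$, and let $\widetilde{K}' \to \widetilde{K}$ be the pullback of the original bundle along $f$. The fundamental group of $\widetilde{K}'$ is the preimage of $Q'$ in $\pi_1(K')$; since $Q \cap Z \leq Q \cap H \cap Z = Q \cap Z_0 = \{1\}$ and $Q$ already surjects onto $Q'$ modulo $Z$, this preimage equals $Z \cdot Q \cong Z \times Q$. Hence the central extension $1 \to Z \to \pi_1(\widetilde{K}') \to \pi_1(\widetilde{K}) \to 1$ splits, which by \textbf{Fact 1} forces $e(\widetilde{K}') = 0$.

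Finally, by naturality (\textbf{Fact 2}), $e(\widetilde{K}') = f^*(e(K'))$, so $f^*(e(K'))=0$. Applying the transfer homomorphism $\tau \colon H^2(\widetilde{K}) \to H^2(K)$ for the degree $d$ finite cover $f$, which satisfies $\tau \circ f^* = d \cdot \mathrm{id}$, yields $d \cdot e(K') = \tau(f^*(e(K'))) = 0$ in $H^2(K)$, so $e(K')$ is torsion as claimed. The main obstacle is invoking the precise form of the splitting theorem valid in the locally CAT(0) (rather than smooth Riemannian) setting; the remaining steps -- passage to the pullback cover, identification of $\pi_1$ via the direct product decomposition of $H$, and the transfer argument -- are routine once the splitting is available.
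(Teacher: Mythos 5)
Your proof is correct and follows essentially the same route as the paper's: pass to a finite cover where the fiber class splits off as a direct factor of the fundamental group (via the CAT(0) Flat Torus/splitting theorem), conclude the pulled-back Euler class vanishes, and apply transfer. The only organizational difference is that you observe upfront that the fiber class $Z$ is already central (via trivial monodromy for a principal bundle with connected structure group), and then construct the cover as $Z\cdot Q$ from the direct-product subgroup $H = Z_0\times Q$; the paper instead first passes to the centralizer $\Lambda$ of $\mZ$ and then splits $\Lambda$. One small nit: the implication ``extension splits $\Rightarrow e(\widetilde K')=0$'' follows from your earlier identification of the extension class with the Euler class, not from Fact~1 as stated (Fact~1 is then what converts the vanishing Euler class into triviality of the bundle, which you don't actually need).
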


\begin{proof}
Since all spaces in the fibration are aspherical,
the associated long exact sequence in homotopy degenerates to a single short
exact sequence:
$$0\rightarrow \mZ \rightarrow \pi_1(K^\p) \rightarrow \pi_1(K) \rightarrow 0.$$
As $K^\p$ is compact, the action of $\pi_1(K^\p)$
on the CAT(0) universal cover $\tilde {K^\p}$ is by semi-simple isometries (i.e. for every
$g\in \pi_1(K^\p)$, there exists a $x\in \tilde {K^\p}$ satisfying $d(x, gx)\leq d(y,gy)$ for
all $y\in \tilde {K^\p}$).  Furthermore, $\pi_1(K^\p)$ contains $\mZ$ as a normal subgroup.
A well-known consequence of the Flat Torus theorem (see the discussion in \cite[pgs. 244-245]{bri})
implies that there exists a finite index subgroup $\La \leq \pi_1(K^\p)$ that centralizes the
$\mathbb Z$-subgroup, i.e. we have:

\[
\xymatrix@C=20pt@R=30pt{
\mZ \ar[r] & \pi_1(K^\p) \ar[r] & \pi_1(K)\\
\mZ \ar[r] \ar[u]^{=}& \La \ar[u]_{\text{Finite Index}}  \ar[r]& \La /\mZ}
\]

It is easy to see (by chasing the diagram) that there is an induced inclusion $\La /\mZ \hookrightarrow
\pi_1(K)$ which is also of finite index.  Let $L\rightarrow K$ be the finite cover corresponding to
$\La /\mZ \hookrightarrow \pi_1(K)$, and $L^\p \rightarrow K^\p$ the cover corresponding to
$\La \hookrightarrow \pi_1(K^\p)$.  We now obtain the commutative diagram of principal bundles
(see~\cite[Theorem II.7.1-(5)]{bri}):

\[
\xymatrix@C=20pt@R=30pt{
S^1 \ar[r] & K^\p \ar[r] & K\\
S^1 \ar[r] \ar[u]^{=}& L^\p \ar[u]  \ar[r]& L \ar[u]_{\text{Finite Cover}}}
\]
where both the ``top row'' and the ``bottom row'' are principal $S^1$-bundles.  Now observe that
the bottom row splits as a product, i.e. $L^\p  \cong L\times S^1$.  Indeed, this follows from the
fact that $\La$ centralizes the $\mZ$-factor, and splits as $\mZ \oplus \La / \mZ$, while acting
on the CAT(0) space $\tilde {K^\p}$. From {\bf Fact 1}, this implies that $e(L^\p)=0\in H^2(L)$.
From {\bf Fact 2}, and commutativity of the diagram, we get that $p^*(e(K^\p))=e(L^\p)=0$, where
$p^*: H^2(K)\rightarrow H^2(L)$ is the map induced by the covering projection $p:L\rightarrow K$.

On the other hand, recall that there is a transfer map on cohomology $T:H^*(L)\rightarrow H^*(K)$
associated with any finite covering $p:L\rightarrow K$.  This map has the property that $T\circ p^*
: H^*(K)\rightarrow H^*(K)$ is just multiplication by the degree of the covering map.  Hence if
$d$ denotes the degree of the covering map, we have that:
$$d\cdot e(K^\p) = (T\circ p^*)(e(K^\p)) = T (0) = 0 \in H^2(K)$$
implying that $e(K^\p) \in H^2(K)$ is a torsion element, and completing the proof of the Lemma.
\end{proof}

Keeping the notation from Section~\ref{initial:subsec}, 
let $N$ be a finite volume, non-compact, hyperbolic manifold, with all cusps diffeomorphic
to a torus times $[0, \infty)$, and let $\overline{N}$ be the compact manifold obtained by ``truncating
the cusps''.  Note that the boundary $\partial \overline{N}$ consists of a finite number of codimension
one tori, and the inclusion $i:\partial \overline{N} \hookrightarrow \overline{N}$ induces
the map $i^*:H^1(\overline{N})\rightarrow H^1(\partial \overline{N})$ on the first cohomology.
We will consider principal $S^1$-bundles over the double $D\overline{N}$.  

\begin{proposition}\label{cohom-cond}
Assume there exists a non-trivial cohomology class $\alpha \in H^1(\partial \overline{N})$ having
the property that $\langle \alpha \rangle \cap i^*(H^1(\overline{N}))=0 \subset H^1(\partial \overline{N})$.
Then there exists a manifold $M$, which is topologically a principal $S^1$-bundle over $D\overline{N}$, 
having the properties:
\begin{enumerate}
\item $M$ does {\bf not} support any locally CAT(0) metric.
\item $M$ is a graph manifold.
\end{enumerate}
\end{proposition}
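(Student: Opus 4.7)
The plan is to construct $M$ as the principal $S^1$-bundle over $D\overline{N}$ whose Euler class $\beta \in H^2(D\overline{N})$ is the image of $\alpha$ under the Mayer--Vietoris connecting homomorphism, and then verify each claim by working with the decomposition $D\overline{N} = \overline{N}_+ \cup_{\partial \overline{N}} \overline{N}_-$.

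First I would write down the Mayer--Vietoris sequence
\[
H^1(\overline{N}) \oplus H^1(\overline{N}) \xrightarrow{(i^*,-i^*)} H^1(\partial \overline{N}) \xrightarrow{\delta} H^2(D\overline{N}) \xrightarrow{(r_+,r_-)} H^2(\overline{N}) \oplus H^2(\overline{N}),
\]
so that $\ker \delta = i^*(H^1(\overline{N}))$. The hypothesis on $\alpha$ says precisely that no non-zero multiple of $\alpha$ lies in $\ker \delta$, so $\beta := \delta(\alpha)$ is a non-torsion element of $H^2(D\overline{N})$ which restricts to zero on each copy of $\overline{N}$. Let $M \to D\overline{N}$ be the principal $S^1$-bundle with Euler class $\beta$.

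For claim (1), I observe that $D\overline{N}$ admits a locally CAT(0) metric: choose the horospherical cross-sections so that $\partial \overline{N}$ is totally geodesic in the hyperbolic metric on $\overline{N}$, then double. If $M$ itself supported a locally CAT(0) metric, Lemma~\ref{Euler-class} applied to $K = D\overline{N}$ would force $\beta = e(M)$ to be torsion in $H^2(D\overline{N})$, contradicting what we just arranged.

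For claim (2), decompose $M$ into the two pieces $M_\pm := p^{-1}(\overline{N}_\pm)$, each of which is a principal $S^1$-bundle over $\overline{N}_\pm$ with Euler class $r_\pm(\beta) = 0$ by exactness. By Fact~1 each piece is topologically $\overline{N}_\pm \times S^1$, a legitimate graph-manifold piece (truncated hyperbolic base times a $1$-torus). The remaining issue is that graph manifolds require the gluings along boundary tori to be affine. Here the class $\alpha \in H^1(\partial \overline{N})$ on a disjoint union of flat tori admits an affine representative $f \colon \partial \overline{N} \to S^1 = \mR/\mZ$, and one checks that bundles of the form $\delta(\alpha)$ are realised by gluing the two trivial pieces $\overline{N}_\pm \times S^1$ along $T^{n-1} \times S^1$ via the map $(x,\theta) \mapsto (x, \theta + f(x))$, which is affine on the boundary tori. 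The main subtlety, and the place that requires the most care, is verifying that this affine-gluing realization of the bundle genuinely has Euler class $\delta(\alpha)$; this is a direct \v{C}ech-style computation, identifying the $S^1$-valued transition function across the collar of $\partial \overline{N}$ with a cocycle representative of $\alpha$ and applying the boundary map in Mayer--Vietoris.
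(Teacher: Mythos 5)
Your proposal is correct and follows the same core route as the paper: use Mayer--Vietoris to see that $\delta(\alpha)$ has infinite order and restricts to zero on each copy of $\overline{N}$, build the principal $S^1$-bundle with that Euler class, invoke Lemma~\ref{Euler-class} to exclude a locally CAT(0) metric, and use Fact~1 and Fact~2 to identify the pieces as trivial bundles over $\overline{N}_\pm$. The one place you and the paper genuinely diverge is in handling the affine gluing requirement. The paper first builds $M$ as an abstract bundle, decomposes it as $M_1\cup_\phi M_2$ with both pieces trivial, and then argues that $\phi$ may be replaced by a homotopic affine diffeomorphism without changing the Euler class. You instead run the construction in the opposite direction: pick an affine representative $f$ of $\alpha$ on the flat boundary tori, glue $\overline{N}_+\times S^1$ to $\overline{N}_-\times S^1$ via the affine map $(x,\theta)\mapsto(x,\theta+f(x))$, and then verify that the resulting bundle has Euler class $\delta(\alpha)$ via the standard clutching/\v{C}ech computation, which you correctly identify as the step needing care. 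Both arguments hinge on the same fact -- the Euler class is determined by the homotopy class of the transition map across the separating hypersurface -- so the two routes are essentially interchangeable; yours is a bit more explicit and constructive, while the paper's saves the clutching computation at the cost of a slightly more implicit ``modify the gluing'' step.

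One small correction in your sketch of why $D\overline{N}$ supports a locally CAT(0) metric: horospherical cross-sections are never totally geodesic in the hyperbolic metric (horospheres have non-vanishing second fundamental form). To double, one has to first deform the hyperbolic metric near $\partial\overline{N}$ to a Riemannian product while keeping it non-positively curved, which is the content of~\cite[Theorem~1]{ArFa} that the paper cites; just citing that result is cleaner than attempting to re-derive it.
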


\begin{proof}
It is well-known that the double $D\overline{N}$ supports a Riemannian metric of non-positive sectional 
curvature (see for example \cite[Theorem 1]{ArFa}). In view of Lemma \ref{Euler-class}, any principal $S^1$-bundle
whose Euler class has infinite order will {\bf not} support any locally CAT(0) metric. Since every class
in $H^2(D\overline{N})$ is realized as the Euler class of some principal $S^1$-bundle, we just need to
find a cohomology class of infinite order.

Consider the Mayer-Vietoris sequence in cohomology for the decomposition $D\overline{N}= \overline{N}_1
\cup _{\partial \overline{N}} \overline{N}_2$, where the $\overline{N}_i$ are the two copies of $\overline{N}$. 
We have:
$$
\xymatrix{
H^1(\overline{N}_1)\oplus H^1(\overline{N}_2)\ar[r]^<<<<<{i} & H^1(\partial \overline{N}) \ar[r]^j & H^2(D\overline{N})
\ar[r] & H^2(\overline{N}_1) \oplus H^2(\overline{N}_1)}
$$ Now by hypothesis
there exists an element $\alpha \in H^1(\partial \overline{N})$ having the
property that $\langle \alpha \rangle \cap i^*(H^1(\overline{N})) = 0$. If
$i_1, i_2$ denotes the inclusions of $\partial \overline{N}$ into $\overline{N}_1, \overline{N}_2$, we have that the first map in the Mayer-Vietoris
sequence above is given by $i:= i_1^* - i_2^*$, and hence the
non-trivial element $\alpha \in H^1(\partial \overline{N})$ has the property that
$\langle \alpha \rangle \cap i(H^1(\overline{N}_1)\oplus H^1(\overline{N}_2))=
\{0\}$. In particular, 
since $H^1(\partial \overline{N})$ is torsion-free, the subgroup
$j(H^1(\partial \overline{N})) \leq H^2(D\overline{N})$
contains an element of infinite order, namely $j(\alpha)$.  Let $M$
be the associated principal $S^1$-bundle over $D\overline{N}$; from the
discussion above, $M$ cannot support any locally CAT(0) metric.

So to conclude, we just need to argue that $M$ is a graph manifold. To see this, 
observe that $M$ naturally decomposes as a union $M= M_1\cup M_2$, where
each $M_i$ is the preimage of the respective $\overline{N}_i$ under the canonical map
$S^1\rightarrow M \rightarrow D\overline{N} = \overline{N}_1\cup _{\partial \overline{N}} \overline{N}_2$.
We now show that the $M_i$ are the pieces for the decomposition of $M$ as a graph manifold.
To do this, we need to understand the topology of the $M_i$.

From {\bf Fact 2}, we can compute the Euler class of the bundles
$S^1\rightarrow M_i\rightarrow \overline{N}_i$ by looking at the image of $\alpha \in H^2(D\overline{N})$
under the maps $H^2(D\overline{N})\rightarrow H^2(\overline{N}_i)$ induced by the inclusions
$\overline{N}_i\hookrightarrow D\overline{N}$.  But observe that these maps are exactly the ones
appearing in the Mayer-Vietoris sequence:
$$H^1(\partial \overline{N})\rightarrow H^2(D\overline{N}) \rightarrow H^2(\overline{N}_1)\oplus H^2(\overline{N}_2)$$
By exactness of the sequence, we immediately
obtain that $\rho (j(\alpha)) =0 \in H^2(\overline{N}_1)\oplus H^2(\overline{N}_2)$, and so the Euler
class of both $M_i$ is zero in the corresponding $H^2(\overline{N}_i)$.  Applying {\bf Fact 1}, we conclude that
each $M_i$ is the trivial $S^1$-bundle over $\overline{N}_i$, i.e.~each $M_i$ is homeomorphic to $\overline{N}_i \times S^1$.
Let us now endow each $M_i$ with the smooth structure induced by the product $\overline{N}_i \times S^1$ of smooth manifolds.
Now the only possible obstruction to $M$ being a graph manifold lies in the  
gluing map between
$M_1$ and $M_2$ being affine. 
However, if the gluing map is not affine, we can replace it by a 
homotopic affine diffeomorphism without affecting the Euler class of the corresponding principal $S^1$-bundle
(actually, if $n > 5$, we can replace the given gluing map by a $C^0$-isotopic affine diffeomorphism
without changing the topological type of the manifold $M$ -- see the discussion in Remark \ref{aff-diff:rem}). 
Then $M$ is indeed a graph manifold,
and this concludes the proof of the Proposition.
\end{proof}

In order to obtain the desired examples, we need to produce finite volume hyperbolic
manifolds $N$ so that the associated truncated $\overline{N}$ satisfies:
\begin{enumerate}[(1)]
\item all the boundary components of $\overline{N}$ are diffeomorphic to tori, and
\item there exists a non-trivial element $\alpha \in H^1(\partial \overline{N})$ which
satisfies $$\langle \alpha\rangle \cap i^*(H^1(\overline{N}))=\{0\} \subset
H^1(\partial \overline{N}).$$  
\end{enumerate}
The next step towards achieving this is to
turn the cohomological condition (2) to a homological condition, as explained
in the following Lemma.

\begin{lemma}\label{hom-cond}
Let $N$ be a finite volume hyperbolic manifold, so that the associated $\overline{N}$ satisfies
condition (1) above.  
Then $N$ also satisfies condition
(2) above if and only if 
$i_*\colon H_1(\partial\overline{N})\to H_1(\overline{N})$ is \emph{not} injective.
\end{lemma}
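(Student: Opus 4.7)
The plan is to translate both sides into statements about ranks (equivalently, about the behavior after tensoring with $\mathbb{Q}$), and then use the adjointness of $i^*$ and $i_*$ with respect to the Kronecker pairing.

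First I would reformulate condition~(2). Since $H^1(\partial\overline{N})$ is free abelian, the statement that there exists a non-trivial $\alpha$ with $\langle\alpha\rangle\cap i^*(H^1(\overline{N}))=\{0\}$ is equivalent to saying that the saturation of $i^*(H^1(\overline{N}))$ in $H^1(\partial\overline{N})$ is a proper subgroup, i.e.~that
\[
\mathrm{rk}\, i^*\bigl(H^1(\overline{N})\bigr) \;<\; \mathrm{rk}\, H^1(\partial\overline{N}).
\]
Equivalently, the rationalized map $i^*\otimes\mathbb{Q}\colon H^1(\overline{N};\mathbb{Q})\to H^1(\partial\overline{N};\mathbb{Q})$ fails to be surjective.

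Next I would invoke the Kronecker pairing $H^1(-;\mathbb{Q})\times H_1(-;\mathbb{Q})\to\mathbb{Q}$, which identifies each $H^1(-;\mathbb{Q})$ with the $\mathbb{Q}$-linear dual of $H_1(-;\mathbb{Q})$. Under this identification, $i^*\otimes\mathbb{Q}$ is the transpose of $i_*\otimes\mathbb{Q}\colon H_1(\partial\overline{N};\mathbb{Q})\to H_1(\overline{N};\mathbb{Q})$. A linear map between finite-dimensional vector spaces is surjective if and only if its transpose is injective, so $i^*\otimes\mathbb{Q}$ is non-surjective if and only if $i_*\otimes\mathbb{Q}$ is non-injective.

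Finally, since every component of $\partial\overline{N}$ is a torus (condition~(1)), the group $H_1(\partial\overline{N})$ is torsion-free. Therefore $\ker(i_*)$ injects into $H_1(\partial\overline{N};\mathbb{Q})$, and $i_*$ is injective if and only if $i_*\otimes\mathbb{Q}$ is injective. Combining the three equivalences yields the desired statement. There is no real obstacle here; the only point requiring a little care is the torsion-freeness of $H_1(\partial\overline{N})$, which is precisely what lets us pass from the rational statement back to the integral one.
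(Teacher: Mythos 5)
Your proof is correct and follows essentially the same route as the paper's. Both arguments translate condition (2) into a statement about the image of $i^*$ having deficient rank (the paper phrases this as ``infinite index''), then dualize via the Kronecker pairing to pass from $i^*$ to $i_*$, and finally invoke torsion-freeness of $H_1(\partial\overline{N})$ to conclude. The only difference is one of bookkeeping: the paper carries out the duality integrally, identifying $H^1(\partial\overline{N})$ with $\mathrm{Hom}(H_1(\partial\overline{N}),\mathbb Z)$ and using the Universal Coefficient Theorem to get surjectivity of $H^1(\overline{N})\to\mathrm{Hom}(H_1(\overline{N}),\mathbb Z)$, so that the condition becomes ``$i_*(\alpha')$ has finite order''; you tensor with $\mathbb Q$ first, where the pairing is a perfect $\mathbb Q$-vector space duality, and then use torsion-freeness to pass back to $\mathbb Z$. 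Your version is arguably a touch cleaner since it avoids the UCT surjectivity observation, but the underlying idea is identical.
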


\begin{proof}
Since $H^1(\partial \overline{N})$ is a finitely generated torsion-free abelian group, property~(2) above is equivalent
to the fact that the index of $i^* (H^1(\overline{N}))$ in $H^1(\partial\overline{N})$ is infinite, so we need to prove
that this last condition is in turn equivalent to the 
fact that $\ker i_*\neq \{0\}$.

For a torus $T^k$, the Kronecker pairing induces an
isomorphism between $H^1(T^k)$ and ${\rm Hom}(H_1(T^k),\mZ)$.  Property (1) ensures
that this duality extends to an isomorphism between $H^1(\partial
\overline{N})$ and ${\rm Hom}(H^1(\partial \overline{N}),\mZ)$.
Moreover, it is easily seen that a subgroup $H$ of  ${\rm Hom}(H^1(\partial \overline{N}),\mZ)$
has infinite index if and only if there exists a non-trivial
element $\alpha'\in H_1(\partial\overline{N})$ such that $\varphi(\alpha')=0$ for every
$\varphi\in H$. Therefore, 
 the index of $i^* (H^1(\overline{N}))$ in $H^1(\partial\overline{N})$ is infinite
 if and only if there exists a non-trivial element $\alpha'\in H_1(\partial\overline{N})$ 
 such that
 \begin{equation}\label{injsurj}
 0=\langle i^*(\beta),\alpha'\rangle=\langle \beta,i_*(\alpha')\rangle \qquad {\rm for\ every}\ \beta\in H^1(\overline{N})\ .
 \end{equation}
An easy application of the Universal Coefficient Theorem shows that 
the Kronecker pairing between $H_1(\overline{N})$ and $H^1(\overline N)$ induces an epimorphism 
$H^1(\overline{N})\to {\rm Hom}(H_1(\overline{N}),\mathbb{Z})$,
so the condition described in Equation~\eqref{injsurj}
is equivalent to the fact that $\varphi (i_*(\alpha'))=0$ for every $\varphi\in {\rm Hom}(H_1(\overline{N}),\mZ)$, whence to the
fact that $i_*(\alpha')$ has finite order in $H_1(\overline{N})$.

We have thus shown that property~(2) above is equivalent to the existence of a non-trivial
element $\alpha'\in H_1(\partial\overline{N})$ such that $i_*(\alpha')$ has finite order in $H_1(\overline{N})$.
Since $H_1(\partial \overline{N})$ is torsion-free, this last condition 
holds if and only if the kernel of $i_*$ is non-trivial, concluding the proof.
\end{proof}

Now the advantage in changing to a homological criterion is that it
is easier to achieve geometrically. One
needs to find examples of finite volume, non-compact, hyperbolic
manifolds $N$ having the property that they contain an embedded
$S\hookrightarrow N$, where $S$ is non-compact surface with
finitely many cusps, and the embedding is proper. After truncation, 
this yields an element in
$H_1(\partial \overline{N})$, namely the element corresponding to
$\partial \bar S \hookrightarrow \partial \overline{N}$, having the
property that $i_*([\partial \bar S])=0 \in H_1(\overline{N})$.
Moreover, if $S$ is suitably chosen one may also ensures that 
$[\partial \bar S]\neq 0$ in $H_1 (\overline{N})$. 

One approach to finding such examples would be to construct
$N$ so as to contain a properly embedded totally geodesic non-compact
finite volume hyperbolic surface $\Sigma$. We refer the reader to the paper of McReynolds,
Stover, and Reid \cite{MRS} for arithmetical constructions
of such pairs $(N, \Sigma)$ in all dimensions.

An alternate approach is to ignore the geometry and to try to argue purely
topologically. Fixing a single boundary torus $T$ inside one of these truncated 
hyperbolic manifolds $\overline{N}$, we let $x_1, \ldots , x _{n-1}$ be a basis for the
first cohomology $H^1(T) \cong \mZ ^{n-1}$.
The following proposition was suggested to us by Juan Souto:

\begin{proposition}\label{juan-prop}
Assume that the cohomology classes $x_i$ for $1\leq i\leq n-2$ have the property that
$\langle x_i \rangle \cap i^*(H^1(\overline{N}))\neq \{0\}$. 
Then there exists an embedded
smooth surface with boundary $(\Sigma, \partial \Sigma) \hookrightarrow (\overline{N}, \partial \overline{N})$, having the 
following properties:
\begin{enumerate}[i)]
\item $\Sigma \cap \partial \overline{N} = \partial \Sigma$ is entirely contained in the boundary component $T$, and
\item the collection of curves $\partial \Sigma$ represent a non-zero class in $H_1(T)$
(and in particular, $\partial \Sigma \neq \emptyset$).
\end{enumerate}
\end{proposition}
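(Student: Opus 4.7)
The plan is to construct $\Sigma$ as the transverse preimage of a regular value under a smooth map $F : \overline{N} \to T^{n-2}$ assembled from $n-2$ classes extending the $x_i$'s, and then to arrange its boundary to lie in $T$ with the desired homology class.

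First, for each $i = 1, \ldots, n-2$, the plan is to use the hypothesis to pick $\alpha_i \in H^1(\overline{N})$ satisfying $i^*(\alpha_i) = k_i x_i$ for some non-zero integer $k_i$, to realize each $\alpha_i$ as a smooth map $f_i : \overline{N} \to S^1$, and to assemble these into $F = (f_1, \ldots, f_{n-2}) : \overline{N} \to T^{n-2}$. Since every continuous map between tori is homotopic to an affine map, $F$ can be homotoped so that its restriction to each boundary torus is affine; in coordinates $t_1, \ldots, t_{n-1}$ on $T$ dual to the chosen basis, we may arrange $F|_T(t_1, \ldots, t_{n-1}) = (k_1 t_1, \ldots, k_{n-2} t_{n-2})$. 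After a small perturbation making $F$ transverse both to a regular value $p \in T^{n-2}$ and to every boundary component, set $\Sigma_0 := F^{-1}(p)$, a properly embedded smooth $2$-submanifold of $\overline{N}$ with $\partial \Sigma_0 \subset \partial \overline{N}$. The affine normal form forces $\Sigma_0 \cap T$ to be a disjoint union of $|k_1 \cdots k_{n-2}|$ parallel circles in the $t_{n-1}$-direction, representing the non-zero class $\pm k_1 \cdots k_{n-2}\,[e_{n-1}] \in H_1(T)$, where $e_{n-1}$ is the homology generator dual to $x_{n-1}$.

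The main obstacle will be to ensure that $\partial \Sigma_0$ lies entirely in $T$, i.e., that $\Sigma_0 \cap T_j' = \emptyset$ for every other boundary torus $T_j'$. The strategy is to exploit the freedom to replace each $\alpha_i$ by $\alpha_i + \beta_i$ with $\beta_i \in \ker(i^* : H^1(\overline{N}) \to H^1(T))$, which leaves the restriction to $T$ (and hence $\Sigma_0 \cap T$) unchanged, and to use this freedom to force the restrictions $\alpha_1|_{T_j'}, \ldots, \alpha_{n-2}|_{T_j'}$ to be $\mZ$-linearly dependent in $H^1(T_j') \cong \mZ^{n-1}$ for every $j$. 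Once this linear dependence holds, $F|_{T_j'}$ factors up to homotopy through a proper subtorus of $T^{n-2}$, so the compact image $F(T_j')$ is a proper subset of $T^{n-2}$ and the regular value $p$ may be selected in the common complement of all the $F(T_j')$, yielding $\Sigma_0 \cap T_j' = \emptyset$. When this purely cohomological adjustment falls short, the plan is to fall back on a surgery: linear dependence of the $\alpha_i|_{T_j'}$ is precisely the condition that the would-be boundary curves in $T_j'$ represent the trivial class in $H_1(T_j')$ (being Poincar\'e dual there to the restricted cup product $(\alpha_1 \cup \cdots \cup \alpha_{n-2})|_{T_j'}$), so each such component bounds an embedded $2$-chain in $T_j'$ that can be pushed into a collar of $T_j'$ inside $\overline{N}$ and used to cap off the offending boundary, modifying $\Sigma_0$ without altering $\Sigma_0 \cap T$ or its class in $H_1(T)$.
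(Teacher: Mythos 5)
Your core construction—realizing multiples of the $x_i$'s as maps to $S^1$ (equivalently assembled into a map to $T^{n-2}$), taking the transverse preimage of a regular value, and computing the homology class of the boundary via the cup-product/intersection duality—matches the paper's. The discrepancy lies in what you call ``the main obstacle,'' which in fact does not arise, because the hypothesis is stronger than you are using.

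The map $i^*$ in the proposition is the restriction $H^1(\overline{N}) \to H^1(\partial\overline{N})$, and the classes $x_i$ live in the direct summand $H^1(T)\subseteq H^1(\partial\overline{N})$. Thus the element $i^*(\alpha_i) = k_i x_i$ is, \emph{by fiat}, supported entirely on the $H^1(T)$ summand; its projection to $H^1(T'_j)$ for every other boundary torus $T'_j$ is zero. Concretely, $\alpha_i\vert_{T'_j} = 0$, so each $f_i\vert_{T'_j}$ is null-homotopic, and after a homotopy supported in collars one arranges that $f_i(T'_j)$ misses the chosen regular value; then automatically $\Sigma_0 \cap T'_j = \emptyset$. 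No adjustment of the $\alpha_i$ and no surgery is needed. You appear to have read $i^*$ as the composition into $H^1(T)$ alone, which would only constrain the $T$-component and would leave the restrictions to the other $T'_j$ free, creating the problem you then set out to solve.

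Beyond being unnecessary, your two proposed workarounds have their own gaps. In (a), adding $\beta_i \in \ker\bigl(H^1(\overline{N})\to H^1(T)\bigr)$ perturbs the restrictions to the $T'_j$ in a way that is not under control: the image of $H^1(\overline{N})$ in $\bigoplus_j H^1(T'_j)$ need not be large enough to force a $\mZ$-linear dependence among the modified $\alpha_i\vert_{T'_j}$ for every $j$, and you offer no argument that it is. In (b), null-homology of the total class $[\partial\Sigma_0 \cap T'_j]$ in $H_1(T'_j)$ does not imply that each connected component of $\partial\Sigma_0 \cap T'_j$ individually bounds an embedded $2$-chain in $T'_j$; and even granting embedded caps, pushing them into a collar and gluing must be done carefully to preserve embeddedness of $\Sigma$ and to avoid creating new intersections with $\partial\overline{N}$. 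None of this is needed once the hypothesis is read correctly.
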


\begin{proof}
Since each $\langle x_i \rangle \cap i^*(H^1(\overline{N}))\neq \{0\}$, we can find 
non-zero integers $r_1, \ldots , r_{n-2}$ with the property
that $r_i\cdot x_i \in i^*(H^1(\overline{N}))$ for $1\leq i \leq n-2$. 
Let $y_i\in H^1(\overline{N})$ be chosen so that $i^*(y_i) = r_i \cdot x_i$.
We will be considering elements in four (co)-homology groups, related via the 
commutative diagram:
\[
\xymatrix@C=20pt@R=30pt{
 H^1(\overline{N}) \ar[r]^{i^*} \ar[d]^{\cong}  & H^1(\partial \overline{N}) \ar[d]^{\cong}\\
 H_{n-1}(\overline{N}, \partial \overline{N}) \ar[r]^\partial & H_{n-2}(\partial \overline{N}) }
\]
where the vertical maps are isomorphisms given by Poincar\'e-Lefschetz duality, the top map
is induced by inclusion, and the bottom map is the boundary map.
We now proceed to use the cohomology classes $y_i$ to construct the surface $\Sigma$.

First, recall that for a smooth $k$-manifold $M$ (possibly with boundary), the Poincar\'e-Lefschetz dual of a 
$1$-dimensional cohomology class $x \in H^1(M)$ has a 
simple geometric interpretation. One can think of the element $x$ as a homotopy class of maps
into the classifying space $K(\mathbb Z, 1) = S^1$, with the trivial element corresponding to a constant map. 
Fixing a reference point $p\in S^1$, we can find a smooth map $f$ within the homotopy class with the 
property that $f$ is transverse to $p$. Then $f^{-1}(p)$ defines a smooth submanifold, which represents 
the Poincar\'e-Lefschetz dual to $x$. This will represent a class in either $H_{k-1}(M)$ or in 
$H_{k-1}(M, \partial M)$, according to whether $\partial M=\emptyset$ or $\partial M\neq \emptyset$. 
For example, in the special case consisting of the trivial cohomology class, one can perturb the constant
map to not contain $p$ in the image, so that the dual class is represented by the ``vacuous'' submanifold.

Let us apply this procedure to each of the cohomology classes $y_i\in H^1(\overline{N})$, obtaining corresponding
smooth maps $f_i: \overline{N} \rightarrow S^1$ transverse to $p$. Now the restriction of $f_i$ to $\partial \overline{N}$
will yield the Poincar\'e-Lefschetz dual to the cohomology class $i^*(y_i) = r_i \cdot x_i \in H^1(\partial \overline{N})$.
The cohomology $H^1(\partial \overline{N})$ decomposes as a direct sum of the cohomology of the individual 
boundary components, and by construction the class $i^*(y_i) = r_i \cdot x_i$ is purely supported on the 
$H^1(T)$ summand. Geometrically, this just says that the restriction of $f_i$ to any of the remaining 
boundary components is homotopic to a point, which we can take to be distinct from $p$. Using a collared 
neighborhood of each of the boundary components, we can effect such a homotopy, allowing us to replace
$f_i$ by a homotopic map which has the additional property that {\it $T$ is the only boundary component of
$\overline{N}$ whose image intersects $p$}. 

Taking pre-images of $p$ under these maps, we obtain a collection
of $(n-1)$-dimensional manifolds $W_1, \ldots ,W_{n-2}$ representing the dual homology classes in
$H_{n-1}(\overline{N}, \partial \overline{N})$. Moreover, each $W_i$ intersects $\partial \overline{N}$ in a collection of 
$(n-2)$-dimensional submanifolds $\partial W_i \subset T$, which represent the duals to the cohomology 
classes $r_i \cdot x_i \in H^1(T)$. Perturbing the pairs $(W_i, \partial W_i) \subset (\overline{N}, T)$ slightly, 
we may assume they are all pairwise transverse. This in turn ensures that the intersection 
$\Sigma= \cap_{i=1}^{n-2} W_i$ is a smooth submanifold. Since $\Sigma$ is the intersection of $n-2$
manifolds each of which has codimension one, we see that $\Sigma$ has codimension $n-2$ in the
$n$-dimensional manifold $\overline{N}$, i.e. $\Sigma$ is a surface. 
Since $T$ is the only boundary component which intersects any of the $W_i$, we have that $\partial \Sigma \subset T$ 
giving us (i).

So to conclude, we need to verify property (ii): that the family of curves defined by $\partial \Sigma$ represent a 
non-zero class in $H^1(T)$. But recall that $\partial \Sigma = \cap _{i=1}^{n-2} \partial W_i$, where each $\partial W_i$
is an $(n-2)$-dimensional submanifold of the $(n-1)$-dimensional torus $T$, representing the Poincar\'e dual to the
cohomology class $i^*(y_i) = r_i \cdot x_i \in H^1(T)$. Under Poincar\'e duality, the geometric intersection of cycles
corresponds to the cup product of the dual cocycles. As such, the collection of curves $\partial \Sigma$ represents
the Poincar\'e dual of the cup product 
$$\cup _{i=1}^{n-2} (r_i \cdot x_i) = \big(\prod r_i \big) \cdot \big( \cup _{i=1}^{n-2} x_i\big) \in H^{n-2}(T) \cong \mathbb Z^{n-1}.$$ 
We know that the cohomology ring $H^*(T)$ is an exterior algebra over the $x_i$, hence
the cup product $\cup _{i=1}^{n-2} x_i$ is non-zero. Since the coefficient $\prod r_i$ is a non-zero integer, the Poincar\'e 
dual of $[\partial \Sigma] \in H_1(T)$ is non-trivial. This implies that the homology class $[\partial \Sigma]$ is likewise
non-zero, establishing (ii), and concluding the proof of the Proposition.
\end{proof}

\begin{corollary}\label{not-inj}
The map $i_*\colon H_1(\partial \overline{N})\to H_1(\overline N)$ is not injective.
\end{corollary}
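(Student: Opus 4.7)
The plan is to deduce Corollary~\ref{not-inj} directly from the geometric construction given in Proposition~\ref{juan-prop}, by exhibiting an explicit element of $H_1(\partial\overline{N})$ which lies in the kernel of $i_*$. Under the cohomological hypothesis of Proposition~\ref{juan-prop} (which we take to be in force, as it is exactly the input required), that proposition produces a smoothly embedded surface with boundary $(\Sigma,\partial\Sigma)\hookrightarrow(\overline{N},\partial\overline{N})$ whose boundary $\partial\Sigma$ lies entirely in a single boundary torus $T$ and represents a nonzero class in $H_1(T)$.

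The idea is then essentially tautological: a multicurve on $\partial\overline{N}$ that bounds a surface in $\overline{N}$ is null-homologous in $\overline{N}$. More precisely, I would consider the fundamental class $[\Sigma,\partial\Sigma]\in H_2(\overline{N},\partial\overline{N})$ and apply the connecting homomorphism $\partial$ of the long exact sequence of the pair:
\[
\cdots\longrightarrow H_2(\overline{N},\partial\overline{N})\xrightarrow{\ \partial\ } H_1(\partial\overline{N})\xrightarrow{\ i_*\ } H_1(\overline{N})\longrightarrow\cdots
\]
Naturality (for the inclusion $(\Sigma,\partial\Sigma)\hookrightarrow(\overline{N},\partial\overline{N})$) identifies $\partial[\Sigma,\partial\Sigma]$ with the class of $[\partial\Sigma]\in H_1(\partial\overline{N})$. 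Hence $[\partial\Sigma]$ belongs to the image of $\partial$, and by exactness lies in $\ker i_*$. Since $[\partial\Sigma]\neq 0$ in $H_1(T)$ by conclusion~(ii) of Proposition~\ref{juan-prop}, and since $H_1(T)$ injects as a direct summand into $H_1(\partial\overline{N})$ (the boundary consists of disjoint tori, so $H_1(\partial\overline{N})$ decomposes as a direct sum over the boundary components), the class $[\partial\Sigma]$ is also nonzero in $H_1(\partial\overline{N})$. This exhibits a nontrivial element of $\ker i_*$, proving the non-injectivity.

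There is really no obstacle here; the only thing to be careful about is to confirm that the multicurve $\partial\Sigma$ is genuinely null-homologous in $\overline{N}$, which is immediate from $\Sigma$ being an embedded surface bounding it, and that its class survives in $H_1(\partial\overline{N})$, which follows from the direct-sum decomposition coming from the fact that all components of $\partial\overline{N}$ are tori (so a class nonzero on a single boundary torus is nonzero in the sum). Thus Corollary~\ref{not-inj} follows from Proposition~\ref{juan-prop} in essentially one line, and combined with Lemma~\ref{hom-cond} it is exactly what is needed to produce a cohomology class $\alpha$ satisfying the hypothesis of Proposition~\ref{cohom-cond}, yielding graph manifolds that support no locally CAT(0) metric.
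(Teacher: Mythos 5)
Your argument for the surface part is correct and even a bit more explicit than what the paper writes: once Proposition~\ref{juan-prop} produces $(\Sigma,\partial\Sigma)\hookrightarrow(\overline{N},\partial\overline{N})$ with $[\partial\Sigma]\neq 0$ in $H_1(T)$, passing $[\Sigma,\partial\Sigma]$ through the long exact sequence of the pair and using the direct-sum splitting of $H_1(\partial\overline{N})$ over the boundary tori does exhibit a nontrivial class in $\ker i_*$.

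The gap is that this only handles one case of a dichotomy, and the phrase ``which we take to be in force, as it is exactly the input required'' papers over exactly the case you cannot ignore. Corollary~\ref{not-inj} is unconditional, whereas Proposition~\ref{juan-prop} has a genuine hypothesis: that \emph{each} $x_i$, $1\leq i\leq n-2$, satisfies $\langle x_i\rangle\cap i^*(H^1(\overline{N}))\neq\{0\}$. If this hypothesis fails, Proposition~\ref{juan-prop} gives you nothing, so your argument produces no surface and no element of $\ker i_*$. The paper's proof handles precisely this complementary case first: if some $x_i$ with $1\leq i\leq n-2$ has $\langle x_i\rangle\cap i^*(H^1(\overline{N}))=\{0\}$, then $x_i$ itself (viewed in $H^1(\partial\overline{N})$ via the direct sum decomposition) is a nontrivial class witnessing condition~(2), and Lemma~\ref{hom-cond} converts that cohomological condition into the non-injectivity of $i_*$. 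Only when no such $x_i$ exists does the hypothesis of Proposition~\ref{juan-prop} kick in, at which point your argument takes over. So your proof needs the preliminary case-split; as written it silently assumes the harder-looking case and misses the easy escape route via Lemma~\ref{hom-cond}.
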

\begin{proof}
Fix a boundary component $T$ of $\overline{N}$, and choose a basis $x_1, \ldots , x_{n-1}$ for the
first cohomology $H^1(T) \cong \mZ ^{n-1}$. If any of the elements $x_1, \ldots ,x _{n-2}$ 
has the property that $\langle x_i \rangle \cap i^*(H^1(\overline{N}))=\{0\}$, then we are done by
Lemma \ref{hom-cond}. So we can assume that $\langle x_i \rangle \cap i^*(H^1(\overline{N}))\neq \{0\}$ 
for each $1\leq i \leq n-2$, allowing us to apply Proposition \ref{juan-prop}, whence the conclusion again.
\end{proof}

Putting together Proposition~\ref{cohom-cond}, Lemma~\ref{hom-cond}
and Corollary~\ref{not-inj}, we can now establish:

\begin{theorem}\label{bundle-example}
Let $N$ be any finite volume, non-compact, hyperbolic manifold, with all cusps diffeomorphic
to a torus times $[0, \infty)$, and let $\overline{N}$ be the compact manifold obtained by ``truncating
the cusps''. Then one can find a graph manifold, arising as a principal $S^1$-bundle over the 
double $D\overline{N}$, which does \emph{not} support a locally CAT(0) metric.
\end{theorem}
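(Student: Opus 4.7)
The plan is to prove Theorem~\ref{bundle-example} by simply chaining together the three preparatory results that immediately precede it, so the argument is a short assembly rather than a new construction. Let $N$ be as in the statement, and let $\overline{N}$ denote its truncation.

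First I would apply Corollary~\ref{not-inj} to conclude that the map $i_*\colon H_1(\partial \overline{N}) \to H_1(\overline{N})$ is not injective. This is where the geometric content lives: Proposition~\ref{juan-prop} produces a properly embedded surface with boundary a homologically nontrivial multicurve on a single boundary torus (as deduced topologically via Poincar\'e--Lefschetz duality and cup product computations on the torus), and Corollary~\ref{not-inj} packages the existence of such a surface as the failure of injectivity of $i_*$.

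Next I would invoke Lemma~\ref{hom-cond} to translate this homological condition into its dual cohomological form: since all boundary components of $\overline{N}$ are tori (by hypothesis on the cusps of $N$), the non-injectivity of $i_*$ is equivalent to the existence of a nontrivial class $\alpha \in H^1(\partial \overline{N})$ satisfying $\langle \alpha \rangle \cap i^*\big(H^1(\overline{N})\big) = \{0\}$.

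Finally, with such an $\alpha$ in hand, Proposition~\ref{cohom-cond} directly produces a manifold $M$, topologically a principal $S^1$-bundle over $D\overline{N}$, which (a) admits the structure of a graph manifold in our sense (each piece is recognized as a trivial bundle $\overline{N}_i \times S^1$ via the Mayer--Vietoris argument, and a non-affine gluing torus is replaced by an isotopic affine one) and (b) does not support any locally CAT(0) metric, since its Euler class has infinite order in $H^2(D\overline{N})$ and Lemma~\ref{Euler-class} forbids such bundles from being locally CAT(0) even though the base $D\overline{N}$ is. There is no real obstacle to overcome in this proof beyond writing down these three invocations in the correct order; the entire difficulty has been absorbed into the lemmas established above.
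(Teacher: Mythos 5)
Your proposal is correct and matches the paper's own argument exactly: the theorem is stated immediately after the sentence ``Putting together Proposition~\ref{cohom-cond}, Lemma~\ref{hom-cond} and Corollary~\ref{not-inj}, we can now establish,'' and your chaining of those three results in that order is precisely how the paper intends the proof to go.
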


%\begin{proof}
%Fix a boundary component $T$ of $\overline{N}$, and choose a basis $x_1, \ldots , x_{n-1}$ for the
%first cohomology $H^1(T^{n-1}) \cong \mZ ^{n-1}$. If any of the elements $x_1, \ldots ,x _{n-2}$ 
%have the property that $\langle x_i \rangle \cap i^*(H^1(\overline{N}))=\{0\}$, then we are done by
%Proposition \ref{cohom-cond}. So we can assume that $\langle x_i \rangle \cap i^*(H^1(\overline{N}))\neq \{0\}$ 
%for each $1\leq i \leq n-2$, allowing us to apply Proposition \ref{juan-prop}. This yields a surface
%$\Sigma$ in $\overline{N}$, with the property that $[\partial \Sigma] \neq 0 \in H_1(T)$. Then the
%non-zero class $[\partial \Sigma] \in H_1(T)$ clearly maps to zero in $H_1(\overline{N})$. By Lemma 
%\ref{hom-cond}, we conclude that there exists some other cohomology class $\alpha \in 
%H^1(T)$ with the property that $\langle \alpha \rangle \cap i^*(H^1(\overline{N}))=\{0\}$. Applying
%Proposition \ref{cohom-cond}, we are again done.
%\end{proof}

To conclude, we recall that there exist examples, in all dimensions
$\geq 3$, of non-compact finite volume hyperbolic manifolds with toric cusps 
(see \cite{MRS}).
From Theorem \ref{bundle-example}, we immediately deduce:

\begin{corollary}
There are examples, in all dimensions $\geq 4$, of principal $S^1$-bundles which are
graph manifolds, but do {\bf not} support any locally CAT(0) metric. 
\end{corollary}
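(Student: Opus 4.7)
The plan is to deduce the corollary directly by combining Theorem~\ref{bundle-example} with the existence result for cusped hyperbolic manifolds with toric cusps cited just before the statement. There is essentially no new content to prove; the task is simply to choose the input dimension correctly and keep track of how the bundle construction bumps dimension up by one.

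First I would fix an integer $n \geq 4$ and set $m := n-1 \geq 3$. By the forthcoming work of McReynolds, Reid and Stover \cite{MRS} (quoted just before the statement), there exists a complete finite-volume non-compact hyperbolic $m$-manifold $N$ all of whose cusps are diffeomorphic to $T^{m-1} \times [0,\infty)$. Let $\overline{N}$ be the compact manifold obtained by truncating the cusps of $N$, so that $\partial \overline{N}$ is a disjoint union of $(m-1)$-dimensional tori. Then the double $D\overline{N}$ is a closed smooth $m$-manifold.

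Next I would apply Theorem~\ref{bundle-example} to this $N$. The theorem produces a principal $S^1$-bundle $M \to D\overline{N}$ such that $M$ is a graph manifold (in the sense of this monograph) and $M$ does \emph{not} admit any locally CAT(0) metric. The total space $M$ has dimension $m+1 = n$, so $M$ is an $n$-dimensional example of the desired type. Since $n \geq 4$ was arbitrary, this produces such an example in every dimension $\geq 4$.

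The only point that deserves a sentence of comment is that the statement of Theorem~\ref{bundle-example} already internalizes everything nontrivial: the verification that the relevant Euler class has infinite order (via Lemma~\ref{hom-cond}, Corollary~\ref{not-inj}, and Proposition~\ref{juan-prop}), and the verification that the resulting $S^1$-bundle admits a graph manifold structure (each piece being a trivial $S^1$-bundle over $\overline{N}_i$, with gluing adjusted to be affine as in Remark~\ref{aff-diff:rem}). Thus no additional obstruction arises in passing from one choice of $N$ to another, and the only external input needed for the corollary is the existence of at least one cusped hyperbolic $m$-manifold with toric cusps in each dimension $m\geq 3$, which is precisely what \cite{MRS} provides. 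The main (and essentially only) point to check is the dimension bookkeeping $\dim M = \dim D\overline{N} + 1 = m+1$, which forces the lower bound $n\geq 4$ in the conclusion.
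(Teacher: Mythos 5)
Your proposal is correct and matches the paper's argument exactly: take a cusped hyperbolic $m$-manifold with toric cusps ($m=n-1\geq 3$) from \cite{MRS}, apply Theorem~\ref{bundle-example} to obtain an $n$-dimensional principal $S^1$-bundle over $D\overline{N}$ that is a graph manifold with no locally CAT(0) metric. The dimension bookkeeping is the only thing to check, and you have done it correctly.
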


\section{Irreducible examples}\label{strirr:sub}
In this Section we prove that in any dimension $\geq 4$
there exist irreducible graph manifolds
which do not support any locally CAT(0) metric. In fact, we provide 
examples of irreducible graph manifolds whose fundamental groups
are not CAT(0). Usually, a group is defined to be CAT(0)
if it acts properly, cocompactly and isometrically on a CAT(0) space (see e.g.~\cite{ger4,Swenson,Swenson-Papa,Alibegovich-Best,Geo-Ont,Rouane}).
Our Definition~\ref{CAT0} below is 
slightly less restrictive.

Let us briefly recall some definitions and results from~\cite[Chapter II.6]{bri}.
Let $G$ be a group acting by isometries on the complete geodesic metric space
$X$.
For every $g\in G$ the \emph{translation length} of $g$ is defined by setting
$$\tau (g)=\inf \{d(x,g (x))\, |\, x\in X\}\ .$$ 
We also set
$$\mn (g)=\{x\in X\, |\, d(x,g(x))=\tau (g)\}\subseteq X\ .$$
If $H$ is a subgroup of $\Gamma$, then we set 
$\mn (H)=\bigcap_{\gamma \in H}\mn (\gamma)\subseteq X$.
An element $g\in G$ is semisimple if $\mn (g)$ is non-empty, i.e.~if
the infimum in the definition of $\tau(g)$ is a minimum. It is well-known that, if $G$ acts
cocompactly on $X$, then every element of $G$ is semisimple.
Following~\cite[Chapter I.8]{bri}, we say that
the action of $G$ on $X$ is \emph{proper} if every point $x\in X$ has a neighbourhood
$U\subseteq X$ such that the set $\{g\in G\, |\, g(U)\cap U\neq \emptyset\}$
is finite. As observed in~\cite{bri}, it is probably more usual 
to say that $G$ acts properly on $X$
if the set $\{g\in G\, |\, g(K)\cap K\neq \emptyset\}$ is finite for every compact
set $K\subseteq X$. The definition we are adopting here implies
that every compact subset $K\subseteq X$ has a neighbourhood $U_K$ such that 
$\{g\in G\, |\, g(U_K)\cap U_K\neq \emptyset\}$
is finite, so the two definitions coincide if $X$ is a proper metric space
(i.e.~if $X$ is locally compact or, equivalently, if every bounded subset
is relatively compact in $X$).

\begin{definition}\label{CAT0}
 Let $G$ be a group. Then $G$ is CAT(0) if it acts properly via semisimple
isometries on a complete CAT(0) space. 
\end{definition}

By Cartan-Hadamard Theorem for metric spaces (see~\cite[Chapter II.4]{bri}),
the universal covering of a complete locally CAT(0) space is complete
and globally CAT(0),
so if a compact topological space $M$ supports a locally CAT(0) metric, then
$\pi_1(M)$ is a CAT(0) group.

\medskip

Let us now come to our construction.
Let $N$ be a complete finite-volume hyperbolic $n$-manifold
with toric cusps, $n\geq 3$, and set $V=\overline{N}\times S^1$, where $\overline{N}$ is as usual
the natural compactification of $N$. We denote by $n$ the dimension of $V$.
We are going to show that one may always choose affine gluing maps between the boundary components
of two copies of $V$ in such a way that the 
resulting graph manifold $M$ is irreducible, and the
fundamental group $\pi_1(M)$ 
is not CAT(0).
As a consequence, irreducible graph manifolds which do not support any locally CAT(0) metric exist in every 
dimension $\geq 4$. 

Let $T^*_1,\ldots,T^*_r$ be the boundary components of $V$.
We denote by $V^+$, $V^-$ two copies of $V$, and by $T_i^+$ (resp.~by $T_i^-$) the boundary component
of $V^+$ (resp.~of $V^-$) corresponding to $T^*_i$, $i=1,\ldots,r$. For every $i=1,\ldots,r$
we fix an affine diffeomorphism $\psi_i\colon T^+_i\to T^-_i$, we denote by 
$M$ the graph manifold
obtained by gluing $V^+$ and $V^-$ along the $\psi_i$, and by $T_i\subseteq M$ the torus
corresponding to $T_i^+\subseteq \partial V^+$ and $T_i^-\subseteq \partial V^-$.

We denote by $\Gamma$ the fundamental group $\pi_1(M)$ of $M$, and we suppose
that $\Gamma$ acts properly by semisimple isometries on the complete CAT(0) space $X$.
For every $i=1,\ldots,r$ we also denote by $A_i$ (a representative of the conjugacy class of) the subgroup $\pi_1(T_i)<\Gamma$. 
Following~\cite{leeb}, we briefly describe
the Euclidean scalar product induced by the metric of $X$ on each $H_1(A_i)\cong A_i\cong \mZ^{n-1}$, $i=1,\ldots,r$. 
%We endow $\tilM$ with the CAT(0) metric induced by the covering map, and denote 
%by $A_i$ the subgroup of the group
%$\Gamma = {\rm Aut} (\tilM) \cong \pi_1 (M)$ 
%corresponding to (a representative of the conjugacy class of) $\pi_1 (T_i)$.
%Let us briefly recall some definitions and results from~\cite[Chapter II.6]{bri}.
%For every $\gamma\in \Gamma$ the \emph{translation length} of $\gamma$ is defined by %setting
%$$\tau (\gamma)=\inf \{d(x,\gamma (x))\, |\, x\in\widetilde{M}\}\ .$$ 
%We also set
%$$\mn (\gamma)=\{x\in \widetilde{M}\, |\, d(x,\gamma(x))=\tau (\gamma)\}\subseteq \widetilde{M}\ .$$
%Since $\widetilde{M}/\Gamma=M$ is compact, the isometry $\gamma$ is semisimple, and
%$\mn (\gamma)$ is non-empty. If $H$ is a subgroup of $\Gamma$ we finally set
%$\mn (H)=\bigcap_{\gamma \in H}\mn (\gamma)$.

Since $A_i\cong \mZ^{n-1}$, 
by the Flat Torus Theorem the subset $\mn (A_i)$ splits as a metric  product 
$\mn (A_i)=Y_i\times E^{n-1}$, where $E^k$ is the Euclidean $k$-dimensional space
(see \emph{e.g.}~\cite[Chapter II.7]{bri}).
Moreover, $A_i$ leaves $\mn (A_i)$ invariant, and the action of every $a\in A_i$ on $\mn (A_i)$ 
splits as the product of the identity on $Y_i$ and a non-trivial translation $v\mapsto v+v_a$ on $E^{n-1}$.
If $l_1,l_2$ are elements of $H_1 (A_i)$ we set 
$$
\langle l_1,l_2\rangle_i =\langle v_{a_1},v_{a_2}\rangle\ ,
$$
where $a_j$ is the element of $A_i\cong H_1 (A_i)$ corresponding to
$l_j$, and $\langle \cdot,\cdot \rangle$ denotes the standard scalar product of $E^{n-1}$.
It is readily seen that $\langle \cdot ,\cdot\rangle_i$
is indeed well-defined. Moreover,
the norm $\| l \|_i=\sqrt{\langle l,l\rangle_i}$ of any element $l\in H_1(A_i)$
coincides with the translation length of the corresponding element $a\in A_i<\Gamma$,
so if $l_1,l_2\in H_1(T_i)$ correspond to the elements $a_1,a_2\in A_i$ we have
$$
2\langle l_1,l_2\rangle_i={\tau(a_1\circ a_2)^2-\tau (a_1)^2-\tau(a_2)^2}\ .
$$

Let us fix a representative $\Gamma^\pm$ of the conjugacy class of
the subgroup $\pi_1 (V^\pm)$ of $\pi_1 (M)\cong \Gamma$. We also choose
the subgroups $A_i$ corresponding to the tori $T_i$ in such a way that
$A_i<\Gamma^\pm$ for every $i=1,\ldots,r$. 
We denote by $f^\pm\in H_1(\Gamma^\pm)$ the class represented by the fiber
of $V^\pm$, i.e.~the element of $H_1 (\Gamma^\pm)=H_1(\pi_1(N))\oplus H_1(\pi_1(S^1))$ 
 corresponding to the positive generator of $H_1(\pi_1(S^1))=\mZ$. If $i^\pm_*\colon \bigoplus_{i=1}^r H_1(A_i)\to H_1 (\Gamma^\pm)$ is 
 the map induced by the inclusions $A_i\hookrightarrow \Gamma^\pm$,
%$i^\pm \colon \bigsqcup_{i=1}^r T_i \to V^\pm$ (where we consider
%$V^\pm$ as a subset of $M$), 
then for every $i=1,\ldots,r$ 
 there exists a unique element  $f_i^\pm\in H_1 (A_i)$ such that 
 $i_*^\pm (f_i^\pm)=f^\pm$. Observe that our definitions imply that $M$ is irreducible if and only if
 $f_i^+\neq \pm f_i^-$ for every $i=1,\ldots,r$.
Lemma~\ref{orto:lem} and Proposition~\ref{nonCAT0-bis} below are inspired 
by the proof of~\cite[Theorem 3.7]{kaplee1}:

\begin{lemma}\label{orto:lem}
For every $i=1,\ldots,r$ let $b_i$ be an element of $H_1 (A_i)$ such that
$$
i^\pm_* (b_1+\ldots+b_r)=0\ .
$$
Then
$$
\sum_{i=1}^r \langle b_i,f_i^\pm\rangle_i =0\ .
$$ 
\end{lemma}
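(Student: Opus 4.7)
The plan is to exhibit, for each sign, a group homomorphism $\phi^\pm\colon\pi_1(V^\pm)\to\mR$ whose restriction to each $A_i\subseteq\pi_1(V^\pm)$ coincides with the linear functional $\langle\,\cdot\,,f_i^\pm\rangle_i$. Since any such $\phi^\pm$ automatically factors through the abelianization $H_1(V^\pm)$, evaluating it on the hypothesis $i^\pm_*(b_1+\cdots+b_r)=0$ will yield $\sum_i\langle b_i,f_i^\pm\rangle_i=0$ at once.

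To construct $\phi^\pm$, I would exploit the fact that, because $V^\pm\cong\overline{N}\times S^1$, the decomposition $\pi_1(V^\pm)\cong\pi_1(\overline{N})\times\mZ$ makes $f^\pm$ a generator of the $\mZ$-factor, hence a central element; moreover, through the inclusion $A_i\hookrightarrow\pi_1(V^\pm)$, the element $f_i^\pm\in A_i$ agrees with $f^\pm$ as a group element. Applying the Flat Torus Theorem to $\langle f^\pm\rangle$ acting on the CAT(0) universal cover $\widetilde{M}$ gives $\mn(f^\pm)=Y\times\mR$, with $f^\pm$ translating the $\mR$-factor by $\tau(f^\pm)>0$; let $\xi^+\in\partial_\infty\widetilde{M}$ denote the endpoint of an axis of $f^\pm$ in the positive translation direction. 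Each $g\in\pi_1(V^\pm)$ commutes with $f^\pm$, so it permutes the parallel family of $f^\pm$-axes and preserves the pair $\{\xi^+,\xi^-\}$ of their common endpoints at infinity; if $g$ exchanged these endpoints, reading off how $gf^\pm g^{-1}$ translates a reversed axis would force $gf^\pm g^{-1}=(f^\pm)^{-1}$, contradicting centrality together with the fact that $f^\pm$ has infinite order. Hence $g$ fixes $\xi^+$, and choosing any Busemann function $\beta$ at $\xi^+$, the cocycle $c^\pm(g):=\beta(g\cdot x)-\beta(x)$ is independent of $x$ and defines a homomorphism $c^\pm\colon\pi_1(V^\pm)\to\mR$.

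The remaining task is to identify $c^\pm$ on each $A_i$. Applying the Flat Torus Theorem to $A_i$ gives $\mn(A_i)=Y_i\times E^{n-1}$, with $A_i$ acting on $E^{n-1}$ by the translation representation $a\mapsto v_a$ that, by the very definition of $\langle\cdot,\cdot\rangle_i$, satisfies $\langle v_a,v_b\rangle=\langle a,b\rangle_i$. Since $\mn(A_i)\subseteq\mn(f^\pm)$, the axes of $f^\pm$ inside $\mn(A_i)$ are the straight lines in direction $v_{f_i^\pm}/\|v_{f_i^\pm}\|$ within $E^{n-1}$, and a direct Busemann-function computation on the flat $Y_i\times E^{n-1}$ gives
$$c^\pm(a)=-\langle v_a,v_{f_i^\pm}\rangle/\|v_{f_i^\pm}\|=-\langle a,f_i^\pm\rangle_i/\tau(f^\pm).$$
Since $\tau(f^\pm)=\|f_i^\pm\|_i$ does not depend on $i$, the homomorphism $\phi^\pm:=-\tau(f^\pm)\cdot c^\pm\colon\pi_1(V^\pm)\to\mR$ has the required property $\phi^\pm|_{A_i}(a)=\langle a,f_i^\pm\rangle_i$; factoring $\phi^\pm$ through $H_1(V^\pm)$ and evaluating at $i^\pm_*(\sum b_i)=0$ then yields the conclusion.

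The main obstacle I anticipate is not logical but bookkeeping: verifying that every element of $\pi_1(V^\pm)$ genuinely fixes $\xi^+$ (rather than swapping the two endpoints of the axis family), checking that $\mn(A_i)$ embeds in $\mn(f^\pm)$ so that the two product decompositions are compatible in the asserted way, and carrying out the Busemann-function calculation on the flat with correct signs. Each piece is routine CAT(0) geometry built on the Flat Torus Theorem, but the identification of parallel-axis endpoints and the sign conventions for $\beta$ must be tracked carefully to ensure that $\phi^\pm$ really recovers the inner-product pairing with $f_i^\pm$ uniformly in $i$.
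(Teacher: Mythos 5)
Your argument is correct and is essentially the same as the paper's, packaged a bit differently. Both proofs proceed by constructing a homomorphism $\pi_1(V^\pm)\to\mathbb{R}$ that measures displacement in the axis direction of the central element $\phi\in\pi_1(V^\pm)$ whose Hurewicz image is $f^\pm$, observing that it factors through $H_1(V^\pm)$, checking that on each $A_i$ it recovers $\langle\cdot,f_i^\pm\rangle_i$ up to the same overall constant, and then evaluating on $i^\pm_*(\sum b_i)=0$. The only difference is in how the homomorphism is produced: the paper applies the Flat Torus Theorem to $\langle\phi\rangle$, gets a $\Gamma^\pm$-invariant splitting $\mathrm{Min}(\phi)=W\times E^1$, projects to $\mathrm{Isom}(E^1)$, and argues via commutation with $\rho_1(\phi)$ that the image lands in translations, i.e.~$\mathbb{R}$; you instead take the Busemann cocycle $c^\pm(g)=\beta(gx)-\beta(x)$ at an endpoint $\xi^+$ of a $\phi$-axis and argue via centrality of $\phi$ that all of $\pi_1(V^\pm)$ fixes $\xi^+$. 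On $\mathrm{Min}(\phi)$ the Busemann function at $\xi^+$ is, up to sign and an additive constant, the $E^1$-coordinate, so $c^\pm$ and $\rho_1$ are literally the same homomorphism up to sign. Your route is slightly more geometric and avoids the paper's explicit polarization computation $\tau(\phi\beta_i)^2-\tau(\phi)^2-\tau(\beta_i)^2$ by instead evaluating $c^\pm$ directly on the flat factor of $\mathrm{Min}(A_i)$; the paper's route avoids discussing Busemann functions and boundary points at infinity. Either way the content is identical, and the points you flag as bookkeeping (fixing $\xi^+$ rather than swapping, compatibility of the splittings via $\mathrm{Min}(A_i)\subseteq\mathrm{Min}(\phi)$, the sign) are exactly the details the paper also handles, via the commutation argument in $\mathrm{Isom}(E^1)$ and the inclusion $\phi\in\bigcap_i A_i$.
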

\begin{proof}
Let $\phi^\pm\in\Gamma^\pm$ be the element corresponding to $({\rm Id},1)$ under the identification
$$
\Gamma^\pm=\pi_1 (V^\pm)=\pi_1 (\overline{N})\times \pi_1 (S^1)=\pi_1 (\overline{N})\times\mathbb{Z}\ .
$$
By construction we have 
$\phi^\pm\in \bigcap_{i=1}^r A_i\subseteq \Gamma^\pm$, and
the image of $\phi^\pm$ under the Hurewicz homomorphism
$\Gamma^\pm\to H_1(\Gamma^\pm)$ coincides with $f^\pm$.
   
Since $\phi^\pm$ lies in the center of $\Gamma^\pm$ the set $\mn (\phi^\pm)\subseteq X$ is
$\Gamma^\pm$-invariant. Moreover, the action of $\Gamma^\pm$ preserves 
the isometric splitting $\mn (\phi^\pm)=W\times E^1$, so
we have an induced representation 
%the identification of $\Gamma$ with 
%the group of covering automorphisms of $\tilM$ induces a representation
$\rho\colon \Gamma^\pm\to {\rm Isom} (W)\times {\rm Isom} (E^1)$. If $\rho_0\colon \Gamma^\pm\to
{\rm Isom} (W)$, $\rho_1\colon \Gamma^\pm\to
{\rm Isom} (E^1)$ are the components of $\rho$, then $\rho_0 (\phi^\pm)$ is the identity of $W$, 
while $\rho_1 (\phi^\pm)$
is a non-trivial translation. As a consequence, since for every $\gamma\in \Gamma^\pm$ 
the isometries $\rho_1 (\gamma)$
and $\rho_1 (\phi^\pm)$ commute, the representation $\rho_1$ takes values in the abelian group
of translations of $E^1$, which can be canonically identified with $\mR$. 
Therefore, 
%by
%Hurewicz's Theorem 
the homomorphism $\rho_1$ factors through
$H_1 (\Gamma^\pm)$, thus defining a homomorphism $\overline{\rho}_1\colon H_1(\Gamma^\pm)\to \mR$.

Let us now observe that, since $\phi^\pm\in A_i$, we have $\mn (A_i)\subseteq \mn (\phi^\pm)=W\times E^1$,
so in order compute the translation length of elements of $A_i$ it is sufficient to consider their 
action on $W\times E^1$. Therefore, for every $a\in A_i$ we have
$\tau (a)^2=\tau_W(\rho_0(a))^2+\rho_1(a)^2$, where we denote by $\tau_W$ 
the translation length of elements of ${\rm Isom} (W)$, and we recall
that we are identifying the group of translations of $E^1$ with $\mR$.
We now let $\beta_i\in A_i$ be a representative of $b_i\in H_1(A_i)$, and proceed
to evaluate the scalar product $\langle b_i,f_i^\pm\rangle_i$. We know that:
$$
2\langle b_i,f_i^\pm\rangle_i=\tau(\phi^\pm\circ\beta_i)^2-\tau(\phi^\pm)^2-\tau(\beta_i)^2\ .
$$
Considering the terms on the right hand side, we recall that
$\phi^\pm\in A_i$ is a representative of $f_i^\pm\in H_1(A_i)$, and hence 
we have $\tau(\phi^\pm)^2=\rho_1(\phi^\pm)^2$. Using the product structure on $W\times E^1$,
the remaining two terms are $\tau(\beta_i)^2=\tau_W(\rho_0 (\beta_i))^2+\rho_1(\beta_i)^2$, and
$\tau (\phi^\pm	\circ\beta_i)^2=\tau_W(\rho_0 (\beta_i))^2+ (\rho_1(\phi^\pm)+\rho_1(\beta_i))^2$.
Substituting these into the expression and simplifying, we obtain that

$$
2\langle b_i,f_i^\pm\rangle_i=2\rho_1(\phi^\pm)\rho_1(\beta_i)=
2\rho_1(\phi^\pm)\overline{\rho}_1(i^\pm_*(b_i))\ .
$$
Summing over all $i$, we deduce that
$$
\sum_{i=1}^r \langle b_i,f_i^\pm\rangle=\rho_1 (\phi^\pm)\cdot \sum_{i=1}^r \overline{\rho}_1 (i^\pm_* (b_i)) 
=\rho_1(\phi^\pm)\cdot \overline{\rho}_1 \left(i^\pm_* \left(\sum_{i=1}^r b_i\right)\right)=0\ ,
$$
whence the conclusion. 
\end{proof}

\begin{proposition}\label{nonCAT0-bis}
There exists a choice for the gluing maps $\psi_i\colon T_i^+\to T_i^-$ such that the following conditions hold:
\begin{enumerate}
\item
the graph manifold $M$ obtained by gluing $V^+$ and $V^-$ along
the $\psi_i$'s is irreducible;
\item
the group $\Gamma=\pi_1 (M)$ is not CAT(0)
(in particular, $M$ does not admit any locally CAT(0) metric).
\end{enumerate}
\end{proposition}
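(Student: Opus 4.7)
The plan is to exploit Lemma~\ref{orto:lem} on both sides of the decomposition, together with the horizontal homology relation guaranteed by Corollary~\ref{not-inj}. Concretely, I will choose the gluings so that, under the identification $H_1(T_i)\cong H_1(T_i^+)$, the fiber class $f_i^-$ differs from $f_i^+$ by a purely horizontal vector $h_i\in H_1(T_i^*)$, with the $h_i$ coordinated so that a single test element simultaneously lies in $\ker(i^+_*)$ and $\ker(i^-_*\circ\psi_*)$. Feeding this element into Lemma~\ref{orto:lem} twice will force an impossible orthogonality.

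The first step is to invoke Corollary~\ref{not-inj} to select a non-zero class $(a_1,\ldots,a_r)\in\ker(i_*)\subseteq H_1(\partial\overline N)$, so that each $a_i\in H_1(T_i^*)$ and $\sum_i j_*(a_i)=0$ in $H_1(\overline N)$ (where $j\colon T_i^*\hookrightarrow\overline N$). Set $S=\{i: a_i\neq 0\}$, define $h_i:=a_i$ for $i\in S$, and let $h_i$ be any non-zero element of $H_1(T_i^*)$ for $i\notin S$. I then take each $\psi_i\colon T_i^+\to T_i^-$ to be the affine diffeomorphism whose derivative fixes the horizontal summand pointwise and sends $f_i^++h_i$ to $f_i^-$; the representing matrix is unipotent, hence in ${\rm SL}(n-1,\mZ)$, so such a $\psi_i$ exists. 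Under the resulting identification, $f_i^-=f_i^++h_i$ in $H_1(T_i)$; since each $h_i$ is a non-zero horizontal vector, $f_i^+\neq\pm f_i^-$, and $M$ is therefore irreducible.

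For the non-CAT(0) statement I argue by contradiction: suppose $M$ admits a locally CAT(0) metric, and consider the Euclidean scalar products $\langle\cdot,\cdot\rangle_i$ on each $H_1(T_i)$ defined just before Lemma~\ref{orto:lem}. Take the test vector $b=(a_1,\ldots,a_r)$, viewed in $\bigoplus_i H_1(T_i^+)$. Because each $a_i$ is horizontal and $\sum_i j_*(a_i)=0$, the image $i^+_*(\sum_i b_i)$ vanishes; and since the derivative of $\psi_i$ fixes the horizontal vector $a_i$, the pushforward $(\psi_i)_*(a_i)=a_i\in H_1(T_i^-)$ has image $j_*(a_i)$ under $i^-_*$, so $i^-_*(\sum_i b_i)=0$ as well. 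Applying Lemma~\ref{orto:lem} to both signs therefore yields
\[
\sum_i \langle a_i,f_i^+\rangle_i \,=\, 0 \,=\, \sum_i \langle a_i,f_i^-\rangle_i.
\]
Subtracting and using $f_i^-=f_i^++h_i$, I obtain $\sum_i \langle a_i,h_i\rangle_i=0$. But $a_i=0$ when $i\notin S$ and $h_i=a_i$ when $i\in S$, so the left-hand side equals $\sum_{i\in S}\|a_i\|_i^2>0$, contradicting positive-definiteness.

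The substantive ingredient is Corollary~\ref{not-inj}, which provides the horizontal relation needed to place $(a_1,\ldots,a_r)$ in both $\ker(i^+_*)$ and $\ker(i^-_*\circ\psi_*)$; without it we could not invoke Lemma~\ref{orto:lem} twice with the same test vector. The remaining work is essentially bookkeeping about affine identifications of tori, and the design of the gluings ensures irreducibility for free.
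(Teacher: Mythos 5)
Your proof is correct and follows essentially the same strategy as the paper's: apply Corollary~\ref{not-inj} to extract a horizontal relation, design the gluings so that $f_i^-$ differs from $f_i^+$ by a horizontal shift supported on that relation, and then run Lemma~\ref{orto:lem} on both sides to derive a contradiction with positive-definiteness. The only cosmetic differences are a sign convention and the paper's extra parameters $n_i$ (which are there to produce infinitely many examples later, not to make this proposition work); note also that for $i\notin S$ your $h_i$ should be specified as a nonzero element of the horizontal summand $H_1(Y_i)$ rather than of $H_1(T_i^*)$ as a whole, so that the intended unipotent $\psi_i$ is actually well-defined.
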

\begin{proof}
Let $Y_1,\ldots,Y_r$ be the boundary components of $\overline{N}$.
By Corollary~\ref{not-inj}, there exist elements $b'_i\in H_1(Y_i)$, $i=1,\ldots,r$, such that
$0\neq b_1'+\ldots+b_r'\in H_1(Y_1)\oplus\ldots\oplus H_1(Y_r)=H_1(\partial \overline{N})$, and
$j_*(b_1'+\ldots+b_r')=0$ in $H_1(\overline{N})$, where $j_*$ is induced by the inclusion
$\partial\overline{N}\hookrightarrow \overline{N}$.
Recall that $V^\pm=\overline{N}\times S^1$, so that we have natural identifications
$T_i^\pm=Y_i\times S^1$ and 
$H_1(T^\pm_i)=H_1(Y_i\times S^1)\cong H_1(Y_i)\oplus H_1(S^1)$, 
$i=1,\ldots,r$. Under these identifications, every affine diffeomorphism $\psi_i\colon T_i^+\to T_i^-$ induces an isomorphism
$$
 (\psi_i)_*\colon H_1(Y_i)\oplus H_1(S^1)\to
H_1(Y_i)\oplus H_1(S^1)\ .
$$

Let us denote by $\lambda$ the positive generator of $H_1 (S^1)$.
For every $i=1,\ldots,r,$ we choose the diffeomorphism 
$\psi_i\colon T_i^+\to T_i^-$ as follows. Let $I=\{i\, |\, b_i'\neq 0\} \subset \{1,\ldots, r\}$,
and observe that our assumptions ensure that $I$ is non-empty. Then:
\begin{enumerate}
 \item 
if $i\notin I$, we only ask that the gluing $\psi_i$ is transverse, i.e.~that
$(\psi_i)_* (0,\lambda)\neq (0,\pm \lambda)$,
\item
if $i\in I$, we choose a positive integer $n_i$ and we let $\psi_i$
be an affine diffeomorphism such that $(\psi_i)_* (v,0)=(v,0)$
for every $v\in H_1(Y_i)$ and $(\psi_i)_* (0,\lambda)=(n_i b_i',\lambda)$.
Also in this case, our choice ensures that $\psi_i$ is transverse.
\end{enumerate}
Recall that $T_i$ is the
toric hypersurface corresponding to $T_i^+$ and $T_i^-$ in the resulting graph manifold $M$, and that we fixed a representative $A_i$ in the conjugacy class
of $\pi_1(T_i)$ in $\pi_1(M)$. 
We denote by $b_i\in H_1(A_i)$ the unique element 
%We now define an element
%$b_i\in H_1(A_i)$ as follows: if $i\notin I$, then $b_i=0$;
%otherwise, $b_i$ is the unique element of $H_1(A_i)$ 
corresponding to the elements
$(b'_i,0)\in H_1(T_i^+)$ and $(b'_i,0)=(\psi_i)_* (b'_i,0)\in H_1(T^-_i)$ under the 
canonical identifications
$H_1(T_i^+)\cong H_1(T_i)\cong H_1(A_i)$ and 
$H_1(T_i^-)\cong H_1(T_i)\cong H_1(A_i)$. Observe that $b_i=0$ if and only if 
$b'_i=0$, i.e.~if and only if $i\notin I$. Moreover, for every $i\in I$ we have
$f_i^+=f_i^-+n_i b_i$.

Let $M$ be the graph manifold obtained by gluing $V^+$ and
$V^-$ along the $\psi_i$'s. By construction, $M$ is irreducible.
Let us suppose by contradiction that $\pi_1(M)$ acts properly by semisimple isometries
on the complete CAT(0) space $X$. We denote by $\langle \cdot , \cdot \rangle_i$
the scalar product induced on $H_1(A_i)$ by the metric of $X$.  
Since $j_*(b_1'+\ldots+b_r')=0$ in $H_1(\overline{N})$, we have
$i^{\pm}_*(\sum_{i=1}^r b_i )=0$ in $H_1(\Gamma^\pm)\cong H_1(V^\pm)$, so
Lemma~\ref{orto:lem} implies that 
\begin{align*}
0&=\sum_{i=1}^r \langle f_i^+,b_i\rangle_i =\sum_{i=1}^r \langle f_i^-+n_ib_i,b_i\rangle_i\\
&=\sum_{i=1}^r \langle f_i^-,b_i\rangle_i+\sum_{i=1}^r n_i\langle b_i,b_i\rangle_i \\
&=\sum_{i=1}^r n_i\|b_i\|^2_i \, ,
\end{align*}
a contradiction since $n_i>0$ and $b_i\neq 0$ for every $i\in I$. 
We have thus shown that $\pi_1(M)$ cannot 
act properly via semisimple isometries on a complete CAT(0) space,
and this concludes the proof.
\end{proof}

\begin{corollary}\label{examples:cor}
Let $n\geq 4$. Then,
there exist infinitely many closed irreducible graph $n$-manifolds having a non-CAT(0)
fundamental group.
In particular, there exist infinitely many closed irreducible graph $n$-manifolds
which do {\bf not} support any locally CAT(0) metric.
\end{corollary}

\begin{proof}
Let us fix an integer $m\geq 3$.
It is proved in~\cite{MRS} that there exist infinitely many complete finite-volume hyperbolic
$m$-manifolds with toric cusps. If $N$ is any such manifold, Proposition~\ref{nonCAT0-bis}
shows that there exists an irreducible graph manifold $M$ which does not support any locally CAT(0) metric
and decomposes as the union of two pieces $V^+$ and $V^-$, each of which is diffeomorphic to $\overline{N}\times S^1$.

In order to conclude it is sufficient to show that the diffeomorphism type of $M$ completely determines
the hyperbolic manifold $N$, so that the infinite family of hyperbolic manifolds provided by~\cite{MRS}
gives rise to the infinite family of desired examples. 
However, 
Theorem~\ref{iso-preserve:thm} implies that the diffeomorphism type of $M$ determines the isomorphism type of the fundamental
group of $V^\pm$.
Since $\pi_1 (N)$ is equal to the quotient
of $\pi_1 (V^\pm)$ by its center
(see Remark~\ref{center:rmk}), 
the conclusion follows 
by Mostow rigidity. 
\end{proof}

\begin{remark}\label{infinite:rem}
Even when starting with a fixed pair of pieces, one can still obtain an infinite family of 
irreducible graph manifolds with non-CAT(0) fundamental group. 
For example, let $N$ be a hyperbolic knot complement in $S^3$, set $V^+=V^-=\overline{N}\times S^1$
and denote by $T^+$ (resp.~$T^-$) the unique boundary component of $V^+$ (resp.~of $V^-$).
The boundary of a Seifert surface for $K$ defines an element $b'\in H_1(\partial\overline{N})$
which bounds in $\overline{N}$, whence an element $b\in H_1(T^\pm)$ such that
$i_*(b)=0\in H_1(V^\pm)$. Let $M(n)$ be the irreducible graph manifold obtained 
by gluing the base of $V^+$ to the base of $V^-$ via the identity of $\partial\overline{N}$,
and by gluing the fibers of $V^+$ and $V^-$ in such a way that
$f^+=f^-+nb$ in 
$H_1(T)$, where $T$ is the internal wall in $M(n)$ corresponding
to $T^+$ and $T^-$.
As described in the proof of Proposition~\ref{nonCAT0-bis}, 
for every positive integer $n$ the group $\pi_1(M(n))$
is not CAT(0).
%does not admit any locally CAT(0) metric.
Moreover, as explained in Remark~\ref{infinitelymany:rem}, the proof of 
Theorem~\ref{infinitelymany}
can be adapted to show that among the fundamental groups of the $M(n)$'s,
there are infinitely many non-isomorphic groups. 
\end{remark}

\begin{remark}
Let $N$ be a complete finite-volume hyperbolic manifold
with toric cusps. We have proved in Proposition~\ref{nonCAT0-bis} that
there exist ``twisted doubles'' of $\overline{N}\times S^1$ which provide
examples of {\it closed} irreducible graph manifolds not admitting any locally CAT(0) metric.
However, in principle one can use a similar construction to also get examples with non-empty boundary. 

Indeed, if $T_1\cup\ldots\cup T_k\subseteq \partial\overline{N}\times S^1$
is a family of boundary tori such that the map $i_*\colon H_1(T_1\cup\ldots\cup T_k)\to 
H_1(\overline{N}\times S^1)$ is not injective, then
the proof of Proposition~\ref{nonCAT0-bis} shows that the obstruction to putting a global
nonpositively curved metric on such twisted doubles is concentrated near the 
gluing tori $T_1,\ldots, T_k$. In other words, if $\partial(\overline{N}\times S^1)$
contains some boundary component other than $T_1,\ldots,T_k$, we can easily
construct irreducible graph manifolds just by gluing two copies
of $\overline{N}\times S^1$ along the corresponding copies of $T_1,\ldots, T_k$,
thus obtaining examples of irreducible graph manifolds, with non-empty boundary,
and which do not support any locally CAT(0) metric.
\end{remark}

%\include{Chapter-13}

%-----------------------------------------------------------------------
% Beginning of chap1.tex
%-----------------------------------------------------------------------
%
%  AMS-LaTeX sample file for a chapter of a monograph, to be used with
%  an AMS monograph document class.  This is a data file input by
%  chapter.tex.
%
%  Use this file as a model for a chapter; DO NOT START BY removing its
%  contents and filling in your own text.
% 
%%%%%%%%%%%%%%%%%%%%%%%%%%%%%%%%%%%%%%%%%%%%%%%%%%%%%%%%%%%%%%%%%%%%%%%%

\chapter{Directions for future research}\label{open:sec}

Our purpose in this monograph was to initiate the study of the class of 
high-dimensional graph manifolds. In this final chapter, we collate 
various problems that came up naturally in the course of this work,
and could serve as directions for future research.

\section{Further algebraic properties}

In Chapter \ref{groups:sec}, we established various algebraic properties of the fundamental
groups of high dimensional graph manifolds. Most of the results followed fairly easily from
the structure of such groups, expressed as a graph of groups. In contrast, there
are a number of interesting properties of groups whose behavior under amalgamations
is less predictable. It would be interesting to see which of these properties hold for the
class of graph manifold groups. For concreteness, we identify some properties which
we think would be of most interest:

\begin{problem} Are fundamental groups of high dimensional graph manifolds Hopfian? Are
they residually finite? Are they linear? What if one additionally assumes the graph manifold
is irreducible?
\end{problem}

A slightly different flavor of problems come from the algorithmic viewpoint. We showed that
the word problem is solvable for the $\pi_1(M)$ of irreducible graph manifolds. Some other
algorithmic problems one can consider include:

\begin{problem}
Is the conjugacy problem solvable for fundamental groups of high dimensional graph manifolds?
Is the isomorphism problem solvable within the class of graph manifold groups?
\end{problem}

Finally, one can also ask for a better understanding of the outer automorphism group
$\out (\pi_1(M))$, and of how it relates to the topology of $M$. For instance:

\begin{problem}
Is the group $\out(\pi_1(M))$ always infinite? What can be said about the structure of $\out(\pi_1(M))$?
\end{problem}

\begin{problem}
If we have a finite subgroup in $\out(\pi_1(M))$, can we lift it back to a finite subgroup of $\diff(M)$?
\end{problem}

This last problem is an analogue of the classic Nielson realization problem. Note that,
by Theorem \ref{smrigidity:thm}, the natural map $\diff(M) \rightarrow \out(\pi_1(M))$ is surjective. 
So we can always lift back individual elements from $\out(\pi_1(M))$ to $\diff(M)$, and the problem
asks whether we can choose the lifts in a compatible manner.

\section{Studying quasi-isometries}\label{qi-open:sec}

One of our main results, Theorem \ref{qirigidity:thm}, gives us some structure theory for groups which
are quasi-isometric to the fundamental group of an irreducible graph manifold. Specializing
to the class of graph manifold groups, this result gives us a necessary condition for deciding
whether two such groups $\pi_1(M_1)$ and $\pi_1(M_2)$ are quasi-isometric to each other: loosely
speaking, the two graph manifolds $M_i$ must essentially be built up from the same collection of 
pieces (up to commensurability), with the same patterns of gluings. The only
distinguishing feature between $M_1$ and $M_2$ would then be in the actual gluing maps
used to attach pieces together. This brings us to the interesting:

\begin{problem}\label{glueing-QI}
To what extent do the gluing maps influence the quasi-isometry type of the resulting graph 
manifold group? More concretely, take pieces $V_1$ and $V_2$ each having exactly one 
boundary component, and let $M_1,M_2$ be a pair of irreducible graph manifolds obtained 
by gluing $V_1$ with $V_2$. Must the the fundamental groups of $M_1$ and $M_2$ be 
quasi-isometric? 
\end{problem}

In order to prove that the answer is positive,
one could try to follow the strategy described in~\cite{behr}, as follows:
\begin{enumerate}
 \item 
Define a \emph{flip manifold} as a graph manifold whose gluing maps
are such that fibers are glued to parallel copies of the traces
at the toric boundaries
of the adjacent base
(this definition generalizes the one given in~\cite{kaplee}). 
\item
Observe that since $V_1$ and $V_2$ can be glued to provide irreducible graph manifolds,
they can also be glued to obtain a flip manifold $M$. Note however that such a manifold
is not uniquely determined by $V_1$ and $V_2$.
\item
Prove that the universal covering of $M_i$, $i=1,2$, 
is quasi-isometric to the universal covering of $M$.
\end{enumerate}
The analogue of Step (3) for pieces with $2$-dimensional bases 
is proved in Section~2 of~\cite{kaplee}. However,
the argument given there does not apply
in our case, since our bases are not negatively curved. 

In Theorem \ref{qi-mnpc}, we argued that a {\it labelled} version of the Bass-Serre tree 
associated to an irreducible graph manifold (with each vertex labelled by the commensurability
class of the hyperbolic factor in the corresponding vertex group) provides a
quasi-isometric invariant. However, it is shown in Remark~\ref{not-suff-rem}
that this is {\it not} a complete invariant,
i.e. that there exist a pair of irreducible graph manifolds with the same invariant, but 
which are nevertheless not quasi-isometric.
We can ask:

\begin{problem}
Can one devise a more sophisticated labeling in order to get a \emph{complete} quasi-isometric invariant?
\end{problem}

It would be interesting to see how the quasi-isometry classes behave with respect
to curvature conditions. For instance, we could ask:

\begin{problem}
Is there a pair of irreducible graph manifolds with quasi-isometric fundamental groups, with the
property that one of them supports a locally CAT(0) metric, but the other
one cannot support any locally CAT(0) metric? 
\end{problem}

Note that if the quasi-isometry class ends up being independent of the gluing maps used
(among the ones giving irreducible graph manifolds),
then by varying the gluing maps, one can give an affirmative answer to this last question.

Now all the quasi-isometry results we have are for the class of irreducible graph manifolds.
The key result we use is that, for this class of graph manifolds, all the walls are {\it undistorted}
in the universal cover (see Chapter \ref{strongirr:sec}). This in turn can be used to show that 
quasi-isometries must send walls to walls (up to finite distance), and hence chambers to 
chambers (see Chapter \ref{preserve:sec}). Trying to generalize these, we can formulate the
following question, which was suggested to us by C.~Drutu and P.~Papasoglu:

\begin{problem}
For a graph manifold $M$, assume that a wall $W$ in the universal cover $\tilde M$ is not 
too distorted (say, polynomially distorted). What additional hypotheses are sufficient to ensure
that quasi-isometries send walls to (bounded distance from) walls? And how can we choose
gluings in order to ensure these hypotheses are satisfied?
\end{problem}

For example, one possibility is to assume that all fibers have dimension which is small relative 
to the degree of polynomial growth. It seems like this constraint might be enough to show that
walls are rigid under quasi-isometries. Finally, we have the most general (and consequently, the
most difficult):

\begin{problem}
Develop methods to analyze quasi-isometries of general graph manifolds (i.e. without 
the assumption of irreducibility).
\end{problem}

Notice that in the proof of Theorem~\ref{qirigidity:thm} we studied each vertex stabilizer separately.
It might be possible to obtain additional information by studying
the interaction between vertex stabilizers of adjacent vertices.

\begin{problem}
 Is it possible, under additional hypotheses, to obtain a better description of the vertex stabilizers?
\end{problem}

A possible strategy to achieve this is to use the fact that walls admit ``foliations'' which are coarsely invariant under quasi-isometries,
namely those given by fibers of the adjacent chambers.
In order to obtain additional information out of this, one probably has to assume that the dimension of the fibers is half
that of the walls.

Finally, it is natural to ask whether versions of our quasi-isometric rigidity results hold for extended graph manifolds as well. The very first step in this direction would be the quasi-isometric rigidity of the fundamental group of a single piece. But the fundamental group of a single piece is just the product of a free group and an abelian group, which leads to the following natural question.

\begin{problem}
What can one say about a group $G$ quasi-isometric to $F_k\times \mathbb{Z}^d$, where $F_k$ is the free group on $k\geq 2$ generators? Is it true that $G$ is virtually of the form $F_{k'}\times \mathbb{Z}^d$?
\end{problem}

Notice that quasi-isometric rigidity is known for both abelian groups (see \cite{gropol}) and for free groups (see~\cite{Sta1} and ~\cite{Dun}).

\section{Non-positive curvature and differential geometry}

We have already given three different constructions of high dimensional graph
manifolds which cannot support a locally CAT(0) metric (see 
Section \ref{noncat0-easy:subsec} and Chapter \ref{construction2:sec}),
and hence no Riemannian metric of non-positive sectional curvature. It would
be interesting to identify precise conditions for such metrics to exist:

\begin{problem}
Find necessary and sufficient conditions for a graph manifold $M$ to
\begin{enumerate}[(i)]
\item support a Riemannian metric of non-positive sectional curvature, or
\item support a locally CAT(0)-metric.
\end{enumerate}
\end{problem}

It is not even clear whether or not items (i) and (ii) above are really
distinct:

\begin{problem}
Assume the high dimensional graph manifold $M$ supports a locally CAT(0) metric.
Does it follow that $M$ supports a Riemannian metric of non-positive sectional
curvature?
\end{problem}

Note that, for the classical $3$-dimensional graph manifolds, Buyalo and Svetlov \cite{BS}
have a complete criterion for deciding whether or not such a manifold supports a 
non-positively curved Riemannian metric (see also \cite{leeb}). 
Some partial results in dimension $=4$ appear in \cite{BK}.

Concerning the second problem, in the 
$3$-dimensional setting, there is no difference between Riemannian and metric 
non-positive curvature (see for instance \cite[Section 2]{DJL}). However, in all
dimensions $\geq 4$, there exist manifolds supporting locally CAT(0) metrics which
do {\it not} support Riemannian metrics of non-positive curvature (see the discussion
in \cite[Section 3]{DJL}). For the class of graph manifolds, the situation is relatively
tame, and one might expect the two classes to coincide.

\vskip 10pt

Next, we discuss a question about ordinary hyperbolic manifolds. One can ask
whether examples exist satisfying a strong form of the cohomological condition
appearing in Proposition \ref{juan-prop}. More precisely:

\begin{problem}
Can one find, in each dimension $n\geq 4$, an example of a
truncated finite volume hyperbolic $n$-manifold $N$, with all boundary components
consisting of tori, such that at least one boundary component 
$T$ has the property that the map $i_*:H_1(T) \rightarrow H_1(N)$ induced 
by inclusion has a non-trivial kernel?
\end{problem}

Note that such examples clearly exist in dimensions $=2, 3$. 
A recent result by Kolpakov and Martelli ensures that one-cusped hyperbolic manifolds with toric cusp
exist also in dimension 4~\cite{MarKo}.
Moreover, if one could
construct a finite volume hyperbolic $n$-manifold having a single cusp with toral
cross section, then Proposition \ref{juan-prop} could be used to show that the
corresponding $\ker (i_*)$ is non-trivial. The problem of constructing hyperbolic 
manifolds with a single cusp is, however, still open.

\vskip 10pt

We have already discussed the behaviour of fundamental groups of graph manifolds with respect
to several conditions encoding nonpositive curvature for groups: for example, we showed that
our groups are often non-relatively hyperbolic, and that, in general, they do not act properly via
semisimple isometries on CAT(0) spaces. An interesting question, which was suggested to the authors
by the anonymous referee, is the following:

\begin{problem}
 Does there exist an (irreducible) graph manifold whose fundamental group does not admit any proper action on a proper CAT(0) space?
\end{problem}

A positive answer to this question would support the feeling that fundamental groups of graph manifolds are genuinely outside of the world of non-positively
curved groups.

\vskip 10pt

Our next question comes from a differential geometric direction.
Intuitively, one can think of high dimensional graph manifolds as being ``mostly''
non-positively curved: the difficulties in putting a global metric of non-positive
curvature is concentrated in the vicinity of the gluing tori, which are a collection
of smooth, pairwise disjoint, codimension one submanifolds. Gromov has 
formulated the notion of {\it almost non-positively curved manifolds}: these are
manifolds with the property that for each $\epsilon >0$, one can find a 
Riemannian metric with the property that the diameters $d$ and maximal 
sectional curvature $K$ satisfy the inequality $K\cdot d^2 \leq \epsilon$ 
(see \cite{gro-flat}). It would be interesting to study graph manifolds from this
viewpoint. In particular:

\begin{problem}
Are graph manifolds almost non-positively curved?
\end{problem}

We note that the class of almost non-positively curved manifolds is very 
mysterious. The only known examples of manifold which are known to {\it not} 
be almost non-positively curved are the sphere $S^2$ and the projective
plane $\mathbb R P^2$ (by Gauss-Bonnet). Aside from manifolds supporting
non-positive curvature, the only additional known examples of almost 
non-positively curved manifolds occur in dimension =3 (all $3$-manifolds
are non-positively curved, see Bavard \cite{Bav}) and in dimension =4 
(a family of examples was constructed by Galaz-Garcia \cite{G-G}).

\vskip 10pt

Keeping on the theme of differential geometry, we recall that the minimal 
volume of a smooth manifold is defined to be the infimum of the volume functional,
over the space of all Riemannian metrics whose curvature is bounded between 
$-1$ and $1$.  Gromov \cite{gro-bddcohom} showed that manifolds with positive simplicial volume have
positive minimal volume and have positive minimal entropy. In view of our Proposition
\ref{simp-vol}, one can ask the following:

\begin{problem}
Let $M$ be a graph manifold with at least one purely hyperbolic piece (i.e. a piece with 
trivial fiber). Can one
compute the minimal volume of $M$? Does it equal the sum of the hyperbolic volumes of the 
purely hyperbolic pieces? Does the choice of gluing maps between tori affect this invariant?
If there are some pieces with non-trivial fiber, can the minimal volume ever be attained
by an actual metric on $M$?
\end{problem}

Similarly, minimal entropy is defined to be the infimum of the topological
entropy of the geodesic flow, over the space of all Riemannian metrics whose 
volume is equal to one. Gromov \cite{gro-bddcohom} also showed that positive simplicial 
volume implies
positive minimal entropy. One could formulate the same types of questions concerning
the minimal entropy.

%    Include appendix "chapters" here.
%\include{}

\backmatter
%-----------------------------------------------------------------------------
% Beginning of biblio.tex
%-----------------------------------------------------------------------------
%
% AMS-LaTeX 1.2 sample file for a monograph, based on amsbook.cls.
% This is a data file input by chapter.tex.
%%%%%%%%%%%%%%%%%%%%%%%%%%%%%%%%%%%%%%%%%%%%%%%%%%%%%%%%%%%%%%%%%%%%%%%%

\bibliographystyle{smfalpha}

%-----------------------------------------------------------------------------
% End of biblio.tex
%-----------------------------------------------------------------------------

%    See note above about multiple indexes.
\printindex

\end{document}